\newtheorem{theorem}{Theorem}[section]
\newtheorem{lemma}[theorem]{Lemma}
\newtheorem{corollary}[theorem]{Corollary}
\newtheorem{proposition}[theorem]{Proposition}
\newtheorem{definition}[theorem]{Definition}
\theoremstyle{remark}
\newtheorem{remark}[theorem]{Remark}
\numberwithin{equation}{section}
\def\sI{{\mathscr{I}}}
\def\fv{\mathfrak{v}}
\def\fZ{\mathscr{Z}}
\def\fW{\mathscr{W}}
\def\rp#1{^{\!(#1)}}
\def\bi{{\bf i}}
\def\bp{\boldsymbol{\partial}}
\def\tmin{{\mbox{\tiny{min}}}}
\def\T{\mathcal{T}}
\def\smu{\slashed{\mu}}
\def\tt{{t'}}
\def\bb{{\mathbf{b}}}
\def\Er{\mbox{Er}}
\def\sn{{\slashed{\nabla}}}
\def\Q{\mathcal{Q}}
\def\sQ{\mathscr{Q}}
\def\zb{{\underline{\zeta}}}
\def\bpi{{\bar{\pi}}}
\def\L{{\mathcal{L}}}
\def\J{{\mathcal{J}}}
\def\M{{\mathcal{M}}}
\def\bT{{\textbf{T}}}
\def\bR{{\textbf{R}}}
\def\bd{{\textbf{D}}}
\def\ti{\tilde}
\def\bg{\mathbf{g}}
\def\hk{{\hat{k}}}
\def\I{{\mathcal I}}
\def\beaa{\begin{eqnarray*}}
\def\eeaa{\end{eqnarray*}}
\def\ba{\begin{array}}
\def\ea{\end{array}}
\def\d{\delta}
\def\be#1{\begin{equation} \label{#1}}
\def \eeq{\end{equation}}
\newcommand{\nn}{\nonumber}
\def\l{\langle}
\def\r{\rangle}
\def\cir{\overset\circ}
\def\nn{\nonumber}
\def\S{{\mathcal S}}
\def\cga{\overset\circ{\ga}}
\def\ud#1{\underline{#1}}
\def\zb{\ud{Z}}
\def\S2{{\mathbb S}^2}
\def\A{\mathcal {A}}
\def\E{{\mathcal E}}
\def\K{{\mathcal{K}}}
\def\ze{{\zeta}}
\def\Lie{{\mathcal L}}
\def\tr{\mbox{tr}}
\def\D{{\mathcal D}}
\def\H{{\mathcal H}}
\def\N{{\mathcal N}}
\def\cga{\overset\circ{\ga}}
\def\La{{\Lambda}}
\def\B{{\mathcal B}}
\def\P{{\mathcal P}}
\def\c{\cdot}
\def\a{\alpha}
\def\b{\beta}
\def\ep{{\epsilon}}
\def\l{\langle}
\def\r{\rangle}
\def\ga{\gamma}
\def\Ga{\Gamma}
\def\p{\partial}
\def\P{{\mathcal P}}
\def\nab{\nabla}
\def\hb{{\ud h}}
\def\C{{{\mathfrak C}}}
\def\CC{{\mathcal{C}}}
\def\Lb{{\underline{L}}}
\def\div{\mbox{\,div\,}}
\def\curl{\mbox{\,curl\,}}
\def\tr{\mbox{tr}}
\def\Tr{\mbox{Tr}}
\def\bE{{\mathbf E}}
\def\tir{{\tilde r}}
\def\f14{\frac{1}{4}}
\def\f12{{\frac{1}{2}}}
\def\t1a{t^{-\frac{1}{a}}}
\def\bm{{\bf m}}
\def\sl{\slashed}
\def\sD{\slashed{\Delta}}
\def\sn{{\slashed{\nabla}}}
\def\zb{{\underline{\zeta}}}
\def\bpi{{\bar{\pi}}}
\def\L{{\mathcal{L}}}
\def\J{{\mathcal{J}}}
\def\M{{\mathcal{M}}}
\def\bT{{\emph{\bf{T}}}}
\def\bR{{\emph{\bf{R}}}}
\def\bd{{\emph{\bf{D}}}}
\def\ti{\tilde}
\def\hk{{\hat{k}}}
\def\I{{\mathcal I}}
\def\beaa{\begin{eqnarray*}}
\def\eeaa{\end{eqnarray*}}
\def\ba{\begin{array}}
\def\ea{\end{array}}
\def\be#1{\begin{equation} \label{#1}}
\def \eeq{\end{equation}}
\def\nn{\nonumber}
\def\l{\langle}
\def\r{\rangle}
\def\cir{\overset\circ}
\def\nn{\nonumber}
\def\S{{\mathcal S}}
\def\cga{\overset\circ{\ga}}
\def\S2{{\mathbb S}^2}
\def\A{\mathcal {A}}
\def\E{{\mathcal E}}
\def\ze{{\zeta}}
\def\Lb{\underline{L}}
\def\tr{\mbox{tr}}
\def\bA{{\emph{\bf{A}}}}
\def\fA{{\mathfrak{A}}}
\def\D{{\mathcal D}}
\def\H{{\mathcal H}}
\def\cga{\overset\circ{\ga}}
\def\La{{\Lambda}}
\def\B{{\mathcal B}}
\def\P{{\mathcal P}}
\def\c{\cdot}
\def\a{\alpha}
\def\b{\beta}
\def\l{\langle}
\def\r{\rangle}
\def\ga{\gamma}
\def\Ga{\Gamma}
\def\la{\lambda}
\def\p{\partial}
\def\P{{\mathcal P}}
\def\nab{\nabla}
\def\Lb{{\underline{L}}}
\def\div{\mbox{\,div\,}}
\def\curl{\mbox{\,curl\,}}
\def\tr{\mbox{tr}}
\def\Tr{\mbox{Tr}}
\def\tir{{\tilde r}}
\def\f14{\frac{1}{4}}
\def\f12{{\frac{1}{2}}}
\def\t1a{t^{-\frac{1}{a}}}
\def\bm{{\bf m}}
\def\sl{\slashed}
\def\sD{\slashed{\Delta}}
\def\ckk{\check}
\newcommand{\bea}{\begin{eqnarray}}
\newcommand{\eea}{\end{eqnarray}}
\def\nn{\nonumber}
\def\ei{\E^{(1)}}
\newcommand{\chih}{\hat{\chi}}
\newcommand{\chib}{\underline{\chi}}
\newcommand{\chibh}{\underline{\hat{\chi}}\,}
\newcommand{\les}{\lesssim}
\def\gac{\stackrel{\circ}\ga}
\def\bN{{\mathbf{N}}}
\def\S{\mathcal{S}}
\def\cir#1{\stackrel{\circ}{#1}}
\def\sV{{\mathscr{V}}}
\def\sX{\mathscr{X}}
\def\sY{\mathscr{Y}}
\def\sF{\mathscr{F}}
\def\sP{{\mathscr{P}}}
\def\sQ{{\mathscr{Q}}}
\def\fC{\mathfrak{C}}
\def\ud#1{\underline{#1}}
\def\fw{\mathfrak{w}}
\def\be{{(e)}}
\def\bi{{(i)}}
\begin{document}
\title[]{Rough solutions of the $3$-D compressible Euler equations}
\author{Qian Wang}
\address{
Oxford PDE center, Mathematical Institute, University of Oxford, Oxford, OX2 6GG, UK}
  \email{qian.wang@maths.ox.ac.uk}
  \date{\today}
\begin{abstract}
We prove the local-in-time well-posedness for the solution of the compressible Euler equations in $3$-D, for the Cauchy data of
the velocity, density
and vorticity $(v,\varrho, \fw) \in H^s\times H^s\times H^{s'}$, $2<s'<s$.

The classical local well-posedness result for the compressible Euler equations in $3$-D  holds for the initial data  $v, \varrho \in H^{s+\f12},\, s>2$.
Due to the works of Smith-Tataru \cite{Tataru} and  Wang \cite{Wangrough}, for the irrotational isentropic case, the local well-posedness can be achieved
 if the data satisfy  $v, \varrho \in H^{s}$, with $s>2$.
  In the incompressible case, the solution is proven  to be ill-posed for the datum  $\fw\in H^\frac{3}{2}$ by Bourgain-Li \cite{Bourgain-Li}. Hence the solution of the compressible Euler equations
  is not expected to be well-posed if the data merely satisfy $v, \varrho\in H^{s}, s>2$  with a general rough  vorticity.

The rough term $\curl \fw$  lowers the regularity of the spacetime geometry, and causes crucial difficulties in
each main building block of the work:  the energy propagation, linearization and the proof of Strichartz estimates.

 By introducing the decomposition of the velocity into the term $(I-\Delta_e)^{-1}\curl \fw$ and a wave function
verifying an improved wave equation, with a series of cancellations for treating the latter,  we achieve the $H^s$-energy bound and complete the linearization for the wave functions
by using the $H^{s-\f12}, \, s>2$  norm for the vorticity. The propagation of energy for  the vorticity typically requires either the data of velocity to be $\f12$-derivative
smoother or $\curl \fw\in C^{0, 0+}$ initially,  stronger than our assumption by $\f12$-derivative. We perform trilinear estimates to gain regularity by observing a $\div$-$\curl$ structure when propagating the energy of the normalized  double-curl of the vorticity,
 and also by spacetime integration by parts.  To prove the Strichartz estimate for the linearized
  wave in the rough spacetime, we encounter a strong Ricci defect which requires  the bound of $\|\curl \fw\|_{L_x^\infty L_t^1}$ on the acoustic null cones since $\curl \fw$ appears in the Ricci tensor. This difficulty is solved by uncovering the cancellation structures
 due to the acoustic metric on the angular derivatives of Ricci and the second fundamental form.
\end{abstract}
\maketitle
\section{Introduction}
\subsection{Basic set-up and the main result}
We consider the compressible Euler equations of $3$ space dimension for a perfect fluid under a barotropic equation of state, that is the pressure $p$ is a function of the density  $\rho:{\mathbb R}^{1+3}\rightarrow (0,\infty)$,
\begin{equation}\label{10.12.1.19}
p=p(\rho).
\end{equation}
We can fix a constant  background  density $\bar\rho>0$.   Define the normalized density
\begin{equation}\label{10.12.2.19}
\varrho=\ln (\rho/\bar \rho)
\end{equation}
 and the sound speed
 \begin{equation*}
  c=\sqrt{\frac{dp}{d\rho}}.
 \end{equation*}
Clearly, due to (\ref{10.12.1.19}),   $c=c(\varrho)$.

Let $v$ be the velocity of the compressible fluid $v:{\mathbb R}^{1+3}\rightarrow {\mathbb R}^3$.
We define the acoustic metric $\bg$ as
\begin{equation}\label{metric}
\bg:=-dt\otimes dt+c^{-2} \sum_{a=1}^3(d x^a-v^adt)\otimes (dx^a-v^a dt),
\end{equation}
and may regard ${\mathbb R}^3\times [0,T]$ with $T>0$ as the acoustic spacetime $(\M, \bg)$.
The inverse metric $\bg^{-1}$ can be written as
\begin{equation*}
\bg^{-1}=-\bT\otimes \bT+c^2 \Sigma_{a=1}^3 \p_a \otimes \p_a,
\end{equation*}
where $\bT$ is the future directed, time-like unit normal of the level set of $t$. And the component of  $\bg^{-1}$ will be denoted by  $\bg^{\a\b}$. \begin{footnote}{We adopt Einstein summation convention in this article. The range of  the indices of Greek letters such as $\a,\b,\mu,\nu$ is $0,\cdots, 3$, and the range of the Latin letters $i,j,k,l,m,n,a,b$ is $1,2,3$. We also fix the convention that $\p_0=\p_t$.}\end{footnote}

 Relative to the Cartesian coordinates, $\bT$ is written as
\begin{equation*}
\bT=\p_t+v^a \p_a.
\end{equation*}
 We can compute directly the  induced metric  $g_{ij}=c^{-2}\delta_{ij}$ on $\Sigma_t=\{t\}\times {\mathbb R}^3$, where $\delta_{ij}$ is the kronecker delta. Define the second fundamental form
 \begin{equation*}
 k_{ij}=-\f12 \Lie_\bT g_{ij}, \qquad \Tr k =g^{ij} k_{ij},
 \end{equation*}
 where $\Lie_X$ denotes the Lie derivative by the vector field $X$.
Let $\cir{k}_{ij}=-\f12 \Lie_\bT \delta_{ij}$.  Thus $\Tr \cir{k}:=\delta^{ij} \cir{k}_{ij}= -\p^i v_i$.

Now we introduce the compressible Euler equations with (\ref{10.12.1.19}) for $\varrho$ and $v$,
\begin{equation}\label{4.23.1.19}
\left\{
\begin{array}{lll}
\bT \varrho=-\div v\\
\bT v^i=-c^2\delta^{ia} \p_a \varrho,
\end{array}
\right.
\end{equation}
where $\div v= \p^i v_i,$  $\varrho$ is the normalized density function in (\ref{10.12.2.19}).

 Let $\tensor{\ep}{_i^j^k}$, $i,j,k=1,2,3$, be the standard volume form on ${\mathbb R}^3$. We define the vorticity to be
\begin{footnote}{ The indices of the tensor field here are lifted and lowered by the Euclidean metric.}\end{footnote}
$
\fw_i=\tensor{\ep}{_i^j_k}\p_j v^k.
$ We
 may employ the normalized vorticity
 $\Omega=e^{-\varrho}\fw$
 for convenience.
There hold for $\Omega$ the equations
\begin{align}
&\div \Omega=-\Omega^a \p_a \varrho,\label{div}\\
& \bT \Omega^i=\Omega^a \p_a v^i, \label{4.10.3.19}
\end{align}
where (\ref{div}) can follow directly due to $\div \fw=0$.

 The compressible Euler equations (\ref{4.23.1.19}) can be reduced to
\begin{align}
& \Box_\bg v^i=-e^\varrho c^2\curl \Omega^i+ \sQ^i, \label{4.10.1.19}\\
& \Box_\bg \varrho= \sQ^0,\label{4.10.2.19}
\end{align}
where $\Box_\bg$ is the Laplace-Beltrami operator of the Lorentzian metric $\bg$, and the two quadratic forms are
\begin{align*}
&\sQ^i:=-(1+c^{-1}c')\bg^{\a\b}\p_\a\varrho\p_\b v^i+2 e^\varrho \tensor{\ep}{^i_a_b}\bT v^a \Omega^b, \\
&\sQ^0:=-3c^{-1}c' \bg^{\a\b}\p_\a \varrho \p_\b \varrho+2\sum_{1\le a<b\le 3}\big(\p_a v^a\p_b v^b-\p_b v^a \p_a v^b\big).
\end{align*}
See the equations from the work of Luk-Speck \cite[Page 13]{Jared_Luk}.

 We can derive by using (\ref{4.10.3.19}) the following transport equation  for $ \C^i=e^{-\varrho} \curl \Omega^i$ that
\begin{equation}\label{4.25.4.19}
\bT \C^i =-2\delta_{jk} \tensor{\ep}{^i^{ab}}  \p_a v^j \p_b \Omega^k e^{-\varrho}+\tensor{\ep}{^{aj}_k} \p_a v^i \p_j \Omega^k  e^{-\varrho},
\end{equation}
with the derivation given in Section \ref{eng_vor}. (See also \cite[(2.3.4.b)]{Jared_Luk}.)

Assume there hold
\begin{equation}\label{10.22.1.19}
|v, \varrho|\le C_1, \quad c>c_0>0,  \,\mbox{ at } t=0
\end{equation}
where $C_1, c_0>0$  are constants. $c_0>0$ is used in particular to ensure the uniform hyperbolicity of the compressible Euler system.
The lower bound $c_0$ can be determined by the bound $C_1$ on $|\varrho(0)|$ if one assumes an explicit form for the pressure, such as the Gamma-law, i.e. $p(\rho)=A\rho^\ga,$ with constants $ \, A, \ga>0$.

Let $\p$ represent the spatial derivative $\p_i, i=1,2,3$ and $\bp$ include $\p$ and $\bT$.
Now we state the main result of this paper.
\begin{theorem}\label{thm1}
Let $s$ and $s'$ be fixed  and $2<s'<s$.  For the given data set of $(v, \varrho, \fw)$  satisfying the assumption of (\ref{10.22.1.19}) and  any $M>0$, there exist positive constants $T_*$ and $M_1$  such that if the initial data  satisfy
$$\|(\p v, \p \varrho, \fw)(0)\|_{H^{s-1}({\mathbb R}^3)\times H^{s-1}({\mathbb R}^3)\times H^{s'}({\mathbb R}^3)}\le M<\infty,
$$ there exists a unique set of the solution with $(\p v,\p \varrho, \fw)\in C(I_*, H^{s-1})\times C(I_*, H^{s-1})\times C(I_*,H^{s'})$ for the $3$-D compressible Euler equations in (\ref{4.23.1.19}) and (\ref{4.10.3.19}), satisfying the estimates
\begin{align}
&\|\bp v, \bp \varrho\|_{L^2_{I_*} L_x^\infty}\le M_1,\nn\\
&\|(\p v, \p \varrho, \fw)(t)\|_{H^{s-1}({\mathbb R}^3)\times H^{s-1}({\mathbb R}^3)\times H^{s'}({\mathbb R}^3)}+ |(v, \varrho)(t)|\le M_1,\quad t\in I_*, \label{eng_f}
\end{align}
where $I_*=[0,T_*]$
 and $0<s'-2<(\frac{s-2}{5})^2$. \begin{footnote}
{The upper bound of $s'$ here is merely chosen for convenience.  One may extend the energy estimate to  $2<s'<s$.  Since the ultimate challenge is to prove the above result for $s'=s-\f12$, such an extension, requiring  higher regularity assumption on data, is not of our interest.}
\end{footnote}
\end{theorem}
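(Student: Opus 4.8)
The plan is to set up a bootstrap (continuity) argument: assume the energy bounds in \eqref{eng_f} hold on a maximal subinterval $I_*=[0,T_*]$ with $M_1$ to be chosen, prove that on $I_*$ one actually recovers the \emph{same} bounds with a constant strictly smaller than $M_1$ (depending only on $M$, $C_1$, $c_0$), and then invoke a standard local continuation to extend $T_*$, contradicting maximality unless $T_*$ is bounded below by a fixed $T_*(M)>0$. The scheme has three interlocking pieces, each of which must be closed under the \emph{same} Strichartz-type control $\|\bp v,\bp\varrho\|_{L^2_{I_*}L^\infty_x}\le M_1$: (i) energy propagation for the wave variables $(\p v,\p\varrho)$ at the level $H^{s-1}$; (ii) energy propagation for the vorticity $\fw$ (equivalently $\Omega$) at the level $H^{s'}$, $2<s'<s$; (iii) the Strichartz estimate for $\Box_\bg$ on the rough acoustic spacetime $(\M,\bg)$, which is what produces the $L^2_tL^\infty_x$ bound in the first place.

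For (i) the key device, as announced in the abstract, is the decomposition $v^i=(I-\Delta_e)^{-1}\curl\fw^i + w^i$, where $w$ solves an improved wave equation $\Box_\bg w^i = (\text{good quadratic terms})$ in which the worst term $-e^\varrho c^2\curl\Omega^i$ of \eqref{4.10.1.19} has been absorbed into the elliptic piece; one commutes $\Box_\bg$ with $\p^{\le s-1}$, uses the standard energy identity for $\Box_\bg$ together with the deformation tensor of $\bT$, and controls the commutator/source terms by $\|\bp v,\bp\varrho\|_{L^2_tL^\infty_x}$ and Gronwall. The elliptic piece $(I-\Delta_e)^{-1}\curl\fw$ is estimated directly from $\fw\in H^{s'}$ with $s'>s-1$, so it costs nothing. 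For (ii), the naive transport estimate for $\fw$ in $H^{s'}$ loses half a derivative because $\bT\fw$ involves $\p v$; the remedy is to propagate instead the normalized double-curl $\C^i=e^{-\varrho}\curl\Omega^i$ via \eqref{4.25.4.19}, where the right-hand side is a product of two first-order quantities with a $\div$-$\curl$ structure — one then runs a trilinear/commutator estimate and integrates by parts in spacetime to move derivatives off the rough factor, recovering the missing $\tfrac12$-derivative so that $\C\in H^{s'-2}$ hence $\Omega\in H^{s'}$ is propagated. The compatibility condition $0<s'-2<((s-2)/5)^2$ is exactly what is needed to feed these error terms back with room to spare.

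The main obstacle is step (iii): proving the Strichartz estimate $\|\bp\psi\|_{L^2_{I_*}L^\infty_x}\lesssim \|\psi\|_{H^{s}}+\cdots$ for solutions of $\Box_\bg\psi=0$ when the metric coefficients $v,\varrho$ have only $H^s$, $s>2$, regularity and the vorticity is genuinely rough. The established route (rescaling, Littlewood--Paley localization, construction of a parametrix via the geometry of acoustic null cones, and a conformal energy / $T^*T$ argument for the error) runs into a \emph{strong Ricci defect}: the Ricci tensor of $\bg$ contains $\curl\fw$, so the Raychaudhuri-type equations along the null cones require control of $\|\curl\fw\|_{L^\infty_x L^1_t}$ on the cones, a full half-derivative above what the hypothesis $\fw\in H^{s'}$ provides. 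The resolution — and the technical heart of the paper — is to expose cancellations in the \emph{angular} derivatives of Ricci and of the second fundamental form induced by the special structure of the acoustic metric, so that only $\sn\text{(tangential)}$ derivatives of the bad term enter, which are controllable; once the null-cone geometry (the bounds on $\trchb$, $\chih$, the mass aspect function, etc.) is closed, the parametrix construction and the bilinear $L^2$ estimates go through as in the irrotational case treated by Smith--Tataru and Wang, and the Strichartz bound closes the bootstrap.

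Finally, uniqueness follows from the linearization step: the difference of two solutions satisfies a linear wave–transport system with coefficients controlled by the already-established bounds, and a standard energy estimate at one derivative below the solution regularity gives uniqueness on $I_*$; persistence of regularity $(\p v,\p\varrho,\fw)\in C(I_*,H^{s-1}\times H^{s-1}\times H^{s'})$ is obtained from the energy identities together with the Strichartz control by the usual Bona--Smith / frequency-truncation approximation argument.
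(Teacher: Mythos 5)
Your outline follows the paper's route in all essentials: the decomposition $v=\eta+v_+$ with $\eta=(I-\Delta_e)^{-1}\curl\fw$, propagation of the vorticity at top order through the curl of \eqref{4.25.4.19} with the $\div$--$\curl$ structure and spacetime integration by parts, and the resolution of the Ricci defect on the acoustic null cones by cancellations in the angular derivatives of $\bR_{LL}$ and of the second fundamental form, all closed by a bootstrap on $\|\bp v,\bp\varrho\|_{L^2_tL^\infty_x}$. However, there is one concrete gap in step (i): it is not true that after absorbing $-e^\varrho c^2\curl\Omega$ the wave equation for the remainder has only ``good quadratic terms,'' nor that the elliptic piece ``costs nothing.'' The equation one actually obtains, \eqref{wavevpl}, contains the \emph{linear} source $\bT\bT\eta-\Tr k\,\bT\eta-c^2\eta$, and the two-time-derivative term $\bT\bT\eta$ is exactly at the critical level: your scheme of ``commute $\Box_\bg$ with $\p^{\le s-1}$, standard energy identity, Gronwall'' would require a bound on $\bT\bT\eta$ in $H^{s-1}_x$ that is not available from $\fw\in H^{s'}$ with $s'$ close to $2$. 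The paper's way around this is structural, not perturbative: one must refuse to commute $\bT$ with $\Box_\bg$, introduce the modified energy current \eqref{9.13.1.19} so that the term $\bT Y$ (with $Y=\bT\eta$) cancels identically in the divergence identity and only \emph{spatial} derivatives of $\bT\eta$ ever appear (these are then controlled via the transport equation for $\curl\fw$ and elliptic estimates), and, in the Strichartz reduction, move $\bT F_{U_\mu}$ into the Duhamel data as in \eqref{pmug} rather than treat it as an inhomogeneity. Without this cancellation mechanism your energy step does not close.

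Two smaller corrections: $\C=e^{-\varrho}\curl\Omega$ is the (single) normalized curl, and what is propagated at top order is $\curl\C\in H^{s'-2}$, from which $\Omega\in H^{s'}$ follows by elliptic estimates for the Hodge system, plus a separate flux estimate for $\curl\C$ along the null cones that the geometric part needs; and the paper closes the Strichartz step by the commuting-vector-field/conformal-energy (physical space) approach of \cite{Wangrough} rather than a Smith--Tataru parametrix with bilinear estimates --- the latter is a legitimate alternative in principle, but the cancellations you invoke for the cone geometry are established here in the vector-field framework, so citing the parametrix route as if it were already available in the rough-vorticity setting overstates what is known.
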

\begin{remark}
Theorem \ref{thm1} holds if the first assumption in (\ref{10.22.1.19}) is replaced by the boundedness of $\|v-\bar v\|_{L^2(\Sigma_0)}+ \|\varrho-\bar \varrho\|_{L^2(\Sigma_0)}$ with $\bar v$ and $\bar \varrho$  constant states, since, in view of Sobolev embedding on ${\mathbb R}^3$, the bound on $|v, \varrho|$ in (\ref{10.22.1.19}) can be obtained by combining the $L^2$ assumption with the other assumption in Theorem \ref{thm1}.
\end{remark}
\begin{remark}\label{10.22.2.19}
 The assumption on $\p v$ in Theorem \ref{thm1} is merely on $\div v$, due to the standard elliptic estimates for the Hodge operator $(\div, \curl)$ on vector fields in $\mathbb R^3$.
By direct comparison estimates on the initial slice, the assumption and the result on $\p \varrho$ in Theorem \ref{thm1} can be stated for $\bT v$ instead,
 due to  (\ref{4.23.1.19}) and  (\ref{10.22.1.19}); 
Theorem \ref{thm1} can  also be stated with $\fw$ replaced by $\Omega$.
\end{remark}
\begin{remark}
Since the classical  local well-posedness holds for  $s>\frac{5}{2}$, we focus on the case  $2<s<\frac{5}{2}$.
\end{remark}

\subsection{Motivation of the problem}
The classical local well-posedness of the compressible Euler equations in $n$-D can be obtained by using energy method for the Cauchy data $v, \varrho\in H^s, \quad s>\frac{n}{2}+1$.
See \cite[Chapter 2]{Majda} and also the earlier work  by Kato \cite{Kato}  for the alternative approach. 
For incompressible Euler equations, Kato and Ponce in \cite{Kato-Ponce} proved the classical local well-posedness for data in the general
Sobolev space $v\in W^{s,p}({\mathbb R}^n ) = (I-\Delta_e)^\frac{s}{2}
L^ p({\mathbb R}^n )$ with  $ s > n/p + 1$  and
$1 < p < \infty$, where $\Delta_e$ is the Laplace-Beltrami operator of the Euclidean metric.
In $3$-D for the incompressible case, the assumption of the datum that  $\fw\in H^{s-\f12}$, $s>2$ is sharp, since  in \cite{Bourgain-Li} the solution is proven to be  strongly ill-posed if $s=2$.

Recall the general quasilinear wave equations considered in \cite{KRduke,Tataru,Wangrough}
\begin{equation}\label{4.10.4.19}
\Box_{h(\phi)} \phi=Q(\bp \phi, \bp \phi)
\end{equation}
with $h(\phi)$ a Lorentzian metric depending on $\phi$ and the right hand side being  quadratic forms of $\bp \phi$. By using the energy method and iteration, the classical local well-posedness
holds for $(\phi(0), \p_t\phi(0))\in H^s\times H^{s-1}$ with $s>\frac{5}{2}$ (see \cite{HKM}), which is of the same level as the classical result for the compressible Euler equations.
Starting from the late 90s, based on establishing Strichartz estimates with loss for wave equations with rough
coefficients, there had been vast improvements, due to Smith, Bahouri-Chemin, Tataru, and Klainerman-Rodnianski, to $s>2+\frac{1}{4}$ achieved in \cite{BC1, BC2, T1}, to $s>2+\frac{1}{6}$ in \cite{T3, Kcom},  and to $s>2+\frac{2-\sqrt{3}}{2}$ achieved in \cite{KRduke}, and for $s>2$ for Einstein vacuum equations  in \cite{KREinst, KREins2, KRd} and \cite{WangCMCSH, Wangricci}. Apart from the improvements over the Sobolev exponents,  the commuting vector field approach for Strichartz estimate was introduced in \cite{Kcom}. This physical approach  further showed its power in  \cite{KRduke} where a fundamental decomposition of a Ricci component (proposed in \cite{Kcom}) was used for improving the regularity in the causal geometry. On the other hand, the Fourier method of proving the Strichartz estimate by constructing parametrix and wave packets was developed to rely on the actual null hypersurfaces in the Lorentzian spacetime. With the help of the improvement on the causal geometry,
 the sharp local well-posedness for the solution of (\ref{4.10.4.19}) was proven  in \cite{Tataru},  and by the vector field approach in  \cite{Wangrough}, for
the initial data $(\phi(0), \p_t\phi(0))\in H^s\times H^{s-1}$ for $s>2$.

In comparison with the equation (\ref{4.10.4.19}), we schematically unify (\ref{4.10.1.19}) and (\ref{4.10.2.19}) into the following equation for
 $\Phi=(v, \varrho)$,
\begin{equation}\label{10.13.2.19}
\Box_{\bg(\Phi)} \Phi=Q(\bp \Phi, \bp \Phi)-(e^\varrho c^2 \curl \Omega,0),
\end{equation}
where \begin{footnote}{These quadratic forms are of the same symbolic type for either $\Phi=v$ or $\varrho$. Instead of repeating them in the bracket notation,
we simply write them as one term.}\end{footnote} $Q(\bp \Phi, \bp \Phi)$ represents symbolically the finite sum of
$\N(\Phi) \bp \Phi  \bp\Phi$, with $\N$ smooth functions of their variables, and $\bp \Phi$ representing terms of  $\bp v$ or $\bp \varrho$.

Note that if $\Omega=0$,  the above equation takes the form of
\begin{equation*}
\Box_{\bg(\Phi)} \Phi=Q(\bp \Phi, \bp \Phi).
\end{equation*}
Therefore the local well-posedness for the irrotational (isentropic) compressible Euler flow holds for $(v(0), \varrho(0))\in { H}^{s}\times {H}^{s}$ for any $s>2$ by the results of \cite{Tataru} and \cite{Wangrough}.
Note the proof of the sharp local well-posedness result for (\ref{4.10.4.19}) is based on the energy method, and via a bootstrap argument the key analysis that reestablishes the standard Strichartz estimate for the free wave in the Minkowski space for the linearized wave equation $\Box_{h(\phi)}\psi=0$ within  a short life-span. The latter relies sensitively on the regularity of the spacetime metric. If $\Omega$ (or comparably $\fw$ ) is non-zero and sufficiently smooth, the term $e^\varrho c^2\curl \Omega$ in
 (\ref{10.13.2.19})
  %
  can be treated as a good inhomogeneous term of the linearized wave equation for both the energy method and the Strichartz estimate, which does not in turn influence the regularity of the spacetime metric. This case can be incorporated in the regime either given in \cite{Tataru} or in \cite{Wangrough}.  See \cite{Jared_4} which assumes $\curl \Omega \in H^{s-1}\cap C^{0,\a}$, $0<\a<1$ for the data, and treats the case allowing  dynamical entropy with the same smoothness assumption  on the divergence of the initial entropy gradient. This assumption is smoother than our assumption in Theorem \ref{thm1} by more than $\f12$-derivative.

Clearly, due to the result of \cite{Bourgain-Li}, lowering the regularity of the datum for $\Omega$ below a certain level would significantly change the behaviour of the solution and the regularity of the spacetime metric. 
 As the ultimate challenge, we propose the sharp local well-posedness conjecture  for the  compressible Euler equations in $3$-D, which reaches the sharp results in both the irrotational and incompressbile cases.

{\bf Conjecture.}
The solution $(v, \varrho, \fw)$ of the $3$-D compressible Euler equations (\ref{4.23.1.19}) and (\ref{4.10.3.19}) is well-posed for $t\in (0,T]$ with some $T>0$ if the data satisfy (\ref{10.22.1.19}) and $(\bp v, \fw)\in   H^{s-1} \times H^{s-\f12}$ with $s>2$.

In Theorem \ref{thm1}, the regularity of vorticity datum is set such that the term of  $\curl \Omega$ in (\ref{4.10.1.19}) can not be treated as a source term associated
 to the approach in \cite{Wangrough} for irrotational fluids. This entails a deeper understanding on the velocity and correspondingly a different regime.
 By introducing a decoupling method, for the data verifying the assumption (\ref{10.22.1.19}) and $\bp v\in H^s$, with the bound of $\|\Omega\|_{C^0[0,T] H^{s-\f12}_x}$, $s>2$,
 we reduce the proof of the local well-posedness to recovering  the standard Strichartz estimate of the free wave in Minkowski space to the linear wave equation in the acoustic spacetime.
To solve the conjecture, there remains  the obstruction  for proving the Strichartz estimate due to the insufficiently smooth acoustic null hypersurfaces.
  The regularity of data in Theorem \ref{thm1} reaches the borderline for guaranteeing the acoustic null hypersurfaces to be sufficiently regular.  Due to the rough data, $\curl \Omega$ fundamental influences our analysis.
It forces us to create and uncover a variety of cancellations on the vorticity derivative and the structure of the acoustic metric for completing the proof of Theorem \ref{thm1}.
With lower  regularity of $\Omega(0)$  than in Theorem \ref{thm1}, we encounter the same difficulty from the rough null hypersurfaces  as that stops the improvement
over the resolution of the bounded $L^2$-curvature conjecture for Einstein vacuum equations in  \cite{KJR}. For the latter, there is also a gap of $\f12$-derivative between
the critical Sobolev exponent for the Einstein vacuum equations and the result by Klainerman-Rodnianski-Szeftel in \cite{KJR}.

We remark that our result can be extended to equations with dynamical entropy by treating the divergence of the entropy gradient analogous to $\curl \fw$. In this article we focus on presenting the method for reducing the regularity requirement on data.
\subsection{Main idea of the proof for Theorem \ref{thm1}}

\subsubsection{A quick guide to the main difficulties}\label{10.13.3.19}
$v, \varrho$ and $\Omega$ in the compressible Euler equations exhibit different physical and analytic properties.
 To achieve a result with the data  $v, \varrho\in H^s,\, s>2$, it is necessary to resort to establishing the Strichartz estimate which gains the spatial regularity
  by taking the advantage of  the dispersive property in time-variable of the quantities. It works particularly well for the solutions of  wave equations.
   To have a brief idea of how the vorticity derivative
 is involved in the analysis, we will apply the energy method to the geometric wave equations (\ref{4.10.1.19}) and (\ref{4.10.2.19}). Through this procedure, we can see the regularity requirement on the Cauchy data  by following the approach  in \cite{Wangrough}.

Under the bootstrap assumption that $\|\bp \Phi\|_{L_t^1 [0,T]L_x^\infty}$ is bounded, applying the standard energy argument to (\ref{10.13.2.19}) implies \begin{footnote}{ We call $C$ a universal constant if it depends merely  on the bound of  $M$ in Theorem \ref{thm1}, and the constants $C_1$ and $c_0$
in (\ref{10.22.1.19}). $A\les B$ means there exists a universal constant $C>0$ such that $A\le C B$. We denote $A\approx B$ if $A\les B$ and $B\les A$. }\end{footnote}
\begin{equation}\label{10.12.5.19}
\|\bp \Phi(t)\|_{H^{s-1}_x}\les \|\bp \Phi(0)\|_{H^{s-1}_x}+\|e^\varrho c^2\curl \Omega\|_{L_t^1 H^{s-1}_x}+\|Q(\bp \Phi, \bp \Phi)\|_{L_t^1 H^{s-1}_x}, \quad \, s\ge 2.
\end{equation}
Due to the standard product estimate, the last term on the right hand side can be controlled by the energy bound, provided that one can control $\|\bp \Phi\|_{L_t^1[0,T] L_x^\infty}$
in terms of the initial data. Assuming  the absence of $\curl\Omega$, to gain such control, this was done in \cite{Tataru, Wangrough} by
   recovering the Strichartz estimate of the free wave  in ${\mathbb R}^{3+1}$  for the solution  $\psi$  of the linear wave equation in the acoustic spacetime $({\mathbb R}^3\times [0,T], \bg)$,
\begin{equation}\label{wavelin}
\Box_{\bg(\Phi)} \psi=0,
\end{equation}
which implies  
\begin{equation}\label{10.13.1.19}
\|\bp \Phi\|_{L_t^q[0,T] L_x^\infty}\les \|\bp \Phi(0)\|_{H^{1+\ep}_x}+\|\Box_{\bg(\phi)}\Phi\|_{L_t^1[0,T] H^{1+\ep}_x}, \quad \ep>0,
\end{equation}
where $q>2$ is sufficiently close to $2$.\begin{footnote}{We refer to Theorem \ref{dyastric} for the precise version, with the relation between $\ep, q$ specified therein. }\end{footnote} By choosing a small  life-span $T$, this estimate (\ref{10.13.1.19}) can close the bootstrap argument with the help of the standard product estimate if the last term is quadratic in $\bp \Phi$.
The proof of the Strichartz estimates depends highly
sensitively on the regularity of the spacetime metric. It at least requires the bound on  $\|\bp \Phi\|_{C_t^0[0,T]H^{1+}_x}+\|\bp \Phi\|_{L_t^2[0,T] L_x^\infty}$.

Consider the term of $\curl\Omega$ on the right hand side of (\ref{10.12.5.19}).  Note  that  with a normalization $\curl \Omega$ verifies the transport equation (\ref{4.25.4.19}).
We can at best expect the regularity of $\curl \Omega$ is preserved with time. This means one has to assume  $\curl \Omega\in H^{s-1}_x$ when $t=0$,  in order to complete the estimate of (\ref{10.12.5.19}). However, our assumption of the initial data is that
$\curl \Omega\in H^{s'-1}_x, \, 2<s'<s$, with $s'$ arbitrarily close to $2$, which is insufficient for using (\ref{10.12.5.19}).
  Therefore, in order to achieve (\ref{eng_f}), we need a different strategy from the above straightforward treatment.

The other serious difficulty arises from the energy propagation of the vorticity, or more precisely, in bounding
the norm $\curl \Omega(t)\in H^\a_x, \, \a\ge 1$, for $0<t\le  T$. To understand the issue, we consider the normalized  transport equation  (\ref{4.25.4.19}) for $\curl \Omega$, which symbolically reads
\begin{equation}\label{10.13.4.19}
\bT \fC= \p v \p \Omega, \quad \quad\mbox{  with }\fC= e^{-\varrho} \curl \Omega.
\end{equation}
Applying (\ref{9.07.4.19}) to $F=G=\fC$, and using $\Tr\cir{k}=-\div v$,  we obtain in view of (\ref{10.13.4.19}) that
\begin{equation*}
\|\fC(t)\|_{H^{\a}_x}\les \|\fC(0)\|_{H^{\a}_x}+\int_0^t \|\p \Omega\c\p v\|_{H^{\a}_x} dt', \mbox{ with }1\le \a\le s'-1.
\end{equation*}

By the standard product estimate
\begin{equation*}
\|\p \Omega \c \p v\|_{H^\a_x}\les \|\p \Omega\|_{L_x^\infty} \|\p v\|_{H^\a_x}+\|\p \Omega\|_{H^\a_x}\|\p v\|_{L_x^\infty}, 1\le \a\le s'-1,
\end{equation*}
we derive
\begin{equation}\label{10.16.1.19}
\|\fC(t)\|_{H^{\a}_x}\les \|\fC(0)\|_{H^{\a}_x}+\int_0^t (\|\p \Omega\|_{L_x^\infty}\|\p v\|_{H^\a_x}+\|\p v\|_{L_x^\infty} \|\p \Omega\|_{H^\a_x}) dt', \mbox{ with }1\le \a\le s'-1.
\end{equation}
  In view of (\ref{div}) and the elliptic estimate for the $\div$-$\curl$ Hodge system, we can obtain
 \begin{equation}\label{11.7.1.19}
 \|\p \Omega \|_{H^\a_x}\les \|\fC\|_{H^\a_x}+l.o.t..
 \end{equation}
Thus by applying Gronwall's inequality and the above estimate to (\ref{10.16.1.19}),  to  bound $ \|\p \Omega (t)\|_{H^\a_x},$ with $1\le \a\le s'-1$,  we need the bound  $\|\p \Omega\|_{L_t^1[0,T] L_x^\infty}$.
 The latter can be bounded  by requiring $\curl \Omega(0)\in C^{0,0+}_x$,  by using (\ref{4.25.4.19}) and the elliptic theory for the Hodge operator $ (\div, \curl)$. This assumes an additional $\f12$-derivative than  Theorem \ref{thm1}  in terms of Sobolev embedding.

  Note that without the bound of the vorticity term $\|\curl \Omega\|_{H^1_x}$ or equivalently $\|\fC\|_{H^1_x}$,  we lose the bound of  $\|\Box_{\bg(\Phi)}\Phi\|_{L_t^1 H^1_x}$
    in view of  (\ref{10.13.2.19}). According  (\ref{10.12.5.19}) with $s=2$ and (\ref{10.13.1.19}), this means we lose both the bounds of $\|\bp \Phi\|_{C^0_t[0,T] H^1_x}$ and $\|\bp\Phi\|_{L_t^2[0,T] L_x^\infty}$. Hence, this is neither a chance to establish the Strichartz estimate for the linearized wave (\ref{wavelin}).
Based on the above treatment, in order to close the energy argument, one may have to assume $\curl \Omega\in H^{s-1}\cap C^{0,0+}$ when $t=0$. 

The more crucial obstruction from the rough vorticity  actually lies in the difficulty in establishing the Stricharz estimate for
the linear wave equation (\ref{wavelin}), which is not yet shown in the above  analysis.
  To prove the Stricharz estimate, it heavily relies on the regularity of the optical function of the Lorentzian space time.
  The optical function $u$ in the acoustic spacetime, defined by the solution of the Eikonal equation  $\bg^{\a\b}\p_\a u\p_\b u=0$,  can be constructed by level hypersurfaces,
  formed by generating null geodesic congruences satisfying certain initial conditions. The regularity of the foliation by  the level surfaces of $u$, i.e. the null cones $C_u$, is
  particularly important for obtaining the dispersive estimates of the linear wave equation (\ref{wavelin}) in the acoustic spacetime (see Theorem \ref{decayth}).

 With $\bb^{-1}=\bT(u)$, we  define the null area expansion, $\tr\chi=-\bb \Box_\bg u$, which is
the most important geometric quantity of the null cones. Denote the normalized null geodesic generator by $L=-\bb\bd u$,  where $\bd$ is the Levi-Civita connection of $\bg$. $\tr\chi$ satisfies the Raychaudhuri equation given in (\ref{s1}), where the Ricci component  $\bR_{LL}$
is the highest order term on the right hand side. Note  $\curl \Omega$ appears in the equation (\ref{6.23.2.19}) for $\bR_{LL}$ as one of the main terms. To achieve the crucial ${L_t^2 L_x^\infty}$ control of $\tr\chi$ for proving the decay estimate in Theorem \ref{decayth}, we need the $L^\infty$
bound of $\curl \Omega$ along each null cone $C_u$. If assuming the bounded $C^{0,0+}$ norm
of $\curl\Omega$ initially, with the help of the transport equation, the $L^\infty$ control on $\curl \Omega$ can hold for $t\le T$. In this situation the smooth vorticity term has
 no influence to the analysis of causal geometry in \cite{Wangrough} (See Sections 5 and 6 therein).  However, with merely the control of
$H^{2+}_x$ bound  of  $\Omega(t)$, we can only obtain  the $H^{\f12+}_x$ bound on null cones for $\p \Omega(t)$ by the standard trace inequality. Thus, there lacks a $\f12$-derivative for $\curl\Omega$ on the
null cone for achieving the desired bound on $\tr\chi$. This remains to be  the main issue even if adopting the alternative approach in \cite{Tataru} to prove the Strichartz
estimate for the linear wave.


  The mechanism described above is  based on  treating the velocity identically as in the irrotational case, provided that the vorticity datum is sufficiently smooth.
 We list below the regularity it requires on the vorticity to summarize the above discussion.
\begin{enumerate}
\item[(a)] For achieving the energy estimate for $v, \varrho$ in (\ref{eng_f}), it requires the initial datum of the vorticity to be bounded in $H^s$ norm;
\item[(b)] For bounding the energy of vorticity, it requires the initial datum of $\curl \Omega$ to be bounded in $C^{0,0+}$;
\item[(c)] For controlling the regularity of the optical function $u$, it needs the initial datum of $\curl \Omega$ to be bounded in $C^{0,0+}$ as well.
\end{enumerate}

With the much weaker assumption on the datum of the vorticity in Theorem \ref{thm1}, we will give our approach based on a deeper understanding of the behaviour of the velocity.

\subsubsection{Decoupling method and cancellations}

 $\Omega$ (or $\fw$) apparently verifies a good transport equation (\ref{4.10.3.19}).  As such, we conceptually regard the vorticity as the relatively
stationary part of $\p v$, which does not exhibit the same dispersive property as a free wave. Since $\fw$ is defined in terms of $\p v$,   with the rough data on $\curl \Omega$, $\p v$ is
 not expected to have the same  dispersive property as the  irrotational fluid even within a short life-span, which is demonstrated by the result of Bourgain-Li \cite{Bourgain-Li} in the extremal case
when $\div v=0$.
To trace the difference back from $v$, we note for the incompressible case, that is when $\div v=0$, the equation (\ref{4.10.1.19}) can be written as
\begin{equation*}
\Box_\bg v=-\bT \bT v- c^2 \curl^2 v+l.o.t= -c^2 \curl^2 v+l.o.t.
\end{equation*}
If repeat the procedure in Section \ref{10.13.3.19} to treat the term $c^2 \curl^2 v$ on the right hand side as a source term, there again requires the smoother
data on vorticity as stated in Section \ref{10.13.3.19} (a)-(c). Nevertheless, clearly from the above equation, the vorticity term can
 be cancelled by the same term on the left hand side. In this situation, it is improper to treat $v$ as the solution for the wave equation (\ref{4.10.1.19}). 
 Thus, in our approach the behaviour of the velocity of the compressible fluid is understood as a blend of the key features from both of the irrotational fluid and the incompressible fluid. Merely applying the approach for the irrotational fluid to (\ref{4.10.1.19}) does not match with the physical nature of $v$, consequently,
 does not provide us with the desired result.
  Schematically, to prove Theorem \ref{thm1}, by decomposing the velocity into  two fundamentally  different parts, we can divide and conquer.

 We hence introduce a decomposition for the velocity
 \begin{equation}\label{dcp1}
v^i=v_+^i+\eta^i,
\end{equation}
where $v_+$ is a vector-valued wave function, satisfying an improved  wave equation system; and  $\eta$ is the part of the velocity determined by the vorticity. $\Box_\bg \eta$ is supposed to cancel the vorticity derivative on the right hand side of (\ref{4.10.1.19}).
 We then will treat $v_+$ and $\varrho$ together as wave functions, for which we apply the energy argument together with establishing the Strichartz estimate, since they verify better wave equations than   (\ref{10.13.2.19}); for $\eta$, we will rely on elliptic estimates and the transport equation for the vorticity.

    More precisely, we define the vector field $\eta^i$  by
\begin{equation}\label{4.10.5.19}
\La^2\eta^i:=(I-\Delta_e)\eta^i=\curl\fw^i,
\end{equation}
where  $\Delta_e$ is the Laplace-Beltrami operator of the Euclidean metric. By direct computation,
\begin{equation*}
\curl \fw^n=e^\varrho\curl \Omega^n+\tensor{\ep}{^{nij}}\fw_j \p_i \varrho.
\end{equation*}
Hence,
we can substitute  (\ref{4.10.5.19}) to  the right hand side of (\ref{4.10.1.19})  to derive
\begin{align*}
\Box_\bg v^i=c^2 \Delta_e \eta^i-c^2 \eta^i+c^2 \fw_n \p_m \varrho \ep^{imn}+\sQ^i.
\end{align*}
Since the induced metric $g_{ij}=c^{-2}\delta_{ij}$ is conformally flat, there holds
$$\Delta_g(\eta^i)=c^2 (\Delta_e (\eta^i)-\p^j (\log c)\p_j (\eta^i)).$$
  Hence  in view of (\ref{dcp1}),
 we can obtain the wave equation for $v_+$
\begin{equation}\label{wave1}
\begin{split}
\Box_\bg v_+^i&=-\bT \bT v_+^i+\Tr k \bT v_+^i+\Delta_g v_+^i\\
&=\bT \bT \eta^i- \Tr k\bT \eta^i-c^2(\eta^i-\p^j \log c \p_j (\eta^i))+c^2 \fw_n \p_m \varrho \ep^{imn}+\sQ^i,
\end{split}
\end{equation}
which gives
\begin{equation}\label{wavevpl}
\Box_\bg v_+^i=\bT \bT \eta^i- \Tr k\bT \eta^i+\widetilde\sQ^i-c^{2}\eta^i,
\end{equation}
where the quadratic term
\begin{equation*}
\widetilde\sQ^i=\sQ^i+c^2( \fw_n \p_m \varrho \ep^{imn}+\p^j (\log c) \p_j (\eta^i)).
\end{equation*}
Such reduction transforms the higher order linear term (\ref{4.10.5.19}), which appears on the right hand side of the equation (\ref{4.10.1.19}), to the term $ \bT \bT \eta$. The latter still looks to be a linear higher order term. We manage to cancel this term during the applications of the wave equation (\ref{wavevpl}). As the consequence of the cancellations, there involves merely spatial derivatives of $\bT\eta$ in the analysis of this article, which can be well-controlled by the derivative bound of vorticity with the help the elliptic estimate.

 Due to the appearance of the $\bT \bT \eta$ in (\ref{wavevpl}), we  consider the following equation for wave functions $v_+$ and  $\varrho$,  which better represents the structure of (\ref{wavevpl}),
 \begin{equation}\label{wave_2}
\Box_\bg \Psi=W+\bT Y-\Tr k Y,
\end{equation}
where the pair of functions $(\Psi, Y) $ is either $(v_+, \bT\eta)$ or $(\varrho, 0)$ with the corresponding error terms contained in  $W$.

 Consider the above quasilinear wave equation for $\Psi$, we manage to cancel $\bT Y$ in the following two major applications.
\begin{itemize}
\item For energy estimates and the flux estimates along null cones, we introduce the modified current (\ref{9.13.1.19}), which was originally constructed in \cite[Section 3.2]{WangCMCSH}, to cancel the term $\bT Y$. By  combining the  elliptic estimates for $\eta$,  we can obtain the total energy of $\varrho, v$. (See  Corollary \ref{eng_wave}).  Due to the $\bT\bT \eta$ term in  (\ref{wave1}), we need to avoid commuting $\bT$ with the wave operator $\Box_\bg$,   the equations in (\ref{4.23.1.19}) allow us to transform  from spatial derivatives for $v, \varrho$ to their time derivatives.

\item To bound the  Strichartz norm  $\|\bp \Psi\|_{L_t^2 L_x^\infty}$ with $\Psi=v_+, \varrho$, 
     instead of treating the full right hand side of (\ref{wave_2}) as the inhomogeneous term of (\ref{wavelin}),
   we adapt the method of linearization in \cite[Section 4.2]{WangCMCSH} to reduce  $\bT Y$  carefully into   $Y$ via the Duhamel's principle in a delicate manner. (See (\ref{pmug}).)
    \end{itemize}
By the completion of the above two steps,  we  bound the energy of $\Psi=v_+, \varrho$ up to the highest order in (\ref{5.11.3.19})
and obtain the necessary $\eta$  estimates  by elliptic estimates, provided that the norm  of  $\|\Omega\|_{H^{s-\f12}_x}$ can be bounded. Therefore to solve the difficulty of (a) listed in Section \ref{10.13.3.19}, for the vorticity we only need the bound of  $\|\Omega(t)\|_{H^{s-\f12}_x}$.

\subsubsection{ Trilinear structure for the propagation of the vorticity}
 Due to (c) in Section \ref{10.13.3.19},  we will have to control the higher order norm $\|\Omega(t)\|_{H^{s'}_x}, 2<s'<s$ for $t\le T$,  even though the energy control of $v, \varrho$ and the linearization only rely on the bound of $\|\Omega\|_{H^{s-\f12}_x}$. Therefore we need to solve the difficulty in (b).

To solve this difficulty, we first note that to bound $\|\Omega(t)\|_{H^{s'}_x}$, it suffices to  bound $\|\curl\fC(t)\|_{H^{\a}_x}$ with $0\le \a\le s'-2, \, 0<t\le T,$  in the same manner as in (\ref{11.7.1.19}) by applying elliptic estimates for the  Hodge operator  $(\div, \curl)$  and comparison estimates.

To control  $\|\curl \fC\|_{L^2_x}$, we integrate the following identity in $t'\in [0,t]$.
\begin{equation}\label{11.8.1.19}
\p_t\int_{{\mathbb R}^3} |\curl \fC|^2 dx=  \int_{{\mathbb R}^3 } (2\bT \curl \fC\c   \curl \fC- \Tr \cir{k}|\curl \fC|^2)dx,
\end{equation}
which is obtained by applying (\ref{9.07.4.19}) to $F=G=\curl \fC$. We recall $\Tr \cir{k}=-\div v$ for treating the last term.

For the term $\bT \curl \fC$, we recast the commutation formula (\ref{9.07.1.19}) schematically as
\begin{equation}\label{10.13.5.19}
\bT \curl \fC-\curl \bT \fC=\p v \p \fC.
\end{equation} 
Substituting  it  to the first term on the right hand side of (\ref{11.8.1.19}), and also  using the elliptic estimate for Hodge system to control $\p \fC$ by $(\div \fC, \curl \fC)$ in $L^2_x$ , allow us to focus on treating the following integral
 \begin{equation*}
\I= \int_0^t \int_{{\mathbb R}^3} \curl \bT \fC \curl \fC dx dt',
 \end{equation*}
 since other terms can be treated by Gronwall's inequality  with the expectation of bounding $\|\p v\|_{L_t^1[0,T] L_x^\infty}$.
 
Instead of using the symbolic formula (\ref{10.13.4.19}) to $\bT \fC$ in the above, we observe trilinear structures in $\I$ by deriving the precise formula in (\ref{9.07.8.19}), with the main terms listed below \begin{footnote}
	{$\p^n$ here is $\delta^{nl}\p_l$ instead of $n$-th order derivative. We will clarify whenever such confusion may occur.}
\end{footnote}
\begin{align*}
\I=\int_0^t\int_{\Sigma_{t'}}e^{-\varrho}\{\p^n \p_m v^j \p_n \Omega_j+\p_j \bT \varrho\p_m \Omega^j\}\curl\fC^m d x dt'+\cdots.
\end{align*}
 See the complete formula in  (\ref{9.08.14.19}).

Clearly, since we at most can obtain the bound $\|\p^2 v(t)\|_{H^{s-2}_x}$  for $t\le T$, to close the estimate for $\I$ with the bound of $\|\curl \fC(t)\|_{L_x^2}$, it seems that we need to bound  $\|\p \Omega\|_{L_t^1 L_x^\infty([0,T]\times {\mathbb R}^3 )}$.

Instead of seeking for the direct bounds, for the first term, since the last factor in the integrand is $\curl \fC^m$, and $\p_m (\curl \fC)^m=0$, we calculate the integrand by
\begin{align*}
e^{-\varrho}\p^n \p_m v^j \p_n \Omega_j \curl \fC^m&=\p_m(\p^n v^j\p_n \Omega_j \curl \fC^m e^{-\varrho})-e^{-\varrho}\p^n v^j \p_m \p_n \Omega_j \curl \fC^m\\
&-\p_m (e^{-\varrho}) \p^n v^j  \p_n \Omega_j \curl \fC^m.
\end{align*}
After integration on $\Sigma_{t'}$, the first term  vanishes due to integration by parts, and other terms can be treated by using the elliptic estimate, Sobolev embedding and Gronwall's inequality.

For the second term in the integrand of $\I$, note
\begin{align*}
e^{-\varrho}\p_j \bT \varrho\p_m \Omega^j \curl\fC^m&= e^{-\varrho}(\bT \p_j \varrho+[\p_j, \bT]\varrho) \p_m \Omega^j\curl \fC^m\\
&=\bT(e^{-\varrho}\p_j \varrho \p_m \Omega^j \curl \fC^m)-\p_j \varrho \bT(e^{-\varrho}\p_m \Omega^j \curl \fC^m)\\
&+e^{-\varrho}[\p_j, \bT]\varrho \p_m \Omega^j\curl \fC^m.
\end{align*}
We then carry out the integration by parts in the spacetime to treat the first term
 on the right hand side  with the help of (\ref{9.07.4.19}), which only generates lower order terms.  For the second term, we can substitute  the transport equations of $\p\Omega$ and $\curl \fC$ to save derivatives. The commutator term is relatively of lower order.
See the proof of (\ref{9.07.5.19}) for the full detail.

To control $\|\curl\fC\|_{H^{\a}_x}$ with $\a=s'-2, \, 0<t\le T,$  besides  entailing the above set of integration by parts, for the estimate of the highest order,  we have an additional issue coming from the high low interaction for the product in the first term below
\begin{equation}\label{10.13.6.19}
\int_0^t \|\p v \p\fC(t')\|_{H^{s'-2}_x}\|\curl \fC(t')\|_{H^{s'-2}_x} dt',
\end{equation}
which comes from the commutator term in  (\ref{10.13.5.19}).

Note that with the expectation of bounding \begin{footnote}{The definition of the Besov norm can be found at the end of Section \ref{h2vorticity}.}\end{footnote}
$
\|\p v\|_{L_t^2 B^{s'-2}_{\infty, 2,x}}
$
by establishing Strichartz estimate for $v_+$ and the elliptic estimate for $\eta$,
(\ref{10.13.6.19}) can be treated by using Gronwall's inequality and the elliptic estimate.
There is yet no control on  $ \|\p v\|_{L_t^2 B^{s-2}_{\infty, 2,x}}$ due to our approach in \cite{Wangrough} for (\ref{4.10.4.19}), even with smoother vorticity data than assumed.  The $s$-$s'$ energy hierarchy for $v, \varrho$ and $\fw$ helps crucially to complete the estimate of (\ref{10.13.6.19}).

\subsubsection{Fundamental structures for  the causal geometry of the acoustic spacetime}
At last, we consider the difficulty from  (c) in Section \ref{10.13.3.19}, that is  to control the acoustic null cones. We will focus on treating the null area expansion $\tr\chi$, which is the most important quantity in the geometric analysis.  The regularity from the general derivative of the  metric is by no means sufficient for the purpose even in the previous works for quasilinear wave equations.

Since $\tr\chi$ verifies the Raychaudhuri equation (\ref{s1}), i.e.
\begin{equation*}
L\tr\chi+\f12 (\tr\chi)^2=-\bR_{LL}+l.o.t.,
\end{equation*}
the strategy is to gain regularity by taking advantage  of the structure of the Ricci component $\bR_{LL}$, which works particularly well in Einstein vacuum spacetime due to the vanishing {\bf Ric}.

For the acoustic spacetime, we derive in (\ref{6.23.2.19}) that
\begin{equation}\label{11.11.1.19}
\bR_{LL}=L(\Xi_L)-e^{\varrho}\delta_{ij}\bN^j \curl \Omega^i+l.o.t.,
\end{equation}
where the one form  $\Xi_\mu=\Ga^\eta_{\a\b}(\bg) \bg^{\a\b}\bg_{\eta \mu}$, $\Ga(\bg)$ is the Christoffel symbol of $\bg$ (see (\ref{ricc6.7.2}) and (\ref{9.30.13.19})), and  $\bN$ is the outward unit normal of $S_{t,u}:=\Sigma_t\cap C_u$.
The basic analysis on null hypersurfaces relies on the energy fluxes of $v, \varrho$ and the vorticity controlled in Section \ref{c_flux}.
We can control the double-curl flux of vorticity $ \|\mu^{s'-2}P_\mu\curl \fC\|_{l_\mu^2 L^2(C_u)}+\|\curl \fC\|_{L^2(C_u)} $ \begin{footnote}{We refer to (\ref{LP4.12.1.19}) for the definition of the Littlewood-Paley projector.}\end{footnote} by energy method and the trilinear estimates, nevertheless, can not bound  $\p \fC$ at the same level, since there lacks the trilinear structure if using $\bT \p \fC$ to propagate the energy, and  there is neither a proper Hodge systems for $\fC= e^{-\varrho} \curl\Omega$ available on $S_{t,u}$.  Applying the trace inequality and using the full energy bound of $\|\C (t)\|_{H^{s'-1}_x}$ on $\Sigma_t$ lead to a loss of $\f12 $-derivative due to the restriction to $ S_{t,u}$. We then lose the bound on $\|\curl \Omega\|_{L^\infty(S_{t,u})}$ by $\f12$-derivative.

 For quasilinear wave equations (\ref{4.10.4.19}), one can gain regularity for $\tr\chi+\Xi_L$  as in \cite{Kcom, KRduke, Tataru, Wangrough} due to (\ref{11.11.1.19}) and the absence of  vorticity. Due to the weak regularity, the derivative of  the term $\Xi_L=\Xi_\mu L^\mu$ caused one of the main difficulties for proving the sharp local well-posedness for the solution of  (\ref{4.10.4.19}) via the geometric approach devised in \cite{Kcom} and \cite{KRduke}. The difficulty was solved in \cite{Wangrough} by introducing the geometric normalization via the conformal change of the spacetime metric together with bounding the conformal energy by an un-canonical energy method. The issue from $\Xi_L$  remains the same for the compressible Euler equations, for which we adopt the method in \cite{Wangrough}.

In the sequel, we focus on solving the difficulty from the Ricci defect caused by the rough $\curl \Omega$  in bounding $\|\tr\chi-\frac{2}{\tir}\|_{L_t^2 L_x^\infty(u\ge 0)}$, with $\tir =t-u$ in (\ref{ric1}). The estimate is crucial for proving the Strichartz estimate in Theorem \ref{dyastric}. The region of the norm is, roughly, the domain of influence of a unit ball with the time-span nearly upto the large frequency $\la$ fixed in Theorem \ref{dyastric}. In such region, $0\le u\le t$, with $u=0$ on the boundary of the domain of influence and $u=t$ along the time-axis. (We refer to Section \ref{setupcone} for the set-up of the acoustic null cones $C_u$.)

Now consider the transport equation for $z=\tr\chi+\Xi_L-\frac{2}{\tir}$ in (\ref{lz}), which is recast below
\begin{equation*}
Lz+\frac{2z}{t-u}=e^\varrho \curl \Omega_\bN+\frac{2}{r}(\Xi_L-k_{\bN\bN})+\cdots,
\end{equation*}
where nonlinear terms are omitted. Note the term $\curl \Omega$ on the right hand side is merely bounded in $H^{\f12+}(S_{t,u})$, which is short of $\f12$-spatial-derivative on $S_{t,u}$ to give the bound of  $\|z\|_{L^\infty(S_{t,u})}$ directly. By  (\ref{9.12.1.19}), $\curl \Omega_i\bN^i=\ep^{AB} \sn_A \Omega_B$
where $\sn$ denotes the Levi-Civita connection on $S_{t,u}$ with respect to the induced metric, $\{e_A\}_{A=1}^2$ forms the orthonormal basis and $\ep^{AB}$ is the volume form on $S_{t,u}$.  Since it is an angular derivative, we can not directly write it into $L F+E$ with $F$ and $E$ smoother functions.
   
   To solve the difficulty, we uncover fundamental structures in Section 7 on the angular derivatives of $\bR_{LL}$ and the radial component of the second fundamental form $k_{\bN\bN}$, in the acoustic spacetime.
   
   As the first step, we derive  the $L^\infty$ bound on $\tir^\f12 z$ by applying
 the Sobolev inequality
\begin{equation}\label{10.15.1.19}
|\tir^\f12 z|\les \|\tir^{\f12-\frac{2}{p}} (\tir\sn)\rp{\le 1}z\|_{L^p(S_{t,u})}, \qquad 0<1-\frac{2}{p}<s'-2.
\end{equation}
To bound the right hand side, we consider the transport equation of $\sn z$ by differentiating (\ref{lz}). It is crucial to observe the following trace decomposition derived in (\ref{9.12.2.19}),
 \begin{align*}
 &\sn(e^\varrho \curl \Omega_\bN)=\sn_L(e^\varrho\curl \Omega)_A+e^{\varrho} {e_A}_i \Pi^{ij} \tensor{\ep}{_{jm}^l}(\curl^2\Omega)_l\bN^m+l.o.t.,
 \end{align*}
with $\Pi^{ij}=\bg^{ij}+\bT^i\bT^j-\bN^i \bN^j$, since  this quantity can  not be directly bounded by the double-curl flux for vorticity. 

 This leads to
\begin{align}\label{10.13.7.19}
\sn_L  \sn z+\frac{3}{t-u} \sn z=\sn_L(e^\varrho\curl \Omega)_A+e^{\varrho}{e_A}_i \Pi^{ij} \tensor{\ep}{_{jm}^l}(\curl^2\Omega)_l\bN^m+\frac{2}{\tir}\sn(\Xi_L-k_{\bN\bN})+\cdots.
\end{align}
By combining the first terms on both sides, we can derive the transport equation for $\sn_A z-(e^\varrho\curl \Omega)_A$ in (\ref{ldz_2}) and obtain the following estimate by integrating along the null cone $C_u$,
\begin{align}
\|\tir(\sn_A z-e^\varrho (\curl \Omega)_A)\|_{L_\omega^p(S_{t,u})}&\les \tir^{-1}\int_{t_\tmin}^t\|\tir\sn(\Xi_L-k_{\bN\bN})\|_{L_\omega^p}dt'\label{10.15.3.19}\displaybreak[0]\\
&+\tir^{-1}\int_{t_\tmin}^t \|\tir^2 \curl^2 \Omega  \|_{L_\omega^p} dt'+\cdots\nn,
\end{align}
where  $t_\tmin=\max(u, 0),$  $\omega \in{\mathbb S}^2$ on $S_{t,u}$ is the pull-back coordinate via the null geodesic flow, (see the construction in Section \ref{null_cone_set},) and we omitted the terms of initial data and nonlinear terms.  We can
 control  $\|\tir^\frac{3}{2}(\sn_A z-e^\varrho (\curl \Omega)_A)\|_{L_\omega^p(S_{t,u})}$ by bounding the right hand side of the inequality with  the flux control in Section \ref{c_flux} and its consequences in Proposition  \ref{7.13.2.19}. The desired bound of $\sn z$ in (\ref{10.15.1.19}) follows since the other term $\tir e^\varrho (\curl \Omega)_A$ can be easily bounded in  $L^p_\omega$. Thus we can obtain the pointwise bound on $\tir^\f12 z$. 

As the second step, we need to derive the integral bound $\|z\|_{L_t^2 L_x^\infty(u\ge 0)}$. Note that near the time-axis, the estimate can not follow from the pointwise estimate of $\tir^\f12 z$, since $\tir=t-u$ can be approaching to $0$. In terms of $L^p_\omega$ estimate, we need the bound of
$
\| \sup_{0\le u\le t}\|\tir\sn z\|_{L_\omega^p}\|_{L_t^2}.
$
In view of  (\ref{10.15.3.19}), the term $\frac{2}{\tir}\sn(\Xi_L-k_{\bN\bN})$ in (\ref{10.13.7.19}) is the obstruction for controlling the above bound since it may require the bound of $\| \tir\sn (\Xi_L-k_{\bN\bN})\|_{L_t^2 L_u^\infty L_\omega^p(C_u)}$,  much stronger than the bound on flux.

We manage to cancel such term by further deriving in Proposition \ref{6.14.2.19} that
 \begin{equation}\label{10.15.2.19}
k_{\bN\bN}-\f12 \Xi_L= -\f12\big(L(\log c+\varrho)+2L(v)_\bN\big).
\end{equation}
Based on the above trace decomposition, and the equation $L\log \bb =-k_{\bN\bN}$ in (\ref{lb}), we consider instead of $z$ the normalized quantity $\sY=\bb (\tr\chi+\Xi_L)-\frac{2}{\tir}$. Note $z=\bb^{-1}\sY+2\frac{\bb^{-1}-1}{\tir}$, where the second term on the right hand side and $\bb^{-1}$ are relatively easier to control. Thus, it remains to derive the bound of $\|\sY\|_{L_t^2 L_x^\infty(u\ge 0)}$.    Note the transport equation of $\sn \sY$ takes the form,
\begin{align}
\sn_L (\sn_A \sY-\bb e^\varrho (\curl \Omega)_A)&+\frac{2}{\tir}(\sn_A \sY-\bb e^\varrho (\curl \Omega)_A)\nn\\
&=-\frac{2}{\tir} \sn (\Xi_L-2 k_{\bN\bN})+\bb e^{\varrho}{e_A}_i \Pi^{ij} \tensor{\ep}{_{jm}^l}(\curl^2\Omega)_l\bN^m+\cdots.\label{11.7.2.19}
\end{align}
By using (\ref{10.15.2.19}), we derive the following trace decomposition
\begin{equation*}
2 \sn (\Xi_L-2 k_{\bN\bN})=L\sn\pi_1+l.o.t, \quad \mbox{ with } \pi_1=2\sn(\log c+\varrho)+4 \sn v^i \bN^j \bg_{ij}.
\end{equation*}
 Substituting the above identity to the first term on the right hand side of (\ref{11.7.2.19}), we can see that  the transport equation of  $\sn_A\sY-\bb e^\varrho (\curl \Omega)_A+\frac{\pi_1}{\tir}$ no longer contains the singular higher order term. 
  Thus we manage to control the estimates of $\sY$ and the desired estimate for $z$.  See the details of the proof in Proposition \ref{10.17.1.19}. The estimate of $\tr\chi-\frac{2}{\tir}$ follows due to  the bounded norm of $\|\Xi_L\|_{L_t^2 L_x^\infty}$ by the bootstrap assumption (\ref{BA1}) and the estimate of $z$.

  To reduce the technical baggage for controlling the full null second fundamental form (with $\tr\chi$ being its trace), we manage to improve the treatment for the torsion tensor $\zeta$. One can refer to Section \ref{causal_reg} for the details.

\subsection{Organization of the proof}
The  proof of  Theorem \ref{thm1} consists of three main building blocks: energy and flux propagation, linearization and reduction for Strichartz estimates, control of the causal geometry of the acoustic spacetime.

The energy propagation includes the propagations of $v,\varrho$ and the vorticity from the Cauchy data. They are completed in Section 2 and 3 with the help of the main bootstrap assumption (\ref{BA1}) in ${\mathbb R}^3\times[0,T]$. In Section 2, the energy estimate of $v_+, \varrho$ up to the order of $H^2_x$ can be bounded together with deriving the bounds of the $L^p_x$ norms of $\p \Omega, \p \fw, \p^2\eta$, with $2\le p\le \frac{3}{3-s}$ from the initial data. They are summarized in Corollary \ref{comp_1}. To bound the highest order energy  $\|\p v, \p \varrho\|_{H^{s-1}_x}$,  Proposition \ref{erro_h} shows that it  relies on the norm of $\|\curl \Omega(t) \|_{H^{s-\frac{3}{2}}_x}$. We then bound the stronger norm $\|\Omega\|_{C^0_t[0,T] H^2_x}$ in (\ref{9.07.5.19})  in Section 3.1 by performing the trilinear estimates. This closes the highest order energy control on $v$ and $\varrho$ in Corollary \ref{eng_wave}. We then use this result to obtain the highest order energy bound for the vorticity in Proposition \ref{9.07.16.19}.

To prove (\ref{BA1}), as in \cite{Wangrough}, we take the framework of the physical approach initiated by the work of \cite{Kcom} (see also \cite{KRduke, KREinst}). In Section 4-5 we obtain the Strichartz estimates in  Theorem \ref{main}  based on Theorem \ref{BT}, which proves (\ref{BA1}). The proof consists of a series of reductions. We first prove Theorem \ref{main} by applying  the dyadic Strichartz estimates in Theorem \ref{dyastric}  to the Littlewood-Paley pieces  $P_\la \p v_+$ and $P_\la \p \varrho$, with large $\la$,  which are represented in (\ref{pmug}) by the solution of the linearized wave equation.  By running the $\T \T^*$ argument,\begin{footnote}{See \cite[Section 4]{WangCMCSH} and \cite[Setion 3 and Section 9]{Wangrough} for the $\T \T^*$ argument and also for more details of the reduction.}\end{footnote} 
 we then reduce the proof of  Theorem \ref{str2}, which is the rescaled statement of Theorem \ref{dyastric}, to the decay estimate in Theorem \ref{decayth}.  By proper localizations in the physical space, it is then reduced to the proof of Proposition \ref{lcestimate} and then further reduced to controlling the conformal energy in Theorem \ref{BT} in the domain of influence of a unit ball with the life-span  slightly less than the large frequency $\la$.
 
 To prove Theorem \ref{BT}, we need to run the multiplier approach in \cite[Section 7]{Wangrough}  which also adopts a conformal change of the spacetime metric for the purpose of normalizing the causal geometry. To obtain the necessary geometric control required therein, we carry out geometric analysis of the acoustic null cones in Section \ref{causal_reg} and Section \ref{conf}.  In Section 5-7, we provide preliminaries for controlling the causal geometry. In Section 5, we give the geometric set-up for the acoustic null cones. In Section \ref{c_flux}, we obtain the control on derivatives of the metric and vorticity along the null cones from the energy estimates in the ambient acoustic spacetime, upto the highest order, with the help of the trace estimates and elliptic estimates for derivatives of $\eta$. In Section 7, we derive the important trace decompositions for $\sn(\bR_{LL})$ and the second fundamental form in Proposition \ref{struc} and Proposition \ref{6.14.2.19}, and also provide the structure equations of the null tetrad and the necessary decompositions of Riemann curvature. In Section \ref{causal_reg}, by using the results in Proposition \ref{struc} and Proposition \ref{6.14.2.19},  we prove Proposition \ref{cone_reg} simultaneously with other estimates therein. With the help of them,  in Section \ref{conf}, we further control the renormalized mass aspect function and obtain the derivative control of the conformal factor used for the geometric normalization, in Proposition \ref{6.23.9.19} and Proposition \ref{dcmpsig}.  Thus we complete the full set of geometric estimates for  running the proof in \cite[Section 7]{Wangrough}. This completes the proof of Theorem \ref{BT}.

 \section{Energy estimates for wave functions $(v_+, \varrho)$}\label{eng_sec}
 \subsection{Bootstrap assumptions}
 For the fixed number  $2<s<\frac{5}{2}$, we fix
 \begin{equation*}
 0<\ep_0<\frac{s-2}{5}, \qquad s'-2=\delta_0=\ep_0^2.
 \end{equation*}
Now we make the following preliminary bootstrap assumption in a spacetime slab $[0,T]\times {\mathbb R}^3$ with $T>0$ (or, identically, $(\M,\bg)$) that
\begin{equation}\label{BA1}
\|\bp\varrho, \bp v_+, \bp v\|^2_{L_t^2 L_x^\infty}+\sum_{\la \ge 2}\la^{2\delta_0}\| P_\la (\bp\varrho, \bp {v_+})\|^2_{L_t^2 L_x^\infty}\le 1,
\end{equation}
 where
$P_\la$ is the Littlewood-Paley projector  with
frequency $\la=2^k $ defined for any function $f$ by
\begin{equation}\label{LP4.12.1.19}
P_\la f(x)=f_\la(x)=\int_{{\mathbb R}^3} e^{i x\c \xi} \b(\la^{-1}\xi) \hat f(\xi) d\xi
\end{equation}
with $\b$ a smooth function supported in the shell $\{\xi: \f12\le |\xi|\le 2\}$
satisfying $\sum_{k\in {\Bbb Z}}\b(2^k \xi)=1$ for $\xi\neq 0$. We refer to \cite{Stein2} for detailed properties of Littlewood-Paley decompositions.

We also make an auxiliary bootstrap assumption that
\begin{equation}\label{ABA1}
\|\p \varrho\|_{L_t^2 L_x^3([0,T]\times {\mathbb R}^3)}\le 1,
\end{equation}
which will be improved  in Section \ref{aux_imp}  to
\begin{equation*}
\|\p \varrho\|_{L_t^2 L_x^3([0,T]\times {\mathbb R}^3)}\le C T^\f12
\end{equation*}
where $C$ is a universal constant.

  (\ref{BA1}) will be improved to
\begin{equation}\label{G1}
\| \bp v_+, \bp\varrho,\bp v\|^2_{L_t^2 L_x^\infty}+\sum_{\la \ge 2}\la^{2\delta_0}\|P_\la (\bp \varrho, \bp v_+)\|_{L_t^2 L_x^\infty}^2\le C T^{2\ga_1},
\end{equation}
where $s>2$ and $0<\ga_1\le \ep_0$. With $0<T<1$ sufficiently small, we can have $\max (C T^\f12, C T^{2\ga_1})<1$.  
 The improvement will be achieved in Section \ref{5.07.4.19} by establishing the dyadic Strichartz estimate in Theorem \ref{dyastric} for (\ref{wavelin}), and by a delicate linearization from  (\ref{wave_2}) of $\Psi=(v_+, \varrho)$ to (\ref{wavelin}).

To prove the Strichartz estimates (\ref{G1}), we  need the full energy control for $v, \varrho$  in Corollary \ref{eng_wave}. To complement the energy estimates on $v_+,\varrho$ by using wave equations, the estimates for $\eta$ will be achieved by elliptic estimates combined with the energy estimates for $\curl\rp{\le 2}\Omega,$  completed in Section \ref{eng_vor}.

Since any point $(x,t)\in (\M, \bg)$ can be reached by following the integral curve of $\bT$, denoted by $x(t)$, from the initial slice
\begin{equation}\label{9.20.6.19}
 c(x(t),t)/c(x(0),0)-1=c(x(0),0)^{-1}\int_0^t c'(\varrho)\bT\varrho;\quad  \varrho(x(t),t)=\varrho(x(0), 0)+\int_0^t\bT \varrho.
\end{equation}
By (\ref{10.22.1.19}) at $t=0$, $|\varrho|\le C_1$. By the evolution of $\varrho$ and (\ref{BA1}),
we can obtain  for all $t\in [0,T]$, $|\varrho|\le C_1+1$.  
Since $c, c'$ are smooth functions about $\varrho$, $|c', c|\les 1.$  By using (\ref{BA1})
\begin{equation*}
|\frac{c(x(t),t)}{c(x(0),0)}-1|\les(C_1+1) c(x(0),0)^{-1}T^\f12\le C c_0^{-1} T^\f12<\f12
\end{equation*}
as long as we fix $0<T^\f12<\f12 c_0 C^{-1}$, where $C$ is a universal constant.
 This leads to
$
\f12<c(x(t),t)/c(x(0),0)<\frac{3}{2}
$
and thus  for $t\in[0,T]$,
\begin{equation}\label{9.20.1.19}
\f12 c_0<\f12 c(x(0),0)\le c\le \frac{3}{2}c(x(0),0)\le C_0,
\end{equation}
where $C_0$ is a universal constant.

This fact together with the fact that $ |c', c''|\les 1$ in $[0,T]\times {\mathbb R}^3$ will be frequently used in this paper.
The metric $g_{ij}=c^{-2}\delta_{ij}$ is thus always an conformally flat Riemannian metric on $\Sigma_t$ for $0<t\le T$.

Similar to (\ref{9.20.6.19}), we can also obtain the bound on $v$ due to (\ref{BA1}) and the $C_1$ bound in (\ref{10.22.1.19}). Thus we will frequently use
$
|\varrho, v|\les 1,
$
and
\begin{equation}\label{5.06.7.19}
\|C(f)\|_{L_x^\infty}\les 1, \quad \|\bp( C(f))\|_{L_x^\infty}\les \|\bp f\|_{L_x^\infty}
\end{equation}
with $f= \varrho$ or $v$,  where  $C(y)$ is a smooth function.

\subsubsection{The second fundamental form}
Next we derive the formula for the second fundamental form.
\begin{proposition}
Define the second fundamental form for $\bT$ on $\Sigma_t$ as $k_{ij}:=-\f12 \L_\bT g_{ij}$. Let $g^{ij} k_{ij}=\Tr k$, and $\hk_{ij}=k_{ij}-\frac{1}{3} \Tr k g_{ij}$. There hold
\begin{align}
&k_{ij}=-\f12 c^{-2}(-2\bT(\log c)\delta_{ij}+\p_i v_j+\p_j v_i) \label{k1}\\
&\Tr k=3 \bT \log c-\div v.\label{5.24.1.19}
\end{align}
\end{proposition}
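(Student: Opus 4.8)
The statement is a direct computation of the second fundamental form $k_{ij} = -\tfrac12 \Lie_\bT g_{ij}$ for the induced metric $g_{ij} = c^{-2}\delta_{ij}$ on $\Sigma_t$, where $\bT = \p_t + v^a\p_a$. The plan is simply to expand the Lie derivative along $\bT$ and organize the result.

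First I would write out the Lie derivative of $g_{ij} = c^{-2}\delta_{ij}$ along $\bT$ using the standard formula $\Lie_\bT g_{ij} = \bT(g_{ij}) + g_{aj}\p_i \bT^a + g_{ia}\p_j \bT^a$, where here the relevant spatial components of $\bT$ are $\bT^a = v^a$ (the $\p_t$ piece contributes only through the $\bT(g_{ij})$ term since $g_{ij}$ is computed on $\Sigma_t$). This gives
\begin{equation*}
\Lie_\bT g_{ij} = \bT(c^{-2})\delta_{ij} + c^{-2}\p_i v_j + c^{-2}\p_j v_i,
\end{equation*}
where I use $\delta_{aj}\p_i v^a = \p_i v_j$ since indices are raised/lowered with $\delta$ in the Cartesian frame. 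Next I would compute $\bT(c^{-2}) = -2c^{-3}\bT(c) = -2c^{-2}\bT(\log c)$, so that
\begin{equation*}
\Lie_\bT g_{ij} = c^{-2}\big(-2\bT(\log c)\delta_{ij} + \p_i v_j + \p_j v_i\big).
\end{equation*}
Multiplying by $-\tfrac12$ yields \eqref{k1}. For \eqref{5.24.1.19}, I would contract with $g^{ij} = c^2\delta^{ij}$: the term $-\tfrac12 c^{-2}(-2\bT(\log c))\delta_{ij}$ contracts to $\tfrac12 \cdot 2\bT(\log c)\cdot 3 = 3\bT\log c$ (using $\delta^{ij}\delta_{ij} = 3$), while $-\tfrac12 c^{-2}(\p_i v_j + \p_j v_i)$ contracts to $-\delta^{ij}\p_i v_j = -\div v = -\p^i v_i$, giving $\Tr k = 3\bT\log c - \div v$.

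There is essentially no serious obstacle here; the only thing to be careful about is the bookkeeping in the Lie derivative — specifically, remembering that $\Lie_\bT$ acting on the spatial metric $g_{ij}$ on the leaves $\Sigma_t$ picks up both the transverse variation $\bT(c^{-2})\delta_{ij}$ and the deformation tensor contribution from the spatial components $v^a$ of $\bT$, and that $\delta_{ij}$ itself is $\bT$-parallel up to nothing (it is constant), so no extra term arises from differentiating $\delta_{ij}$. One could equivalently observe that $\cir k_{ij} = -\tfrac12\Lie_\bT \delta_{ij} = -\tfrac12(\p_i v_j + \p_j v_i)$ has already been recorded (with $\Tr\cir k = -\p^i v_i$) and that the conformal factor $c^{-2}$ contributes the extra $\bT(\log c)$ terms via the conformal transformation law for the second fundamental form; but the direct expansion above is the cleanest route. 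The formulas \eqref{k1} and \eqref{5.24.1.19} then follow immediately, and I would note in passing that the Euler equations \eqref{4.23.1.19} let one further rewrite $\bT\log c = c^{-1}c'\,\bT\varrho = -c^{-1}c'\,\div v$ if a form purely in terms of $v$ is desired, though that is not needed for the stated identities.
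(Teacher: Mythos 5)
Your proposal is correct and follows essentially the same route as the paper: the paper also expands $\Lie_\bT g_{ij}$ directly (writing the deformation terms via the commutators $[\bT,\p_i]=-\p_i v^a\p_a$, which reproduces exactly your $g_{aj}\p_i v^a+g_{ia}\p_j v^a$ contributions), computes $\bT(c^{-2})=-2c^{-2}\bT(\log c)$, and then takes the trace with $g^{ij}=c^2\delta^{ij}$. The bookkeeping and the final contraction are identical to yours.
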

\begin{proof}
By the fact that
\begin{equation}\label{cmu1}
[\bT, \p_i]=-\p_i v^a \p_a,
\end{equation}
we can compute by definition that
\begin{align*}
k_{ij}&=-\f12 \L_\bT g_{ij}=-\f12 (\bT g_{ij}-g([\bT, \p_i], \p_j)-g(\p_i, [\bT, \p_j]))\\
&=-\f12(\bT(c^{-2}) \delta_{ij}+\p_i v^a g_{aj}+\p_j v^a g_{ia})\\
&=-\f12 c^{-2}(-2\bT(\log c)\delta_{ij}+\p_i v_j+\p_j v_i).
\end{align*}
Thus the trace and  the traceless part of $k$ are
\begin{align*}
\Tr k&= g^{ij} k_{ij}=-\f12(-6\bT(\log c)+2\div v)=3\bT \log c-\div v,\\
\hat k_{ij}&=-\f12 c^{-2}(\p_i v_j+\p_j v_i)+\frac{1}{3}c^{-2}\div \delta_{ij}.
\end{align*}
The proof is complete.
\end{proof}

\subsection{Preliminaries for  energy estimates}
To begin with, we give the uniform method to treat the equations (\ref{wavevpl}) and (\ref{4.10.2.19}) without involving the term $\bT \bT \eta$ in analysis.
\subsubsection{Reduction to the first order system}
We note the  main equations (\ref{wavevpl}) and (\ref{4.10.2.19})  take the form of
\begin{equation}\label{wave2}
\Box_\bg \Psi=W+\bT Y-\Tr k Y,
\end{equation}
which can be written as the first order equation system for a pair of functions $(U,V)$
\begin{equation}\label{lu3}
\left\{\begin{array}{lll}
\bT U=V+F_U\\
\bT V=\Delta_g U+F_V+\Tr k V
\end{array}\right.
\end{equation}
with 
\begin{equation}\label{wave3}
U=\Psi, \quad F_U=-Y, \quad F_V=-W.
\end{equation}
It is crucial not to apply $\bT$ derivative to (\ref{wave2}) for energy estimate. The use of (\ref{lu3}) allows us to keep track of the error terms produced by applying the spatial derivatives or Littlewood-Paley projector $P_\la$ with $\la>1$ to (\ref{wave2}).

 (\ref{wave2}) for $\Psi=v_+$ can be written as  (\ref{lu3}) with
\begin{equation}\label{5.03.1.19}
U=v^i_+,\quad V= \bT v^i,\quad F_U=-\bT \eta^i,\quad  F_V=-\widetilde\sQ^i+c^2\eta^i;
\end{equation}
(\ref{wave2}) for $\Psi=\varrho$ can be written as  (\ref{lu3}) with
\begin{equation}\label{5.03.2.19}
U=\varrho,\quad V=\bT \varrho,\quad F_U=0,\quad F_V=-\sQ^0.
\end{equation}
We denote $U_{v_+}$ and $U_\varrho$ the functions $U$ listed above for $\Psi=v_+$ or $\varrho$ respectively. The same convention applies to $V$ and the errors $F_U$ and $F_V$.
We set the vector-valued function ${\textbf U}=(U_{v_+}, U_{\varrho})$ and ${\textbf V}=(V_{v_+}, V_{\varrho})$ to unify both cases. In the same manner
\begin{equation*}
F_{\textbf U}=(F_{U_{v_+}}, 0), \quad F_{\textbf V}=(F_{V_{v_+}}, F_{V_{\varrho}}).
\end{equation*}
By abuse of notation, when using $U$ and $V$ unless specified, we mean the components in the corresponding vectors ${\textbf U}$ and ${\textbf V}$.
We can directly write in view of (\ref{5.03.1.19}) and (\ref{5.03.2.19}) that
\begin{equation}\label{7.24.8.19}
F_{\textbf U}=(-\bT \eta, 0).
\end{equation}
Consolidating the quadratic forms  $\sQ^0$  in (\ref{4.10.2.19}) and $\widetilde{\sQ}^i$ in (\ref{wavevpl}),  we write both components of $F_{\textbf V}$  as
\begin{equation}\label{5.07.1.19}
 F_V= (C(\varrho)+1)\c \fZ+  c^2 (\eta+\fw \c \p \varrho+\p (\log c)\c \p \eta),
\end{equation}
 where $C(\varrho)$ denotes some smooth functions of $\varrho$, which may vary when distributed to each term in the other factor of the product, and
\begin{equation}\label{5.07.2.19}
\fZ=(\p v)^2+\bp\varrho\c (\bp v+\bp \varrho).
\end{equation}
 Here we use  $\bp f$ to denote the component of total derivative $(\p f, \bT f)$ for smooth functions $f$. We may write $F_U$ to represent any of the components in $F_{\textbf U}$, and apply the same convention to $F_{\textbf V}$.

  By using (\ref{cmu1}) we differentiate (\ref{lu3}) to  obtain the equation system for $(U_i\rp{1}, V_i\rp{1})=(\p_i U, \p_i V)$
\begin{equation}\label{lu3_1}
\left\{\begin{array}{lll}
\bT  U\rp{1}=V\rp{1}+F_{U\rp{1}}\\
\bT V\rp{1}=\Delta_g U\rp{1}+ F_{V\rp{1}}+\Tr k V\rp{1},
\end{array}
\right.
\end{equation}
where
\begin{equation}\label{5.02.3.19}
\begin{split}
F_{U\rp{1}}&= \p_i F_U-\p_i v^m \p_m U\\
F_{V\rp{1}}&=\p_i (c^2) \Delta_e U-\p_i v^m \p_m V-\p_i(c^2\p^m(\log c)) \p_m U+V \p_i\Tr k +\p_i F_V.
\end{split}
\end{equation}
For calculating the above error terms, we used the commutator formula for the scalar function $f=U$,
\begin{equation*}
\p_i \Delta_g f-\Delta_g (\p_i f)=\p_i(c^2) \Delta_e f-\p_i\big(c^2\p^j(\log c)\big)\p_j f.
\end{equation*}

 For  $(U,V)$ satisfying (\ref{lu3}), by applying the Littlewood-Paley projection $P_\mu$ with $\mu>1$ to (\ref{lu3}), we can obtain (\ref{lu3}) holds  for  the pair of functions $(U_\mu,V_\mu):=(P_\mu U, P_\mu V)$  with $F_{U_\mu}$ and $F_{V_\mu}$ given by
\begin{equation}\label{puuv}
\left\{\begin{array}{lll}
F_{U_\mu}=-[P_\mu, v^m]\p_m U +P_\mu F_U,\\
F_{V_\mu}=[P_\mu, c^2] \Delta_e U+P_\mu F_V+[P_\mu, \Tr k]V-[P_\mu,c^2\p_l(\log c)]\p^l U -[P_\mu, v^m]\p_m V.
\end{array}\right.
\end{equation}
Define the energy for $(U,V)$ satisfying (\ref{lu3}) by
\begin{equation}\label{9.28.4.19}
\E(t)=\E[U](t):=\frac{1}{2} \int_{\Sigma_t} \left(g^{ij}\p_i U\p_j U+|V|^2\right) d\mu_g,
\end{equation}
which may also be denoted by $\E_U(t)$. With $U\rp{0}=U, \,V\rp{0}=V$, we define
\begin{align*}
&\E\rp{m}(t)=\E[U\rp{m}](t),\quad m=0,1,\\
&\E\rp{1}_\mu(t)=\E[U\rp{1}_\mu](t).
\end{align*}
For the component of a vector-valued function such as $v_+^i$, we denote the sum of the energy of all components of $U^i$ by the same notation whenever no confusion occurs.

The low order  energy control can be  undertaken  for $v$ and $\varrho$ directly by using (\ref{4.10.1.19}) and (\ref{4.10.2.19}) under the assumption of (\ref{BA1}). This will be  given in (\ref{9.21.1.19}) and in Corollary \ref{5.04.12.19}.

For the higher order energy, we will construct the modified current for the equation (\ref{wave2}), which cancels the term $\bT Y$ on the right hand side. Only the spatial derivative of $Y$ will be  involved in our analysis. This  enables us  to control energy (and energy flux in Section \ref{c_flux}) for $v_+$ and $\varrho$ by  (\ref{wave2}) from initial data merely with the help of the norm of $\|\curl \Omega\|_{H^{s-\f12}_x}$ and (\ref{BA1}).

\subsubsection{The modified current}
 We construct the modified energy current of scalar functions $U$ \begin{footnote}{This is adapted from the one introduced  in \cite[Section 3.2]{WangCMCSH}.}\end{footnote} satisfying the equation  (\ref{wave2}), or identically, (\ref{lu3}) with (\ref{wave3}),
\begin{equation}\label{9.13.1.19}
 \sP[U]_\mu=-F_U \bd_\mu U+Q[U]_{\mu\nu}\bT^\nu+\f12 F_U^2  \bd_\mu t,
\end{equation}
where for any scalar function $f$, $Q_{\a\b}:=Q[f]_{\a\b}$
is the standard energy momentum tensor defined by
\begin{equation}\label{gmom}
Q[f]_{\a\b}=\p_\a f\p_\b f-\f12 \bg_{\a\b}\bd^\mu f \bd_\mu f
\end{equation}
with $\bd$ the Levi-Civita connection of $\bg$ and indices lifted or lowered by $\bg$.
Applying the divergence theorem to $\sP_\mu$ in the spacetime region $\bigcup_{0\le t'\le t} \Sigma_t$  yields
\begin{equation}\label{div0}
\int_{\Sigma_t} \sP_\mu \bT^\mu=
\int_{\Sigma_0} \sP_\mu \bT^\mu-\int_0^t \int_{\Sigma_{t'}}\bd^\mu \sP_\mu,
\end{equation}
where we hide the standard volume element on $\Sigma_t$ and $\bigcup_{0\le t'\le t}\Sigma_{t'}$  which are $d\mu_g$ and $d\mu_g dt'$.  $d\mu_g$ is always comparable to $d\mu_e$ due to (\ref{9.20.1.19}).

Now we show
\begin{equation}\label{dpu}
\bd^\mu \sP_\mu  =-F_V\c V
 -\bd^i F_U\bd_i U-(k^{ij} -\f12 \Tr k c^2\delta^{ji})\bd_i U \bd_j U-\f12 \Tr k V^2.
\end{equation}
Indeed, since (\ref{lu3}) implies $V=-F_U+\bT U$, we have from the definition of $\sP_\mu$
that
\begin{align*}
\bd^\mu \sP_\mu &=-\bd^\mu F_U \bd_\mu U -F_U \Box_\bg U+\bd^\mu Q_{\mu\nu}\bT^\nu
 +\f12 Q_{\mu\nu}{}^{(\bT)}\pi^{\mu\nu}\\
&\quad \, +F_U\bd^\mu F_U \bd_\mu t+\f12 F_U^2 \Box_\bg t\\
&=\bd_\bT(F_U) \bd_\bT U-\bd^i F_U \bd_i U- F_U \Box_\bg U+\Box_\bg U \bd_{\bT}U
+\f12 Q_{\mu\nu}{}^{(\bT)}\pi^{\mu\nu}\\
&\quad\, +F_U \bd^\mu F_U  \bd_\mu t+\f12 \Tr k F_U^2\displaybreak[0]\\
&=(-F_U+\bd_\bT U) (\Box_\bg U+\bd_\bT F_U)-\bd^i F_U\bd_i U +\f12 Q_{\mu\nu}{}^{(\bT)}\pi^{\mu\nu}+\f12 \Tr k F_U^2\\
&=\left(\Box_\bg U+\bd_\bT F_U\right) V-\bd^iF_U\bd_i U+\f12 Q_{\mu\nu}{}^{(\bT)}\pi^{\mu\nu}+\f12\Tr k F_U^2.
\end{align*}
Here  $\pi^{(\bT)}:= \Lie_{\bT} \bg$ is
the deformation tensor of $\bT$.

In view of (\ref{wave2}) and (\ref{wave3}), we obtain (\ref{dpu}).
\begin{align}\label{dpu1}
\bd^\mu \sP_\mu & =\left(-F_V+\Tr k F_U\right)V
 -\bd^i F_U\bd_i U+\f12 Q_{\mu\nu}{}^{(\bT)}\pi^{\mu\nu}+\f12 \Tr k F_U^2.
\end{align}

Note  $ \rp{\bT} \pi_{ij}=-2k_{ij}$ is the non-trivial part of $\pi^{(\bT)}$.
We compute
\begin{align*}
-\f12 Q_{\mu\nu}{}^{(\bT)}\pi^{\mu\nu}&=Q_{ij}k^{ij}=k^{ij}(\bd_i U \bd_j U-\f12 \bg_{ij}\bd^\a U \bd_\a U)\\
&=k^{ij} \bd_i U \bd_j U-\f12 \Tr k\bd^i U \bd_i U+\f12 \Tr k(\bd_\bT U)^2\\
&=k^{ij} \bd_i U \bd_j U-\f12 \Tr k\bd^i U \bd_i U+\f12 \Tr k\big((F_U)^2+2 V F_U+V^2\big).
\end{align*}
Substituting the above identity to (\ref{dpu1}) implies (\ref{dpu}) in view of (\ref{wave2}).

We also compute
\begin{align*}
\bT^\mu \sP_\mu&=-F_U \bT U+Q_{\mu \nu}\bT^\mu \bT^\nu +\f12 F_U^2 \bT(t)\\
&=\f12(\bT U \bT U+ \bd^i U \bd_i U)+\f12 F_U^2-F_U \bT U\\
&=\f12\{( \bT U-F_U)^2+ \bd^i U\bd_j U\}\\
&=\f12 (|V|^2+c^2\delta^{ij}\p_i U \p_j U).
\end{align*}
This is identical to the integrand of (\ref{9.28.4.19}). Thus
\begin{equation}\label{9.28.5.19}
\int_{\Sigma_t} \bT^\mu \sP_\mu= \E(t).
\end{equation}

\begin{lemma}[Fundamental inequalities for higher order energies]\label{peng}
If $\|\p v\|_{L_t^1 L_x^\infty([0,T]\times {\mathbb R}^3)}\les 1$, there hold with $\a>0$ that
\begin{align}
& \E(t)^\f12 \les \E(t)^\f12 +\int_0^t\big(\|\p F_U\|_{L_x^2}+\|F_V\|_{L_x^2}\big) dt'\label{4.12.4.19}\\
&\E\rp{1}(t)^\f12\les \E\rp{1}(0)^\f12 +\int_0^t \big(\|\p F_{U\rp{1}}\|_{L_x^2}+\|F_{V\rp{1}}\|_{L_x^2}\big) dt' \label{5.02.2.19}\\
& \|\la^\a \E\rp{1}_\la(t)^\f12\|_{l_\la^2}\les \|\la^\a \E\rp{1}_\la(0)^\f12\|_{l_\la^2}+\int_0^t\big(\|\la^\a \p F_{U\rp{1}_\la}\|_{l_\la^2 L_x^2}+\|\la^\a F_{V\rp{1}_\la}\|_{l_\la^2 L_x^2}\big) dt'\label{4.13.1.19}
\end{align}
\begin{footnote}{In this paper, for any $f$, $\|f\|_{l_\la^2}:=(\sum_{\la>1}|f_\la|^2 )^\f12$ with $\la$ dyadic. }\end{footnote}
\end{lemma}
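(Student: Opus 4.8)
The plan is to derive all three inequalities from the divergence identity (\ref{div0}) together with the pointwise formula (\ref{dpu}) for $\bd^\mu\sP_\mu$ and the identity (\ref{9.28.5.19}) relating $\int_{\Sigma_t}\bT^\mu\sP_\mu$ to $\E(t)$. Combining these gives the exact energy relation
\begin{equation*}
\E(t)=\E(0)-\int_0^t\int_{\Sigma_{t'}}\Bigl(-F_V\c V-\bd^iF_U\bd_iU-(k^{ij}-\tfrac12\Tr k\,c^2\delta^{ji})\bd_iU\bd_jU-\tfrac12\Tr k\,V^2\Bigr)\,d\mu_g\,dt'.
\end{equation*}
First I would estimate the three terms that are quadratic in the solution $(\bd U,V)$: by the formulas (\ref{k1})--(\ref{5.24.1.19}) for $k$ and $\Tr k$, together with $|c|,|c'|\les1$ from (\ref{9.20.1.19}) and (\ref{5.06.7.19}), each of $\|k\|_{L^\infty_x}$, $\|\Tr k\|_{L^\infty_x}$ is controlled by $\|\p v\|_{L^\infty_x}+\|\bT\log c\|_{L^\infty_x}\les\|\bp v\|_{L^\infty_x}+\|\bp\varrho\|_{L^\infty_x}$, so these three terms are bounded by $\big(\|\p v\|_{L^\infty_x}+\|\bp\varrho\|_{L^\infty_x}\big)\,\E(t')$; I would absorb the $\bT\log c$ piece into the $\varrho$-energy or simply note it is controlled by the same $L^1_tL^\infty_x$ quantity that the hypothesis makes small. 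The cross terms $F_V\c V$ and $\bd^iF_U\bd_iU$ are handled by Cauchy--Schwarz on $\Sigma_{t'}$: $\int|F_V\c V|\les\|F_V\|_{L^2_x}\E(t')^{1/2}$ and $\int|\bd^iF_U\bd_iU|\les c^2\|\p F_U\|_{L^2_x}\E(t')^{1/2}$, again using (\ref{9.20.1.19}) to bound $c$. Dividing by $\E(t')^{1/2}$ after taking a supremum over the time interval (or applying the standard differential-inequality/Gronwall trick with $\int_0^t\|\p v\|_{L^\infty_x}dt'\les1$) yields (\ref{4.12.4.19}).

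For (\ref{5.02.2.19}) I would apply the result just obtained to the differentiated system (\ref{lu3_1}) in place of (\ref{lu3}), since (\ref{lu3_1}) has exactly the same structure with $(U\rp1,V\rp1)$ playing the role of $(U,V)$ and $(F_{U\rp1},F_{V\rp1})$ given by (\ref{5.02.3.19}). One must check that the differentiated system still arises from an equation of the form (\ref{wave2}) so that the modified current construction applies; alternatively, since (\ref{4.12.4.19}) was proved purely from the first-order system (\ref{lu3}) and the energy definition (\ref{9.28.4.19}), and (\ref{lu3_1}) is literally of the form (\ref{lu3}), the inequality transfers verbatim with $\E\rp1$, $F_{U\rp1}$, $F_{V\rp1}$. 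This gives (\ref{5.02.2.19}) immediately.

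For (\ref{4.13.1.19}) the same argument is applied to the Littlewood--Paley localized differentiated system, i.e. (\ref{lu3_1}) with $(U\rp1_\la,V\rp1_\la)=(P_\la U\rp1,P_\la V\rp1)$ and error terms $F_{U\rp1_\la}$, $F_{V\rp1_\la}$ obtained by inserting $P_\la$ as in (\ref{puuv}); this yields $\E\rp1_\la(t)^{1/2}\les\E\rp1_\la(0)^{1/2}+\int_0^t(\|\p F_{U\rp1_\la}\|_{L^2_x}+\|F_{V\rp1_\la}\|_{L^2_x})\,dt'$ for each dyadic $\la$. Multiplying by $\la^\a$, taking the $l^2_\la$ norm, and using Minkowski's inequality to pull $l^2_\la$ inside the time integral gives (\ref{4.13.1.19}). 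The one point requiring care — and the main obstacle — is making the Gronwall step uniform in $\la$: the coefficient controlling the quadratic-in-solution terms is $\|\p v\|_{L^1_tL^\infty_x}$, which is $\la$-independent, so the resulting constant in the exponential is uniform, and one should also verify that commuting $P_\la$ past the variable coefficients $v^m$, $c^2$, $\Tr k$ in forming the localized system produces only the commutator error terms already displayed in (\ref{puuv}) and does not spoil the structural form (\ref{lu3}); both are routine given the paper's conventions.
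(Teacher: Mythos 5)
Your proposal follows essentially the same route as the paper: the energy identity obtained from (\ref{div0}), (\ref{dpu}) and (\ref{9.28.5.19}), Cauchy--Schwarz on the terms linear in $(F_U,F_V)$, Gronwall with $\|\p v\|_{L_t^1 L_x^\infty}\les 1$, and then transferring the estimate verbatim to the differentiated system (\ref{lu3_1}) for (\ref{5.02.2.19}) and to the Littlewood--Paley localized system with the errors (\ref{puuv}) for (\ref{4.13.1.19}), with $\la^\a$ weights and Minkowski in time; your remarks on why the transfer is legitimate and on the $\la$-uniformity of the Gronwall constant are correct.

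The one loose point is your treatment of the $\bT\log c$ contribution to $k$ and $\Tr k$. Bounding it by $\|\bp\varrho\|_{L_x^\infty}$ invokes a quantity that is not in the hypothesis of the lemma (which assumes only the $\p v$ bound), and ``absorbing it into the $\varrho$-energy'' does not apply here, since it enters as a multiplicative coefficient of the quadratic energy terms rather than additively. The paper closes this by the first equation of (\ref{4.23.1.19}), $\bT\varrho=-\div v$, which together with $|c^{-1}c'|\les 1$ gives $|\bT\log c|\les|\p v|$ and hence $|k|\les|\p v|$ (this is (\ref{9.20.2.19})); with that substitution the Gronwall coefficient is exactly the hypothesized $\|\p v\|_{L_t^1 L_x^\infty}$, and the rest of your argument goes through unchanged.
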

\begin{remark}
 Since $\int_{\Sigma_t}|\nab f|^2_g d\mu_g\approx \int_{\Sigma_t} |\p f|^2  d\mu_e$, with $d\mu_e$ the volume element of the Euclidean metric,  we will not distinguish the metric used for $\dot{H}_1$ norm. And the assumption $\|\p v\|_{L_t^1 L_x^\infty([0,T]\times {\mathbb R}^3)}\les 1$ is a consequence of (\ref{BA1}).
\end{remark}
\begin{proof}
We first  derive from (\ref{dpu}) that
\begin{equation}\label{9.28.6.19}
\int_0^t \int_{\Sigma_{t'}} |\bd^\a \sP_\a|\les \int_0^t\{ (\|F_V(t')\|_{L_x^2}+\|\p F_U(t')\|_{L_x^2})\E^\f12(t')+\|k(t')\|_{L_x^\infty}\E(t')\} dt'.
\end{equation}
Note   $|c', c^{-1}|\les 1$ due to (\ref{9.20.1.19}) and (\ref{5.06.7.19}). We can bound $|\bT \log c|\les |\bT \varrho|$.
Hence using (\ref{k1}) and the first equation in (\ref{4.23.1.19}), we have
\begin{equation} \label{9.20.2.19}
|k|\les |\p v|.
\end{equation}
In view of (\ref{9.20.2.19}), substituting (\ref{9.28.6.19}) and (\ref{9.28.5.19}) into (\ref{div0}) gives
\begin{equation*}
\E(t)\le \E(0)+ \int_0^t\{ (\|F_V(t')\|_{L_x^2}+\|\p F_U(t')\|_{L_x^2})\E^\f12(t')+\|\p v(t')\|_{L_x^\infty}\E(t')\} dt'.
\end{equation*}
(\ref{4.12.4.19}) follows immediately by using Gronwall's inequality and the bound $\|\p v\|_{L_t^1 L_x^\infty([0,T]\times {\mathbb R}^3)}\les 1$.

Applying (\ref{4.12.4.19}) to $(U\rp{1}, V\rp{1})$  with error terms  verifying (\ref{5.02.3.19}), we can obtain (\ref{5.02.2.19}). Applying (\ref{4.12.4.19})  to $(U\rp{1}_\la, V\rp{1}_\la)$ with error terms  (\ref{puuv}) substituted by $(U\rp{1}, V\rp{1})$ gives (\ref{4.13.1.19}).
\end{proof}
\begin{corollary}\label{eng_1}
Under the assumption of (\ref{BA1}), there hold the  energy estimates for $l=0,1$,
\begin{align}
&\E\rp{l}(t)^\f12\les \E\rp{l}(0)^\f12+\int_0^t\{ \|\p \p\rp{l} F_U, \p\rp{ l} F_V\|_{L_x^2}+l(\|\p v\|_{H^1_x}+\|\p \varrho\|_{H^1_x})(\|\p U\|_{L_x^\infty}+\| V\|_{L_x^\infty})\} dt'.\label{5.02.4.19}
\end{align}
\end{corollary}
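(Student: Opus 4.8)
I will obtain both estimates from Lemma \ref{peng}. The case $l=0$ is precisely (\ref{4.12.4.19}) (with $\p\rp{0}$ the identity), whose hypothesis $\|\p v\|_{L_t^1 L_x^\infty}\les 1$ follows from (\ref{BA1}) on the finite interval $[0,T]$ by Cauchy--Schwarz; so the substance is the case $l=1$, which I deduce from (\ref{5.02.2.19}) by inserting the explicit error terms (\ref{5.02.3.19}) and estimating each of them.

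By (\ref{5.02.3.19}), both $\|\p F_{U\rp{1}}\|_{L_x^2}$ and $\|F_{V\rp{1}}\|_{L_x^2}$ split into (i) the genuine inhomogeneous contribution $\|\p\p F_U,\p F_V\|_{L_x^2}$, which is retained as the stated source term, plus (ii) a finite sum of quadratic commutator terms of the schematic shapes $\p v\,\p U$, $\p v\,\p V$, $\p(c^2)\,\Delta_e U$, $\p\big(c^2\p(\log c)\big)\,\p U$, $V\,\p\Tr k$, together with $\p^2 v\,\p U$ and $\p v\,\p^2 U$ produced by the outer $\p_i$ acting on $\p_i v^m\p_m U$ in $\|\p F_{U\rp{1}}\|_{L_x^2}$. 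Since $c=c(\varrho)$ one has $|\p(c^2)|\les|\p\varrho|$ and, using (\ref{5.24.1.19}) with $\bT\varrho=-\div v$ from (\ref{4.23.1.19}), $|\p\Tr k|\les|\p^2 v|+|\p v||\p\varrho|$; moreover, by the definition (\ref{9.28.4.19}), $\E\rp{1}(t)^{\f12}$ controls $\|\p^2 U\|_{L_x^2}+\|\p V\|_{L_x^2}$, hence $\|\Delta_e U\|_{L_x^2}$ as well.

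These quadratic terms fall into two classes. In the first, one factor is one of the norms $\|\p v\|_{L_x^\infty}$, $\|\p\varrho\|_{L_x^\infty}$ --- finite pointwise in $t$ and, by (\ref{BA1}), square-integrable in $t$ on $[0,T]$ --- while the other factor is bounded by $\E\rp{1}(t)^{\f12}$; typical instances are $\|\p v\,\p^2 U\|_{L_x^2}\les\|\p v\|_{L_x^\infty}\E\rp{1}(t)^{\f12}$, $\|\p v\,\p V\|_{L_x^2}\les\|\p v\|_{L_x^\infty}\E\rp{1}(t)^{\f12}$, and $\|\p\varrho\,\Delta_e U\|_{L_x^2}\les\|\p\varrho\|_{L_x^\infty}\E\rp{1}(t)^{\f12}$. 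These are absorbed into the left side by Gronwall's inequality, exactly as in the proof of (\ref{4.12.4.19}). In the second class the top-order derivative lands in $L_x^2$ --- necessarily on $v$ or $\varrho$ --- while the remaining factor, $\p U$ or $V$, is placed in $L_x^\infty$: $\|\p^2 v\,\p U\|_{L_x^2}\les\|\p v\|_{H^1_x}\|\p U\|_{L_x^\infty}$ and $\|V\,\p\Tr k\|_{L_x^2}\les\|\p v\|_{H^1_x}\|V\|_{L_x^\infty}$ plus a harmless term of shape $\|\p v\|_{L_x^\infty}\|\p\varrho\|_{L_x^\infty}\|V\|_{L_x^2}$, whose two $L_x^\infty$-factors are square-integrable in $t$ by (\ref{BA1}). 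Summing over all such contributions produces precisely the stated factor $\big(\|\p v\|_{H^1_x}+\|\p\varrho\|_{H^1_x}\big)\big(\|\p U\|_{L_x^\infty}+\|V\|_{L_x^\infty}\big)$, and the purely lower-order self-interactions (such as the $(\p\varrho)^2\p U$ piece of $\p(c^2\p(\log c))\p U$) are dominated by the same two classes with the help of (\ref{ABA1}) and the $l=0$ bound. Collecting the source term together with the first (Gronwall) class and the second class in (\ref{5.02.2.19}) and applying Gronwall's inequality yields (\ref{5.02.4.19}) for $l=1$.

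The only delicate point --- the ``main obstacle'', modest as it is --- is the bookkeeping that no quadratic term ever forces an uncontrollable pairing $\|\p^2 v\|_{L_x^2}\|\p^2 U\|_{L_x^2}$ (or with $\varrho$ in place of $v$). This is exactly what the structure of (\ref{5.02.3.19}) and (\ref{5.24.1.19}) secures: differentiating $\p_i v^m\p_m U$ yields $\p^2 v\,\p U+\p v\,\p^2 U$ but never $\p^2 v\,\p^2 U$, and $\p\Tr k$ involves only $\p^2 v$ and $|\p v||\p\varrho|$, each then multiplied by the \emph{undifferentiated} factor $V$. Hence every quadratic term carries at most one second-order derivative, which is precisely what makes the dichotomy above --- and therefore the corollary --- work; no further structure of the equations (\ref{4.23.1.19}) is needed.
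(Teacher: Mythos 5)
Your proof is correct and follows essentially the same route as the paper: the $l=0$ case is (\ref{4.12.4.19}), and for $l=1$ you insert (\ref{5.02.3.19}) into (\ref{5.02.2.19}), split each commutator term so that at most one factor carries two derivatives, bound $\p\Tr k$ and $\p^2(c^2)$ via (\ref{5.24.1.19}) and (\ref{4.23.1.19}), and keep the top-order-in-$(v,\varrho)$ terms as the stated $H^1_x\times L_x^\infty$ product. The only difference is presentational: you make explicit the Gronwall absorption (via (\ref{BA1})) of the $\|\p v,\p\varrho\|_{L_x^\infty}\E\rp{1}(t')^{\f12}$ contributions, which the paper leaves implicit when it combines its two error estimates with (\ref{5.02.2.19}).
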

\begin{proof}
The case of  $l=0$ in (\ref{5.02.4.19}) is (\ref{4.12.4.19}). We only need to consider the first order estimate, for which need to bound
 the integrand of the right hand side of (\ref{5.02.2.19}). In view of (\ref{5.02.3.19}), we compute  that
\begin{align*}
\|\p F_{U\rp{1}}&\|_{L_x^2}\les \|\p^2 F_U\|_{L_x^2}+\|\p v\|_{L_x^\infty}\|\p^2 U\|_{L_x^2}+\|\p^2 v\|_{L_x^2}\|\p U\|_{L_x^\infty},\\
\|F_{V\rp{1}}\|_{L_x^2}&\les \|\p \varrho\|_{L_x^\infty}\|\Delta_e U\|_{L_x^2}+\|\p v\|_{L_x^\infty} \|\p V\|_{L_x^2}+\|\p \Tr k\|_{L_x^2} \|V\|_{L_x^\infty}\\
&+\|\p^2(c^2)\|_{L_x^2}\|\p U\|_{L_x^\infty}+\|\p F_V\|_{L_x^2}\\
&\les \|\p \varrho, \p v\|_{L_x^\infty}\E\rp{1}(t)+(\|V\|_{L_x^\infty}+\|\p U\|_{L_x^\infty})( \|\p^2 c\|_{L_x^2}+\||\p \Tr k\|_{L_x^2})+\|\p F_V\|_{L_x^2}.
\end{align*}
By using (\ref{5.24.1.19}) and the first equation in  (\ref{4.23.1.19}), $|c,c', c'', c^{-1}|\les 1$, and Sobolev embedding, we bound
\begin{equation*}
\|\p \Tr k\|_{L_x^2}+\|\p^2(c^2)\|_{L_x^2}\les \|\p v\|_{H^1_x}+\|\p \varrho\|_{H^1_x}.
\end{equation*}
 Combining the above two estimates, we can obtain (\ref{5.02.4.19}).
\end{proof}
The purpose of introducing the  modified current is mainly to control the higher order energy. For the low order estimates, we adopt the standard method.
\begin{proposition}[$0$-order energy]
Under the assumption (\ref{BA1}), there holds
\begin{equation}\label{9.21.1.19}
\|\bp v, \bp \varrho\|_{L^2(\Sigma_t)}\les \|\bp v, \bp \varrho\|_{L^2(\Sigma_0)}+\int_0^t \|\curl \Omega(t')\|_{L_x^2} dt'.
\end{equation}
\end{proposition}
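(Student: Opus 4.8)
The plan is to run the classical energy estimate for the wave equations (\ref{4.10.1.19}) and (\ref{4.10.2.19}) based on the energy--momentum tensor $Q[f]_{\a\b}$ of (\ref{gmom}) with the timelike multiplier $\bT$, applied component-by-component to $\Phi=v^i$ and to $\Phi=\varrho$. For a scalar $\Phi$ solving $\Box_\bg\Phi=N$ one has the standard identity
\[
\bd^\mu\big(Q[\Phi]_{\mu\nu}\bT^\nu\big)=N\,\bT\Phi+\f12\,Q[\Phi]_{\mu\nu}\,{}^{(\bT)}\pi^{\mu\nu},
\]
in which ${}^{(\bT)}\pi=\Lie_\bT\bg$ is the deformation tensor of $\bT$ --- this is exactly the computation occurring in the derivation of (\ref{dpu}), but now with the \emph{plain} current $Q[\Phi]_{\mu\nu}\bT^\nu$: no modified current is needed here, since the right-hand sides of (\ref{4.10.1.19})--(\ref{4.10.2.19}) for $v$ and $\varrho$ themselves contain no $\bT Y$ term. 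Integrating over $\bigcup_{0\le t'\le t}\Sigma_{t'}$ and applying the divergence theorem as in (\ref{div0}), and using that $Q[\Phi]_{\mu\nu}\bT^\mu\bT^\nu=\f12\big((\bT\Phi)^2+c^2\delta^{ij}\p_i\Phi\,\p_j\Phi\big)$ together with the two-sided bound $\f12 c_0<c<C_0$ from (\ref{9.20.1.19}), the boundary term on $\Sigma_t$ is comparable to $\|\bp\Phi\|^2_{L^2(\Sigma_t)}$; summing over $\Phi\in\{v^1,v^2,v^3,\varrho\}$ then reconstructs the quantities on both sides of (\ref{9.21.1.19}).

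It then remains to bound the spacetime bulk integral $\int_0^t\!\int_{\Sigma_{t'}}\big(N\,\bT\Phi+\f12 Q[\Phi]_{\mu\nu}{}^{(\bT)}\pi^{\mu\nu}\big)$. The deformation-tensor term is treated as in (\ref{9.28.6.19}): it is $\les\|k\|_{L_x^\infty}\,\E(t')\les\|\p v\|_{L_x^\infty}\,\E(t')$ by (\ref{9.20.2.19}). For the source term I would split into the two cases. If $\Phi=\varrho$, then $N=\sQ^0$ is a quadratic form in $\bp v,\bp\varrho$ with coefficients that are bounded smooth functions of $\varrho$ (using $|c,c',c^{-1}|\les1$), whence $\int_{\Sigma_{t'}}|\sQ^0\,\bT\varrho|\les\|\bp v,\bp\varrho\|_{L_x^\infty}\,\E(t')$. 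If $\Phi=v^i$, then $N=-e^\varrho c^2\,\curl\Omega^i+\sQ^i$; the first term of $\sQ^i$ is again quadratic in $\bp v,\bp\varrho$ and contributes $\les\|\bp v,\bp\varrho\|_{L_x^\infty}\,\E(t')$, while the term $2e^\varrho\,\tensor{\ep}{^i_a_b}\,\bT v^a\Omega^b$ \emph{vanishes identically} once contracted against $\bT v_i$, because the volume form is antisymmetric in $i$ and $a$ whereas $\bT v^i\,\bT v^a$ is symmetric. Hence the only genuinely new contribution is $\int_{\Sigma_{t'}}e^\varrho c^2\,\curl\Omega^i\,\bT v_i\,d\mu_g$, which by Cauchy--Schwarz and $|e^\varrho c^2|\les1$ is $\les\|\curl\Omega(t')\|_{L_x^2}\,\E(t')^{\f12}$.

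Collecting these bounds, with $\E(t)$ denoting the total energy $\sum_i\E[v^i](t)+\E[\varrho](t)$, I would obtain
\[
\E(t)\les\E(0)+\int_0^t\Big(\|\bp v,\bp\varrho\|_{L_x^\infty}\,\E(t')+\|\curl\Omega(t')\|_{L_x^2}\,\E(t')^{\f12}\Big)\,dt',
\]
and then conclude by Gronwall's inequality applied to $\E^{\f12}$, using that $\|\bp v\|_{L_t^1L_x^\infty}+\|\bp\varrho\|_{L_t^1L_x^\infty}\les1$ is a consequence of (\ref{BA1}). This yields $\E(t)^{\f12}\les\E(0)^{\f12}+\int_0^t\|\curl\Omega(t')\|_{L_x^2}\,dt'$, which is (\ref{9.21.1.19}). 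There is no serious obstacle at this low order; the one point worth flagging is the algebraic cancellation removing the $\Omega$-dependence of $\sQ^i$ against $\bT v$, since it is exactly what allows the vorticity to enter only through $\|\curl\Omega\|_{L_x^2}$, so that no $L^\infty$-control of $\Omega$ itself is needed at this stage.
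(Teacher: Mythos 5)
Your proof is correct and is essentially the paper's own argument: the standard $\bT$-multiplier energy identity (the paper's (\ref{qs}), i.e. (\ref{eng7.10})), coercivity of $Q[\cdot]_{\bT\bT}$ via (\ref{9.20.1.19}), the deformation-tensor term controlled by $\|\p v\|_{L_x^\infty}$ through (\ref{9.20.2.19}), and Gronwall using (\ref{BA1}), so that the vorticity enters only through $\int_0^t\|\curl\Omega\|_{L_x^2}$. The only cosmetic difference is the term $2e^\varrho\,\ep^{i}{}_{ab}\,\bT v^a\Omega^b$: you eliminate it by the (valid) antisymmetry cancellation against $\bT v_i$, whereas the paper simply bounds it in $L^2_x$ by $\|\bp\varrho\|_{L_x^\infty}\|\Omega\|_{L_x^2}$ (using $\bT v=-c^2\p\varrho$ and $|\Omega|\les|\p v|$) and absorbs it via Gronwall — both treatments are fine.
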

\begin{proof}
To prove (\ref{9.21.1.19}), we recall the standard energy approach.

By applying the divergence theorem to the energy current
$\P_\a^{(\bT)}[f]=Q[f]_{\a\b}\bT^\b$, we can obtain the energy identity
\begin{equation}\label{qs}
\int Q_{\a\b}\bT^\a\bT^\b(t)d\mu_g-\int Q_{\a\b}\bT^\a\bT^\b(0) d\mu_g
=-\int_{[0,t]\times{\mathbb R}^3} \left(\Box_{\bg} f \bd_\bT f+\f12 {}^{(\bT)}\pi_{\a\b} Q^{\a\b}\right).
\end{equation}

For any smooth scalar function, since
\begin{align*}
Q[f]_{\bT \bT}&=\f12\big( (\bT f)^2+c^2 \delta^{ij}\p_i  f \p_j f\big)
\end{align*}
there holds for  $0\le t\le T$
\begin{align}\label{norm_com}
\int_{\Sigma_t} Q[f]_{\bT \bT } d\mu_g\approx \int_{\Sigma_t}\big(|\p f|^2+|\bT f|^2\big) d\mu_e=\|\bp f(t)\|_{L_x^2}^2,
\end{align}
where the constant depends on $c_0$ and $C_0$.

 Since the non-trivial component of the deformation tensor $\pi_{ij}^{(\bT)}=-2k_{ij}$, we use (\ref{9.20.2.19}) to bound $|Q^{\a\b}{}\rp{\bT}\pi_{\a\b}| \les |\p v||\bp f|^2$.
 Due to  (\ref{BA1}),
\begin{equation}\label{eng7.10}
\|\bp f(t)\|_{L_x^2}\les \|\bp f(0)\|_{L_x^2}+ \int_0^t \|\Box_\bg f(t')\|_{L_x^2} dt'.
\end{equation}
Applying (\ref{eng7.10}) to  (\ref{4.10.1.19}) for $v$  and to (\ref{4.10.2.19}) for $\varrho$, and using (\ref{4.23.1.19}) lead to
\begin{align*}
&\|\bp v(t)\|_{L_x^2}+\|\bp \varrho(t)\|_{L_x^2}\\
&\les \|\bp v(0)\|_{L_x^2}+\|\bp \varrho(0)\|_{L_x^2}+\int_0^t(\|\Box_\bg v\|_{L_x^2}+\|\Box_\bg \varrho\|_{L_x^2})dt'\\
&\les \|\bp v(0)\|_{L_x^2}+\|\bp \varrho(0)\|_{L_x^2}+\int_0^t \{\|(\bp \varrho+\bp v)^2\|_{L_x^2} +\|\p \varrho \Omega\|_{L_x^2}+\|\curl \Omega(t')\|_{L_x^2}\} dt'\\
&\les \|\bp v(0)\|_{L_x^2}+\|\bp \varrho(0)\|_{L_x^2}+\int_0^t \|\bp \varrho, \bp v\|_{L_x^\infty} \|\bp \varrho, \bp v, \Omega\|_{L_x^2}  +\|\curl \Omega\|_{L_x^2}\} dt'.
\end{align*}
Since $|\Omega|\les |\p v|$, we will incorporate this term as part of $\bp v$ in the last line. The consequence drops out by using Gronwall's inequality and
(\ref{BA1}).
\end{proof}
In order to carry out energy estimate for the dyadic pairs $(U_\mu, V_\mu)$, we need to derive a series of product estimates and commutator estimates with Littlewood-Paley theory. Since the estimates are not limited to the applications in the energy estimates, we provide them in Section \ref{app}.

Now we give the energy inequality of the highest order.
\begin{proposition}\label{4.13.8.19}
With $0<\a<1$, under the assumption of (\ref{BA1}), there holds
\begin{align*}
\|\mu^\a \E\rp{1}_\mu(t)^\f12\|_{l_\mu^2 L_x^2}&\les \|\mu^\a \E\rp{1}_\mu(0)^\f12\|_{l_\mu^2 L_x^2}+T^\f12\sup_{0\le t'\le t}\E\rp{\le 1}(t')^\f12 \\
& +\int_0^t(\|\p v,\p(c^2), \Tr k\|_{H_x^{1+\a}}\|\p U, V\|_{L_x^\infty}+\|\p (\p F_U, F_V)\|_{\dot{H}^\a_x}) dt'.
\end{align*}
\end{proposition}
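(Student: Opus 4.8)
The plan is to apply the dyadic energy inequality (\ref{4.13.1.19}) from Lemma \ref{peng} to the pair $(U\rp{1}_\mu, V\rp{1}_\mu)$ with weight $\la^\a$, and then to estimate the two driving terms $\|\mu^\a \p F_{U\rp{1}_\mu}\|_{l_\mu^2 L_x^2}$ and $\|\mu^\a F_{V\rp{1}_\mu}\|_{l_\mu^2 L_x^2}$ by splitting them, via (\ref{puuv}) applied to $(U\rp{1},V\rp{1})$, into the commutator contributions and the genuine source contributions $P_\mu(\p F_{U\rp{1}})$, $P_\mu(F_{V\rp{1}})$. For the genuine source contributions, Littlewood-Paley almost-orthogonality gives $\|\mu^\a P_\mu(\p F_{U\rp{1}})\|_{l_\mu^2 L_x^2}\approx \|\p F_{U\rp{1}}\|_{\dot H^\a_x}$ and likewise for $F_{V\rp{1}}$; inserting the formulas (\ref{5.02.3.19}) for $F_{U\rp{1}}, F_{V\rp{1}}$ and using the fractional-Leibniz/product estimates in $\dot H^\a_x$ (with the low-high split putting $\p U$ or $V$ in $L^\infty_x$ and the rough factors $\p v$, $\p(c^2)$, $\Tr k$ in $H^{1+\a}_x$) produces exactly the integrand $\|\p v, \p(c^2), \Tr k\|_{H^{1+\a}_x}\|\p U, V\|_{L_x^\infty}+\|\p(\p F_U, F_V)\|_{\dot H^\a_x}$ claimed in the statement, after absorbing $\|\p^2 c\|_{H^\a}+\|\p\Tr k\|_{H^\a}\les \|\p v\|_{H^{1+\a}}+\|\p\varrho\|_{H^{1+\a}}$ by (\ref{5.24.1.19}) and (\ref{4.23.1.19}) as in Corollary \ref{eng_1}.

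The main work is the commutator terms in (\ref{puuv}): $[P_\mu, v^m]\p_m U\rp{1}$, $[P_\mu, v^m]\p_m V\rp{1}$, $[P_\mu, c^2]\Delta_e U\rp{1}$, $[P_\mu,\Tr k] V\rp{1}$ and $[P_\mu, c^2\p_l(\log c)]\p^l U\rp{1}$, each carrying a weight $\mu^\a$ and summed in $l_\mu^2 L_x^2$. I would handle these with the standard commutator estimate $\|\mu^\a[P_\mu,a]\p f\|_{l_\mu^2 L_x^2}\les \|\p a\|_{L_x^\infty}\|f\|_{\dot H^\a_x}+(\text{lower-frequency / better-behaved remainders})$ together with its variants that allow trading $\|\p a\|_{L^\infty_x}$ against the higher Sobolev norm of $a$ when the other factor sits at low frequency — precisely the Littlewood-Paley product and commutator estimates the author defers to Section \ref{app}. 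Since $\|\p v\|_{L^\infty_x}$, $\|\p\varrho\|_{L^\infty_x}$ (hence $\|\p(c^2)\|_{L^\infty_x}$, $\|\Tr k\|_{L^\infty_x}$ by (\ref{5.24.1.19}), (\ref{9.20.2.19})) are in $L^2_t$ and bounded by $1$ under (\ref{BA1}), these commutator contributions to $\int_0^t$ are controlled either by $\int_0^t \|\p v,\p\varrho\|_{L^\infty_x}\,\|\mu^\a\E\rp 1_\mu\|_{l^2_\mu}^{1/2}\,dt'$, which Gronwall absorbs into the left side, or — when forced to use the $H^{1+\a}_x$ norm of the coefficient paired with $\|\p U,V\|_{L^\infty_x}$ at low frequency — into the stated integrand, or finally, using $T^{1/2}$ from Cauchy–Schwarz in $t$ against the $L^2_t L^\infty_x$ bound $\les 1$ from (\ref{BA1}), into the term $T^\f12\sup_{0\le t'\le t}\E\rp{\le 1}(t')^{1/2}$. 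One must be slightly careful that the term $P_\mu F_V$ when $F_V$ contains $\Delta_e U$-type pieces (the $c^2\Delta_e U$ from (\ref{5.02.3.19})) does not cost two net derivatives on $U$: it is exactly matched against the $\Delta_e U\rp{1}$ already present, i.e. one keeps it grouped with the principal part, exactly as in the derivation of (\ref{dpu}) and Corollary \ref{eng_1}.

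The hard part will be the bookkeeping of the low-high versus high-low frequency interactions so that the rough coefficients $\p v, \Tr k, \p(c^2)$ are always the factor that receives either the $L^\infty_x$ norm (to be Gronwall-absorbed or Cauchy–Schwarz'd against $T^{1/2}$) or the $H^{1+\a}_x$ norm (to land in the final integrand), while the wave-function factors $\p U, V$ receive the complementary $L^\infty_x$ or $\dot H^\a_x$ norm — never demanding an $L^\infty_x$ bound on $\p^2 U$ or $\p V$ that we do not have. Concretely, in $\|[P_\mu, a]\p f\|$ one splits $a=P_{<\mu/8}a + P_{\sim\mu}a + \cdots$; the paraproduct with $a$ at low frequency gives the $\|\p a\|_{L^\infty_x}\|f\|_{\dot H^\a}$ term (Gronwall), while the piece with $a$ at frequency $\gtrsim \mu$ lets $f$ sit at comparable or lower frequency and produces $\|a\|_{H^{1+\a}_x}\|\p f\|_{L^\infty_x}$, i.e. $\|\p v,\Tr k,\p(c^2)\|_{H^{1+\a}_x}\|\p U, V\|_{L^\infty_x}$. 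Once this dichotomy is set up cleanly for each of the five commutators in (\ref{puuv}) and for the genuine sources, assembling $\int_0^t(\cdots)\,dt'$, invoking Gronwall on the $\|\p v,\p\varrho\|_{L^\infty_x}$-weighted term (using $\|\p v\|_{L^1_t L^\infty_x}\les 1$ from (\ref{BA1})), and bounding the residual $L^2_t L^\infty_x$-type contributions by $T^{1/2}$ times the energy, yields precisely the asserted inequality.
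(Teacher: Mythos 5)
Your proposal is correct and follows essentially the same route as the paper: apply the dyadic energy inequality (\ref{4.13.1.19}) to $(U\rp{1}_\mu,V\rp{1}_\mu)$, split the errors via (\ref{puuv}) into commutators handled by the Section \ref{app} estimates (\ref{4.13.2.19})--(\ref{4.13.4.19}) and genuine sources handled by the product estimates (\ref{9.20.5.19}), (\ref{5.05.2.19}), then close with Gronwall using $\|\p v,\p\varrho\|_{L_t^1 L_x^\infty}\les 1$ from (\ref{BA1}) and Cauchy--Schwarz in $t$ for the $T^{\f12}\sup_{t'}\E\rp{\le 1}$ term. The only slight imprecision is attributing a $c^2\Delta_e U$ piece to (\ref{5.02.3.19}) (there it is $\p_i(c^2)\Delta_e U$, while the $[P_\mu,c^2]\Delta_e U\rp{1}$ term comes from (\ref{puuv})), which does not affect the argument.
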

\begin{proof}
In view of (\ref{4.13.1.19}), we control the integrand with the help  of (\ref{puuv}).  Recall that
\begin{align*}
F_{U\rp{1}_\mu}&=-[P_\mu, v^m]\p_m U\rp{1}+P_\mu F_{U\rp{1}}\\
F_{V\rp{1}_\mu}&=[P_\mu, c^2]\Delta_e U\rp{1}+P_\mu F_{V\rp{1}}+[P_\mu, \Tr k] V\rp{1}-\f12 [P_\mu, \p(c^2)]\p U\rp{1}-[P_\mu, v^m]\p_m V\rp{1}.
\end{align*}
To treat the commutators, we will employ commutator estimates provided in Section \ref{app}.
By applying (\ref{4.13.4.19}) to $(F, G)=(v,U\rp{1})$, we can bound
\begin{equation}\label{9.20.3.19}
\begin{split}
\|\mu^\a \p F_{U\rp{1}_\mu}\|_{l_\mu^2 L_x^2}&\les \|\p[P_\mu, v] \p U\rp{1}\|_{l_\mu^2 L_x^2}+\|\mu^\a\p P_\mu F_{U\rp{1}}\|_{l_\mu^2 L_x^2}\\
&\les\|\mu^\a\p P_\mu F_{U\rp{1}}\|_{l_\mu^2 L_x^2}+\|\p v\|_{L_x^\infty}\|\p U\rp{1}\|_{H^\a_x}+\|\p v\|_{H^{1+\a}_x}\| U\rp{1}\|_{L_x^\infty}.
\end{split}
\end{equation}

To estimate $\|\mu^\a F_{V_\mu\rp{1}}\|_{l_\mu^2 L_x^2}$,
  we first apply (\ref{4.13.3.19}) to $(F,G)=(c^2, \p U\rp{1})$ and $(v, V\rp{1})$  to derive
  \begin{align*}
  \|\mu^\a[P_\mu, c^2] \Delta_e U\rp{1}\|_{l_\mu^2 L_x^2}+\|\mu^\a[P_\mu, v]\p V\rp{1}\|_{l_\mu^2 L_x^2}&\les \|\p \varrho\|_{L_x^\infty}\|\p U\rp{1}\|_{H^\a_x}+\|\p v\|_{L_x^\infty}  \|V\rp{1}\|_{H^\a_x}.
  \end{align*}
Note $V\rp{1}=\p V$.  Applying  (\ref{4.13.2.19}) to $(F,G)=(\p (c^2), U\rp{1})$ and $(\Tr k, V)$  yields
  \begin{align*}
  \|\mu^\a[P_\mu, \p (c^2)]&\p U\rp{1}\|_{l_\mu^2 L_x^2}+\|\mu^\a[P_\mu, \Tr k] V\rp{1}\|_{l_\mu^2 L_x^2}\\
  &\les \|\p \varrho, \Tr k \|_{L_x^\infty}\|\p U\rp{1}, V\rp{1}\|_{H^\a_x}+\|\p (c^2), \Tr k\|_{H^{1+\a}_x} \|U\rp{1}, V\|_{L_x^\infty}.
  \end{align*}
  Combining  the above two estimates in view of the formula of $F_{V_\mu\rp{1}}$ and also using  (\ref{9.20.2.19}) imply
\begin{equation}\label{9.20.4.19}
\begin{split}
\|\mu^\a F_{V\rp{1}_\mu}\|_{l_\mu^2 L_x^2}
&\les\|\mu^\a P_\mu F_{V\rp{1}}\|_{l_\mu^2 L_x^2}+ \|\p \varrho, \p v\|_{L_x^\infty}\|\p U\rp{1}, V\rp{1}\|_{H^\a_x}\\
&+\|\p(c^2), \Tr k \|_{H^{1+\a}_x}\|U\rp{1},V\|_{L_x^\infty}.
\end{split}
\end{equation}
It remains to bound the first terms on the right hand side  of (\ref{9.20.3.19}) and (\ref{9.20.4.19}). In view of (\ref{5.02.3.19}), we apply (\ref{9.20.5.19}) to $(F, G)=(\p v, \p U)$ to derive
\begin{align*}
\|\mu^\a P_\mu \p F_{U\rp{1}}\|_{l_\mu^2 L_x^2}&\les \|\p^2 F_U\|_{{\dot H}^\a_x}+\|\mu^\a P_\mu\p (\p v\c \p U)\|_{l_\mu^2 L_x^2}\\
&\les \|\p^2 F_U\|_{\dot{H}^\a_x}+\|\p^2 v\|_{H^\a}\|\p U\|_{L_x^\infty}+\|\p v\|_{L_x^\infty} \|\p^2 U\|_{H^\a_x}.
\end{align*}
Applying (\ref{5.05.2.19}) to $(F,G)=(V, \Tr k), (\p U, \p (c^2)), (\p v, V)$ and $(\p(c^2), \p U)$ yields
\begin{align*}
\|\mu^\a P_\mu F_{V\rp{1}}\|_{l_\mu^2 L_x^2}&\les \|\p v, \Tr k\|_{H^{1+\a}} \|V\|_{L_x^\infty}+\|\p(c^2)\|_{H^{1+\a}_x}\|\p U\|_{L_x^\infty}\\
&+\|\p v, \p \varrho \|_{L_x^\infty} \| V\rp{1}, \p U\|_{H^{1+\a}_x}+\|\p F_V\|_{\dot{H}^{\a}_x},
\end{align*}
where we also used (\ref{9.20.2.19}) to bound $|\Tr k|\les |\p v|$.

We summarize the above estimates as
\begin{align}
&\|\mu^\a P_\mu \p F_{U\rp{1}}\|_{l_\mu^2 L_x^2}+\|\mu^\a P_\mu F_{V\rp{1}}\|_{l_\mu^2 L_x^2}\label{9.29.2.19}\\
&\les \|\p v, \p(c^2), \Tr k\|_{H^{1+\a}} \|V, \p U\|_{L_x^\infty}+\|\p v, \p \varrho \|_{L_x^\infty} \| V\rp{1}, \p U\|_{H^{1+\a}_x}+\|\p^2 F_U\|_{\dot{H}^\a_x}+\|\p F_V\|_{\dot{H}^{\a}_x}\nn.
\end{align}

Substituting the inequality to (\ref{4.13.1.19}) implies that for $0<\a<1$,
\begin{align*}
\|\mu^\a \E_\mu(t)^\f12\|_{l_\mu^2 L_x^2}&\les \|\mu^\a \E_\mu^\f12(0)\|_{l_\mu^2 L_x^2}+ \int_0^t\|\p v, \p \varrho\|_{L_x^\infty}(\|\mu^\a\E\rp{1}_\mu(t')^\f12\|_{l_\mu^2}+\E\rp{\le 1}(t')) dt'\\
&+\int_0^t(\|\p v, \p(c^2), \Tr k \|_{H_x^{1+\a}}\|\p U, V\|_{L_x^\infty}+\|\p (\p F_U, F_V)\|_{\dot{H}^\a_x}) dt'.
\end{align*}
 Proposition \ref{4.13.8.19} follows by applying the Gronwall's inequality with the help of $\|\p v, \p \varrho\|_{L_t^1 L_x^\infty}\les 1$ due to (\ref{BA1}).
\end{proof}

The main task will be to control $\|\p(\p F_U, F_V)\|_{L_t^1 H^{\a}_x}$ with $0\le \a\le s-2$ for both $U=v_+$ and $U=\varrho$. In particular for $U=v_+$, due to (\ref{5.03.1.19}), we need to provide estimates for $\bp \eta$.  This will be carried out in the following subsection. 

\subsection{Preliminary Estimates for $\Omega$ and $\eta$} We first rely on the definition of  $\eta$ in (\ref{4.10.5.19}) and the equation  (\ref{4.10.3.19})  to prove the following estimates for the vorticity and  $\eta$.
\begin{lemma}\label{vort_1}
(1) For any $p\ge 2$,
\begin{align}
\|\Omega(t)\|_{L_x^p}&\les\|\Omega(0)\|_{L_x^p}\les 1;\label{5.03.4.19}
\end{align}

(2)  Let $0<\ep\le s-2$. For any $2\le p\le \frac{3}{1-\ep}$, there hold
\begin{align}
&\|\p \Omega, \p \fw\|_{L_x^p}\les \|\p \varrho\|_{L_x^p}+1,\label{5.03.5.19}\\
&\|\curl \Omega, \C\|_{L_x^p}\les 1,\label{5.03.8.19}
\end{align}
and
\begin{equation}\label{9.21.2.19}
\|\p^2\eta\|_{L_x^p}\les \|\p \varrho\|_{L_x^p}+1.
\end{equation}
\end{lemma}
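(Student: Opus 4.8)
\emph{Plan.} The two transport-type quantities $\Omega$ and $\C=e^{-\varrho}\curl\Omega$ will be estimated in $L^p_x$ by energy identities along the material derivative $\bT=\p_t+v^a\p_a$, and the bounds for $\p\Omega$, $\p\fw$, $\p^2\eta$ will then follow from the Hodge and elliptic structure. Throughout I use that \eqref{BA1} gives $\|\p v\|_{L^1_tL^\infty_x}\les T^{\f12}\les1$, and that by \eqref{10.22.1.19} and the evolution of $\varrho$ one has $|\varrho,v|\les1$ and $|c^{\pm1},c',c''|\les1$. For Part~(1): differentiating along $\bT$ and using \eqref{4.10.3.19} gives the pointwise bound $\bT|\Omega|\les|\p v|\,|\Omega|$, so along each integral curve of $\bT$ one obtains $\|\Omega(t)\|_{L^\infty_x}\les\|\Omega(0)\|_{L^\infty_x}\les1$, using $\Omega(0)=e^{-\varrho(0)}\fw(0)$ and $\fw(0)\in H^{s'}\hookrightarrow L^\infty$ since $s'>2$. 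For $p<\infty$, integrating $\bT(|\Omega|^p)=p|\Omega|^{p-2}\Omega_i\Omega^a\p_a v^i$ over $\Sigma_t$, after rewriting $\p_t\!\int_{\Sigma_t}=\int_{\Sigma_t}\bT-\int_{\Sigma_t}(\div v)$, yields $\frac{d}{dt}\|\Omega\|_{L^p_x}^p\les\|\p v\|_{L^\infty_x}\|\Omega\|_{L^p_x}^p$; Gronwall together with $\|\Omega(0)\|_{L^p_x}\les\|\fw(0)\|_{L^2_x\cap L^\infty_x}\les1$ gives \eqref{5.03.4.19}.

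For Part~(2) I first treat \eqref{5.03.8.19}, which reduces (up to the bounded factor $e^{\pm\varrho}$) to bounding $\|\C(t)\|_{L^p_x}$. From \eqref{4.25.4.19} one has $|\bT\C|\les|\p v|\,|\p\Omega|$, while the Calder\'on--Zygmund estimate for the $(\div,\curl)$ Hodge system on ${\mathbb R}^3$ (legitimate since $\Omega(t)\in L^2_x$ by Part~(1)), combined with \eqref{div} and the $L^\infty$ bound of Part~(1), gives $\|\p\Omega\|_{L^p_x}\les\|\Omega\|_{L^\infty_x}\|\p\varrho\|_{L^p_x}+\|\curl\Omega\|_{L^p_x}\les\|\p\varrho\|_{L^p_x}+\|\C\|_{L^p_x}$. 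Running the $L^p$ transport estimate for $\C$ as in Part~(1) and inserting this gives $\frac{d}{dt}\|\C\|_{L^p_x}\les\|\p v\|_{L^\infty_x}\big(\|\p\varrho\|_{L^p_x}+\|\C\|_{L^p_x}\big)$, hence $\|\C(t)\|_{L^p_x}\les\|\C(0)\|_{L^p_x}+\int_0^t\|\p v\|_{L^\infty_x}\|\p\varrho\|_{L^p_x}\,dt'$, where the datum term is $\les1$ by Sobolev embedding in the stated range of $p$. The case $p=2$ is closed by combining this with the zeroth-order energy identity \eqref{9.21.1.19} (in which $\|\curl\Omega\|_{L^2_x}\approx\|\C\|_{L^2_x}$ appears) in one Gronwall argument for $\|\bp v,\bp\varrho\|_{L^2_x}+\|\C\|_{L^2_x}$, using only \eqref{BA1} and $T\le1$; for $2<p\le3$ one interpolates $L^p_x$ between $L^2_x$ (from the case $p=2$) and $L^3_x$, whose time integral against $\|\p v\|_{L^\infty_x}$ is bounded by H\"older in $t$, \eqref{BA1} and the auxiliary bootstrap \eqref{ABA1}; for $3<p\le\frac{3}{1-\ep}<6$ one further interpolates with $L^6_x$ via $\dot H^1\hookrightarrow L^6$ using the first-order energy bound for $\varrho$ (self-contained, since $\sQ^0$ involves only $v,\varrho$). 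This yields \eqref{5.03.8.19}.

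Once \eqref{5.03.8.19} is known, the Hodge estimate above gives \eqref{5.03.5.19} for $\p\Omega$, and since $\fw=e^\varrho\Omega$ the Leibniz rule with the $L^\infty$ bounds of Part~(1) gives the same for $\p\fw$. For \eqref{9.21.2.19}, combining \eqref{4.10.5.19} with $\curl\fw^n=e^\varrho\curl\Omega^n+\ep^{nij}\fw_j\p_i\varrho$ I write $\eta=(I-\Delta_e)^{-1}\big(e^\varrho\curl\Omega+\fw\cdot\p\varrho\big)$; since $\p^2(I-\Delta_e)^{-1}$ is a Mikhlin multiplier (bounded on $L^p_x$ for $1<p<\infty$) and $\|\fw\|_{L^\infty_x}\les\|\Omega\|_{L^\infty_x}\les1$, we obtain $\|\p^2\eta\|_{L^p_x}\les\|\curl\Omega\|_{L^p_x}+\|\fw\|_{L^\infty_x}\|\p\varrho\|_{L^p_x}\les1+\|\p\varrho\|_{L^p_x}$.

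\emph{Expected main obstacle.} The only genuinely delicate point is the coupling in Part~(2) between the transport bound for $\C$ and the elliptic bound for $\p\Omega$: the $L^2$ level must be closed simultaneously with \eqref{9.21.1.19}, and bounding the forcing term $\int_0^t\|\p v\|_{L^\infty_x}\|\p\varrho\|_{L^p_x}\,dt'$ for $p>2$ is precisely where \eqref{ABA1} (and, near $p=6$, the first-order energy of $\varrho$) is needed — this is what caps the admissible exponent at $\frac{3}{1-\ep}$.
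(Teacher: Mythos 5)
Your skeleton -- transport estimates along $\bT$ for $\Omega$ and $\C$, the $(\div,\curl)$ Hodge estimate for $\p\Omega$, and the elliptic/multiplier bound for $\p^2\eta$ -- is the same as the paper's, and Part (1), \eqref{5.03.5.19} and \eqref{9.21.2.19} are fine. The problem is your treatment of \eqref{5.03.8.19}. After inserting the Hodge estimate you keep the forcing term in the form $\int_0^t\|\p v\|_{L^\infty_x}\|\p\varrho\|_{L^p_x}\,dt'$, which obliges you to produce a uniform bound for $\|\p\varrho(t)\|_{L^p_x}$ at this stage of the bootstrap. For $3<p\le\frac{3}{1-\ep}$ you propose to get it from $\dot H^1\hookrightarrow L^6$ together with a ``first-order energy bound for $\varrho$'', claimed self-contained because $\sQ^0$ involves only $v,\varrho$. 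That claim does not hold: the first-order energy estimate for $\varrho$ requires bounding $\|\p\sQ^0\|_{L^2_x}$ and the commutator/coefficient terms in \eqref{5.02.3.19} (e.g.\ $V\,\p\Tr k$, $\p(c^2)\Delta_e\varrho$), all of which contain $\p^2 v$; in this paper $\|\p^2 v\|_{L^2_x}$ is only controlled through the $v_+$ energy together with the $\eta$ and $\bT\eta$ estimates (Proposition \ref{leng}, Proposition \ref{eta_est}), and those in turn rest on precisely \eqref{5.03.4.19}--\eqref{9.21.2.19}. A direct first-order energy for $v$ is not an option either, since $\Box_\bg v$ contains $e^\varrho c^2\curl\Omega$ and one would then need $\|\p\curl\Omega\|_{L^1_t L^2_x}$, i.e.\ the $H^2$ vorticity bound of Section 3, which lies far downstream. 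So your argument for the upper range of $p$ is circular as written; the $2\le p\le 3$ cases, closed by coupling with \eqref{9.21.1.19} and using \eqref{ABA1}, do go through, though more laboriously than necessary.

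The repair is simple and is exactly what the paper does: estimate the product the other way, $\|\Omega\,\p\varrho\|_{L^p_x}\le\|\Omega\|_{L^p_x}\|\p\varrho\|_{L^\infty_x}\les\|\p\varrho\|_{L^\infty_x}$ by \eqref{5.03.4.19}, so the forcing becomes $\int_0^t\|\p v\|_{L^\infty_x}\|\p\varrho\|_{L^\infty_x}\,dt'\les\|\p v\|_{L^2_t L^\infty_x}\|\p\varrho\|_{L^2_t L^\infty_x}\les 1$ by \eqref{BA1}, and Gronwall closes for every $2\le p\le\frac{3}{1-\ep}$ with no recourse to energy estimates, \eqref{ABA1}, or interpolation. (Equivalently, interpolating $\|\p\varrho\|_{L^p_x}$ between $L^2_x$ and $L^\infty_x$ and using \eqref{BA1} in time would also do.) Keeping the lemma independent of the energy hierarchy matters here, because Corollary \ref{5.04.12.19} and Proposition \ref{leng} are built on top of it.
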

Substituting the estimate (\ref{5.03.8.19}) into (\ref{9.21.1.19})  implies the lowest order energy estimate,
\begin{corollary}\label{5.04.12.19}
\begin{align*}
&\|\bp \varrho, \bp v\|_{L^2(\Sigma_t)}\les \|\bp \varrho, \bp v\|_{L^2(\Sigma_0)}+T\les 1.
\end{align*}
\end{corollary}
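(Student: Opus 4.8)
The plan is to read the corollary off directly from the zeroth-order energy inequality (\ref{9.21.1.19}) and the uniform-in-time bound on $\curl\Omega$ contained in Lemma \ref{vort_1}. First I would recall (\ref{9.21.1.19}), which under the bootstrap assumption (\ref{BA1}) gives
\[
\|\bp v, \bp \varrho\|_{L^2(\Sigma_t)}\les \|\bp v, \bp \varrho\|_{L^2(\Sigma_0)}+\int_0^t \|\curl \Omega(t')\|_{L_x^2}\, dt' .
\]
The whole matter is then to dispose of the time integral. Since $\tfrac{3}{1-\ep}\ge 3>2$ for every admissible $\ep\in(0,s-2]$, the exponent $p=2$ is allowed in (\ref{5.03.8.19}), which therefore yields $\|\curl\Omega(t')\|_{L^2_x}\les 1$ uniformly for $t'\in[0,T]$; hence $\int_0^t\|\curl\Omega(t')\|_{L^2_x}\,dt'\les t\le T$, and we obtain $\|\bp v,\bp\varrho\|_{L^2(\Sigma_t)}\les \|\bp v,\bp\varrho\|_{L^2(\Sigma_0)}+T$.

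To upgrade this to the second inequality $\les 1$, I would invoke the data hypothesis of Theorem \ref{thm1}, namely $\|(\p v,\p\varrho,\fw)(0)\|_{H^{s-1}\times H^{s-1}\times H^{s'}}\le M$, together with the Euler relations (\ref{4.23.1.19}) (which express $\bT v$ and $\bT\varrho$ through $\p\varrho$ and $\div v$) and the uniform bounds on $c$ and $|v,\varrho|$ from (\ref{10.22.1.19}) and (\ref{9.20.1.19}); these reduce $\|\bp v,\bp\varrho\|_{L^2(\Sigma_0)}$ to a universal constant. Combining with the fact that $0<T<1$ is fixed in the bootstrap set-up then gives $\|\bp v,\bp\varrho\|_{L^2(\Sigma_t)}\les 1$.

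Since the substantive work has already been carried out in establishing (\ref{9.21.1.19}) and Lemma \ref{vort_1}, there is no genuine obstacle here; the only points needing a word of care are the admissibility of the exponent $p=2$ in Lemma \ref{vort_1} and the reduction of the $L^2(\Sigma_0)$ norm of $\bp v,\bp\varrho$ to the data norm of Theorem \ref{thm1} via the first-order system (\ref{4.23.1.19}).
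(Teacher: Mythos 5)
Your proposal is correct and is essentially the paper's own argument: the paper proves the corollary precisely by substituting (\ref{5.03.8.19}) (with $p=2$, which is admissible) into the zeroth-order energy inequality (\ref{9.21.1.19}), the data term being $\les 1$ by the hypotheses of Theorem \ref{thm1} together with (\ref{4.23.1.19}) and the convention that universal constants may depend on $M$, $C_1$, $c_0$. Your extra remarks on reducing $\|\bp v,\bp\varrho\|_{L^2(\Sigma_0)}$ to the data norm are consistent with Remark \ref{10.22.2.19} and are left implicit in the paper.
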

\begin{proof}[Proof of Lemma \ref{vort_1}]
The first inequality in (\ref{5.03.4.19}) can be obtained by integrating (\ref{4.10.3.19}) with the help of the bound $\|\p v\|_{L_t^1 L_x^\infty}\les 1$  due to (\ref{BA1}). The second one is due to Sobolev embedding $\|\Omega(0)\|_{L^p_x}\les \|\Omega(0)\|_{H^{\frac{3}{2}+}_x}\les 1$ for all $p>2$.

Similarly, by integrating (\ref{4.25.4.19}), for $2\le p\le \frac{3}{1-\ep}$
\begin{align}\label{5.03.6.19}
\|\C(t)\|_{L_x^p}&\les \|\C(0)\|_{L_x^p}+\int_0^t\|\p v\|_{L_x^\infty}\|\p \Omega\|_{L_x^p}.
\end{align}
Recall from (\ref{div}) and the definition of $\C$ that
\begin{equation*}
\div \Omega=-\Omega^a \p_a\varrho, \qquad \curl \Omega=e^\varrho \C
\end{equation*}
The $L^p$ estimate for the above Hodge system gives
\begin{equation*}
\|\p \Omega\|_{L_x^p}\les \|\Omega \c \p \varrho\|_{L_x^p}+\|\C\|_{L_x^p}.
\end{equation*}
Substituting the above estimate into (\ref{5.03.6.19}) implies
\begin{align*}
\|\C(t)\|_{L_x^p}\les \|\C(0)\|_{L_x^p}+\int_0^t \|\p v\|_{L_x^\infty}(\|\Omega \c \p \varrho\|_{L_x^p}+\|\C\|_{L_x^p}) dt'.
\end{align*}
By using (\ref{5.03.4.19}) for the estimate of $\|\Omega\|_{L_x^p}$ and applying  (\ref{BA1}) for $\|\p v, \p \varrho\|_{L_t^2 L_x^\infty}\les 1$, we can obtain
\begin{equation*}
\|\C(t)\|_{L_x^p}\les 1.
\end{equation*}
This gives the estimate of (\ref{5.03.8.19}) and we can bound
\begin{equation*}
\|\p \Omega\|_{L_x^p}\les \|\Omega\|_{L_x^\infty}\|\p \varrho\|_{L_x^p}+1\les \|\p \varrho\|_{L_x^p}+1
\end{equation*}
which is the first estimate in  (\ref{5.03.5.19}).

It is straightforward to compute,
 \begin{align*}
\curl \fw_i&=\curl(\Omega e^\varrho)_i=\tensor{\ep}{_i^{mn}}(\p_m \Omega_n+\Omega_n \p_m \varrho) e^\varrho.
 \end{align*}
We  hence have obtained the Hodge system
\begin{equation}\label{hodge_2}
\div \fw=0, \qquad  \curl \fw_n=e^\varrho\big((\curl \Omega)_n+\tensor{\ep}{_n^{ij}}\Omega_j \p_i \varrho\big).
\end{equation}
It follows by the  $L^p$ estimate for the above Hodge system, (\ref{5.03.8.19}) and (\ref{5.03.4.19}) that
\begin{equation}\label{5.03.7.19}
\|\p \fw\|_{L_x^p}\les\|\C\|_{L_x^p}+\|\p \varrho\|_{L_x^p}\|\Omega\|_{L_x^\infty}\les 1+\|\p \varrho\|_{L_x^p},
\end{equation}
which is the second estimate of (\ref{5.03.5.19}).

Also in view of (\ref{hodge_2}), (\ref{5.03.8.19}) and (\ref{5.03.4.19}), we have
\begin{align*}
\|\p^2\eta\|_{L_x^p}\les \|\p^2 \La^{-2} (\curl \fw)\|_{L_x^p}\les \|\Omega\c\p \varrho\|_{L_x^p}+\|\curl \Omega\|_{L_x^p}\les \|\p \varrho\|_{L_x^p}+1,
\end{align*}
which gives (\ref{9.21.2.19}). The proof of Lemma \ref{vort_1} is complete.
\end{proof}

Next, we give more estimates on $\eta$.
\begin{proposition}\label{eta_est}
There hold the following estimates for  $t\in[0,T]$, 
\begin{align}
&\|\eta\|_{H^2_x}\les 1,\quad \|\eta\|_{L_x^\infty}\les 1,   \label{5.03.9.19}\\
&\|\bT \eta\|_{H^l_x}\les 1+ l\big(\|\p \eta\|_{L_x^\infty}+\|\p v\|_{H^1_x}(\|\p \varrho\|_{L_x^3}+1)\big),\quad l=0,1\label{5.03.3.19_1}\\
&\|\p\bT \eta\|_{H^1_x}\les \|\p v\|_{H^1_x}(\|\p \eta\|_{L_x^\infty}+\|\p \varrho\|_{L_x^3}+1).\label{4.12.5.19_1}
\end{align}
\end{proposition}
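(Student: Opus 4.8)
The plan is to read off all three bounds from the defining elliptic identity $(I-\Delta_e)\eta^i=\curl\fw^i$ in (\ref{4.10.5.19}), exploiting systematically that $\La^{-2}$ composed with up to two spatial derivatives is bounded on $L^p_x$ for $1<p<\infty$, and that $\La^{-2}$ (resp.\ $\La^{-2}\p$) maps $L^1_x$ (resp.\ $L^{6/5}_x$) into $L^2_x$ on ${\mathbb R}^3$. Thus any time a velocity (or $\eta$) derivative of order one above the rest of a product appears, it can be moved onto $\La^{-2}$ by integration by parts.

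For (\ref{5.03.9.19}) I would use elliptic regularity, $\|\eta\|_{H^2_x}\les\|\curl\fw\|_{L^2_x}$, then the Hodge system (\ref{hodge_2}) to get $\|\curl\fw\|_{L^2_x}\les\|\curl\Omega\|_{L^2_x}+\|\Omega\|_{L^\infty_x}\|\p\varrho\|_{L^2_x}$, which is $\les1$ by (\ref{5.03.8.19}), (\ref{5.03.4.19}) and Corollary \ref{5.04.12.19}; then $\|\eta\|_{L^\infty_x}\les\|\eta\|_{H^2_x}\les1$ by Sobolev embedding on ${\mathbb R}^3$.

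For (\ref{5.03.3.19_1}) and (\ref{4.12.5.19_1}) I would commute $\La^2$ past $\bT$. Since $[\La^2,\p_i]=0$ and, by (\ref{cmu1}), $[\La^2,\bT]f=-(\Delta_e v^a)\p_a f-2\p_b v^a\p_b\p_a f$ for scalar $f$, applying $\bT$ (and then $\p$, $\p\p$) to (\ref{4.10.5.19}) gives
\begin{equation*}
\bT\eta^i=\La^{-2}\big(\bT\curl\fw^i\big)-\La^{-2}\big((\Delta_e v^a)\p_a\eta^i+2\p_b v^a\p_b\p_a\eta^i\big)
\end{equation*}
and its differentiated analogues — crucially without ever letting a second $\bT$ fall on $\eta$. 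To expand $\bT\curl\fw$ without creating two extra derivatives on $v$, I would write $\curl\fw=e^{2\varrho}\C+e^\varrho\,\ep\,\Omega\,\p\varrho$ (from (\ref{hodge_2}) and $\curl\Omega=e^\varrho\C$) and differentiate using $\bT\varrho=-\div v$ from (\ref{4.23.1.19}), $\bT\Omega^i=\Omega^a\p_a v^i$ from (\ref{4.10.3.19}), $[\bT,\p_i]=-\p_i v^a\p_a$, and, decisively, the transport equation (\ref{4.25.4.19}) for $\C$, whose right side carries only \emph{first} derivatives of $v$ and $\Omega$. This yields the benign products $\p v\cdot\p\Omega$, $\div v\cdot\C$ and triple products of $\{\p v,\Omega,\p\varrho\}$, together with one genuinely higher-order term $\Omega\cdot\p\div v$ arising from $\bT\p_i\varrho=-\p_i\div v-\p_i v^a\p_a\varrho$; that one is handled by $\La^{-2}(\Omega\,\p\div v)=\La^{-2}\p(\Omega\,\div v)-\La^{-2}((\p\Omega)\div v)$ and boundedness of $\La^{-2}\p$ on $L^2_x$. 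The commutator term is treated the same way, via $\La^{-2}((\Delta_e v^a)\p_a\eta)=\La^{-2}\p((\p v)\p\eta)-\La^{-2}((\p v)\p\p\eta)$ with $\|\p\p\eta\|_{L^2_x}\les1$ from (\ref{9.21.2.19}). The surviving factors are then estimated by H\"older together with Lemma \ref{vort_1}, $\|\eta\|_{H^2_x}\les1$, Corollary \ref{5.04.12.19} and Sobolev embedding; for $l=0$ every term closes with a universal constant, while for $l=1$ (and in (\ref{4.12.5.19_1})) the norms $\|\p v\|_{H^1_x}$, $\|\p\varrho\|_{L^3_x}$ and $\|\p\eta\|_{L^\infty_x}$ enter precisely through the H\"older pairings forced by the extra derivative, such as $\|\p\p v\|_{L^2_x}\|\p\Omega\|_{L^2_x}$, $\|\p v\|_{L^6_x}\|\p\varrho\|_{L^3_x}$ and $\|\p\p v\|_{L^2_x}\|\p\eta\|_{L^\infty_x}$. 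For (\ref{4.12.5.19_1}) the same scheme after applying $\p\p$ works, the roughest terms $\p^3 v$ and $\p^3\eta$ being absorbed by one further integration by parts against $\La^{-2}$ and $\La^{-2}\p$.

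The main — indeed only — obstacle is that every derivative hitting $\curl\fw$, directly or through the $[\La^2,\bT]$ commutator, produces velocity or $\eta$ derivatives a full order above what their companion factors can afford in $L^2_x$; the resolution is uniformly the two-derivative smoothing of $\La^{-2}$ combined with integration by parts inside it, supplemented by the cancellation already present in the transport equation (\ref{4.25.4.19}) for $\C=e^{-\varrho}\curl\Omega$. Everything else is routine H\"older--Sobolev bookkeeping.
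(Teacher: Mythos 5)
Your proposal is correct and follows essentially the paper's own route: the paper likewise writes $\bT\eta=\La^{-2}\big(\p(\p v\,\p\eta)+\p v\,\p^2\eta+\bT\curl\fw\big)$ by commuting $\bT$ with $I-\Delta_e$, computes $\bT\curl\fw$ from (\ref{4.10.3.19}), (\ref{4.25.4.19}), (\ref{4.23.1.19}) while keeping the higher-order pieces in divergence form so that the smoothing bounds (\ref{10.18.1.19}) for $\La^{-2}$, $\La^{-2}\p$, $\La^{-1}$ absorb the extra derivative, and then closes with H\"older--Sobolev plus Lemma \ref{vort_1} and Corollary \ref{5.04.12.19}. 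The only cosmetic difference is your remark about "$\p^3 v$ and $\p^3\eta$" in (\ref{4.12.5.19_1}): if one never expands the operators $\p^2\La^{-2}$ and $\p^2\La^{-2}\p$ (bounded of order $0$ and $1$), no third derivatives ever appear, which is exactly how the paper proceeds.
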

\begin{proof}
The first estimate in (\ref{5.03.9.19}) is obtained by using (\ref{9.21.2.19})  and Corollary \ref{5.04.12.19}, and the second one follows immediately as its consequence by Sobolev embedding.

Consider the estimate of $\bT \eta$.
We first derive the symbolic formula
\begin{equation}\label{9.21.4.19}
\bT (\curl \fw)_n=\p\big(\p v\Omega e^\varrho\big)+\C e^{2\varrho} \p v+\p v\c \p \varrho \Omega e^{\varrho}.
\end{equation}
Indeed,  by using the second identity in (\ref{hodge_2}), (\ref{4.10.3.19}), (\ref{4.25.4.19}) and the first equation in (\ref{4.23.1.19})  that
\begin{align*}
\bT (\curl \fw)_n&=\bT(\C_n e^{2\varrho})+\bT\big(\Omega_j \tensor{\ep}{_n^{ij}}\p_i(e^\varrho)\big)\\
&=\bT \C_n e^{2\varrho}+\C_n \bT (e^{2\varrho})+\bT \Omega_j \tensor{\ep}{_n^i^j} \p_i(e^\varrho)+\Omega_j \tensor{\ep}{_n^{ij}}\bT \p_i(e^\varrho)\\
&=(\bT \C_n+2\C_n\bT \varrho) e^{2\varrho}+\Omega^a \p_a v_j \tensor{\ep}{_n^i^j} \p_i(e^\varrho)+\Omega_j \tensor{\ep}{_{n}^{ij}}(\p_i \bT (e^\varrho)+[\bT, \p_i]e^\varrho)\\
&=-2\p_b(\Omega_j \p_a v^j\tensor{\ep}{_{n}^{ab}} e^\varrho)+\p_a v_n \curl \Omega^a e^\varrho+2\C_n e^{2\varrho} \bT \varrho+\p_i(\Omega_j \tensor{\ep}{_n^{ij}}\bT(e^\varrho))\\
&-(\curl \Omega)_n \bT (e^\varrho)+\Omega \p v \p e^\varrho\\
&=-2\p_b(\Omega_j \p_a v^j\tensor{\ep}{_{n}^{ab}} e^\varrho)+\p_i(\Omega_j \tensor{\ep}{_n^{ij}}\bT(e^\varrho))+\C_n e^{2\varrho} \p v+\Omega \p v \p e^\varrho,
\end{align*}
where we used (\ref{cmu1}) and also have written the higher order terms on the right hand side into divergence form. By using the first equation in (\ref{4.23.1.19}) again, we can obtain (\ref{9.21.4.19}).

We next show
\begin{equation}\label{9.21.3.19}
\|\p\rp{m}\La^{-2}\bT \curl \fw\|_{H^1_x}\les  m\|\p v\|_{H^1_x}\|\p \varrho\|_{L_x^3}+\|\p v\|_{H^1_x}^{\max(m-1,0)}, \quad m=0,1,2,
\end{equation}
by using the following standard estimates for scalar functions $F$,
\begin{equation}\label{10.18.1.19}
\|\La^{-1}F\|_{L^2_x}\les \|F\|_{L^\frac{6}{5}_x}, \qquad \|\La^{-2}F\|_{L_x^2}\les \|F\|_{L^1_x},
\end{equation}
which follows directly from the duality argument, Sobolev embedding and $L^2$ estimates for the Calderon-Zygmund operator. The constants in the inequalities are the universal Sobolev constants.

By using (\ref{10.18.1.19}), in view of (\ref{9.21.4.19}), we derive by using (\ref{5.03.4.19}),  (\ref{5.03.8.19}), Corollary \ref{5.04.12.19} and Sobolev embedding that
\begin{align*}
\|\La^{-2} \bT \curl \fw\|_{L_x^2}&\les \|\La^{-2}\p\big(\p v \Omega e^\varrho\big)\|_{L_x^2}+\|\La^{-2} \big(\p v(e^\varrho\C+ \p \varrho\Omega) e^\varrho\big)\|_{L_x^2}\\
&\les \|\p v\c \Omega\|_{L_x^\frac{6}{5}}+\|\p v e^\varrho(\C e^\varrho+\p \varrho \Omega)\|_{L_x^1}\\
&\les \|\p v\|_{L_x^2}\|\Omega\|_{L_x^3}+\|\p v\|_{L_x^2}\big(\|\p \varrho\|_{L_x^2}\|\Omega\|_{L_x^\infty}+\|\C\|_{L_x^2}\big)\les 1;
\end{align*}
and
\begin{align*}
\|\p\La^{-2}\bT \curl \fw\|_{L_x^2}&\les \|\p v\|_{L_x^2} \|\Omega\|_{L_x^\infty}+\|\La^{-1} \big(\p v(e^\varrho\C+ \p \varrho\Omega) e^\varrho\big)\|_{L_x^2}\\
& \les 1+\|\|\p v(e^\varrho\C+ \p \varrho\Omega) e^\varrho\|_{L_x^{\frac{6}{5}}}\\
&\les 1+\|\C\|_{L_x^3}\|\p v\|_{L_x^2}+\|\Omega\|_{L_x^\infty}\|\p v\|_{L_x^6}\|\p \varrho\|_{L_x^3} \\
&\les 1+\|\p v\|_{H^1_x}\|\p \varrho\|_{L_x^3}.
\end{align*}
Similarly, also using (\ref{5.03.5.19})
\begin{align*}
\|\p^2 \La^{-2} \bT \curl \fw\|_{L_x^2}&\les \|\p (\p v \Omega e^\varrho)\|_{L_x^2}+\|\p v (\C e^\varrho +\Omega\p \varrho) e^\varrho\|_{L_x^2}\\
&\les\|\p^2 v\|_{L_x^2}\|\Omega\|_{L_x^\infty}+\|\p v\|_{L_x^6}\|\C,\p \Omega\|_{L_x^3}+\|\Omega\|_{L_x^\infty}\|\p v\|_{L_x^6}\|\p \varrho\|_{L_x^3}\\
&\les \|\p v\|_{H^1_x}(\|\p \varrho\|_{L_x^3}+1).
\end{align*}
Therefore (\ref{9.21.3.19}) is proved.

On the other hand by the definition of $\eta$ in (\ref{4.10.5.19})
\begin{equation*}
\bT(I-\Delta_e) \eta=\bT \curl \fw
\end{equation*}
which implies
\begin{equation*}
\bT \eta=(I-\Delta_e)^{-1}(-[\bT, I-\Delta_e]\eta+\bT \curl \fw).
\end{equation*}
By using (\ref{cmu1}), there holds symbolically that
\begin{equation*}
[\bT, \Delta_e]\eta=\p(\p v\p \eta)+\p v(\p^2 \eta).
\end{equation*}
Thus,
\begin{equation}\label{4.14.1.19}
\bT\eta=\La^{-2} (\p(\p v\p \eta)+\p v \p^2 \eta+\bT\curl \fw ).
\end{equation}

By using (\ref{10.18.1.19}), we first give the base order estimate for $\bT \eta$ with the help of the above identity,
\begin{align*}
\|\bT \eta\|_{L_x^2}&\le \|\La^{-1}(\p v\p \eta)\|_{L_x^2}+\|\La^{-2}(\p v\p^2\eta)\|_{L_x^2}+\|\La^{-2} \bT \curl \fw\|_{L_x^2}\\
&\les \|\p v\p \eta\|_{L_x^\frac{6}{5}}+\|\p v\c\p^2 \eta\|_{L_x^1}+\|\La^{-2} \bT \curl \fw\|_{L_x^2}\\
&\les \|\p v \|_{L_x^2}(\|\p \eta\|_{L_x^3}+\|\p^2 \eta\|_{L_x^2})+\|\La^{-2} \bT \curl \fw\|_{L_x^2}\\
&\les 1+\|\La^{-2} \bT \curl \fw\|_{L_x^2},
\end{align*}
where we employed Corollary \ref{5.04.12.19}, Sobolev embedding and  the first estimate in  (\ref{5.03.9.19}).

For higher order derivatives, in view of (\ref{4.14.1.19}),  using (\ref{5.03.9.19}), Corollary \ref{5.04.12.19}, Sobolev embedding and the first estimate in (\ref{10.18.1.19}), we derive
\begin{align*}
\|\p \bT \eta\|_{L_x^2}&\les \|\p v \c \p \eta\|_{L_x^2}+\|\La^{-1}(\p v \c \p^2 \eta)\|_{L_x^2}+\|\p\La^{-2}\bT \curl \fw\|_{L_x^2}\\
&\les \|\p v\|_{L_x^2}\|\p\eta\|_{L_x^\infty}+\|\p v \c \p^2 \eta\|_{L^{\frac{6}{5}}_x}+\|\p\La^{-2}\bT \curl \fw\|_{L_x^2}\\
&\les  \|\p v\|_{L_x^2}\|\p \eta\|_{L_x^\infty}+\|\p v\|_{L_x^3}\||\p^2 \eta\|_{L_x^2}+\|\p\La^{-2}\bT \curl \fw\|_{L_x^2}\\
&\les \|\p \eta\|_{L_x^\infty}+\|\p v\|_{L_x^3}+\| \La^{-2}\bT \curl \fw\|_{H^1_x};\\
\|\p^2 \bT \eta\|_{L_x^2}&\les \|\p(\p v\p \eta)\|_{L_x^2}+\|\p v \c \p^2 \eta\|_{L_x^2}+\|\p \La^{-2} \bT\curl \fw\|_{H^1_x}\\
&\les \|\p^2 v\|_{L_x^2}\|\p \eta\|_{L_x^\infty}+\|\p v\|_{L_x^6}\|\p^2 \eta\|_{L_x^3}+\|\p \La^{-2} \bT\curl \fw\|_{H^1_x}\\
&\les \|\p v\|_{H^1_x}(\|\p \eta\|_{L_x^\infty}+\|\p \varrho\|_{L_x^3}+1)+\|\p \La^{-2} \bT\curl \fw\|_{H^1_x},
\end{align*}
where we also used (\ref{9.21.2.19}) to derive the last line.

Note by using Corollary \ref{5.04.12.19} and Sobolev embedding
$$\|\p v\|_{L_x^3}\les \|\p v\|_{H^1_x}^\f12\|\p v\|_{L_x^2}^\f12+\|\p v\|_{L_x^2}\les \|\p v\|_{H^1_x}^\f12+1.$$
Applying (\ref{9.21.3.19}) to the above inequalities leads to
\begin{align*}
&\|\bT \eta\|_{H^l_x}\les 1+ l\big(\|\p \eta\|_{L_x^\infty}+\|\p v\|_{H^1_x}(\|\p \varrho\|_{L_x^3}+1)\big),\quad l=0,1\\
&\|\p\bT \eta\|_{H^1_x}\les \|\p v\|_{H^1_x}(\|\p \eta\|_{L_x^\infty}+\|\p \varrho\|_{L_x^3}+1).
\end{align*}
These are (\ref{5.03.3.19_1}) and (\ref{4.12.5.19_1}).

\end{proof}

\subsection{Energy estimates for $v_+$ and $\varrho$}
  We will provide the energy estimates for $v_+$ and $\varrho$ in this subsection. To distinguish the energies with $(U, V, F_U, F_V)$ defined in (\ref{5.03.1.19}) and (\ref{5.03.2.19}), we denote the two sets of energies by $\E\rp{l}_{v_+}(t)$ and $\E\rp{l}_\varrho(t)$ respectively.

\subsubsection{Lower order energy estimates}
We give the first order energy estimate, where the $0$-order energy estimate in Corollary \ref{5.04.12.19} will be frequently used.
\begin{proposition} [First order energies]\label{leng}
 For $(U, V, F_U, F_V)$ given in (\ref{5.03.1.19}) and  (\ref{5.03.2.19}), there hold
\begin{itemize}
\item[(0)]
\begin{align}
&\|F_U\|_{H^1_x}\les   1+ \|\p \eta\|_{L_x^\infty}+\|\p v\|_{H^1_x}(\|\p \varrho\|_{L_x^3}+1);\label{4.14.2.19}\\
&\|\fZ\|_{L_x^2}+\|F_V\|_{L^2_x}\les \|\bp v, \bp \varrho\|_{L_x^\infty}+1+\|\p \varrho\|_{L_x^3}\label{4.14.3.19}
\end{align}
and in particular $F_U\equiv 0$ for $U=\varrho$.
\item[(1)]
\begin{align}
\|\p^2 F_U\|_{L_x^2}&\les (1+\|\p v_+\|_{H^1_x})(\|\p \eta\|_{L_x^\infty}+\|\p \varrho\|_{L_x^3}+1)\label{5.04.5.19}\\
\|\p F_V\|_{L_x^2}&\les (\|\bp v, \bp \varrho, \p \eta\|_{L_x^\infty}+\|\p \varrho\|_{L_x^3}+1)\big(\|\bp \varrho\|_{H_x^1}+\|\p^2 v_+\|_{L_x^2}+1\big).\label{5.04.6.19}
\end{align}
\item[(2)]
 \begin{align}
&\E\rp{\le 1}_{v_+}(t)^\f12+\E\rp{\le 1}_{\varrho}(t)^\f12\les \E\rp{\le 1}_{v_+}(0)^\f12+\E\rp{\le 1}_{\varrho}(0)^\f12+1\les 1\label{5.04.16.19}
 \end{align}
 \end{itemize}
\end{proposition}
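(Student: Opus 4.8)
The plan is to prove items (0), (1), (2) in order: (0) and (1) are careful but essentially mechanical product estimates built from the already-established bounds for $\eta$, $\bT\eta$, $\Omega$, $\fw$ (Lemma \ref{vort_1}, Proposition \ref{eta_est}) and the zeroth-order energy (Corollary \ref{5.04.12.19}); (2) then follows by feeding (0)--(1) into the first-order energy inequality \eqref{5.02.4.19} of Corollary \ref{eng_1} and running a short-time Gronwall argument. For (0): since $F_U=-\bT\eta$ when $U=v_+$ and $F_U\equiv 0$ when $U=\varrho$ (by \eqref{5.03.1.19}, \eqref{5.03.2.19}), the bound \eqref{4.14.2.19} is exactly the case $l=1$ of \eqref{5.03.3.19_1}. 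For \eqref{4.14.3.19} I would expand $F_V$ and $\fZ$ via \eqref{5.07.1.19}--\eqref{5.07.2.19}: in $\fZ$ every summand is an $L^\infty_x$ factor ($\bp v$ or $\bp\varrho$) times an $L^2_x$ factor ($\bp v$ or $\bp\varrho$), the latter $\les 1$ by Corollary \ref{5.04.12.19}, giving $\|\fZ\|_{L^2_x}\les\|\bp v,\bp\varrho\|_{L^\infty_x}$; the term $c^2\fw\c\p\varrho$ is handled by H\"older with $\|\fw\|_{L^6_x}\les 1$ (from $\|\fw\|_{L^2_x}+\|\p\fw\|_{L^2_x}\les 1$ via \eqref{5.03.5.19} and Corollary \ref{5.04.12.19}), producing the $\|\p\varrho\|_{L^3_x}$ contribution; and $c^2\eta$, $c^2\p(\log c)\c\p\eta$ are $\les\|\eta\|_{H^1_x}+\|\p\varrho\|_{L^\infty_x}\|\p\eta\|_{L^2_x}\les 1+\|\p\varrho\|_{L^\infty_x}$ by \eqref{5.03.9.19}.

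\textbf{Item (1).} Here $\|\p^2F_U\|_{L^2_x}=\|\p^2\bT\eta\|_{L^2_x}$ is \eqref{4.12.5.19_1} after replacing $\|\p v\|_{H^1_x}$ by $1+\|\p v_+\|_{H^1_x}$, using $v=v_++\eta$ and $\|\p\eta\|_{H^1_x}\les\|\eta\|_{H^2_x}\les 1$; this gives \eqref{5.04.5.19}. For \eqref{5.04.6.19} I would differentiate \eqref{5.07.1.19} term by term and bound each piece as (a factor from $\{\|\bp v,\bp\varrho,\p\eta\|_{L^\infty_x},\|\p\varrho\|_{L^3_x},1\}$) times (a factor from $\{\|\bp\varrho\|_{H^1_x},\|\p^2v_+\|_{L^2_x},1\}$). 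The routing is the point: any second derivative of $\varrho$ goes into $\|\bp\varrho\|_{H^1_x}$; each $\p^2 v$ is split by $v=v_++\eta$, with $\|\p^2\eta\|_{L^p_x}\les\|\p\varrho\|_{L^p_x}+1$ from \eqref{9.21.2.19} absorbing the $\eta$-part so only $\|\p^2v_+\|_{L^2_x}$ survives; and $\p(c^2\fw\c\p\varrho)$ produces $\fw\c\p^2\varrho$, bounded by $\|\fw\|_{L^\infty_x}\|\p^2\varrho\|_{L^2_x}\les\|\bp\varrho\|_{H^1_x}$ (using $\|\fw\|_{L^\infty_x}\les 1$, propagated from $\fw(0)\in H^{s'}\hookrightarrow L^\infty$ by the transport equation \eqref{4.10.3.19} and \eqref{BA1}), together with $\p\fw\c\p\varrho$ bounded by $\|\p\fw\|_{L^3_x}\|\p\varrho\|_{L^6_x}\les(\|\p\varrho\|_{L^3_x}+1)\|\bp\varrho\|_{H^1_x}$, and the remaining lower-order products handled exactly as in (0).

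\textbf{Item (2).} I would apply \eqref{5.02.4.19} with $l=1$ to $(U_{v_+},V_{v_+})$ and to $(U_\varrho,V_\varrho)$ and add. Set $\mathcal{N}(t)=\E\rp{\le 1}_{v_+}(t)^{\f12}+\E\rp{\le 1}_\varrho(t)^{\f12}$; then $\|\p v\|_{H^1_x}\les 1+\|\p v_+\|_{H^1_x}\les 1+\mathcal{N}$, $\|\p v_+,\bT v\|_{L^\infty_x}\le\|\bp v_+,\bp v\|_{L^\infty_x}$, and $\|\p\eta\|_{L^\infty_x}\les 1+\|\p\varrho\|_{L^3_x}+\mathcal{N}$ (from \eqref{9.21.2.19} and Morrey, since $\|\p^2\eta\|_{L^p_x}$ is available for some $p>3$). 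Combining with (0), (1), Corollary \ref{5.04.12.19}, and Corollary \ref{eng_1}, one obtains an inequality of the schematic form
\[
\mathcal{N}(t)\les\mathcal{N}(0)+\int_0^t A(t')\,\mathcal{N}(t')\,dt'+\int_0^t B(t')\bigl(1+\mathcal{N}(t')^2\bigr)\,dt',
\]
where $\|A\|_{L^1_t[0,T]}\les T^{\f12}$ (by Cauchy--Schwarz against $\|\bp v,\bp\varrho,\bp v_+\|_{L^2_tL^\infty_x}\le 1$ from \eqref{BA1} and $\|\p\varrho\|_{L^2_tL^3_x}\le 1$ from \eqref{ABA1}) and $\|B\|_{L^1_t[0,T]}\les T+T^{\f12}$. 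Since $\mathcal{N}(0)\les 1$ (from $\|\p v(0),\p\varrho(0)\|_{H^{s-1}_x}\le M$, $\bT\varrho(0)=-\div v(0)$, $\bT v(0)=-c^2\p\varrho(0)$, and $\|\eta(0)\|_{H^2_x}\les 1$), a standard continuity/bootstrap argument closes for $T\le T_*$ small depending only on $M$, giving $\mathcal{N}(t)\les 1$ on $[0,T_*]$, i.e. \eqref{5.04.16.19}.

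\textbf{Main obstacle.} The delicate part is the interplay of two features of (1)--(2): first, producing the precise factorized bound \eqref{5.04.6.19}---keeping every second derivative of $\varrho$ confined to the single factor $\|\bp\varrho\|_{H^1_x}$, reducing all of $\p^2 v$ to $\|\p^2 v_+\|_{L^2_x}$ plus harmless $\eta$-terms, and disposing of $\fw\c\p^2\varrho$ without a gain in $\varrho$-regularity (which is where $\|\fw\|_{L^\infty_x}\les 1$ is used); second, the fact that through $\|\p\eta\|_{L^\infty_x}$ and the cubic terms in $F_V$ the resulting differential inequality for $\mathcal{N}$ is genuinely quadratic rather than linear, so the closure is not a plain Gronwall estimate but a short-time bootstrap that must exploit the smallness of $T$ together with the smallness of the $L^2_t L^\infty_x$ and $L^2_t L^3_x$ norms furnished by \eqref{BA1} and \eqref{ABA1}.
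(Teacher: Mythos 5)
Your items (0) and (1) are correct and follow essentially the paper's own route: \eqref{4.14.2.19} and \eqref{5.04.5.19} are read off from \eqref{5.03.3.19_1}, \eqref{4.12.5.19_1} together with $\|\p^m v\|_{L^2_x}\les\|\p^m v_+\|_{L^2_x}+1$, and \eqref{4.14.3.19}, \eqref{5.04.6.19} are the same term-by-term product estimates (your bookkeeping of which factor produces $\|\p\varrho\|_{L^3_x}$ — through $\fw\c\p\varrho$ rather than $\p(\log c)\c\p\eta$ — is immaterial, since $\|\fw\|_{L^6_x}\les 1$ and $\|\p\eta\|_{L^6_x}\les 1$ are both available).

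Where you genuinely diverge is item (2). The paper rewrites every occurrence of $\|\p\eta\|_{L^\infty_x}$ via $\p\eta=\p v-\p v_+$ and puts it into the time-integrable coefficient $\|\bp v,\p v_+,\bp\varrho\|_{L^\infty_x}+\|\p\varrho\|_{L^3_x}+1$, which is controlled in $L^1_t$ directly by \eqref{BA1} and \eqref{ABA1}; the resulting inequality \eqref{10.18.3.19} is then linear in $\E\rp{\le 1}_{v_+}(t)^{\f12}+\E\rp{\le 1}_\varrho(t)^{\f12}$ and closes by a plain Gronwall argument, with no further smallness requirement on $T$ at this stage. You instead bound $\|\p\eta\|_{L^\infty_x}$ by the elliptic estimate \eqref{9.21.2.19} plus Morrey in terms of the energy itself, which makes the inequality quadratic and forces a continuity/bootstrap closure using smallness of $T$ (depending on $M$). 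This is admissible within the overall scheme, since $T_*$ is ultimately chosen small depending on universal constants, and your nonlinear closure is in fact robust enough to absorb the mildly superlinear remainders (e.g. the $\||\p\varrho|^2\|_{L^2_x}$ term coming from $\p\bT v$) that the linear bookkeeping treats somewhat cavalierly; what the paper's substitution buys is a cleaner argument — a genuinely linear Gronwall inequality whose coefficient smallness comes for free from the bootstrap assumptions, with no extra constraint on $T$ and no continuity argument needed.
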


\begin{proof}
We first give the $0$-order error estimates in (0).
Recall $F_U=(-\bT \eta,0)$, (\ref{4.14.2.19}) follows from (\ref{5.03.3.19_1}).
Recall from (\ref{5.07.1.19}) and  Corollary \ref{5.04.12.19}, using the first estimate in (\ref{5.03.9.19}), (\ref{5.03.4.19}) and Sobolev embedding we bound
 \begin{align*}
\|\fZ\|_{L_x^2}+ \|F_V\|_{L_x^2}&\les(\|\p v\|_{L_x^2}+\|\bp \varrho\|_{L_x^2})(\|\bp v\|_{L_x^\infty}+\|\bp \varrho\|_{L_x^\infty})+\|\eta, (\Omega, \p \eta)\c \p \varrho\|_{L_x^2}\\
 &\les \|\bp v, \bp \varrho\|_{L_x^\infty}+\|\eta\|_{L_x^2}+\|\Omega\|_{L_x^\infty}\|\p \varrho\|_{L_x^2}+\|\p\eta\|_{L_x^6}\|\p \varrho\|_{L_x^3}\\
 &\les \|\bp v, \bp \varrho\|_{L_x^\infty}+1+\|\p \varrho\|_{L_x^3}.
 \end{align*}
   This gives (\ref{4.14.3.19}).

 The estimate of (\ref{5.04.5.19}) follows from (\ref{4.12.5.19_1}),
$
\|\p^2 F_U\|_{L_x^2}= \|\p^2\bT\eta\|_{L_x^2},
$
 and   the derivative estimates
 \begin{align}\label{5.07.5.19}
 \|\p^m v\|_{L_x^2}&\les \|\p^m \eta\|_{L_x^2}+\|\p^m v_+\|_{L_x^2}\les \|\p^m v_+\|_{L_x^2}+1, \quad m=0,1,2,
 \end{align}
 which is derived by using (\ref{5.03.9.19}).

 Next we consider $\|F_V\|_{H^1_x}$. In view of (\ref{5.07.1.19}),
\begin{equation}\label{5.07.6.19}
\p F_V= \p\big((C(\varrho)+1)\c \fZ\big)+II
\end{equation}
with
$$II=\p \big (c^2 (\eta+\fw \c \p \varrho+\p (\log c)\c \p \eta)\big)$$ and  $\fZ$  given in (\ref{5.07.2.19}). Expanding $\p\fZ$ gives
 \begin{equation}\label{10.19.1.19}
 \p \fZ=(\bp \varrho+\bp v) \c (\p \bp \varrho+\p\bp v).
  \end{equation}
 It follows  by using (\ref{4.23.1.19}) and  (\ref{5.07.5.19})
  \begin{align}\label{5.06.9.19}
  \begin{split}
  \|\p \fZ\|_{L_x^2}&\les \|\bp \varrho,\bp v\|_{L_x^\infty} \c (\|\p\bp \varrho, \p^2 v_+, \p\bT v \|_{L_x^2}+1)\\
  & \les  \|\bp \varrho,\bp v\|_{L_x^\infty}(\|\bp \varrho\|_{H^1_x}+\|\p^2 v_+\|_{L_x^2}+\||\p \varrho|^2\|_{L_x^2}+1)\\
  &\les  \|\bp \varrho,\bp v\|_{L_x^\infty}(\|\bp \varrho\|_{H^1_x}+\|\p^2 v_+\|_{L_x^2}+1),
  \end{split}
  \end{align}
  where we have used Sobolev embedding on ${\mathbb R}^3$ and Corollary \ref{5.04.12.19} to bound
  \begin{equation}\label{9.21.5.19}
  \|\p\varrho\|_{L_x^4}\les \|\p \varrho\|_{H^1_x}^\frac{3}{4}\|\p \varrho\|_{L_x^2}^\frac{1}{4}\les \|\p \varrho\|_{H^1_x}+1.
  \end{equation}
   The estimate for the term $II$ in (\ref{5.07.6.19}) is performed term by term as follows:
  \begin{equation*}
  \|\p(c^2\eta)\|_{L_x^2}\les \|\p \varrho\|_{L_x^2}\|\eta\|_{L_x^\infty}+\|\p \eta\|_{L_x^2}\les 1
  \end{equation*}
 where we used (\ref{5.03.9.19}) and Corollary \ref{5.04.12.19}.
 \begin{align*}
 \|\p(c^2 \fw \p \varrho)\|_{L_x^2}&\les\|\p \varrho\|_{L_x^6}\|\p \fw \|_{L_x^3}+\|\fw\|_{L_x^\infty}\||\p \varrho|^2\|_{L_x^2}\\
&\les \|\p \varrho\|_{H^1_x}(\|\p \varrho\|_{L_x^3}+1)
 \end{align*}
 where we used (\ref{5.03.4.19}) and (\ref{5.03.5.19}) to bound $\fw$, used (\ref{9.21.5.19}) and Sobolev embedding to bound norms of $\p \varrho$. By using the first estimate in (\ref{5.03.9.19}) and (\ref{9.21.5.19}), we can obtain
  \begin{align*}
  \|\p(\p (c^2) \p \eta)\|_{L_x^2}&\les \|\p^2(c^2)\|_{L_x^2}\|\p\eta\|_{L_x^\infty}+\|\p^2 \eta\|_{L_x^2}\|\p(c^2)\|_{L_x^\infty}\\
  &\les (\|\p \varrho\|_{H^1_x}+1)\|\p \eta\|_{L_x^\infty}+\|\p \varrho\|_{L_x^\infty}.
  \end{align*}
  Combining the above three estimates gives
  \begin{equation*}
  \|II\|_{L_x^2}\les  (\|\p \varrho\|_{H^1_x}+1)(\|\p \eta\|_{L_x^\infty}+\|\p \varrho\|_{L_x^3}+1)+\|\p \varrho\|_{L_x^\infty}
  \end{equation*}
In view of  (\ref{5.07.6.19}), substituting the estimate for $\|II\|_{L_x^2}$, (\ref{5.06.9.19}) and the first estimate in (\ref{4.14.3.19})  leads to
\begin{align*}
\|\p F_V\|_{L_x^2}&\les \|\p \fZ\|_{L_x^2}+\|\p C(\varrho)\|_{L_x^\infty}\|\fZ\|_{L_x^2}+\|II\|_{L_x^2}\\
&\les (\|\bp v, \bp \varrho, \p \eta\|_{L_x^\infty}+\|\p \varrho\|_{L_x^3}+1)\big(\|\bp \varrho\|_{H_x^1}+\|\p^2 v_+\|_{L_x^2}+1\big).
\end{align*}
  This gives (\ref{5.04.6.19}).

We now rewrite  the error estimates (\ref{4.14.2.19})-(\ref{5.04.6.19}) in view of and $\p \eta=\p v-\p v_+$ as
\begin{align}\label{9.28.2.19}
\|F_U\|_{H^2_x}+\| F_V\|_{H^1_x}&\les (\|\bp v, \bp \varrho, \p v_+\|_{L_x^\infty}+\|\p \varrho\|_{L_x^3}+1)\big( \E\rp{\le 1}_{v+}(t)^\f12+\E\rp{\le 1}_\varrho(t)^\f12+1\big).
\end{align}

Substituting the above estimates to (\ref{5.02.4.19}) and also applying (\ref{5.07.5.19})  imply
\begin{align}\label{10.18.3.19}
\begin{split}
&\sum_{l=0}^1(\E\rp{l}_{v_+}(t)^\f12+\E\rp{l}_{\varrho}(t)^\f12)\\
&\les \int_0^t (\|\bp v, \p v_+, \bp \varrho\|_{L_x^\infty}+\|\p \varrho\|_{L_x^3}+1)(\E\rp{\le 1}_\varrho(t)^\f12+\E_{v_+}\rp{\le 1}(t)^\f12+1)\\
&+\sum_{l=0}^1(\E_{v_+}\rp{l}(0)^\f12+\E\rp{l}_{\varrho}(0)^\f12)
\end{split}
\end{align}
Similar to (\ref{5.07.5.19}), there holds the derivative estimate
\begin{align*}
\|\p^m v_+\|_{L_x^2}&\les \|\p^m \eta\|_{L_x^2}+\|\p^m v\|_{L_x^2}\les \|\p^m v\|_{L_x^2}+1, \quad m=0,1,2.
\end{align*}
Therefore when $t=0$,
also using  (\ref{4.23.1.19}),   $V_{v_+}=\bT v$ in (\ref{5.03.1.19}) and Sobolev embedding
$
\|\p \varrho\|_{L_x^4}\les \|\p \varrho\|_{H^1_x}$, we have
\begin{equation*}
\E\rp{l}_{v_+}(0)^\f12\les \|\p v(0)\|_{H^{l}_x}+\|\p \varrho(0)\|_{H^{l}_x},\quad l=0,1.
\end{equation*}

Similarly in view of (\ref{5.03.2.19}) and the first equation in (\ref{4.23.1.19}), we derive
\begin{align*}
\E\rp{l}_\varrho(0)^\f12 \les \|\p \varrho(0)\|_{H^l_x}+\|\p v(0)\|_{H^{l}_x},\quad l=0,1.
\end{align*}
Thus  we have the comparison result for the initial data that
\begin{equation*}
\E\rp{l}_{v_+}(0)+\E\rp{l}_\varrho(0) \les 1,\quad l=0,1.
\end{equation*}
Using (\ref{BA1}) and (\ref{ABA1}) and applying Gronwall's inequality to (\ref{10.18.3.19}),  (\ref{5.04.16.19}) can be proven.
 \end{proof}
\subsection{Improvement on (\ref{ABA1})}\label{aux_imp}
As a direct consequence of (\ref{5.04.16.19}), by Sobolev embedding, we obtain $\|\p \varrho\|_{L_x^3}\les \|\p \varrho\|_{H^1_x}\les 1$. Hence
\begin{equation}\label{9.21.6.19}
\|\p \varrho\|_{L_x^3}\les 1,\quad \quad \|\p \varrho\|_{L_t^2 L_x^3}\les T^\f12.
\end{equation}
Thus the bootstrap assumption (\ref{ABA1}) is improved.

We summarize some important estimates below for future reference.
 \begin{corollary}\label{comp_1}
 	Let $0<\ep\le s-2$. There hold
 \begin{align}
&\|\bp v, \p v_+, \bp \varrho\|_{H^1_x}+\|\bp C(\varrho)\|_{H^1_x}\les 1,\label{5.04.17.19}\\
 &\|\fw, \Omega\|_{H^1_x}\les 1, \quad \|\p \Omega, \p \fw, \p^2 \eta\|_{L_x^p}\les 1,\label{9.22.3.19}\\
 &\|\mbox{Ric}(g)\|_{L_x^2}\les 1,\label{10.21.2.19}
 \end{align}
 where $2\le p\le \frac{3}{1-\ep}$ and $C(y)$ are smooth functions; and  there hold the following error estimates
 \begin{equation}\label{9.28.3.19}
 \|F_U\|_{H^2_x}+\| F_V\|_{H^1_x}\les \|\bp v, \bp \varrho, \p v_+\|_{L_x^\infty}+1.
 \end{equation}
 \end{corollary}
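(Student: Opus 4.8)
The plan is to deduce every bound in the statement from the first-order energy control (\ref{5.04.16.19}), the improved auxiliary estimate (\ref{9.21.6.19}), and the preliminary estimates of Lemma \ref{vort_1} and Proposition \ref{eta_est}; all bounds are understood uniformly for $t\in[0,T]$. First I would unpack (\ref{5.04.16.19}) using the definition (\ref{9.28.4.19}) of the energies together with (\ref{5.03.1.19})--(\ref{5.03.2.19}): since $g^{ij}$ is comparable to the Euclidean metric by (\ref{9.20.1.19}), $\E\rp{\le 1}_{v_+}(t)\les 1$ gives $\|\p v_+\|_{H^1_x}+\|\bT v\|_{H^1_x}\les 1$ (recall $V_{v_+}=\bT v$), while $\E\rp{\le 1}_\varrho(t)\les 1$ gives $\|\p\varrho\|_{H^1_x}+\|\bT\varrho\|_{H^1_x}=\|\bp\varrho\|_{H^1_x}\les 1$. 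For $\|\bp v\|_{H^1_x}$ I would use the decomposition (\ref{dcp1}) and the elliptic bound $\|\eta\|_{H^2_x}\les 1$ from (\ref{5.03.9.19}) to get $\|\p v\|_{H^1_x}\le\|\p v_+\|_{H^1_x}+\|\p\eta\|_{H^1_x}\les 1$, which together with $\|\bT v\|_{H^1_x}\les 1$ yields $\|\bp v\|_{H^1_x}\les 1$. The bound $\|\bp C(\varrho)\|_{H^1_x}\les 1$ follows from $|\varrho|\les 1$ and $|c',c''|\les 1$ as in (\ref{5.06.7.19}), the chain rule, and product/Sobolev estimates on $\mathbb{R}^3$; the only quadratic term $\p\varrho\,\bp\varrho$ arising from $\p\,\bp C(\varrho)$ is handled by $\|\p\varrho\|_{L_x^4}\|\bp\varrho\|_{L_x^4}\les\|\bp\varrho\|_{H^1_x}^2\les 1$ via $H^1_x\hookrightarrow L_x^4$. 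Taken together these give (\ref{5.04.17.19}).

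Next, for (\ref{9.22.3.19}): $\|\fw\|_{H^1_x}\les 1$ is immediate from $\fw=\curl v$ and $\|\p v\|_{H^1_x}\les 1$, and then $\|\Omega\|_{H^1_x}\les 1$ follows from $\Omega=e^{-\varrho}\fw$ together with $|\varrho|\les 1$, $\|\bp\varrho\|_{H^1_x}\les 1$ by the product rule and Sobolev embedding. The remaining $L_x^p$-bounds are precisely (\ref{5.03.5.19}) and (\ref{9.21.2.19}) of Lemma \ref{vort_1}, once we observe that $\|\p\varrho\|_{L_x^p}\les\|\p\varrho\|_{H^1_x}\les 1$ for all $2\le p\le 6$, which covers the whole admissible range $2\le p\le\frac{3}{1-\ep}$ since $\ep\le s-2<\f12$. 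For the Ricci bound (\ref{10.21.2.19}), since $g_{ij}=c^{-2}\delta_{ij}$ is conformally flat with conformal factor $-\log c(\varrho)$, the components of $\mbox{Ric}(g)$ are schematically $C(\varrho)\p^2\varrho+C(\varrho)(\p\varrho)^2$ with $C$ smooth and bounded (using $|c,c',c'',c^{-1}|\les 1$); hence $\|\mbox{Ric}(g)\|_{L_x^2}\les\|\p^2\varrho\|_{L_x^2}+\||\p\varrho|^2\|_{L_x^2}\les\|\bp\varrho\|_{H^1_x}+\|\p\varrho\|_{L_x^4}^2\les 1$ by (\ref{9.21.5.19}).

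Finally, the error estimate (\ref{9.28.3.19}) is read off from (\ref{9.28.2.19}) by substituting the bounds $\E\rp{\le 1}_{v_+}(t)^\f12+\E\rp{\le 1}_\varrho(t)^\f12\les 1$ of (\ref{5.04.16.19}) and $\|\p\varrho\|_{L_x^3}\les 1$ of (\ref{9.21.6.19}) into its right-hand side, which collapses it to $\|\bp v,\bp\varrho,\p v_+\|_{L_x^\infty}+1$. None of these steps is genuinely hard, since the analytic work was already carried out in Proposition \ref{leng}, Lemma \ref{vort_1} and Proposition \ref{eta_est}; the main point to watch is the bookkeeping, in particular that $\E\rp{\le 1}$ controls $\p U$ but not $U$ itself, so that the $H^1_x$-bounds on $\bp v$ and $\bp\varrho$ must be assembled through the decomposition (\ref{dcp1}) and the first-order system (\ref{4.23.1.19}), and that the admissible exponent range $p\le\frac{3}{1-\ep}$ stays within reach of $H^1_x\hookrightarrow L_x^6$.
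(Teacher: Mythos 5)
Your proposal is correct and follows essentially the same route as the paper: (\ref{5.04.17.19}) from the first-order energies (\ref{5.04.16.19}) together with the $\eta$ bound (\ref{5.03.9.19}) and Sobolev/product estimates for $C(\varrho)$, the $L^p_x$ bounds in (\ref{9.22.3.19}) from (\ref{5.03.5.19}), (\ref{9.21.2.19}) and $\|\p\varrho\|_{L^p_x}\les\|\p\varrho\|_{H^1_x}$, the Ricci bound from the conformally flat structure $g_{ij}=c^{-2}\delta_{ij}$, and (\ref{9.28.3.19}) by substituting (\ref{5.04.16.19}) and (\ref{9.21.6.19}) into (\ref{9.28.2.19}). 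The only cosmetic difference is that you get $\|\fw\|_{H^1_x}$ directly from $\fw=\curl v$ rather than from (\ref{5.03.4.19})--(\ref{5.03.5.19}), which is equivalent bookkeeping.
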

 \begin{proof}
The first set of estimates in (\ref{5.04.17.19}) is a consequence of Corollary \ref{5.04.12.19} and (\ref{5.04.16.19}). The second  estimate follows by using the first set of estimates and the smoothness of $C''$ and Sobolev embedding.  Substituting (\ref{9.21.6.19})  and (\ref{5.04.16.19}) into (\ref{9.28.2.19}) implies (\ref{9.28.3.19}). It follows from (\ref{5.04.17.19}) and Sobolev embedding that $\|\p \varrho\|_{L^p_x}\les 1$ if $2\le p\le \frac{3}{1-\ep}$. The derivative estimates in  (\ref{9.22.3.19}) can then be derived by combining this estimate, (\ref{5.03.5.19}) and (\ref{9.21.2.19}). The $L^2_x$ estimates for $\fw$ and $\Omega$ have been obtained in (\ref{5.03.4.19}). Note
$
\mbox{ Ric}=g (\p^2 g+\p g \p g).
$
 (\ref{10.21.2.19}) is a consequence of the derivative estimate of $\varrho$ in (\ref{5.04.17.19}) together with Sobolev embedding, (\ref{9.20.1.19}) and $|\varrho|\le C$.
 \end{proof}

 Using \cite{Ander} or \cite[Theorem 5.4]{Petersen}, there exists a constant $d_0>0$ depending only on the constant bounds in (\ref{9.20.1.19}) and the bound of $\|\mbox{Ric}\|_{L_x^2}$, which is the uniform lower bound of radius of injectivity on $\Sigma_t$ for $t\in [0,T]$. We assume $T$ satisfies  $0<T\le \min(d_0, 1)$ throughout the paper.

 Due to Corollary \ref{5.04.12.19}, and for smooth function $C(y)$, there holds
  $
 \|C(\varrho), C'(\varrho)\|_{L_x^\infty}+\|\p\varrho\|_{H^1_x}\les 1.
 $
 This leads to
 \begin{lemma}\label{comp_dyd}
 Let $0<\a\le \f12$ be fixed. For $C(\varrho)$ a smooth function of $\varrho$, there hold for scalar functions $f$ that
 \begin{align}
 &\|\La^\a(C(\varrho) f)\|_{L_x^2}\les \|\La^\a f\|_{L_x^2}+ \|f\|_{L_x^2}, \label{9.22.4.19}\\
 &\|\La^{\f12+\a}(C(\varrho) f)\|_{L_x^2}\les\|\La^{\f12+\a} f\|_{L_x^2}+\|f\|_{L_x^2}.\label{9.22.5.19}
 \end{align}
 \end{lemma}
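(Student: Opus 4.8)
The plan is to prove the two product estimates \eqref{9.22.4.19} and \eqref{9.22.5.19} for multiplication by a smooth function $C(\varrho)$ of $\varrho$, using the regularity control $\|C(\varrho),C'(\varrho)\|_{L_x^\infty}+\|\p\varrho\|_{H^1_x}\les 1$ noted just before the lemma, together with Corollary \ref{5.04.12.19} and the uniform bounds $|\varrho,v|\les 1$, $|c,c',c''|\les 1$. The natural tool is the fractional Leibniz (Kato--Ponce type) inequality: for $0<\a\le \f12$,
\[
\|\La^\a(C(\varrho) f)\|_{L_x^2}\les \|\La^\a C(\varrho)\|_{L_x^{p_1}}\|f\|_{L_x^{q_1}}+\|C(\varrho)\|_{L_x^\infty}\|\La^\a f\|_{L_x^2},
\]
with an exponent pair $\frac1{p_1}+\frac1{q_1}=\f12$ to be chosen so that each factor on the right is controlled by the available norms. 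Since $\La^{1}(C(\varrho))=C'(\varrho)\p\varrho$ up to the constant coefficient from the first-order symbol, and $\|C'(\varrho)\p\varrho\|_{L_x^2}\les \|\p\varrho\|_{H^1_x}\les 1$ by Corollary \ref{comp_1}, interpolation between $L^\infty$ (for $\La^0 C(\varrho)$) and $\dot H^1$ (for $\La^1 C(\varrho)$) gives $\|\La^\a C(\varrho)\|_{L_x^{p_1}}\les 1$ for the appropriate $p_1=p_1(\a)$ via Sobolev embedding on $\mathbb R^3$. I would then choose $q_1$ accordingly and estimate $\|f\|_{L_x^{q_1}}\les \|\La^\a f\|_{L_x^2}+\|f\|_{L_x^2}$ again by Sobolev embedding, which is where the restriction $\a\le \f12$ enters (so that $q_1\le 6$ stays within the admissible Sobolev range in dimension $3$). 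Collecting terms yields \eqref{9.22.4.19}.

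For \eqref{9.22.5.19}, which asks for $\La^{\f12+\a}$ with $\a\le\f12$, i.e. up to one full derivative, the same Leibniz scheme applies but now one must distribute up to $\f12+\a\le 1$ derivatives. I would split
\[
\La^{\f12+\a}(C(\varrho) f)=C(\varrho)\La^{\f12+\a}f + \big[\La^{\f12+\a},C(\varrho)\big]f,
\]
bound the first term by $\|C(\varrho)\|_{L_x^\infty}\|\La^{\f12+\a}f\|_{L_x^2}\les \|\La^{\f12+\a}f\|_{L_x^2}$, and handle the commutator by the Kato--Ponce commutator estimate, which costs one derivative on $C(\varrho)$ and hence only involves $\nabla C(\varrho)=C'(\varrho)\p\varrho\in L^2_x\cap L^3_x$ (the latter by $\|\p\varrho\|_{L_x^3}\les 1$ from \eqref{9.21.6.19}) paired with a lower-order piece of $f$ measured in $L_x^q$ for a suitable $q$. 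Using $\|\p\varrho\|_{H^1_x}\les 1$, $\|C'(\varrho)\|_{L_x^\infty}\les 1$, and Sobolev embedding to absorb $\|f\|_{L_x^q}\les \|\La^{\f12+\a}f\|_{L_x^2}+\|f\|_{L_x^2}$, one arrives at \eqref{9.22.5.19}. As a preliminary reduction I would also record that $C(\varrho)$, being a smooth function of a function $\varrho$ with $|\varrho|\les1$, satisfies the same Sobolev-type bounds as $\varrho$ up to constants depending only on $C$ and the universal constants (this is the analogue of \eqref{5.06.7.19} at the level of fractional Sobolev norms), which can be proven by the chain rule and Moser-type estimates.

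The main obstacle I anticipate is purely bookkeeping of Sobolev exponents in dimension $3$: ensuring that for every value of $\a\in(0,\f12]$ one can pick the pair $(p_1,q_1)$ so that both $\|\La^{\text{something}}C(\varrho)\|_{L^{p_1}_x}\les1$ (which relies only on $\|\p\varrho\|_{H^1_x}\les1$, hence at most $\dot H^1\hookrightarrow L^6$) and $\|f\|_{L^{q_1}_x}\les\|\La^{\a'}f\|_{L^2_x}+\|f\|_{L^2_x}$ hold with $\a'\le \f12+\a\le 1$ — i.e. verifying the endpoint cases $\a=\f12$ (for the first estimate) and $\f12+\a=1$ (for the second) stay strictly inside the range where $\dot H^{\a'}\hookrightarrow L^{q'}$ with finite $q'$. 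This is exactly why the hypothesis fixes $\a\le\f12$; at $\a'=1$ one lands at $L^6$, which is still admissible, so the estimates close. No genuinely new analytic input is needed beyond the standard fractional Leibniz/commutator inequalities and the already-established bound $\|\p\varrho\|_{H^1_x}\les 1$.
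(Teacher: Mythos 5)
Your route is genuinely different from the paper's. The paper stays entirely dyadic: for \eqref{9.22.4.19} it applies its own Littlewood--Paley product estimate (\ref{9.08.1.19}) with $(F,G)=(C(\varrho),f)$ and then controls the Besov factor by Bernstein, $\|\mu^\a P_\mu C(\varrho)\|_{L_x^\infty}\les \|\mu^{\a+\f12}P_\mu \p C(\varrho)\|_{L_x^2}$, so that $\|\mu^\a P_\mu C(\varrho)\|_{l_\mu^2 L_x^\infty}\les\|\p C(\varrho)\|_{H^1_x}\les\|\p\varrho\|_{H^1_x}\les 1$; for \eqref{9.22.5.19} it splits $P_\mu(C(\varrho)f)=C(\varrho)P_\mu f+[P_\mu,C(\varrho)]f$ and uses the dyadic commutator estimate (\ref{lem4eq}) of Lemma \ref{lem2} for $0<\a<\f12$, handling the endpoint $\a=\f12$ separately by the ordinary product rule, $\|\p\varrho\|_{L^3_x}\les1$ and Sobolev. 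You instead import off-the-shelf Kato--Ponce fractional Leibniz/commutator estimates with H\"older exponents plus Gagliardo--Nirenberg interpolation of $C(\varrho)$; the inputs ($C,C'$ bounded, $\p\varrho\in H^1_x$, hence $\nabla C(\varrho)\in L^2_x\cap L^6_x$) are the same, and the scheme can be made to close, at the price of invoking external mixed-exponent estimates where the paper uses its self-contained appendix lemmas. You are also right to avoid the classical commutator bound with $\|\nabla C(\varrho)\|_{L^\infty_x}$, which is not available at fixed time with a universal constant.

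Two steps, however, need repair as written. First, your displayed Leibniz inequality carries $\|\La^\a C(\varrho)\|_{L_x^{p_1}}$ with finite $p_1$, and the identification ``$\La^1(C(\varrho))=C'(\varrho)\p\varrho$ up to the symbol constant''. Since $C(\varrho)$ does not decay (e.g. $c^2$, $e^\varrho$ tend to nonzero constants, and $\La^\a$ of a constant is that constant), this norm is infinite, so the stated inequality is vacuous; the Bessel potential retains the nondecaying zero-frequency part and is not captured by $\nabla C(\varrho)$. You must put the homogeneous $|D|^\a$ (or only the frequencies $\mu\ge1$, as the paper does) on the $C(\varrho)$ factor and absorb the low-frequency contribution into $\|C(\varrho)\|_{L^\infty}\|f\|_{L^2}$ --- this is precisely why $\|f\|_{L^2_x}$ appears on the right of \eqref{9.22.4.19}--\eqref{9.22.5.19}. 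Second, the exponent bookkeeping does not close if you interpolate only between $L^\infty$ and $\dot H^1$: that gives $p_1=2/\a$, forcing $\|f\|_{L^{2/(1-\a)}_x}$, and $2/(1-\a)>6/(3-2\a)$ for every $\a>0$, so $H^\a\hookrightarrow L^{q_1}$ is not enough. You need $\nabla C(\varrho)\in L^r_x$ with $r\ge 3$ (available from $\|\p\varrho\|_{H^1_x}\les1$ by Sobolev, or from (\ref{9.21.6.19}) for $r=3$), which yields $\||D|^\a C(\varrho)\|_{L^{r/\a}_x}\les\|C(\varrho)\|_{L^\infty}^{1-\a}\|\nabla C(\varrho)\|_{L^r_x}^{\a}\les1$ and a conjugate exponent $q_1\le 6/(3-2\a)$ compatible with $H^\a$; the analogous choice (e.g. $\|\nabla C(\varrho)\|_{L^3_x}\|\La^{s-1}f\|_{L^6_x}\les\|\La^{s}f\|_{L^2_x}$ with $s=\f12+\a$) closes the commutator term in \eqref{9.22.5.19}, including the case $\f12+\a=1$. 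With these corrections your argument goes through.
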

 \begin{proof}
For (\ref{9.22.4.19}), we apply (\ref{9.08.1.19}) to $(F, G)=\big(C(\varrho), f\big)$  to obtain
\begin{equation}\label{9.22.6.19}
\|\La^\a(C(\varrho) f)\|_{L_x^2}\le \|C(\varrho)\|_{L_x^\infty}\|\La^\a f\|_{L_x^2}+\|C(\varrho)\|_{B_{\infty, 2}^\a}\| f\|_{L_x^2}.
\end{equation}
Note that due to Bernstein inequality
\begin{equation*}
\|\mu^\a P_\mu C(\varrho)\|_{L_x^\infty}\les \|\mu^{\a+\f12}P_\mu \p C(\varrho)\|_{L_x^2}.
\end{equation*}
 Thus for all $0\le \a\le \f12$
 \begin{align*}
 \|\mu^\a P_\mu C(\varrho)\|_{l_\mu^2L_x^\infty}\les \|\p C(\varrho)\|_{H^1_x}\les \|\p \varrho\|_{H^1_x} \les 1.
 \end{align*}
Combining this inequality with  (\ref{9.22.6.19}) implies (\ref{9.22.4.19}).

For (\ref{9.22.5.19}) and $0<\a<\f12$,  we first derive
\begin{equation*}
\mu^{\f12+\a}\| P_\mu(C(\varrho) f)\|_{L_x^2}\le \mu^{\f12+\a}\|C(\varrho)P_\mu f\|_{L_x^2}+\mu^{\f12+\a}\|[P_\mu, C(\varrho)]f\|_{L_x^2}.
\end{equation*}
For the second term on the right hand side, we apply (\ref{lem4eq}) to $(F, G)=\big(C(\varrho), f)$  to derive
\begin{equation*}
\|\mu^{\f12+\a}[P_\mu, C(\varrho)]f\|_{l_\mu^2 L_x^2}\les \|\p C(\varrho)\|_{H^1_x}\|\La^\a f\|_{L_x^2}\les \|\La^\a f\|_{L_x^2},
\end{equation*}
where we used (\ref{5.04.17.19}). Combining the above  two inequalities gives (\ref{9.22.5.19}) for the case $0<\a<\f12$.

If $\a=\f12$, we can directly obtain (\ref{9.22.5.19}) by using (\ref{5.04.17.19}) and Sobolev embedding.
 \end{proof}

  \begin{proposition}
  	Let $\ep=s-2$. There hold the following estimates
\begin{align}
&\|\p \eta\|_{L^\infty}+\|\p^2\eta\|_{L_x^p}\les 1,\qquad 2\le p\le \frac{3}{1-\ep}\label{5.03.3.19}\\
&\|\p \Omega\|_{\dot{H}^\a_x}\les \|\curl \Omega\|_{\dot{H}^\a_x}+1, \quad 0<\a \le \frac{1}{2}+\ep\label{5.06.2.19}\\
&\|\curl \fw\|_{H^\a_x}\les \|\curl \Omega\|_{H^\a_x}+1, 0<\a \le \frac{1}{2}+\ep\label{9.22.8.19}\\
&\| \eta\|_{H^{2+\a}_x}\les\|\curl \Omega\|_{H^\a_x}+1,\quad  0<\a\le \f12+\ep\label{4.12.2.19}\\
&\|\bT \eta\|_{H^2_x}\les 1,\displaybreak[0] \label{4.12.5.19}\\
&\|\bT \eta\|_{L^\infty_x}\les 1, \label{5.04.1.19}\\
&\|\bT \eta\|_{H^{2+\ep}_x}\les \|\p v, \p \varrho\|_{H^{1+\ep}_x}+ \|\curl\Omega\|_{H^{\frac{1}{2}+\ep}_x}+1.\label{4.12.3.19}
\end{align}
\end{proposition}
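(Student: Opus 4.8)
The plan is to prove the seven estimates essentially in the order listed, since each one feeds the next and all of them rest on the elliptic representation $\La^2\eta=\curl\fw$ of (\ref{4.10.5.19}) together with the transported vorticity bounds already recorded in Lemma~\ref{vort_1} and Corollary~\ref{comp_1}.

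First, for (\ref{5.03.3.19}): the bound $\|\p^2\eta\|_{L^p_x}\les 1$ for $2\le p\le\frac{3}{1-\ep}$ is immediate from (\ref{9.21.2.19}) (equivalently (\ref{9.22.3.19})) and $\|\p\varrho\|_{L^p_x}\les 1$, which follows from (\ref{5.04.17.19}) and Sobolev embedding. For the $L^\infty$ bound on $\p\eta$ I would note that $\curl\fw$ is a component of $\p\fw$, so $\|\curl\fw\|_{L^p_x}\les\|\p\fw\|_{L^p_x}\les 1$ with $p=\frac{3}{3-s}$ by (\ref{9.22.3.19}); since $(I-\Delta_e)^{-1}$ maps $L^p_x$ into $W^{2,p}_x$, we get $\p\eta=\p(I-\Delta_e)^{-1}\curl\fw\in W^{1,p}_x$ with norm $\les 1$, and $W^{1,p}_x\hookrightarrow C^{0,\ep}_x\hookrightarrow L^\infty_x$ because $1-3/p=\ep>0$. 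This simultaneously records $\|\p\eta\|_{C^{0,\ep}_x}\les 1$, used repeatedly below, and by the same token $\|\Omega\|_{C^{0,\ep}_x}\les 1$ since $\Omega\in W^{1,p}_x$ with the same $p$ by (\ref{9.22.3.19}).

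Next, (\ref{9.22.8.19}), (\ref{4.12.2.19}) and (\ref{5.06.2.19}) are elliptic estimates. From the second identity in (\ref{hodge_2}), $\curl\fw=e^\varrho\curl\Omega+e^\varrho\,\tensor{\ep}{_n^{ij}}\Omega_j\p_i\varrho$; the first term is controlled by Lemma~\ref{comp_dyd}, using (\ref{9.22.4.19}) for $0<\a\le\f12$ and, writing $\a=\f12+\a'$ with $0<\a'\le\ep\le\f12$, using (\ref{9.22.5.19}) for $\f12<\a\le\f12+\ep$, together with $\|\curl\Omega\|_{L^2_x}\les 1$ from (\ref{5.03.8.19}); the term $\Omega\p\varrho$ is bounded in $H^\a_x$ by a universal constant for all $\a\le\f12+\ep$ via the product estimates of Section~\ref{app}, distributing $\La^\a$ onto $\Omega\in\dot W^{\a,q}_x$ (with $3/q=\max(0,\a-\ep)$, coming from $\Omega\in W^{1,\frac{3}{3-s}}_x$) and onto $\p\varrho\in L^{q'}_x$ with $q'\le 3$, both $\les 1$. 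This gives (\ref{9.22.8.19}), and then (\ref{4.12.2.19}) is immediate since $\|\eta\|_{H^{2+\a}_x}\approx\|(I-\Delta_e)\eta\|_{H^\a_x}=\|\curl\fw\|_{H^\a_x}$. For (\ref{5.06.2.19}) one uses the elliptic estimate for the $\div$--$\curl$ Hodge system on $\mathbb R^3$, $\|\p\Omega\|_{\dot H^\a_x}\les\|\div\Omega\|_{\dot H^\a_x}+\|\curl\Omega\|_{\dot H^\a_x}$, with $\div\Omega=-\Omega^a\p_a\varrho$ from (\ref{div}) and the same product bound on $\Omega\p\varrho$. Then (\ref{4.12.5.19}) and (\ref{5.04.1.19}) follow quickly: feeding $\|\p\eta\|_{L^\infty_x}\les 1$, $\|\p v\|_{H^1_x}\les 1$ from (\ref{5.04.17.19}) and $\|\p\varrho\|_{L^3_x}\les 1$ from (\ref{9.21.6.19}) into (\ref{4.12.5.19_1}) yields $\|\p\bT\eta\|_{H^1_x}\les 1$, and combining with the $l=0$ case of (\ref{5.03.3.19_1}) gives $\|\bT\eta\|_{H^2_x}\les 1$, after which $H^2(\mathbb R^3)\hookrightarrow L^\infty$ gives (\ref{5.04.1.19}).

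Finally (\ref{4.12.3.19}) is the hard one. Applying $\bT$ to $(I-\Delta_e)\eta=\curl\fw$ gives $(I-\Delta_e)\bT\eta=\bT\curl\fw+[\bT,\Delta_e]\eta$, so, since $(I-\Delta_e)^{-1}$ gains two derivatives, $\|\bT\eta\|_{H^{2+\ep}_x}\les\|\bT\curl\fw\|_{H^\ep_x}+\|[\bT,\Delta_e]\eta\|_{H^\ep_x}$. By (\ref{cmu1}) the commutator is symbolically $\p^2v\,\p\eta+\p v\,\p^2\eta$, while $\bT\curl\fw$ is given by (\ref{9.21.4.19}), i.e.\ $\p(\p v\,\Omega e^\varrho)+\C e^{2\varrho}\p v+\p v\,\p\varrho\,\Omega e^\varrho$. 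One estimates the $H^\ep_x$ norm of each of these products by the fractional Leibniz/paraproduct estimates of Section~\ref{app}, splitting $\|\La^\ep(AB)\|_{L^2_x}\les\|\La^\ep A\|\,\|B\|+\|A\|\,\|\La^\ep B\|$ and routing the $\La^\ep$ onto whichever factor the right-hand side can absorb: onto $\p v$ or $\p\varrho$ it produces $\|\p v,\p\varrho\|_{H^{1+\ep}_x}$; onto the vorticity it produces $\|\curl\Omega\|_{H^{\f12+\ep}_x}$, using crucially that by the already-proved (\ref{5.06.2.19}) with $\a=\f12+\ep$ one has $\|\Omega\|_{H^{3/2+\ep}_x}\les\|\p\Omega\|_{H^{\f12+\ep}_x}+1\les\|\curl\Omega\|_{H^{\f12+\ep}_x}+1$, and likewise $\|\C\|_{H^{\f12+\ep}_x}\les\|\curl\Omega\|_{H^{\f12+\ep}_x}+1$ by Lemma~\ref{comp_dyd}; every remaining factor is $\les 1$ ($\|\p\eta\|_{C^{0,\ep}_x}+\|\p^2\eta\|_{L^p_x}\les1$ from (\ref{5.03.3.19}), and $\bp v,\bp\varrho,\Omega,\C$ in the relevant $L^q_x$, $q\le 6$ resp.\ $q\le\frac{3}{1-\ep}$, by (\ref{5.04.17.19}), (\ref{9.21.6.19}), Corollary~\ref{5.04.12.19} and (\ref{5.03.8.19})), and the smooth factors $e^{\pm\varrho}$ are absorbed by (\ref{9.22.4.19})--(\ref{9.22.5.19}) since $\ep\le\f12$. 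Summing the pieces gives (\ref{4.12.3.19}).

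The main obstacle will be precisely this last step: since $\ep=s-2$ is small, the products in $\bT\curl\fw$ and $[\bT,\Delta_e]\eta$ sit at the critical regularity $H^\ep_x$ where none of $\p v$, $\p\varrho$, $\C$ lies in $L^\infty_x$, so one must split the $\ep$ derivative so that the factor left in a weak norm ($\p v$, $\p\varrho$ in $L^3_x$--$L^6_x$, $\C$ in $L^{3/(1-\ep)}_x$, $\p\eta$ in $C^{0,\ep}_x$) has just enough integrability while the other factor absorbs the derivative into an admissible high norm; keeping the right-hand side a sum rather than a product forces one to use the Hodge upgrade of $\|\curl\Omega\|_{H^{\f12+\ep}_x}$ to $\|\Omega\|_{H^{3/2+\ep}_x}$ and the precise Littlewood--Paley product and commutator inequalities of Section~\ref{app} exactly at their endpoints.
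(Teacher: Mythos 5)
Your treatment of (\ref{5.03.3.19}), (\ref{9.22.8.19}), (\ref{4.12.2.19}), (\ref{4.12.5.19}), (\ref{5.04.1.19}) and (\ref{4.12.3.19}) is essentially the paper's own argument: the $L^\infty$ bound on $\p\eta$ by Morrey/Sobolev from the $W^{2,p}$ information with $p=\frac{3}{1-\ep}>3$, the reduction of (\ref{9.22.8.19}) and (\ref{4.12.2.19}) to $\|e^\varrho\curl\Omega\|_{H^\a_x}$ plus $\|\Omega\,\p\varrho\|_{H^\a_x}$ via (\ref{hodge_2}) and Lemma \ref{comp_dyd}, the substitution of $\|\p\eta\|_{L^\infty_x},\|\p v\|_{H^1_x},\|\p\varrho\|_{L^3_x}\les 1$ into (\ref{5.03.3.19_1}) and (\ref{4.12.5.19_1}), and for (\ref{4.12.3.19}) the same decomposition $\bT\eta=\La^{-2}([\bT,\Delta_e]\eta+\bT\curl\fw)$ with the fractional Leibniz estimates (\ref{5.05.6.19}), (\ref{5.05.7.19}), (\ref{9.08.17.19}), (\ref{9.22.4.19}) routing $\La^\ep$ onto $\p v,\p\varrho$ or onto the vorticity (converted to $\|\curl\Omega\|_{H^{\f12+\ep}_x}$ through (\ref{5.06.2.19}) and (\ref{4.12.2.19})).

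The one place where you genuinely depart from the paper, and where your justification as written does not hold up, is the bound $\|\Omega\,\p\varrho\|_{H^\a_x}\les 1$ for $\f12<\a\le\f12+\ep$, which underlies both (\ref{5.06.2.19}) and (\ref{9.22.8.19}) in that range. You attribute it to "the product estimates of Section \ref{app}", but none of (\ref{9.08.1.19}), (\ref{9.08.8.19}), (\ref{9.22.7.19}) has the mixed-Lebesgue form you need: applying them with $(F,G)=(\Omega,\p\varrho)$ forces either $\|\Omega\|_{\dot H^{\f12+\a}_x}$ (more than one derivative of $\Omega$ in $L^2$) or $\|\p\varrho\|_{\dot H^{\f12+\a}_x}$, and at this stage of the bootstrap only $\|\Omega\|_{H^1_x},\|\p\varrho\|_{H^1_x}\les 1$ are available (the $H^{1+\ep}$ bounds come only in Section 3 and Corollary \ref{eng_wave}). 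The paper resolves exactly this point differently: it uses the exact Littlewood--Paley div--curl identity (\ref{5.11.1.19}), estimates $\Omega\,\p\varrho$ by (\ref{9.22.7.19}) so that the supercritical factor is $\|\Omega\|_{\dot H^{\f12+\a}_x}$, interpolates $\|\Omega\|_{\dot H^{\f12+\a}_x}\le\|\Omega\|_{\dot H^{1+\a}_x}^\theta\|\Omega\|_{\dot H^1_x}^{1-\theta}$, and absorbs the $\|\Omega\|_{\dot H^{1+\a}_x}^\theta\approx\|\p\Omega\|_{\dot H^\a_x}^\theta$ piece into the left side of (\ref{5.11.1.19}) by Young's inequality; no product estimate beyond the appendix is needed. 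Your alternative—feeding $\Omega\in W^{1,\frac{3}{1-\ep}}_x$ through the critical embedding into $H^{\f12+\ep,6}_x$ and pairing with $\p\varrho\in L^3_x$—can be made to work, but only by importing a genuine Kato--Ponce inequality with $L^6\times L^3$ splitting (and the exactly critical Triebel--Lizorkin/Sobolev embedding), neither of which is in the paper; at the endpoint $\a=\f12+\ep$ there is no slack, so the casual "both $\les 1$" hides the one estimate that actually has to be proved. Either cite and verify those external tools, or adopt the paper's interpolation-and-absorption argument, which is self-contained.
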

\begin{proof} The $L^p_x$ estimate in (\ref{5.03.3.19}) has been included in (\ref{9.22.3.19}). The estimate of $\|\p \eta\|_{L^\infty_x}$ is a consequence of the $L^p_x$ estimate and  (\ref{5.03.9.19}) by using Sobolev embedding.

Next we consider (\ref{5.06.2.19}). Note that  by using (\ref{div})
\begin{align*}
\int_{\Sigma_t} \mu^{2\a} |\p P_\mu \Omega|^2&=\int_{\Sigma_t}\mu^{2\a}\{|\curl P_\mu \Omega|^2+|\div P_\mu \Omega)|^2 \}dx\nn \\
&=\int_{\Sigma_t}\mu^{2\a}\{|P_\mu \curl \Omega|^2+|P_\mu \div \Omega|^2\} dx\nn\\
&=\int_{\Sigma_t}\mu^{2\a}\{|P_\mu \curl \Omega|^2+|P_\mu(\Omega \p \varrho)|^2 \}dx.
\end{align*}
Thus,
\begin{equation}\label{5.11.1.19}
\|\mu^\a \p P_\mu \Omega\|^2_{l_\mu^2 L_x^2}=\|\mu^\a P_\mu \curl \Omega\|^2_{l_\mu^2 L_x^2}+\|\mu^\a P_\mu(\Omega \p \varrho)\|^2_{l_\mu^2 L_x^2}.
\end{equation}
 We consider the second term on the right hand side of (\ref{5.11.1.19}) by applying (\ref{9.22.7.19}) to $(F, G)=(\Omega, \p \varrho)$, which  gives
\begin{equation}\label{9.22.9.19}
\|\mu^\a P_\mu(\Omega \p \varrho)\|_{l_\mu^2 L_x^2}\les \|\Omega\|_{\dot{H}^{\f12+\a}}\|\p\varrho\|_{H^1_x}+\|\Omega\|_{L_x^\infty}\|\p \varrho\|_{\dot{H}^\a_x}, \quad 0<\a\le \f12+\ep.
\end{equation}
If $\a\le \f12$, using (\ref{9.22.3.19}) and (\ref{5.03.4.19}), the right hand side is bounded by $\|\p \varrho\|_{H^1_x}$. In this case, by using (\ref{5.04.17.19}), we can obtain
\begin{equation*}
\|\p \Omega\|_{\dot{H}^\a_x}\les \|\curl \Omega\|_{\dot{H}^\a_x}+\|\p \varrho\|_{H^1_x}\les \|\curl \Omega\|_{\dot{H}^\a_x}+1.
\end{equation*}
 (\ref{5.06.2.19}) is proved.
 If $\a>\f12$, note that with $\theta=\frac{\a-\f12}{\a}$ and using (\ref{9.22.3.19}),
\begin{equation*}
\|\Omega\|_{\dot{H}^{\f12+\a}_x} \le \|\Omega\|_{\dot{H}^{1+\a}_x}^\theta\|\Omega\|_{\dot{H}^1_x}^{1-\theta}\les \|\Omega\|_{\dot{H}^{1+\a}_x}^\theta,
\end{equation*}
where we also used (\ref{5.03.5.19}).
Combining the above estimate with (\ref{5.11.1.19}) and (\ref{9.22.9.19}), by using Young's inequality, we can obtain (\ref{5.06.2.19}) for the case $\f12<\a\le \f12+\ep$.

 Recall from  the second equation of (\ref{hodge_2}), applying Lemma \ref{comp_dyd} and (\ref{9.22.9.19})
\begin{align*}
\|\curl \fw\|_{H^\a_x}&\les \|\curl \Omega\|_{H^\a_x}+\|\Omega \c \p \varrho\|_{H^\a_x}, \quad 0<\a\le \f12+\ep\\
&\les \|\curl \Omega\|_{H^\a_x} +\|\Omega\|_{H^{\f12+\a}_x}+1\\
&\les \|\curl \Omega\|_{H^\a_x}+1,
\end{align*}
where we also used (\ref{5.03.4.19}), (\ref{5.06.2.19}) and (\ref{5.04.17.19}) to derive the last two lines.  Thus (\ref{9.22.8.19}) is proved. (\ref{4.12.2.19}) is a direct consequence of (\ref{9.22.8.19}) and the definition of $\eta$ in (\ref{4.10.5.19}).

(\ref{4.12.5.19}) can be derived by substituting the first estimate in (\ref{5.03.3.19}),  (\ref{5.04.17.19}) and (\ref{9.21.6.19}) into the estimates (\ref{5.03.3.19_1}) and (\ref{4.12.5.19_1}). (\ref{5.04.1.19}) is its consequence due to Sobolev embedding.

Thus we complete the proof of (\ref{5.04.1.19}) in view of the first estimate in (\ref{5.03.3.19}).

 We next consider (\ref{4.12.3.19})  in view of (\ref{4.14.1.19}),
\begin{equation*}
\|\p^2 \bT \eta\|_{H^\ep_x}  \le I+J+K,
\end{equation*}
where
 \begin{align*}
I=\|\p (\p v\c \p \eta)\|_{H^\ep_x}, \quad J=\|\p v\c \p^2 \eta\|_{H^\ep_x}, \quad K=\|\bT \curl \fw\|_{H^\ep_x}.
 \end{align*}
For the first term $I$, applying (\ref{5.05.6.19}) to $F=\p v$, $G=\p \eta$ and $\a=\ep$ gives
\begin{equation*}
I\les \|\p v\|_{H^{1+\ep}_x}\|\p \eta\|_{L_x^\infty}+\|\p v\|_{H^1_x}\|\p \eta\|_{H^{\frac{3}{2}+\ep}_x}.
\end{equation*}
For the second term $J$, we apply (\ref{5.05.7.19}) to $F=\p v$, $G=\p \eta$ and $\a=\ep$ to derive
\begin{align*}
J&\les \|\p v\|_{H^{1+\ep}_x}\|\p \eta\|_{L_x^\infty}+\|\p v\|_{H^1_x}\|\p \eta\|_{H^{\frac{3}{2}+\a}_x}.
\end{align*}
Also by using (\ref{4.12.2.19}), (\ref{5.03.3.19}) and (\ref{5.03.9.19}), we can conclude that
\begin{equation*}
I+J\les \|\p v\|_{H^{1+\ep}_x}+\|\p v\|_{H^1_x} \|\curl \Omega\|_{H^{\f12+\ep}_x}.
\end{equation*}
For $K$, in view of (\ref{9.21.4.19}), we apply (\ref{9.22.4.19}) with $\a=\ep$ and $C(y)=e^y$ to derive
\begin{equation}
\begin{split}
\|\bT \curl \fw\|_{H^\ep_x}&\les \|\p\big(\p v \Omega e^\varrho\big)\|_{\dot{H}^\ep_x}+\|\C e^{2\varrho} \p v\|_{H^\ep_x}+\|\p v\c \p (e^\varrho)\Omega\|_{H^\ep_x}\\
&\les \|\p(\p v \Omega) \|_{H^\ep_x}+\|\curl \Omega \c \p v\|_{H^\ep_x}+\|\p v\c \p \varrho\Omega\|_{H^\ep_x} \label{9.22.1.19}.
\end{split}
\end{equation}
For the first term on the right hand side, we apply (\ref{5.05.6.19}) to $(F, G)=(\p v, \Omega)$ and $\a=\ep$ to derive
\begin{align*}
\|\p\big(\p v \Omega\big) \|_{H^\ep_x}&\les \|\p v\|_{H^{1+\ep}_x}\|\Omega\|_{L_x^\infty}+\|\p v\|_{H^1_x}\|\Omega\|_{H^{\frac{3}{2}+\ep}_x}\\
&\les \|\p v\|_{H^{1+\ep}_x}+\|\p v\|_{H^1_x}(\|\curl \Omega\|_{H^{\f12+\ep}_x}+1),
\end{align*}
where we also used (\ref{5.06.2.19}) with $\a=\f12+\ep$ and (\ref{5.03.4.19}).

For the second term on the right of (\ref{9.22.1.19}), by applying (\ref{5.05.7.19}) to $(F, G)=(\p v, \Omega)$ we derive
\begin{align*}
\|\La^\ep(\curl \Omega \p v)\|_{L_x^2}&\les \|\p v\|_{\dot{H}^{1+\ep}_x} \|\Omega\|_{L_x^\infty}+\|\p v\|_{H^1_x} \|\Omega\|_{H^{\frac{3}{2}+\ep}_x}\\
&\les \|\p v\|_{\dot{H}^{1+\ep}_x}+\|\p v\|_{H^1_x} (\|\curl \Omega\|_{H^{\f12+\ep}_x}+1),
\end{align*}
where we used (\ref{5.03.4.19})  and (\ref{5.06.2.19}).

For the third term on the right of (\ref{9.22.1.19}), we apply (\ref{9.08.17.19}) to $(G_1, G_2, G_3)=(\p v, \p \varrho, \Omega)$ that
\begin{align*}
\|\La^\ep(\p v \c \p \varrho\c \Omega)\|_{L_x^2}&\les \|\La^\ep \p v\|_{H^1_x}\|\p v\|_{H^1_x}\|\Omega \|_{H^1_x}+\|\La^\ep \Omega\|_{H^1_x}\|\p v\|_{H^1_x}\|\p \varrho\|_{H^1_x}\\
&+\|\La^\ep\p \varrho\|_{H^1_x}\|\p v\|_{H^1_x}\|\Omega\|_{H^1_x}.
\end{align*}
Since $\|\Omega\|_{H^1_x}\les 1$ in (\ref{9.22.3.19}), also using  (\ref{5.04.17.19}) and (\ref{5.06.2.19}),
\begin{equation*}
\|\La^\ep(\p v \c \p \varrho\c \Omega)\|_{L_x^2}\les \|\La^\ep(\p v, \p \varrho)\|_{H^1_x}+\|\La^\ep\curl \Omega\|_{L_x^2}+1.
\end{equation*}
Summing up the above estimates for three terms and also using (\ref{5.04.17.19})  imply
\begin{align*}
K=\|\bT \curl \fw\|_{H^\ep_x}\les \|\La^\ep(\p v, \p\varrho)\|_{H^1_x}+\|\curl \Omega\|_{H^{\f12+\ep}_x}+1.
\end{align*}
Combining the above estimate with the estimate for $I+J$ give (\ref{4.12.3.19}).
\end{proof}

 \subsubsection{Highest order energy estimates for $v_+$ and $\varrho$} We will control the highest order energy for $v_+$ and $\varrho$, by  giving the control of  $F_U$ and $F_V$ at the highest order, and using Proposition \ref{4.13.8.19}.

 \begin{proposition}\label{erro_h}
 Let $0<\ep\le s-2$.

(1) For $(U, V, F_U, F_V)$ in (\ref{5.03.1.19}) and (\ref{5.03.2.19}), there hold
\begin{align}
&\|\p^2 F_U\|_{H^\ep_x}\les\|\p v, \bp \varrho\|_{H^{1+\ep}_x}+\|\curl\Omega\|_{H^{\frac{1}{2}+\ep}_x}+1,\label{5.05.3.19}\\
&\|\p F_V\|_{H^\ep_x}\les (\|\p(\bp v, \bp \varrho)\|_{\dot{H}^\ep_x}+\|\curl\Omega\|_{H^\ep_x} +1)(\|\bp v, \bp \varrho\|_{L_x^\infty}+1). \label{5.05.4.19}
\end{align}

(2) Denote $\E\rp{1}_\mu(t):= \E\rp{1}_{v_+,\mu}(t)+\E\rp{1}_{\varrho,\mu}(t)$  for short. There hold the following energy estimate
\begin{equation}\label{5.11.3.19}
\begin{split}
\|\mu^\ep \E\rp{1}_\mu(t)^\f12\|_{l_\mu^2}&\les 1+\int_0^t\|\curl\Omega\|_{H^{\f12+\ep}_x}(\|\bp\varrho, \p v_+\|_{L_x^\infty}+1)dt'\\
&+\|\mu^\ep\E\rp{1}_\mu(0)^\f12\|_{l_\mu^2}.
\end{split}
\end{equation}
 \end{proposition}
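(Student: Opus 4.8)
\textbf{Proof proposal for Proposition \ref{erro_h}.}
The plan is to treat the two parts in sequence, deriving the highest-order error estimates \eqref{5.05.3.19}--\eqref{5.05.4.19} first and then feeding them into Proposition \ref{4.13.8.19} with $\a=\ep$ to obtain the energy inequality \eqref{5.11.3.19}. For part (1), recall that $F_U=(-\bT\eta,0)$, so $\|\p^2 F_U\|_{H^\ep_x}=\|\p^2\bT\eta\|_{H^\ep_x}$ and \eqref{5.05.3.19} is precisely the content of \eqref{4.12.3.19}, after using \eqref{5.07.5.19} to trade $\p v$ against $\p v_++1$ if desired. For $F_V$, I would start from the decomposition \eqref{5.07.6.19}, $\p F_V=\p\big((C(\varrho)+1)\fZ\big)+II$ with $II=\p\big(c^2(\eta+\fw\c\p\varrho+\p(\log c)\c\p\eta)\big)$ and $\p\fZ=(\bp\varrho+\bp v)(\p\bp\varrho+\p\bp v)$ from \eqref{10.19.1.19}. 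For the $\fZ$-piece I would apply the product/Moser-type estimates from Section \ref{app} (those of the form \eqref{5.05.6.19}--\eqref{5.05.7.19}) to split $\|\La^\ep(\p\Phi\c\p\bp\Phi)\|_{L^2_x}$ into $\|\bp\Phi\|_{L^\infty_x}\|\p\bp\Phi\|_{H^\ep_x}$ plus lower terms, then use \eqref{4.23.1.19} to convert $\bT$-derivatives of $v,\varrho$ into spatial derivatives and \eqref{5.07.5.19} to pass from $v$ to $v_+$; the smooth factor $C(\varrho)$ is absorbed by Lemma \ref{comp_dyd} (i.e. \eqref{9.22.4.19}), using $\|\p\varrho\|_{H^1_x}\les1$ from \eqref{5.04.17.19}. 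For the $II$-piece I would estimate term by term exactly as in the proof of Proposition \ref{leng}: $\|\p(c^2\eta)\|\les1$ via \eqref{5.03.9.19}; $\|c^2\fw\p\varrho\|$ and $\|\p(c^2)\p\eta\|$ via \eqref{5.03.4.19}, \eqref{9.22.3.19}, \eqref{5.03.3.19} and Sobolev embedding; the only genuinely high-order contribution, $\|\p(c^2\fw\p\varrho)\|_{H^\ep_x}$, produces a factor $\|\p\fw\|_{H^{\f12+\ep}_x}$ which \eqref{9.22.8.19} bounds by $\|\curl\Omega\|_{H^\ep_x}+1$, and similarly $\|\p^2\eta\|_{H^\ep_x}$ is controlled by \eqref{4.12.2.19}. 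Collecting terms and factoring out $(\|\bp v,\bp\varrho\|_{L^\infty_x}+1)$ yields \eqref{5.05.4.19}.

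For part (2), I would apply Proposition \ref{4.13.8.19} with $\a=\ep\in(0,1)$ to each of $U=v_+$ and $U=\varrho$ and sum. The right-hand side contains three groups of terms. The initial-data term $\|\mu^\ep\E\rp1_\mu(0)^{1/2}\|_{l^2_\mu}$ is kept as is. The term $T^{1/2}\sup_{t'}\E\rp{\le1}(t')^{1/2}$ is bounded by $CT^{1/2}\les1$ using Corollary \ref{5.04.12.19} and \eqref{5.04.16.19}. The main integral $\int_0^t\big(\|\p v,\p(c^2),\Tr k\|_{H^{1+\ep}_x}\|\p U,V\|_{L^\infty_x}+\|\p(\p F_U,F_V)\|_{\dot H^\ep_x}\big)dt'$ is handled as follows. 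Using \eqref{5.24.1.19}, \eqref{4.23.1.19} and the smoothness of $c$, one has $\|\p(c^2),\Tr k\|_{H^{1+\ep}_x}\les\|\p v,\p\varrho\|_{H^{1+\ep}_x}\les\E\rp1_\mu$-type norm $+1$; more precisely $\|\p v\|_{H^{1+\ep}_x}\les\|\p v_+\|_{H^{1+\ep}_x}+\|\p\eta\|_{H^{1+\ep}_x}\les\|\mu^\ep\E\rp1_{v_+,\mu}^{1/2}\|_{l^2_\mu}+\|\curl\Omega\|_{H^\ep_x}+1$ via \eqref{5.07.5.19}-type bounds and \eqref{4.12.2.19}. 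Then I would substitute \eqref{5.05.3.19} and \eqref{5.05.4.19} for the error term, bound $\|\bp v,\bp\varrho,\p v_+\|_{L^\infty_x}$ crudely using that it is the quantity appearing on the left of the bootstrap and using \eqref{BA1} to get an $L^1_t$-integrable (indeed $L^2_t$) bound, and observe that every occurrence of $\|\curl\Omega\|_{H^{\f12+\ep}_x}$ is already displayed as the non-self-improving term on the right of \eqref{5.11.3.19}. All remaining factors are of the form (energy norm)$\times$($L^\infty$-in-$x$, $L^1$-in-$t$ bounded quantity), so Gronwall's inequality closes the estimate, exactly as in Proposition \ref{4.13.8.19}; here one uses $\|\bp v,\bp\varrho,\p v_+\|_{L^1_tL^\infty_x}\les T^{1/2}\les1$ from \eqref{BA1}.

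The main obstacle is the bookkeeping in part (1) for $\|\p F_V\|_{H^\ep_x}$: one must be careful that the highest-order derivative always lands either on a factor that can be returned to the energy $\E\rp1_\mu$ of $v_+$ or $\varrho$, or on $\curl\Omega$ at regularity exactly $H^{\f12+\ep}_x$ (never higher), and that the $\eta$-terms are routed through the elliptic estimates \eqref{4.12.2.19}, \eqref{9.22.8.19}, \eqref{4.12.3.19} rather than naively differentiated. In particular the term $\bT\bT\eta$ hidden inside $\bT v$ (through $V_{v_+}=\bT v$ and $F_{U}=-\bT\eta$) must never be produced: this is guaranteed by working with the first-order system \eqref{lu3}--\eqref{lu3_1} and the spatial-derivative commutators \eqref{5.02.3.19}, \eqref{puuv} only, as emphasized after \eqref{wave2}. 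The other mild subtlety is that $\|\curl\Omega\|_{H^{\f12+\ep}_x}$ is \emph{not} yet known to be bounded — it will be controlled later in Section \ref{eng_vor} — so it must be carried through as an inhomogeneity rather than absorbed, which is why \eqref{5.11.3.19} is stated as an inequality coupling the $v_+,\varrho$-energy to the vorticity norm.
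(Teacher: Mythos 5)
Your outline follows the paper's own route almost verbatim: for $F_U$ you correctly reduce \eqref{5.05.3.19} to \eqref{4.12.3.19}; for part (2) you apply Proposition \ref{4.13.8.19} with $\a=\ep$, convert the metric coefficients to the energy via $v=v_++\eta$, \eqref{4.23.1.19} and \eqref{4.12.2.19} (this is exactly Lemma \ref{comp_2}), substitute \eqref{5.05.3.19}--\eqref{5.05.4.19}, and close with Gronwall and \eqref{BA1}, carrying $\|\curl\Omega\|_{H^{1/2+\ep}_x}$ as an inhomogeneity — all as in the paper.

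The one concrete problem is your treatment of the term $c^2\fw\c\p\varrho$ in $F_V$. You claim the highest-order contribution produces $\|\p\fw\|_{H^{\f12+\ep}_x}$, which "\eqref{9.22.8.19} bounds by $\|\curl\Omega\|_{H^\ep_x}+1$". That is off by half a derivative: \eqref{9.22.8.19} (with elliptic estimates for the Hodge system $\div\fw=0$) gives $\|\p\fw\|_{H^{\f12+\ep}_x}\les\|\curl\Omega\|_{H^{\f12+\ep}_x}+1$, and no gain of $\f12$ derivative is available since $\curl\Omega$ is essentially a component of $\p\Omega$. So as written your argument either misquotes the elliptic bound, or proves only a weaker form of \eqref{5.05.4.19} with $\|\curl\Omega\|_{H^{\f12+\ep}_x}$ on the right. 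The paper avoids this by writing $\fw=\Omega e^\varrho$, absorbing $e^\varrho$ into $C(\varrho)$ via Lemma \ref{comp_dyd}, and applying the product estimate \eqref{9.20.5.19} to $\p(\Omega\c\p\varrho)$, so that the derivative lands as $\|\p\Omega\|_{H^\ep_x}\les\|\curl\Omega\|_{H^\ep_x}+1$ by \eqref{5.06.2.19} — this is what makes the stated \eqref{5.05.4.19} (with only $H^\ep$ of $\curl\Omega$) true. Note that since \eqref{5.05.3.19} already forces $H^{\f12+\ep}$, your weaker version would still yield \eqref{5.11.3.19}, so part (2) survives, but \eqref{5.05.4.19} as stated is not established by your route. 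A second, minor omission: differentiating $C(\varrho)$ in $(C(\varrho)+1)\fZ$ produces genuinely cubic terms like $\p\varrho\c\fZ$, which the paper controls with the trilinear estimate \eqref{9.08.17.19}; your "collecting terms" step should invoke it (or \eqref{9.08.8.19} with $\fZ\in H^{\f12+\ep}$) explicitly.
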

 To prove the above result, we need a preliminary comparison result.
 \begin{lemma}\label{comp_2}
 Let $0<\ep\le s-2$.
\begin{align}
&\|\p \bp v\|_{\dot{H}^\ep_x}\les \big(\sum_{\mu>1}\mu^{2\ep}\E\rp{1}_\mu(t)\big)^\f12+\|\curl \Omega\|_{H^\ep_x}+1,\label{9.23.1.19}\\
&\|\p \bp \varrho, \p \Tr k, \p^2(c^2)\|_{\dot{H}^\ep_x}\les \big(\sum_{\mu>1}\mu^{2\ep}\E_{\varrho,\mu}\rp{1}(t)\big)^\f12+1.\label{9.23.2.19}
\end{align}
\end{lemma}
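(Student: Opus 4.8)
The plan is the following. Since $v=v_++\eta$ while $\varrho$, $\Tr k$ and $c^2$ involve no $\eta$, I would first reduce all the quantities on the left of (\ref{9.23.1.19})--(\ref{9.23.2.19}) to second (spatial, or one-$\bT$--one-spatial) derivatives of the three objects $v_+$, $\varrho$, $\eta$, and then treat each object with the tool matched to its nature: the dyadic energies $\E\rp{1}_\mu$ for the wave functions $v_+,\varrho$, the elliptic bound (\ref{4.12.2.19}) for $\eta$, and the rough-coefficient product/commutator estimates of Section \ref{app} (in particular (\ref{9.22.4.19})) for whatever is built from these by composing with smooth functions of $\varrho$ or by taking products.

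The core is an elementary comparison. By (\ref{9.20.1.19}) and the equivalence $\int_{\Sigma_t}|\nab f|^2_g\,d\mu_g\approx\int_{\Sigma_t}|\p f|^2\,d\mu_e$ noted after Lemma \ref{peng}, together with $V_{v_+}=\bT v$ and $V_\varrho=\bT\varrho$, the gradient part of $\E[U\rp{1}_\mu]$ is $\approx\|P_\mu\p^2 U\|^2_{L_x^2}$ and the $V$-part is $\|P_\mu\p\bT U\|^2_{L_x^2}$ for $U=v_+$ or $\varrho$; hence for each dyadic $\mu>1$,
\[
\|P_\mu\p^2 v_+\|^2_{L_x^2}+\|P_\mu\p\bT v\|^2_{L_x^2}\les\E\rp{1}_{v_+,\mu}(t),\qquad \|P_\mu\p^2\varrho\|^2_{L_x^2}+\|P_\mu\p\bT\varrho\|^2_{L_x^2}\les\E\rp{1}_{\varrho,\mu}(t).
\]
Summing against $\mu^{2\ep}$, using the Littlewood--Paley square-function characterization of $\dot H^\ep_x$, and absorbing the frequencies $\mu\le 1$ by the bound $\|\bp v,\bp\varrho,\p v_+\|_{H^1_x}\les 1$ from Corollary \ref{comp_1}, one gets
\[
\|\p^2 v_+\|_{\dot H^\ep_x}+\|\p\bT v\|_{\dot H^\ep_x}\les\big(\textstyle\sum_{\mu>1}\mu^{2\ep}\E\rp{1}_{v_+,\mu}(t)\big)^\f12+1
\]
and the analogue with $v_+$ replaced by $\varrho$. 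For (\ref{9.23.1.19}) it then suffices to write $\p\bp v=(\p^2 v_++\p^2\eta,\ \p\bT v)$ and bound $\|\p^2\eta\|_{\dot H^\ep_x}\les\|\eta\|_{H^{2+\ep}_x}\les\|\curl\Omega\|_{H^\ep_x}+1$ by (\ref{4.12.2.19}) with $\a=\ep$ (admissible since $0<\ep\le s-2<\f12+\ep$); combining with $\E\rp{1}_{v_+,\mu}\le\E\rp{1}_\mu$ finishes it.

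For (\ref{9.23.2.19}) the term $\p\bp\varrho=(\p^2\varrho,\p\bT\varrho)$ is already covered by the $\varrho$-version of the display above, and -- crucially -- neither a $v_+$-energy nor $\curl\Omega$ enters. For $\p\Tr k$ I would use (\ref{5.24.1.19}) together with $\div v=-\bT\varrho$ from (\ref{4.23.1.19}) and the fact that $c=c(\varrho)$ to write $\Tr k=3\,\bT\log c+\bT\varrho$ with $\bT\log c=C(\varrho)\bT\varrho$, so $\p\Tr k$ is a combination of $\p\bT\varrho$, of $C(\varrho)\p\bT\varrho$ and of the quadratic term $C(\varrho)\,\p\varrho\,\bT\varrho$; the rough-coefficient term $C(\varrho)\p\bT\varrho$ is controlled via (\ref{9.22.4.19}) with $\a=\ep$ by $\|\p\bT\varrho\|_{\dot H^\ep_x}+\|\p\bT\varrho\|_{L_x^2}$, and the quadratic term -- a product of the $H^1_x$ functions $C(\varrho)\p\varrho$ and $\bT\varrho$, each $\les 1$ by Corollary \ref{comp_1} and a Moser estimate -- is $\les 1$ by a bilinear product estimate of Section \ref{app}. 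Identically, $\p^2(c^2)=C(\varrho)\p^2\varrho+C(\varrho)(\p\varrho)^2$ is handled by (\ref{9.22.4.19}) and a product estimate. Collecting these gives (\ref{9.23.2.19}).

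I do not expect a genuine obstacle here: this is a bookkeeping lemma that feeds Proposition \ref{erro_h} and the later geometric estimates. The only two points needing care are, first, keeping the $\varrho$-block strictly free of $v_+$-energy, which works precisely because $\div v=-\bT\varrho$ removes $v$ from $\Tr k$ and because $c$ depends on $\varrho$ alone; and second, that whenever a smooth function of $\varrho$ multiplies a top-order derivative one must invoke the rough-coefficient estimate (\ref{9.22.4.19}) -- valid only for $0<\a\le\f12$, which is where the standing hypothesis $\ep\le s-2<\f12$ is used -- rather than a naive $L^\infty_x\times L^2_x$ bound, which would cost the $\ep$ fractional derivative.
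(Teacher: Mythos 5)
Your proof is correct and follows essentially the same route as the paper: decompose $v=v_+ +\eta$ and use (\ref{4.12.2.19}) for $\p^2\eta$, read the wave parts off the dyadic energies, and handle the $C(\varrho)$-coefficients and quadratic terms in $\p\Tr k$, $\p^2(c^2)$ via Lemma \ref{comp_dyd} and the product estimate (\ref{9.08.8.19}), exactly as in the paper's argument. The only (harmless) variation is that you bound $\p\bT v$ directly from the $V$-component of $\E\rp{1}_{v_+,\mu}$ (recalling $V_{v_+}=\bT v$ in (\ref{5.03.1.19})), whereas the paper converts $\p\bT v$ through $\bT v=-c^2\p\varrho$ into $\p^2\varrho$ plus a quadratic term and charges it to the $\varrho$-energy; both land on the same right-hand side since $\E\rp{1}_\mu=\E\rp{1}_{v_+,\mu}+\E\rp{1}_{\varrho,\mu}$.
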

\begin{proof}
Due to (\ref{4.12.2.19}) and $v=v_+ +\eta$, we can obtain
\begin{align*}
&\|\p^2 v\|_{{\dot H}^\ep_x}\les \|\p^2 v_+\|_{{\dot H}^\ep_x}+\|\curl \Omega\|_{H^\ep_x}+1.
\end{align*}
Since $\bT v=-c^2 \p \varrho$ in (\ref{4.23.1.19}), $\p \bT v=c^2(c^{-1} c'(\varrho)\p \varrho \p \varrho+ \p^2 \varrho)$. We apply Lemma \ref{comp_dyd} to derive
\begin{align*}
\|c^2 \p^2 \varrho\|_{H^\ep_x}\les\|\p^2 \varrho\|_{H^\ep_x}.
\end{align*}
By using (\ref{9.08.8.19}) with $F=G=\p \varrho$, (\ref{5.04.17.19}) and Lemma \ref{comp_dyd}, we have
\begin{equation}\label{9.23.3.19}
\|C(\varrho)\bp \varrho \bp \varrho\|_{H^\ep_x}\les \||\bp \varrho|^2\|_{H^\ep_x}\les \|\bp \varrho\|_{H^1_x}^2\les 1.
\end{equation}
Combining the above two estimates yields
\begin{equation*}
\|\p \bT v\|_{\dot{H}^\ep_x}\les \|\p^2 \varrho\|_{H^\ep_x}+1.
\end{equation*}
Hence (\ref{9.23.1.19}) is proved.

The estimate in (\ref{9.23.2.19}) for $\p \bp \varrho$ follows by definition. In view of (\ref{5.24.1.19}) and (\ref{4.23.1.19})
\begin{equation*}
\p \Tr k=3\p ((\log c)'\bT\varrho)+\p \bT\varrho=C(\varrho)(\p \bp \varrho+\p \varrho \bp \varrho),
\end{equation*}
where $C(\varrho)$ represents several smooth functions of $\varrho$. $\p^2 (c^2)$ can also be written in the same symbolic form.
Applying Lemma \ref{comp_dyd} and (\ref{9.23.3.19}) leads to
\begin{equation*}
\|\p \Tr  k, \p^2(c^2) \|_{\dot{H}^\ep_x}\les \|\p\bp \varrho\|_{H^\ep_x}+1.
\end{equation*}
Thus (\ref{9.23.2.19}) is proved.
\end{proof}

 \begin{proof}[Proof of Proposition \ref{erro_h}]
Note $F_U=0$ in (\ref{5.03.2.19}), (\ref{5.05.3.19}) holds trivially in this case.
Recall the formula of $F_U$  from (\ref{5.03.1.19}) and
$
\|\p^2 F_U\|_{H^\ep_x}=\|\p^2\bT \eta\|_{H^\ep_x}.
$
(\ref{5.05.3.19}) follows immediately by using  (\ref{4.12.3.19}).

Recall the definition of $F_V$ in  (\ref{5.07.1.19}) and $\fw=\Omega e^{\varrho}$. We symbolically write
\begin{align*}
\p F_V&=\p (\fZ(C(\varrho)+1))+\p(c^2(\eta+\fw\c \p \varrho+ \p \log c\p\eta))\nn\\
&=(\p \fZ+\p(\eta+\Omega\c \p \varrho+ \p\varrho\p\eta))C(\varrho)+\p\fZ+(\fZ+\eta+\Omega\c \p \varrho+\p \varrho \p \eta)\p C(\varrho),
\end{align*}
where $C(\varrho)$ are smooth functions of $\varrho$ and may vary when  distributed to different factors.

Since in  (\ref{5.04.6.19}) we have obtained the estimate for $\|\p F_V\|_{L^2_x}$, we only consider the highest order estimate. Applying (\ref{9.22.4.19}) twice gives
\begin{align*}
\|\p F_V\|_{\dot{H}^\ep_x}&\les \|\p \eta\|_{H^\ep_x}+ \|\p \fZ\|_{H^\ep_x}+\|\p (\Omega \c\p \varrho+\p \eta\c \p \varrho)\|_{H^\ep_x}+\|(\fZ+\eta+\Omega\p \varrho+\p \eta\p \varrho)\p C(\varrho)\|_{H^\ep_x}\\
&\les 1+ \|\p \fZ\|_{H^\ep_x}+\|\p (\Omega \c\p \varrho+\p \eta\c \p \varrho)\|_{H^\ep_x}+\|(\fZ+\eta+\Omega\p \varrho+\p \eta\p \varrho)\p \varrho\|_{H^\ep_x},
\end{align*}
 where we also employed the first estimate in (\ref{5.03.9.19}).
 For the lower order term, applying (\ref{9.08.8.19}) to $(F, G)=(\eta, \p \varrho)$ yields
 \begin{align*}
 \|\eta\c \p \varrho\|_{H^\ep_x}&\les \|\eta\|_{H^{\f12+\ep}_x}\|\p \varrho\|_{H^1_x}+\|\p \varrho\|_{H^{\f12+\ep}_x}\|\eta\|_{H^1_x}\\
 &\les \|\eta\|_{H^1_x}\|\p \varrho\|_{H^1_x}\les 1,
 \end{align*}
 where we used (\ref{5.04.17.19}) and (\ref{5.03.9.19}).

 Next we estimate the quadratic terms. Recall (\ref{10.19.1.19}) for the form of $\p \fZ$. We  apply (\ref{9.20.5.19}) with $$\a=\ep,\, F=\p v, \bp \varrho; \, G=\bp v, \bp \varrho, \Omega, \p \eta$$ to derive
 \begin{align*}
& \|\p \fZ\|_{H^\ep_x}+\|\p (\Omega \c\p \varrho+\p \eta\c \p \varrho)\|_{H^\ep_x}\\
&\les \|\p(\p v,\bp \varrho)\|_{H^\ep_x}\|\bp v, \bp \varrho, \Omega, \p \eta\|_{L_x^\infty}+\|\p v, \bp \varrho\|_{L_x^\infty}\|\p(\bp v, \bp \varrho, \Omega, \p \eta)\|_{H^\ep_x}.
 \end{align*}
By using (\ref{5.06.2.19}) and (\ref{4.12.2.19}),
\begin{equation}\label{9.22.10.19}
\|\p \Omega\|_{H^\ep_x}+\|\p^2\eta\|_{H^\ep_x}\les \|\curl \Omega\|_{H^\ep_x}+1.
\end{equation}
 Using  the above estimate, (\ref{5.03.4.19}) and (\ref{5.03.3.19}), we obtain
 \begin{align*}
  \|\p \fZ\|_{H^\ep_x}&+\|\p (\Omega \c\p \varrho+\p \eta\c \p \varrho)\|_{H^\ep_x}\\
  &\les (\|\p(\bp v, \bp \varrho)\|_{\dot{H}^\ep_x}+\|\curl\Omega\|_{H^\ep_x} +1)(\|\bp v, \bp \varrho\|_{L_x^\infty}+1).
 \end{align*}
For the cubic terms, we apply (\ref{9.08.17.19}) to $G_1=\p v, \bp \varrho, \Omega, \p \eta$, $G_2=\bp v, \bp \varrho$ and $G_3=\p \varrho$.
\begin{align*}
\|(\fZ&+\Omega\p \varrho+\p \eta\p \varrho)\p \varrho\|_{H^\ep_x}\les \|\p v, \bp \varrho, \Omega, \p \eta\|_{H^{1+\ep}_x}\|\bp v, \bp \varrho\|_{H^1_x}\|\p \varrho\|_{H^1_x}\\
&+\|\p v,  \bp \varrho, \Omega, \p \eta\|_{H^1_x}(\|\bp v, \bp \varrho\|_{H^{1+\ep}_x}\|\p \varrho\|_{H^1_x}+\|\bp v, \bp \varrho\|_{H^1_x}\|\p \varrho\|_{H^{1+\ep}_x}).
\end{align*}
Using (\ref{9.22.10.19}), (\ref{9.22.3.19}), (\ref{5.03.9.19}), and Corollary \ref{5.04.12.19},  we derive
\begin{align*}
\|(\fZ+\Omega\p \varrho+\p \eta\p \varrho)\p \varrho\|_{H^\ep_x}&\les \|\p(\bp v, \bp \varrho)\|_{H^\ep_x}+\|\curl \Omega\|_{H^\ep_x}+1.
\end{align*}
Summing up the estimates for the lower order, quadratic and cubic terms yields (\ref{5.05.4.19}).

We hence combine the estimates of (\ref{5.05.3.19}) and (\ref{5.05.4.19}) with (\ref{9.23.1.19}) to conclude
\begin{equation}\label{10.19.2.19}
\|\p (\p F_U, F_V)\|_{H^\ep_x}\les (\big(\sum_{\mu>1}\mu^{2\ep}\E\rp{1}_\mu(t)\big)^\f12+\|\curl \Omega\|_{H^{\f12+\ep}_x}+1)(\|\bp \varrho, \bp v\|_{L_x^\infty}+1).
\end{equation}
To  prove (\ref{5.11.3.19}), we will substitute (\ref{5.05.3.19}) and  (\ref{5.05.4.19}) to the right hand side of the inequality in Proposition \ref{4.13.8.19}.
Recall from Lemma \ref{comp_2} that
\begin{equation*}
\|\p^2 (c^2), \p\bp v, \p \bp \varrho, \p(\Tr k) \|_{H^\ep_x}\les (\sum_{\mu>1}\mu^{2\ep}\E\rp{1}_\mu(t))^\f12+\|\curl \Omega\|_{H^\ep_x}+1.
\end{equation*}
Moreover, by (\ref{4.23.1.19}) and the first estimate in (\ref{5.03.3.19}),  for both the cases of $(U, V)$ in (\ref{5.03.1.19}) and (\ref{5.03.2.19})
\begin{equation}\label{5.12.3.19}
\|\bp v\|_{L_x^\infty}+\|\p U, V\|_{L_x^\infty}\les \| \bp \varrho, \p v_+\|_{L_x^\infty}+1.
\end{equation}
Hence by using (\ref{5.04.16.19}), (\ref{10.19.2.19}) and (\ref{5.12.3.19}), we derive
\begin{align*}
&\|\mu^\ep \E\rp{1}_\mu(t)^\f12\|_{l_\mu^2 L_x^2}\\
&\les \|\mu^\ep \E\rp{1}_\mu(0)^\f12\|_{l_\mu^2 L_x^2}+T^\f12\sup_{0\le t'\le t}\E\rp{\le 1}(t')^\f12 \\
& +\int_0^t(\|\p v,\p(c^2), \Tr k\|_{H_x^{1+\ep}}\|\p U, V\|_{L_x^\infty}+\|\p (\p F_U, F_V)\|_{H^\ep_x}) dt'\\
&\les\|\mu^\ep \E\rp{1}_\mu(0)^\f12\|_{l_\mu^2 L_x^2}+T^\f12 +\int_0^t(\|\mu^\ep\E\rp{1}_\mu(t')^\f12\|_{l_\mu^2}+\|\curl \Omega\|_{H^{\frac{1}{2}+\ep}_x}+1)(\|\bp \varrho, \p v_+\|_{L_x^\infty}+1)dt'.
\end{align*}
(\ref{5.11.3.19}) can be obtained by Gronwall's inequality and using (\ref{BA1}).
 \end{proof}

\section{$H^{2+\delta}$ energy estimates for vorticity}\label{eng_vor}
Let $0\le \delta\le s'-2$. The purpose of this section is to derive the bound of $\|\Omega\|_{H^{2+\delta}(\Sigma_t)}$ for all $0<t\le T$.   Neither $\p v$ nor $\Omega$ is  sufficiently smooth for  providing such bound by using the transport equation if following the standard product estimates under the bootstrap assumption (see  (\ref{10.16.1.19})).
  Applying the  $\curl$-operator to (\ref{4.25.4.19}) followed by  pairing the resulting equation with a $\curl$-structure leads to a series of crucial cancellations including the $\div$-$\curl$ structure in space and integration by part in spacetime,
   with the help of the Hodge system
\begin{equation}\label{hodge_1}
\div v=-\bT \varrho, \quad \curl v=\Omega e^{\varrho}
\end{equation}
and the transport equations (\ref{4.10.3.19}) and (\ref{4.25.4.19}).

Recall from (\ref{5.11.3.19}) that  we need to obtain the bound of $\|\curl \Omega(t)\|_{H^{s-\frac{3}{2}}_x}$ for all $0<t\le T$ to obtain the highest order energy estimates for the wave functions $(v_+, \varrho)$. Under our assumption on the initial vorticity,
we will bound $\|\C(t)\|_{H^1_x}$ norm to close the energy estimate in (\ref{5.11.3.19}). We then further obtain the bound for $\|\C(t)\|_{H^{1+\delta}_x}$ with the help of the bound on the highest order energy for $(v_+, \varrho)$ and (\ref{BA1}). For both estimates on $\C(t)$, we heavily rely on the
  the particular structure in the  $\curl$-equation of (\ref{4.25.4.19}).

We first give the precise formulae of $\bT\curl F$ for vector fields $F$.
\begin{proposition}
There hold for $\Sigma_t$-tangent vector fields $F$ that \begin{footnote}{We use the Euclidean metric $\delta_{ij}$ to lift and lower the indices of tensor fields.}\end{footnote}
\begin{align}\label{9.07.1.19}
\bT\curl F^m-\bT \varrho \curl F^m=\p_n v^m \curl F^n -\tensor{\ep}{^m^n^i} \p_n v_j \p_i F^j +\curl \bT F^m,
\end{align}
and identically,
\begin{align}\label{9.07.2.19}
\bT(e^{-\varrho} \curl F^m)= e^{-\varrho}\p_n v^m  \curl F^n-\tensor{\ep}{^m^n^i} \p_n v_j \p_i F^j e^{-\varrho}+\curl \bT F^m e^{-\varrho}.
\end{align}
Let $(\p v_j\wedge \p F^j)^m=\ep^{mni}\p_n v_j \p_i F^j$. There holds
\begin{align}
&\curl(e^{-\varrho}\p v_j\wedge \p F^j)_m\label{9.07.3.19}\\
&=e^{-\varrho}\big(\p^n \p_m v_j \p_n F^j+(\curl^2 v_j+\p_j \bT \varrho)\p_m F^j+\p_m v^j(-\curl^2 F^j+\p_j(\div F))\nn\\
&-\p_n v^j \p^n \p_m F_j-\tensor{\ep}{_m^n^i}\tensor{\ep}{_i^{ab}}\p_a v^j \p_b F_j \p_n \varrho\big).\nn
\end{align}
\end{proposition}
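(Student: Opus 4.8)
The plan is to establish all three identities by a direct computation, using only the commutator formula $[\bT,\p_i]=-\p_i v^a\p_a$ from~(\ref{cmu1}), the relation $\div v=-\bT\varrho$ from the first equation in~(\ref{4.23.1.19}), and the elementary algebra of the Euclidean volume form $\ep$ on $\mathbb R^3$: the contraction identity $\ep_{abc}\ep_{ade}=\delta_{bd}\delta_{ce}-\delta_{be}\delta_{cd}$, the curl two-form relation $\p_a F_b-\p_b F_a=\ep_{abc}\curl F^c$, and the vector-calculus identity $\Delta_e W_j=\p_j(\div W)-\curl^2 W_j$. Since all indices are moved with $\delta_{ij}$, there is no distinction between $\curl F^m$ and $\curl F_m$, and I will raise and lower freely.

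For~(\ref{9.07.1.19}) I would write $\curl F^m=\ep^{mni}\p_n F_i$, apply $\bT$, and use $\bT\p_n F_i=\p_n\bT F_i+[\bT,\p_n]F_i=\p_n\bT F_i-\p_n v^a\p_a F_i$ to obtain
\[
\bT\curl F^m=\curl\bT F^m-\ep^{mni}\,\p_n v^a\,\p_a F_i .
\]
The task then reduces to rewriting $-\ep^{mni}\p_n v^a\p_a F_i$ as $\bT\varrho\,\curl F^m+\p_n v^m\curl F^n-\ep^{mni}\p_n v_j\p_i F^j$. To do this I split $\p_a F_i=\p_i F_a+(\p_a F_i-\p_i F_a)$: the first piece produces exactly $-\ep^{mni}\p_n v^a\p_i F_a=-\ep^{mni}\p_n v_j\p_i F^j$, while the second, after substituting $\p_a F_i-\p_i F_a=\ep_{aic}\curl F^c$ and contracting $\ep^{mni}\ep_{aic}=\delta_{mc}\delta_{na}-\delta_{ma}\delta_{nc}$, yields $-(\div v)\curl F^m+\p_n v^m\curl F^n$. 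Replacing $\div v$ by $-\bT\varrho$ gives~(\ref{9.07.1.19}). Identity~(\ref{9.07.2.19}) is then immediate, since $\bT(e^{-\varrho}\curl F^m)=e^{-\varrho}(\bT\curl F^m-\bT\varrho\,\curl F^m)$ and we may insert~(\ref{9.07.1.19}).

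For~(\ref{9.07.3.19}) I would apply $\curl$ to $G_q:=e^{-\varrho}\ep_{qni}\p_n v_j\p_i F^j$ via the Leibniz rule, writing $(\curl G)_m=\ep_{mpq}\ep_{qni}\p_p(e^{-\varrho}\p_n v_j\p_i F^j)$ and distributing $\p_p$ onto $e^{-\varrho}$, onto $\p_n v_j$, and onto $\p_i F^j$. Using $\ep_{mpq}\ep_{qni}=\delta_{mn}\delta_{pi}-\delta_{mi}\delta_{pn}$ to collapse the double volume form: the term hitting $\p_n v_j$ gives $e^{-\varrho}\p^n\p_m v_j\p_n F^j$ together with $-e^{-\varrho}(\Delta_e v_j)\p_m F^j$, which I convert through $\Delta_e v_j=\p_j\div v-\curl^2 v_j=-\p_j\bT\varrho-\curl^2 v_j$ into $e^{-\varrho}(\curl^2 v_j+\p_j\bT\varrho)\p_m F^j$; the term hitting $\p_i F^j$ gives $-e^{-\varrho}\p_n v^j\p^n\p_m F_j$ and $e^{-\varrho}\p_m v^j\Delta_e F_j=e^{-\varrho}\p_m v^j(\p_j\div F-\curl^2 F_j)$; and the term hitting $e^{-\varrho}$ produces, after one more contraction of the volume form, the gradient correction $-e^{-\varrho}\tensor{\ep}{_m^n^i}\tensor{\ep}{_i^{ab}}\p_a v^j\p_b F_j\,\p_n\varrho$. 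Collecting these terms reproduces~(\ref{9.07.3.19}).

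I expect the only real obstacle to be organizational: keeping the index bookkeeping straight through the several $\ep$–$\delta$ contractions, tracking carefully the gradient term generated by differentiating $e^{-\varrho}$, and matching each Laplacian piece to the correct combination $\curl^2 v_j+\p_j\bT\varrho$, respectively $-\curl^2 F^j+\p_j\div F$. There is no conceptual difficulty beyond this; everything follows from the commutator~(\ref{cmu1}), the Euler relation $\div v=-\bT\varrho$, and the algebra of the Euclidean volume form.
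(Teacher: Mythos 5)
Your proposal is correct and follows essentially the same route as the paper: the commutator $[\bT,\p_i]=-\p_i v^a\p_a$, the splitting of $\p_a F_i$ into its symmetric-plus-curl parts with the $\ep$–$\delta$ contraction (which is exactly the paper's substitution $\p_l F^i=\p_i F_l+\tensor{\ep}{_{li}^{n}}\curl F_n$), the relation $\div v=-\bT\varrho$, and for (\ref{9.07.3.19}) the same direct Leibniz expansion with $\Delta_e W_j=\p_j(\div W)-\curl^2 W_j$. No gaps; the index bookkeeping you outline matches the paper's computation term by term.
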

\begin{proof}
In view of (\ref{cmu1})
\begin{equation*}
[\bT, \p_j] F^i=-\p_j v^l \p_l F^i,
\end{equation*}
we can derive
\begin{align*}
\tensor{\ep}{^m^j_i}[\bT, \p_j]F^i&=\tensor{\ep}{^m_i^j}\p_j v^l \p_l F^i\\
&=\tensor{\ep}{^m^i^j}\p_j v^l(\p_i F_l+\tensor{\ep}{_l_i^n}\curl F_n)\\
&=\tensor{\ep}{^m^i^j}\p_j v^l\p_i F_l+\p_n v^m \curl F^n- \div v \curl F^m,
\end{align*}
where we calculated for $Z_n=\curl F_n$ that
\begin{align*}
\tensor{\ep}{^m^j_i} \tensor{\ep}{_l^n^i} \p_j v^l Z_n=(\delta_l^m \delta_j^n-\delta_l^j \delta^{mn})\p_j v^l Z_n=\p^n v^m Z_n-\div v Z^m
\end{align*}
to obtain the last line.
(\ref{9.07.1.19}) follows by applying (\ref{4.23.1.19}) to $\div v$.

(\ref{9.07.2.19}) is a consequence of multiplying (\ref{9.07.1.19}) by $e^{-\varrho}$.

Next we prove (\ref{9.07.3.19}). We first directly compute
$
\tensor{\ep}{^i^a^b}\ep_{mni}=\delta_m^a \delta_n^b-\delta_n^a \delta_m^b.
$
With its help, we derive
\begin{align*}
&\tensor{\ep}{_{mn}^i}\p^n(e^{-\varrho}\p v_j\wedge \p F^j)_i=\tensor{\ep}{_{mn}^i}\p^n(\tensor{\ep}{_i_{ab}} \p^a v^j \p^b F_j  e^{-\varrho})\\
 &=e^{-\varrho}\{(\delta_m^a \delta_n^b-\delta_n^a \delta_m^b)\p^n(\p_a v^j \p_b F_j) -\tensor{\ep}{_{mn}^i}\tensor{\ep}{_i_{ab}}\p^n \varrho\p^a v^j \p^b F_j \} \\
 &=e^{-\varrho}\{(\delta_m^a \delta_n^b-\delta_n^a \delta_m^b)\big(\p^n \p_a v^j \p_b F_j +\p_a v^j \p^n \p_b F_j\big)-\tensor{\ep}{_{mn}^i}\tensor{\ep}{_i^{ab}}\p^n \varrho\p_a v^j \p_b F_j \} \\
 &=e^{-\varrho}\big(\p^n \p_m v^j \p_n F_j-\Delta_e v^j\p_m F_j+\p_m v^j \Delta_e F_j-\p_n v^j\p^n \p_m F_j- \tensor{\ep}{_{mn}^i}\tensor{\ep}{_i^{ab}}\p^n \varrho\p_a v^j \p_b F_j\big).
\end{align*}
Noting for a vector-valued function $G$ there holds
\begin{equation*}
\Delta_e (G^j)=-\curl^2 (G^j)+\p_j(\div G)
\end{equation*}
and using the first equation in (\ref{4.23.1.19}) when $G=v$, we can obtain
\begin{align*}
&\ep_{mni}\p^n(e^{-\varrho}\p v_j\wedge \p F^j)^i\\
&=e^{-\varrho}\big(\p^n\p_m v^j \p_n F_j+(\curl^2 v_j+\p_j \bT\varrho)\p_m F^j+\p_m v^j(-\curl^2 F_j +\p_j(\div F))-\p_n v^j \p^n \p_m F_j\big)\\
&\,- e^{-\varrho}\tensor{\ep}{_{mn}^i}\tensor{\ep}{_i^{ab}}\p^n \varrho\p_a v^j \p_b F_j.
\end{align*}
This is (\ref{9.07.3.19}).
\end{proof}
\begin{proof}[Proof of (\ref{4.25.4.19})]
(\ref{4.25.4.19}) is a direct consequence of applying (\ref{9.07.2.19}) to $F=\Omega$. Indeed,
\begin{align*}
\bT(e^{-\varrho} \curl \Omega^m)= \p_n v^m  (e^{-\varrho}\curl \Omega^n)-\tensor{\ep}{^m^n^i} \p_n v_j \p_i \Omega^j e^{-\varrho}+\curl \bT \Omega^m e^{-\varrho}.
\end{align*}
By using (\ref{4.10.3.19}), (\ref{div}) and $\Omega=\fw e^{-\varrho}$, we calculate
\begin{align*}
\curl(\bT\Omega^m)&=\ep^{mij} \p_i (\bT \Omega_j)=\ep^{mij} \p_i (\Omega^a \p_a v_j)\\
&=\ep^{mij}\p_i\Omega^a \p_a v_j+\p_a \fw^m \Omega^a=\ep^{mij} \p_i \Omega^a(\p_j v_a+\tensor{\ep}{_{aj}^l}\fw_l)+\p_a \fw^m \Omega^a \\
&=\ep^{mij} \p_i \Omega^a \p_a v_j-(\delta_a^m\delta^{il}-\delta^{ml} \delta_a^i) \fw_l \p_i\Omega^a+\p_a \fw^m \Omega^a\\
&=\ep^{mij} \p_i \Omega^a \p_a v_j-(\p_i \Omega^m \fw^i -\fw^m \p_a \Omega^a)+\p_a \fw^m \Omega^a\\
&=\ep^{mij} \p_i \Omega^a \p_j v_a-(\p_i \fw^m e^{-\varrho} \fw^i+\fw^m \p_i (e^{-\varrho})\fw^i +\fw^m \Omega^a \p_a \varrho)+\p_a \fw^m \Omega^a\\\
&=\ep^{mij} \p_i \Omega^a \p_j v_a.
\end{align*}
Combining the above two calculations gives (\ref{4.25.4.19}).
\end{proof}
Recall that for any vector fields  $Z$ tangent to $\Sigma_t$ and any scalar function $f$ there holds
$\int_{\Sigma_t} \Lie_Z (f d\mu_e) =\int_{\Sigma_t} \mbox{div}(f Z) d\mu_e=0$.
To  derive the $H^2$ energy of vorticity,  we derive the energy formula  for $\Sigma_t$-tangent vector fields $F, G$
\begin{align}\label{9.07.4.19}
\begin{split}
\p_t \int_{\Sigma_t} \l F, G\r_e d\mu_e&=\int_{\Sigma_t} (\l F, \bT G\r_e+\l G, \bT F\r_e)  d\mu_e+ \int_{\Sigma_t} \l F, G\r (\p_t+\Lie_v) d\mu_e\\
&=\int_{\Sigma_t} (\l F, \bT G\r_e+\l G, \bT F\r_e)  d\mu_e+ \int_{\Sigma_t}-\l F, G\r \Tr\cir{k} d\mu_e,
\end{split}
\end{align}
where we used $\Lie_\bT d\mu_e=-\delta^{ij}\cir{k}_{ij} d\mu_e=-\Tr \cir{k}d\mu_e$, and $\l\cdot,\cdot\r_e$ means the contraction by using the Euclidean metric.  
\subsection{$H^2$ estimate for vorticity}\label{h2vorticity}
We first derive the energy estimate for vorticity in $H^2_x$, which is sufficient to complete the energy estimate for the wave function $(v_+,\varrho)$ given in (\ref{5.11.3.19}).
\begin{proposition}[$H^2$ bound of vorticity]
There hold for $0< t\le T$ that
\begin{align}
&\|\curl \C\|_{L^2(\Sigma_t)}+\|\p \C\|_{L^2(\Sigma_t)}+\|\p^2 \Omega\|_{L^2(\Sigma_t)}+\|\p^2 \fw\|_{L^2(\Sigma_t)}\les 1\label{9.07.5.19}.
\end{align}
\end{proposition}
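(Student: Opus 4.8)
The plan is to derive a single differential identity for the quantity $\int_{\Sigma_t}|\curl\fC|^2\,dx$ and run a Gronwall argument, where the crux is to handle the main term $\int \curl\bT\fC\cdot\curl\fC$ via the trilinear cancellations advertised in the introduction. First I would apply the energy formula \eqref{9.07.4.19} with $F=G=\curl\fC$, using $\Tr\cir{k}=-\div v$, to obtain
\begin{equation*}
\p_t\int_{\Sigma_t}|\curl\fC|^2\,dx=\int_{\Sigma_t}\big(2\bT\curl\fC\cdot\curl\fC+\div v\,|\curl\fC|^2\big)\,dx.
\end{equation*}
The last term is harmless: by \eqref{BA1}, $\|\div v\|_{L_x^\infty}$ is integrable in time, so it feeds directly into Gronwall. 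For $\bT\curl\fC$ I would invoke the commutation formula \eqref{9.07.1.19} (equivalently \eqref{9.07.2.19}) applied to $F=\fC$, which schematically yields $\bT\curl\fC-\curl\bT\fC=\p v\,\p\fC+(\text{zeroth-order in }\p v)\fC$. The term $\p v\,\p\fC\cdot\curl\fC$ is controlled by first using the $\div$-$\curl$ elliptic estimate for the Hodge system of $\fC$ (with $\div\fC$ expressed via $\curl\Omega$ and $\p\varrho$, cf. \eqref{div} and surrounding material) to bound $\|\p\fC\|_{L_x^2}\lesssim\|\curl\fC\|_{L_x^2}+\text{l.o.t.}$, then absorbing it into Gronwall with the bound $\|\p v\|_{L_t^1L_x^\infty}\lesssim 1$.

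The main work is the remaining integral $\I=\int_0^t\int_{\Sigma_{t'}}\curl\bT\fC\cdot\curl\fC\,dx\,dt'$. Here I would \emph{not} use the symbolic transport equation for $\fC$ but instead substitute the precise formula \eqref{9.07.3.19} for $\curl(e^{-\varrho}\,\p v_j\wedge\p F^j)$ with $F=\Omega$, combined with \eqref{4.25.4.19}, producing an integrand whose leading terms are of the form $e^{-\varrho}\p^n\p_m v^j\p_n\Omega_j\,\curl\fC^m$ and $e^{-\varrho}\p_j\bT\varrho\,\p_m\Omega^j\,\curl\fC^m$ (plus lower-order terms). For the first, since $\p_m(\curl\fC)^m=0$, I integrate by parts in $x$: writing
\begin{equation*}
e^{-\varrho}\p^n\p_m v^j\p_n\Omega_j\,\curl\fC^m=\p_m\big(\p^nv^j\p_n\Omega_j\,\curl\fC^m e^{-\varrho}\big)-e^{-\varrho}\p^nv^j\p_m\p_n\Omega_j\,\curl\fC^m-\p_m(e^{-\varrho})\p^nv^j\p_n\Omega_j\,\curl\fC^m,
\end{equation*}
the divergence term vanishes on $\Sigma_{t'}$, and the surviving terms carry only one derivative on $v$ (bounded in $L_x^\infty$ by \eqref{BA1}) times factors controlled by $\|\p\Omega\|_{L_x^6}\lesssim\|\Omega\|_{H^1_x}\lesssim1$ (Corollary \ref{comp_1}), $\|\p^2\Omega\|_{L_x^2}$ — which is exactly $\|\curl\fC\|_{L_x^2}$ up to l.o.t. via \eqref{9.22.10.19}-type elliptic estimates — and $\|\curl\fC\|_{L_x^2}$, giving a quantity $\lesssim\|\p v\|_{L_x^\infty}\big(\|\curl\fC\|_{L_x^2}^2+1\big)$ suitable for Gronwall. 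For the second term, I rewrite $\p_j\bT\varrho=\bT\p_j\varrho+[\p_j,\bT]\varrho$ and move the $\bT$ outward:
\begin{equation*}
e^{-\varrho}\p_j\bT\varrho\,\p_m\Omega^j\,\curl\fC^m=\bT\big(e^{-\varrho}\p_j\varrho\,\p_m\Omega^j\,\curl\fC^m\big)-\p_j\varrho\,\bT\big(e^{-\varrho}\p_m\Omega^j\,\curl\fC^m\big)+e^{-\varrho}[\p_j,\bT]\varrho\,\p_m\Omega^j\,\curl\fC^m;
\end{equation*}
the total-$\bT$-derivative term is integrated by parts in spacetime using \eqref{9.07.4.19}, leaving boundary data at $t'=0,t$ plus a $\Tr\cir{k}$ term, all of acceptable size; in the second term the transport equations \eqref{4.10.3.19} for $\p\Omega$ and \eqref{4.25.4.19} for $\curl\fC$ are substituted to trade $\bT$-derivatives for spatial ones; the commutator term is genuinely lower order via \eqref{cmu1}.

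Finally I would collect the estimates: integrating the differential identity in $t'\in[0,t]$, controlling all right-hand-side contributions by $C+C\int_0^t(\|\p v\|_{L_x^\infty}+\|\p\varrho\|_{L_x^\infty}+1)\big(\|\curl\fC(t')\|_{L_x^2}^2+1\big)\,dt'$ using \eqref{BA1} and Corollary \ref{comp_1}, and applying Gronwall's inequality yields $\|\curl\fC(t)\|_{L_x^2}\lesssim1$. The remaining norms in \eqref{9.07.5.19} follow immediately: $\|\p\fC\|_{L^2(\Sigma_t)}\lesssim\|\curl\fC\|_{L_x^2}+\|\div\fC\|_{L_x^2}+\text{l.o.t.}\lesssim1$ by the $\div$-$\curl$ elliptic estimate, then $\|\p^2\Omega\|_{L^2(\Sigma_t)}\lesssim1$ by applying the elliptic estimate for $(\div\Omega,\curl\Omega)$ in \eqref{div} and comparison with $\fC=e^{-\varrho}\curl\Omega$, and $\|\p^2\fw\|_{L^2(\Sigma_t)}\lesssim1$ from the Hodge system \eqref{hodge_2}. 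I expect the main obstacle to be bookkeeping the full list of lower-order terms generated by \eqref{9.07.3.19} and by the two rounds of integration by parts (ensuring each genuinely reduces to products controllable by $\|\Omega\|_{H^1_x}$, $\|\p v,\p\varrho\|_{L_x^\infty}$, and $\|\curl\fC\|_{L_x^2}$ without loss of derivatives), rather than any single hard estimate; the structural observation making it work is that the last factor is always a divergence-free $\curl\fC$, which licenses the spatial integration by parts that removes the one genuinely dangerous third-order term.
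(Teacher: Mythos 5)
Your proposal follows essentially the same route as the paper: reduce to the $\|\curl\fC\|_{L^2_x}$ bound via the $\div$-$\curl$ elliptic estimates, apply \eqref{9.07.4.19} to $F=G=\curl\fC$, use \eqref{9.07.1.19} and \eqref{9.07.3.19} with $F=\Omega$ to isolate the two dangerous terms, kill the third-order term $\p^2v\,\p\Omega\,\curl\fC$ by spatial integration by parts against the divergence-free $\curl\fC$, and handle $\p_j\bT\varrho\,\p_m\Omega^j\curl\fC^m$ by commuting $\bT$ outward, integrating by parts in spacetime via \eqref{9.07.4.19}, and substituting the transport equations, before closing with Gronwall. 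This matches the paper's argument (including the final elliptic steps for $\p\fC$, $\p^2\Omega$, $\p^2\fw$), so no further comparison is needed.
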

\begin{proof}
We first reduce the proof of (\ref{9.07.5.19}) to showing the first estimate therein.

  By $\C^i=e^{-\varrho}\curl \Omega^i$, we derive
\begin{equation*}
\div \C=\p_i (e^{-\varrho})\curl \Omega^i.
\end{equation*}
 By the elliptic estimate for hodge system, Sobolev embedding on ${\mathbb R}^3$, (\ref{5.04.17.19}) and (\ref{5.03.8.19}), we can obtain  that
\begin{align*}
\|\p\C\|_{L^2(\Sigma_t)}&\les \|\p(e^{-\varrho})\|_{L^6_x}\|\curl \Omega\|_{L^3(\Sigma_t)}+\|\curl \C\|_{L^2(\Sigma_t)}\\
    &\les \|\p \varrho\|_{L_x^6}\|+\|\curl \C\|_{L^2(\Sigma_t)}\les \|\curl \C\|_{L^2(\Sigma_t)}+1.
\end{align*}
Moreover, by the elliptic estimate and the formula
\begin{equation*}
\Delta_e \Omega^j= -\curl^2 \Omega^j +\p^j \div \Omega,
\end{equation*}
we derive by using (\ref{5.04.17.19}), (\ref{9.22.3.19}), (\ref{5.03.4.19}), (\ref{div}) and the  Sobolev embedding that
\begin{equation}\label{9.07.9.19}
\begin{split}
\|\p^2 \Omega\|_{L^2(\Sigma_t)}&\les \|\curl(e^\varrho \C^i)\|_{L^2(\Sigma_t)}+\|\p (\Omega \p \varrho)\|_{L^2(\Sigma_t)}+\|\p \varrho\c \curl\Omega\|_{L^2(\Sigma_t)}\\
&\les \|\curl \C\|_{L^2(\Sigma_t)}+\|\p \varrho\|_{L^6(\Sigma_t)}\|\C, \p \Omega\|_{L^3(\Sigma_t)}+\|\Omega\|_{L_x^\infty}\|\p^2 \varrho\|_{L^2(\Sigma_t)}\\
&\les \|\curl \C\|_{L^2(\Sigma_t)}+1.
\end{split}
\end{equation}

Next, we recall that $\fw =\Omega e^{\varrho}$, which gives
\begin{equation}\label{9.24.1.19}
\begin{split}
\p^2 \fw&=\p^2(\Omega e^\varrho)=e^\varrho(\p^2 \Omega+\p \Omega \p \varrho)+\p(e^\varrho \p \varrho) \Omega\\
&=e^\varrho\{\p^2 \Omega+\p \Omega \p \varrho+\Omega\big((\p\varrho)^2+\p^2 \varrho\big)\}.
\end{split}
\end{equation}
Note that by using  (\ref{9.22.3.19}) and (\ref{5.04.17.19}) we can derive by using Sobolev embedding
\begin{equation}\label{9.23.4.19}
\|\p \Omega \c(\bp \varrho, \bp v)\|_{L_x^2}+\|\Omega (\bp \varrho)^2\|_{L_x^2}\les \|\p \Omega\|_{L_x^3}\|\bp \varrho, \bp v\|_{L_x^6}+\|\Omega\|_{H^1_x}\|\bp \varrho\|_{H^1_x}^2\les 1.
\end{equation}
Hence taking $L^2_x$ norm  of the expression for  $\p^2 \fw$  yields
\begin{align*}
\|\p^2 \fw\|_{L^2_x}&\les \|\p^2 \Omega\|_{L_x^2}+\|\p \Omega \p \varrho\|_{L_x^2}+\|\Omega (\p \varrho)^2\|_{L_x^2}+\|\Omega \p^2 \varrho\|_{L_x^2}\\
&\les \|\p^2 \Omega\|_{L_x^2}+\|\Omega\|_{L_x^\infty}\|\p^2 \varrho\|_{L_x^2}+1\les \|\p^2 \Omega\|_{L_x^2}+1,
\end{align*}
where we used (\ref{5.04.17.19}), (\ref{9.22.3.19})  and (\ref{5.03.4.19}).  Combined with (\ref{9.07.9.19}), we have
\begin{equation*}
\|\p^2 \fw\|_{L_x^2}\les \|\curl\C\|_{L_x^2}+1,
\end{equation*}
as desired. Therefore
\begin{equation}\label{9.07.7.19}
\|\p \fC\|_{L^2(\Sigma_t)}+\|\p^2 \Omega\|_{L^2(\Sigma_t)}+\|\p^2 \fw\|_{L^2(\Sigma_t)}\les \|\curl \fC\|_{L^2(\Sigma_t)}+1.
\end{equation}
 To prove (\ref{9.07.5.19}), it suffices to consider the first estimate.

Now we apply (\ref{9.07.4.19}) to $F=G=\curl \C$. By using Gronwall's inequality, $\Tr \cir{k}=-\div v=\bT\varrho$ and (\ref{BA1}),
\begin{align}\label{9.07.14.19}
\int_{\Sigma_t}|\curl \C|^2 d\mu_e\les\|\curl \C\|_{L^2(\Sigma_0)}^2+|\int_0^t \int_{\Sigma_{t'}}\l \curl \C, 2\bT \curl \C-\bT \varrho \curl \C\r_e d\mu_e dt'|.
\end{align}
To treat the last term on the right hand side, which will be denoted as $I$, we apply (\ref{9.07.1.19}) to $F=\C$, followed with using (\ref{9.07.7.19}) and the first equation in (\ref{4.23.1.19}) to derive
\begin{align*}
I&\les |\int_0^t\int_{\Sigma_{t'}}\l \curl \C, \curl \bT \C\r_e d\mu_e dt'|+\int_0^t\|\p v\|_{L_x^\infty}\|\p \C\|_{L^2_x}\|\curl \C\|_{L^2_x}dt'\\
&\les |\int_0^t \int_{\Sigma_{t'}}\l \curl \C, \curl \bT \C\r_e d\mu_e dt'|+\int_0^t\|\curl \C\|_{L^2_x}(1+\|\curl \C\|_{L^2_x})\|\p v\|_{L^\infty_x} dt'.
\end{align*}
 The second term on the right will be treated by Gronwall's inequality and using (\ref{BA1}) when integrating in $t$.
 We focus on the first term on the right hand side, denoted by $|I^1|$ with
 \begin{equation}\label{9.11.7.19}
 I^1=\int_0^t\int_{\Sigma_{t'}}\l \curl \C, \curl \bT \C\r_e d\mu_e dt'.
 \end{equation}
 By using (\ref{4.25.4.19}), we compute
  \begin{align}\label{9.07.8.19}
  \curl \bT \C_m&=-2\curl(e^{-\varrho}\p v_j\wedge \p \Omega^j )_m+\curl(\p_a v \C^a)_m.
  \end{align}
  Note that
  \begin{align}\label{9.08.12.19}
  \begin{split}
  \curl(\p_a v \C^a)_m&=\ep_{mni} \p^n(\p_a v^i \curl \Omega^a e^{-\varrho})\\
  &=\ep_{mn i} \p^n \p_a v^i \C^a+\ep_{mn i}\p_a v^i \p^n \C^a\\
  &=\p_a \fw_m \C^a+\ep_{mn i} \p_a v^i \p^n \C^a.
  \end{split}
  \end{align}
  Thus by using (\ref{9.07.7.19}) and (\ref{9.22.3.19}), we can directly bound
  \begin{align*}
  \|\curl(\p_a v \C^a)\|_{L^2(\Sigma_t)}&\les \|\p \fw\|_{L^3_x} \|\C\|_{L^6_x}+\|\p_a v\|_{L^\infty_x}\|\p \C\|_{L^2_x}\\
  &\les \|\C\|_{H^1_x}(\|\p \fw \|_{L_x^3}+\|\p v\|_{L_x^\infty})\les( \|\curl \C\|_{L_x^2}+1)(\|\p v\|_{L_x^\infty}+1).
  \end{align*}
  Hence the second term in (\ref{9.07.8.19}) has been treated.

  It only remains to bound $\|\cdot \|_{L_x^2}$ norm of the first term on the right hand side of (\ref{9.07.8.19}). For this purpose, we apply (\ref{9.07.3.19}) to $F=\Omega$ and  observe that the first term on the right takes the form of $\p^2 v\c \p_j \Omega$, which is only expected to be in $L^\frac{3}{2}_x$ instead of the favourable $L^2_x$.  To solve this difficulty, we derive carefully the integral below,
  \begin{align}\label{9.08.14.19}
  \begin{split}
&I^1_+= \int_0^t \int_{\Sigma_t'}\curl(e^{-\varrho}\p v_j\wedge \p \Omega^j)_m \curl \C^m d\mu_e dt'\\
&=\int_0^t\int_{\Sigma_t'}\{e^{-\varrho}(\p^n \p_m v^j \p_n \Omega_j+(\p_j \bT \varrho+\curl^2 v_j)\p_m\Omega^j+\p_m v^j(-\curl^2 \Omega_j+\p_j(\div \Omega))\\
&-\p_n v^j \p^n \p_m \Omega_j-\ep_{mni}\ep^{iab}\p_a v^j \p_b \Omega_j \p^n \varrho)\}\curl \C^m d\mu_e dt'.
\end{split}
  \end{align}
  The above integral can be decomposed into two parts. For the following part, denoted by $I^1_{+,1}$, we will employ integration by parts in ${\mathbb R}^3$ and in spacetime,
  \begin{align}\label{9.08.15.19}
I^1_{+,1} &=\int_0^t \int_{\Sigma_{t'}}e^{-\varrho}(\p^n \p_m v^j \p_n \Omega_j+\p_j \bT \varrho \p_m\Omega^j )\curl \C^m d\mu_e dt',
  \end{align}
while the remaining terms are denoted by $I^1_{+,2}$. 
 Using the second equation in (\ref{hodge_2}), (\ref{5.03.4.19}) and (\ref{9.23.4.19}), we carry out the direct estimate below,
\begin{align*}
|I^1_{+,2}|&\les \int_0^t \int_{\Sigma_{t'}}\big( |\curl \fw| |\p \Omega|+|\p v|(|\p^2 \Omega|+|\p\Omega\c \p \varrho|)\big) |\curl \C| d\mu_e dt'\\
&\les \int_0^t\|\curl \C\|_{L_x^2}\{\||\p \Omega|(|\p \Omega|+|\Omega \p \varrho|)\|_{L_x^2}+ \|\p v\|_{L_x^\infty}(\|\p^2 \Omega\|_{L_x^2}+\|\p\Omega \c \p \varrho\|_{L_x^2})\}\\
&\les \int_0^t \|\curl \C\|_{L_x^2}\big(\||\p \Omega|^2\|_{L_x^2}+1+\|\p v\|_{L_x^\infty}(\|\p^2 \Omega\|_{L_x^2}+1)\big) dt'.
\end{align*}
By using Sobolev embedding, (\ref{9.22.3.19}) and (\ref{9.07.9.19})
$$\||\p \Omega|^2\|_{L_x^2}\les \|\p \Omega\|_{H_x^1}\|\p \Omega\|_{L_x^3}\les \|\curl \C\|_{L_x^2}+1.$$
Substituting the above estimate in the estimate for $|I^1_{+,2}|$,
also using (\ref{9.07.9.19}) again, we can obtain
\begin{align*}
|I^1_{+,2}|&\les \int_0^t \|\curl \C\|_{L_x^2}(\|\curl \C\|_{L_x^2}+1)(1+\|\p v\|_{L_x^\infty}) dt',
\end{align*}
which can be further treated by Gronwall's inequality.

For the first term in (\ref{9.08.15.19}), we observe the $\div$-$\curl$ structure and perform integration by parts on $\Sigma_t$,
\begin{align}
I^1_{+,1,1}&=\int_0^t \int_{\Sigma_{t'}}e^{-\varrho}\p^n \p_m v^j \p_n\Omega_j \curl \C^m d\mu_e dt'\nn\\
&=\int_0^t \int_{\Sigma_{t'}}\{ \p_m(e^{-\varrho} \p^n v^j \p_n \Omega_j \curl \C^m)-\p_m(e^{-\varrho} \p_n\Omega_j \curl \C^m)\p^n v^j \} d\mu_e dt'.\label{9.30.3.19}
\end{align}
The first term is a boundary term, denoted by $I^1_{+,1,1,b}$ which vanishes identically.  Due to  $\p_m (\curl \C)^m=0$, we obtain
\begin{align*}
I^1_{+,1, 1}&=-\int_0^t \int_{\Sigma_{t'}}\p_m(e^{-\varrho} \p^n \Omega_j\curl \C^m)\p_n v^j  d\mu_e dt'\\
&=-\int_0^t \int_{\Sigma_{t'}} \big(\p_m(e^{-\varrho}) \p_n \Omega_j  +e^{-\varrho} \p_m \p_n \Omega_j\big)\curl\C^m  \p^n v^j  d\mu_e dt'.
\end{align*}
 We then  use  (\ref{9.07.9.19}) and the first estimate in (\ref{9.23.4.19})   to derive
 \begin{align*}
 |I^1_{+, 1,1}|&\les\int_0^t (1+\|\p^2 \Omega\|_{L_x^2})\|\curl \C\|_{L_x^2}\|\p v\|_{L_x^\infty} dt'\\
 &\les \int_0^t (1+\|\curl\C\|_{L_x^2})\|\curl \C\|_{L_x^2}\|\p v\|_{L_x^\infty} dt'.
 \end{align*}
   The above term can be then treated by Gronwall's inequality.

 Next, we consider the second term of $I^1_{+,1}$ in (\ref{9.08.15.19}), which is denoted by $I^1_{+,1,2}$ below.  In view of (\ref{9.07.4.19}), we integrate by parts in spacetime to obtain
  \begin{align}\label{9.08.19.19}
  \begin{split}
 I^1_{+,1,2}&= \int_0^t \int_{\Sigma_{t'}} \p_j \bT \varrho \p_m \Omega^j e^{-\varrho} \curl \C^m d\mu_e dt'\\
 &=\int_0^t \int_{\Sigma_{t'}}  ([\p_j, \bT]\varrho +\bT \p_j\varrho ) \p_m \Omega^j e^{-\varrho} \curl \C^m   d\mu_e dt'\\
 &=\int_0^t \int_{\Sigma_{t'}}(\p_j v^a \p_a \varrho+\Tr \cir{k}\p_j \varrho) \p_m \Omega^j e^{-\varrho} \curl \C^m d\mu_e dt'\\
 &+\int_0^t \p_t \int_{\Sigma_{t'}}\p_j \varrho \p_m\Omega^j e^{-\varrho} \curl \C^m d\mu_e dt'-\int_0^t \int_{\Sigma_{t'}} \p_j\varrho \bT( \p_m \Omega^j e^{-\varrho} \curl \C^m) d\mu_e dt',
 \end{split}
 \end{align}
 where we used (\ref{cmu1}) to treat the commutator.

By using (\ref{9.23.4.19}) and (\ref{BA1}), we control the first term on the right hand side
 \begin{align*}
|I^1_{+,1,2,1}|&= |\int_0^t \int_{\Sigma_{t'}}(\p_j v^a \p_a \varrho+\Tr \cir{k}\p_j \varrho) \p_m \Omega^j e^{-\varrho} \curl \C^m d\mu_e dt'|\\
&\les\int_0^t \|\p v\|_{L_x^\infty}\|\p \varrho\c \p \Omega\|_{L^2_x}\|\curl \C\|_{L_x^2} dt' \\
&\les \int_0^t \|\p v\|_{L_x^\infty}\|\curl\C\|_{L_x^2} dt'\les  \sup_{0\le t'\le t}\|\curl \C\|_{L^2(\Sigma_{t'})}.
 \end{align*}
Similarly, we bound the  boundary term by
\begin{align*}
|I^1_{+,1,2,2}|&\les \sup_{0\le t'\le t}\{\|\p \varrho\c\p \Omega\|_{L_x^2} \|\curl \C\|_{L_x^2}\}\les \sup_{0\le t'\le t}\|\curl \C\|_{L^2(\Sigma_{t'})}.
\end{align*}
For the  last term in $I^1_{+,1,2}$, we first show the following preliminary estimates
\begin{align}
 \|\bT (\p \Omega\c (e^{-\varrho}+1))\|_{L^2_x} \les 1\label{9.07.10.19}.
 \end{align}
Indeed, 
we  compute by using  (\ref{4.10.3.19}), (\ref{4.23.1.19}) and (\ref{cmu1}) that
\begin{align}\label{9.08.22.19}
\bT \p \Omega&= \p \Omega \p v+\Omega \p^2 v,\quad \bT (\p \Omega e^{-\varrho})=e^{-\varrho}(\p \Omega \p v+\Omega \p^2 v).
\end{align}
Therefore, it follows by using (\ref{9.23.4.19}), (\ref{5.03.4.19}) and  (\ref{5.04.17.19}) that
\begin{align*}
\|\bT \p \Omega (e^{-\varrho}+1)\|_{L_x^2}&\les \|\p \Omega \c\p v\|_{L_x^2}+\|\Omega\|_{L_x^\infty} \|\p^2 v\|_{L_x^2}\les 1.
\end{align*}
This gives (\ref{9.07.10.19}).

In view of  (\ref{4.25.4.19}), we can symbolically write
\begin{equation}\label{9.07.13.19}
\bT \C=e^{-\varrho}\p v\c \p\Omega; \qquad \curl \bT \C=\p( e^{-\varrho}\p v\c\p\Omega).
\end{equation}
Using the above symbolic formulas, (\ref{cmu1}) and $\fC= e^{-\varrho} \curl \Omega$, we derive
\begin{align*}
I^1_{+,1,2,3}&=\int_0^t\int_{\Sigma_{t'}} \{\p_j\varrho \p_m \Omega^j e^{-\varrho} (\curl \bT \C^m+[\bT, \curl]\C^m)+\p_j \varrho\bT( \p_m \Omega^j e^{-\varrho})\curl \C^m\} d\mu_e dt'\\
&=\int_0^t \int_{\Sigma_{t'}} \p \varrho \{\p\Omega e^{-\varrho} \p(e^{-\varrho}\p v\c \p\Omega) + \p v\p \C\p \Omega e^{-\varrho}+\bT(\p \Omega e^{-\varrho}) \curl \C\}d\mu_e dt'\\
&=\int_0^t \int_{\Sigma_{t'}}\{ \p \varrho \p\Omega e^{-2\varrho}(\p^2 v \p \Omega+\p v\p \varrho\p \Omega+\p v\p^2 \Omega)+\p\varrho \bT(\p \Omega e^{-\varrho}) \curl \C\} d\mu_e dt'.
\end{align*}
By using (\ref{9.07.10.19}), we can estimate the last term in the last line,
\begin{equation*}
\|\p\varrho \bT(\p \Omega e^{-\varrho}) \curl \C\|_{L_x^1}\les \|\p \varrho\|_{L_x^\infty}\|\curl \C\|_{L_x^2}.
\end{equation*}
For the remaining terms in the last line, we employ Sobolev embedding, (\ref{5.04.17.19}) and (\ref{9.22.3.19}) to derive
\begin{align*}
&\|\p_j \varrho \p\Omega e^{-2\varrho}(\p^2 v \p \Omega+\p v\p \varrho\p \Omega+\p v\p^2 \Omega)\|_{L_x^1}\\
&\les \|\p \varrho\|_{L_x^6}\big(\|\p^2 v\|_{L_x^2}\|\p \Omega\|^2_{L_x^6}+\|\p\Omega\|_{L_x^6}\|\p \Omega\|_{L_x^3}\|\p v\|_{L_x^6}\|\p \varrho\|_{L_x^6}+\|\p^2 \Omega\|_{L_x^2}\|\p \Omega\|_{L_x^6}\|\p v\|_{L_x^6}\big)\\
&\les (\|\curl \C\|_{L_x^2}+1)(\|\curl \C\|_{L_x^2}+1),
\end{align*}
where we used Sobolev embedding and (\ref{9.07.9.19}) to treat $\|\p \Omega\|_{L_x^6}+\|\p^2 \Omega\|_{L_x^2}$ in the last step.

Combining the estimates for the two parts gives
\begin{align*}
|I^1_{+,1,2,3}|&\les \int_0^t (\|\curl \C\|_{L_x^2}+1)(\|\p \varrho\|_{L_x^\infty}+\|\curl \C\|_{L_x^2}+1) dt',
\end{align*}
which then can be treated by Gronwall's inequality.

We now summarize the above calculations as
\begin{align}\label{9.11.8.19}
|I|+|I^1|&\les \int_0^t  (\|\curl\C\|_{L^2(\Sigma_t)}+1)^2 (\|\p v, \p \varrho\|_{L_x^\infty}+1) dt'+\sup_{0\le t'\le t} \| \curl \C\|_{L^2(\Sigma_t')}.
\end{align}
Substituting the above estimate in (\ref{9.07.14.19}) yields
\begin{align*}
\int_{\Sigma_t}|\curl \C|^2 d\mu_e&\les\|\curl \C\|_{L^2(\Sigma_0)}^2+\int_0^t (\|\curl \C\|_{L_x^2}+1)^2(\|\p v, \p \varrho\|_{L_x^\infty}+1) dt'\\
&+\sup_{0\le t'\le t}\|\curl \C\|_{L^2(\Sigma_{t'})},
\end{align*}
which implies the first estimate in (\ref{9.07.5.19}) by using Gronwall's inequality.
\end{proof}
\begin{corollary}\label{eng_wave}
For $0\le \ep\le s-2$, there hold
\begin{align}
&\|\Omega\|_{H_x^2}+\|\curl\Omega\|_{H_x^1}\les 1\label{4.25.1.19}\\
&\|\bp v, \bp\varrho, \Tr k, \p(c^2) \|_{H^{1+\ep}_x}\les 1, \quad \| \mu^\ep\E\rp{1}_\mu(t)^\f12\|_{l_\mu^2}\les 1\label{eng_3}\\
&\|\bT^2 v, \bT^2 \varrho\|_{H^{\ep}_x}\les 1.\label{eng_2}
\end{align}
\end{corollary}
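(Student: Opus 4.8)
The corollary is the wrap-up of the energy scheme: the substantive inputs are the $H^2$ vorticity bound (\ref{9.07.5.19}) and the conditional estimate (\ref{5.11.3.19}) of Proposition \ref{erro_h}, and the plan is to assemble them with the elliptic and product estimates already in place, taking $\ep=s-2$ (smaller $\ep$ being easier). First I would deduce (\ref{4.25.1.19}). Writing $\curl\Omega=e^\varrho\C$, the bounds $\|\p\C\|_{L^2(\Sigma_t)}\les1$ from (\ref{9.07.5.19}) and $\|\C\|_{L^2_x}\les1$ from (\ref{5.03.8.19}) give, through Lemma \ref{comp_dyd} applied with $C(y)=e^y$, that $\|\curl\Omega\|_{H^1_x}\les1$; and for $\Omega$ itself I would combine $\|\Omega\|_{L^2_x}\les1$ (from (\ref{5.03.4.19})), $\|\p\Omega\|_{L^2_x}\les1$ (from (\ref{9.22.3.19})) and $\|\p^2\Omega\|_{L^2_x}\les1$ (part of (\ref{9.07.5.19})) to get $\|\Omega\|_{H^2_x}\les1$.

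Next, since $2<s<\frac52$ we have $\frac12+\ep=s-\frac32\le1$, so (\ref{4.25.1.19}) yields $\|\curl\Omega\|_{H^{\f12+\ep}_x}\les1$ uniformly for $t\in[0,T]$. Inserting this into (\ref{5.11.3.19}) collapses the right-hand side to $1+\int_0^t(\|\bp\varrho,\p v_+\|_{L_x^\infty}+1)\,dt'+\|\mu^\ep\E\rp{1}_\mu(0)^\f12\|_{l_\mu^2}$; the data term is controlled by the comparison estimates on $\Sigma_0$ used in Proposition \ref{leng}, and the time integral is $\les T^\f12\|\bp\varrho,\p v_+\|_{L^2_tL^\infty_x}+T\les1$ by (\ref{BA1}) and $0<T<1$. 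This gives $\|\mu^\ep\E\rp{1}_\mu(t)^\f12\|_{l_\mu^2}\les1$, which is the second estimate in (\ref{eng_3}).

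For the first estimate in (\ref{eng_3}) I would feed this energy bound, together with $\|\curl\Omega\|_{H^\ep_x}\les1$ from (\ref{4.25.1.19}), into Lemma \ref{comp_2}: (\ref{9.23.1.19}) then gives $\|\p\bp v\|_{\dot{H}^\ep_x}\les1$ and (\ref{9.23.2.19}) gives $\|\p\bp\varrho,\p\Tr k,\p^2(c^2)\|_{\dot{H}^\ep_x}\les1$. Combining with the $H^1_x$ bounds of Corollary \ref{comp_1}, the identities $\Tr k=3\bT\log c-\div v$ and $\p(c^2)=C(\varrho)\p\varrho$, and the Moser-type product estimates of Section \ref{app} to move smooth functions of $\varrho$ through fractional derivatives, upgrades everything to $\|\bp v,\bp\varrho,\Tr k,\p(c^2)\|_{H^{1+\ep}_x}\les1$. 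Finally, for (\ref{eng_2}) I would use $\bT v^i=-c^2\p^i\varrho$ and $\bT\varrho=-\div v$ from (\ref{4.23.1.19}); one more $\bT$-differentiation, together with repeated use of $[\bT,\p_i]=-\p_i v^a\p_a$ from (\ref{cmu1}), expresses $\bT^2 v$ and $\bT^2\varrho$ as finite sums of schematic terms $C(\varrho)\,\p^2(v,\varrho)$ and $C(\varrho)\,\bp(v,\varrho)\cdot\bp(v,\varrho)$, which I would bound in $\dot{H}^\ep_x$ by the product and commutator estimates of Section \ref{app} together with (\ref{eng_3}) and $\|\bp v,\bp\varrho\|_{L^\infty_x}\les1$.

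The only mildly delicate point is the third step, where the genuinely nonlinear quantities $\Tr k$ and $c^2$ must be estimated at the fractional level $H^{1+\ep}$ rather than merely $H^1$; everything else is a direct assembly, since the $H^2$ vorticity estimate (\ref{9.07.5.19}) was designed exactly to supply the $H^{s-3/2}\subset H^1$ input that (\ref{5.11.3.19}) demands.
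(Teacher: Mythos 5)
Your proposal follows the paper's own proof essentially step for step: (\ref{4.25.1.19}) is read off from (\ref{9.07.5.19}) and (\ref{9.22.3.19}); the dyadic energy bound in (\ref{eng_3}) comes from feeding the $H^1$ vorticity bound (hence the $H^{\f12+\ep}$ bound, since $\f12+\ep\le s-\frac{3}{2}<1$) together with (\ref{BA1}) into (\ref{5.11.3.19}); the $H^{1+\ep}$ bounds follow from Lemma \ref{comp_2} at top order and (\ref{5.04.17.19}) at low order; and (\ref{eng_2}) is obtained by writing $\bT^2 v,\bT^2\varrho$ schematically as $C(\varrho)(\bp(v,\varrho))^2+C(\varrho)\,\p\bT(v,\varrho)$ via (\ref{4.23.1.19}) and (\ref{cmu1}), exactly as the paper does.

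The one ingredient you should not invoke is $\|\bp v,\bp\varrho\|_{L^\infty_x}\les 1$ in the last step: this is not available pointwise in time, since the bootstrap (\ref{BA1}) only controls $L^2_t L^\infty_x$ and $H^{s-1}({\mathbb R}^3)$ with $s-1<\frac{3}{2}$ does not embed into $L^\infty_x$. It is also unnecessary: the paper bounds the quadratic terms using (\ref{9.22.4.19}) together with the bilinear estimate (\ref{9.08.8.19}), namely $\|(\bp(v,\varrho))^2\|_{H^\ep_x}\les \|\bp(v,\varrho)\|_{H^{\ep+\f12}_x}\|\bp(v,\varrho)\|_{H^1_x}$, and both factors are controlled by (\ref{5.04.17.19}) and (\ref{eng_3}) since $\ep+\f12\le 1$. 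With that substitution (which stays within the Section \ref{app} toolkit you already cite) your argument coincides with the paper's.
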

\begin{proof}
(\ref{4.25.1.19}) is a consequence of (\ref{9.07.5.19}) and (\ref{9.22.3.19}).
The second estimate in (\ref{eng_3}) is derived by substituting (\ref{9.07.5.19}) and (\ref{BA1}) to (\ref{5.11.3.19}). The $\dot{H}^{1+\ep}$ bound in the first  estimate in (\ref{eng_3}) follows as the  consequence of the second estimate with the help of Lemma \ref{comp_2} and  (\ref{4.25.1.19}), while the lower order bound can be obtained by (\ref{5.04.17.19}).

To show (\ref{eng_2}), we first derive by using (\ref{4.23.1.19}) and (\ref{cmu1}) that
   \begin{align*}
   \bT^2 (v)&= \bT(-c^2 \p \varrho)=c c' \bT \varrho \p \varrho+c^2 \p v \p \varrho+c^2 \p \bT \varrho\\
   \bT^2 \varrho&=\bT (\p v)=\p \bT v+(\p v)^2.
    \end{align*}
    Hence, symbolically, we can write
    \begin{equation*}
    \bT^2 v, \bT^2 \varrho=C(\varrho) (\bp (v, \varrho))^2+C(\varrho)\p \bT (\varrho, v).
    \end{equation*}
    By using (\ref{9.22.4.19}), and (\ref{9.08.8.19})
    \begin{align*}
    \|\bT^2 v, \bT^2 \varrho\|_{H^\ep_x}&\les \|(\bp (v, \varrho))^2\|_{H^\ep_x}+\|\bp\bT(\varrho, v)\|_{H^\ep_x}\\
    &\les \|\bp(v, \varrho)\|_{H^{\ep+\f12}_x}\|\bp(v, \varrho)\|_{H^1_x}+\|\p\bT(\varrho, v)\|_{H^\ep_x}.
    \end{align*}
    Therefore $\|\bT^2 v, \bT^2\varrho\|_{H^\ep_x}\les 1$ follows by using (\ref{eng_3}).
\end{proof}

Let  $0<\a<1$ be fixed. Define for functions $f$ the following Besov norm
\begin{equation*}
\|f\|_{B_{\infty,2,x}^\a}=\|f\|_{L_x^\infty}+\|\mu^\a P_\mu f\|_{l_\mu^2 L_x^\infty}.
\end{equation*}
\begin{corollary}\label{comp_3}
Let $0<\delta\le s'-2$. There hold the following estimates
\begin{align*}
&\|\bp v\|_{B_{\infty,2,x}^\delta}\les \|\bp  v_+\|_{B_{\infty, 2,x}^\delta}+1\\
&\|\bp v\|_{L_x^\infty}\les \|\bp v_+\|_{L_x^\infty}+1
\end{align*}
\end{corollary}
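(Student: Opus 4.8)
\textbf{Proof plan for Corollary \ref{comp_3}.}
The two asserted estimates are immediate consequences of the decomposition $v = v_+ + \eta$ together with the elliptic bounds for $\eta$ established in the previous subsection; no new analytical input beyond that and Sobolev embedding is needed. The plan is to split $\bp v = \bp v_+ + \bp \eta$ and absorb the $\eta$-contribution into the constant $1$. For the $L^\infty$ statement one writes $\|\bp v\|_{L^\infty_x} \le \|\bp v_+\|_{L^\infty_x} + \|\bp \eta\|_{L^\infty_x}$ and uses the bounds $\|\p \eta\|_{L^\infty_x}\les 1$ from (\ref{5.03.3.19}) and $\|\bT\eta\|_{L^\infty_x}\les 1$ from (\ref{5.04.1.19}), which together give $\|\bp\eta\|_{L^\infty_x}\les 1$.

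For the Besov statement I would do the same splitting at the level of Littlewood-Paley pieces: $P_\mu\bp v = P_\mu \bp v_+ + P_\mu \bp\eta$, so that
\begin{equation*}
\|\bp v\|_{B^\delta_{\infty,2,x}} \le \|\bp v_+\|_{B^\delta_{\infty,2,x}} + \|\bp\eta\|_{B^\delta_{\infty,2,x}},
\end{equation*}
and it then remains to show $\|\bp\eta\|_{B^\delta_{\infty,2,x}}\les 1$ for $0<\delta\le s'-2$. Here one controls $\|\mu^\delta P_\mu \bp\eta\|_{l^2_\mu L^\infty_x}$ by passing to $L^2_x$ via Bernstein's inequality, $\|\mu^\delta P_\mu \bp\eta\|_{L^\infty_x}\les \|\mu^{\delta+\frac{3}{2}} P_\mu \bp\eta\|_{L^2_x}$, and then summing: since $\delta + \tfrac32 \le s'-2+\tfrac32 < \tfrac12 + (s-2) \le \tfrac32 + \ep$ for the relevant range, the desired $l^2_\mu$-sum is dominated by $\|\bp\eta\|_{H^{2+\ep'}_x}$ for a suitable $\ep'\le \ep = s-2$. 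For the spatial part $\|\p\eta\|$ this is exactly (\ref{4.12.2.19}) combined with (\ref{4.25.1.19}), which gives $\|\eta\|_{H^{2+\ep}_x}\les 1$; for the $\bT\eta$ part one invokes (\ref{4.12.5.19}), namely $\|\bT\eta\|_{H^2_x}\les 1$, noting $2 \ge \delta+\tfrac32$ precisely because $\delta \le s'-2 < \tfrac12$. Thus in both cases the $\eta$-Besov norm is $\les 1$.

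The only point requiring any care is the bookkeeping on indices: one must check that the Sobolev/Besov exponents produced by Bernstein's inequality genuinely fall within the ranges where the $\eta$-estimates (\ref{4.12.2.19}), (\ref{4.12.5.19}), (\ref{5.03.3.19}), (\ref{5.04.1.19}) have been proven, which is where the hypothesis $0<s'-2<(\tfrac{s-2}{5})^2$ (hence $s'-2$ small relative to $s-2$) is used. This is the "main obstacle," but it is purely a matter of chasing the numerology $\delta+\tfrac32 \le 2$ and $\delta \le \ep$; there is no genuine analytic difficulty. Once the index check is done, the corollary follows by combining the two splittings above with the triangle inequality.
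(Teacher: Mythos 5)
Your overall strategy is exactly the paper's: split $v=v_++\eta$, bound the $\eta$-contribution in $B^\delta_{\infty,2,x}$ by Bernstein's inequality and the $L^2$-based bounds on $\eta$ and $\bT\eta$, and handle the $L^\infty_x$ statement the same way. The $\bT\eta$ piece (via (\ref{4.12.5.19}), using $\delta+\frac{3}{2}\le 2$) and the $L^\infty_x$ estimate are fine and match the paper.

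However, the index bookkeeping for the spatial piece — which you yourself single out as the only delicate point — is off. Bernstein reduces $\|\mu^\delta P_\mu\p\eta\|_{l^2_\mu L^\infty_x}$ to $\|\p\eta\|_{H^{\frac{3}{2}+\delta}_x}$, i.e.\ essentially to $\|\eta\|_{H^{\frac{5}{2}+\delta}_x}$, and the bound you cite, $\|\eta\|_{H^{2+\ep}_x}\les 1$ with $\ep=s-2<\frac{1}{2}$, does not dominate this, since $2+\ep<\frac{5}{2}\le\frac{5}{2}+\delta$. Likewise the displayed chain $\delta+\frac{3}{2}\le s'-2+\frac{3}{2}<\frac{1}{2}+(s-2)$ is false as written (its right side is less than $1$). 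The correct move — and what the paper does — is to apply (\ref{4.12.2.19}) with $\a=\frac{1}{2}+\delta$, which is admissible because $\delta\le s'-2<s-2=\ep$ so $\frac{1}{2}+\delta\le\frac{1}{2}+\ep$; this gives $\|\eta\|_{H^{\frac{5}{2}+\delta}_x}\les\|\curl\Omega\|_{H^{\frac{1}{2}+\delta}_x}+1\les 1$, the last step using $\|\curl\Omega\|_{H^1_x}\les 1$ from (\ref{4.25.1.19})/(\ref{9.07.5.19}). With that correction your argument closes and coincides with the paper's proof.
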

\begin{proof}
By using $v=v_++\eta$, Bernstein inequality, the estimate for $\eta$ in (\ref{4.12.2.19}) and  (\ref{9.07.5.19})
\begin{align*}
\|\p v\|_{B_{\infty,2,x}^\delta}&\les \|\p v_+\|_{B_{\infty, 2,x}^\delta}+\|\curl \Omega\|_{H^{\f12+\delta}_x}+1\\
&\les \|\p v_+\|_{B_{\infty, 2,x}^\delta}+1.
\end{align*}
Similarly, by using (\ref{4.12.5.19}),
\begin{align*}
\|\bT v\|_{B_{\infty,2,x}^\delta}&\les \|\bT \eta\|_{B_{\infty,2,x}^\delta}+\|\bT v_+\|_{B_{\infty,2,x}^\delta}\\
&\les \|\bT \eta\|_{H^{\frac{3}{2}+\delta}}+\|\bT v_+\|_{B_{\infty,2,x}^\delta}\\
&\les 1+\|\bT v_+\|_{B_{\infty,2,x}^\delta}.
\end{align*}
The $L_x^\infty$ estimate can be derived in the same way.
\end{proof}
\subsection{$H^{2+\delta}$ bound for vorticity}
Let $0<\delta\le s'-2$ be fixed.
In this subsection, we derive the highest order energy bound for the vorticity. Such bound for the vorticity, together with the flux on $\curl \C$ which will be bounded in Section \ref{flux_C},  is used particularly for controlling the geometry of the acoustic cones.
\begin{proposition}[Highest-order energy control for vorticity]\label{9.07.16.19}
Let $0<\delta\le s'-2$. There hold the following estimates
\begin{align}
&\|\curl \C\|_{H^\delta(\Sigma_t)}+\|\p \C\|_{H^\delta(\Sigma_t)}+\|\p^2 \fw\|_{H^\delta(\Sigma_t)}+\| \p^2 \Omega\|_{H^\delta(\Sigma_t)}\les 1.\label{9.07.15.19}
\end{align}
\end{proposition}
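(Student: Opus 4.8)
The plan is to upgrade the $H^2$ estimate of Proposition in Section~\ref{h2vorticity} to the $H^{2+\delta}$ level by running exactly the same energy scheme, but with $P_\mu$-projected and $\mu^\delta$-weighted quantities, relying now on the higher-order bounds already secured in Corollary~\ref{eng_wave} (in particular $\|\bp v,\bp\varrho\|_{H^{1+\ep}_x}\les 1$, $\|\bT^2 v,\bT^2\varrho\|_{H^\ep_x}\les 1$ for $0\le\ep\le s-2$) together with the Besov/Strichartz information $\|\bp v\|_{L^2_t B^\delta_{\infty,2,x}}\les 1$ from Corollary~\ref{comp_3} and the bootstrap assumption (\ref{BA1}). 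First I would reduce, as in (\ref{9.07.7.19}), the four quantities in (\ref{9.07.15.19}) to the single quantity $\|\curl\fC\|_{H^\delta(\Sigma_t)}$: the passage from $\curl\fC$ to $\p\fC$, $\p^2\Omega$, $\p^2\fw$ uses the Hodge elliptic estimates for $(\div,\curl)$, the identities $\div\fC=\p(e^{-\varrho})\curl\Omega$, $\Delta_e\Omega^j=-\curl^2\Omega^j+\p^j\div\Omega$, (\ref{9.24.1.19}), and the fractional product estimates from Section~\ref{app} applied with the already-known bounds $\|\p\varrho\|_{H^{1+\ep}_x}\les 1$, $\|\Omega\|_{H^2_x}\les 1$ of (\ref{4.25.1.19}) and (\ref{eng_3}). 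The only subtlety here is that one works with $\dot H^\delta$-weighted commutators rather than $L^2$, so the product estimates (\ref{9.08.8.19}) etc. must be used in their $H^\delta$ forms.

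Next I would set $Z=\curl\fC$, apply the Littlewood–Paley projector $P_\mu$, and run the energy identity (\ref{9.07.4.19}) with $F=G=P_\mu Z$, exactly as in (\ref{9.07.14.19}) but for each dyadic piece, then sum $\mu^{2\delta}\sum_\mu$. The commutator terms $[P_\mu,v^m]\p_m(\cdot)$ and $[P_\mu,\bT\varrho](\cdot)$ that arise from projecting the transport equation $\bT Z=\cdots$ are controlled by the commutator estimates of Section~\ref{app}, costing a factor $\|\bp v\|_{L^\infty_x}$ (Gronwall-absorbable) plus $\|\bp v\|_{H^{1+\delta}_x}\|\cdot\|_{L^2_x}$ (bounded by Corollary~\ref{eng_wave}). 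The heart of the argument is again the term $I^1_\mu=\int_0^t\int_{\Sigma_{t'}}\langle P_\mu\curl\fC,P_\mu\curl\bT\fC\rangle_e$, and I would reuse verbatim the structural decomposition (\ref{9.07.8.19})–(\ref{9.08.22.19}): the $\div$-$\curl$ spatial integration by parts that killed the dangerous $\p^2 v\cdot\p\Omega$ term in $I^1_{+,1,1}$, and the spacetime integration by parts using (\ref{9.07.4.19}) that handled $I^1_{+,1,2}$. The difference is that at the $H^\delta$ level the borderline product $\p^2 v\cdot\p\Omega$ (after IBP, $\p^2\Omega\cdot\p v$) must be estimated in $\dot H^\delta$; by the product rule this splits into $\|\p^2\Omega\|_{H^\delta_x}\|\p v\|_{L^\infty_x}$, absorbed by Gronwall against the target norm, and $\|\p^2\Omega\|_{L^2_x}\|\p v\|_{B^\delta_{\infty,2,x}}$, where the Besov norm is exactly what Corollary~\ref{comp_3} provides with an $L^2_t$ bound — this is the point where the $s$–$s'$ hierarchy and the reserve of $\f12$-derivative for $v,\varrho$ over the vorticity is genuinely used. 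Likewise the cubic terms carrying $\p\varrho$ are handled by $\|\La^\delta(\p v\cdot\p\varrho\cdot\Omega)\|_{L^2_x}$-type trilinear estimates (cf. (\ref{9.08.17.19})) with the bounds of Corollary~\ref{eng_wave}.

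Assembling, one obtains, for $0<\delta\le s'-2$,
\begin{equation*}
\|\mu^\delta \|P_\mu\curl\fC\|_{L^2(\Sigma_t)}\|_{l^2_\mu}\les \|\mu^\delta\|P_\mu\curl\fC\|_{L^2(\Sigma_0)}\|_{l^2_\mu}+\int_0^t (\|\curl\fC\|_{H^\delta_x}+1)^2(\|\bp v\|_{B^\delta_{\infty,2,x}}+\|\bp\varrho\|_{L^\infty_x}+1)\,dt',
\end{equation*}
and since the last factor is in $L^1_t$ by (\ref{BA1}) and Corollary~\ref{comp_3}, Gronwall's inequality closes the estimate once $T$ is small, giving $\|\curl\fC\|_{H^\delta(\Sigma_t)}\les 1$ and hence (\ref{9.07.15.19}) via the reduction step. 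The main obstacle, and the place where care is required, is precisely the highest-order high–low interaction $\p^2\Omega\cdot\p v$ in the $\curl\bT\fC$ term: it cannot be closed by $\|\bp v\|_{L^\infty}$ alone (there is no $\f12$-derivative to spare inside the vorticity norm), and it is only the availability of the square-summable Besov bound $\|\bp v\|_{L^2_t B^\delta_{\infty,2,x}}$ — a consequence of the Strichartz estimates proven later for $v_+$ plus the elliptic control of $\eta$ — that makes the term summable in $\mu$ and integrable in $t$; all the other terms are routine applications of the product, commutator and trilinear estimates of Section~\ref{app} together with Corollaries~\ref{eng_wave} and \ref{comp_3}.
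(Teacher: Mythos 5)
Your proposal follows essentially the same route as the paper: reduce all four norms to $\|\curl\fC\|_{H^\delta}$ via the Hodge elliptic estimates, then run the dyadic $\mu^{2\delta}$-weighted version of the same energy identity, reuse the $\div$-$\curl$ spatial integration by parts and the spacetime integration by parts on the $\curl\bT\fC$ pairing, control the borderline high–low interaction $\p^2\Omega\cdot\p v$ with the $L_t^2 B^\delta_{\infty,2,x}$ bound on $\bp v$ coming from (\ref{BA1}) and Corollary \ref{comp_3}, and close by Gronwall. This is the paper's proof (Lemma \ref{9.08.11.19} and the surrounding commutator/product estimates) in outline, so the approach is correct.
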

Since the $L^2_x$ estimates have been proved in (\ref{9.07.5.19}), we will focus on proving the highest order estimates in the above result.
We first prove two sets of preliminary estimates.
\begin{lemma}
Let $0<\delta\le s'-2$ and $C(y)$ be any smooth function. There hold
\begin{align}
\|\mu^\delta[\bT, P_\mu \curl] F^m\|_{l_\mu^2 L_x^2}&\les \|\p v\|_{B_{\infty, 2, x}^\delta}\|\p F\|_{L_x^2}+\|\p v\|_{L_x^\infty}\|\p F\|_{H^\delta_x},\label{9.08.2.19}\\
\| C(\varrho) F \|_{H^{1+\delta}_x}\les \|F\|_{H^{1+\delta}_x}.\label{9.08.21.19}
\end{align}
\end{lemma}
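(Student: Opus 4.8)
The plan is to prove the two commutator/product estimates \eqref{9.08.2.19} and \eqref{9.08.21.19} by standard Littlewood-Paley analysis, using the already-established bounds on $\varrho$ and $v$ from Corollary \ref{comp_1} and Corollary \ref{eng_wave}. For \eqref{9.08.21.19}, I would first reduce to the case $C(\varrho)-C(0)$, noting that constants act trivially, and then write $C(\varrho)\,F$ and apply the $H^{1+\delta}$ Leibniz/product rule: schematically $\|C(\varrho)F\|_{H^{1+\delta}_x}\lesssim \|C(\varrho)\|_{L^\infty_x}\|F\|_{H^{1+\delta}_x}+\|C(\varrho)\|_{H^{1+\delta}_x\cap(\text{Besov})}\|F\|_{L^\infty_x}$ type bounds via the paraproduct decomposition already used repeatedly in the paper (cf. \eqref{9.22.4.19}, \eqref{9.22.5.19}, and the product estimates referenced from Section \ref{app}). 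The point is that $\|C(\varrho)\|_{L^\infty_x}\lesssim 1$ since $|\varrho|\lesssim 1$ and $C$ is smooth, and $\|\p C(\varrho)\|_{H^\delta_x}\lesssim \|C'(\varrho)\|_{L^\infty_x}\|\p\varrho\|_{H^\delta_x}+\text{(nonlinear terms)}\lesssim\|\p\varrho\|_{H^1_x}\lesssim 1$ by \eqref{eng_3} (using $\delta\le s'-2<1$), together with the chain-rule/composition estimate for $C(\varrho)$ in fractional Sobolev spaces. One then needs to control the low-frequency piece of $F$ by $\|F\|_{L^\infty_x}$ — but since here $F$ is an $L^2$-based object, it is cleaner to bound the ``$C(\varrho)$ high $\times$ $F$ low'' paraproduct using $\|P_{\leq\mu}F\|_{L^2_x}$ and the $l^2_\mu L^\infty_x$ Besov bound on $P_\mu C(\varrho)$ that follows from Bernstein and $\|\p C(\varrho)\|_{H^1_x}\lesssim 1$, exactly as in the proof of Lemma \ref{comp_dyd}. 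So \eqref{9.08.21.19} is essentially a repackaging of the arguments already given for \eqref{9.22.4.19}.

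For \eqref{9.08.2.19}, I would first compute the commutator $[\bT,\curl]F$ at the symbolic level. Using \eqref{cmu1}, $[\bT,\p_i]F^j=-\p_i v^a\p_a F^j$, so applying $\ep^{mni}$ gives $[\bT,\curl]F^m=-\ep^{mni}\p_n v^a\p_a F^i$, i.e. schematically $[\bT,\curl]F=\p v\cdot\p F$. Next, I write
\[
[\bT,P_\mu\curl]F=[\bT,P_\mu](\curl F)+P_\mu([\bT,\curl]F)=-[P_\mu,v^a]\p_a(\curl F)+\text{(lower order from $[\p,v]$)}+P_\mu(\p v\cdot\p F),
\]
where I have used that $\bT=\p_t+v^a\p_a$ and that $[\bT,P_\mu]=-[P_\mu,v^a]\p_a$ up to commuting $P_\mu$ past $\p_a$. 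The term $P_\mu(\p v\cdot\p F)$ is controlled in $l^2_\mu L^2_x$ by the product estimate $\|\mu^\delta P_\mu(\p v\cdot\p F)\|_{l^2_\mu L^2_x}\lesssim\|\p v\|_{B^\delta_{\infty,2,x}}\|\p F\|_{L^2_x}+\|\p v\|_{L^\infty_x}\|\p F\|_{H^\delta_x}$ — this is exactly the Besov-refined product estimate of the type proved in Section \ref{app} (e.g. the analogue of \eqref{9.20.5.19} with one factor measured in the Besov norm). For the commutator term $[P_\mu,v^a]\p_a(\curl F)$, I would apply the commutator estimate from Section \ref{app} — the same one used in \eqref{9.20.3.19}, \eqref{9.20.4.19} (e.g. \eqref{4.13.4.19}) — which gives $\|\mu^\delta[P_\mu,v]\p(\curl F)\|_{l^2_\mu L^2_x}\lesssim\|\p v\|_{L^\infty_x}\|\curl F\|_{H^\delta_x}+\|\p v\|_{H^{1+\delta}_x\cap B^\delta}\|\curl F\|_{L^\infty_x}$; one then has to be a little careful, because the $\|\curl F\|_{L^\infty_x}$ factor is not available for general $F$. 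The resolution is to keep the commutator in the form $[P_\mu,v^a]\p_a G$ with $G=\curl F$ and use the \emph{low-high} version of the commutator estimate that trades a derivative, yielding a bound of the form $\|\p v\|_{B^\delta_{\infty,2,x}}\|\p F\|_{L^2_x}$; this is the standard Coifman–Meyer-type gain and matches the statement's right-hand side. Collecting the three contributions and using $\|\p F\|_{H^\delta_x}\lesssim\|\p F\|_{L^2_x}+\|\p F\|_{\dot H^\delta_x}$ gives \eqref{9.08.2.19}.

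The main obstacle I anticipate is bookkeeping rather than conceptual: making sure that in the commutator $[\bT,P_\mu\curl]F$ every error term genuinely falls under one of the two terms on the right-hand side of \eqref{9.08.2.19}, in particular that no term forces an $L^\infty_x$ bound on $\p F$ or on $\curl F$ (which would be false at this level of regularity). This is handled by always pairing a Littlewood-Paley commutator $[P_\mu,v]$ with the \emph{lower-regularity} factor and invoking the derivative-gaining commutator estimates from Section \ref{app}, exactly as was done in the proof of Proposition \ref{4.13.8.19}. The remaining verification — that $\|\p v\|_{B^\delta_{\infty,2,x}}$ and $\|\p v\|_{L^\infty_x}$ are the only norms of $v$ that appear, with all fractional-Sobolev norms of $v$ absorbed into universal constants via \eqref{eng_3} — is routine given the estimates already in hand.
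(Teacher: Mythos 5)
Your proposal is correct and follows essentially the paper's own route: the same splitting $[\bT,P_\mu\curl]F=[\bT,P_\mu]\curl F+P_\mu\big([\bT,\curl]F\big)$ with $[\bT,\curl]F=\p v\c\p F$, the product estimate \eqref{9.08.1.19} for $P_\mu(\p v\c\p F)$, and for \eqref{9.08.21.19} the Leibniz rule combined with the Bernstein/chain-rule control of $C(\varrho)$ as in Lemma \ref{comp_dyd} together with \eqref{9.08.8.19}. The only (harmless) deviation is in the term $[P_\mu,v]\p\curl F$: the paper just cites \eqref{4.13.3.19}, which bounds it by $\|\p v\|_{L_x^\infty}\|\curl F\|_{H^\delta_x}\les\|\p v\|_{L_x^\infty}\|\p F\|_{H^\delta_x}$ without ever producing an $L^\infty$ norm of $\curl F$, so the difficulty you flag (and your extra ``low--high'' refinement, which is valid) does not actually arise.
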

\begin{proof}
In view of  (\ref{9.07.1.19}) and using (\ref{4.23.1.19}) to treat $\bT \varrho$ therein, symbolically, we can write
\begin{align}\label{9.08.5.19}
[\bT, P_\mu \curl] F^m=[\bT, P_\mu]\curl F^m+P_\mu(\p v\c \p F).
\end{align}
Since $$[P_\mu, \bT] \curl F^m=[P_\mu, v] \p \curl F^m,$$
by  applying (\ref{4.13.3.19}) to $F=v$ and $G=\curl F$, we derive
\begin{equation}\label{9.08.4.19}
\|\mu^\delta[P_\mu, \bT]\curl F\|_{l_\mu^2 L_x^2}\les \|\p v\|_{L_x^\infty}\|\curl F\|_{H^\delta_x}.
\end{equation}
For the other term in (\ref{9.08.5.19}), we apply (\ref{9.08.1.19}) with $F=\p v$ and $G=\p F$ to have
\begin{align*}
\|\p v\c \p F\|_{H^\delta_x}\les\|\p v\|_{B_{\infty, 2, x}^\delta}\|\p F\|_{L_x^2}+\|\p v\|_{L_x^\infty}\|\p F\|_{H^\delta_x}.
\end{align*}
We combine the above estimate with (\ref{9.08.4.19}) to derive (\ref{9.08.2.19}).

Next we prove (\ref{9.08.21.19}). It suffices to consider the highest order estimate. We apply (\ref{9.22.4.19}) to $\a=\delta$, $C(\varrho)$ and  $f=\p F$;  and apply (\ref{9.08.8.19}) to $\a=\delta$, $F$ and $G=\p (C(\varrho))$ to derive
\begin{align*}
\|\La^\delta\p(F C(\varrho))\|_{L_x^2}&\les \|\La^\delta(\p F C(\varrho))\|_{L_x^2}+\|\La^\delta( F \p( C(\varrho)))\|_{L_x^2}\\
  &\les \|\p F\|_{H^\delta_x}+\|F\|_{H^{\f12+\delta}_x}\|\p (C(\varrho))\|_{H^1_x}+\|F\|_{H^1_x}\|\p( C(\varrho))\|_{H^{\f12+\delta}_x}\\
  &\les \|\p F\|_{H^\delta_x}+\|F\|_{H^1_x},
\end{align*}
where we used $\|\p \varrho, \p \big(C(\varrho)\big)\|_{H^1_x}\les 1$ in (\ref{5.04.17.19}). Thus (\ref{9.08.21.19}) is proved.
\end{proof}
Next we provide a comparison result.
\begin{lemma}
Let $0<\delta\le s'-2$. There hold the following estimates,
\begin{align}\label{9.08.6.19}
\|\p \C\|_{H^\delta_x}+\|\p^2 \Omega\|_{H^\delta_x}+\|\p^2 \fw\|_{H^\delta_x}\les \|\curl \C\|_{H^\delta_x}+1.
\end{align}
\end{lemma}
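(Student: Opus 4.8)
The plan is to prove \eqref{9.08.6.19} by a sequence of three elliptic comparison estimates, each reducing one quantity to the previous one plus the double-curl $\curl \C$ in $H^\delta_x$, mirroring the $L^2$-level argument already carried out in the proof of \eqref{9.07.5.19} but now tracking one extra $\delta$ derivative via the fractional product and commutator estimates collected in Section \ref{app} and Lemma \ref{comp_dyd}.

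First I would estimate $\|\p\C\|_{H^\delta_x}$. Since $\C^i = e^{-\varrho}\curl\Omega^i$, we have $\div\C = \p_i(e^{-\varrho})\curl\Omega^i$, a symbolic product $\p\varrho\cdot\curl\Omega$; applying the elliptic estimate for the Hodge operator $(\div,\curl)$ on $\Sigma_t$ gives $\|\p\C\|_{H^\delta_x}\les \|\div\C\|_{H^\delta_x}+\|\curl\C\|_{H^\delta_x}$, and the first term is controlled by a fractional product estimate (e.g. \eqref{9.08.8.19} applied to $(F,G)=(\curl\Omega,\p\varrho)$ together with \eqref{5.04.17.19} and \eqref{4.25.1.19} to absorb $\|\p\varrho\|_{H^1_x}\les1$ and $\|\curl\Omega\|_{H^{1/2+\delta}_x}\les1$, valid since $\delta\le s'-2 < s-2 \le 1/2$ in the relevant range, so $1/2+\delta\le 1$). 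Next, for $\|\p^2\Omega\|_{H^\delta_x}$, I would use $\Delta_e\Omega^j = -\curl^2\Omega^j + \p^j\div\Omega$ together with \eqref{div}, writing $\curl^2\Omega = \curl(e^\varrho\C)$ and expanding $\div\Omega = -\Omega^a\p_a\varrho$; then $\|\p^2\Omega\|_{H^\delta_x}\les \|\curl(e^\varrho\C)\|_{H^\delta_x}+\|\p(\Omega\p\varrho)\|_{H^\delta_x}$. The first term is handled by \eqref{9.08.21.19} (with $C(y)=e^y$) plus the bound on $\|\p\C\|_{H^\delta_x}$ just obtained and $\|\C\|_{H^1_x}\les1$; the second uses a fractional Leibniz estimate and the already-established bounds $\|\Omega\|_{H^2_x}\les1$, $\|\p\varrho\|_{H^{1+\ep}_x}\les1$ from \eqref{4.25.1.19}, \eqref{eng_3}. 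Finally, for $\|\p^2\fw\|_{H^\delta_x}$, I would differentiate $\fw=\Omega e^\varrho$ as in \eqref{9.24.1.19}, obtaining $\p^2\fw = e^\varrho\{\p^2\Omega + \p\Omega\,\p\varrho + \Omega((\p\varrho)^2+\p^2\varrho)\}$, and take $H^\delta_x$ norms: the leading term is $\|\p^2\Omega\|_{H^\delta_x}$ (already done), and the remaining quadratic/cubic terms are bounded by fractional product estimates using \eqref{5.04.17.19}, \eqref{4.25.1.19}, \eqref{9.22.3.19}, Sobolev embedding on $\mathbb{R}^3$, and the fact that $|\varrho|\le C$, $\|e^\varrho\|_{H^{1+\delta}_x}$-type factors are harmless by \eqref{9.08.21.19}. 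Chaining the three gives \eqref{9.08.6.19}.

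The only genuine subtlety — and the step I expect to demand the most care — is making sure the fractional product estimates do not require more regularity on $\varrho$ or $\Omega$ than is available: one needs $\|\p^2\varrho\|_{H^\delta_x}\les\|\bp\varrho\|_{H^{1+\ep}_x}\les1$ with $\delta\le\ep=s-2$ (true since $\delta\le s'-2<s-2$), and one must split products so that the rough factor $\curl\C$ (only $L^2$-controlled at one derivative less, i.e. in $H^\delta$ at top order) always sits in $L^2$ while the smooth factor carries the $H^{1+\delta}$ or $L^\infty$ norm; the commutator-type term $\tensor{\ep}{_m^{ni}}\p^n(e^{-\varrho})(\cdots)$ arising from $\curl(e^\varrho\C)$ is precisely of this favorable type. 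Everything else is routine: elliptic estimates for the Euclidean Hodge system, Lemma \ref{comp_dyd}-type tame estimates for composition with $C(\varrho)$, and Sobolev embedding on $\mathbb{R}^3$. No spacetime integration by parts or trilinear structure is needed here — that machinery was already spent in \eqref{9.07.5.19} to get the $L^2$ bound $\|\curl\C\|_{L^2(\Sigma_t)}\les1$, and the present lemma is purely a static elliptic comparison on a fixed slice $\Sigma_t$, which is why it can be stated and proved before the transport/energy argument for $\|\curl\C\|_{H^\delta}\les1$ itself (that argument, in \eqref{9.07.15.19}, will invoke \eqref{9.08.6.19}).
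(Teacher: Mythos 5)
Your proposal is correct and follows essentially the same route as the paper: the Hodge system $\div\C=-\p\varrho\cdot\C$, $\curl\C$ for $\p\C$, the identity $\Delta_e\Omega=-\curl^2\Omega+\p\,\div\Omega$ with $\curl^2\Omega=\curl(e^\varrho\C)$ for $\p^2\Omega$, and the expansion \eqref{9.24.1.19} for $\p^2\fw$, all closed by the fractional product estimates of Section \ref{app}, Lemma \ref{comp_dyd}, and the bounds \eqref{9.07.5.19}, \eqref{5.04.17.19}, \eqref{4.25.1.19}, \eqref{eng_3}. The only cosmetic difference is that you pass $\|\curl(e^\varrho\C)\|_{H^\delta_x}$ through \eqref{9.08.21.19} and the already-proved bound on $\|\p\C\|_{H^\delta_x}$, whereas the paper expands the curl and applies Lemma \ref{comp_dyd} and \eqref{9.08.10.19} directly; the substance is identical.
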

\begin{proof}
We first show the following preliminary estimates
\begin{align}
&\|(e^{\pm\varrho}+1)\p \varrho\c (\C+\p \Omega)\|_{H^\delta_x}\les 1,\label{9.08.10.19}\\
&\|\Omega\c \p^2 \varrho\|_{H^\delta_x}+\|\p(\Omega \c \p \varrho)\|_{H^\delta_x}\les 1, \label{9.09.1.19}\\
&\|(e^{\pm\varrho}+1) (\p \varrho)^2\Omega\|_{H^\delta_x}\les 1.\label{9.24.2.19}
\end{align}
Indeed,
applying (\ref{9.08.8.19}) to $F=\p \varrho$ and $G=\C+\p \Omega$, and using the result $\|\p \varrho, \C, \p \Omega\|_{H^1_x}\les 1$ which is from (\ref{5.04.17.19}) and (\ref{9.07.5.19}), we have
\begin{align*}
\|(e^{\pm\varrho}+1)\p \varrho\c (\C+\p \Omega)\|_{H^\delta_x}&\les\|\p \varrho\c (\C+\p \Omega)\|_{H^\delta_x}\\
&\les \|\p \varrho\|_{H^1_x}(\|\C\|_{H^1_x}+1)\les 1,
\end{align*}
where for the first inequality we applied Lemma \ref{comp_dyd}. This gives (\ref{9.08.10.19}).

By using (\ref{9.08.1.19}), (\ref{4.25.1.19}), (\ref{5.03.4.19}) and (\ref{eng_3}),
\begin{align*}
\|\Omega \c \p^2 \varrho\|_{H^\delta_x}&\les \|\Omega\|_{B_{\infty,2,x}^\delta}\|\p^2 \varrho\|_{L_x^2}+\| \Omega\|_{L^\infty_x}\|\p^2 \varrho\|_{H^\delta_x}\\
&\les \|\Omega\|_{H^{\frac{3}{2}+\delta}_x}\|\p^2 \varrho\|_{L_x^2}+\| \Omega\|_{L^\infty_x}\|\p^2 \varrho\|_{H^\delta_x}\les 1.
\end{align*}
Due to $\p (\Omega \p \varrho)=\Omega\p^2\varrho+\p \Omega \p \varrho$, combining the above estimate with (\ref{9.08.10.19}) yields the second estimate in (\ref{9.09.1.19}).

For (\ref{9.24.2.19}),  we first use (\ref{9.22.4.19}), and then apply (\ref{9.08.17.19}) with $G_1=G_2=\p \varrho$ and $G_3=\Omega$ to derive
\begin{align*}
\|(e^{\pm\varrho}+1) (\p \varrho)^2\Omega\|_{H^\delta_x}&\les \|(\p \varrho)^2 \Omega\|_{H^\delta_x}\les \|\p \varrho\|_{H^1_x}^2\|\Omega\|_{H^{1+\delta}_x}+\|\Omega\|_{H^1_x}\|\p \varrho\|_{H^{1+\delta}_x}\|\p \varrho\|_{H^1_x}\\
&\les 1,
\end{align*}
where we used (\ref{eng_3}) and (\ref{4.25.1.19}).

Now we consider the estimate in (\ref{9.08.6.19}).  By using the equation
\begin{equation}\label{divc}
\div \C=-\p_i \varrho  \C^i
\end{equation}
and the elliptic estimate for the $\div$-$\curl$ system, we derive
\begin{equation}\label{9.08.7.19}
\|\p \C\|_{H^\delta_x}\les \|\p_i \varrho \c \C^i\|_{H^\delta_x}+\| \curl \C\|_{H^\delta_x}.
\end{equation}
Applying (\ref{9.08.10.19}) to the first term gives the first estimate in (\ref{9.08.6.19}).

For the second estimate in (\ref{9.08.6.19}), similar to (\ref{9.07.9.19}), we derive
\begin{align}
\|\p^2 \Omega\|_{H^\delta_x}&\les \|\curl^2 \Omega\|_{H^\delta_x}+\|\p(\Omega\c \p \varrho)\|_{H^\delta_x}\nn\\
&\les \|\curl(e^\varrho \C)\|_{H^\delta_x}+\|\p (\Omega \c \p \varrho)\|_{H^\delta_x}\nn\\
&\les \|e^\varrho\curl \C \|_{H^\delta_x}+\|e^\varrho \p \varrho\c \C\|_{H^\delta_x}+\|\p(\Omega\c \p \varrho)\|_{H^\delta_x}.\label{9.08.9.19}
\end{align}
Applying Lemma \ref{comp_dyd} to the first term, (\ref{9.08.10.19}) to the second term and (\ref{9.09.1.19}) to the third term leads to
\begin{equation}\label{9.24.3.19}
\|\p^2 \Omega\|_{H^\delta_x}\les \|\curl\C\|_{H^\delta_x}+1.
\end{equation}
 Finally, we estimate $\|\p^2 \fw\|_{H^\delta_x}$ in view of the symbolic formula (\ref{9.24.1.19}) for $\p^2 \fw$
 \begin{align*}
 \|\p^2 \fw\|_{H^\delta_x}&\les \|e^\varrho\{\p^2 \Omega+\p \Omega \p \varrho+\Omega\big((\p\varrho)^2+\p^2 \varrho\big)\} \|_{H^\delta_x}\\
 &\les \|\p^2 \Omega\|_{H^\delta_x}+\|\p\Omega \p \varrho\|_{H^\delta_x}+\|\Omega \p^2 \varrho\|_{H^\delta_x}+\|\Omega(\p \varrho)^2\|_{H^\delta_x}\\
 &\les \|\p^2 \Omega\|_{H^\delta_x}+1\les \|\curl \C\|_{H^{\delta}_x}+1,
 \end{align*}
 where we used (\ref{9.22.4.19}), (\ref{9.08.10.19})-(\ref{9.24.2.19})  and the estimate (\ref{9.24.3.19}). Using the $L^2_x$ bound in (\ref{9.07.5.19}),
 (\ref{9.08.6.19}) is proven.
 \end{proof}
  For  estimates in the fractional Sobolev spaces,  we recall the standard trichotomy of the Littlewood-Paley theory. For smooth functions $F$ and $G$,
\begin{equation}\label{9.08.3.19}
P_\mu(F\c G)=P_\mu[F\c G]_{HL}+P_\mu[F\c G]_{L H}+P_\mu[F\c G]_{HH}
\end{equation}
where
\begin{align*}
&P_\mu[F\c G]_{HL}=P_\mu(P_\mu F P_{\le \mu}G), \\
&P_\mu[F\c G]_{LH}=P_\mu(P_{\le \mu} F P_\mu G),\\
&P_\mu[F\c G]_{HH}=P_\mu(\sum_{\la>\mu} P_\la F P_\la G).
\end{align*}
\begin{lemma}\label{9.08.11.19}
Let $0<\delta\le s'-2$ and
let $\I_\mu=\mu^{2\delta}\int_0^t \int_{\Sigma_{t'}}P_\mu (\curl \bT \C^m)P_\mu \curl \C_m d\mu_e dt'$. There holds the following estimate, 
\begin{align*}
\sum_{\mu>1}|\I_\mu|&\les \int_0^t (\|\curl \C\|_{\dot{H}^\delta_x}+1)\{\|\p v\|_{B_{\infty, 2,x}^\delta}+(1+\|\p v\|_{L_x^\infty})(\|\curl \C\|_{\dot{H}^\delta_x}+1)\} dt'\\
& +\sup_{0\le t'\le t}\|\La^\delta\curl \C(t')\|_{L_x^2}.
\end{align*}
\end{lemma}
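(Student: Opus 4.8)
The plan is to estimate $\I_\mu$ by inserting the precise commutator formula for $P_\mu\curl \bT\C$ and exploiting the trichotomy decomposition \eqref{9.08.3.19} together with all the cancellations already uncovered in the $H^2$ estimate. First I would write, using \eqref{9.08.5.19} with $F=\C$,
\begin{equation*}
P_\mu\curl\bT\C^m = P_\mu\bT\curl\C^m - [\bT, P_\mu\curl]\C^m = P_\mu\bT\curl\C^m - [\bT,P_\mu]\curl\C^m - P_\mu(\p v\c\p\C),
\end{equation*}
so that $\I_\mu$ splits into a commutator-in-$\bT$ part, a quadratic part $P_\mu(\p v\c\p\C)$, and the main term $\mu^{2\delta}\int_0^t\int_{\Sigma_{t'}} P_\mu(\bT\curl\C^m)\, P_\mu\curl\C_m$. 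The commutator $[\bT,P_\mu]\curl\C$ is controlled by \eqref{9.08.4.19} (with $F=\C$), giving $\|\mu^\delta[\bT,P_\mu]\curl\C\|_{l_\mu^2L_x^2}\les \|\p v\|_{L_x^\infty}\|\curl\C\|_{\dot H^\delta_x}$ after using \eqref{9.08.6.19} to absorb $\div\C$; paired against $\mu^\delta P_\mu\curl\C$ and summed in $\mu$ via Cauchy–Schwarz in $l^2_\mu$ this contributes $\int_0^t\|\p v\|_{L_x^\infty}\|\curl\C\|_{\dot H^\delta_x}^2\,dt'$. The quadratic term $P_\mu(\p v\c\p\C)$ is handled by the product estimate \eqref{9.08.1.19} with $(F,G)=(\p v,\p\C)$: $\|\p v\c\p\C\|_{\dot H^\delta_x}\les \|\p v\|_{B^\delta_{\infty,2,x}}\|\p\C\|_{L_x^2}+\|\p v\|_{L_x^\infty}\|\p\C\|_{\dot H^\delta_x}$, and then \eqref{9.07.5.19} and \eqref{9.08.6.19} convert $\|\p\C\|$-norms into $\|\curl\C\|$-norms plus $1$, which after pairing and summing gives exactly the first bracket on the right-hand side of the claimed inequality.

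For the main term $\mu^{2\delta}\int_0^t\int_{\Sigma_{t'}}P_\mu(\bT\curl\C^m)\,P_\mu\curl\C_m$, I would use \eqref{9.07.8.19}, i.e. $\curl\bT\C_m = -2\curl(e^{-\varrho}\p v_j\wedge\p\Omega^j)_m + \curl(\p_a v\,\C^a)_m$, and mirror the decomposition $I^1 = -2I^1_+ + (\text{term from }\curl(\p_a v\,\C^a))$ from the proof of \eqref{9.07.5.19}, except now with each factor carrying $\mu^\delta P_\mu$ and the $L^2_x$ pairing replaced by an $l^2_\mu L^2_x$ pairing. The $\curl(\p_a v\,\C^a)$ piece, via \eqref{9.08.12.19}, is $\p\fw_m\C^a + \ep_{mni}\p_a v^i\p^n\C^a$; applying the fractional product rules \eqref{9.08.1.19} and \eqref{9.08.8.19} together with \eqref{9.08.6.19}, \eqref{9.08.21.19}, \eqref{4.25.1.19} and \eqref{eng_3} bounds its $\dot H^\delta_x$ norm by $(1+\|\p v\|_{L_x^\infty})(\|\curl\C\|_{\dot H^\delta_x}+1)+\|\p v\|_{B^\delta_{\infty,2,x}}$, which after pairing gives the second bracket. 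For the remaining $I^1_{+}$-type terms I would re-run the three reductions of \eqref{9.07.5.19}: the $\div$-$\curl$ spatial integration by parts (that $\p_m(P_\mu\curl\C)^m=0$, so the would-be worst term $\p^n\p_m v^j\p_n\Omega_j$ integrates by parts off the $\curl\C^m$ factor), the spacetime integration by parts in $t$ using \eqref{9.07.4.19} for the $\p_j\bT\varrho\,\p_m\Omega^j$ term, and the direct estimate of all other terms using \eqref{9.08.10.19}--\eqref{9.24.2.19} and \eqref{9.07.10.19}; the boundary term in the spacetime integration by parts produces the $\sup_{0\le t'\le t}\|\La^\delta\curl\C(t')\|_{L_x^2}$ contribution.

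The main obstacle is the top-order term coming from $\p^n\p_m v^j\,\p_n\Omega_j$ paired against $P_\mu\curl\C^m$: $\p^2 v$ is only in $\dot H^{s-2}\subset L^{3/(5-2s)+}$, so in $\dot H^\delta_x$ the naive product estimate loses; the trick, exactly as in the $H^2$ case, is that we never see $\p^2 v$ naked, because after projecting by $P_\mu$ and integrating by parts in $x$ (legitimate since $P_\mu\curl$ is still divergence-free as the projection of a curl) the dangerous derivative moves onto $\p_n\Omega_j$, leaving a product of $\p v$, $\p^2\Omega$ (or $\p\Omega$) and $\p(P_\mu\curl\C)$-type factors, each controllable. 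The delicate bookkeeping is to check that the Littlewood-Paley projector $P_\mu$ genuinely commutes through these integration-by-parts manoeuvres up to lower-order commutator terms that are themselves absorbed by \eqref{9.08.2.19} and \eqref{4.13.3.19}, and that, when one finally sums in $\mu$, the high-low versus high-high pieces of \eqref{9.08.3.19} both close — the high-high piece being the one that forces the use of $\|\p v\|_{B^\delta_{\infty,2,x}}$ (hence the appearance of Corollary \ref{comp_3}) rather than merely $\|\p v\|_{L_x^\infty}$. Everything else is Gronwall-type bookkeeping with the bootstrap assumption \eqref{BA1}.
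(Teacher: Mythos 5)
Your overall plan is the paper's own proof: split $P_\mu(\curl\bT\C)$ via \eqref{9.07.8.19} and \eqref{9.08.12.19}, expand the wedge term with \eqref{9.07.3.19}, kill the worst term $\p^n\p_m v^j\p_n\Omega_j$ by the spatial div--curl integration by parts using $\p_m(P_\mu\curl\C)^m=0$, treat $\p_j\bT\varrho\,\p_m\Omega^j$ by the spacetime integration by parts via \eqref{9.07.4.19} (whose boundary term gives the $\sup_{t'}\|\La^\delta\curl\C\|_{L^2_x}$), and use the trichotomy so that only the high--low piece needs $\|\p v\|_{B^\delta_{\infty,2,x}}$. However, your opening manipulation is both unnecessary and incorrect as written: $\I_\mu$ already contains $P_\mu(\curl\bT\C)$, which is exactly the object to which \eqref{9.07.8.19} applies. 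The identity you state conflates $[\bT,P_\mu\curl]\C$ with $P_\mu([\bT,\curl]\C)$ (they differ by $[\bT,P_\mu]\curl\C$, cf. \eqref{9.08.5.19}), and in the next step you apply the formula for $\curl\bT\C$ to your ``main term'' $P_\mu(\bT\curl\C)$, a different object differing by precisely the commutator you just extracted; taken literally this either double counts those terms or simply undoes the step (and pairing $\bT P_\mu\curl\C$ against $P_\mu\curl\C$ would just reproduce the outer energy identity of Proposition \ref{9.07.16.19}, i.e.\ be circular). The repair is simply to delete this step and apply \eqref{9.07.8.19} directly.

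The one substantive thin spot is the term generated by the spacetime integration by parts in which $\bT$ lands on $P_\mu\curl\C$ (the paper's $\K^-_\mu$). You file this under ``direct estimate of all other terms'' and commutators absorbed by \eqref{9.08.2.19}, \eqref{4.13.3.19}, but only the $[\bT,P_\mu\curl]\C$ part is a commutator; the remaining piece $\mu^{2\delta}\int P_\mu(\p\varrho\,\p\Omega\, e^{-\varrho})\,P_\mu\curl\bT\C$ is not, and a direct bound would require $\|\La^\delta\curl\bT\C\|_{L^2_x}\les\|\p v\,\p\Omega\|_{H^{1+\delta}_x}$, which at this regularity forces $\|\p\Omega\|_{L^\infty_x}$ or $\p^2 v\in H^\delta_x$ against $\p\Omega\in L^\infty_x$ --- exactly the norms the whole argument is designed to avoid (and \eqref{9.07.10.19} does not cover it). The way to close it is to substitute the transport identity \eqref{9.07.13.19}, use the finite band property to split the weight as $\mu^{1+2\delta}=\mu^{\f12+\delta}\cdot\mu^{\f12+\delta}$ over the two quadratic factors, and bound $\|\p\varrho\,\p\Omega\,e^{-\varrho}\|_{H^{\f12+\delta}_x}$ and $\|e^{-\varrho}\p v\,\p\Omega\|_{H^{\f12+\delta}_x}$ by $\|\curl\C\|_{\dot{H}^\delta_x}+1$ via \eqref{9.22.5.19}, \eqref{9.08.8.19} and \eqref{9.08.6.19}. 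With that supplied and the first step removed, your argument coincides with the paper's.
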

\begin{proof}
In view of (\ref{9.07.8.19}),
\begin{align}\label{9.30.7.19}
\begin{split}
\I_\mu&=\mu^{2\delta}\int_0^t\int_{\Sigma_{t'}} (-2P_\mu\curl(e^{-\varrho}\p v_j \wedge\p \Omega^j)_m+P_\mu\curl(\p_a v \C^a)_m)\c P_\mu \curl \C^m d\mu_e\\
& =\I^1_\mu+\I^2_\mu.
\end{split}
\end{align}
For the term $\I^2_\mu$, we recall the formula in (\ref{9.08.12.19}). We first bound the second term in (\ref{9.08.12.19}) by applying (\ref{9.08.1.19}) to $F=\p v$ and $G=\p \C$,
\begin{align*}
\|\mu^\delta P_\mu(\p v\c \p \C)\|_{l_\mu^2L_x^2}&\les \|\p v\|_{B_{\infty, 2, x}^\delta}\|\p \C\|_{L^2_x}+\|\p \C\|_{H^\delta_x}\|\p v\|_{L_x^\infty}\\
&\les \|\p v\|_{B_{\infty,2,x}^\delta}+(\|\curl \C\|_{\dot{H}^\delta_x}+1)\|\p v\|_{L_x^\infty},
\end{align*}
where we used (\ref{9.07.5.19}) and (\ref{9.08.6.19}) to derive the last line.

For the first term in (\ref{9.08.12.19}), we apply (\ref{9.08.8.19}) to $F=\p \fw$ and $G=\C$, and also use (\ref{9.07.5.19}) to derive
\begin{align*}
\|\mu^\delta P_\mu(\p \fw\c \C)\|_{l_\mu^2 L_x^2}&\les \|\p \fw\|_{H^{\f12+\delta}_x}\|\C\|_{H^1_x}+\|\p \fw\|_{H^1_x}\|\C\|_{H^{\f12+\delta}_x}\les 1.
\end{align*}
Combining the above two estimates implies
\begin{align}\label{9.08.13.19}
\sum_{\mu>1}|\I^2_\mu|&\les \int_0^t\|\curl \C\|_{\dot{H}^\delta_x}(\|\p v\|_{B_{\infty,2,x}^\delta}+(\|\curl \C\|_{\dot{H}^\delta_x}+1)\|\p v\|_{L_x^\infty}+1) dt'.
\end{align}
For the hard term $\I^1_\mu$ \begin{footnote}{We dropped the constant $-2$ which does not influence the analysis.}\end{footnote}, applying (\ref{9.07.3.19}) to $F=\Omega$, similar to (\ref{9.08.14.19}), we write
\begin{align*}
&\I^1_{\mu}= \mu^{2\delta}\int_0^t \int_{\Sigma_t'}\curl P_\mu(e^{-\varrho}\p v_j\wedge \p \Omega^j)_m P_\mu\curl \C^m d\mu_e dt'\\
&=\mu^{2\delta}\int_0^t\int_{\Sigma_t'}P_\mu\{e^{-\varrho}(\p^n \p_m v^j \p_n \Omega_j+(\p_j \bT \varrho+\curl^2 v_j)\p_m\Omega^j+\p_m v^j(-\curl^2 \Omega_j+\p_j(\div \Omega))\\
&-\p_n v^j \p^n \p_m \Omega_j-\tensor{\ep}{_m^{ni}}\tensor{\ep}{_i^{ab}}\p_a v^j \p_b \Omega_j \p_n \varrho)\}P_\mu\curl \C^m d\mu_e dt'.
\end{align*}
Let us write the hard term below, which will be treated by integration by parts
\begin{align}\label{9.08.20.19}
\I^1_{+,\mu} &=\mu^{2\delta}\int_0^t \int_{\Sigma_t'}P_\mu\{e^{-\varrho}(\p^n \p_m v^j \p_n \Omega_j+\p_j \bT \varrho \p_m\Omega^j )\}P_\mu\curl \C^m d\mu_e dt'.
  \end{align}
The remaining terms are denote by $\I^1_{-,\mu}$
\begin{align*}
\I^1_{-,\mu}=\mu^{2\delta}\int_0^t \int_{\Sigma_{t'}} P_\mu (e^{-\varrho}\A_m) P_\mu\curl \C^m d\mu_e dt'
\end{align*}
where
\begin{align*}
\A_m&=\curl^2 v_j\p_m\Omega^j+\p_m v^j(-\curl^2 \Omega_j+\p_j(\div \Omega))-\p_n v^j \p^n \p_m \Omega_j-\tensor{\ep}{_m^{ni}}\tensor{\ep}{_i^{ab}}\p_a v^j \p_b \Omega_j \p_n \varrho\\
&=\curl\fw \p \Omega+ \p v\p^2\Omega+\p v \c\p \Omega\c \p \varrho\\\
&=e^{\varrho}(\curl \Omega+\Omega \p \varrho)\p \Omega+\p v\p^2\Omega+\p v \c\p \Omega\c \p \varrho.
\end{align*}
and we used  the second equation in  (\ref{hodge_2}) to derive the last line. To control this term, we first  directly obtain
 \begin{equation}\label{9.08.16.19}
\sum_{\mu>1} |\I^1_{-,\mu}|\les \int_0^t \|e^{-\varrho}\A\|_{\dot{H}^\delta_x}\|\curl\C\|_{\dot{H}^\delta_x} dt'.
 \end{equation}
 We then estimate by using Lemma \ref{comp_dyd} that
 \begin{align*}
 \| e^{-\varrho} \A\|_{\dot{H}^\delta_x}&\les \|\curl \Omega \c \p\Omega\|_{\dot{H}^\delta_x}+\|\Omega \p \varrho\p \Omega\|_{H^\delta_x}+\|\p v\p^2\Omega\|_{H^\delta_x}+\|\p v \c\p \Omega\c \p \varrho\|_{H^\delta_x}.
 \end{align*}
 By using (\ref{9.08.8.19}) and (\ref{9.07.5.19})
 \begin{align*}
  \|\p \Omega \c \p\Omega\|_{\dot{H}^\delta_x}&\les \|\p \Omega\|_{H^{\frac{1}{2}+\delta}_x}\|\p \Omega\|_{H^1_x}\les \|\p \Omega\|_{H^1_x}^2\les 1.
 \end{align*}
By using (\ref{9.08.1.19}), (\ref{9.07.5.19}) and (\ref{9.08.6.19}), we can obtain
\begin{align*}
\|\p v \p^2 \Omega\|_{H^\delta_x}&\les \|\p v\|_{B_{\infty,2,x}^\delta} \|\p^2 \Omega\|_{L_x^2}+\|\p v\|_{L_x^\infty} \|\p^2 \Omega\|_{H^\delta_x}\\
&\les \|\p v\|_{B_{\infty, 2,x}^\delta}+\|\p v\|_{L_x^\infty}(\|\curl \C\|_{\dot{H}^\delta_x}+1).
\end{align*}
For the cubic terms, we employ (\ref{9.08.17.19}) to obtain
\begin{align}\label{9.09.2.19}
\begin{split}
\|(\Omega+\p v+\p \varrho) \c\p \Omega \c\p \varrho\|_{H^\delta_x}&\les \big(\|\La^\delta\Omega\|_{H^1_x}+\|\La^\delta (\p v, \p \varrho)\|_{H^1_x}\big)\|\p \Omega\|_{H^1_x}\|\p \varrho\|_{H^1_x}\\
&+\|\Omega, \p v,\p \varrho\|_{H^1_x}(\|\p \Omega\|_{H^1_x}\|\La^\delta\p \varrho\|_{H^1_x}+\|\La^\delta\p \Omega \|_{H^1_x}\|\p \varrho\|_{H^1_x})\\
&\les \|\La^\delta \p \Omega\|_{H^1_x}+1\les \|\curl \C\|_{\dot{H}^\delta}+1,
\end{split}
\end{align}
where we employed Corollary \ref{eng_wave}, (\ref{9.07.5.19}) and (\ref{9.08.6.19}) to get the last line.

Substituting  the above three estimates to (\ref{9.08.16.19}) yields
\begin{equation}\label{9.08.18.19}
\sum_{\mu>1} |\I^1_{-,\mu}|\les \int_0^t \|\curl \C\|_{\dot{H}^\delta_x}\{\|\p v\|_{B_{\infty, 2,x}^\delta}+(1+\|\p v\|_{L_x^\infty})(\|\curl \C\|_{\dot{H}^\delta_x}+1)\} dt'.
\end{equation}

Next we consider the  term of (\ref{9.08.20.19}). For the two terms therein, we will integrate by parts in two different ways. Therefore we first separate them as
\begin{align}\label{9.11.9.19}
\begin{split}
\J_\mu&=\mu^{2\delta}\int_0^t \int_{\Sigma_{t'}}P_\mu [\p^n \p_m v^j \c \p_n \Omega_j e^{-\varrho}]P_\mu \curl \C^m d\mu_e dt',\\
\K_\mu&= \mu^{2\delta}\int_0^t \int_{\Sigma_{t'}}P_\mu [\p_j \bT \varrho \c \p_m\Omega^j e^{-\varrho}]P_\mu\curl \C^m d\mu_e dt'.
\end{split}
\end{align}
For $\J_\mu$, using $\p^m P_\mu \curl \C_m=0$, we derive
\begin{align*}
\J_\mu&=\mu^{2\delta}\int_0^t \int_{\Sigma_{t'}}P_\mu [ \p_m(\p^n v^j \c \p_n \Omega_j e^{-\varrho})-\p^n v^j \p_m(\p_n \Omega_j e^{-\varrho})]P_\mu \curl \C^m d\mu_e dt'\\
&=\mu^{2\delta}\int_0^t \int_{\Sigma_{t'}}\big\{\p_m \big(P_\mu(\p^n v^j \c \p_n \Omega_j e^{-\varrho})P_\mu \curl \C^m\big)\\
&\quad-P_\mu[\p^n v^j \p_m (\p_n \Omega_j \c e^{-\varrho})] P_\mu \curl \C^m \big\}d\mu_e dt'.
\end{align*}
Again, since the boundary term vanishes, we only need to treat the last line. By trichotomy in (\ref{9.08.3.19}), we decompose $\J_\mu=\J_{\mu,HL}+\J_{\mu,LH}+\J_{\mu,HH}$,
\begin{align*}
\J_{\mu, HL}&=\mu^{2\delta}\int_0^t \int_{\Sigma_{t'}}P_\mu[ \p^n v_\mu^j \c P_{\le \mu}\p_m(e^{-\varrho}\p_n \Omega_j)]P_\mu \curl \C^m d\mu_e dt',\\
\J_{\mu, LH}&=\mu^{2\delta}\int_0^t \int_{\Sigma_{t'}}P_{\mu}[P_{\le\mu}\p^n v_\mu^j \c P_\mu\p_m(e^{-\varrho}\p_n \Omega_j)]P_\mu \curl \C^m d\mu_e dt',\\
\J_{\mu, HH}&=\mu^{2\delta}\int_0^t \int_{\Sigma_{t'}}\sum_{\la\ge \mu}P_\mu[P_\la( \p^n v_\mu^j) \c P_\la \p_m(e^{-\varrho}\p_n \Omega_j)]P_\mu \curl \C^m d\mu_e dt',
\end{align*}
where we neglected the $-$ sign, since it does not influence the estimate; and $F_\mu^i=P_\mu (F^i)$. By using H\"{o}lder inequality and Cauchy-Schwarz inequality, we derive
\begin{equation}\label{9.24.4.19}
\begin{split}
&\sum_{\mu>1}|\J_{\mu,HL}|\les \int_0^t \|\p v\|_{B_{\infty,2,x}^\delta} \|\p(e^{-\varrho} \p \Omega)\|_{L^2_x}\|\curl \C\|_{\dot{H}^\delta_x}dt';\\
&\sum_{\mu>1}(|\J_{\mu,LH}|+|\J_{\mu,HH}|)\les \int_0^t\|\p v\|_{L_x^\infty}\|\p(e^{-\varrho}\p\Omega)\|_{\dot{H}^\delta_x}\|\curl \C\|_{\dot{H}^\delta_x} dt'.
\end{split}
\end{equation}
By using (\ref{9.08.21.19}) with $F=\p\Omega$ and (\ref{9.08.6.19}), we have
\begin{align*}
\|\p(e^{-\varrho}\p\Omega)\|_{\dot{H}^{\delta}_x}\les \|\curl\C\|_{\dot{H}^\delta_x}+1.
\end{align*}
Note the lower order estimate holds due to (\ref{5.04.17.19}), the first estimate in (\ref{9.23.4.19}) and  (\ref{9.07.5.19})
\begin{align*}
\|\p (e^{-\varrho}\p \Omega)\|_{L^2_x}\les \|\curl\C\|_{L^2_x}+1\les 1.
\end{align*}
Substituting the above two estimates to (\ref{9.24.4.19}) yields
\begin{align}\label{9.10.1.19}
\sum_{\mu>1} |\J_\mu|&\les \int_0^t \{\|\p v\|_{B_{\infty,2,x}^\delta}+\|\p v\|_{L_x^\infty}(\|\curl \C\|_{\dot{H}^\delta_x}+1)\} \|\curl \C\|_{\dot{H}^\delta_x}dt'.
\end{align}
Next we treat $\K_\mu$ in (\ref{9.11.9.19}) by performing the spacetime integration by parts by virtue of (\ref{9.07.4.19}).
\begin{align*}
&\K_\mu= \mu^{2\delta}\int_0^t \int_{\Sigma_{t'}} P_\mu(\p_j \bT \varrho \p_m \Omega^j e^{-\varrho})P_\mu \curl \C^m d\mu_e dt'\\
 &=\mu^{2\delta}\int_0^t \int_{\Sigma_{t'}}  P_\mu\big(([\p_j, \bT]\varrho +\bT \p_j\varrho ) \p_m \Omega^j e^{-\varrho}\big)P_\mu \curl \C^m   d\mu_e dt'\\
 &=\mu^{2\delta}\int_0^t \int_{\Sigma_{t'}}P_\mu[\bT(\p_j \varrho  \p_m \Omega^j e^{-\varrho})-\p_j \varrho\bT(\p_m \Omega^j e^{-\varrho})+\p_j v^n \p_n \varrho \p_m \Omega^j e^{-\varrho}]P_\mu \curl \C^m d\mu_e dt'\\
 &=\mu^{2\delta}\int_0^t \{\bT P_\mu (\p_j \varrho \p_m\Omega^j e^{-\varrho})+[P_\mu, \bT] (\p_j \varrho \p_m \Omega^j e^{-\varrho})+P_\mu[-\p_j \varrho \bT(\p_m \Omega^j e^{-\varrho})\\
 &\quad \quad+\p_j v^n \p_n \varrho \p_m \Omega^j e^{-\varrho}]\}P_\mu \curl \C^m d\mu_e dt'\displaybreak[0]\\
 &=\mu^{2\delta}\int_0^t \int_{\Sigma_{t'}}\big(\bT\{P_\mu(\p_j \varrho \p_m \Omega^j e^{-\varrho})\c P_\mu \curl \C^m\}-P_\mu(\p_j \varrho \p_m \Omega^j e^{-\varrho}) \bT P_\mu \curl \C^m \big)d\mu_e dt'\\
 &+\mu^{2\delta}\int_0^t \int_{\Sigma_{t'}}\{[P_\mu, \bT] (\p_j \varrho \p_m \Omega^j e^{-\varrho})+P_\mu[-\p_j \varrho\bT(\p_m \Omega^j e^{-\varrho})+\p_j v^n \p_n \varrho \p_m \Omega^j e^{-\varrho}]\} P_\mu \curl \C^m d\mu_e dt'\displaybreak[0]\\
 &=\mu^{2\delta}\{\int_0^t \int_{\Sigma_{t'}}\Tr\cir{k}P_\mu(\p_j \varrho \p_m \Omega^j e^{-\varrho})P_\mu \curl \C^m d\mu_e dt'+\int_0^t \p_t \int_{\Sigma_{t'}}P_\mu(\p_j \varrho \p_m\Omega^j e^{-\varrho})P_\mu \curl \C^m d\mu_e dt'\}\\
 &+\mu^{2\delta}\int_0^t \int_{\Sigma_{t'}}\{[P_\mu, \bT] (\p_j \varrho \p_m \Omega^j e^{-\varrho})+P_\mu\big(-\p_j \varrho\bT(\p_m \Omega^j e^{-\varrho})+\p_j v^n\p_n \varrho \p_m \Omega^j e^{-\varrho}\big)\} P_\mu \curl \C^m d\mu_e dt'\\
 &-\mu^{2\delta}\int_0^t\int_{\Sigma_{t'}}P_\mu(\p_j \varrho \p_m \Omega^j e^{-\varrho}) \bT P_\mu \curl \C^m d\mu_e dt'.
 \end{align*}
 Let us denote the last line by $\K^-_\mu$ and the remaining four terms together by $\K^+_\mu$. We further decompose $\K^-_\mu$ as follows
 \begin{align*}
\K^-_{\mu}&= \mu^{2\delta}\int_0^t\int_{\Sigma_{t'}}P_\mu(\p_j \varrho \p_m \Omega^j e^{-\varrho}) \bT P_\mu \curl \C^m d\mu_e dt'\\
&=\mu^{2\delta} \int_0^t \int_{\Sigma_{t'}}P_\mu(\p_j \varrho\p_m \Omega^j e^{-\varrho})([\bT, P_\mu \curl]\C^m +P_\mu  \curl\bT \C^m)d\mu_e dt'\\
&=\K^-_{1, \mu}+\K^-_{2,\mu},
 \end{align*}
 where the $-$ sign is dropped without influencing the result. We can bound the second term  by using  (\ref{9.07.13.19}) and the finite band property
 \begin{align*}
 |\K^-_{2, \mu}|&\le \int_0^t \int_{\Sigma_{t'}} \mu^{2\delta}P_\mu(\p \varrho \p \Omega e^{-\varrho})P_\mu\p(e^{-\varrho}\p v\c \p \Omega) d\mu_e dt|\\
 &\les \mu^{1+2\delta}\int_0^t \|P_\mu(\p \varrho e^{-\varrho}\c \p \Omega)\|_{L_x^2}\|P_\mu(e^{-\varrho}\p v \p \Omega)\|_{L_x^2} dt'.
 \end{align*}
 By using (\ref{9.22.5.19})
 \begin{align*}
\Er:&=\|\La^{\f12+\delta}(\p \varrho e^{-\varrho}\c \p \Omega)\|_{L_x^2}+\|\La^{\f12+\delta}(e^{-\varrho}\p v\c \p \Omega)\|_{L_x^2}\\
&\les\|\La^{\f12+\delta}(\p \varrho  \p \Omega)\|_{L_x^2}+\|\La^{\f12+\delta}(\p v \p \Omega)\|_{L_x^2}.
 \end{align*}
  For the $\dot{H}^{\f12+\delta}$ norms, applying (\ref{9.08.8.19}) to $(F, G)=(\p \varrho, \p \Omega)$ and $(F,G)=(\p v, \p \Omega)$ leads to
 \begin{align*}
 \Er&\les \|\p \varrho ,\p v\|_{H^{1+\delta}_x}\|\p \Omega\|_{H^1_x}+\|\p \varrho, \p v\|_{H^1_x}\|\p \Omega\|_{H^{1+\delta}_x}\\
&\les \|\p \varrho, \p v\|_{H^{1+\delta}_x}+\|\p \Omega\|_{H^{1+\delta}_x},
 \end{align*}
 where we used Corollary \ref{comp_1} to obtain the last inequality. Hence by using Corollary \ref{eng_wave} and (\ref{9.08.6.19}),
 \begin{align*}
 \Er\les \|\curl \C\|_{\dot{H}^\delta_x}+1.
 \end{align*}
 We thus conclude that
 \begin{align*}
 \sum_{\mu>1}|\K^-_{2, \mu}|&\les \int_0^t\|\La^{\f12+\delta}(e^{-\varrho}\p \varrho \p \Omega)\|_{L_x^2}\|\La^{\f12+\delta}(e^{-\varrho}\p v\c \p \Omega)\|_{L_x^2}dt'\\
 &\les \int_0^t (\|\curl \C\|_{\dot{H}^\delta_x}+1)^2 dt'.
 \end{align*}
 For the other term, we first derive
 \begin{align*}
 \sum_{\mu>1}|\K^-_{1,\mu}|&\les\int_0^t  \|\La^\delta( e^{-\varrho}\p \varrho\p\Omega)\|_{L_x^2}\|\mu^\delta[\bT, P_\mu\curl]\C\|_{l_\mu^2 L_x^2} dt'.
 \end{align*}
 Due to (\ref{9.08.10.19})
 \begin{equation}\label{9.09.3.19}
 \| e^{-\varrho}\p \varrho\p\Omega\|_{H^\delta_x}\les 1.
 \end{equation}
 Combining  the above estimate with (\ref{9.08.2.19}) for $F=\fC$, (\ref{9.07.5.19}) and (\ref{9.08.6.19}) yields
 \begin{align*}
 \sum_{\mu>1}|\K^-_{1,\mu}|&\les\int_0^t ( \|\p v\|_{B_{\infty, 2, x}^\delta}\|\p \C\|_{L_x^2}+\|\p v\|_{L_x^\infty}\|\p \C\|_{H^\delta_x}) dt'\\
 &\les \int_0^t \{ \|\p v\|_{B_{\infty, 2, x}^\delta}+\|\p v\|_{L_x^\infty}(\|\curl \C\|_{\dot{H}^\delta_x}+1)\} dt'.
 \end{align*}
 Hence
 \begin{align}\label{9.24.5.19}
 \sum_{\mu>1}|\K^-_\mu|&\les \int_0^t\{  \|\p v\|_{B_{\infty, 2, x}^\delta}+\big(\|\curl \C\|_{\dot{H}^\delta_x}+1+\|\p v\|_{L_x^\infty}\big)(\|\curl \C\|_{\dot{H}^\delta_x}+1)\} dt'.
 \end{align}
 We go back to consider $\K_\mu^+$, which has four terms, denoted by $\K_{i, \mu}^+$ with $i=1,\cdots, 4$. By using (\ref{9.08.22.19}), we symbolically rewrite the term below
 \begin{align*}
 \K_{1,\mu}^+&=\int_0^t \int_{\Sigma_{t'}} \mu^{2\delta}P_\mu(-\p_j \varrho\bT(\p_m \Omega^j e^{-\varrho})+\p_j v^n \p_n \varrho \p_m \Omega^j e^{-\varrho}) P_\mu \curl \C^m d\mu_e dt'\\
 &=\int_0^t \int_{\Sigma_{t'}} \mu^{2\delta}P_\mu\{(\p \varrho(\p \varrho+\p v) \p \Omega+\Omega \p^2 \varrho) e^{-\varrho}\}P_\mu \curl \C d\mu_e dt'.
 \end{align*}
Thus it follows by using Lemma \ref{comp_dyd}, (\ref{9.09.2.19}) and (\ref{9.09.1.19}) that
\begin{align*}
\sum_{\mu>1}|\K^+_{1,\mu}|&\les \int_0^t \|\p \varrho(\p \varrho+\p v) \p \Omega+\Omega \p^2 \varrho\|_{H^\delta_x}\|\La^\delta\curl \C\|_{L_x^2} dt'\\
&\les \int_0^t (\|\curl \C\|_{\dot{H}^\delta}+1)^2 dt'.
\end{align*}
For the term
 \begin{align*}
\K^+_{2,\mu}=\mu^{2\delta} \int_0^t \int_{\Sigma_{t'}} \Tr \cir{k} P_\mu (\p \varrho \p \Omega e^{-\varrho}) P_\mu \curl \C d\mu_e dt'
\end{align*}
by using (\ref{9.09.3.19}) and $\Tr \cir{k}=-\div v$ we estimate
\begin{align*}
\sum_{\mu>1}|\K^+_{2,\mu}|&\les \int_0^t \|\Tr \cir{k}\|_{L_x^\infty} \|\La^\delta(\p \varrho \p \Omega e^{-\varrho})\|_{L_x^2}\|\La^\delta\curl \C\|_{L_x^2} dt'\\
&\les \int_0^t \|\p v\|_{L_x^\infty} \|\La^\delta \curl \C\|_{L_x^2} dt'.
\end{align*}
For the term
\begin{align*}
\K^+_{3,\mu}=\mu^{2\delta} \int_0^t \int_{\Sigma_{t'}} [P_\mu, \bT] (\p \varrho \p \Omega e^{-\varrho}) P_\mu \curl C d\mu_e dt'
\end{align*}
we recall that $[P_\mu, \bT] =[P_\mu, v]\p $  and apply (\ref{4.13.3.19}) to $F=v$ and $G=\p \varrho \p \Omega e^{-\varrho}$ with $\a=\delta$,
\begin{align*}
\sum_{\mu>1}|\K^+_{3,\mu}|&\les \int_0^t \|\mu^\delta[P_\mu, v]\p(\p \varrho \p \Omega e^{-\varrho})\|_{l_\mu^2 L_x^2}\|\La^\delta \curl \C\|_{L_x^2} dt'\\
 &\les \int_0^t \|\p v\|_{L_x^\infty}\|\p \varrho \p \Omega e^{-\varrho}\|_{H^\delta_x}\|\La^\delta \curl \C\|_{L_x^2}\\
 &\les \int_0^t \|\p v\|_{L_x^\infty}\|\La^\delta \curl \C\|_{L_x^2} dt',
\end{align*}
where we used (\ref{9.09.3.19}) to obtain the last line.

For the boundary term,
\begin{align*}
\K^+_{4,\mu}=\mu^{2\delta}\int_0^t \p_t\int_{\Sigma_{t'}} P_\mu(\p\varrho \p \Omega e^{-\varrho})P_\mu \curl \C d\mu_e dt',
\end{align*}
by using (\ref{9.09.3.19}) again, we derive
\begin{align*}
\sum_{\mu>1}|\K^+_{4,\mu}|&\les \sup_{0\le t'\le t}\|\La^\delta(\p \varrho \p\Omega e^{-\varrho})\|_{L_x^2}\|\La^\delta \curl \C\|_{L_x^2} \\
&\les \sup_{0\le t'\le t}\|\La^\delta\curl \C\|_{L_x^2}(t').
\end{align*}
Hence we summarize the above estimates for the terms in $\K_\mu^+$, and combine the estimate of (\ref{9.24.5.19}) to conclude
\begin{align*}
\sum_{\mu>1} |\K_\mu|&\les   \int_0^t (\|\p v\|_{L_x^\infty}+\|\La^\delta \curl \C\|_{L_x^2}+1)(\|\La^\delta \curl \C\|_{L_x^2}+1) dt'\\
&+\sup_{0\le t'\le t}\|\La^\delta\curl \C\|_{L_x^2}(t')+\int_0^t  \|\p v\|_{B_{\infty, 2, x}^\delta}dt'.
\end{align*}

Combining the above estimate with (\ref{9.10.1.19}) and (\ref{9.08.18.19}) implies the result in Lemma \ref{9.08.11.19}.
\end{proof}
\begin{proof}[Proof of Proposition \ref{9.07.16.19}]
We apply (\ref{9.07.4.19}) to $F=G=P_\mu \curl \C$ to obtain
\begin{align*}
\sum_{\mu>1}\mu^{2\delta}&\int_{\Sigma_t} |P_\mu\curl \C|^2 d\mu_e \\
&\les \|\La^\delta\curl \C(0)\|_{L_x^2}^2+\sum_{\mu>1}|\int_0^t \mu^{2\delta}\bT P_\mu \curl \C \c P_\mu \curl \C d\mu_e dt'|\\
&+\int_0^t \|\p v\|_{L_x^\infty}\|\La^\delta\curl \C\|^2_{L_x^2} dt'\displaybreak[0]\\
&\les \|\La^\delta\curl \C(0)\|_{L_x^2}^2+\sum_{\mu>1}|\int_0^t \mu^{2\delta} ([\bT, P_\mu \curl]+P_\mu \curl \bT ) \C \c P_\mu \curl \C d\mu_e dt'|\\
&+\int_0^t \|\p v\|_{L_x^\infty}\|\La^\delta\curl \C\|^2_{L_x^2} dt'\\
&\les \|\La^\delta\curl \C(0)\|_{L_x^2}^2+\sum_{\mu>1}|\int_0^t \mu^{2\delta}  [\bT, P_\mu \curl] \C \c P_\mu \curl \C d\mu_e dt'|\\
&+\int_0^t \|\p v\|_{L_x^\infty}\|\La^\delta\curl \C\|^2_{L_x^2} dt'+\sum_{\mu>1}|\I_\mu|\\
&\les \int_0^t (\|\curl \C\|_{\dot{H}^\delta_x}+1)\{\|\p v\|_{B_{\infty, 2,x}^\delta}+(1+\|\p v\|_{L_x^\infty})(\|\curl \C\|_{\dot{H}^\delta_x}+1)\} dt'\\
& +\sup_{0\le t'\le t}\|\La^\delta\curl \C\|_{L_x^2}(t')+\|\La^\delta\curl \C(0)\|_{L_x^2}^2,
\end{align*}
where we used  $\Tr \cir{k}=-\div v$, and  applied (\ref{9.08.2.19}) to $F=\C$ together with (\ref{9.08.6.19}), and Lemma  \ref{9.08.11.19} to obtain the last inequality. Proposition \ref{9.07.16.19} then follows by using Gronwall's inequality and using (\ref{9.08.6.19}).
\end{proof}

 \section{Reduction to the Strichartz estimates for the linearized wave equation}\label{5.07.4.19}

 Due to Corollary \ref{comp_3}, the main task from now on is to improve  the bootstrap assumption in (\ref{BA1}) by establishing Strichartz  estimate for the wave function $\Psi=(v_+, \varrho)$, which is restated below.
\begin{theorem}[Main estimate]\label{main}
Let $s>2$ and let $\Psi=(v_+, \varrho)$ be the pair of solutions of (\ref{wavevpl}) and (\ref{4.10.2.19}). If $\Psi$ satisfies (\ref{BA1}), then there holds with a number $8\delta_0<\ga_0<s-2$
\begin{equation}\label{7.11.300}
\|\bp \Psi\|_{L^2_{[0,T]}L_x^\infty}^2+\sum_{\la\ge 2}\la^{2\ga_0}\|\bar P_\la\bp \Psi\|_{L^2_{[0,T]} L_x^\infty}^2
\les T^{2\ga_1}
\end{equation}
where $\bar P_\la$ denote the Littlewood-Paley projections with $\sum_{\la} \bar P_\la=Id$ in $L^2({\mathbb R}^3)$ and $0<\ga_1\le \ep_0$.
\end{theorem}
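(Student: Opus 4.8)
The plan is to improve the bootstrap assumption (\ref{BA1}) by establishing the dyadic Strichartz estimate for the homogeneous linearized wave equation (\ref{wavelin}) in the acoustic spacetime and feeding it into a Duhamel representation of $\bp\Psi$; I would run this through a chain of reductions. First I would dispose of the low frequencies. Writing $\bp\Psi=\sum_\la\bar P_\la\bp\Psi$, the part supported at frequencies $\la\lesssim1$ obeys, by Bernstein's inequality and the full energy control of Corollary \ref{eng_wave}, $\|\bar P_{\lesssim1}\bp\Psi\|_{L^\infty_x}\les\|\bp\Psi\|_{L^2_x}\les1$, so its contribution to the left side of (\ref{7.11.300}) is $\les T\les T^{2\ga_1}$, using $0<T<1$ and $2\ga_1\le2\ep_0<1$. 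Hence it suffices, for each dyadic $\la\ge2$, to prove
\begin{equation*}
\|\bar P_\la\bp\Psi\|_{L^2_{[0,T]}L^\infty_x}\les \la^{-\ga_0}T^{\ga_1}d_\la,\qquad \|d_\la\|_{l^2_\la}\les1,
\end{equation*}
from which both terms of (\ref{7.11.300}) follow by squaring and summing, the loss being absorbed within $8\delta_0<\ga_0<s-2$.

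Second, and this is the delicate point, for fixed large $\la$ I would represent $\bar P_\la\bp\Psi$ through the linear flow. Since $\Psi=v_+$ solves (\ref{wavevpl}), i.e. the model equation $\Box_\bg\Psi=W+\bT Y-\Tr k\,Y$ of (\ref{wave2}) with $Y=\bT\eta$, one cannot commute $\bT$ through $\Box_\bg$ (that would reintroduce $\bT\bT\eta$); instead I would apply Duhamel's formula for $\Box_\bg$ and integrate by parts in time in the contribution of $\bT Y$, trading $\bT Y$ for $Y$ and a boundary term, exactly in the manner of the representation (\ref{pmug}) adapted from \cite[Section 4.2]{WangCMCSH}. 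After this step $\bar P_\la\bp\Psi$ is expressed as $\bp$ applied to frequency-localized solutions $\psi$ of $\Box_\bg\psi=0$, plus an inhomogeneous remainder involving only $W$, $\Tr k\,Y$ and spatial derivatives of $Y=\bT\eta$. By Corollary \ref{eng_wave}, Corollary \ref{comp_1}, and the $\eta$ and vorticity estimates of Sections 2--3 (in particular $\|\Omega\|_{H^{s-1/2}_x}\les1$, $\|\bT\eta\|_{H^2_x}\les1$, and the first-order energy hierarchy $\|\mu^\ep\E^{(1)}_\mu(t)^{1/2}\|_{l^2_\mu}\les1$), the Cauchy data of the $\psi$'s and the remainder are controlled at the required level; one must also estimate the commutators of $P_\la$ with the rough coefficients $c^2,v^m,\Tr k$ that arise from (\ref{wave2}).

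Third, I would invoke the dyadic Strichartz estimate (Theorem \ref{dyastric}) for (\ref{wavelin})---whose proof occupies Sections 4--7, passing via the rescaled statement (Theorem \ref{str2}), the $\T\T^\ast$ argument, the decay estimate (Theorem \ref{decayth}), Proposition \ref{lcestimate}, and ultimately the conformal-energy bound of Theorem \ref{BT} for the acoustic null cones---applying it to each $\psi$ above on time subintervals whose common length is a small negative power of $\la$, and summing the squares over the resulting $\sim T\la^{\beta}$ subintervals (for a small $\beta>0$) and over $\la$. The count of subintervals, together with the subcritical gain in the Strichartz exponent $q>2$, produces the factor $T^{2\ga_1}$, while the $l^2_\la$-summability of the dyadic energies closes the sum; this yields (\ref{7.11.300}). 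By Corollary \ref{comp_3} the same bound transfers from $v_+$ to $v$, so (\ref{BA1}) is improved to (\ref{G1}).

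The main obstacle is the second step: carrying out the linearization so that the transport-type source $\bT Y=\bT\bT\eta$ is eliminated from the wave equation for $v_+$ without spoiling the regularity of the Cauchy data or of the residual source, i.e. ensuring that only spatial derivatives of $\bT\eta$---which elliptic theory bounds in terms of the vorticity---survive in the reduced inhomogeneity. A secondary but substantial difficulty is verifying that the geometric hypotheses demanded by Theorem \ref{dyastric} are precisely those supplied by the causal-geometry analysis of Sections 5--8; it is this structural compatibility, rather than any isolated inequality, that absorbs most of the effort.
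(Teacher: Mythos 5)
Your proposal follows essentially the same route as the paper: partition $[0,T]$ into $\les\la^{8\ep_0}$ subintervals using (\ref{BA1}), represent $P_\mu U$ by the Duhamel formula (\ref{pmug}) in which $\bT Y=\bT F_{U_\mu}$ is traded for $F_{U_\mu}$ (absorbed into the data and the source $R_\mu=-F_{V_\mu}+v^i\p_i F_{U_\mu}$), apply the dyadic Strichartz estimate of Theorem \ref{dyastric} on each subinterval, control the commutators with $c^2,v^m,\Tr k$ and the errors by the energy hierarchy of Corollary \ref{eng_wave}, and sum in $k$ and $\mu$ with the exponent bookkeeping $\delta_2=(\tfrac12-\tfrac1q)(1-8\ep_0)$, $4\ep_0+\delta_2+\delta<s-2$ producing the factor $T^{2\ga_1}$.

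The one place where your route diverges is the $\bT$-component of $\bp\Psi$ in (\ref{7.11.300}): you propose applying $\bp$ (including $\bT$) to the frequency-localized Duhamel representation, whereas the paper deliberately avoids this (see the Remark after Theorem \ref{main}, ``due to a potential issue from commutations'') and instead obtains the $\bT$-estimates algebraically from the spatial ones, via $\bT v_+=-c^2\p\varrho-\bT\eta$ and $\bT\varrho=-\p v_+-\p\eta$ together with the $\eta$, $\bT\eta$ bounds and the commutator Lemma \ref{10.26.3.19} (Corollary \ref{10.21.1.19}). Your direct route is not doomed, but it is not free either: $\bar P_\la$ does not commute with $\bT$, so you must estimate $\la^{\ga_0}\|[\bar P_\la,v^a]\p_a\Psi\|_{L^2_tL^\infty_x}$ with a gain in $\la$ and a power of $T$ (which requires feeding back the already-proved spatial Strichartz bound for $v_+$ and the smoothness of $\p\eta$), and you must check that neither the representation nor the endpoint term at $s=t$ reintroduces $\bT\bT\eta$. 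The paper's algebraic detour buys exactly this: the $\bT$-bounds come for free from the spatial ones and from $\|\bT\eta\|_{H^2_x}\les1$, with no further interaction with the wave parametrix.
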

\begin{remark}
Different from \cite[Section 4]{WangCMCSH} and \cite[Section 3]{Wangrough},  $\bT$-derivative estimates in (\ref{7.11.300}) will be derived by using (\ref{4.23.1.19}) and the spatial derivative estimate in (\ref{7.11.300}) due to a potential issue from commutations.
\end{remark}

 We first reduce the proof of Theorem \ref{main} to the proof of Strichartz estimates on small time intervals.
Let $\la$ be a fixed large dyadic number and let $0<\ep_0<\frac{s-2}{5}$ and $\delta_0=\ep_0^2$ be the fixed numbers as mentioned in the introduction.
By using the bootstrap assumption (\ref{BA1}), we can partition $[0, T ]$ into disjoint union of sub-intervals
$I_k := [t_{k-1}, t_k]$ of total number $\les \la^{8\ep_0}$ with the properties that $|I_k|\le \la^{-8\ep_0} T$ and
\begin{equation}\label{BA2}
\|\bp \Psi\|_{L_{I_k}^2 L_x^\infty}^2+\sum_{\mu\ge 2}\mu^{2\delta_0}\|\bar P_\mu\bp \Psi\|_{L^2_{I_k} L_x^\infty}^2\les \la^{-8\ep_0}.
\end{equation}
The total number of intervals $I_k$ depends on $\la$, which is denoted by $k_\la$.

Let $\H$ be the space of pairs of functions $\vartheta=(\vartheta_0, \vartheta_1)$, with the norm
\begin{equation*}
\|\vartheta\|^2_{\H}=\|\p \vartheta_0\|^2_{L_x^2}+\|\vartheta_1+v^i \p_i \vartheta_0\|^2_{L_x^2}.
\end{equation*}
For the pair of functions
$
f[t]:=(f(t), \p_t f(t)),
$
\begin{equation*}
\|f[t]\|^2_\H= \|\p f(t)\|^2_{L^2_x}+\|\bT f(t)\|^2_{L_x^2}.
\end{equation*}

On each time interval $I_k$ we will show the following dyadic Strichartz estimate.
\begin{theorem}[Dyadic Strichartz estimates for the linearized wave equation]\label{dyastric}
 Fix $\la\ge\La$ with $\La$ a large constant. Let $\bg$ be the acoustic metric given in \eqref{metric}, and $\psi$ be a solution of
 \begin{equation}\label{eq2}
 \Box_\bg\psi=0
 \end{equation}
on the time interval $I_k$. Then for any $q>2$ sufficiently close to $2$ there holds
 \begin{equation*}
\|P_\la \bp \psi\|_{L^q_{I_k}L_x^\infty}\les \la^{\frac{3}{2}-\frac{1}{q}} \|\psi[t_k]\|_{\H},
 \end{equation*}
 where $\bp=\p,\bT$.
\end{theorem}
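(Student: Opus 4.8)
\textbf{Proof proposal for Theorem \ref{dyastric}.}
The plan is to establish the dyadic Strichartz estimate for the free wave $\Box_\bg\psi=0$ by a $\T\T^*$-type reduction to a decay (dispersive) estimate, carried out in the rescaled picture at frequency $\la$. First I would perform the standard parabolic rescaling $x\mapsto \la x,\ t\mapsto\la t$, which converts frequency-$\la$ data into frequency-$1$ data, converts the short interval $I_k$ of length $\les\la^{-8\ep_0}T$ into an interval of length $\les\la^{1-8\ep_0}T$ (a large but controlled time span, slightly less than $\la$), and leaves the acoustic metric $\bg$ with the regularity controlled by the bootstrap assumption (\ref{BA1}) and Corollary \ref{eng_wave}: in the rescaled coordinates the metric coefficients have bounded $L_t^2 L_x^\infty$ and the relevant Besov norms remain small. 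The estimate to be proven then becomes the rescaled statement ``Theorem \ref{str2}'' referenced in the text. I would then run the $\T\T^*$ argument exactly as in \cite[Section 4]{WangCMCSH} and \cite[Sections 3 and 9]{Wangrough}: bound $\|P_\la\bp\psi\|_{L^q_{I_k}L_x^\infty}$ by duality against a function on $L^{q'}_tL^1_x$, reducing matters to an $L^1\to L^\infty$ kernel bound, i.e.\ to the decay estimate Theorem \ref{decayth} for the wave parametrix constructed via the acoustic null foliation.

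Next I would localize in physical space: the decay estimate Theorem \ref{decayth} is reduced, by cutting into unit-size spatial balls and exploiting finite speed of propagation, to the local estimate Proposition \ref{lcestimate}, and that in turn to controlling the conformal energy in Theorem \ref{BT} on the domain of influence of a unit ball with lifespan slightly less than $\la$. Here is where the geometric input enters: Theorem \ref{BT} is proven by the multiplier/vector-field method of \cite[Section 7]{Wangrough} after a conformal change of the acoustic metric used to normalize the causal geometry, and it requires the full package of estimates on the acoustic null cones $C_u$ --- in particular the bound $\|\tr\chi-\tfrac{2}{\tir}\|_{L_t^2L_x^\infty(u\ge 0)}$ together with the control of the torsion $\zeta$, the null second fundamental form, and the conformal factor. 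These are supplied by Section \ref{c_flux} (flux control, including the double-curl vorticity flux), Section \ref{causal_reg} (Proposition \ref{cone_reg}, using the trace decompositions of Proposition \ref{struc} and Proposition \ref{6.14.2.19}), and Section \ref{conf} (Proposition \ref{6.23.9.19} and Proposition \ref{dcmpsig}). Crucially, the vorticity only enters at the level of flux on $C_u$, and the cancellation structures (\ref{10.15.2.19}), $k_{\bN\bN}-\tfrac12\Xi_L=-\tfrac12(L(\log c+\varrho)+2L(v)_\bN)$, and the trace decomposition (\ref{9.12.2.19}) of $\sn(e^\varrho\curl\Omega_\bN)$ are precisely what allow $\|\curl\Omega\|_{H^{\f12+}(S_{t,u})}$ --- short by half a derivative of pointwise control --- to nonetheless yield the required $\tr\chi$ bound.

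The exponent bookkeeping is routine once the decay estimate is in hand: the loss $\la^{\f32-\f1q}$ and the requirement that $q>2$ be sufficiently close to $2$ come from integrating the $t^{-(1-\frac2q)}$-type decay against the $\la^{1-8\ep_0}T$-long interval and matching against the $\delta_0=\ep_0^2$ gain in (\ref{BA2}); the compatibility $0<\ep_0<\frac{s-2}{5}$, $s'-2=\ep_0^2$, $8\delta_0<\ga_0<s-2$ is exactly arranged so that the per-interval loss, summed over the $\les\la^{8\ep_0}$ subintervals $I_k$, still leaves the small power $T^{2\ga_1}$ with $0<\ga_1\le\ep_0$ in (\ref{7.11.300}). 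The statement of Theorem \ref{dyastric} is phrased on a single interval $I_k$ with data posed at the endpoint $t_k$, measured in the energy space $\H$; here the transition from $\psi[t_k]$ in $\H$ to the $L^2$ data used in the parametrix construction is immediate from the definition of the $\H$-norm and the comparison $\|f[t]\|_\H^2\approx\|\p f(t)\|_{L^2_x}^2+\|\bT f(t)\|_{L^2_x}^2$.

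\textbf{Main obstacle.} The hard part is not the $\T\T^*$ machinery or the exponent counting --- those follow the template of \cite{Wangrough, WangCMCSH} --- but establishing Theorem \ref{BT}, and within it the bound on $\tr\chi-\tfrac2{\tir}$ in $L_t^2L_x^\infty$ near the time axis where $\tir=t-u\to 0$. The Ricci component $\bR_{LL}$ contains $e^\varrho\delta_{ij}\bN^j\curl\Omega^i$, and $\curl\Omega$ is only controlled in $H^{\f12+}(S_{t,u})$, half a derivative short of what a direct transport-equation argument for $z=\tr\chi+\Xi_L-\tfrac2{\tir}$ would need. Overcoming this requires the two-step scheme of Proposition \ref{10.17.1.19}: first obtain a pointwise bound on $\tir^{\f12}z$ via the Sobolev inequality (\ref{10.15.1.19}) and the transport equation (\ref{10.13.7.19}) for $\sn z$, using the trace decomposition (\ref{9.12.2.19}) to expose $\curl^2\Omega$ (controllable by the double-curl flux of Section \ref{flux_C}); then, to reach the $L_t^2L_x^\infty$ bound near the axis, pass to the normalized quantity $\sY=\bb(\tr\chi+\Xi_L)-\tfrac2{\tir}$ and use the cancellation (\ref{10.15.2.19}) to rewrite $2\sn(\Xi_L-2k_{\bN\bN})=L\sn\pi_1+\mathrm{l.o.t.}$, so that the singular higher-order term $\tfrac2{\tir}\sn(\Xi_L-2k_{\bN\bN})$ disappears from the transport equation for $\sn_A\sY-\bb e^\varrho(\curl\Omega)_A+\tfrac{\pi_1}{\tir}$. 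Getting all the constants and the interplay with the conformal-energy estimate of \cite[Section 7]{Wangrough} to close simultaneously --- i.e.\ Proposition \ref{cone_reg} proved in tandem with the other cone estimates --- is the genuine difficulty.
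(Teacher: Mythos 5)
Your proposal follows essentially the same route as the paper: rescaling $(t,x)\mapsto(\la(t-t_k),\la x)$ to reduce Theorem \ref{dyastric} to Theorem \ref{str2}, the $\T\T^*$ argument reducing that to the decay estimate of Theorem \ref{decayth}, physical-space localization to Proposition \ref{lcestimate}, and finally the conformal energy bound of Theorem \ref{BT} supported by the null-cone geometry of Sections \ref{c_flux}--\ref{conf}. Apart from cosmetic slips (the rescaling is the hyperbolic one, not parabolic, and the relevant kernel decay is $(1+t)^{-2/q}$ rather than $t^{-(1-\frac{2}{q})}$), this is the paper's argument.
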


\subsection{Proof of Theorem \ref{main} assuming Theorem  \ref{dyastric}}
By the reproducing property, we can write $\bar P_\mu=P_\mu^2$ with $P_\mu$ the Littlewood-Paley projection associated to a different symbol.
We now recall that for the solution $U$ of (\ref{wave2}) verifies the first order system (\ref{lu3}) with correspondence given in (\ref{wave3}). We also obtained that the corresponding $(U_\mu, V_\mu)=(P_\mu U, P_\mu V)$ verifies (\ref{lu3}) with $F_{U_\mu}$ and $F_{V_\mu}$ given in (\ref{puuv}).   Recast the equation for $U_\mu$ into a form of (\ref{wave2}) by using (\ref{wave3}), we have
\begin{equation*}
\Box_\bg P_\mu U= -\bT F_{U_\mu}+\Tr k F_{U_\mu}-F_{V_\mu}.
\end{equation*}

We will apply the above calculation to $U$ being either of the function in $\Psi$ according to (\ref{5.03.1.19}) and (\ref{5.03.2.19}). The first term  on the right hand side will be shortly transformed into the lower order term $-F_{U_\mu}$ which are set into the functions of initial data  in Duhamel's principle.

To be precise, we define
$\fW(t,s)$ to be the operator defined on $\H$ such that, for each $\vartheta:=(\vartheta_0, \vartheta_1)\in \H$ on $\Sigma_s$,
$\phi(t,s,x)=\fW(t,s)(\vartheta)$ is the unique solution of the initial value problem such that
\begin{equation}\label{eqn23}
\Box_\bg \phi=0
\end{equation}
with
\begin{equation*}
 \phi(t;s,x)=\vartheta_0,\quad  \p_t \phi(t;s,x)=\vartheta_1, \mbox{ at } t=s.
\end{equation*}

Then,  by an adaption from \cite[Section 4]{WangCMCSH}, we derive the representation formula by the Duhamel's principle for $t\in I_k=[t_{k-1}, t_k]$ that
\begin{align}
P_\mu U(t)&=\fW(t,{t_{k-1}})\left(P_\mu U(t_{k-1}), \p_t P_\mu U(t_{k-1})- F_{U_\mu}(t_{k-1})\right)\nn\\
&+\int_{t_{k-1}}^t \{\fW(t,s)(0, - R_\mu(s)) +\fW(t,s)(F_{U_\mu}(s), 0)\}ds,\label{pmug}
\end{align}
where
\begin{equation}\label{5.12.2.19}
R_\mu=-F_{V_\mu}+v^i \p_i F_{U_\mu}.
\end{equation}

Now we apply $P_\mu$ to the both sides and take the spatial derivative.
\begin{align}
P_\mu^2 \p_m U(t)&=\int_{t_{k-1}}^t \left\{\p_m P_\mu \fW(t,s)(0, - R_\mu(s))
  +\p_m P_\mu \fW(t,s)(F_{U_\mu}(s), 0)\right\}ds\nn\\
&\quad \, +\p_m P_\mu \fW(t,{t_{k-1}})\left(P_\mu U(t_{k-1}),
  \p_t P_\mu U(t_{k-1})-F_{U_\mu}(t_{k-1})\right).\label{ppg}
\end{align}


By using Theorem \ref{dyastric}, we have for any one-parameter family of data $\vartheta(s):=(\vartheta_0(s), \vartheta_1(s))\in \H$ with
$s\in I_k:=[t_{k-1}, t_k]$ that
$$
\|P_\mu \p \fW(t,s)(\vartheta(s))\|_{L_{[s,t_k]}^{q}L_x^\infty}
\les \mu^{\frac{3}{2}-\frac{1}{q}} \|\vartheta(s)\|_{\H}.
$$
In view of the Minkowski inequality we then obtain
\begin{align*}
\left\|\int_{t_{k-1}}^t P_\mu \p \fW(t,s)(\vartheta(s))ds\right\|_{L_{I_k}^2 L_x^\infty}
&\les \int_{t_{k-1}}^{t_k} \|P_\mu \p \fW(t,s)(\vartheta(s))\|_{L^2_{[s,t_k]} L_x^\infty} ds\\
&\les |I_k|^{\frac{1}{2}-\frac{1}{q}}\mu^{\frac{3}{2}-\frac{1}{q}}\int_{I_k}\|\vartheta(s)\|_{\H} ds.
\end{align*}
Since $|I_k|\les T \mu^{-8\ep_0}$, it follows that
\begin{equation*}
\left\|\int_{t_{k-1}}^t P_\mu \p \fW(t,s)(\vartheta(s))ds\right\|_{L_{I_k}^2 L_x^\infty}
\les T^{\f12-\frac{1}{q}}\mu^{(\frac{1}{2}-\frac{1}{q})(1-8\ep_0)}
\int_{I_k} \mu \|\vartheta(s)\|_{\H} ds.
\end{equation*}
Applying the above inequality to (\ref{ppg}) gives, with
$\delta_2:=(\frac{1}{2}-\frac{1}{q})(1-8\epsilon_0)$, that
\begin{align}
\|\bar P_\mu \p_m U\|_{L^2_{I_k} L_x^\infty}
&\les T^{\f12-\frac{1}{q}}\mu^{\delta_2} \|\mu(F_{U_\mu},-R_\mu)\|_{L^1_{I_k} \H} + T^{\f12-\frac{1}{q}} B_\mu(t_{k-1}), \label{sumu}
\end{align}
where
\begin{align*}
B_{\mu}(t):=\mu^{\delta_2}\|\mu\left(P_\mu U(t), \p_t P_\mu U(t)-F_{U_\mu}(t)\right)\|_{\H}, 
\end{align*}
In the following we will give the estimates on $R_\mu$, $F_{U_\mu}$ and $B_\mu(t_{k-1})$.
Positive indices $\ep_0, q, \delta$ are chosen such that $4\ep_0+\delta_2+\delta<s-2$, and $\delta_2+\delta<4\ep_0.$
\begin{remark}\label{9.25.2.19}
{For convenience, we choose $\frac{1}{2}-\frac{1}{q}=\ep_0$.  Hence $0<\delta<\min(s-2-4\ep_0-\ep_0(1-8\ep_0), 4\ep_0-\ep_0(1-8\ep_0))$. Since $s-2>5\ep_0>0$,  this allows us to achieve $\delta> 8\ep_0^2=8\delta_0$ in the range.}
\end{remark}

\subsubsection{ Estimates for $ R_\mu$, $F_{U_\mu}$ }
To treat the first term on the right of (\ref{sumu}), we note
\begin{equation}\label{5.12.4.19}
\begin{split}
\|(F_{U_\mu}(s), - R_\mu(s))\|^2_{\H}&=\|(-R_\mu+v^m\p_m F_{U_\mu})(s)\|^2_{L_x^2}+\|\p F_{U_\mu}(s)\|^2_{L_x^2}\\
&=\|F_{V_\mu}(s)\|^2_{L_x^2}+\|\p F_{U_\mu}(s)\|^2_{L_x^2}.
\end{split}
\end{equation}
\begin{lemma}\label{rmd.1}
For any $\delta_1>\delta>0$ satisfying  $b:=\delta_2+\delta_1<4\epsilon_0$,  there holds
\begin{align}
\left(\sum_{\mu>\La}\sum_{k=1}^{\kappa_\mu} \|\mu^{1+\delta_2+\delta} (F_{U_\mu}(s), - R_\mu(s))\|_{L^1_{I_k}\H}^2\right)^\f12
&\les 1.\label{rem1.1}
\end{align}
\end{lemma}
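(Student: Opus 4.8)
The plan is to reduce (\ref{rem1.1}) to a fixed-time estimate for the dyadic errors $F_{U_\mu}, F_{V_\mu}$ and then integrate in time against the bootstrap assumption (\ref{BA1}). First, by (\ref{5.12.4.19}) one has $\|(F_{U_\mu}(s),-R_\mu(s))\|_{\H}=\big(\|F_{V_\mu}(s)\|_{L_x^2}^2+\|\p F_{U_\mu}(s)\|_{L_x^2}^2\big)^{\f12}=:g_\mu(s)$. Since the intervals $I_k$ partition $[0,T]$ and $g_\mu\ge 0$, $\sum_k\big(\int_{I_k}g_\mu\big)^2\le\big(\int_0^T g_\mu\big)^2$, so by Minkowski's integral inequality the left side of (\ref{rem1.1}) is bounded by
\begin{equation*}
\int_0^T\big(\textstyle\sum_{\mu>\La}\mu^{2(1+\ep)}(\|F_{V_\mu}\|_{L_x^2}^2+\|\p F_{U_\mu}\|_{L_x^2}^2)\big)^{\f12}\,ds
\ \le\ \int_0^T\big(\|\mu^{1+\ep}F_{V_\mu}\|_{l_\mu^2 L_x^2}+\|\mu^{1+\ep}\p F_{U_\mu}\|_{l_\mu^2 L_x^2}\big)\,ds,
\end{equation*}
with $\ep:=\delta_2+\delta$. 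The hypotheses $\delta<\delta_1$ and $b=\delta_2+\delta_1<4\ep_0$ give $0<\ep<4\ep_0<s-2$, and in the range $2<s<\tfrac52$ also $\ep<\f12$; these are precisely the constraints under which the higher-order estimates of Sections~2 and~3 apply. (The sharper interval bound $|I_k|\les T\mu^{-8\ep_0}$, which is the reason one asks for $b<4\ep_0$, could be fed in through Cauchy--Schwarz in $t$ to gain extra negative powers of $\mu$, but it is not needed for the argument below.)

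Next I would establish the pointwise-in-time bound
\begin{equation*}
\big\|\mu^{1+\ep}(F_{V_\mu},\,\p F_{U_\mu})\big\|_{l_\mu^2 L_x^2}\ \les\ \|\bp v,\bp\varrho,\p v_+\|_{L_x^\infty}+1 .
\end{equation*}
Using (\ref{puuv}), split $F_{U_\mu}$ and $F_{V_\mu}$ into the truncated full errors $P_\mu F_U$, $P_\mu F_V$ and the Littlewood--Paley commutator terms. For the truncated parts, $\|\mu^{1+\ep}P_\mu\p F_U\|_{l_\mu^2 L_x^2}\les\|\p^2 F_U\|_{\dot H^\ep_x}$, which is $\les1$ by (\ref{5.05.3.19}), Corollary \ref{eng_wave} (giving $\|\p v,\bp\varrho\|_{H^{1+\ep}_x}\les1$) and (\ref{4.25.1.19}) (giving $\|\curl\Omega\|_{H^{\f12+\ep}_x}\les\|\curl\Omega\|_{H^1_x}\les1$, using $\f12+\ep\le1$); similarly $\|\mu^{1+\ep}P_\mu F_V\|_{l_\mu^2 L_x^2}\les\|\p F_V\|_{\dot H^\ep_x}\les\|\bp v,\bp\varrho\|_{L_x^\infty}+1$ by (\ref{5.05.4.19}), Lemma \ref{comp_2}, Corollary \ref{eng_wave} and (\ref{4.25.1.19}). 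For the commutator terms I would adapt the bookkeeping already carried out in the proof of Proposition \ref{4.13.8.19}: apply the commutator estimates of Section \ref{app} to the pairs $(v,\p U)$, $(c^2,\Delta_e U)$, $(c^2\p\log c,\p U)$, $(\Tr k,V)$, $(v,\p V)$ at the weight $\mu^{1+\ep}$, and control the resulting factors by $\|\bp v,\bp\varrho,\Tr k,\p(c^2)\|_{H^{1+\ep}_x}\les1$ and $\|\mu^\ep\E\rp{1}_\mu(t)^{\f12}\|_{l_\mu^2}\les1$ (Corollary \ref{eng_wave}, whence $\|\p^2 U\|_{H^\ep_x}\les1$, and via $\bT v=-c^2\p\varrho$ also $\|\p\bT v\|_{H^\ep_x}\les1$), together with $\|\p U,V\|_{L_x^\infty}\les\|\bp\varrho,\p v_+\|_{L_x^\infty}+1$ (from $v=v_++\eta$ with $\|\p\eta\|_{L_x^\infty}+\|\bT\eta\|_{L_x^\infty}\les1$, proved in Section~2). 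Each commutator term is then $\les\|\bp v,\bp\varrho,\p v_+\|_{L_x^\infty}+1$, and the displayed bound follows.

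Finally, inserting this into the time integral and using (\ref{BA1}) with $0<T<1$,
\begin{equation*}
\int_0^T\big(\|\bp v,\bp\varrho,\p v_+\|_{L_x^\infty}+1\big)\,dt\ \les\ T+T^{\f12}\|\bp v,\bp\varrho,\bp v_+\|_{L^2_{[0,T]}L_x^\infty}\ \les\ T+T^{\f12}\ \les\ 1,
\end{equation*}
which proves (\ref{rem1.1}). The one genuinely non-routine point I expect is the fixed-time commutator estimate at weight $\mu^{1+\ep}$ --- i.e.\ checking that dyadically localizing the wave equations (\ref{wavevpl}) and (\ref{4.10.2.19}) costs no derivative beyond what the $H^{1+\ep}$ energies already control --- but this is exactly the mechanism underlying Propositions \ref{4.13.8.19} and \ref{erro_h}, so here it reduces to bookkeeping rather than new analysis.
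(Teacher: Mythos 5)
Your reduction (Minkowski in time, then a fixed-time bound on $\|\mu^{1+\ep}(F_{V_\mu},\p F_{U_\mu})\|_{l_\mu^2L_x^2}$, then integrate against (\ref{BA1})) is structurally legitimate, and your treatment of the non-commutator pieces $P_\mu F_U$, $P_\mu F_V$ and of the commutators with differentiated coefficients $[P_\mu,\Tr k]V$, $[P_\mu,\p v]\p U$, $[P_\mu,\p(c^2)]\p U$ matches what the paper does for its second group of terms (via (\ref{4.14.11.19}) at weight $\mu^{1+b}$, using the $H^{1+\ep}$ energies of Corollary \ref{eng_wave}). The gap is in the claimed fixed-time bound for the three hardest commutators $[P_\mu,c^2]\Delta_e U$, $[P_\mu,v]\p V$, $[P_\mu,v]\p^2 U$ at weight $\mu^{1+\ep}$. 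The commutator estimates of Section \ref{app} that you invoke ((\ref{4.13.2.19}), (\ref{4.13.3.19}), (\ref{4.13.4.19})) are stated and proved only for weights $\mu^\a$ with $0<\a<1$, i.e.\ exactly one derivative below what you need; the only all-purpose bound (\ref{5.12.1.19}) gains a single $\mu^{-1}$ and leaves a factor $\mu^{\ep}$ that is not $l_\mu^2$-summable against $\|\p^2U,\p V\|_{L_x^2}$; and treating these terms as plain products at weight $\mu^{1+\ep}$ would require $\|\Delta_e U,\p V\|_{H^{1+\ep}_x}$, i.e.\ third derivatives of $v_+,\varrho$, which are not controlled. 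Likewise, Proposition \ref{4.13.8.19} and (\ref{9.29.2.19}) concern the errors $F_{U\rp{1}_\mu},F_{V\rp{1}_\mu}$ of the differentiated system at weight $\mu^\ep$ — there the extra derivative already sits on $U,V$ before localization — so they do not transfer to $F_{U_\mu},F_{V_\mu}$ at weight $\mu^{1+\ep}$ by bookkeeping.

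This is precisely the point where the paper argues differently: it isolates these terms ($\I_1$ in its proof), applies only the crude gain (\ref{5.12.1.19}), and absorbs the leftover $\mu^{\delta+\delta_2}$ using the per-interval smallness $\|\p v,\p\varrho\|_{L^1_{I_k}L_x^\infty}\les\mu^{-8\ep_0}T^{1/2}$ from (\ref{BA2}) together with the interval count $\kappa_\mu\les\mu^{8\ep_0}$ — which is exactly why the hypothesis $b=\delta_1+\delta_2<4\ep_0$ appears in the statement, and it is the mechanism you explicitly set aside as "not needed." A fixed-time estimate at weight $\mu^{1+\ep}$ for these commutators may well be provable (e.g.\ by splitting according to the relative frequencies and using Bernstein to trade $\|\p v,\p(c^2)\|_{H^{1/2+\ep}_x}$ for Besov control of the coefficients on the low-input piece, and $\|\p^2U,\p V\|_{H^\ep_x}$ on the comparable and high-high pieces), but that is a new dyadic estimate you would have to state and prove; as written, the step does not follow from the lemmas you cite, so the proof is incomplete at its central point.
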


\begin{proof}
Recall from the definitions of (\ref{puuv}),
\begin{align*}
F_{V_\mu}&=[P_\mu, c^2] \Delta_e U-[P_\mu, v^m]\p_m V+P_\mu F_V+[P_\mu, \Tr k]V-[P_\mu,c^2\p_l(\log c)]\p^l U,\\
\p_i F_{U_\mu}=&
-[P_\mu, v^m]\p_i\p_m U-[P_\mu, \p_i v^m] \p_m U+ \p_i P_\mu F_U.
\end{align*}
These terms will be divided into two types and thus treated differently,
\begin{align*}
&\I_1:=\sum_{\mu>\La}\sum_{k=1}^{\kappa_\mu}\|\mu^{1+\delta_2+\delta}([P_\mu, c^2]\Delta_e U, [P_\mu, v]\p V, [P_\mu, v]\p^2 U) \|_{L_{I_k}^1 L_x^2}^2, \\
&\I_2:=\sum_{\mu>\La}\sum_{k=1}^{\kappa_\mu}\|\mu^{1+\delta_2+\delta}\ckk R_\mu\|_{L_{I_k}^1 L_x^2}^2,
\end{align*}
where symbolically $$\ckk R_\mu=[P_\mu, \Tr k]V+[P_\mu, \p_i v^m] \p_m U+[P_\mu,\p_l(c^2)]\p^l U+P_\mu F_V+\p P_\mu F_U.$$
It suffices to show that
\begin{align}
\I_1^{\f12}&\les \| \p^2 U, \p V\|_{L_I^\infty L_x^2}, \label{rem1.2} \\
\I_2^{\f12}&\les \|\bp v_+,\bp \varrho\|_{L_I^1L_x^\infty}+\|\mu^{1+b}\p P_\mu F_U\|_{L_I^1 l_\mu^2 L_x^2}
+\|\mu^{1+b} P_\mu F_V\|_{L_I^1 l_\mu^2 L_x^2}. \label{gnr.1}
\end{align}
By (\ref{BA2}) and Lemma \ref{comp_3}, we have  $\|\p \varrho, \p v\|_{L_{I_k}^1 L_x^\infty} \les \mu^{-8\epsilon_0}T^\f12$. We can apply
(\ref{5.12.1.19}) to obtain
\begin{align*}
\|\mu^{1+\delta+\delta_2}& ([P_\mu, c^2]\Delta_e U, [P_\mu, v]\p V, [P_\mu, v]\p^2 U)\|_{L^1_{I_k} L_x^2}\\
&\les \mu^{\delta+\delta_2}\|\p \varrho, \p v\|_{L^1_{I_k}L_x^\infty}\|\p^2 U, \p V\|_{L_t^\infty L_x^2}\\
&\les \mu^{\delta+\delta_2-8\epsilon_0} \|\p^2 U, \p V\|_{L_t^\infty L_x^2}.
\end{align*}
Recall also that $\kappa_\mu\les \mu^{8\epsilon_0}$. We can obtain
\begin{align*}
\sum_{k=1}^{\kappa_\mu}&\|\mu^{1+\delta+\delta_2}([P_\mu, c^2]\Delta_e U,[P_\mu, v]\p V, [P_\mu, v]\p^2 U)\|_{L^1_{I_k}L_x^2}^2
\le C\mu^{2(\delta+\delta_2-4\ep_0)}\|\p^2 U, \p V\|_{L_t^\infty L_x^2}^2.
\end{align*}
Since $0<\delta<\delta_1$ and $b:=\delta_2+\delta_1<4\epsilon_0$, we have
$$\I_1\les \La^{2(b-4\epsilon_0)} \|\p^2 U, \p V\|^2_{L_t^\infty L_x^2}\les \|\p^2 U, \p V\|_{L_t^\infty L_x^2}^2,$$
which gives (\ref{rem1.2}).

Next we prove (\ref{gnr.1}). Since $0<\delta<\delta_1$, we observe that for any function $a_\mu$ there holds
\begin{align}
\sum_{\mu>\La} \sum_{k=1}^{\kappa_\mu} \|\mu^\delta a_\mu\|_{L_{I_k}^1 L_x^2}^2
&\le \sum_{\mu>\La} \|\mu^\delta a_\mu\|_{L_I^1 L_x^2}^2
\le \left(\int_I \sum_{\mu>\La}\|\mu^\delta a_\mu\|_{L_x^2}\right)^2 \nn\\
&\les \left(\int_I \|\mu^{\delta_1} a_\mu\|_{l_\mu^2 L_x^2} \right)^2.\label{amu1}
\end{align}

The first three terms in $\ckk R_\mu$ can be treated directly by using (\ref{amu1}). We first note that
\begin{equation*}
[P_\mu, f]G=P_\mu (fG)- fP_\mu G.
\end{equation*}
Applying (\ref{4.14.11.19}) to the first term yields
\begin{equation*}
\|\mu^{1+b}[P_\mu, f] G\|_{l_\mu^2 L_x^2}\les \|f\|_{L_x^\infty} \|G\|_{H^{b+1}_x}+\|f\|_{H^{1+b}_x}\|G\|_{L_x^\infty}.
\end{equation*}
With the help of the above estimate and (\ref{amu1}),
\begin{align*}
\|\mu^{1+b}&\big([P_\mu, \Tr k] V, [P_\mu, \p v]\p U, [P_\mu,\p_l(c^2)]\p^l U\big)\|_{l_\mu^2 L_x^2}\\
&\les \| \p v, \Tr k, \p(c^2)  \|_{L_x^\infty}\| V, \p U\|_{ H_x^{b+1}}+\|\p v, \Tr k , \p (c^2)\|_{H^{1+b}_x}\| V, \p U\|_{ L_x^\infty}.
\end{align*}
Using (\ref{5.12.3.19}) and  taking $L_I^1$ norm gives
\begin{align*}
\|\mu^{1+b}\big([P_\mu, \Tr k] V&, [P_\mu, \p v]\p U, [P_\mu,\p_l(c^2)]\p^l U\big)\|_{l_\mu^2 L_x^2}\\
&\les \|\bp \varrho, \p v_+ \|_{L_I^1L_x^\infty}\|V, \p U, \p v, \Tr k, \p(c^2)\|_{L_t^\infty H^{1+b}_x}.
\end{align*}
 (\ref{gnr.1}) then follows by applying  Corollary \ref{eng_wave}.

 Thus
\begin{align*}
 \I_1+\I_2&\le\|\bp v_+, \bp \varrho\|_{L_I^1 L_x^\infty} +\|\mu^{1+b} P_\mu F_U\|_{L_I^1 l_\mu^2 H^1_x}\nn\\
&+\|\mu^{1+b} P_\mu F_V\|_{L_I^1 l_\mu^2 L_x^2}+\| \p^2 U, \p V\|_{L_I^\infty L_x^2}.
\end{align*}
We now combine Corollary \ref{eng_wave}, (\ref{5.05.3.19}), (\ref{5.05.4.19}) and Corollary \ref{comp_3} to obtain
\begin{equation*}
\|\mu^{1+b} P_\mu F_U\|_{L_I^1 l_\mu^2 H^1}+\|\mu^{1+b} P_\mu F_V\|_{L_I^1 l_\mu^2 L_x^2}\les\|\bp v_+, \bp \varrho\|_{L_I^1 L_x^\infty}+1.
\end{equation*}
 Using the above estimate, $\|\bp v_+, \bp \varrho\|_{L_I^1 L_x^\infty}\les 1$ due to (\ref{BA1}) and Corollary \ref{comp_3}, we can conclude (\ref{rem1.1}).
\end{proof}

\subsubsection{Estimate for  $B_{\mu}(t_{k-1})$.}
Recall  (\ref{lu3}) with $U$ being $P_\mu U$. By  the definition of $\H$, we derive
\begin{equation}\label{5.12.5.19}
B_\mu(t)^2=\mu^{2\delta_2+2}\big(\|\bT P_\mu U(t)-F_{U_\mu}(t)\|^2_{L_x^2}+\|\p P_\mu U(t)\|_{L_x^2}^2)=\mu^{2\delta_2+2}\big(\|P_\mu V(t)\|^2_{L_x^2}+\|\p P_\mu U(t)\|^2_{L_x^2}\big).
\end{equation}
Hence, by using (\ref{eng_3}), we can obtain directly
\begin{lemma}\label{rmd.111}
If $4\ep_0+\delta+\delta_2<s-2$, there holds
\begin{align*}
 \sum_{\mu>\La}\sum_{k=1}^{\kappa_\mu} \mu^{2\delta} B_{\mu}(t_{k-1})^2\les 1.
\end{align*}
\end{lemma}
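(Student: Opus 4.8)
\textbf{Proof plan for Lemma \ref{rmd.111}.}
The strategy is straightforward: expand $B_\mu(t_{k-1})^2$ via the identity \eqref{5.12.5.19}, then sum in $k$ using the finite number $\kappa_\mu\les \mu^{8\ep_0}$ of subintervals, and finally sum in $\mu$ by absorbing the resulting power of $\mu$ into the (already controlled) highest-order energy. First I would recall that, by \eqref{5.12.5.19},
\begin{equation*}
B_\mu(t)^2=\mu^{2\delta_2+2}\big(\|P_\mu V(t)\|^2_{L_x^2}+\|\p P_\mu U(t)\|^2_{L_x^2}\big)\les \mu^{2\delta_2}\E\rp{1}_\mu(t),
\end{equation*}
where $\E\rp{1}_\mu$ is the first-order dyadic energy of $(U,V)$ introduced in Section \ref{eng_sec}, for $U$ being either $v_+$ or $\varrho$. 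Indeed $\mu^{2}(\|P_\mu V\|^2_{L_x^2}+\|\p P_\mu U\|^2_{L_x^2})\les \|\p P_\mu V\|^2_{L_x^2}+\|\p^2 P_\mu U\|^2_{L_x^2}\approx \E\rp{1}_\mu[U]$ up to the harmless conformal factor comparison $d\mu_g\approx d\mu_e$ from \eqref{9.20.1.19}, using the finite-band property of $P_\mu$ with $\mu>\La$.

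Next, since $B_\mu(t_{k-1})$ does not depend on $k$ other than through the time slice $t_{k-1}\in[0,T]$, I would bound each term by the supremum in time:
\begin{equation*}
\sum_{k=1}^{\kappa_\mu}\mu^{2\delta}B_\mu(t_{k-1})^2\les \kappa_\mu\,\mu^{2\delta}\sup_{0\le t\le T}B_\mu(t)^2\les \mu^{8\ep_0+2\delta+2\delta_2}\sup_{0\le t\le T}\E\rp{1}_\mu(t).
\end{equation*}
Summing over dyadic $\mu>\La$ and using the hypothesis $4\ep_0+\delta+\delta_2<s-2$, i.e. $8\ep_0+2\delta+2\delta_2<2(s-2)=2\ep$ with $\ep=s-2$, I would write $8\ep_0+2\delta+2\delta_2=2\ep-2\sigma$ for some $\sigma>0$, so that
\begin{equation*}
\sum_{\mu>\La}\mu^{8\ep_0+2\delta+2\delta_2}\sup_{0\le t\le T}\E\rp{1}_\mu(t)\le \La^{-2\sigma}\sup_{0\le t\le T}\sum_{\mu>\La}\mu^{2\ep}\E\rp{1}_\mu(t)\les \La^{-2\sigma},
\end{equation*}
where the last inequality is exactly the highest-order energy bound $\|\mu^\ep\E\rp{1}_\mu(t)^\f12\|_{l_\mu^2}\les 1$ from the second estimate in \eqref{eng_3} of Corollary \ref{eng_wave}. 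Since $\La$ is a large fixed constant, $\La^{-2\sigma}\les 1$, which gives the claimed bound.

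There is essentially no obstacle here: the lemma is a bookkeeping consequence of the energy hierarchy already established. The only point requiring a little care is the passage from $B_\mu(t)^2$ to $\E\rp{1}_\mu(t)$, where one must be slightly careful that $\p P_\mu U$ and $P_\mu V=P_\mu\bT U$ control the full first-order energy $\E\rp{1}_\mu$ (whose integrand involves $\p^2 U$ and $\p V$); this uses the reproducing property $\bar P_\mu=P_\mu^2$ together with the finite-band inequality $\|\p P_\mu f\|_{L_x^2}\approx \mu\|P_\mu f\|_{L_x^2}$ for $\mu>\La$, plus the comparison of $d\mu_g$ and $d\mu_e$. Once this is noted, exponent arithmetic closes the argument.
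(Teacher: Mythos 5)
Your proof follows essentially the same route as the paper: rewrite $B_\mu(t)^2$ via \eqref{5.12.5.19}, use the finite-band property to dominate it by $\mu^{2\delta_2}\E\rp{1}_\mu(t)$, absorb the number of subintervals $\kappa_\mu\les\mu^{8\ep_0}$, and close by dyadic summation using the exponent condition together with the highest-order energy bound of Corollary \ref{eng_wave}. The reduction of $B_\mu$ to the dyadic energy and the exponent arithmetic are both fine.

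However, one displayed inequality is false as written: you claim
\begin{equation*}
\sum_{\mu>\La}\mu^{8\ep_0+2\delta+2\delta_2}\sup_{0\le t\le T}\E\rp{1}_\mu(t)\le \La^{-2\sigma}\sup_{0\le t\le T}\sum_{\mu>\La}\mu^{2\ep}\E\rp{1}_\mu(t),
\end{equation*}
which requires interchanging $\sum_\mu$ and $\sup_t$ in the wrong direction (in general $\sum_\mu\sup_t a_\mu(t)\ge \sup_t\sum_\mu a_\mu(t)$, not $\le$). The repair is immediate and is exactly how the paper argues: since \eqref{eng_3} holds for every fixed $t$, each single mode obeys $\mu^{2(s-2)}\E\rp{1}_\mu(t)\les 1$ uniformly in $t$ and $\mu$ (the $l^\infty_\mu$ consequence of the $l^2_\mu$ bound), so
\begin{equation*}
\sum_{\mu>\La}\mu^{8\ep_0+2\delta+2\delta_2}\sup_{0\le t\le T}\E\rp{1}_\mu(t)\les \sum_{\mu>\La}\mu^{8\ep_0+2\delta+2\delta_2-2(s-2)}\les \La^{-2\sigma},
\end{equation*}
the last sum converging precisely because $4\ep_0+\delta+\delta_2<s-2$. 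With this one-line correction your argument coincides with the paper's proof.
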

Indeed since $\kappa_\mu<\mu^{8\ep_0}$, we can derive in view of  the energy bound in (\ref{eng_3}) that
\begin{align*}
 \sum_{\mu>\La}\sum_{k=1}^{\kappa_\mu} \mu^{2\delta} B_{\mu}(t_{k-1})^2 &\les \sum_{\mu>\La} \mu^{2\delta+2\delta_2+8\ep_0} \sup_{t\in I} \ei_\mu(t)\\
 &\les \sum_{\mu>\La} \mu^{2\delta+2\delta_2+8\ep_0-2(s-2)} \c \sup_{t\in I}\sup_{\mu>\La} \mu^{2(s-2)} \ei_\mu(t)\les 1.
\end{align*}

In view of (\ref{sumu}), Lemma \ref{rmd.1}, Lemma \ref{rmd.111}
and writing
$$
\sum_{\mu>\La} \|\mu^{\delta} P_\mu \p_m U\|_{L_{I}^2 L_x^\infty}^2
=\sum_{\mu>\La} \sum_{k=1}^{\kappa_\mu} \|\mu^{\delta} P_\mu \p_m U\|_{L_{I_k}^2 L_x^\infty}^2,
$$
we can obtain the following result.

\begin{proposition}\label{smau}
For any $q>2$ sufficiently close to $2$ and any $\delta>0$ sufficiently small such that
$4\epsilon_0+\delta_2+\delta<s-2$, where $\delta_2:=(\f12-\frac{1}{q})(1-8\epsilon_0)$, and $\delta_2+\delta<4\ep_0$,
 for  $(U, V)$ in (\ref{5.03.1.19}) and (\ref{5.03.2.19}) satisfying (\ref{lu3}) there holds
\begin{align*}
&\sum_{\mu>\La}  \|\mu^{\delta} P_\mu \p U\|_{L_{I}^2 L_x^\infty}^2\les T^{1-\frac{2}{q}}.
\end{align*}
\end{proposition}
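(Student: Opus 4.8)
The plan is to assemble the three ingredients already prepared in this section — the dyadic Strichartz estimate of Theorem \ref{dyastric}, the Duhamel representation \eqref{pmug}–\eqref{ppg} for the Littlewood-Paley pieces $P_\mu U$, and the two summation lemmas (Lemma \ref{rmd.1} and Lemma \ref{rmd.111}) — into the single frequency-summed bound. First I would recall that $\bar P_\mu=P_\mu^2$, so that applying $P_\mu$ and one spatial derivative $\p_m$ to \eqref{pmug} gives \eqref{ppg}, which expresses $\bar P_\mu \p_m U(t)$ as the sum of two Duhamel integrals (with forcing data $(0,-R_\mu(s))$ and $(F_{U_\mu}(s),0)$) plus one free evolution of the initial data $\bigl(P_\mu U(t_{k-1}),\,\p_t P_\mu U(t_{k-1})-F_{U_\mu}(t_{k-1})\bigr)$ on the short interval $I_k$.

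Next I would run the Strichartz/Minkowski bound on each $I_k$: Theorem \ref{dyastric} applied to $\fW(t,s)$ yields $\|P_\mu\p\fW(t,s)\vartheta(s)\|_{L^q_{[s,t_k]}L_x^\infty}\les \mu^{3/2-1/q}\|\vartheta(s)\|_\H$, then Minkowski's integral inequality over $s\in I_k$ together with $|I_k|\les T\mu^{-8\ep_0}$ produces the factor $T^{1/2-1/q}\mu^{(1/2-1/q)(1-8\ep_0)}=T^{1/2-1/q}\mu^{\delta_2}$. This is exactly the content of \eqref{sumu}, giving
\begin{equation*}
\|\bar P_\mu\p_m U\|_{L^2_{I_k}L_x^\infty}\les T^{1/2-1/q}\mu^{\delta_2}\|\mu(F_{U_\mu},-R_\mu)\|_{L^1_{I_k}\H}+T^{1/2-1/q}B_\mu(t_{k-1}).
\end{equation*}
Squaring, summing over $k=1,\dots,\kappa_\mu$ and over dyadic $\mu>\La$, and attaching the weight $\mu^{2\delta}$, the first term is controlled by $T^{1-2/q}\bigl(\sum_{\mu>\La}\sum_k\|\mu^{1+\delta_2+\delta}(F_{U_\mu},-R_\mu)\|_{L^1_{I_k}\H}^2\bigr)\les T^{1-2/q}$ by Lemma \ref{rmd.1} (whose hypothesis $b=\delta_2+\delta_1<4\ep_0$ is guaranteed by the choice of parameters in Remark \ref{9.25.2.19}), while the second term is $\les T^{1-2/q}\sum_{\mu>\La}\sum_k\mu^{2\delta}B_\mu(t_{k-1})^2\les T^{1-2/q}$ by Lemma \ref{rmd.111} (whose hypothesis $4\ep_0+\delta+\delta_2<s-2$ is also built into the parameter choice). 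Finally, since $\sum_{\mu>\La}\|\mu^\delta P_\mu\p U\|_{L^2_I L_x^\infty}^2=\sum_{\mu>\La}\sum_{k=1}^{\kappa_\mu}\|\mu^\delta P_\mu\p U\|_{L^2_{I_k}L_x^\infty}^2$, combining the two bounds yields $\sum_{\mu>\La}\|\mu^\delta P_\mu\p U\|_{L^2_I L_x^\infty}^2\les T^{1-2/q}$, which is the claim of Proposition \ref{smau}.

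The routine part is the bookkeeping of the $\mu$-powers: one must be careful that the factor $\mu^{1+\delta_2+\delta}$ fed into Lemma \ref{rmd.1} matches the $\mu^{\delta_2}\cdot\mu\cdot\mu^\delta$ arising from the Strichartz gain $\mu^{3/2-1/q}$ rescaled, the extra derivative, and the outer weight, and that the count $\kappa_\mu\les\mu^{8\ep_0}$ is absorbed through $|I_k|\les T\mu^{-8\ep_0}$. The main obstacle — already dispatched in the lemmas rather than here — is controlling $R_\mu$ and $F_{U_\mu}$ in the $\H$-norm with the sharp weight; this is where the commutator estimates of Section \ref{app}, the energy bounds of Corollary \ref{eng_wave}, the error estimates \eqref{5.05.3.19}–\eqref{5.05.4.19}, and the Besov comparison of Corollary \ref{comp_3} all enter, and where one genuinely uses that the $\bT Y$ term was removed via the modified current so that only $\p F_U=\p\bT\eta$, not $\bT\bT\eta$, ever appears. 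With those lemmas in hand the proof of Proposition \ref{smau} is essentially a one-paragraph assembly, so in the write-up I would keep it short and point back to \eqref{sumu}, Lemma \ref{rmd.1} and Lemma \ref{rmd.111}.
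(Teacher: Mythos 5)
Your proposal is correct and follows exactly the paper's own route: the representation formula (\ref{pmug})--(\ref{ppg}), the dyadic Strichartz estimate of Theorem \ref{dyastric} combined with Minkowski's inequality and $|I_k|\les T\mu^{-8\ep_0}$ to get (\ref{sumu}), and then Lemma \ref{rmd.1} and Lemma \ref{rmd.111} to handle the inhomogeneous and data terms after squaring and summing over $k$ and $\mu$ with the weight $\mu^{2\delta}$. The parameter bookkeeping ($\delta_2+\delta<4\ep_0$, $4\ep_0+\delta_2+\delta<s-2$, $\kappa_\mu\les\mu^{8\ep_0}$) is handled just as in the paper, so no gap remains.
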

Therefore for $\Psi=(v_+,\varrho)$,  we have obtained
\begin{equation*}
\|\p \Psi\|_{L_I^2 L_x^\infty}^2
+\sum_{\mu>\La} \|\mu^{\delta} P_\mu \p\Psi\|_{L_{I}^2 L_x^\infty}^2
  \le C T^{1-\frac{2}{q}}.
\end{equation*}

We need the following result for obtaining the control on  $\bT\Psi$.
\begin{lemma}\label{10.26.3.19}
There holds the following commutator estimate
\begin{equation*}
\|[P_\mu, G]\p f\|_{L_x^\infty}\les \mu^{-\f12} \|\p G\|_{L_x^\infty} \|\p^2 f\|_{L_x^2}, \quad \mu>1.
\end{equation*}
\end{lemma}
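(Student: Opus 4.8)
\textbf{Proof proposal for Lemma \ref{10.26.3.19}.}

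The plan is to expand the commutator using the Littlewood-Paley kernel and then exploit the convolution structure. Writing $P_\mu f = \mu^3 \int \check\beta(\mu(x-y)) f(y)\, dy$ for the rescaled kernel $\check\beta$ of the multiplier $\beta(\mu^{-1}\xi)$, we have
\begin{equation*}
[P_\mu, G]\p f (x) = \mu^3 \int \check\beta(\mu(x-y))\,(G(y)-G(x))\, \p f(y)\, dy.
\end{equation*}
The first step is to write $G(y)-G(x) = (y-x)\cdot \int_0^1 \p G(x+\tau(y-x))\, d\tau$, so that the factor $(y-x)$ can be absorbed into the kernel: $\mu^3 (y-x)\check\beta(\mu(x-y)) = \mu^{-1}\cdot \mu^3 \widetilde{\check\beta}(\mu(x-y))$ where $\widetilde{\check\beta}(z) = z\,\check\beta(-z)$ is again a Schwartz function (the rescaled kernel of a new Fourier multiplier $\widetilde\beta$ supported in the same shell). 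This produces the gain of $\mu^{-1}$, one more than we actually need, and pulls out $\|\p G\|_{L_x^\infty}$ after taking absolute values and using that $\|\widetilde{\check\beta}\|_{L^1}$ is a universal constant.

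The second step handles the remaining factor $\p f$. After the above manipulation we are left with controlling, pointwise in $x$,
\begin{equation*}
\mu^{-1}\|\p G\|_{L_x^\infty} \cdot \mu^3 \int |\widetilde{\check\beta}(\mu(x-y))|\,|\p f(y)|\, dy \les \mu^{-1}\|\p G\|_{L_x^\infty}\, \big(|\widetilde{\check\beta}_\mu| * |\p f|\big)(x),
\end{equation*}
and by Young's inequality with the splitting $L^\infty \hookleftarrow L^1 * L^\infty$ one could bound $|\widetilde{\check\beta}_\mu| * |\p f|$ by $\|\p f\|_{L^\infty}$, but we want $\|\p^2 f\|_{L^2}$ instead. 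To get the $L^2$ bound on the right, I would instead estimate the convolution by $\|\widetilde{\check\beta}_\mu\|_{L^2}\|\p f\|_{L^2} \approx \mu^{3/2}\|\p f\|_{L^2}$, which would give $\mu^{1/2}\|\p G\|_{L^\infty}\|\p f\|_{L^2}$ — the wrong sign of power. The correct route is to not extract the full $\mu^{-1}$ but rather arrange one derivative to land on $f$: keep only the gain $\mu^{-1}$ but write $\p f$ in terms of $\p^2 f$ via the kernel. Concretely, since $\p_y$ acting on $\p f$ integrates by parts onto the kernel, and the kernel $\widetilde{\check\beta}_\mu$ already carries frequency $\sim\mu$, we have $\|\widetilde{\check\beta}_\mu * \p f\|_{L_x^\infty} \les \mu^{-1}\|\widetilde{\check\beta}'_\mu * \p^2 f\|_{L_x^\infty}$ up to relabeling, no — cleaner: simply note $P_\mu \p f = \mu^{-1} \widetilde P_\mu \p^2 f$ in an appropriate sense is false componentwise, so the honest argument is: in the display above replace $\mu^3|\widetilde{\check\beta}(\mu(x-y))|$ by its $L^2_y$ norm times the $L^2_y$ norm of $\p f$ localized — but to land $\p^2 f$ we integrate by parts once more in $y$, moving $\p_y$ off $\p f$ onto $\widetilde{\check\beta}(\mu(x-y))$, which costs a factor $\mu$ and hence nets $\mu^{-1}\cdot\mu = \mu^0$ times $\|\widehat{\check\beta}_\mu\|_{L^2}\|\p^2 f\|_{L^2} \approx \mu^{3/2}\|\p^2 f\|_{L^2}$; combined with an additional application of Bernstein $\|P_\mu h\|_{L^\infty}\les \mu^{3/2}\|h\|_{L^2}$ the bookkeeping closes to give the stated $\mu^{-1/2}$.

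The main obstacle is thus the precise balancing of powers of $\mu$: one must distribute the single net derivative gain from the commutator structure against a Bernstein-type $L^2 \to L^\infty$ embedding (which costs $\mu^{3/2}$) and the derivative that is transferred onto $f$. The cleanest presentation is to use the commutator representation with the two-parameter kernel, extract one factor $(y-x)$ to produce a $\mu^{-1}$ gain and the $\|\p G\|_{L^\infty}$ factor, then write $\p f = P_{\sim\mu}\p f$ (valid after the kernel localization) and apply Bernstein $\|P_{\sim\mu}\p f\|_{L_x^\infty}\les \mu^{1/2}\|P_{\sim\mu}\p f\|_{L_x^6}\les\dots$, or most directly $\|P_{\sim\mu}\p f\|_{L_x^\infty} = \|\mu^{-1}P_{\sim\mu}\p^2 f\|_{\text{...}}$ — i.e. the frequency localization lets us trade $\p f$ for $\mu^{-1}\p^2 f$, and then $\mu^{-1}\cdot\mu^{-1}\cdot\mu^{3/2} = \mu^{-1/2}$ after one Bernstein step on the $\p^2 f$ piece. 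Once the power counting is pinned down the remaining estimates are routine bounds on $L^1$ and $L^2$ norms of Schwartz kernels, and I would simply cite the standard Littlewood-Paley facts from \cite{Stein2} already invoked in the paper.
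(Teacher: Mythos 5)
Your first step is fine: the kernel representation of the commutator together with the fundamental theorem of calculus legitimately produces the factor $\mu^{-1}\|\p G\|_{L_x^\infty}$ and the pointwise bound by $\mu^{-1}\|\p G\|_{L_x^\infty}\,\big(|\widetilde{\check\beta}_\mu|*|\p f|\big)(x)$ (this is in effect the paper's estimate (\ref{5.12.1.19})). The gap is in the second step. After taking absolute values of the kernel there is no frequency localization left on $f$: the claim ``$\p f = P_{\sim\mu}\p f$, valid after the kernel localization'' is false, because the commutator at output frequency $\mu$ receives contributions from \emph{all} frequencies of $f$ below $\mu$ (paired with frequency-$\mu$ content of $G$) as well as from frequencies above $\mu$. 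Consequently the trade ``$\p f \mapsto \mu^{-1}\p^2 f$'' is unjustified and, as a stand-alone inequality, wrong: for $f$ concentrated at a single frequency $N\gg\mu$ one has $\|\p f\|_{L_x^\infty}\sim N^{1/2}\|\p^2 f\|_{L_x^2}$, not $\mu^{1/2}\|\p^2 f\|_{L_x^2}$, so the crude bound $\|[P_\mu,G]\p f\|_{L^\infty}\les\mu^{-1}\|\p G\|_{L^\infty}\|\p f\|_{L^\infty}$ cannot be closed by any fixed power of $\mu$. The integration-by-parts maneuver you sketch also moves the derivative the wrong way (off $\p f$, giving $f$, not $\p^2 f$), and the final power count $\mu^{-1}\cdot\mu^{-1}\cdot\mu^{3/2}$ rests entirely on the unavailable localization. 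So as written the proposal does not constitute a proof.

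The missing idea is a dyadic decomposition of $f$ itself, treating low and high frequencies by different mechanisms — which is exactly what the paper does via its splitting (\ref{4.13.5.19}): for the piece $[P_\mu,G]\p f_{\le\mu}$ one uses the commutator gain (\ref{5.12.1.19}) and then Bernstein plus the finite band property, $\|\p f_l\|_{L^\infty}\les l^{3/2}\|\p f_l\|_{L^2}\les l^{1/2}\|P_l\p^2 f\|_{L^2}$, summed over $l\le\mu$ to give $\mu^{1/2}\|\p^2 f\|_{L^2}$; for the high-high piece $\sum_{\la>\mu}P_\mu(G_\la\,\p f_\la)$ no commutator gain is used at all — the smallness comes from $\|G_\la\|_{L^\infty}\les\la^{-1}\|\p G_\la\|_{L^\infty}$, and then $\la^{1/2}\|\p f_\la\|_{L^2}\les\la^{-1/2}\|P_\la\p^2 f\|_{L^2}$ sums to $\mu^{-1/2}\|\p^2 f\|_{L^2}$. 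Your kernel approach can be salvaged along the same lines (decompose $\p f=\sum_l P_l\p f$ inside the convolution, use Young $L^1*L^\infty$ with Bernstein for $l\le\mu$ and Young $L^2*L^2$ with the finite band property for $l>\mu$), but that low/high case distinction is precisely the step your write-up hand-waves past.
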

Indeed,  from (\ref{5.12.1.19}), (\ref{4.13.5.19}) and using Bernstein inequality, we derive
\begin{align*}
 \|[P_\mu, G]\p_m f\|_{L_x^\infty}&\les \|[P_\mu, G] \p f_{\le \mu}\|_{L_x^\infty}+\|P_\mu(\sum_{\la>\mu}(G)_\la \p  f_\la)\|_{L_x^\infty}\\
 &\les \mu^{-1}\|\p G\|_{L_x^\infty}\|\p f_{\le\mu}\|_{L_x^\infty}+\sum_{\la>\mu}\la^{-1}\| \p (G)_\la \|_{L_x^\infty}\| \p f_\la\|_{L_x^\infty}\\
 &\les\|\p G\|_{L_x^\infty} \{\mu^{-1}\sum_{l\le \mu}\|l^{\frac{3}{2}}\p f_l\|_{L_x^2}+\mu^{-\f12}\sum_{\la>\mu}(\frac{\mu}{\la})^\f12\|\la P_\la \p f\|_{L_x^2}\}\\
& \les \mu^{-\f12}\|\p G\|_{L_x^\infty}\|\p^2 f\|_{L_x^2}.
\end{align*}
This implies Lemma \ref{10.26.3.19}.

\begin{corollary}\label{10.21.1.19}
With the same choices of $q,\delta_2, \delta$ as in Proposition \ref{smau},
for $\Psi=(v_+,\varrho)$,  there hold
\begin{equation*}
\|\bT \Psi\|_{L_I^2 L_x^\infty}^2
+\sum_{\mu>\La} \|\mu^{\delta} \bar P_\mu \bT\Psi\|_{L_{I}^2 L_x^\infty}^2
  \le C T^{1-\frac{2}{q}}.
\end{equation*}
\end{corollary}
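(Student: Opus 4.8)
\textbf{Proof proposal for Corollary \ref{10.21.1.19}.}
The plan is to reduce the $\bT$-derivative Strichartz estimate for $\Psi=(v_+,\varrho)$ to the already-established spatial-derivative estimate of Proposition \ref{smau}, using the transport relation $\bT v = -c^2\,\delta\cdot\p\varrho$ from \eqref{4.23.1.19} together with $\bT\varrho=-\div v$, and then transferring from $v$ to $v_+$ via the decomposition $v=v_++\eta$ and the estimates of Proposition \ref{eta_est} and Corollary \ref{eng_wave}. Since the Remark after Theorem \ref{main} explicitly flags that commutations of $\bT$ with the wave operator (or with $P_\mu$) must be avoided, the whole point is that $\bT$ applied to $\Psi$ is algebraically a function of $\p\varrho$, $\p v$ and lower-order products, and therefore its dyadic $L_x^\infty$ norms are controlled by the spatial ones which we already have.

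First I would treat $\bT\varrho=-\div v=-\p^i v_i$. By $v=v_++\eta$, we get $\bT\varrho=-\div v_+ -\div\eta$ pointwise. The term $\div v_+$ is a component of $\p v_+$, so $\|\bT\varrho\|_{L_I^2L_x^\infty}$ and $\sum_{\mu>\La}\|\mu^\delta \bar P_\mu \div v_+\|_{L_I^2L_x^\infty}^2$ are bounded by the quantities estimated in Proposition \ref{smau}. For $\div\eta$: by \eqref{4.12.2.19} with $\a=\frac12+\ep$ (so $\eta\in H^{3+\ep}_x$ uniformly in $t$ using \eqref{4.25.1.19}), Bernstein's inequality gives $\sum_{\mu>\La}\mu^{2\delta}\|\bar P_\mu \p\eta\|_{L_x^\infty}^2\les \|\eta\|_{H^{3+\ep}_x}^2\les 1$ for $\delta<1+\ep$, hence $\sum_{\mu>\La}\|\mu^\delta \bar P_\mu\div\eta\|_{L_I^2L_x^\infty}^2\les T\les T^{1-2/q}$; similarly $\|\div\eta\|_{L_I^2L_x^\infty}^2\les T$. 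Next, for $\bT v_+^i=\bT v^i-\bT\eta^i=-c^2\delta^{ia}\p_a\varrho-\bT\eta^i$. The first term is $c^2$ times a spatial derivative of $\varrho$; to move the multiplier $c^2$ past the Littlewood-Paley projector I would write $\bar P_\mu(c^2\,\p\varrho)=c^2\,\bar P_\mu\p\varrho+[\bar P_\mu,c^2]\p\varrho$, bound the first piece using $\|c^2\|_{L_x^\infty}\les 1$ from \eqref{9.20.1.19} and Proposition \ref{smau}, and bound the commutator pointwise by Lemma \ref{10.26.3.19} with $G=c^2$ (so $\|\p(c^2)\|_{L_x^\infty}\les\|\p\varrho\|_{L_x^\infty}\les 1$ by \eqref{5.06.7.19} and the Sobolev embedding of the $H^{1+\ep}_x$ bound in \eqref{eng_3}) and $f=\varrho$: this yields $\|[\bar P_\mu,c^2]\p\varrho\|_{L_x^\infty}\les \mu^{-1/2}\|\p^2\varrho\|_{L_x^2}$, and since $\|\p^2\varrho\|_{L_x^2}\les 1$ by \eqref{eng_3}, summing $\mu^{2\delta-1}$ over $\mu>\La$ converges (for $\delta<\frac12$, which is compatible with the smallness constraints on $\delta$ in Remark \ref{9.25.2.19}) and contributes $O(T^{1-2/q})$ after taking $L_I^2$. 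Finally $\bT\eta$: by \eqref{4.12.5.19} and \eqref{5.04.1.19}, $\|\bT\eta\|_{H^2_x}+\|\bT\eta\|_{L_x^\infty}\les 1$, and in fact \eqref{4.12.3.19} gives $\|\bT\eta\|_{H^{2+\ep}_x}\les 1$ using \eqref{eng_3} and \eqref{4.25.1.19}; hence by Bernstein $\sum_{\mu>\La}\mu^{2\delta}\|\bar P_\mu\bT\eta\|_{L_x^\infty}^2\les \|\bT\eta\|_{H^{3/2+\delta}_x}^2\les 1$ for $\delta<\frac12+\ep$, so this term contributes $O(T)\les O(T^{1-2/q})$ as well.

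Assembling: $\bT\varrho$, $\div v_+$, $c^2\p\varrho$, and $\bT\eta$ all have dyadic-square $L_I^2L_x^\infty$ sums bounded by $CT^{1-2/q}$, and the low-frequency (no $\bar P_\mu$) pieces are handled identically using $\|\p\Psi\|_{L_I^2L_x^\infty}$ from Proposition \ref{smau} together with the uniform $H^2$-type bounds on $\eta,\bT\eta$ and Sobolev embedding; adding the two cases $\Psi=v_+$ and $\Psi=\varrho$ gives the claimed estimate with a new universal constant $C$.

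\textbf{Main obstacle.} The only genuinely delicate point is the transfer of the $c^2$ multiplier (and more generally any non-constant smooth coefficient) through $\bar P_\mu$ \emph{without losing derivatives}: one cannot afford the naive bound $\|\bar P_\mu(c^2\p\varrho)\|_{L_x^\infty}\les\|c^2\p\varrho\|_{L_x^\infty}$ summed against $\mu^{2\delta}$, since the product has no $B^\delta_{\infty,2}$ control beyond what we are trying to prove. Lemma \ref{10.26.3.19} is precisely the tool that resolves this, converting the commutator into a gain of $\mu^{-1/2}$ paired with the \emph{energy} bound $\|\p^2\varrho\|_{L_x^2}\les 1$ rather than an $L_x^\infty$ bound; the convergence of $\sum_\mu \mu^{2\delta-1}$ then forces $\delta<\frac12$, which one must check is consistent with the constraints $\delta_2+\delta<4\ep_0$ and $4\ep_0+\delta_2+\delta<s-2$ already imposed in Proposition \ref{smau} and Remark \ref{9.25.2.19} — it is, since there $\delta$ is taken arbitrarily small and in any case $8\delta_0<\delta$ with $\delta_0=\ep_0^2$ small. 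Everything else is routine bookkeeping with the elliptic bounds on $\eta$ and the energy estimates already in hand.
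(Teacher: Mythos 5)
Your proposal follows essentially the same route as the paper's proof: decompose $\bT \varrho=-\div v_+-\div\eta$ and $\bT v_+=-c^2\p\varrho-\bT\eta$ via (\ref{4.23.1.19}) and (\ref{dcp1}); handle the $\p\eta$ and $\bT\eta$ pieces by the elliptic/Sobolev bounds (\ref{4.12.2.19}), (\ref{4.25.1.19}), (\ref{4.12.5.19}) and Bernstein; pass the coefficient $c^2$ through $\bar P_\mu$ with the commutator estimate of Lemma \ref{10.26.3.19}; and reduce everything to Proposition \ref{smau}. That is exactly the paper's argument (the paper's citation of ``Lemma \ref{10.26.1.19}'' at this point is a label typo for the same commutator lemma you invoke).

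There is, however, one incorrect justification in your write-up. After bounding the commutator by $\mu^{-\f12}\|\p(c^2)\|_{L_x^\infty}\|\p^2\varrho\|_{L_x^2}$ you claim $\|\p(c^2)\|_{L_x^\infty}\les\|\p\varrho\|_{L_x^\infty}\les 1$ ``by Sobolev embedding of the $H^{1+\ep}_x$ bound in (\ref{eng_3})''. In three space dimensions $H^{1+\ep}_x$ does not embed into $L_x^\infty$ for $\ep<\f12$, and here $\ep\le s-2<\f12$; a uniform-in-time $L^\infty_x$ bound on $\p\varrho$ is simply not available — obtaining $L^\infty_x$ control of $\bp\Phi$ only after time integration is the entire point of the Strichartz machinery, so assuming it here is circular. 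The step is salvageable exactly as the paper does it: keep the factor $\|\p\varrho(t)\|_{L_x^\infty}$, sum $\mu^{2\delta-1}$ over $\mu>\La$ (using $\delta<\f12$), and only then take the $L^2_I$ norm, bounding $\|\p\varrho\|_{L^2_IL_x^\infty}\les T^{\f12-\frac{1}{q}}$ by Proposition \ref{smau} itself (or $\les 1$ by (\ref{BA1})); this still yields the claimed $CT^{1-\frac{2}{q}}$ after squaring. A second, harmless slip: (\ref{4.12.2.19}) with $\a\le\f12+\ep$ gives $\eta\in H^{\frac{5}{2}+\ep}_x$, not $H^{3+\ep}_x$, so the Bernstein step for $\div\eta$ requires $\delta\le\ep$ rather than $\delta<1+\ep$; this constraint does hold, since $\delta<4\ep_0<s-2$.
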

\begin{proof}
By using (\ref{4.23.1.19}) and (\ref{dcp1}), we can derive
\begin{align*}
&\bT v_+= \bT v-\bT \eta=-c^2\p \varrho -\bT \eta,\\
&\bT \varrho=-\div v=-(\p v_++\p \eta).
\end{align*}
By using (\ref{5.03.3.19}),  (\ref{4.12.5.19}) and Sobolev embedding,
\begin{align*}
&\|\bT v_+\|_{L^\infty}\le \|\p \varrho\|_{L_x^\infty}+\|\bT \eta\|_{L_x^\infty}\les \|\p \varrho\|_{L_x^\infty}+1; \\
&\|\bT \varrho\|_{L_x^\infty}\les \|\p v_+\|_{L_x^\infty}+\|\p \eta\|_{L_x^\infty}\les \|\p v_+\|_{L_x^\infty}+1.
\end{align*}
Hence the $L_I^2 L_x^\infty$ estimates in Corollary \ref{10.21.1.19} follows immediately as a consequence of the $L_I^2 L_x^\infty$ estimates in Proposition \ref{smau}.

By Sobolev embedding and (\ref{4.12.5.19}), we derive
\begin{equation*}
\|\mu^\delta \bar P_\mu \bT \eta\|_{l_\mu^2 L_x^\infty}\les\|\bT \eta\|_{H^2_x}\les 1
\end{equation*}
Applying Lemma \ref{10.26.1.19} to $G=c^2$,  together with using (\ref{5.04.17.19}) implies
\begin{equation*}
\|\mu^\delta \bar P_\mu(c^2 \p \varrho)\|_{l_\mu^2 L_I^2 L_x^\infty}\les \|\mu^\delta \bar P_\mu \p \varrho\|_{L_I^2 l_\mu^2 L_x^\infty}+\|\p \varrho\|_{L_I^2  L_x^\infty}.
\end{equation*}
These two estimates imply  $\sum_{\mu>\La} \|\mu^{\delta} \bar P_\mu \bT v_+\|_{L_{I}^2 L_x^\infty}^2
  \le C T^{1-\frac{2}{q}},$ by using the estimate of $\p\varrho$ in  Proposition \ref{smau}.

Note by Berntein inequality,  (\ref{4.12.2.19}) and (\ref{4.25.1.19}), we derive
\begin{equation*}
\|\mu^\delta \bar P_\mu \p \eta\|_{l_\mu^2 L_x^\infty}\les \|\mu^{\delta+\frac{3}{2}}\bar P_\mu \p \eta\|_{l_\mu^2 L_x^2}\les\|\curl \Omega\|_{H^{\f12+\delta}_x}+1\les 1.
\end{equation*}
Combining  the above estimate with the estimate of $\p v_+$ in Proposition \ref{smau} gives
\begin{equation*}
\|\mu^\delta \bar P_\mu \bT \varrho\|_{L_I^2 l_\mu^2 L_x^\infty}\les T^\f12+\|\mu^\delta \bar P_\mu \p v_+\|_{L_I^2 l_\mu^2 L_x^\infty}\les T^{\f12-\frac{1}{q}}.
\end{equation*}
Hence the proof of Corollary  \ref{10.21.1.19} is complete.
\end{proof}

Rename the choice of $\delta, q$ in Remark \ref{9.25.2.19} by $\ga_0=\delta$, and $\ga_1=\f12-\frac{1}{q}$. The proof of Theorem \ref{main} is complete.

\subsection{Prove Theorem \ref{dyastric} using dispersive estimate}

In order to prove Theorem \ref{dyastric} on each spacetime slab $I_k\times {\Bbb R}^3$, we consider the coordinate change
$(t,x)\rightarrow(\la ( t-t_k), \la x)$. The interval $I_k$ becomes $I_*=[0, \tau_*]$ with $\tau_*\le \la^{1-8\ep_0}T$.
Under the rescaled coordinates the function $\Phi=(\varrho, v)$ and the metric component $\bg$  in (\ref{metric}) become
\begin{align}\label{7.26.1}
\Phi(\la^{-1}t+t_k, \la^{-1}x) \quad \mbox{and} \quad \bg(\Phi(\la^{-1}t+t_k, \la^{-1}x))
\end{align}
which are still denoted as $\Phi$ and $\bg$. In view of (\ref{BA2}), Corollary \ref{comp_3} and $|I_k|\le \la^{-8\ep_0} T$,
we have $$\|\bp \bg\|_{L_{I_*}^1 L_x^\infty}\les \la^{-8\ep_0}.$$ Therefore, to prove Theorem \ref{dyastric}, it is equivalent to show
the following Strichartz estimate on $I_*$ with respect to Littlewood-Paley projection $P$ on the frequency domain $\{1/2\le |\xi|\le 2\}$.
Here we fix the convention that $P=P_1$, which is the Littlewood-Paley projection $P_\la$ with $\la=1$.

\begin{theorem}\label{str2}
If there is a large number  $\La$  such that for $\la\ge \La$,
 on the time interval $I_*=[0,\tau_*]$, there holds
\begin{equation}\label{smallas}
\|\bp v_+, \bp \varrho \|_{L_{I_*}^2 L_x^\infty}^2+\la^{2\delta_0}\sum_{\mu \ge 2}\mu^{2\delta_0}\|\bar P_\mu \bp (v_+, \varrho)\|_{L_{I_*}^2 L_x^\infty}^2\les \la^{-1-8\ep_0}
\end{equation}
then for any solution $\psi$ of $\Box_{\bg} \psi=0$ on the time interval $I_*$ and $q>2$ sufficiently close to $2$, there holds
\begin{equation}\label{str1}
\|P \bp \psi\|_{L_{I_*}^q L_x^\infty}\les \|\psi[0]\|_{\H}.
\end{equation}
\end{theorem}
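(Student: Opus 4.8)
The plan is to prove Theorem \ref{str2} (equivalently, the rescaled version Theorem \ref{dyastric}) by reducing it, through a sequence of standard but technically delicate reductions, to a decay estimate for the linearized wave which in turn rests on the conformal energy bound of Theorem \ref{BT}. First I would recall the $\T\T^*$ machinery as in \cite[Section 4]{WangCMCSH} and \cite[Section 3, Section 9]{Wangrough}: to establish \eqref{str1} it suffices, after a duality argument and an almost-orthogonality reduction in frequency, to prove the fixed-time decay estimate for the ``half-wave'' parametrix, namely that a suitably localized solution of $\Box_\bg\psi=0$ obeys a dispersive bound of the form $\|P\bp\psi(t)\|_{L^\infty_x}\les (1+|t|)^{-1+C\delta_0}\,\|\psi[0]\|_\H$ for $t\in I_*$. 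This is the content of Theorem \ref{decayth}. The passage from the decay estimate to the Strichartz estimate \eqref{str1} is the classical $TT^*$ plus stationary-phase argument; the only nonstandard input is that the metric $\bg$ is merely controlled at the level \eqref{smallas}, so one works on the short interval $I_*$ where $\|\bp\bg\|_{L^1_{I_*}L^\infty_x}\les\la^{-8\ep_0}$, and absorbs the resulting errors.

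Next I would localize in physical space: one cuts $\psi$ (or rather the data) into pieces supported in unit balls, exploiting finite speed of propagation for $\bg$ (which holds since $\bg$ is a small perturbation of Minkowski on $I_*$), so that the decay estimate need only be proved in the domain of influence of a single unit ball. This is the reduction to Proposition \ref{lcestimate}. Within that domain one introduces the acoustic optical function $u$ and the null cones $C_u$ with $0\le u\le t$ as in Section \ref{setupcone}, and constructs the geometric energy/parametrix adapted to the foliation by $C_u$. The decay estimate for a single wave packet then follows, by the multiplier method of \cite[Section 7]{Wangrough}, from a bound on the conformal energy of $\psi$ in this region with life-span slightly below $\la$ — that is Theorem \ref{BT}. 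At this stage I would invoke, as black boxes, the geometric control of the acoustic null cones established in Section \ref{causal_reg} and Section \ref{conf}: the bound $\|\tr\chi-\tfrac{2}{\tir}\|_{L^2_tL^\infty_x(u\ge0)}$ from Proposition \ref{10.17.1.19}, the control of $\zeta$, and the mass-aspect/conformal-factor estimates of Proposition \ref{6.23.9.19} and Proposition \ref{dcmpsig}, together with the flux bounds of Section \ref{c_flux} and the energy estimates of Corollary \ref{eng_wave} and Proposition \ref{9.07.16.19}. With those in hand, running the un-canonical (conformal) energy argument of \cite[Section 7]{Wangrough} verbatim, adapted to the acoustic metric, yields Theorem \ref{BT}, hence Theorem \ref{decayth}, hence Theorem \ref{str2}.

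The main obstacle, as the introduction stresses, is not the $TT^*$ or localization reductions — those are by now routine — but verifying that the geometric hypotheses required by the \cite{Wangrough} multiplier argument actually hold for the \emph{rough} acoustic spacetime, where $\curl\Omega$ enters the Ricci tensor through \eqref{11.11.1.19} and is controlled only in $H^{\f12+}(S_{t,u})$ on the cones, a half-derivative short of what a direct estimate of $\tr\chi$ would need. The resolution, which I would carry out (or, in this proof, cite from Section \ref{causal_reg}), is the cancellation hierarchy: replace $z=\tr\chi+\Xi_L-\tfrac{2}{\tir}$ by the normalized quantity $\sY=\bb(\tr\chi+\Xi_L)-\tfrac{2}{\tir}$, use the trace decompositions $\sn(e^\varrho\curl\Omega_\bN)=\sn_L(e^\varrho\curl\Omega)_A+\ldots$ from \eqref{9.12.2.19} and $k_{\bN\bN}-\tfrac12\Xi_L=-\tfrac12(L(\log c+\varrho)+2L(v)_\bN)$ from \eqref{10.15.2.19}, and thereby convert the dangerous angular-derivative term $\tfrac{2}{\tir}\sn(\Xi_L-2k_{\bN\bN})$ into $L\sn\pi_1+\text{l.o.t.}$, which can be absorbed into the transport variable. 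The ``$s$–$s'$'' energy hierarchy then closes the remaining double-curl flux estimates. Once this package of geometric estimates is secured, the proof of Theorem \ref{str2} follows by assembling the reductions above; no further genuinely new difficulty arises.
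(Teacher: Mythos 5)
Your proposal follows essentially the same route as the paper: Theorem \ref{str2} is obtained from the decay estimate of Theorem \ref{decayth} by the $\T\T^*$ argument of \cite{WangCMCSH,Wangrough}, with the decay estimate in turn reduced by physical-space localization to Proposition \ref{lcestimate} and then to the conformal energy bound of Theorem \ref{BT}, whose geometric inputs are exactly the estimates of Sections \ref{causal_reg} and \ref{conf} that you cite. The only (harmless) imprecision is the form you give the dispersive bound: the paper's Theorem \ref{decayth} measures the data in $L^1_x$-based norms and allows a correction term $\mathfrak{d}(t)$ with $\|\mathfrak{d}\|_{L^{q/2}}\les 1$, rather than a clean $(1+t)^{-1+C\delta_0}$ decay against $\|\psi[0]\|_{\H}$.
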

The proof of Theorem \ref{str2} crucially relies on the following decay estimate.

\begin{theorem}[Decay estimate]\label{decayth}
Let $0<\ep_0<\frac{s-2}{5}$ be a fixed number. There exists a large number $\La$ such that for
any $\la\ge \La$ and any solution $\psi$ of the equation
\begin{equation}\label{wave.4}
\Box_\bg \psi=0
\end{equation}
on the time interval $I_*=[0, \tau_*]$ with $\tau_*\le \la^{1-8\ep_0} T$, there is a function $\mathfrak{d}(t)$
satisfying
\begin{equation}\label{correccondi}
 \|\mathfrak{d}\|_{L^{\frac{q}{2}}}\les 1, \mbox{ for } q>2 \mbox{ sufficiently close to }2
 \end{equation}
such that for any $0\le t\le \tau_*$ there holds
\begin{equation}\label{decay}
 \|P\bT \psi(t) \|_{L_x^\infty}\le \left(\frac{1}{{(1+t)}^{\frac{2}{q}}}+\mathfrak{d}(t)\right)
\left(\sum_{m=0}^3\|\p^m \psi(0)\|_{L_x^1}+\sum_{m=0}^2 \|\p^m \bT \psi(0)\|_{L_x^1}\right).
\end{equation}
\end{theorem}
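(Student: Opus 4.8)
\textbf{Plan of proof for Theorem \ref{decayth}.} The strategy is the standard wave-packet/parametrix decay mechanism adapted to the acoustic spacetime, following the architecture of \cite{Wangrough} (and \cite{KRduke, Kcom} before it) but with the geometric input now supplied by the causal-geometry estimates of Sections \ref{causal_reg}--\ref{conf}. First I would fix a point $p$ on $\Sigma_{\tau_*}$ (or rather, by time-translation invariance within $I_*$, reduce to estimating $P\bT\psi$ at a generic point) and set up the acoustic null cone $C_u$ and the optical function $u$ with vertex at the relevant geodesic, as in Section \ref{setupcone}; the region $0\le u\le t$ described after \eqref{ric1} is exactly the domain of influence of a unit ball over a time-span of order $\la^{1-8\ep_0}T$. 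The decay of $P\bT\psi(t)$ in $L_x^\infty$ is obtained by representing $P\psi$ through a Fourier-integral parametrix whose phase is built from $u$, and estimating the resulting oscillatory integral: the stationary-phase gain is governed by the rate at which the null cones spread, i.e. by $\tr\chi$ and its comparison with the Minkowski value $\frac{2}{\tir}$.

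The key steps, in order, are: (i) reduce \eqref{decay} to an $L^1\to L^\infty$ dispersive bound for the half-wave propagators $e^{\pm it|D|_\bg}$ localized at frequency $1$, by writing $\bT\psi$ via \eqref{eqn23}, splitting into the two half-waves, and absorbing lower-order commutator terms (the passage between $\bT$ and $\p_t$, and between $\Box_\bg$ and the reduced first-order system) into the data norm on the right of \eqref{decay}; (ii) construct the parametrix $\psi(t,x)\approx \int e^{i\la^{-1}\phi(t,x,\xi)} a(t,x,\xi)\hat g(\xi)\,d\xi$ with $\phi$ solving the eikonal equation — here one uses the foliation by $C_u$ and the transport structure for the amplitude $a$ along $L$; (iii) perform the $\xi$-integration by stationary phase on the angular variables, where the Hessian is controlled precisely by $\tir^{-2}$ up to the error $z=\tr\chi+\Xi_L-\frac{2}{\tir}$ and the torsion $\zeta$, yielding the model decay $(1+t)^{-2/q}$; (iv) collect all the error contributions — those coming from $\|z\|_{L_t^2 L_x^\infty(u\ge0)}$, from $\|\zeta\|$, from the conformal-factor derivative bounds of Proposition \ref{dcmpsig}, and from the mass-aspect control of Proposition \ref{6.23.9.19} — into the function $\mathfrak d(t)$, and verify $\|\mathfrak d\|_{L^{q/2}}\les 1$ using that each of these geometric quantities is bounded in $L_t^2$ (the $L^{q/2}$ norm with $q$ close to $2$ being a small perturbation of $L^1$, hence dominated by $L^2$ over the bounded time interval after Cauchy--Schwarz); (v) finally, since the construction of the genuine solution $\psi$ from the parametrix requires controlling the conformal energy, invoke Theorem \ref{BT} to close the argument, exactly as the reduction chain in Section \ref{5.07.4.19} indicates (Proposition \ref{lcestimate} $\to$ Theorem \ref{BT}).

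The main obstacle — and the place where the rough vorticity forces genuinely new work beyond \cite{Wangrough} — is step (iv), specifically the contribution of $z$ to $\mathfrak d$. One needs $\|\tr\chi-\frac{2}{\tir}\|_{L_t^2 L_x^\infty(u\ge0)}\les 1$, and because $\curl\Omega$ enters $\bR_{LL}$ in \eqref{11.11.1.19} while being controlled only in $H^{\f12+}(S_{t,u})$ — a half-derivative short of the $L^\infty(S_{t,u})$ bound that would make this routine — the naive transport estimate for $z$ fails. The resolution is the two-layer cancellation scheme sketched in the introduction: first the trace decomposition $\sn(e^\varrho\curl\Omega_\bN)=\sn_L(e^\varrho\curl\Omega)_A + (\text{double-curl term}) + \text{l.o.t.}$ from \eqref{9.12.2.19}, which lets one propagate $\sn_A z - e^\varrho(\curl\Omega)_A$ rather than $\sn z$ and reduces the bad term to the double-curl flux $\|\mu^{s'-2}P_\mu\curl\fC\|_{l_\mu^2 L^2(C_u)}$ controlled in Section \ref{flux_C}; and second, near the time axis where $\tir\to0$, the identity $k_{\bN\bN}-\f12\Xi_L=-\f12(L(\log c+\varrho)+2L(v)_\bN)$ of Proposition \ref{6.14.2.19}, which lets one replace $z$ by the normalized quantity $\sY=\bb(\tr\chi+\Xi_L)-\frac{2}{\tir}$ and absorb the singular term $\frac{2}{\tir}\sn(\Xi_L-2k_{\bN\bN})$ into $L\sn\pi_1$, removing the obstruction to the $L_t^2 L_u^\infty L_\omega^p$ bound. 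Once these cancellations are in place (Propositions \ref{10.17.1.19} and \ref{10.15.1.19}), the oscillatory-integral estimate in step (iii) goes through with the stated decay, and $\mathfrak d$ satisfies \eqref{correccondi}.
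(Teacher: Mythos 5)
Your plan diverges from the paper at the decisive point, and the divergence is not merely stylistic: the mechanism you propose for producing the decay --- a Fourier-integral parametrix with phase built from the optical function and a stationary-phase estimate in the angular variables (your steps (ii)--(iii)) --- is never carried out, and nothing in the paper's geometric toolkit is shown (or designed) to support it. The paper follows the physical-space, vector-field route throughout: Theorem \ref{decayth} is reduced, by the partition of unity $\{\sX_J\}$ at time $t_0$ together with the energy estimate of Lemma \ref{geoeg} and the Sobolev embedding $W^{2,1}\hookrightarrow L^2$ (which converts the $\H$- and $L^2$-norms of each $\psi_J[t_0]$ into the $L^1$-based data norms appearing on the right of \eqref{decay}), to the localized dispersive estimate of Proposition \ref{lcestimate}; and the decay in Proposition \ref{lcestimate} is extracted directly from the boundedness of the conformal energy (Theorem \ref{BT}) through the weighted multiplier and conformal-change argument of \cite{Wangrough}, with no parametrix and no oscillatory-integral estimate anywhere. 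If you insist on a Smith--Tataru style parametrix instead, you must supply the full machinery that makes stationary phase work after rescaling: quantitative control of the Hessian of the phase, transport bounds for the amplitude, and smallness of $\Box_\bg$ applied to the parametrix in a norm strong enough to compare it with the exact solution. None of this is provided, and it is not a consequence of Propositions \ref{cone_reg}, \ref{6.23.9.19}, \ref{dcmpsig}, which were proved to feed the conformal-energy argument, not an FIO construction.

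Your step (v) is also incoherent as stated: Theorem \ref{BT} is not what controls the discrepancy between a parametrix and the genuine solution (that would be an inhomogeneous energy estimate); in the paper's chain the conformal energy bound is itself the engine that yields \eqref{decaycp} for the exact solution, and the correction $\mathfrak{d}(t)$ arises from the error terms of that weighted-energy argument, not from stationary-phase remainders, so your step (iv) attributes the $L^{q/2}_t$ bound \eqref{correccondi} to the wrong source. What you write about the vorticity-induced difficulties --- the trace decomposition \eqref{9.12.2.19}, the identity \eqref{10.15.2.19}, and the normalized quantity $\sY$ leading to the $L_t^2L_x^\infty$ control of $\tr\chi-\frac{2}{\tir}$ --- correctly reproduces the content of Sections \ref{causal_reg}--\ref{conf}, but for the statement at hand it does not substitute for the missing derivation of \eqref{decay}: as it stands, the step that actually produces the dispersive rate $(1+t)^{-2/q}$ is asserted rather than proved, and the reduction (partition of unity, Lemma \ref{geoeg}, $W^{2,1}\hookrightarrow L^2$) that constitutes the paper's proof of Theorem \ref{decayth} from Proposition \ref{lcestimate} is absent from your outline.
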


Assuming Theorem \ref{decayth}, we can prove Theorem \ref{str2} by running the $\T \T^*$ argument. See \cite[Section 4]{WangCMCSH} and \cite[Section 9]{Wangrough}.

To prove  Theorem \ref{decayth}, we carry out a further localization of the solution for (\ref{wave.4}) in  physical space  in  $I_*\times {\mathbb R}^3$ with $I_*=[0, \tau_*]$ under the
rescaled coordinates, where $\tau_*\le\la^{1-8\ep_0} T$. For each $t$ denote by $g(t)$ or $g$ the induced Riemannian metric of (\ref{7.26.1}) on  $\Sigma_t =\{t\}\times {\mathbb R}^3$. Given $d>0$ and ${\bf p}\in \Sigma_t$ we use
$B_d({\bf p})$ and $B_d({\bf p}, g)$ to denote the Euclidean ball and the geodesic ball on $\Sigma_t$ with respect to $g$.
We can find $R>0$ such that
\begin{equation}\label{8.7.1}
B_R({\bf p}) \subset B_{\f12}({\bf p}, g(t)), \quad \forall \,{\bf p}\in \Sigma_t \mbox{ and } 0\le t\le \tau_*.
\end{equation}
This is achievable due to the ellipticity condition from (\ref{9.20.1.19}). Now we fix $t_0=1$. We take a sequence of Euclidean
balls $\{B_J\}$ with radius $R$ such that their union covers ${\mathbb R}^3$ and any ball in this collection intersect
at most $20$ other balls. Let $\{\sX_J\}$ be a partition of unity subordinate to the cover $\{B_J\}$. We may assume that
$\sum_{m=1}^3|\p^m\sX_{J}|_{L_x^\infty}\le C_1$ holds uniformly in $J$. By using this partition of unity and a standard argument
we can reduce the proof of Theorem \ref{decayth} to establishing the following dispersive estimate for the solution of
$\Box_{\bg} \psi =0$ with $\psi[t_0]:=(\psi(t_0), \p_t \psi(t_0))$ supported on an Euclidean ball of radius $R$.

\begin{proposition}\label{lcestimate}
There is a large constant $\La$ such that for any $\la\ge \La$ and
any solution $\psi$ of
\begin{equation*}
\Box_\bg \psi=0
\end{equation*}
on the time interval $[0, \tau_*]$ with $\tau_*\le \la^{1-8\ep_0} T$ and with $\psi[t_0]$ supported in the Euclidean
ball $B_{R}$  of radius $R$, there exists a function $\mathfrak{d}(t)$ satisfying
\begin{equation}\label{correc}
 \|\mathfrak{d}\|_{L^\frac{q}{2}[0,\tau_*]}\les 1 \quad  \mbox{for  } q>2 \mbox{ sufficiently close to }2
 \end{equation}
 such that for all $t_0\le t\le \tau_*$,
 \begin{equation}\label{decaycp}
 \|P \bT \psi(t)\|_{L_x^\infty}\les \left(\frac{1}{{(1+|t-t_0|)}^{\frac{2}{q}}}+\mathfrak{d}(t)\right)(\|\psi[t_0]\|_\H+\|\psi(t_0)\|_{L^2}).
 \end{equation}
\end{proposition}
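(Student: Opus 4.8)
\textbf{Proof proposal for Proposition \ref{lcestimate}.}
The plan is to follow the template of the wave-packet/parametrix construction used in \cite{Wangrough} (and before that in \cite{WangCMCSH}), adapting it to the acoustic metric $\bg$ and the wave function $\psi$ solving $\Box_\bg\psi=0$. The starting point is to realize the pointwise decay of $P\bT\psi(t)$ through an energy flux along the acoustic null cones $C_u$ emanating from the time axis, so that $\|P\bT\psi(t)\|_{L_x^\infty}$ is dominated by a suitable weighted $L^2$ norm of $\psi$ on $C_u$ together with a correction term $\mathfrak{d}(t)$ that absorbs the geometric errors. Concretely, I would first set up the acoustic optical function $u$ and the geometric apparatus of Section \ref{causal_reg} and Section \ref{conf}, localize $\psi$ to the domain of influence of the unit ball $B_R$ via finite speed of propagation (using the ellipticity bounds \eqref{9.20.1.19}), and then perform a conformal change of $\bg$ exactly as in \cite[Section 7]{Wangrough} to normalize the causal geometry. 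Under this normalization the proof of \eqref{decaycp} is reduced, by the multiplier (Morawetz-type) argument of \cite[Section 7]{Wangrough}, to the bound on the conformal energy contained in Theorem \ref{BT}.

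The key steps, in order, would be: (1) invoke finite speed of propagation to replace $\psi$ by its restriction to the causal diamond above $B_R$, using the uniform hyperbolicity guaranteed by \eqref{9.20.1.19}; (2) quote the control of the acoustic null cones — in particular the estimate $\|\tr\chi-\tfrac{2}{\tir}\|_{L_t^2L_x^\infty(u\ge 0)}\les 1$ in Proposition \ref{10.17.1.19}, the regularity of the torsion $\zeta$ from Section \ref{causal_reg}, and the flux bounds of Section \ref{c_flux} for $v,\varrho$ and for the double-curl of the vorticity — as the geometric input; (3) carry out the conformal change of the spacetime metric as in \cite{Wangrough}, producing a conformally-rescaled solution $\psi$ together with the renormalized mass aspect function and conformal factor estimates of Proposition \ref{6.23.9.19} and Proposition \ref{dcmpsig}; (4) run the un-canonical energy method / multiplier argument of \cite[Section 7]{Wangrough} on the conformal spacetime to bound the conformal energy, which is precisely the content of Theorem \ref{BT}; (5) translate the conformal energy bound back into the pointwise estimate \eqref{decaycp} by a Sobolev embedding along $S_{t,u}$ (of the type \eqref{10.15.1.19}), and identify the error term $\mathfrak{d}(t)$, checking \eqref{correc} via the $L_t^2$ control of $\tr\chi-\tfrac{2}{\tir}$ and of the Ricci-defect terms. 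Throughout, the terms on the right-hand side of \eqref{decaycp}, namely $\|\psi[t_0]\|_\H+\|\psi(t_0)\|_{L^2}$, arise from propagating the initial data along $C_u$ and from the lowest-order energy identity for $\Box_\bg$.

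The main obstacle is step (2)–(4): controlling the geometry of the acoustic null cones in the presence of the rough vorticity. The difficulty, as anticipated in the introduction, is the strong Ricci defect caused by $\curl\Omega$ appearing in $\bR_{LL}$ through \eqref{11.11.1.19}; since $\curl\Omega(t)$ is only bounded in $H^{\f12+}(S_{t,u})$ by the trace inequality, one lacks the half-derivative needed to bound $\tr\chi-\tfrac{2}{\tir}$ directly in $L_t^2L_x^\infty$. The resolution, which I would invoke rather than reprove, is the chain of cancellations of Section 7: the trace decomposition of $\sn(\bR_{LL})$ in Proposition \ref{struc}, the identity \eqref{10.15.2.19} relating $k_{\bN\bN}-\tfrac12\Xi_L$ to $L$-derivatives of $\log c+\varrho$ and $v_\bN$ in Proposition \ref{6.14.2.19}, and the normalized quantity $\sY=\bb(\tr\chi+\Xi_L)-\tfrac{2}{\tir}$ whose transport equation — after absorbing $\pi_1/\tir$ — no longer carries the singular higher-order term. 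Once these geometric estimates are in hand, the remaining reductions from the decay estimate down to the conformal energy, and the $\T\T^*$ argument deducing Theorem \ref{str2} from Theorem \ref{decayth}, are standard and follow \cite[Section 4]{WangCMCSH} and \cite[Section 9]{Wangrough} essentially verbatim; the only book-keeping one must check is that the powers of $\la$ lost in the rescaling \eqref{7.26.1} and the smallness \eqref{smallas} are compatible with the choices $\tfrac12-\tfrac1q=\ep_0$, $\delta_0=\ep_0^2$, $\delta_0<\ga_0<s-2$ fixed earlier.
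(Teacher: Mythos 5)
Your proposal follows essentially the same route as the paper: Proposition \ref{lcestimate} is obtained by reducing the decay estimate to the boundedness of the conformal energy in Theorem \ref{BT} via the reduction of \cite[Section 4.1]{Wangrough} (and \cite[Section 4.3]{WangCMCSH}), whose applicability rests on exactly the geometric inputs you list — the estimates (\ref{pba2})--(\ref{wras}) furnished by Propositions \ref{cone_reg}, \ref{ricpr} and Lemma \ref{6.17.1}, the flux bounds of Section \ref{c_flux}, and the conformal-factor and mass-aspect control of Propositions \ref{6.23.9.19} and \ref{dcmpsig} feeding the multiplier argument of \cite[Section 7]{Wangrough}. Apart from minor imprecision in how the conformal energy bound is converted into the pointwise bound (the paper simply cites the standard reduction rather than a Sobolev embedding of the type (\ref{10.15.1.19})), your outline matches the paper's proof.
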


In order to show  Theorem \ref{decayth}  by using Proposition \ref{lcestimate}, we  first need the standard energy control.
\begin{lemma}\label{geoeg}
Under the bootstrap assumption (\ref{BA1}), there holds, for any solution $\psi$ of (\ref{wave.4}), the standard energy estimate
\begin{equation}\label{geoge}
\|\psi[t]\|_\H\les \| \psi[0]\|_\H,
\end{equation}
and for $0<t\le t_0$,
\begin{equation*}
\|\psi(t)\|_{L_x^2}\les  \|\psi[0]\|_\H+\|\psi(0)\|_{L_x^2}.
\end{equation*}
\end{lemma}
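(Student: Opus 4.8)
The plan is to establish Lemma \ref{geoeg} by the standard energy identity for the wave operator $\Box_\bg$, which is already set up in the excerpt via \eqref{qs} and \eqref{norm_com}. First I would apply the divergence theorem to the energy current $\P_\a^{(\bT)}[\psi] = Q[\psi]_{\a\b}\bT^\b$ on the slab $\bigcup_{0\le t'\le t}\Sigma_{t'}$, exactly as in \eqref{qs}, with $\Box_\bg\psi = 0$. The bulk term then reduces to $-\f12\int {}^{(\bT)}\pi_{\a\b}Q^{\a\b}$. Using that the only nontrivial component of the deformation tensor is $\pi^{(\bT)}_{ij} = -2k_{ij}$ together with the pointwise bound $|k|\lesssim|\p v|$ from \eqref{9.20.2.19}, one gets $|Q^{\a\b}\,{}^{(\bT)}\pi_{\a\b}|\lesssim |\p v|\,|\bp\psi|^2$, and by \eqref{norm_com} the boundary integrals at $t'=t$ and $t'=0$ are comparable to $\|\bp\psi(t)\|_{L^2_x}^2$ and $\|\bp\psi(0)\|_{L^2_x}^2$ respectively (with constants depending only on $c_0, C_0$ via \eqref{9.20.1.19}). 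This yields
\begin{equation*}
\|\bp\psi(t)\|_{L^2_x}^2 \lesssim \|\bp\psi(0)\|_{L^2_x}^2 + \int_0^t \|\p v(t')\|_{L^\infty_x}\,\|\bp\psi(t')\|_{L^2_x}^2\, dt',
\end{equation*}
and Gronwall's inequality together with $\|\p v\|_{L^1_{[0,T]}L^\infty_x}\lesssim 1$ (a consequence of \eqref{BA1}) gives $\|\bp\psi(t)\|_{L^2_x}\lesssim\|\bp\psi(0)\|_{L^2_x}$. Since $\|\psi[t]\|_\H^2 = \|\p\psi(t)\|_{L^2_x}^2 + \|\bT\psi(t)\|_{L^2_x}^2 = \|\bp\psi(t)\|_{L^2_x}^2$ by the definition of $\H$, this is precisely \eqref{geoge}.

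For the second estimate, controlling $\|\psi(t)\|_{L^2_x}$ itself (not just its derivatives) on the short interval $0<t\le t_0$, I would integrate in time: writing $\bT\psi = \p_t\psi + v^a\p_a\psi$, one has $\p_t\big(\tfrac12\int_{\Sigma_t}\psi^2\,d\mu_g\big)$ expressed via $\int \psi\,\p_t\psi\,d\mu_g$ plus a term from $\Lie_\bT d\mu_g = -\Tr\cir k\, d\mu_g$; using $\p_t\psi = \bT\psi - v^a\p_a\psi$ and Cauchy--Schwarz, $\p_t\|\psi(t)\|_{L^2_x}\lesssim \|\bT\psi(t)\|_{L^2_x} + \|\p v\|_{L^\infty_x}\|\psi(t)\|_{L^2_x} + \|v\|_{L^\infty_x}\|\p\psi(t)\|_{L^2_x}$. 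Since $|v|\lesssim1$ and both $\|\bT\psi(t)\|_{L^2_x}$ and $\|\p\psi(t)\|_{L^2_x}$ are already bounded by $\|\psi[0]\|_\H$ from the first part, Gronwall's inequality over $[0,t_0]$ with $t_0 = 1$ fixed gives $\|\psi(t)\|_{L^2_x}\lesssim \|\psi(0)\|_{L^2_x} + \|\psi[0]\|_\H$, which is the claimed bound.

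The argument is essentially routine; the only points requiring minor care are (i) making sure the volume-element factors $d\mu_g$ versus $d\mu_e$ are handled uniformly, which is legitimate because of the ellipticity bound \eqref{9.20.1.19}, and (ii) confirming that $\|\p v\|_{L^1_t L^\infty_x}\lesssim1$ on the (rescaled) interval in question — this follows from \eqref{BA1} and, after rescaling, from the smallness in \eqref{smallas}/\eqref{BA2}. I do not anticipate a genuine obstacle here: this lemma is the classical hyperbolic energy estimate for a metric whose coefficients are bounded and have bounded $L^1_t L^\infty_x$ derivatives, and all the needed ingredients (the energy identity, the deformation tensor computation, the norm comparison) have already been recorded in the text. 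The slightly delicate piece, if any, is the passage from the $\dot H^1$-type control to the $L^2$ control of $\psi$ itself, which is why it is restricted to the bounded interval $0<t\le t_0$ and allowed to pick up the extra $\|\psi(0)\|_{L^2_x}$ term.
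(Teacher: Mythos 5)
Your proposal is correct and follows essentially the same route as the paper: the paper simply invokes the already-established energy inequality (\ref{eng7.10}) (whose derivation via (\ref{qs}), (\ref{norm_com}), the deformation-tensor bound (\ref{9.20.2.19}) and Gronwall is exactly what you reproduce) to get (\ref{geoge}), and then obtains the $L^2_x$ bound of $\psi$ on $[0,t_0]$ by the fundamental theorem of calculus, i.e.\ by integrating $\p_t\psi=\bT\psi-v^a\p_a\psi$ in time, which is the same elementary step as your differentiate-the-norm-plus-Gronwall variant.
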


\begin{proof}
(\ref{geoge}) follows from (\ref{eng7.10}). The other inequality then can be obtained by using the fundamental theorem of calculus.
\end{proof}

\begin{proof}[Proof of Theorem \ref{decayth} assuming Proposition \ref{lcestimate}]
For $0<t<t_0$, we may use the Bernstein inequality of LP projections to obtain
\begin{equation*} 
\|P \bT \psi(t)\|_{L_x^\infty}\les \|\bT \psi(t)\|_{L_x^2}.
\end{equation*}
Also using Lemma \ref{geoeg} and the Sobolev embedding $W^{2,1} \hookrightarrow L^2$ in ${\mathbb R}^3$,
we can obtain the desired inequality. Thus we can assume $t_0\le t\le \tau_*$. By using the partition of unity $\{\sX_J\}$ we decompose
$
\psi =\sum_J \psi_J,
$
with $\psi_J$  the solution of $\Box_{\bg} \psi_J=0$ satisfying the initial conditions
\begin{equation*}
\psi_J(t_0)=\sX_J \psi(t_0),\qquad  \p_t \psi_J(t_0)=\sX_J  \p_t \psi(t_0).
\end{equation*}
By using (\ref{decaycp}) in Proposition \ref{lcestimate}, we have
\begin{equation*} 
 \|P \bT \psi_J(t)\|_{L_x^\infty}\les \left(\frac{1}{{(1+|t-t_0|)}^{\frac{2}{q}}}+\mathfrak{d}(t)\right)(\|\psi_J[t_0]\|_\H+\|\psi_J(t_0)\|_{L^2}).
\end{equation*}
In view of Lemma \ref{geoeg} and the Sobolev embedding $W^{2,1}\hookrightarrow L^2$ in ${\mathbb R}^3$, we obtain
\begin{equation*} 
\|P \bT \psi_J(t)\|_{L_x^\infty}\les \left(\frac{1}{{(1+|t-t_0|)}^{\frac{2}{q}}}+\mathfrak{d}(t)\right)
\left(\sum_{m=0}^3 \|\p^m \psi_J(0)\|_{L_x^1} + \sum_{m=0}^2 \|\p^m \bT  \psi_J(0)\|_{L_x^1}\right).
\end{equation*}
Summing over $J$ and using the fact that any ball $B_J$ intersects with at most $20$ other balls, we can conclude
the desired estimate.
\end{proof}

\section{Reduction to the boundedness of conformal energy}\label{null_cone_set}
The main task now is to prove Proposition \ref{lcestimate}, which will be reduced further to controlling conformal energy. To define the conformal energy, we set up a foliation of  the acoustic spacetime by outgoing acoustic null cones, which  covers the domain of influence of $ B_{\f12}({\bf p}, g(t_0))$.

\subsection{Construction of foliations of acoustic null cones}\label{setupcone}
 Let  $\bf p$ be the center of $B_R$ in Proposition \ref{lcestimate} at  $t=t_0$.
We  denote by $\Ga^+$ the time axis passing through $\bf p$ which
is defined to be the integral curve of the forward unit normal $\bT$ with $\Ga^+(t_{\bf p})=\bf p$. We similarly extend the integral curve of $-\bT$ from $t=t_{\bf p}$ till $t=0$, which is still denoted as $\Ga^+$ by abuse of notation, and  denote the point $\Ga^+(0)={\bf o}$.   We will only consider the segment of $\Ga^+(t)$ with $t\in[0,\tau_*]$.

Define the optical function $u$ to be the solution of the Eikonal equation
\begin{equation}\label{optical}
\bg^{\a\b}\p_\a u \p_\b u=0.
\end{equation}
We refer to the level sets of the optical function $u$, denoted by $C_u$, as the acoustic cones. The global optical function $u$ is constructed as follows.

Let $p$ be any point on $\Ga^+$, and $L_\omega, \omega \in {\mathbb S^2}$ be the family of null vectors in $\T_p\M$. For each  $\omega\in {\mathbb S}^2$,  define the vector field $L'$  to be the generator of  the null geodesic $\Upsilon_\omega$ in $(\M, \bg)$ by
\begin{equation}\label{6.29.2.19}
\bd_{L'} {L'}=0, \quad \frac{d}{ds}\Upsilon_\omega(s)=L',\quad  L'(s)=1,
\end{equation}
and when $s=0$, $L'=L_\omega$. The null vector $L_\omega$ can be decomposed as $L_\omega=\bT+\bN_\omega$, where $\bN_\omega, \omega\in {\mathbb S}^2$ is the family of the unit vectors in $\T_p \Sigma_{t_p}$.
 We set $u=t$ at $p\in \Ga^+$. The ruled hypersurface formed by $\{\Upsilon_\omega, \omega\in{\mathbb S}^2\}$ is the level set of $u$, which is denoted by $C_u$. This immediately yields $L'(u)=0$. One can direct check via  (\ref{6.29.2.19}) and Eikonal equation  that $L'=-\bd u$, and clearly
\begin{equation}\label{9.26.1.19}
\bT u=1, \qquad \mbox{on }\Ga^+(t_p).
\end{equation}

As such, the optical function $u$ has been defined in the causal future of $\bf o$, denoted by $\D^+$, with the level sets $C_u$ being  the outgoing null (acoustic) cones
with vertex on $\Ga^+$ at $t=u$. With $S_{t,u}=C_u\cap \Sigma_t$, which is a smooth surface diffeomorphic to ${\mathbb S}^2$,  we denote the two solid cones
$$
\D^+_0=\bigcup_{\{t\in [t_0,\tau_*],0\le u\le t\}} S_{t,u} \quad \mbox{and}\quad
\D^+=\bigcup_{\{t\in [0,\tau_*], 0\le u\le t\}} S_{t,u}.
$$

Next we extend the foliation of spacetime by the null cones to a neighbourhood of  $\D^+$ in $\bigcup_{t\in[0, \tau_*]} \Sigma_t$.
 Recall that $\tau_*\le \la^{1-8\ep_0} T$ and $0<T\le d_0$, where $d_0>0$ is the radius of injectivity on $\Sigma_t$ for $t\le [0,T]$.
Let $\fv_*=\frac{4}{5} \tau_*$. We can guarantee that there is a neighbourhood ${\mathscr O}$ of ${\bf o}$ contained in the geodesic ball
$B_{\tau_*}({\bf o}, g)$ of radius $\tau_*$ on $\{t=0\}$ such that it can be foliated by the level sets $S_\fv$ of a function $\fv$
taking all values in $[0, \fv_*]$ with $\fv({\bf o})=0$ and with each $S_\fv$, for $\fv>0$, diffeomorphic to ${\mathbb S}^2$;
see Proposition \ref{exten} shortly for various important properties.
Let $a^{-1}=|\nab \fv|_g$ be the lapse function on $\{t=0\}$ and we know $\lim_{\fv \rightarrow 0}a=1$ in Proposition \ref{exten}. Then, in ${\mathscr O}$ the metric $g$ can be written as
\begin{equation}\label{radial}
ds^2=a^{2} d\fv^2+\ga_{AB} d\omega^A d\omega^B,
\end{equation}
where $\omega^A$, $A=1,2$, denote the angular variables on ${\mathbb S}^2$ and $\ga$ is the induced metric on $S_\fv$. At $t=0$, we denote by $\bN=\bN(\fv,\omega)$ the outward unit normal of the foliation of $S_\fv$ with $[0,\fv_*]$ and we also note that $\bN\rightarrow \bN_\omega$ as $\fv\rightarrow 0$ for $\omega\in {\mathbb S}^2$.

With the initial datum  at $S_\fv$  given by
\begin{equation}\label{6.29.1.19}
L'=a^{-1}(\bT+\bN), \quad 0<\fv\le \fv_*,\,  t=0,
\end{equation}
 we define $L', \Upsilon_\omega$ to be the vector field and the null geodesic  satisfying (\ref{6.29.2.19}).  By setting  $u=-\fv$, the level set of $u$ which is $C_u$ is  the ruled hypersurface of the null geodesics $\{\Upsilon_\omega, \omega\in {\mathbb S}^2\}$, and thus $L'(u)=0$. And we can check $L'=-\bd u$.

This gives an extension
of $u$ satisfying (\ref{optical}) in the causal future of $\cup_{0\le \fv\le \fv_*}S_\fv$, denoted by $\widetilde {\D^+}$. It is foliated by the null cones $C_u$, initiating
from $S_\fv$ with  $u=-\fv$ at $t=0$, and those initiating from the time axis $\Ga^+$ with $u=t$. We still denote $S_{t, u}=C_u\cap \Sigma_t$  for $-\fv_*\le u<0$ and $t\in [0, \tau_*]$. $\widetilde{\D^+}$ can be written as
$$
\widetilde{\D^+}=\bigcup_{\{t\in [0,\tau_*], -\fv_*\le u\le t\}} S_{t,u}.
$$

Define in $\widetilde{\D^+}$ for all $0<t\le \tau_*$ that
$$
\bb^{-1}:=\bT u=-\l L',\bT\r.
$$
By (\ref{6.29.1.19}) and (\ref{9.26.1.19}), we can see  the initial and the boundary value of $\bb$ verify $\bb=a \mbox{ at }t=0$  and $\bb=1$ on $\Ga^+(t)$.

Next we directly compute
\begin{equation*}
\nab u=\bd u+\l \bd u, \bT\r \bT=-L'+\bb^{-1} \bT
\end{equation*}
and thus $|\nab u|=\bb^{-1}$. Let $\bN$ be the outward unit normal of the radial foliation $S_{t,u}$ with $-\fv_*\le u\le t$ on $\Sigma_t$. By the above calculation
\begin{equation*}
\bN=\bb L'- \bT.
\end{equation*}
Moreover, also using  $L'=-\bd u$,
\begin{equation*}
-\bN(u)=\bb^{-1}=\bT u=|\nab u|.
\end{equation*}
We can directly see that
\begin{align*}
&\bN\rightarrow \bN_\omega \big(\Ga^+(t)\big), \quad \bb\rightarrow 1 \mbox{ as }u\rightarrow t.
\end{align*}

In Lemma \ref{6.17.1}, we will show that $|\bb-1|\le \frac{1}{4}$. Assuming this property, we can prove as in \cite[Section 4]{Wangrough} that
\begin{equation*}
B_{\f12}({\bf p}, g(t_0)) \subset \D_0^+\cap \Sigma_{t_0},
\end{equation*}
which implies $B_R(\bf p)\subset\D_0^+\cap \Sigma_{t_0}$. 

For convenience in $\widetilde{\D^+}$, we introduce the pair of null frames
\begin{equation}\label{9.11.4.19}
L=\bb L'=\bT+\bN, \quad \Lb= \bT-\bN,
\end{equation}
and define the projection tensor as
\begin{equation}\label{metric_2}
\Pi^{\mu \nu}=\bg^{\mu\nu}+\bT^\mu \bT^\nu-\bN^\mu \bN^\nu
\end{equation}
or identically
\begin{equation*}
\Pi^{\mu\nu}=\bg^{\mu\nu}+\f12 (L^\mu \Lb^\nu+L^\mu \Lb^\nu).
\end{equation*}
Since $\bN$ is the unit normal to $S_{t,u}$ in $\Sigma_t$, $\Pi_{\mu\nu}$ gives the induced metric on $S_{t,u}$, which will be denoted as $\ga$, with $\sn$ the corresponding Levi-Civita connection on $S_{t,u}$. We can write down the line element of the metric $g$ as
\begin{equation}\label{metric_II}
ds^2=\bb^2 du^2+\ga_{AB}d\omega^A d\omega^B.
\end{equation}

Let $\{e_A, e_B\}$ with $A,B=1,2$  be the orthonormal basis of the tangent bundle on $S_{t,u}$.
We now introduce the connection coefficient on $C_u$ by using the null pair  $e_4=L,\, e_3=\Lb$. The null second fundamental forms $\chi$ and $\chib$,
the torsion $\zeta$, and the Ricci coefficient $\zb$ of the foliation $S_{t,u}$ are defined by
\begin{equation}\label{ricc_def}
\begin{split}
\chi_{AB}=\bg(\bd_A e_4, e_B), &\qquad \chib_{AB}=\bg (\bd_A e_3, e_B),\\
\zeta_A=\f12 \bg(\bd_3 e_4, e_A), &\qquad \zb_A=\f12 \bg (\bd_4 e_3, e_A).
\end{split}
\end{equation}
 We denote by $\tr\chi$ and $\chih$ the trace and traceless part of $\chi$ by the metric $\ga$, and apply the same convention to $\chib$.

 On each $C_u \cap \widetilde{\D^+}$, with $t_\tmin =\max\{u, 0\}$,  we write $\Upsilon(t,\omega):=\Upsilon(s(t, \omega),\omega)$  for $t_\tmin\le t\le \tau_*$ by change of parameter. The pull-back  coordinates $(t, \omega^1, \omega^2)$ induced by the null geodesic flow $\Upsilon(t,\omega), \omega\in {\mathbb S}^2 $ along $C_u$ together with the function $u$  define a complete system of coordinates on $\widetilde{\D^+}$. Indeed,
 the short-time well-posedness of the null geodesic family $
 \{\Upsilon(s(t), \omega, u)\}$, for all $\omega\in {\mathbb S}^2, -\fv_*\le u\le t$ can follow from the standard ODE theory. The semi-global behaviour throughout $\widetilde{\D^+}$, or more precisely,  the global diffeomorphism of the null exponential map,   $\Upsilon: [t_\tmin, \tau_*] \times {\mathbb S}^2\rightarrow C_u\cap \widetilde{\D^+}$ for all $C_u$ contained in $\widetilde{\D^+}$, can be guaranteed   by running the same continuity  argument for proving \cite[Theorem 1.2]{Wang10}, based on  (\ref{comp2}), Proposition \ref{ricpr}, (\ref{bb_4}) and (\ref{BA1}).

Hence for each $p\in \widetilde{\D^+}$, there exists a unique triplet $(t,\omega,u )$  such that  $p=\Upsilon(t,\omega,u)$.
Corresponding to the case that $t=0$ in the triplet, in view of (\ref{radial}), we note on each $S_\fv$, $\{\p_{\omega^1}, \p_{\omega^2}\}$ is the pair of pull-back coordinate frame   by the diffeomorphism  ${\mathbb S}^2\rightarrow S_\fv$ with $0< \fv\le \fv_*$. (See more details in the proof in \cite{roughgeneral_online} for Theorem \ref{exten}.)

For $t>0$, we introduce the transport coordinate on $C_u$ by
\begin{equation}\label{trscoord}
\frac{d}{dt} x^\mu\big(\Upsilon(t, \omega)\big)= L^\mu(t, \omega), t>0
\end{equation}
  and adopt the pull-back coordinate frame $\{\p_{\omega^A}, A=1,2\}$ on $S_{t,u}$ defined by the diffeomorphism $\Upsilon(t,\cdot, u):{\mathbb S}^2\rightarrow S_{t,u}, t>0$.   Along the cone $C_u$,  $L=\p_t$\begin{footnote}{This is the partial differentiation with $u$ and $\omega$ fixed, instead of the $\p_t$ in the cartesian frame.}\end{footnote} together with the
the pull-back coordinate frame $\{\p_{\omega^1}, \p_{\omega^2}\}$ forms a set of coordinate frame on $C_u$.
We can derive
\begin{equation}\label{trscoord2}
\begin{split}
\frac{d}{dt}\ga(\p_{\omega_A}, \p_{\omega_B})&=\ga(\bd_L \p_{\omega_A},\p_{\omega_B})+\ga(\p_{\omega_A}, \bd_L \p_{\omega_B})\\
&=\ga(\bd_{\p_{\omega_A}}L ,\p_{\omega_B})+\ga(\p_{\omega_A}, \bd_{\p_{\omega_B}} L)\\
&=2\chi(\p_{\omega_A},\p_{\omega_B}).
\end{split}
\end{equation}

The second fundamental form of $S_{t,u}$ in $\Sigma_t$ for $0\le t\le\tau_*$ is given by
\begin{equation}\label{7.15.7.19}
\theta(X, Y)=\l \nab_X \bN, Y\r
\end{equation}
for any vector fields $X, Y$ tangent to $S_{t,u}$. The trace of $\theta$ is defined by $\tr\theta=\ga^{AB} \theta_{AB}$, and the traceless part of $\theta$ is denoted by $\hat \theta$.

Let
\begin{equation*}
v_t=\frac{\sqrt{|\ga|}}{\sqrt{|\ga\rp{0}|}},
\end{equation*}
where $\ga\rp{0}$ is the canonic round metric on ${\mathbb S}^2$. Since $v_t$ is defined on $S_{t,u}$, we may write it as $v_{t,u}$ in particular when $u$ is also varying.
 By  $\Lie_L\ga=2\chi$,
\begin{equation}\label{lv}
L(v_t)= v_t \tr\chi.
\end{equation}

We define  $S_{t,u}$-tangent tensor field $F$ if $F$ verifies $i_L F=0$ and $i_\Lb F=0$. For such tensor fields,
 $|F|$ is the norm of $F$ under the  induced metric $\ga$. We will use the two norms
$$
\|F\|_{L_x^q(S_{t, u})}^q= \int_{S_{t, u}} |F|^q d\mu_\ga\quad \mbox{and} \quad
\|F\|_{L_\omega^q(S_{t, u})}^q = \int_{{\mathbb S}^2} |F|^q(\omega) d \mu_{{\mathbb S}^2}.
$$

For $S_{t, u}$-tangent tensor field $F$ on $C_u$, we introduce the mixed norms
\begin{align*}
\|F\|_{L_\omega^q L_t^\infty(C_u)}^q&= \int_{{\mathbb S}^2} \sup_{\Upsilon_\omega}|F|^q   d\mu_{{\Bbb S}^2}\quad
\mbox{and} \quad \|F\|_{L_x^q L_t^\infty(C_u)}^q=\int_{{\Bbb S}^2} \sup_{t\in \Upsilon_\omega}(v_{t} |F|^q) d\mu_{{\Bbb S}^2}
\end{align*}
and
\begin{align*}
 &\|F\|_{L_t^p L_x^q(C_u)}^p=\int_{t_1}^{t_2}(\int_{{\Bbb S}^2} |F|^q v_{t'} d\mu_{{\Bbb S}^2})^\frac{p}{q} dt'.
 \end{align*}
For tensor fields $F$ defined on $\Sigma_t\cap \widetilde{\D^+}$ we use the norms
\begin{align*}
& \|F\|_{L_u^p L_x^q}^p=\int_{u_1}^{u_2}(\int_{{\Bbb S}^2} |F|^q v_{t, u'} d\mu_{{\Bbb S}^2})^\frac{p}{q} du',\\
&\|F\|_{L_x^q L_u^\infty}=\left(\int_{{\Bbb S}^2}(\sup_{u}(v_t |F|^q ))(\omega) d\mu_{{\Bbb S}^2}\right)^\frac{1}{q}.
 \end{align*}
 We may  write $d \mu_{{\Bbb S}^2}$ as $d\omega$ for convenience. The ranges $t_1$, $t_2$ and $u_1$ and $u_2$ are determined by the integral regions.

Finally, we recall that the existence of the $\fv$-foliation with the desired properties in a neighbourhood of ${\bf o}$ on $\{t=0\}$ is guaranteed by the following result.
The proof of the following result depends on the Ricci curvature of the induced metric $g$ on $\Sigma_t$,  which is of the same regularity level as in \cite{Wangrough}. See the proof in \cite[Section 10]{roughgeneral_online}.

\begin{proposition}\label{exten}
On $\{t=0\}$ there exists a function $\fv$ with $0\le \fv\le \fv_*=\frac{4}{5}\tau_*$ such that each level set $S_\fv$ is diffeomorphic to ${\Bbb S}^2$ and
\begin{equation}\label{amc}
{\emph\tr} \, \theta + k_{\bN\bN}=\frac{2}{a\fv} + {\emph\Tr} \, k-\Xi_4, \qquad a({\bf o})=1.
\end{equation}
Let $\ga^{(0)}$ be the canonical round metric on ${\Bbb S}^2$ and $\ga$ be the induced metric of $g$ on $S_\fv$. Let $\cga=\fv^{-2} \ga$
and $\ckk\ga=\fv^2\ga^{(0)}$. Then on $\cup_{0\le \fv\le \fv_*}S_\fv$ there hold
\begin{align}
&|a-1|\les \la^{-4 \ep_0} <\frac{1}{4},\quad \|\fv^{\f12-\frac{2}{q_*}}(\hat\theta, \sn \log a) \|_{L^{q_*}(S_\fv)}\les \la^{-\f12},\label{a_3} \\
&\|\sn \log a\|_{L_\fv^2 L^\infty_{S_\fv}} +\| \chih\|_{L_\fv^2 L^\infty_{S_\fv}}\les\la^{-\f12}, \label{a_4}\\
&|\cga-\ga^{(0)}|+\|\p_\omega(\cga-\ga^{(0)})\|_{L_\omega^{q_*}(S_\fv)} \les \la^{-4\ep_0}, \label{a_5}\\
&\|\fv^{\f12-\frac{2}{q_*}}\sn (\log \sqrt{|\ga|}-\log \sqrt{|\ckk\ga|})\|_{L^{q_*}(S_\fv)}\les \la^{-\f12}, \label{4a_6}
\end{align}
where $0<1-\frac{2}{q_*}< s-2$ and for scalar functions $f$, $\|f\|_{L_\omega^{q_*}(S_\fv)}^{q_*}:=\int_{{\Bbb S}^2}|f|^{q_*}(\omega) d\mu_{{\Bbb S}^2}$.
 Moreover
\begin{equation}\label{w8.1.1}
|a-1|\les \la^{-4\ep_0},\qquad \|\fv^{-\f12}(a-1)\|_{L^\infty}\les \la^{-\f12}, \qquad\frac{\sqrt{|\ga|}}{\sqrt{|\ga^{(0)}|}}\approx \fv^2
\end{equation}
and there holds the inclusion
\begin{equation}\label{w7.13.1}
\cup_{0\le \fv\le \fv_*} S_\fv\subset B_{\tau_*}({\bf o}).
\end{equation}
\end{proposition}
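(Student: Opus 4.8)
The statement to be proved is Proposition~\ref{exten}, which asserts the existence of the $\fv$-foliation on $\{t=0\}$ in a neighbourhood of $\bf o$ with the quantitative estimates \eqref{amc}--\eqref{w7.13.1}. The plan is to build the foliation as the level sets of the geodesic distance function from $\bf o$ with respect to the induced Riemannian metric $g=c^{-2}\delta$ on $\Sigma_0$, suitably reparametrized. First I would fix $\bf o$ and, using the uniform lower bound on the injectivity radius of $(\Sigma_0,g)$ (recall that $T\le \min(d_0,1)$ where $d_0$ is the injectivity radius bound coming from \cite{Ander} or \cite[Theorem 5.4]{Petersen}, which applies here because $\|\mbox{Ric}(g)\|_{L^2_x}\les 1$ by \eqref{10.21.2.19} and $c$ is uniformly bounded above and below by \eqref{9.20.1.19}), let $d(\cdot)=d_g(\cdot,{\bf o})$ be the $g$-distance function. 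For $\fv$ small, $S_\fv:=\{d=\fv\}$ is diffeomorphic to $\mathbb S^2$, and $|\nab d|_g=1$, so $d$ is already a geodesic-normalized radial function; the lapse $a$ in \eqref{radial} is obtained after the rescaling $\fv\mapsto$ (a mild modification of $d$) needed to get the precise form of the transport equation \eqref{amc} and the normalization $a({\bf o})=1$. Concretely, $a^{-1}=|\nab \fv|_g$, and the relation between $\tr\theta+k_{\bN\bN}$ and $\frac{2}{a\fv}$ follows from the Riccati/Raychaudhuri equation for the radial foliation together with the definition of $\bN=\nab\fv/|\nab\fv|_g$ and the identity $\tr\cir k=-\div v$; the term $\Xi_4=\Xi_\mu L^\mu$ and $\Tr k$ enter exactly as the non-Euclidean corrections, matching the corresponding computation in \cite{Wangrough}.

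Next I would establish the quantitative control. The key inputs are: the comparison between $g$ and the Euclidean metric, $\f12 c_0\le c\le C_0$ with $|\partial c|\les|\partial\varrho|$ and the Sobolev bounds on $\partial\varrho$ from Corollary~\ref{comp_1} and \eqref{5.04.17.19}; the $L^2_x$ bound on $\mbox{Ric}(g)$ from \eqref{10.21.2.19}; and, crucially, that the spatial geometry of $(\Sigma_0,g)$ is at the \emph{same} regularity level as in \cite{Wangrough} — this is the whole point of the decomposition $v=v_++\eta$, since $\varrho$ (hence $g$) is only coupled to the good wave part. The estimates \eqref{a_3}--\eqref{4a_6} are obtained by writing the structure equations for $\hat\theta$, $\sn\log a$, $\chih$ and $\sn(\log\sqrt{|\ga|}-\log\sqrt{|\ckk\ga|})$ along the radial geodesics, then integrating from $\bf o$ and using a trace/Sobolev inequality on $S_\fv$ together with a bootstrap on small $\fv$; the $\la^{-1/2}$, $\la^{-4\ep_0}$ gains come from the rescaling \eqref{7.26.1} and the bootstrap assumption \eqref{BA2} (equivalently \eqref{smallas}) exactly as in the ambient-spacetime case. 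The metric comparison \eqref{a_5} and $\sqrt{|\ga|}/\sqrt{|\ga^{(0)}|}\approx \fv^2$ in \eqref{w8.1.1} follow from integrating $\Lie_{\p_\fv}\cga$ and controlling $\hat\theta$ and $\tr\theta-\frac{2}{\fv}$; the inclusion \eqref{w7.13.1} is immediate from $|\nab\fv|_g=a^{-1}\approx 1$ so that $\fv\le \fv_*<\tau_*$ implies $d_g(\cdot,{\bf o})<\tau_*$.

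Since the entire construction and its estimates are, after the $v=v_++\eta$ decoupling, structurally identical to the corresponding result in \cite{Wangrough}, I would not reproduce the computation in full; instead I would invoke \cite[Section 10]{roughgeneral_online} for the detailed proof, having verified that all of its hypotheses hold in our setting — namely the Ricci bound \eqref{10.21.2.19}, the injectivity radius lower bound, the ellipticity \eqref{9.20.1.19}, and the smallness \eqref{smallas}/\eqref{BA2}. The main obstacle, and the only place where genuine new input is needed beyond citing \cite{Wangrough}, is checking that the spatial metric $g$ built from $(v,\varrho)$ in the compressible Euler system is indeed no rougher than in the irrotational quasilinear wave case: this is where one uses that $g_{ij}=c^{-2}\delta_{ij}$ depends only on $\varrho$, that $\|\p\varrho\|_{H^1_x}\les 1$ and $\|\mbox{Ric}(g)\|_{L^2_x}\les1$ hold \emph{uniformly} in $t\le T$ by the energy estimates of Sections 2--3, and that the rough vorticity $\curl\Omega$ does \emph{not} enter the induced geometry on $\Sigma_0$ at all — it only affects $\eta$, which does not appear in $g$. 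Once this is in place, the remaining work is the routine-but-lengthy transcription of the radial-foliation estimates, which I would relegate to the cited reference.
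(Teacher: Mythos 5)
Your concluding move—verify that the induced metric $g_{ij}=c^{-2}\delta_{ij}$ depends only on $\varrho$ (so the rough vorticity never enters the spatial geometry), that $\|\mbox{Ric}(g)\|_{L^2_x}\les 1$, the injectivity-radius bound and the rescaled smallness hold, and then invoke \cite[Section 10]{roughgeneral_online}—is exactly what the paper does: it offers precisely this remark plus that citation and no independent proof. So the citation route is fine. The problem is the construction you describe before falling back on the citation: it is not the construction that proves the proposition, and if carried out as written it fails at \eqref{amc}. The level sets of the $g$-distance from ${\bf o}$ (or of any reparametrization $\fv=f(d)$) have lapse $a\equiv 1$, or at best $a$ constant on each leaf; but \eqref{amc} is not an identity valid for a radial foliation—it is the \emph{defining constraint} of the foliation. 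Using $\chi_{AB}=\theta_{AB}-k_{AB}$, \eqref{amc} says $\widetilde{\tr\chi}=\tr\theta-\Tr k+k_{\bN\bN}+\Xi_4=\frac{2}{a\fv}$ pointwise on each $S_\fv$, i.e. (since $\bb=a$ and $\tir=\fv$ at $t=0$) that $\sY=\bb\,\widetilde{\tr\chi}-\frac{2}{\tir}$ vanishes on the initial slice, which is exactly the initial condition \eqref{10.2.1.19} and Lemma \ref{inii}(iii) that the later cone analysis relies on. This forces $a$ to be a genuinely non-constant function on the leaves, determined by solving the coupled system for $(\fv,a)$ (a prescribed-expansion/lapse equation solved with a continuity–bootstrap argument in the cited reference); no Riccati/Raychaudhuri identity along radial geodesics produces it, and under your ansatz the bounds on $a-1$ in \eqref{a_3} and \eqref{w8.1.1} would be vacuous—a sign that the object being constructed is different from the one you describe.

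So: if you intend \cite[Section 10]{roughgeneral_online} to carry the entire construction and its estimates, your proof coincides with the paper's, and the only substantive check—that $\curl\Omega$ does not degrade the regularity of $g$ on $\Sigma_0$, so the hypotheses of the cited section hold—is correctly identified. But the geodesic-distance description of $S_\fv$ and the claim that \eqref{amc} "follows from the Riccati equation" should be deleted or replaced by the statement that \eqref{amc} is the prescribed equation defining the foliation and its lapse; as written, that step is a genuine gap.
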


\subsection{Reduction from the dyadic Strichartz estimates to the boundedness of conformal energy}\label{10.30.1.19}

In order to give the definition of our conformal energy, we take two smooth cut-off functions $\underline{\varpi}$ and $\varpi$
depending only on two variables $t,u$; for $t>0$ they are defined in a manner such that
\begin{equation*}
\underline{\varpi}=\left\{\begin{array}{lll}
1  & \mbox{ on }\,  0\le u\le t\\
0  & \mbox { on } u \le -\frac{t}{4} ,
\end{array}\right.
\quad \mbox{and} \quad
 \varpi=\left\{\begin{array} {lll}
 1  & \mbox { on } 0\le\frac{u}{t}\le \f12 \\
 0  & \mbox{ if } \frac{u}{t}\ge \frac{3}{4} \mbox { or } u\le  -\frac{t}{4}.
\end{array}\right.
\end{equation*}
We may define $\varpi$ and $\underline{\varpi}$ such that they coincide in the region $\cup_{\{t\in [t_0, \tau_*],-\frac{t}{4}<u\le 0 \}}S_{t,u}$.

\begin{definition}\label{cfen}
For any scalar function $\psi$ vanishing outside $\D^+$,  we define the conformal energy $\CC[\psi]$ of $\psi$ by
 \begin{equation*}
\CC[\psi](t)=\CC[\psi]^\bi(t)+\CC[\psi]^\be(t),
 \end{equation*}
 where
 \begin{align}\label{confen}
 &\CC[\psi]^\bi(t)=\int_{\Sigma_t} (\underline{\varpi}-\varpi)t^2\left(|\bd \psi|^2+|\tir^{-1} \psi|^2\right) d\mu_g,\\
 &\CC[\psi]^\be(t)=\int_{\Sigma_t}\varpi \left(\tir^2|\bd_L \psi|^2+\tir^2|\sn \psi|^2+|\psi|^2\right) d\mu_g.
 \end{align}
\end{definition}

 We will prove the following boundedness theorem  for the conformal energy in the rest of this paper, combined with  \cite[Section 7]{Wangrough}.

\begin{theorem}[Boundedness theorem]\label{BT}
Let (\ref{smallas}) hold. Let $\psi$ be any solution of $\Box_\bg \psi=0$ on $I_*=[0, \tau_*]$ with $\psi[t_0]$ supported in
$B_R\subset \D^+\cap \Sigma_{t_0}$.  Then, for $t\in [t_0, \tau_*]$, the conformal energy of $\psi$ satisfies the estimate
$$
\CC[\psi](t)\les(1+t)^{2\ep}\left(\|\psi[t_0]\|_{\H}^2+\|\psi(t_0)\|_{L^2(\Sigma)}^2\right),
$$
where $\ep>0$ is an arbitrarily small number.
\end{theorem}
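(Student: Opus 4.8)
\textbf{Proof strategy for Theorem \ref{BT}.}
The plan is to run the multiplier (Morawetz-type) argument of \cite[Section 7]{Wangrough}, adapted to the acoustic spacetime, on the conformal energy $\CC[\psi]$. First I would pass to the conformally rescaled metric $\widehat\bg=\Omega_c^2 \bg$ with a conformal factor $\Omega_c$ chosen so that, under $\widehat\bg$, the causal geometry of the null cones $C_u$ is normalized; this is exactly the geometric normalization performed in \cite{Wangrough}, and the derivative control of $\Omega_c$ is provided by Proposition \ref{dcmpsig} and Proposition \ref{6.23.9.19}. The wave equation $\Box_\bg\psi=0$ transforms into $\Box_{\widehat\bg}\widehat\psi=V\widehat\psi$ for a potential $V$ controlled by the renormalized mass aspect function; the conformal energy $\CC[\psi]$ is comparable to the natural energy flux of $\widehat\psi$ through $\Sigma_t$ associated with the multiplier vector field $\widehat K = \tir^2 L + (\text{interior part supported where } \varpi-\underline\varpi \ne 0)$. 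I would then apply the divergence theorem to the current $\widehat Q[\widehat\psi]_{\mu\nu}\widehat K^\nu$ (plus the standard lower-order modification terms), decomposing the spacetime error integral into the interior region $\underline\varpi-\varpi\ne 0$ (where $\tir\approx t$ and one uses the bulk energy estimate with the standard $\bT$-multiplier as in Lemma \ref{geoeg}) and the exterior region $\varpi=1$ (where the Morawetz/conformal multiplier produces the genuine decay).

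The key steps in order: (i) set up $\widehat\bg$, $\widehat K$ and collect the deformation tensor $\up{(\widehat K)}\widehat\pi$ and the conformal-wave potential, using Proposition \ref{6.23.9.19} and Proposition \ref{dcmpsig}; (ii) establish the coercivity $\CC[\psi](t)\approx \int_{\Sigma_t}\widehat Q[\widehat\psi](\widehat K,\bT)\,d\mu_g + \|\psi\|_{L^2}^2$, which needs $|\bb-1|\le\frac14$ from Lemma \ref{6.17.1} and the structure of the metric in \eqref{metric_II}; (iii) bound the bulk error $\int_{\D^+_0}\big(\up{(\widehat K)}\widehat\pi\cdot\widehat Q + V\widehat\psi\,\widehat K\widehat\psi + \text{l.o.t.}\big)$ by $\int_{t_0}^t \frac{C}{\tir}\,\CC[\psi](t')\,dt'$ up to a term $\les (1+t)^{2\ep}(\|\psi[t_0]\|_{\H}^2+\|\psi(t_0)\|_{L^2}^2)$ coming from the genuinely non-integrable pieces; (iv) conclude by Gronwall's inequality (the singular weight $\tir^{-1}$ integrates to a logarithm, which after the standard $\ep$-loss bookkeeping gives the stated $(1+t)^{2\ep}$). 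The terms in (iii) that decay like $\tir^{-1}$ in $L_t^2 L_x^\infty$ along the cones are precisely those controlled by the cone-regularity package: $\|\tr\chi-\frac{2}{\tir}\|_{L_t^2 L_x^\infty(u\ge0)}$ from Proposition \ref{10.17.1.19}, the torsion and $\chih$ bounds from Proposition \ref{cone_reg}, and the flux bounds of $v,\varrho$ and $\curl\C$ from Section \ref{c_flux}.

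The main obstacle will be step (iii), specifically handling the contribution of the Ricci component $\bR_{LL}$ (equivalently $\tr\chi$) to the deformation tensor error: $\up{(\widehat K)}\widehat\pi$ contains $\tir^2(\tr\chi-\frac{2}{\tir})$, and $\tr\chi-\frac{2}{\tir}$ inherits the Ricci defect from the rough $\curl\Omega$. This is where the analysis genuinely departs from the irrotational case, and it is resolved only by invoking the full strength of the trace decompositions \eqref{11.11.1.19}, \eqref{10.15.2.19} and the normalized transport argument for $\sY=\bb(\tr\chi+\Xi_L)-\frac{2}{\tir}$ in Proposition \ref{10.17.1.19}, together with the double-curl flux control $\|\mu^{s'-2}P_\mu\curl\C\|_{l_\mu^2 L^2(C_u)}+\|\curl\C\|_{L^2(C_u)}\les 1$ established via Proposition \ref{9.07.16.19} and Section \ref{flux_C}. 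A secondary difficulty is that the multiplier $\widehat K$ is only Lipschitz across the transition region $\varpi\ne\underline\varpi$, so the error from $L(\varpi)$ and $L(\underline\varpi)$ must be absorbed into the interior energy; this is handled exactly as in \cite[Section 7]{Wangrough} since in that region $\tir$ is comparable to $t$ and no sharp cone regularity is needed. Once these errors are shown to be of the form $\int \tir^{-1}\CC[\psi]\,dt' + (\text{data})$, the theorem follows.
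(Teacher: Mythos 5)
Your overall architecture coincides with the paper's reduction: establish the cone-regularity package (Propositions \ref{cone_reg}, \ref{ricpr}, \ref{10.17.1.19}), the flux bounds of Section \ref{c_flux}, the conformal factor $\sigma$ with $L\sigma=\f12\Xi_L$, and the control of the renormalized mass aspect function and of $\sn\sigma$ (Propositions \ref{6.23.9.19}, \ref{dcmpsig}), and then run the multiplier scheme of \cite[Section 7]{Wangrough}; indeed the paper itself does not re-derive that scheme but supplies exactly these geometric inputs and invokes it. The genuine problem is your step (iii), i.e.\ your description of the multiplier argument itself. You propose the classical conformal-Morawetz argument: a single multiplier $\widehat K=\tir^2L+\cdots$, divergence theorem, bulk error dominated by $\int\tir^{-1}\CC[\psi]\,dt'$, and Gronwall, with the $(1+t)^{2\ep}$ attributed to a logarithm from the $\tir^{-1}$ weight. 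That scheme requires bounds on the deformation-tensor errors involving $\sn\widetilde{\tr\chi}$ and the mass aspect function in strong norms (weighted $L_t^1L_x^\infty$ or $L_t^2L_x^\infty$ type), which are not available at this regularity and are nowhere proved in the paper: all that is established is the much weaker mixed control of Proposition \ref{6.23.9.19}, e.g.\ $\|\tir\ckk\mu\|_{L_u^2L_t^2L_\omega^p}+\|\tir^{3/2}\ckk\mu\|_{L_u^2L_t^\infty L_\omega^p}\les\la^{-4\ep_0}$, together with the decomposition $\sn\sigma=\bA+\bA^\dag+\mu^\dag$ of Proposition \ref{dcmpsig}. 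These estimates are tailored to a different argument, not to a direct Morawetz-plus-Gronwall closure.

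The missing idea is precisely what the paper flags as the second ingredient of \cite[Section 7]{Wangrough}: besides the conformal normalization, one must adapt the hierarchic $r^p$-weighted multiplier approach of Dafermos--Rodnianski \cite{Mih_rod} to the rough spacetime, producing a hierarchy of weighted energies together with weighted fluxes (an ``un-canonical'' energy method) exactly in order to compensate the weak control on $\ckk\mu$; the $(1+t)^{2\ep}$ growth is the cost of interpolating within this hierarchy under the weak regularity, not a Gronwall logarithm. This is why the paper stresses that the boundedness proof in \cite{Wangrough} is ``completely different from the previous works'' (the single conformal multiplier argument of \cite{KRduke} is what forces the higher Sobolev exponent there). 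Without the hierarchic mechanism, the spacetime error terms of $\widehat K$ carrying $\tir^2$ weights against $\ckk\mu$ and $\sn\widetilde{\tr\chi}$ cannot be absorbed into $\int\tir^{-1}\CC[\psi]\,dt'$ under the estimates actually available, so the step as you state it would fail; your proposal correctly assembles the geometric prerequisites but omits the central analytic device of the proof.
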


Under the assumption (\ref{smallas}), to show Theorem \ref{BT} implies Proposition \ref{lcestimate}, we refer to \begin{footnote}{See also \cite[Section 4.3]{WangCMCSH}.}\end{footnote} \cite[Section 4.1]{Wangrough}, for which we need the following results on $\widetilde{\D^+}$,
\begin{equation}\label{pba2}
\|\varpi\big(\chih, \sn \log \bb, \tr\chi-\frac{2}{\tir}\big)\|_{(L^{\frac{q}{2}}[0,\tau_*] L^\infty_x\cap \widetilde{\D^+})}\le C \la^{\frac{2}{q}-1-4\ep_0(\frac{4}{q}-1)},
\end{equation}
where $\tir=t-u$, $q>2$ and is sufficiently close to $2$;
\begin{equation}\label{Pba}
|\bb-1|\le \f12,\,\quad \, \|\tr\theta-\frac{2}{\tir}\|_{L^3(\Sigma_t\cap \widetilde{\D^+})}\le C,
\end{equation}
and the estimate
 \begin{equation}\label{wras}
C^{-1}\ga^{(0)}(X,X) \le \tir^{-2}\ga(X,X)\le C\ga^{(0)}(X,X), \quad  v_t/\tir^2\approx 1,
 \end{equation}
 where  $X$ is any $S_{t,u}$ tangent vector field with $S_{t,u}$ contained in $\widetilde{\D^+}$ and $C>0$ is a universal constant.

The assumptions (\ref{Pba}) and (\ref{wras})  are used to prove the scaling-invariant inequalities in Lemma \ref{trace2} and Proposition \ref{basic1} which are involved in proving Proposition \ref{lcestimate}.
  (\ref{wras}) ensures that the area element $v_t$ and $\tir^2$ are comparable,  and the first assumption in (\ref{Pba}) implies $\bb$ can be regarded as a positive constant away from zero.
Thus for any tensor $F$ on $S_{t, u}$ and $1\le q<\infty$ we have
$$
\|F\|_{L^q(S_{t, u})}\approx \|\tir^{\frac{2}{q}} F\|_{L_\omega^q(S_{t, u})}.
$$
 With the series of reduction, the proof of the main theorem, Theorem \ref{thm1}, has been reduced to the proof of  Theorem \ref{BT}.

The proofs of Theorem \ref{BT} had always been the most important part of the series of works on rough solutions for quasilinear wave equations \cite{KRduke, KREinst,WangCMCSH, Wangrough}.   In particular, due to the optimal regularity assumption on the data in \cite{Wangrough}, the proof of the boundedness theorem in \cite{Wangrough} is completely different from the previous works and the result has a growth with time, i.e. $(t+1)^{2\ep}$. The reason of such harmless loss is due to the weak regularity of the spacetime metric and the null hypersurfaces therein. The proof in \cite[Section 7]{Wangrough} contains two major ingredients: one is to use the conformal method to normalize the null cones; the other is to  adapt the hierarchic approach in \cite{Mih_rod} to the rough spacetime to obtain the weighted energy flux together with the weighted  energy, so as to compensate the weak control in particular on the normalized mass aspect function (see (\ref{6.23_mu})). The analysis in the proof is hard to be relaxed further.

 In our situation, the appearance of the rough vorticity derivative in (\ref{4.10.1.19}) lowers the regularity of the Ricci curvature significantly, which makes it much harder to control the null hypersurfaces of the spacetime time. Our task is to gain the complete set of the geometric control required in \cite[Section 7]{Wangrough} by utilizing the geometric structures of the acoustic spacetime derived in Section \ref{fundstr}. The first set of estimates will be achieved in  Proposition \ref{cone_reg} and Proposition \ref{ricpr}, which will complete the proof of (\ref{pba2})-(\ref{wras}), (see Remark \ref{10.26.4.19}). We will control the normalized mass aspect function and the conformal factor for applying the conformal method in Section \ref{conf}.

Next, we provide a set of analytic tools under the original coordinates in the region of $\widetilde{\D^+}$, for which, we rely on bootstrap assumptions (\ref{Pba}) and (\ref{wras}).

 \begin{proposition}\label{basic1}
 Under the assumption (\ref{wras}),
 there hold the following Sobolev inequalities

(i) For $2\le q<\infty$ and any $S_{t,u}$-tangent tensor $F$, there hold
\begin{equation}\label{sob.12}
\|\tir^{1-2/q}F\|_{L^q(S_{t,u})}\les\|\tir\sn F\|_{L^2(S_{t,u})}^{1-2/q}\|F\|_{L^2(S_{t,u})}^{2/q}+\|F\|_{L^2(S_{t,u})},
\end{equation}
and
\begin{equation}\label{sob_9.18}
\tir^\f12\| F\|_{L^\infty(S_{t,u})}\les \|\tir \sn F\|_{L^4(S_{t,u})}+\|F\|_{L^4(S_{t,u})}.
\end{equation}
(ii) For any $\delta\in (0,1)$, any $q\in (2,\infty)$ and any scalar function $f$
there hold
\begin{align*}
\sup_{S_{t,u}}|f|\les  \tir^{\frac{2\delta(q-2)}{2q+\delta(q-2)}}
&\left(\int_{S_{t,u}} \left(|\sn f|^2+\tir^{-2} |f|^2\right)\right)^{\f12-\frac{\delta q}{2q+\delta(q-2)}}
\left(\int_{S_{t,u}} \left(|\sn f|^q+\tir^{-q}|f|^q\right) \right)^{\frac{2\delta}{2q+\delta(q-2)}}.
\end{align*}
 \end{proposition}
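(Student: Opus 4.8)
\textbf{Proof plan for Proposition \ref{basic1}.}
The plan is to treat the two parts by the standard Sobolev-on-a-$2$-sphere machinery, transferred to $S_{t,u}$ via the comparison (\ref{wras}) which says the metric $\ga$ on $S_{t,u}$ is comparable to $\tir^2 \ga^{(0)}$, where $\ga^{(0)}$ is the round metric on ${\mathbb S}^2$. Since $S_{t,u}$ is diffeomorphic to ${\mathbb S}^2$ (from the construction in Section \ref{setupcone}), and since all intrinsic quantities scale in a controlled way under $\ga \mapsto \tir^{-2}\ga$, it suffices to prove the scale-invariant versions on a fixed round sphere and then restore the $\tir$-weights by homogeneity. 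For part (i), I would first record how $\sn F$, $\|F\|_{L^q}$ and the volume element transform: if $\cga := \tir^{-2}\ga$, then $\sqrt{|\ga|} = \tir^2\sqrt{|\cga|}$, $|F|^2_\ga = \tir^{-2m}|F|^2_{\cga}$ for an $m$-covariant tensor, and $|\sn F|_\ga$ rescales accordingly; these produce exactly the $\tir^{1-2/q}$ and $\tir$-factors displayed. The estimate (\ref{wras}) also guarantees that the transition functions between the pull-back coordinate patches are uniformly controlled, so the fixed-sphere Sobolev constants can be used uniformly in $(t,u)$.

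For the first inequality (\ref{sob.12}), once reduced to $\cga$-comparable-to-$\ga^{(0)}$, I would invoke the Gagliardo--Nirenberg--Sobolev inequality on a compact surface in the form $\|F\|_{L^q} \lesssim \|\sn F\|_{L^2}^{1-2/q}\|F\|_{L^2}^{2/q} + \|F\|_{L^2}$, valid for $2\le q<\infty$; this is classical for scalar functions and extends to tensors by applying it to $|F|$ together with Kato's inequality $|\sn|F|| \le |\sn F|$ a.e. The endpoint-type bound (\ref{sob_9.18}) follows by combining the $q=4$ case of (\ref{sob.12}) with the Sobolev embedding $W^{1,4}({\mathbb S}^2)\hookrightarrow L^\infty$, again applied to $|F|$; tracking the weights gives the stated $\tir^{1/2}$ prefactor. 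Part (ii) is the scalar isoperimetric/Sobolev inequality with a mixed $L^2$--$L^q$ right-hand side: on a fixed round ${\mathbb S}^2$ it is the interpolated Sobolev embedding (one first gets $\sup|f| \lesssim \|f\|_{W^{1,q}}$ for $q>2$, then interpolates the $W^{1,q}$ norm between $W^{1,2}$ and $W^{1,q}$ to optimize the exponent), after which one restores the $\tir$-scaling using $\tir^{-2}$-weighting of the zeroth-order term to keep the inequality dimensionally consistent. The precise exponents $\frac{2\delta(q-2)}{2q+\delta(q-2)}$ etc. come out of balancing the powers of $\tir$ in the two factors.

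The main obstacle, such as it is, is not analytic depth but bookkeeping: one must verify that the comparison (\ref{wras}) is genuinely \emph{scale-invariant} enough that the Sobolev constant does not degenerate as $\tir \to 0$ near the vertex $\Ga^+$, and that the pull-back coordinate frame $\{\p_{\omega^A}\}$ from (\ref{trscoord2}) gives charts on $S_{t,u}$ whose overlap maps are uniformly bi-Lipschitz with respect to $\cga$. Both follow from (\ref{wras}) together with the area comparison $v_t/\tir^2\approx 1$, but it requires a careful statement that the constants are universal. Everything else is a routine transfer of well-known sphere inequalities, so I would present part (i) in full detail and then note that part (ii) is proved by the same scaling argument applied to the classical interpolated Sobolev embedding.
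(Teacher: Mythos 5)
The paper does not actually prove Proposition \ref{basic1}: its ``proof'' is the citation of \cite{KRduke}, \cite{KREins2}, \cite{WangCMCSH} and \cite{Wangrough}, and the arguments there are exactly of the type you outline, namely rescale to $\cga=\tir^{-2}\ga$ using (\ref{wras}), invoke the classical Sobolev/Gagliardo--Nirenberg inequalities on a surface whose metric is uniformly comparable to the round metric, and restore the $\tir$-weights by homogeneity. For part (i) your bookkeeping is correct: $\tir$ is constant on each $S_{t,u}$, so $\ga$ and $\cga$ have the same Levi-Civita connection, the powers of $\tir$ come out exactly as displayed, and Kato's inequality reduces the tensor case to the scalar one. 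Your worry about uniformity of constants is already settled by the hypothesis: each $S_{t,u}$ carries a single global parametrization by ${\mathbb S}^2$ (the null-geodesic flow), and (\ref{wras}) asserts two-sided comparability with a universal constant, valid uniformly as $\tir\to 0$, so no atlas/overlap argument is needed.

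There is, however, a genuine soft spot in your sketch for part (ii). After reduction to the unit sphere the claim reads $\sup|f|\les X^{1-\theta}Y^{\theta}$ with $X$ the $W^{1,2}$-type factor, $Y$ the $W^{1,q}$-type factor and $\theta=\frac{2\delta q}{2q+\delta(q-2)}$ (your scaling checks $a=(q-2)c$ and $2b+qc=1$ are what make the $\tir$-powers consistent). The route you describe --- Morrey embedding $W^{1,q}\hookrightarrow C^{0,1-2/q}$ for $q>2$, then ``interpolating the $W^{1,q}$ norm between $W^{1,2}$ and $W^{1,q}$'' --- only produces the single scaling-forced exponent $\theta_0=\frac{q}{2(q-1)}$ (via the standard spike argument against the $L^2$ norm), and since $X\les Y$ on a unit-size surface one can only enlarge the weight on $Y$ from there. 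But for $\delta<\frac{2q}{3q-2}$ the stated $\theta$ is strictly smaller than $\theta_0$, so most of the asserted range of $\delta$ is strictly stronger than what this argument yields. To close the gap you need an extra ingredient, e.g.\ a Brezis--Gallouet/Brezis--Wainger type bound $\sup|f|\les X\,\bigl(\log(2+Y/X)\bigr)^{1/2}$ on the unit sphere (or the iteration argument used in \cite{KRduke, KREins2}), after which any positive power of $Y$ dominates the logarithm and the full family $\delta\in(0,1)$ follows, with constants depending on $\delta$ and $q$. With that supplement your plan is complete; as written, part (ii) is not yet a proof.
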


 We refer to \cite{KRduke}, \cite{KREins2}, \cite{WangCMCSH} and \cite{Wangrough} for the  above inequalities. We will also need a collection of  trace inequalities for future reference.
\begin{lemma}\label{trace2}
Under the assumptions (\ref{Pba}) and (\ref{wras})  there hold on $S_{t,u}$ for scalar functions $F$ the following trace inequalities
\begin{align*}
&\int_{S_{t,u}}|F|^2\les \big(\|F\|_{\dot{H}^1(\Sigma_t\cap \{u'\ge u\})}+\|F\|_{L^6(\Sigma_t\cap \{u'\ge u\})}\big)\|F\|_{L^2(\Sigma_t\cap \{u'\ge u\})},\\
&\|F\|_{L^4(S_{t,u})}+\|\tir^{-\f12} F\|_{L^2(S_{t,u})}\les \| F\|_{\dot {H}^1(\Sigma_t\cap \{u'\ge u\})}+\|F\|_{L^6(\Sigma_t\cap \{u'\ge u\})},
\end{align*}
where $-\fv_*\le u\le t$.
\end{lemma}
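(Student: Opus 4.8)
\textbf{Plan of proof for Lemma \ref{trace2}.} The two inequalities are standard co-area/trace estimates on the leaves $S_{t,u}$ of the radial foliation of $\Sigma_t$, and the plan is to reduce them to the Euclidean trace inequality for a hypersurface foliation, using that the geometry of the foliation is comparable to a round one. First I would work in the transport coordinates $(u,\omega)$ on $\Sigma_t\cap\{u'\ge u\}$, in which, by (\ref{metric_II}), $g$ has the split form $ds^2=\bb^2\,du'^2+\ga_{AB}\,d\omega^A\,d\omega^B$; by the first assumption in (\ref{Pba}) the lapse $\bb$ is a constant up to a fixed multiplicative factor, so $d\mu_g\approx du'\,d\mu_\ga$ and the geodesic distance across leaves is comparable to the $u'$-difference. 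This lets me treat the problem as a one-dimensional trace in the $u'$-variable with the surface measure $d\mu_\ga$ on $S_{t,u'}$, uniformly in $\omega$.

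The core computation is the fundamental theorem of calculus in $u'$: for a fixed direction $\omega$ and for $u'\ge u$,
\begin{equation*}
|F|^2(u,\omega)=-\int_u^{\sup u'}\p_{u'}\big(|F|^2\big)(u',\omega)\,du'
= -2\int_u^{\sup u'}F\,\bN(F)\,\bb\,du' \, \big/\, \bb ,
\end{equation*}
since $\bN=\bb^{-1}\p_{u'}$ along the transport coordinates. Multiplying by $v_{t,u}$ and integrating over ${\mathbb S}^2$, then using (\ref{wras}) to replace $v_{t,u}$ and $\tir^2$ interchangeably and to pass between $L^q(S_{t,u})$ and $L^q_\omega(S_{t,u})$ norms, one obtains
\begin{equation*}
\int_{S_{t,u}}|F|^2\les \int_{\Sigma_t\cap\{u'\ge u\}}\big(|F|\,|\bN F|+\tir^{-1}|F|^2\big)\,d\mu_g,
\end{equation*}
where the extra $\tir^{-1}|F|^2$ term accounts for the $u'$-dependence of the area element $v_{t,u'}$, controlled again via (\ref{wras}). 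The first term is handled by Cauchy–Schwarz in $d\mu_g$, giving $\|F\|_{\dot H^1}\|F\|_{L^2}$; the weighted term $\int \tir^{-1}|F|^2$ is bounded by Hölder (splitting $\tir^{-1}\in L^3_{\mathrm{loc}}$ since $\tir\ge$ a positive lower bound away from the vertex, or by a Hardy inequality near it) into $\|F\|_{L^6}\|F\|_{L^2}$ plus a harmless $\|F\|_{L^2}^2$ absorbed into the right-hand side. This yields the first inequality. The second inequality is proven the same way, except starting from $\p_{u'}(|F|^4)$ to get the $L^4(S_{t,u})$ bound, and from a direct application of the one-dimensional trace together with (\ref{wras}) for the $\|\tir^{-1/2}F\|_{L^2(S_{t,u})}$ bound; in both cases one invokes Proposition \ref{basic1} (the Sobolev inequalities on $S_{t,u}$) only if an intermediate $\sn F$ term needs to be traded, but in fact a cleaner route is to bound the ambient integral directly by $\|F\|_{\dot H^1(\Sigma_t)}+\|F\|_{L^6(\Sigma_t)}$ via the three-dimensional Gagliardo–Nirenberg–Sobolev inequality.

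The main obstacle is uniformity near the cone vertex $\Ga^+$, where $\tir=t-u\to 0$ and the leaves $S_{t,u}$ degenerate: there the weight $\tir^{-1}$ (or $\tir^{-1/2}$) blows up and one must check that the degeneration of $\ga$ is exactly of the expected $\tir^2$-rate so that (\ref{wras}) still applies and the $u'$-integrals converge. This is precisely what (\ref{wras}) and the first bound in (\ref{Pba}) are designed to supply, so the argument goes through, but the bookkeeping of the vertex contribution — showing that the singular weight is integrable against $d\mu_g\approx\tir^2\,du'\,d\omega$ — is the delicate point; everywhere away from a fixed neighbourhood of $\Ga^+$ the estimates are uniformly elliptic and routine. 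A secondary point is that $F$ is only assumed scalar, so no issue of parallel transport of a tensor between different leaves arises, which keeps the $u'$-differentiation step elementary.
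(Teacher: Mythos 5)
Your strategy — foliate $\Sigma_t\cap\{u'\ge u\}$ by the surfaces $S_{t,u'}$, integrate radially in $u'$ up to the vertex, and use Cauchy--Schwarz for the gradient term — is the standard one and is essentially what the paper relies on (the paper gives no proof of its own, only the citation to \cite[Section 7.2]{Wang10online}, where exactly this radial-integration argument is carried out). So the route is right, but as written there are three concrete soft spots.

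First, your displayed starting identity is applied to $|F|^2$ alone and then multiplied by the area element $v_{t,u}$ at the \emph{outer} radius. Taken literally this leaves the boundary value $|F|^2(t,\omega)$ at the vertex, which is a point value of $F$ and is not controlled by $\dot H^1\cap L^6\cap L^2$, and it also forces you to compare $v_{t,u}$ with $v_{t,u'}$, a ratio $\approx \tir(u)^2/\tir(u')^2$ that blows up as $u'\to t$. The fundamental theorem of calculus must be applied to the full surface integral $\int_{S_{t,u'}}|F|^2\,d\mu_\ga$, so that the vertex term is killed by $v_{t,u'}\to 0$; the price is exactly the term $\int\bb\,\tr\theta\,|F|^2$. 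Second, and relatedly, the control of that term is \emph{not} supplied by (\ref{wras}): (\ref{wras}) compares $\ga$ with $\tir^2\ga^{(0)}$ at fixed $u'$ but says nothing about the $u'$-derivative of the area element. What is needed is precisely the second assumption in (\ref{Pba}): write $\tr\theta=(\tr\theta-\frac{2}{\tir})+\frac{2}{\tir}$, bound the first piece by H\"older, $\|\tr\theta-\frac{2}{\tir}\|_{L^3}\|F\|_{L^6}\|F\|_{L^2}$, and note that for the second piece plain H\"older genuinely fails ($\tir^{-1}\notin L^3$ near the vertex, only logarithmically so); your Hardy fix does work, but it requires observing that $\tir$ is comparable to the distance to the vertex (this follows from $|\bb-1|\le\f12$, since the radial curve has length $\int\bb\,du'\approx\tir$), after which $\int\tir^{-1}|F|^2\le(\int\tir^{-2}|F|^2)^{1/2}\|F\|_{L^2}\les\|F\|_{\dot H^1}\|F\|_{L^2}$, and similarly $\int\tir^{-1}|F|^4\les\|F\|_{\dot H^1}\|F\|_{L^6}^3$ for the $L^4$ trace. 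Third, the bound $\|\tir^{-\f12}F\|_{L^2(S_{t,u})}\les\|F\|_{\dot H^1}+\|F\|_{L^6}$ is not a ``direct application'' of the unweighted trace: multiplying the first inequality by $\tir^{-1}$ is useless near the vertex. One has to integrate the weighted quantity $\tir'^{-1}\int_{S_{t,u'}}|F|^2\,d\mu_\ga$ in $u'$; the term produced by differentiating the weight $\tir'^{-1}$ comes with a favorable sign and can be dropped, and the remaining terms are again handled by Hardy and by the $L^3$ bound on $\tr\theta-\frac{2}{\tir}$. With these corrections the argument closes; without them, the step controlling the area-element variation and the weighted $L^2$ trace are genuine gaps.
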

The above result will be always used together  with Sobolev embedding on $\Sigma_t$.
\begin{proof}
This results can be obtained by  slightly adapting the original proof in \cite[Sectioon 7.2]{Wang10online}.
\end{proof}

We note that in $\widetilde{\D^+}$,  $0\le \tir\le \frac{9}{5}\tau_*$. Thus, back to the coordinate before rescaling, there holds $0\le \tir\les \la^{-8\ep_0}T\les T$. This fact will be constantly used in the rest of this section and Section \ref{c_flux}.

  \begin{lemma}[Dyadic trace inequality]
 Let $0<\a<\f12$.  Under the assumptions of (\ref{Pba}) and (\ref{wras}), there hold the following estimates for scalar functions $F$
 \begin{align}
 &\|\mu^\a[P_\mu, v] F\|_{l_\mu^2 L^2(S_{t,u})}\les \|\p v\|_{H^1_x}\|F\|_{H^\a(\Sigma_t)},\label{9.10.3.19}\\
 &\|\mu^\a P_\mu F\|_{l_\mu^2 L^2(S_{t,u})}\les \| F\|_{H^{\f12+\a}(\Sigma_t)}, \label{9.10.4.19}\\
&\|F\|_{L^2(S_{t,u})}\les \|F\|_{H^{\f12+}(\Sigma_t)},\label{trace1}
 \end{align}
 where $P_\mu$ is a Littlewood-Paley projector with the smooth symbol supported in a dyadic shell $\{C^{-1}<|\xi|<C, \xi \in {\mathbb R}^3\} $.
 \end{lemma}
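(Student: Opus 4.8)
The three inequalities are all of trace type: they bound norms on the $2$-surfaces $S_{t,u}$ by norms on the slices $\Sigma_t$, and the engine is the trace estimate of Lemma \ref{trace2} together with the Littlewood--Paley calculus on $\mathbb R^3$ (finite band property, Bernstein), using that the induced metric $g$ is uniformly comparable to the Euclidean one by (\ref{9.20.1.19}), so that $\dot H^1(\Sigma_t,g)$-norms equal Euclidean ones up to constants and the Euclidean $P_\mu$ commutes with the relevant derivatives. The plan is to establish first the single-dyadic estimate behind (\ref{9.10.4.19}): apply the first inequality of Lemma \ref{trace2} to $G=P_\mu F$ and bound $\|P_\mu F\|_{\dot H^1(\Sigma_t)}\lesssim \mu\|P_\mu F\|_{L^2(\Sigma_t)}$ (finite band) and $\|P_\mu F\|_{L^6(\Sigma_t)}\lesssim \mu\|P_\mu F\|_{L^2(\Sigma_t)}$ (Bernstein), obtaining $\|P_\mu F\|_{L^2(S_{t,u})}\lesssim \mu^{1/2}\|P_\mu F\|_{L^2(\Sigma_t)}$. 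Squaring, multiplying by $\mu^{2\alpha}$ and summing over dyadic $\mu$ gives $\sum_\mu \mu^{2\alpha}\|P_\mu F\|_{L^2(S_{t,u})}^2\lesssim \sum_\mu \mu^{1+2\alpha}\|P_\mu F\|_{L^2(\Sigma_t)}^2\approx \|F\|_{H^{1/2+\alpha}(\Sigma_t)}^2$, i.e.\ (\ref{9.10.4.19}); here $0<\alpha<1/2$ only enters in that $1/2+\alpha<1$, so the Sobolev norm on the right stays subcritical for the trace. Inequality (\ref{trace1}) then follows from (\ref{9.10.4.19}) applied with a small positive exponent $\epsilon$ in place of $\alpha$: $\|F\|_{L^2(S_{t,u})}\le \sum_\mu\|P_\mu F\|_{L^2(S_{t,u})}\le (\sum_\mu\mu^{-2\epsilon})^{1/2}(\sum_\mu\mu^{2\epsilon}\|P_\mu F\|_{L^2(S_{t,u})}^2)^{1/2}\lesssim \|F\|_{H^{1/2+\epsilon}(\Sigma_t)}$, adding the single low-frequency block, which is controlled directly by $\|F\|_{L^2(\Sigma_t)}$ and Lemma \ref{trace2}.

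For the commutator estimate (\ref{9.10.3.19}) the plan is to combine the same single-dyadic trace reduction with the ambient $L^2$ commutator bounds of Section \ref{app}, in particular the gain-of-one-derivative estimate (\ref{lem4eq}) in the form $\|\mu^{1/2+\alpha}[P_\mu,v]G\|_{l^2_\mu L^2_x}\lesssim \|\partial v\|_{H^1_x}\|G\|_{\dot H^\alpha_x}$. Concretely, I would decompose $[P_\mu,v]F=P_\mu(vF)-vP_\mu F$ by the Littlewood--Paley trichotomy into: a piece where $v$ sits at frequency $\sim\mu$ (not a true commutator, but small because $\|v_\mu\|_{L^2}\sim \mu^{-1}\|(\partial v)_\mu\|_{L^2}$ while $F$ is then at low frequency, estimated in $L^3(\Sigma_t)$ via Bernstein by $\mu^{1/2-\alpha}\|F\|_{H^\alpha}$); a piece where $\partial v$ is at low frequency and $F$ at frequency $\sim\mu$ (a genuine commutator gaining one derivative, with $\|P_{<\mu}\partial v\|_{L^\infty}\lesssim \mu^{1/2}\|\partial v\|_{H^1}$ by Bernstein); and a high--high piece (frequency-localized at $\mu$, summed in $\mu$ by a Schur test). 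In each case $\|\cdot\|_{L^2(S_{t,u})}\lesssim \mu^{1/2}\|\cdot\|_{L^2(\Sigma_t)}$ by the reduction above, the $L^2(\Sigma_t)$-norm is estimated using Bernstein, Sobolev and the commutator gain, and the remaining $\mu$-sum closes by Cauchy--Schwarz (or Schur) precisely because $\alpha<1/2$, which makes the $\mu^{1/2}$ Bernstein losses affordable after the factor $\mu^{-1}$ from the commutator and the bookkeeping $\|(\partial v)_\mu\|_{L^2}\le \mu^{-1}\|\partial^2 v\|_{L^2}$. When a pointwise bound $\|\partial v\|_{L^\infty}\lesssim 1$ is available, e.g.\ by Corollary \ref{eng_wave}, several of these estimates simplify.

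The main obstacle is the commutator estimate (\ref{9.10.3.19}). The delicate point is that $[P_\mu,v]F$ is not exactly frequency-localized in the shell of scale $\mu$ --- the term $vP_\mu F$ produces output contributions at all lower frequencies --- so one cannot apply the finite-band property to it directly; the resolution is exactly the trichotomy above, which isolates the genuinely frequency-$\mu$, gain-of-one-derivative part and pushes the remaining low-output-frequency contributions into terms where $v$ or $\partial v$ carries the high frequency, so that the cruder bounds suffice there. Balancing the $\mu^{1/2}$ trace loss against the commutator gain under the weak hypothesis $\|\partial v\|_{H^1_x}$ (rather than $\|\partial v\|_{L^\infty}$) is what forces $0<\alpha<1/2$ and is where the only real care is needed; the two non-commutator inequalities (\ref{9.10.4.19}) and (\ref{trace1}) are routine once Lemma \ref{trace2} is granted.
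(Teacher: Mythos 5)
Your treatment of (\ref{9.10.4.19}) and (\ref{trace1}) is exactly the paper's: Lemma \ref{trace2} applied to $P_\mu F$, with finite band and Bernstein giving the single-dyadic reduction $\|P_\mu F\|_{L^2(S_{t,u})}\les \mu^{1/2}\|P_\mu F\|_{L^2(\Sigma_t)}$, then summation. For the commutator estimate (\ref{9.10.3.19}), however, you take a genuinely different route. The paper does not re-localize the commutator at all: it applies the trace inequality of Lemma \ref{trace2} in its multiplicative form directly to the whole (non-frequency-localized) quantity $[P_\mu,v]F$, distributing the weight as $\|\mu^\a[P_\mu,v]F\|_{L^2(S_{t,u})}\les \|\mu^{\a-\f12}[P_\mu,v]F\|_{H^1(\Sigma_t)}^{1/2}\|\mu^{\a+\f12}[P_\mu,v]F\|_{L^2(\Sigma_t)}^{1/2}$, and then quotes \emph{both} halves of the appendix Lemma \ref{lem2} (the $\p$-weighted bound with $\mu^{-\f12+\a}$ and (\ref{lem4eq}) with $\mu^{\f12+\a}$), each controlled by $\|\p v\|_{L^6_x}\les\|\p v\|_{H^1_x}$ times Schur-summable dyadic sums, before taking $l^2_\mu$. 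Your route instead redoes the trichotomy by hand and applies the $\mu^{1/2}$ trace reduction piece by piece; this can be made to work, but it is essentially a re-proof of Lemma \ref{lem2} fused with the trace step, and it is more delicate at precisely the two points you only gesture at: (i) the $\mu^{1/2}$ reduction is legitimate only for pieces whose Fourier support lies in $\{|\xi|\les\mu\}$, so the contribution $v_{\gg\mu}\,P_\mu F$ (which your three-way accounting does not explicitly cover, and which is \emph{not} localized at $\mu$) must be handled separately, e.g.\ by putting $v_\la$ in $L^\infty$ with $\|v_\la\|_{L^\infty}\les\la^{-1/2}\|\p v\|_{H^1_x}$ and tracing only $P_\mu F$; and (ii) in the high–low piece you must keep the dyadic sum over the low frequencies of $F$ with the Schur kernel $(\la/\mu)^{\f12-\a}$ rather than collapsing it to $\mu^{\f12-\a}\|F\|_{H^\a}$, since the collapsed bound is only uniform in $\mu$ and does not square-sum. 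The paper's interpolation trick buys exactly the freedom from these bookkeeping issues, at the cost of needing the $H^1$-weighted commutator bound (the first inequality of Lemma \ref{lem2}), which your plan never invokes.
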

 \begin{proof}
 To prove (\ref{9.10.3.19}), we  apply Lemma \ref{trace2} and Lemma \ref{lem2} to obtain
\begin{align*}
\|\mu^\a[P_\mu, v] F\|_{L^2(S_{t,u})}&\les \|\mu^{\a-\f12}[P_\mu, v]F\|^\f12_{H^1(\Sigma_t)}\|\mu^{\a+\f12}[P_\mu, v]F\|^\f12_{L^2(\Sigma_t)}\\
&\les\|\p v\|_{L^6_x}(\sum_{\la \le \mu}(\frac{\la}{\mu})^{\frac{1}{2}-\a}+\sum_{\la>\mu}(\frac{\mu}{\la})^{\f12+\a})\|\la^\a F_\la \|_{L_x^2}.
\end{align*}
Since $\|\p v\|_{L^6_x}\les \|\p v\|_{H^1_x}\les 1$ due to (\ref{5.04.17.19}), taking $l_\mu^2$ norm gives (\ref{9.10.3.19}).

 By using Lemma \ref{trace2} and the finite band property, 
 \begin{align*}
\|P_\mu F\|^2_{L^2(S_{t,u})}&\les \|P_\mu F\|_{H^1(\Sigma_t)}\|P_\mu F\|_{L^2(\Sigma_t)}\les \|\mu^\f12 P_\mu F\|^2_{L^2(\Sigma_t)}.
 \end{align*}
Multiplying the above inequality by $\mu^{2\a}$, followed with taking $l_\mu^2$ norms on both sides,  gives (\ref{9.10.4.19}).

By applying $F=\sum\bar P_\la F$, with $\sum_\la \bar P_\la=Id$ in $L^2({\mathbb R}^3)$,  we apply the first inequality in  Lemma \ref{trace2}  to derive
\begin{align*}
\|F_{\le 1}\|_{L^2(S_{t,u})}&\les\|F_{\le 1}\|^\f12_{H^1(\Sigma_t)}\|F_{\le 1}\|^\f12_{L^2(\Sigma_t)}\les \|F\|_{L^2(\Sigma_t)}.
\end{align*}
And for $\|\sum_{\la >1} \bar P_\la F\|_{L^2(S_{t,u})}$, we apply (\ref{9.10.4.19}). (\ref{trace1}) follows as  a consequence.
 \end{proof}


\begin{lemma}
Let $s-2\ge \delta>1-\frac{2}{p}$. Under the assumptions (\ref{Pba}) and (\ref{wras}), there hold for scalar functions $F$,
\begin{align}
&\|\tir F\|_{L_t^2 L^p_\omega(C_u)}\les \|\mu^\delta \ti{P}_\mu F\|_{l_\mu^2 L^2(C_u)}+T^\f12\sup_{0\le t\le T}\|F\|_{L^2(\Sigma_t)},\label{9.17.1.19}\\
&\|\tir F\|_{L_u^2 L^p_\omega(u\ge u_0)}\les \|\La^\delta F\|_{L^2(\Sigma_t\cap \{u\ge u_0\})}+\|F\|_{L^2(\Sigma_t\cap\{u\ge u_0\})}, \label{10.2.5.19}\\
&\tir^\f12\| F\|_{L^{2p}_\omega(S_{t,u})}\les \|\La^\delta F\|_{H^1(\Sigma_t)}+\|F\|_{H^1(\Sigma_t)},\label{9.18.1.19}
\end{align}
where  $\ti P_\mu$ is a Littlewood-Paley projector in ${\mathbb R}^3$  under the original coordinates, which may have slightly different smooth symbol from either $\bar P_\mu$ or $P_\mu$.
\end{lemma}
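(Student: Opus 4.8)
The three inequalities \eqref{9.17.1.19}, \eqref{10.2.5.19}, \eqref{9.18.1.19} are Sobolev-type trace bounds on the null cones and spacelike slices, and all of them should follow by combining the dyadic trace inequalities \eqref{9.10.3.19}--\eqref{trace1} with the spherical Sobolev inequalities of Proposition \ref{basic1} and the comparability $v_t\approx\tir^2$ from \eqref{wras}. The plan is to dyadically decompose $F=\sum_\mu\tilde P_\mu F$, handle the low-frequency piece $F_{\le1}$ crudely by Bernstein together with the energy bound, and for each high-frequency piece $\tilde P_\mu F$ exploit the extra regularity gap $\delta>1-\frac2p$ to sum the dyadic pieces in $\ell^2_\mu$.

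\textbf{Step 1: \eqref{9.18.1.19}.} First I would prove the pointwise/$L^{2p}_\omega$ bound on a single sphere $S_{t,u}$. Rescale to $L^{2p}(S_{t,u})$ using $\|F\|_{L^{2p}(S_{t,u})}\approx\|\tir^{1/p}F\|_{L^{2p}_\omega(S_{t,u})}$, so it suffices to bound $\|F\|_{L^{2p}(S_{t,u})}$ by $\tir^{1/2-1/p}(\|\La^\delta F\|_{H^1(\Sigma_t)}+\|F\|_{H^1(\Sigma_t)})$. Apply the spherical Sobolev embedding \eqref{sob.12} with exponent $2p$, reducing matters to $\|\tir\sn F\|_{L^2(S_{t,u})}$ and $\|F\|_{L^2(S_{t,u})}$; then feed these into the trace inequality \eqref{9.10.4.19} (applied to $F$ and to $\sn F$, or rather to the ambient derivatives that dominate $\sn$) together with Lemma \ref{trace2}. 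The role of $\delta>1-\frac2p$ is exactly to absorb the power of $\tir$ produced by \eqref{sob.12}; since $0\le\tir\les T\le 1$ in $\widetilde{\D^+}$, a favorable power of $\tir$ is harmless and an unfavorable one is controlled by choosing $\delta$ large enough.

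\textbf{Step 2: \eqref{9.17.1.19} and \eqref{10.2.5.19}.} For \eqref{9.17.1.19} I would integrate the single-sphere estimate along $C_u$. Write $\|\tir F\|_{L^2_tL^p_\omega(C_u)}^2=\int\|\tir F\|_{L^p_\omega(S_{t,u})}^2\,dt$ and use the $L^p_\omega$ analogue of \eqref{sob.12} on each $S_{t,u}$ to bound $\|\tir F\|_{L^p_\omega(S_{t,u})}$ by a geometric-mean of $\|\tir\sn F\|_{L^2(S_{t,u})}$ and $\|F\|_{L^2(S_{t,u})}$, then apply the dyadic cone-trace inequality (the $C_u$ version of \eqref{9.10.4.19}, namely the hypothesis-side of the statement involving $\|\mu^\delta\tilde P_\mu F\|_{\ell^2_\mu L^2(C_u)}$) together with Lemma \ref{trace2} and a Cauchy--Schwarz in $t$; the $T^{1/2}\sup_t\|F\|_{L^2(\Sigma_t)}$ term absorbs the low-frequency piece after Bernstein. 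Inequality \eqref{10.2.5.19} is the same argument with the roles of $t$ and $u$ exchanged: foliate $\Sigma_t\cap\{u\ge u_0\}$ by the spheres $S_{t,u}$, use $du$ in place of $dt$ and the coarea formula with lapse $\bb\approx1$ from \eqref{Pba}, apply \eqref{sob.12} on each sphere and then \eqref{9.10.4.19}/Lemma \ref{trace2} on $\Sigma_t\cap\{u\ge u_0\}$, finishing with Cauchy--Schwarz in $u$ (here $u$ ranges over an interval of length $\les T$, so no extra weights appear).

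\textbf{Main obstacle.} The routine parts are the dyadic summations and the bookkeeping of $\tir$-weights; the genuinely delicate point is that \eqref{sob.12} and the trace inequalities hold only under the bootstrap assumptions \eqref{Pba} and \eqref{wras}, and the geometric quantities ($\chih$, $\sn\log\bb$, $\tr\chi-2/\tir$, the area element $v_t$) that control the error terms when commuting $\sn$ past Littlewood--Paley projections on $S_{t,u}$ are only as regular as \eqref{pba2} permits. So the hard part is to verify that, after dyadic localization, the commutators $[\tilde P_\mu,\text{(geometry)}]$ acting on $F$ along the cone produce only lower-order contributions that sit inside $T^{1/2}\sup_t\|F\|_{L^2(\Sigma_t)}$ or $\|F\|_{L^2}$; this is handled by the same mechanism as in \eqref{9.10.3.19} (commutator with $v$, controlled by $\|\p v\|_{H^1_x}\les1$), but one must check that the null-frame analogues of those commutator bounds follow from Proposition \ref{basic1}, Lemma \ref{trace2}, and the already-established $L^2_tL^\infty_x$-type control on the connection coefficients; modulo that verification, the three inequalities follow.
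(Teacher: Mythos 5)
Your handling of \eqref{9.17.1.19} and \eqref{10.2.5.19} is essentially the paper's argument: decompose $F$ into Littlewood--Paley pieces, apply \eqref{sob.12} (at exponent $p$) to each piece on $S_{t,u}$, use the finite band property to replace $\tir\sn \ti P_\mu F$ by $(\mu\tir)\ti P_\mu F\les \mu\,\ti P_\mu F$, treat the low frequencies crudely, and sum using $\delta>1-\frac2p$. Two corrections of emphasis: the hypothesis $\delta>1-\frac2p$ is what makes the dyadic sum converge by Cauchy--Schwarz (the powers of $\tir$ are simply discarded via $\tir\les 1$), it does not ``absorb'' $\tir$-weights; and no cone trace inequality, no Cauchy--Schwarz in $t$ or $u$, and no commutators $[\ti P_\mu,\hbox{geometry}]$ are needed --- integrating the squared per-sphere bound in $t$ along $C_u$ (resp.\ in $u$ with $\bb\approx 1$, by the coarea formula) directly reconstitutes $\|\ti P_\mu F\|_{L^2(C_u)}$ (resp.\ $\|\ti P_\mu F\|_{L^2(\Sigma_t\cap\{u\ge u_0\})}$), which is already the stated right-hand side; the geometry enters only through the constants in \eqref{sob.12} and Lemma \ref{trace2}, which are hypotheses of the lemma.

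Your Step 1 for \eqref{9.18.1.19}, however, does not close. If you run \eqref{sob.12} at exponent $q=2p$, the tangential gradient enters with power $1-\frac2q=1-\frac1p$, so after the finite band property each dyadic piece costs at least $\mu^{1-\frac1p}$ (or, trading half a derivative against the $\mu^{\f12}$ loss in the sphere trace $\|\ti P_\mu F\|_{L^2(S_{t,u})}\les\mu^{\f12}\|\ti P_\mu F\|_{L^2(\Sigma_t)}$, at best $\mu^{\f12(1-\frac1p)}>\mu^{\frac14}$). Since $1-\frac1p>\f12>s-2\ge\delta$, and a fortiori in the regime where the lemma is actually invoked ($\delta\le s'-2=\delta_0$, cf.\ \eqref{10.26.1.19}, \eqref{9.13.3.19}), the dyadic sum $\sum_\mu \mu^{\theta-\delta}$ with $\theta\ge\f12(1-\frac1p)$ diverges; moreover the most direct distribution of the $\tir$-weights yields only a $\tir$-weighted (hence strictly weaker) $L^{2p}_\omega$ bound rather than the $\tir^{\f12}$-weighted one. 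The missing idea is to interpolate $\|\cdot\|_{L^{2p}_\omega}\le\|\cdot\|_{L^4_\omega}^{2/p}\|\cdot\|_{L^\infty_\omega}^{1-2/p}$ and use \eqref{sob_9.18} only at the $L^\infty_\omega$ endpoint, controlling the $L^4(S_{t,u})$ endpoint weight-free and derivative-free by the trace inequality in Lemma \ref{trace2}; then the one derivative (the factor $\mu$) is paid only with exponent $1-\frac2p<\delta$, giving per piece $\tir^{\f12}\|\ti P_\mu^2 F\|_{L^{2p}_\omega}\les(\mu^{1-2/p}+1)\|\ti P_\mu F\|_{H^1(\Sigma_t)}$, which sums to \eqref{9.18.1.19}. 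Without this change of interpolation endpoints your Step 1 proves neither the weight nor a summable frequency count.
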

\begin{proof}
For the Littlewood-Paley projectors $\sum_{\la}\bar P_\la =Id$, in view of the reproducing property $\bar P_\la=P^2_\la$,
we can decompose $F=\sum_{0<\mu\le 1}P^2_\mu F+\sum_{\mu>1}P^2_\mu F$.

If $\mu>1$, by using (\ref{sob.12})
\begin{align*}
\|\tir^{1-2/p}P^2_\mu F\|_{L^p(S_{t,u})}&\les\|\tir\sn P^2_\mu F\|_{L^2(S_{t,u})}^{1-2/p}\|P_\mu^2 F\|_{L^2(S_{t,u})}^{2/p}+\|P_\mu^2 F\|_{L^2(S_{t,u})}\\
&\les \mu^{1-\frac{2}{p}}\|\tir\ti P_\mu P_\mu F\|_{L^2(S_{t,u})}^{1-\frac{2}{p}}\|P_\mu^2 F\|_{L^2(S_{t,u})}^\frac{2}{p}+\|P_\mu^2 F\|_{L^2(S_{t,u})}\\
&\les ((\mu \tir)^{1-\frac{2}{p}}+1)\|\ti P_\mu F\|_{L^2(S_{t,u})}\les (\mu ^{1-\frac{2}{p}}+1)\|\ti P_\mu F\|_{L^2(S_{t,u})},
\end{align*}
where we used $|\sn f|\les |\p f|$. In the last line above,  we have regarded both $P_\mu^2 F, \ti P_\mu P_\mu F$ as $\ti P_\mu F$, which are the $3$-D Littlewood-Paley projection associated to some smooth symbols. For the lower frequency term, applying  Lemma \ref{trace2} leads to
\begin{align*}
\|\tir^{1-\frac{2}{p}}P^2_{\le 1}F\|_{L^p(S_{t,u})}\les \|P^2_{\le 1} F\|_{H^1(\Sigma_t)}\les \|F\|_{L^2(\Sigma_t)}.
\end{align*}
Therefore we have obtained for $\delta>1-\frac{2}{p}$
\begin{equation*}
\|\tir^{1-\frac{2}{p}}F\|_{L^p(S_{t,u})}\les \|\mu^{\delta} \ti{P}_\mu F\|_{l_\mu^2 L^2(S_{t,u})}+\|F\|_{L^2(\Sigma_t)}.
\end{equation*}
 Integrating the inequality along $C_u$ with $L_t^2$ gives (\ref{9.17.1.19}). Integrating in $u$ from $t$ to $u_0$ gives (\ref{10.2.5.19}). 

To prove (\ref{9.18.1.19}), we first derive for $\mu>1$
\begin{align*}
\|P_\mu^2 F\|_{L_\omega^{2p}}\les \|P_\mu^2F\|_{L_\omega^4}^\frac{2}{p}\|P_\mu^2 F\|_{L_\omega^\infty}^{1-\frac{2}{p}}.
\end{align*}
Thus by using (\ref{sob_9.18})
\begin{align*}
 \tir^\f12\|P_\mu^2 F\|_{L_x^\infty}&\les \tir\| \sn P_\mu^2 F\|_{L^4(S_{t,u})}+\|P^2_\mu F\|_{L^4(S_{t,u})}\\
&\les \tir\mu\|\ti P_\mu P_\mu F\|_{L^4(S_{t,u})}+ \|P^2_\mu F\|_{L^4(S_{t,u})}.
\end{align*}
We then apply the $L^4$ estimate in Lemma \ref{trace2} to derive
\begin{align}\label{9.28.1.19}
\begin{split}
\tir^\f12\|P_\mu^2 F\|_{L_\omega^{2p}}&\les \|P_\mu^2 F\|_{H^1_x}^\frac{2}{p} (\tir\mu\|\ti P_\mu P_\mu F\|_{L^4(S_{t,u})}+ \|P^2_\mu F\|_{L^4(S_{t,u})})^{1-\frac{2}{p}}\\
&\les \|P_\mu^2 F\|_{H^1_x}^\frac{2}{p}(\mu\|\ti P_\mu F\|_{H^1_x}+\|P_\mu^2 F\|_{H^1_x})^{1-\frac{2}{p}}\\
&\les \|\ti P_\mu F\|_{H^1_x}(\mu^{1-\frac{2}{p}}+1).
\end{split}
\end{align}
For $F_{\le 1}=\sum_{0<\mu\le 1}P_\mu^2 F$, by the same procedure, we can obtain
\begin{equation*}
\tir^\f12 \|F_{\le 1}\|_{L_\omega^{2p}}\les \|F\|_{H^1_x}
\end{equation*}
We sum the estimate (\ref{9.28.1.19}) for $\mu>1$, then combine the result with the above estimate to obtain (\ref{9.18.1.19}) with $\delta>1-\frac{2}{p}$.
\end{proof}
  
\section{Control of flux}\label{c_flux}
In order to understand the analytic property of the acoustic null cones,  we will  control the energy flux for derivatives of $v$, $\varrho$, and  for $\curl \C$ along null cones.
\subsection{Flux for $\p v$ and $\bp \varrho$}

In order to control the flux of $(v, \varrho)$ along null cones $C_u$, we  apply the divergence theorem to $\sP_\mu$ in (\ref{9.13.1.19}) and (\ref{wave2}) in the spacetime region $\widetilde{\D^+}\cap \{ u'\ge u\}\cap\{ t_\tmin\le t'\le t\}$, where $t_\tmin=\max\{u,0\}$. This leads to
\begin{equation}\label{div1}
\begin{split}
&\int_{C_u\cap \{t_{\tmin}\le t'\le t\}} L^\mu  \sP_\mu d\mu_\ga dt'\\
&=\int_{\Sigma_t\cap\{ u'\ge u\}}\sP_\mu \bT^\mu
-\int_{\Sigma_{t_\tmin}\cap\{ u'\ge u\}} \sP_\mu \bT^\mu+\int_{\widetilde{\D^+}\cap \{ u'\ge u\}\cap\{ t_{\tmin}\le t'\le t\}}\bd^\mu \sP_\mu,
\end{split}
\end{equation}
where we hide the volume elements on $C_u$, which is $d\mu_\ga dt'$ and on $\Sigma_t$ which is $d\mu_g$.  The latter is always comparable to $d\mu_e$. In particular if $t_\tmin=u$, then $\Sigma_{t_\tmin}\cap\{ u'\ge u\}$ is only the point $\Ga^+(u)$. In this situation, the corresponding integral  vanishes.

 We compute
\begin{align*}
L^\mu  \sP_\mu&=-F_U L U+Q_{\mu\nu}\bT^\nu L^\mu+\f12 F_U^2 L^\mu\bd_\mu t\\
&=- F_U L U+\f12 \big((LU)^2+(\sn U)^2\big)+\f12 F_U^2 L(t)\\
&=\f12 \big((LU-F_U)^2+|\sn U|^2\big),
\end{align*}
where we used $ Q(L, \bT)[f]=\f12\big((L f)^2+|\sn f|^2\big)$.
Substituting the above identities to   (\ref{div1}) implies the following result.

\begin{lemma}[Fundamental estimate for flux]\label{fflux}
Let
\begin{equation*}
\sF[U](C_u)=\int_{C_u\cap \{t_\tmin\le t'\le t\}} \big(|L U|^2+|\sn U|^2\big)
\end{equation*}
and $C_u$ be a short-hand notation for $C_u \cap \{t_\tmin\le t'\le t\}$.
There holds on $\widetilde{\D^+}$ for $(U,V, F_U, F_V)$ satisfying (\ref{lu3}),
\begin{equation}\label{du1}
\sF[U](C_u)\les \int_{C_u}|F_U|^2+\E[U](t)+\E[U](t_{\tmin})+\left|\int_{\widetilde{\D^+}\cap \{ u'\ge u\}\cap\{ t_{\tmin}\le t'\le t\}} \bd^\mu \sP_\mu\right|,
\end{equation}
where the integrand of the last term can be found in   (\ref{dpu}), and the term $\E[U](t_{\tmin})$ vanishes if $u>0$.
\end{lemma}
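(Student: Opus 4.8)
\textbf{Proof plan for Lemma \ref{fflux}.} The plan is to apply the divergence theorem in the solid region $\widetilde{\D^+}\cap\{u'\ge u\}\cap\{t_\tmin\le t'\le t\}$ to the modified current $\sP_\mu$ defined in (\ref{9.13.1.19}), exactly as recorded in (\ref{div1}). The boundary of this region consists of three pieces: the portion of the null cone $C_u$ between times $t_\tmin$ and $t$, the top lid $\Sigma_t\cap\{u'\ge u\}$, and the bottom lid $\Sigma_{t_\tmin}\cap\{u'\ge u\}$; there is no lateral timelike boundary since the region is foliated by the cones $C_{u'}$ with $u'\ge u$, all of which terminate at the time axis or at $t=\tau_*$. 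When $u>0$ the bottom lid degenerates to the single vertex point $\Ga^+(u)$ and contributes nothing, which accounts for the stated vanishing of $\E[U](t_\tmin)$ in that case.

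The first step is to identify the three boundary integrands. On the spacelike lids the relevant flux is $\bT^\mu\sP_\mu$, which by the computation already carried out just before (\ref{9.28.5.19}) equals $\f12\big(|V|^2+c^2\delta^{ij}\p_i U\p_j U\big)$, i.e. precisely the integrand of the energy $\E[U]$ in (\ref{9.28.4.19}); hence these two terms produce $\E[U](t)$ and $\E[U](t_\tmin)$ after integration over the truncated slices $\Sigma_{t'}\cap\{u'\ge u\}$, bounded trivially by the full energies. The second step is the null-flux computation displayed in the statement: using $L=\bb L'$, the relations $Q(L,\bT)[f]=\f12\big((Lf)^2+|\sn f|^2\big)$ and $L(t)=\bT(t)=1$ together with the definition of $\sP_\mu$, one finds $L^\mu\sP_\mu=\f12\big((LU-F_U)^2+|\sn U|^2\big)$. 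Since $V=\bT U-F_U$ and $LU-F_U=\bT U+\bN U-F_U=V+\bN U$, one has $|LU-F_U|^2\les |V|^2+|\bN U|^2\les |LU|^2+|\sn U|^2+|F_U|^2$ after re-expanding, so the null boundary term dominates $\sF[U](C_u)$ up to an additive $\int_{C_u}|F_U|^2$; conversely the left side of (\ref{div1}) is exactly $\f12\int_{C_u}\big((LU-F_U)^2+|\sn U|^2\big)$, giving the lower bound $\gtrsim \sF[U](C_u)-\int_{C_u}|F_U|^2$. Rearranging (\ref{div1}) to isolate the null-flux term and moving the bulk term $\int \bd^\mu\sP_\mu$ (whose integrand is (\ref{dpu})) to the right-hand side with an absolute value then yields (\ref{du1}).

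The only genuinely delicate point is geometric rather than computational: one must be sure that $\widetilde{\D^+}\cap\{u'\ge u\}\cap\{t_\tmin\le t'\le t\}$ is indeed a region to which the divergence theorem applies with exactly those three boundary components, and in particular that the null cone $C_u$ is a Lipschitz hypersurface with the correct induced volume element $d\mu_\ga\,dt'$ along the null generators. This is supplied by the construction in Section \ref{setupcone}: the global diffeomorphism property of the null exponential map on $\widetilde{\D^+}$ guarantees $C_u$ is a smooth ruled hypersurface, and the transport-coordinate identity (\ref{trscoord2}) gives the evolution of $\ga$ along $L=\p_t$, so the flux integral over $C_u$ is the stated one. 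I expect this step — verifying the applicability of the divergence theorem on the truncated cone region and correctly handling the degenerate case $t_\tmin=u$ where the bottom lid collapses to the vertex — to be the main obstacle, though it is essentially bookkeeping given the foliation already set up. Everything else is the short algebraic identity for $L^\mu\sP_\mu$ together with the already-established identity $\bT^\mu\sP_\mu$-integral $=\E[U]$.
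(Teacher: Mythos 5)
Your proposal is correct and follows essentially the same route as the paper: apply the divergence theorem to the modified current $\sP_\mu$ over $\widetilde{\D^+}\cap\{u'\ge u\}\cap\{t_\tmin\le t'\le t\}$ to get (\ref{div1}), compute $L^\mu\sP_\mu=\f12\big((LU-F_U)^2+|\sn U|^2\big)$, identify the spacelike boundary fluxes with the energy via (\ref{9.28.5.19}), and rearrange. The only blemish is the superfluous intermediate chain $|LU-F_U|^2\les |V|^2+|\bN U|^2\les |LU|^2+|\sn U|^2+|F_U|^2$ (the second inequality is false in general, since $V$ carries the transversal derivative, which is not controlled by $LU$, $\sn U$, $F_U$), but it is never needed: the direction you actually use, $\sF[U](C_u)\les \int_{C_u}L^\mu\sP_\mu+\int_{C_u}|F_U|^2$, follows from the elementary pointwise bound $(LU)^2\le 2(LU-F_U)^2+2F_U^2$, exactly as in the paper.
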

In Section \ref{causal_reg}, we need the flux control for the metric components $\bg$. Since $v=v_++\eta$, we will apply Lemma \ref{fflux} to wave functions $(v_+, \varrho)$, and  use the trace inequalities and   elliptic estimates to control derivatives of $\eta$.
\begin{proposition}[$H^2$ flux for $(v, \varrho)$]\label{7.24.4.19}
Under the assumptions (\ref{Pba}) and (\ref{wras}) on $\widetilde{\D^+}$,  for the density $\varrho$ and the component of velocity $v^i$, there holds
\begin{equation*}
\sF[\bp \varrho](C_u)+\sF[\bp v](C_u)\les 1.
\end{equation*}
\end{proposition}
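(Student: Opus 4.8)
\textbf{Proof proposal for Proposition \ref{7.24.4.19}.}

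The plan is to decompose $v = v_+ + \eta$ and treat the wave part $v_+$ and $\varrho$ via the fundamental flux estimate of Lemma \ref{fflux}, while bounding the flux contribution of $\eta$ directly by elliptic and trace estimates. For the wave parts, I would apply Lemma \ref{fflux} to the first-order systems $(U,V,F_U,F_V)$ given in \eqref{5.03.1.19} and \eqref{5.03.2.19}, so that $U$ ranges over the components of $v_+$ and over $\varrho$, and to the once-differentiated systems $(U\rp{1},V\rp{1})$ with error terms \eqref{5.02.3.19}. The right-hand side of \eqref{du1} then involves three ingredients: the energies $\E[U]$ and $\E\rp{1}[U]$ on $\Sigma_t$ and $\Sigma_{t_\tmin}$, which are bounded by Corollary \ref{5.04.12.19}, Proposition \ref{leng} and Corollary \ref{eng_wave}; the flux of the source term $F_U$ along $C_u$, i.e. $\int_{C_u}|F_U|^2$ and $\int_{C_u}|\p F_U|^2$; and the spacetime integral of $\bd^\mu \sP_\mu$ whose integrand is displayed in \eqref{dpu}. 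The last of these is controlled term by term: $\|F_V\|_{L^2_x}$, $\|\p F_V\|_{L^2_x}$ from Proposition \ref{leng}, the second-fundamental-form factor $|k|\les|\p v|$ from \eqref{9.20.2.19}, all integrated in $t'$ against $\E^{1/2}$ or $\E$, which after Gronwall against $\|\p v\|_{L^1_t L^\infty_x}\les 1$ (from \eqref{BA1}) produces only a universal constant.

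The remaining points are the flux of $F_U$ and the passage from $v_+$ to $v$. Since $F_U = -\bT\eta$ for the velocity and $F_U = 0$ for $\varrho$, the flux integrals $\int_{C_u}(|\bT\eta|^2 + |\p\bT\eta|^2)$ must be bounded; here I would invoke the trace inequality \eqref{trace1}, $\|F\|_{L^2(S_{t,u})}\les \|F\|_{H^{1/2+}(\Sigma_t)}$, integrated in $t'$ over the cone, together with the $\eta$-estimates $\|\bT\eta\|_{H^2_x}\les 1$ from \eqref{4.12.5.19} (which already contains the needed $H^{3/2+}$ control). For the conversion back to $v$, on $C_u$ one writes $L v = L v_+ + L\eta$ and $\sn v = \sn v_+ + \sn \eta$, so $\sF[\bp v](C_u)\les \sF[\bp v_+](C_u) + \int_{C_u}(|\bp\p\eta|^2 + |\bp^2\eta|^2)$; the $\eta$-cone-integrals are again handled by \eqref{trace1} and the bounds $\|\eta\|_{H^{2+\a}_x}\les 1$, $\|\bT\eta\|_{H^2_x}\les 1$ of \eqref{4.12.2.19}, \eqref{4.12.5.19}. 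One subtlety is that $\sF[\bp\varrho]$ and $\sF[\bp v]$ as stated involve $\bT$-derivatives along with spatial ones; using \eqref{4.23.1.19} one trades $\bT v = -c^2\p\varrho$ and $\bT\varrho = -\div v$, so the $\bT$-flux reduces to the spatial flux already estimated (modulo lower-order products $C(\varrho)\bp\varrho\,\bp\varrho$ controlled pointwise by \eqref{5.04.17.19} and Sobolev embedding on the cone via \eqref{trace1}).

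The main obstacle I anticipate is not any single estimate but the careful bookkeeping of which energy/flux level each error term lives at: applying Lemma \ref{fflux} at the level of $(U\rp{1},V\rp{1})$ produces, through \eqref{5.02.3.19}, terms like $\p v^m\p_m V$ and $\p_i(c^2)\Delta_e U$ whose $L^2(C_u)$ norms are not directly available, so one must first integrate in the ambient $\Sigma_{t'}$ and only then restrict to $C_u$, or else use that these terms appear under a spacetime integral (the $\bd^\mu\sP_\mu$ contribution) rather than as genuine cone fluxes. Concretely, I would organize the proof so that all genuinely cone-restricted quantities are the already-controlled $F_U = -\bT\eta$ and its first derivative, while everything coming from $F_V$, $F_{V\rp{1}}$, the commutator terms and the $k$-terms stays inside the $\widetilde{\D^+}$-spacetime integral where $L^2_x$ bounds from Section \ref{eng_sec} and Gronwall suffice. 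Once this split is in place the proof is a routine assembly of Corollary \ref{5.04.12.19}, Proposition \ref{leng}, Corollary \ref{eng_wave}, the $\eta$-estimates \eqref{4.12.2.19}, \eqref{4.12.5.19}, \eqref{5.04.1.19}, the trace inequality \eqref{trace1}, and \eqref{BA1}, finished by Gronwall's inequality in $t'$.
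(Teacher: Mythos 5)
Your proposal is correct and follows essentially the same route as the paper: apply the modified-current flux estimate (Lemma \ref{fflux}) to the differentiated systems for $(v_+,\varrho)$, bound the cone-restricted source terms coming from $F_U=-\bT\eta$ by trace inequalities and the elliptic $\eta$-estimates, control the $\bd^\mu\sP_\mu$ spacetime integral with the $L^2_x$ error bounds and Gronwall, then recover $v$ by adding the $\eta$ cone contributions and trade $\bT$-derivatives through (\ref{4.23.1.19}). The only bookkeeping refinement in the paper is that $F_{U\rp{1}}=\p F_U-\p v\,\p U$ itself contains the quadratic piece $\p v\,\p U$ as a genuine cone flux, which is handled on $S_{t,u}$ by the $L^4$ trace estimate of Lemma \ref{trace2} — exactly the kind of product-on-the-cone control you already invoke for the $\bT$-trading step.
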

\begin{proof}
We will use the equation (\ref{lu3_1}) with (\ref{5.03.1.19}), (\ref{5.03.2.19}), and recall $(U_i\rp{1}, V_i\rp{1})=(\p_i U, \p_i V)$. Since $(U_i\rp{1}, V_i\rp{1})$ involves merely the   spatial derivatives of $(U,V)$, we will obtain the flux of  time derivatives  by using (\ref{4.23.1.19}).  Since   $U=v_+$ or $\varrho$ in   (\ref{lu3_1}), to recover the full control on $v$, we will employ the trace inequality to control  $\|L \p \eta\|_{L^2(C_u)}$ and $\|\sn \p \eta\|_{L^2(C_u)}$. Note  that $\|F_U\|_{L^2(C_u)}$ appears on the right hand side of (\ref{du1}). Since it also contains the terms of $\eta$, we will treat such term by virtue of trace inequalities. To this end, we first show
\begin{equation}\label{7.24.5.19_1}
\|L (\p\eta)\|_{L^2(S_{t,u})}+\|\sn (\p \eta) \|_{L^2(S_{t,u})}+\|F_{U\rp{1}}\|_{L^2(S_{t,u})}\les 1,
\end{equation}
which immediately implies
\begin{equation}\label{7.24.5.19}
\|L (\p\eta)\|_{L^2(C_u)}+\|\sn (\p \eta) \|_{L^2(C_u)}+\|F_{U\rp{1}}\|_{L^2(C_u)}\les T^\f12.
\end{equation}

 $L=\bT+\bN$ in (\ref{9.11.4.19}) will be  frequently used in the proof.
To see the first estimate in (\ref{7.24.5.19_1}), we derive  by using (\ref{trace1}) that
\begin{align*}
\|L \p \eta\|_{L^2(S_{t,u})}&\les \|\bN \p \eta\|_{L^2(S_{t,u})}+\|\bT \p \eta\|_{L^2(S_{t,u})}\\
&\les \|\p^2 \eta\|_{H^{\f12+}(\Sigma_t)}+\|\bT \p \eta\|_{H^{\f12+}(\Sigma_t)}.
\end{align*}
For the second term, by (\ref{cmu1}), $\bT\p \eta=\p \bT \eta-\p v^m \p_m \eta$.
By using (\ref{4.12.5.19}), Sobolev inequality, (\ref{5.03.3.19}) and (\ref{5.04.17.19}), we derive
\begin{align*}
\|\bT \p \eta\|_{H^{\f12+}_x}&\les \|\p \bT \eta\|_{H^1_x}+\|\p v\c \p \eta\|_{H^1_x}\\
&\les 1+\|\p^2 v\p \eta\|_{L^2_x}+\|\p v\p^2 \eta\|_{L^2_x}+\|\p v\c \p\eta\|_{L_x^2}\\
 &\les 1+\|\p v\|_{H^1_x}\|\p \eta\|_{L^\infty_x}+\|\p^2 \eta\|_{L^3_x}\|\p v\|_{L^6_x}\\
 &\les\|\p v\|_{H^1_x}+1 \les 1.
\end{align*}
For the first term,  by using (\ref{4.12.2.19}) and (\ref{4.25.1.19}),
 \begin{equation}\label{7.25.5.19}
 \|\p^2 \eta\|_{H^{\f12+}(\Sigma_t)}\les \|\curl \Omega\|_{H^{\f12+}_x}\les\|\curl \Omega\|_{H^1_x} \les 1.
 \end{equation}
Hence,
\begin{equation}\label{7.24.6.19}
\|L (\p \eta)\|_{L^2(S_{t,u})}\les 1.
\end{equation}
In view of (\ref{trace1}), (\ref{7.25.5.19}) also implies
\begin{equation*}
\|\sn (\p \eta)\|_{L^2(S_{t,u})}\les \|\p^2 \eta\|_{L^2(S_{t,u})}\les \|\p^2 \eta\|_{H^{\f12+}(\Sigma_t)}\les 1.
\end{equation*}
 Thus the first two estimates in (\ref{7.24.5.19_1}) are proven.

For the last estimate in (\ref{7.24.5.19}), we recall from (\ref{5.02.3.19}),
\begin{align*}
\|F_{U\rp{1}}\|_{L^2(S_{t,u})}&\les \|\p_i F_U\|_{L^2(S_{t,u})}+\|\p v \c \p U \|_{L^2(S_{t,u})}\\
&\les \|\p_i \bT \eta\|_{L^2(S_{t,u})}+\|\p v\|_{L^4(S_{t,u})}\|\p U\|_{L^4(S_{t,u})}.
\end{align*}
By using (\ref{trace1}) and Lemma \ref{trace2}, the energy estimates (\ref{5.04.16.19}), (\ref{5.04.17.19}),  and (\ref{4.12.5.19}), we have
\begin{equation*}
\|F_{U\rp{1}}\|_{L^2(S_{t,u})}\les \|\p \bT \eta\|_{H^{\f12+}_x}+\|\p v\|_{H^1_x}\|\p  U\|_{H^1_x}\les 1,
\end{equation*}
as desired in (\ref{7.24.5.19_1}). Thus the proof of (\ref{7.24.5.19_1}) is completed.

Next we apply (\ref{du1}) to $U\rp{1}, V\rp{1}, F_{U\rp{1}}, F_{V\rp{1}}$.  Similar to  (\ref{9.28.6.19})
\begin{align*}
\|\bd^\a \sP_\a\|_{L^1(\widetilde{\D^+})}&\les \big(\|F_{V\rp{1}}\|_{L_t^1 L_x^2}+\|\p F_{U\rp{1}}\|_{L_t^1 L_x^2}\big)\sup_{t'\le t}\E\rp{1}_U(t')^\f12+\|k\|_{L_t^1 L_x^\infty}\sup_{t'\le t} \E\rp{1}_U(t').
\end{align*}
Recall from (\ref{5.02.3.19}) and the calculation in Corollary \ref{eng_1}
\begin{align*}
\|\p F_{U\rp{1}}\|_{L^2_x}&\les \|\p^2 v\|_{L_x^2}\|\p U\|_{L_x^\infty}+\|\p v\|_{L_x^\infty} \|\p^2 U\|_{L_x^2}+\|\p^2 F_U\|_{L_x^2}\\
&\les \|\p U,\p v\|_{L_x^\infty}+\|\p^2 F_U\|_{L_x^2},\\
\|F_{V\rp{1}}\|_{L_x^2}&\les \|\p \varrho,\p v \|_{L_x^\infty}\E\rp{1}_U(t)^\f12+\|V, \p U\|_{L_x^\infty}(\|\p v\|_{H^1_x}+\|\p \varrho\|_{H^1_x})+\|\p F_V\|_{L_x^2}\\
&\les \|\p \varrho, \p v, V, \p U\|_{L_x^\infty}+\|\p F_V\|_{L_x^2},
\end{align*}
where we also used (\ref{5.04.16.19}) and (\ref{5.04.17.19}). Substituting (\ref{9.28.3.19}) into the above inequalities yields
\begin{equation*}
\|\p F_{U\rp{1}}\|_{L_x^2}+\|F_{V\rp{1}}\|_{L_x^2}\les \|\bp v, \bp \varrho, \p v_+\|_{L_x^\infty}+1
\end{equation*}
for $U=v_+$ and $U=\varrho$.

By (\ref{9.20.2.19}), $\|k\|_{L_t^1 L_x^\infty}\les T^\f12 $ by (\ref{BA1}). By using the boundedness of energy in (\ref{5.04.16.19}), we  summarize the above calculations and derive with the help of (\ref{BA1}) that
\begin{equation}\label{7.25.1.19}
\|\bd^\a \sP_\a\|_{L^1(\widetilde{\D^+})}\les \|\bp v, \bp \varrho, \p v_+\|_{L_t^1 L_x^\infty(\widetilde{\D^+})}+T\les T^\f12.
\end{equation}

Substituting (\ref{7.25.1.19}), the last estimate in (\ref{7.24.5.19}) and the boundedness of energy (\ref{5.04.16.19})  to (\ref{du1}) yields
\begin{equation*}
\|L \p U, \sn \p U\|_{L^2(C_u)}\les 1, \mbox{ for } U=v_+, \varrho.
\end{equation*}
Using the first two estimates in (\ref{7.24.5.19}), $v=v_++\eta$, and the first equation in (\ref{4.23.1.19}), we can conclude
\begin{equation}\label{7.25.6.19}
\|L \p (v, \varrho), \sn \p (v, \varrho), L \bT \varrho, \sn \bT \varrho\|_{L^2(C_u)}\les 1.
\end{equation}
We further apply the second equation in  (\ref{4.23.1.19})
\begin{equation}\label{10.2.2.19}
|L \bT v|+|\sn \bT v|\les |L (c^2 \p \varrho)|+|\sn (c^2 \p \varrho)|\les |L\p \varrho|+|\sn\p \varrho|+|\bp \varrho|^2.
\end{equation}
By using Lemma \ref{trace2} and (\ref{5.04.17.19}), we can bound
\begin{equation*}
\||\bp\varrho|^2\|_{L^2(C_u)}\les\sup_{t'\le t}\|\bp \varrho\|_{L^4(S_{t',u})}^2\c T^\f12 \les \|\bp \varrho\|_{L_t^\infty H^1_x}^2 T^\f12\les T^\f12.
\end{equation*}
Hence by using (\ref{7.25.6.19}) we have
\begin{equation*}
\|L \bT v, \sn \bT v\|_{L^2(C_u)}\les 1.
\end{equation*}
Thus the proof of Proposition \ref{7.24.4.19} is complete.
 \end{proof}

\begin{proposition}[$H^{2+\ep}$ flux for $(v, \varrho)$]\label{9.10.2.19}
Let $0<\ep\le s-2$. Under the assumption (\ref{Pba}) and (\ref{wras}) on $\widetilde{\D^+}$, there holds
\begin{equation*}
\|\mu^\ep \sF^\f12[P_\mu \p v](C_u)\|_{l_\mu^2}+ \|\mu^\ep\sF^\f12[P_\mu \bp \varrho](C_u)\|_{l_\mu^2}+\|\mu^\ep \sF^\f12[P_\mu \p v_+](C_u)\|_{l_\mu^2} \les 1.
\end{equation*}
\end{proposition}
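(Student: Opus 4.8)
\textbf{Proof plan for Proposition \ref{9.10.2.19}.}
The strategy is to run the same argument as in Proposition \ref{7.24.4.19}, but now applied to the dyadic pieces $(U\rp{1}_\mu, V\rp{1}_\mu) = (P_\mu\p U, P_\mu\p V)$ for $U = v_+$ or $\varrho$, and then to summate in $\mu$ against the weight $\mu^{2\ep}$. First I would invoke Lemma \ref{fflux} with $(U,V,F_U,F_V)$ replaced by $(U\rp{1}_\mu, V\rp{1}_\mu, F_{U\rp{1}_\mu}, F_{V\rp{1}_\mu})$, where the error terms are given by applying $P_\mu$ to the differentiated system, i.e. the formulas in (\ref{puuv}) with $(U,V)$ there equal to $(U\rp{1}, V\rp{1})$. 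This reduces the bound to controlling three contributions in the right-hand side of (\ref{du1}): (i) $\|\mu^\ep F_{U\rp{1}_\mu}\|_{l_\mu^2 L^2(C_u)}$; (ii) $\|\mu^\ep \E[U\rp{1}_\mu]^\f12(t)\|_{l_\mu^2} + \|\mu^\ep \E[U\rp{1}_\mu]^\f12(t_\tmin)\|_{l_\mu^2}$ on the initial and final slices, which is exactly the highest-order energy $\|\mu^\ep\E\rp{1}_\mu(t)^\f12\|_{l_\mu^2}$ bounded in Corollary \ref{eng_wave} (second estimate of (\ref{eng_3})); and (iii) the spacetime integral $\sum_{\mu>\La}\mu^{2\ep}\|\bd^\a \sP_\a[U\rp{1}_\mu]\|_{L^1(\widetilde{\D^+})}$, for which the integrand is read off from (\ref{dpu}).

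For the spacetime integral (iii), following (\ref{9.28.6.19}), the integrand is dominated in $L^2_x$ by $(\|F_{V\rp{1}_\mu}\|_{L^2_x} + \|\p F_{U\rp{1}_\mu}\|_{L^2_x})\E\rp{1}_\mu{}^\f12 + \|k\|_{L^\infty_x}\E\rp{1}_\mu$. Summing against $\mu^{2\ep}$ and using Cauchy--Schwarz in $\mu$, I would bound $\|\mu^\ep\p F_{U\rp{1}_\mu}\|_{l_\mu^2 L^2_x} + \|\mu^\ep F_{V\rp{1}_\mu}\|_{l_\mu^2 L^2_x}$ by (\ref{9.20.3.19}) and (\ref{9.20.4.19}) together with the estimate (\ref{9.29.2.19}) for the genuine (non-commutator) pieces $\|\mu^\ep P_\mu\p F_{U\rp{1}}\|_{l_\mu^2 L^2_x} + \|\mu^\ep P_\mu F_{V\rp{1}}\|_{l_\mu^2 L^2_x}$; the right-hand side of the latter is controlled via (\ref{5.05.3.19}), (\ref{5.05.4.19}), the comparison Lemma \ref{comp_2}, and the full energy bounds in Corollary \ref{eng_wave}, so everything is $\les \|\bp v, \bp\varrho, \p v_+\|_{L^\infty_x}+1$. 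Then $\|k\|_{L^1_t L^\infty_x}\les T^\f12$ by (\ref{9.20.2.19}) and (\ref{BA1}), and the time integral of $\|\bp v,\bp\varrho,\p v_+\|_{L^\infty_x}$ is $\les T^\f12$ again by (\ref{BA1}) and Corollary \ref{comp_3}. This gives $\sum_{\mu>\La}\mu^{2\ep}\|\bd^\a\sP_\a[U\rp{1}_\mu]\|_{L^1(\widetilde{\D^+})}\les 1$.

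For the flux of the error term (i), $F_{U\rp{1}_\mu}$ for $U=\varrho$ is a pure commutator (since $F_U\equiv 0$), and for $U=v_+$ it is (\ref{puuv}) with the extra $P_\mu F_{U\rp{1}}$ where $F_{U\rp{1}}$ involves $\p^2\bT\eta$ and $\p v\cdot\p^2 v$. I would split $F_{U\rp{1}_\mu}$ into a commutator part, estimated on $S_{t,u}$ by the dyadic trace inequality (\ref{9.10.3.19}) and its variants plus the bounds on $\|\p v\|_{H^1_x}$, and a genuine part, estimated on $S_{t,u}$ by the trace inequalities of Lemma \ref{trace2} and (\ref{trace1}) combined with $\|\p^2\bT\eta\|_{H^{\f12+\ep}_x}\les 1$ coming from (\ref{4.12.3.19}) and (\ref{eng_3}); integrating along $C_u$ picks up the harmless factor $T^\f12$, exactly as in (\ref{7.24.5.19}). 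Once the flux for $P_\mu\p v_+$ and $P_\mu\bp\varrho$ is established, the flux for $P_\mu\p v$ follows from $v = v_+ + \eta$ together with the elliptic/trace control $\|\mu^\ep \sF^\f12[P_\mu\p\eta](C_u)\|_{l_\mu^2}\les 1$ (from (\ref{4.12.2.19}), (\ref{4.25.1.19}) and the trace inequalities, since $\|\mu^\ep P_\mu\p^2\eta\|_{l_\mu^2 H^{\f12}_x}\les\|\curl\Omega\|_{H^{\f12+\ep}_x}+1\les 1$), and the $\bT\varrho$ flux is recovered from the first equation of (\ref{4.23.1.19}). The main obstacle I anticipate is bookkeeping the commutator terms in $F_{U\rp{1}_\mu}$ and $F_{V\rp{1}_\mu}$ at the $S_{t,u}$ level with the correct dyadic summability: one must be careful that the dyadic trace inequalities (\ref{9.10.3.19})--(\ref{9.10.4.19}) only allow $0<\a<\f12$, so for $\ep$ close to $s-2$ (hence possibly $\ep>\f12-$) one has to interpolate or split the frequency regimes, mirroring the care taken in the proof of Proposition \ref{4.13.8.19}; everything else is a routine repackaging of estimates already in hand.
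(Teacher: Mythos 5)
Your plan is essentially the paper's own proof: apply the flux inequality (\ref{du1}) to the dyadic pairs $(P_\mu\p U, P_\mu\p V)$, control the spacetime divergence via (\ref{9.28.6.19}) together with (\ref{9.29.2.19}), (\ref{5.05.3.19}), (\ref{5.05.4.19}), Lemma \ref{comp_2} and Corollary \ref{eng_wave}, bound the flux of $F_{U\rp{1}_\mu}$ by the dyadic trace inequalities, and recover $\p v$ and $\bT\varrho$ from $v=v_++\eta$ and (\ref{4.23.1.19}); this is exactly the route taken in the paper. Two small corrections to your bookkeeping: the bound you quote, $\|\p^2\bT\eta\|_{H^{\f12+\ep}_x}\les 1$, is neither available (only $\|\bT\eta\|_{H^{2+\ep}_x}\les 1$ from (\ref{4.12.3.19})) nor needed — the trace step for $P_\mu F_{U\rp{1}}$ only requires $\|\La^{\f12+\ep}\p\bT\eta\|_{L_x^2}\les 1$, which follows from (\ref{4.12.5.19}) since $\ep\le s-2<\f12$; for the same reason your worry about $\ep>\f12$ and frequency splitting in (\ref{9.10.3.19})--(\ref{9.10.4.19}) is moot, as $2<s<\frac52$ forces $\ep<\f12$. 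Also, in the $\eta$-flux step the $L$-component $L P_\mu\p\eta$ is not controlled by $\p^2\eta$ alone: one must split $L=\bN+\bT$ and treat $\bT P_\mu\p\eta$ via the commutator $[\bT,P_\mu]$, (\ref{9.10.4.19}) and (\ref{4.12.5.19}), as the paper does in establishing (\ref{9.11.1.19}).
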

\begin{remark}
{
To avoid unnecessary technical baggage, we will not derive the flux control for $\bT v$, since  only the weaker control in (\ref{6.18.5.19}) is required in Section \ref{causal_reg}.  By using the second equation of (\ref{4.23.1.19}), $\bT v=-c^2 \p \varrho$, we can directly get the bound for $\bT v$ in (\ref{6.18.5.19}) by using the above result with the help of the trace inequality and energy estimates. We will give the thorough detail to prove  (\ref{6.18.5.19}) in Section \ref{causal_reg}.}
\end{remark}
\begin{proof}
We apply (\ref{du1}) to $(U_\mu\rp{1}, V_\mu\rp{1})=(P_\mu U\rp{1}, P_\mu V\rp{1})$ with
\begin{equation*}
(U\rp{1}, V\rp{1})=(\p v_+, \p \bT v), \quad (U\rp{1}, V\rp{1})=(\p \varrho, \p \bT \varrho)
\end{equation*}
to obtain
\begin{align}\label{9.10.6.19}
\begin{split}
\sum_{\mu>1}\mu^{2\ep}\sF[U\rp{1}_\mu](C_u)
&\les \sum_{\mu>1}\{ \int_{C_u\cap \{t'\le t\}}\mu^{2\ep}|F_{U\rp{1}_\mu}|^2+ \mu^{2\ep}(\E[U\rp{1}_\mu](t)+\E[U\rp{1}_\mu](t_\tmin))\}\\
&+\sum_{\mu>1}\mu^{2\ep}\left|\int_{\widetilde{\D^+}\cap \{ u'\ge u\}\cap \{t_\tmin\le t'\le t\}} \bd^\a \sP_\a[U\rp{1}_\mu]\right|.
\end{split}
\end{align}
Substituting $(U_\mu\rp{1}, V_\mu\rp{1})$ into (\ref{9.28.6.19}) implies
\begin{align}
&\sum_{\mu>1}\mu^{2\ep}\|\bd^\a \sP_\a[U\rp{1}_\mu]\|_{L^1(\widetilde{\D^+}\cap \{ u'\ge u\}\cap \{t_\tmin\le t'\le t\})}\label{9.10.7.19}\\
&\les \int_0^t \|\E[U\rp{1}_\mu]^\f12(t')\|_{l_\mu^2}\{\|\mu^\ep F_{V\rp{1}_\mu}(t')\|_{l_\mu^2 L_x^2}+\|\mu^\ep \p F_{ U\rp{1}_\mu}(t')\|_{l_\mu^2 L_x^2}+ \|\E[U\rp{1}_\mu]^\f12(t')\|_{l_\mu^2}\|k(t')\|_{L_x^\infty}\}dt',\nn
\end{align}
where the formulas of $F_{U_\mu\rp{1}}$ and $F_{V\rp{1}_\mu}$  can be found in  (\ref{puuv}).
To control the right hand side, we will rely on the estimates of $F_{U_\mu\rp{1}}, F_{V\rp{1}_\mu}$ in the proof of Proposition \ref{4.13.8.19} and the results of Corollary \ref{eng_wave}.

On $\Sigma_{t}$ with $0< t\le T$, we derive from (\ref{9.29.2.19}) that
\begin{align}
&\|\mu^\ep \p F_{U\rp{1}_\mu}\|_{l_\mu^2 L_x^2}+\|\mu^\ep F_{V\rp{1}_\mu} \|_{l_\mu^2 L_x^2}\nn\\
&\les \|\p v, \Tr k\|_{H^{1+\ep}} \|V, \p U\|_{L_x^\infty}+\|\p v, \p \varrho \|_{L_x^\infty} \| V\rp{1}, \p U\|_{H^{1+\ep}_x}+\|\p^2 F_U\|_{H^\ep_x}+\|\p F_V\|_{\dot{H}^{\ep}_x}\nn\\
&\les \|\bp v, \bp \varrho, \p v_+\|_{L_x^\infty}\|\p U, V\rp{1}, \p v, \Tr k\|_{H^{1+\ep}_x}+\|\p^2 F_U\|_{H^\ep_x}+\|\p F_V\|_{\dot{H}^{\ep}_x}\nn\\
&\les (\E\rp{\le 1}(t)^\f12+\|\mu^\ep \E\rp{1}_\mu(t)^\f12\|_{l_\mu^2}+\|\curl \Omega\|_{H^{\f12+\ep}_x}+1)(\|\p v_+, \p v, \bp \varrho\|_{L_x^\infty}+1)\label{9.29.3.19}\\
&\les \|\p v_+, \p v, \bp \varrho\|_{L_x^\infty}+1\nn,
\end{align}
where we employed (\ref{5.05.3.19}) and (\ref{5.05.4.19}) together with Lemma \ref{comp_2} to derive the line of (\ref{9.29.3.19}), and   used  Corollary \ref{eng_wave} and (\ref{9.07.5.19}) to derive the last line. We then substitute the above estimate to (\ref{9.10.7.19}). By using (\ref{BA1}) and  Corollary \ref{eng_wave}, we can conclude that
\begin{equation}\label{10.22.3.19}
\sum_{\mu>1}\mu^{2\ep}\|\bd^\a \sP_\a[U\rp{1}_\mu]\|_{L^1(\widetilde{\D^+})}\les T^\f12.
\end{equation}

Next, we bound $\|\mu^\ep F_{U\rp{1}_\mu}\|_{l_\mu^2 L^2(C_u)}$  in (\ref{9.10.6.19}). Using (\ref{5.02.3.19}) and (\ref{puuv}) directly implies
\begin{align}
\|\mu^\ep F_{U\rp{1}_\mu}\|_{l_\mu^2 L^2(C_u)}&\les\|\mu^\ep[P_\mu, v] \p U\rp{1}\|_{l_\mu^2 L^2(C_u)}+\|\mu^\ep P_\mu F_{U\rp{1}}\|_{l_\mu^2 L^2(C_u)}\nn\\
   &\les T^\f12 (\|\p  v\|_{L_t^\infty H^1_x}\|\p U\rp{1}\|_{L_t^\infty H^\ep_x}+\|\La^{\f12+\ep} F_{U\rp{1}}\|_{L_t^\infty L_x^2})\label{9.10.5.19},
\end{align}
where we applied  (\ref{9.10.3.19}) to  $\p U\rp{1}$,  and  (\ref{9.10.4.19}) to $F_{U\rp{1}}$ to obtain the last inequality.  For the second term on the right hand side, we recall (\ref{7.24.8.19}) and use (\ref{9.08.8.19}) with $\a=\f12+\ep$ to obtain
\begin{align*}
\|\La^{\f12+\ep} F_{U\rp{1}}\|_{L^2_x}&\les \|\La^{\f12+\ep}\p F_U\|_{L^2_x}+\|\La^{\f12+\ep}(\p v \p U)\|_{L^2_x}\\
&\les \|\La^{\f12+\ep} \p\bT \eta\|_{L^2_x}+\|\p v\|_{H^{1+\ep}_x}\|\p U\|_{H^1_x}+\|\p U\|_{H^{1+\ep}_x}\|\p v\|_{H^1_x})\\
&\les \|\p \bT\eta\|_{H^1_x}+ \|\p U\|_{H^{1+\ep}_x} \les 1,
\end{align*}
where we used (\ref{4.12.5.19}) and Corollary \ref{eng_wave} to derive the last inequality.

Substituting the above estimate  to (\ref{9.10.5.19}) and using Corollary \ref{eng_wave} to treat the first term in the last line of (\ref{9.10.5.19}) imply
\begin{align*}
\|\mu^\ep F_{U\rp{1}_\mu}\|_{l_\mu^2 L^2(C_u)}\les T^\f12.
\end{align*}
This controls the first term on the right hand side of (\ref{9.10.6.19}).  Substituting this estimate and  (\ref{10.22.3.19}) into (\ref{9.10.6.19}) and using Corollary \ref{eng_wave} imply
\begin{equation}\label{9.10.8.19}
\|\mu^\ep \sF^\f12[U_\mu\rp{1}](C_u)\|_{l_\mu^2}^2\les 1+\sup_{t'\le t}\sum_{\mu>1}\mu^{2\ep}\E\rp{1}_\mu(t')\les 1.
\end{equation}
This together with Proposition \ref{7.24.4.19} immediately gives the control of the  flux for $(\p v_+, \p \varrho)$  to the highest order.
Similar to the $H^2$-case, in view of $v=v_++\eta$, we need to show
\begin{equation}\label{9.11.1.19}
\|\mu^\ep (L P_\mu \p \eta, \sn P_\mu \p \eta)\|_{l_\mu^2 L^2(C_u)}\les T^\f12\les 1.
\end{equation}
For simplicity, we denote by $S=S_{t',u}$ with $t_\tmin\le t'\le t$.
We first apply (\ref{9.10.4.19}) to $F=\p^2\eta$.   By using (\ref{4.12.2.19}) and (\ref{9.07.5.19}),
\begin{equation}\label{9.29.4.19}
\|\La^\a\p^2 \eta\|_{L_t^\infty L_x^2}\les 1,\quad 0<\a\le \f12+\ep.
\end{equation}
 Hence, by using (\ref{wras})
\begin{align}\label{9.11.2.19}
\|\mu^\ep \sn P_\mu \p \eta\|_{l_\mu^2 L^2(S)}\les \|\mu^\ep \p P_\mu \p \eta\|_{l_\mu^2 L^2(S)}\les \|\La^{\f12+\ep}\p^2 \eta\|_{L_t^\infty  L^2_x}\les 1.
\end{align}
For the first estimate in (\ref{9.11.1.19}), we derive
\begin{align*}
\|\mu^\ep L P_\mu \p \eta\|_{l_\mu^2 L^2(S)}&\les \|\mu^\ep \bN^i\p_i P_\mu \p \eta\|_{l_\mu^2 L^2(S)}+\|\mu^\ep \bT P_\mu \p \eta\|_{l_\mu^2 L^2(S)}\\
&\les \|\mu^\ep P_\mu \p^2 \eta\|_{l_\mu^2 L^2(S)}+\|\mu^\ep[\bT, P_\mu]\p \eta\|_{l_\mu^2 L^2(S)}+\|\mu^\ep P_\mu \bT \p \eta\|_{l_\mu^2 L^2(S)}.
\end{align*}
The estimate of the first term on the right hand side of the last line is already included in (\ref{9.11.2.19}). It suffices to show
\begin{align}\label{9.11.3.19}
\|\mu^\ep[\bT, P_\mu]\p \eta\|_{l_\mu^2 L^2(S)}+\|\mu^\ep P_\mu \bT \p \eta\|_{l_\mu^2 L^2(S)}\les 1.
\end{align}
For the first estimate, we apply (\ref{9.10.3.19})  to obtain
\begin{align*}
\|\mu^\ep[P_\mu,v]\p^2 \eta\|_{l_\mu^2 L^2(S)}\les \|\p v\|_{H^1_x}\|\p^2 \eta\|_{H^\ep_x}\les 1,
\end{align*}
where we used (\ref{9.29.4.19}) and (\ref{5.04.17.19}) to get the last bound. For the other term,  we first  use (\ref{9.29.4.19}), (\ref{5.03.9.19}), (\ref{9.08.8.19}), Corollary \ref{eng_wave} to bound
\begin{align*}
\|\La^{\f12+\ep} [\bT, \p] \eta\|_{L_x^2}&\le\|\La^{\f12+\ep}(\p v\c \p \eta)\|_{L_x^2}\\
&\les \|\p v\|_{H^{1+\ep}_x}\|\p \eta\|_{H^1_x}+\|\p v\|_{H^1_x}\|\p \eta\|_{H^{1+\ep}_x}\\
&\les 1.
\end{align*}
Next we use (\ref{9.10.4.19}) with $\a=\ep$,  (\ref{4.12.5.19}) together with the above estimate  to derive
\begin{align*}
\|\mu^\ep P_\mu \bT \p \eta\|_{l_\mu^2 L^2(S)}&\les \|\La^{\f12+\ep} ([\bT, \p]+\p \bT) \eta\|_{L_t^\infty L_x^2}\les 1.
\end{align*}
Thus (\ref{9.11.3.19}) is proved. We have completed the proof of
\begin{equation*}
\|\mu^\ep L P_\mu \p \eta\|_{l_\mu^2 L^2(C_u)}\les T^\f12.
\end{equation*}
The proof of (\ref{9.11.1.19}) is thus completed.

Combining (\ref{9.10.8.19}) with (\ref{9.11.1.19}), and applying $v=v_++\eta$ imply
\begin{align*}
\|\mu^\ep \sF^\f12[v_\mu\rp{1}](C_u)\|_{l_\mu^2}&\les\| \mu^\ep\sF^\f12[P_\mu v_+\rp{1}](C_u)\|_{l_\mu^2}+\|\mu^\ep(\sn_L P_\mu \p \eta, \sn P_\mu \p \eta)\|_{l_\mu^2 L^2(C_u)}\les 1.
\end{align*}
 Using $\bT\varrho=-\div v$ in (\ref{4.23.1.19}), the above estimate together  with (\ref{9.10.8.19}) leads to
\begin{equation*}
\|\mu^\ep\sF^\f12[\p v, \p v_+, \bp \varrho](C_u)\|_{l_\mu^2}\les 1.
\end{equation*}
Thus Proposition \ref{9.10.2.19} is proved.
\end{proof}

\subsection{Flux of $\curl \C$}\label{flux_C}

In this subsection, we derive the flux control of $\curl\C$ up to the highest order along $C_u$, which will be crucial for Section \ref{causal_reg}.

Along $C_u$, as a consequence of Proposition \ref{9.07.16.19}, Proposition \ref{9.07.5.19} and the trace inequalities (\ref{9.10.4.19}) and (\ref{9.18.1.19}), we can  bound
\begin{equation}\label{10.26.1.19}
\|\mu^{\delta+\f12}P_\mu\p  \Omega\|_{l_\mu^2L^2(S_{t,u})}+\tir^\f12 \|\p \Omega\|_{L^{2p}_\omega(S_{t,u})}\les 1,
\end{equation}
 with $0\le 1-\frac{2}{p}<\delta\le s'-2$.
 Nevertheless,  the analysis in  Section \ref{causal_reg} requires us to gain nearly $\f12$ derivative more than the above bounds. The goal is  achieved in this subsection merely for $\curl \C$, whereas the full control  for $\p^2 \Omega$ may not actually hold under our assumption on the initial data.

We will introduce an energy argument identical to  Section \ref{eng_vor}, with the flux achieved as the boundary term on $C_u$. Recall that the energy bound of $\curl \fC$ in Section \ref{eng_vor} is derived by taking advantage of the trilinear structure due to the pairing of  $\bT \curl \C\c \curl \C$, with the help of a series of integration by parts. We lose the structure if propagation the  general $\p \fC$ directly. The elliptic estimates for the Hodge system on $\Sigma_t$ allows us to obtain the $H^\delta_x$ bound for $\p^2\Omega, \p \C, \p^2 \fw$ whence $\|\curl \C\|_{H^\d_x}$ is bounded. (See  Proposition \ref{9.07.16.19} and Proposition \ref{9.07.5.19}.) Restricted on the $C_u$, we can only obtain the corresponding flux bound for $\curl \C$ without loss instead of for $\p \C$, since there lacks a reasonable Hodge system on $S_{t,u}$ which allows us to bound $\p \fC$ by $\curl \C$ in $L^2(S_{t,u})$.
 Fortunately, we manage to use merely the highest-order flux of  $\curl \C$  to obtain  the sufficient regularity of the acoustic cones in   Section \ref{causal_reg} by uncovering  a series of  geometric structures of the acoustic spacetime.

We first give the fundamental  inequality for bounding the energy flux of $\curl \fC$.  
\begin{lemma}
Let $F$ and $G$ be one-tensor fields. There holds on $\widetilde{\D^+}$ that
\begin{align}\label{9.11.5.19}
\begin{split}
|\int_{C_{u_0}\cap \{t_\tmin\le t'\le t\}} c^3 \l F, G\r_e d\mu_\ga  dt&+ \int_{\widetilde{\D^+}\cap \{u\ge u_0\}\cap \{t_\tmin\le t'\le t\}} (\l \bT F,G\r_e+\l\bT G, F\r_e) d\mu_e dt|\\
&\les \int_0^t \|\p v(t')\|_{L_x^\infty}\int_{\Sigma_{t'}} | \l F, G\r| d\mu_e dt'\\
&+|\int_{\Sigma_t} \l F, G\r_e  d\mu_g|+|\int_{\Sigma_{t_\tmin}} \l F, G\r_e d\mu_g|.
\end{split}
\end{align}
where $t_\tmin(u_0)=\max(u_0, 0)$, and thus the last term vanishes if $u_0\ge 0$.
\end{lemma}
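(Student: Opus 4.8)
The plan is to obtain \eqref{9.11.5.19} by applying the divergence theorem to the auxiliary vector field $\mathcal{P}_\mu := \langle F, G\rangle_e \, \bT^\mu$ — or more precisely to a scalar-valued energy current built from the Euclidean pairing $\langle F, G\rangle_e$ — over the spacetime slab $\widetilde{\D^+}\cap\{u\ge u_0\}\cap\{t_\tmin\le t'\le t\}$, exactly mirroring how \eqref{div1} and Lemma \ref{fflux} were derived from \eqref{9.13.1.19}. First I would record that $\langle F,G\rangle_e$ is a scalar function on the slab, so $\bd^\mu\big(\langle F,G\rangle_e \bT^\mu\big) = \bT(\langle F,G\rangle_e) + \langle F,G\rangle_e\, \bd^\mu \bT_\mu$, and the divergence $\bd^\mu\bT_\mu = -\Tr k$ (equivalently, using $\Tr\cir{k} = -\div v$ and the comparison of $d\mu_g$ with $d\mu_e$, the volume distortion $\Lie_\bT d\mu_e = -\Tr\cir{k}\, d\mu_e$ from the computation preceding \eqref{9.07.4.19}). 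The term $\bT(\langle F,G\rangle_e)$ expands by the Leibniz rule into $\langle \bT F, G\rangle_e + \langle F, \bT G\rangle_e$ plus a term where $\bT$ hits the Euclidean contraction coefficients $\delta_{ij}$; but since $F,G$ are handled with their indices raised/lowered by $\delta_{ij}$ and $[\bT,\p_i] = -\p_i v^a\p_a$ from \eqref{cmu1}, the extra term is of the schematic form $\p v\cdot \langle F,G\rangle$.

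Second I would compute the three boundary contributions of the divergence theorem: on $\Sigma_t$ the flux is $\int_{\Sigma_t}\langle F,G\rangle_e\, d\mu_g$ (up to the comparability of $d\mu_g$ and $d\mu_e$ from \eqref{9.20.1.19}); on $\Sigma_{t_\tmin}$ it is $\int_{\Sigma_{t_\tmin}}\langle F,G\rangle_e\, d\mu_g$, which collapses to the point $\Ga^+(u_0)$ — hence vanishes — when $t_\tmin = u_0 > 0$, as already noted after \eqref{div1}; and on the null cone $C_{u_0}\cap\{t_\tmin\le t'\le t\}$ the flux is $\int \langle F,G\rangle_e\, \bT^\mu (d\sigma)_\mu$, where the co-normal to $C_{u_0}$ pairs with $\bT$ to produce the factor $L^\mu(d\mu_\ga\, dt') $ scaled by the lapse; using $\bb\approx 1$ from \eqref{Pba} and the line element \eqref{metric_II} together with the ellipticity $g_{ij} = c^{-2}\delta_{ij}$, the measure on $C_{u_0}$ relative to $d\mu_\ga\, dt'$ carries a power of $c$, which accounts precisely for the $c^3$ weight on the null-cone term (the three factors of $c^{-1}$ from converting three spatial lengths, times one compensating $c^{2}$ from the $L$-contraction, schematically). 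I would just verify the exponent by a direct local computation in the $(t,\omega,u)$ coordinates and cite \eqref{metric_II}, \eqref{9.20.1.19}.

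Third, collecting the terms: the divergence theorem gives the $C_{u_0}$ flux equal to the difference of the $\Sigma_t$ and $\Sigma_{t_\tmin}$ fluxes plus the bulk integral of $\bd^\mu\mathcal{P}_\mu = \langle\bT F,G\rangle_e + \langle F,\bT G\rangle_e + O(|\p v|\,|\langle F,G\rangle|)$ over the slab; moving $\langle\bT F,G\rangle_e + \langle F,\bT G\rangle_e$ to the left and bounding $|\Tr k|\lesssim |\p v|$ via \eqref{9.20.2.19} together with the schematic $\p v$-term yields the stated inequality \eqref{9.11.5.19}, with absolute values taken throughout. I expect the \emph{main obstacle} to be purely bookkeeping: pinning down the exact $c$-weight on the null-cone boundary term and confirming that the $\bT$-derivative of the Euclidean metric coefficients really does contribute only a lower-order $\|\p v\|_{L_x^\infty}$-controlled term rather than something that would need to appear on the left. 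Both are routine once one works in the adapted null coordinates and uses \eqref{cmu1}, \eqref{9.20.1.19} and \eqref{9.20.2.19}, but they are the only places where care is needed; everything else is a verbatim repetition of the energy-current argument of Section \ref{eng_vor} and Lemma \ref{fflux}.
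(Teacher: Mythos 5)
Your overall strategy — applying the divergence theorem to the current $\sV^\mu=\l F,G\r_e\,\bT^\mu$ over $\widetilde{\D^+}\cap\{u\ge u_0\}\cap\{t_\tmin\le t'\le t\}$, with the error governed by $\Tr k$ and $|k|\les|\p v|$ — is indeed the paper's route (cf.\ (\ref{9.30.4.19})). But there is a genuine gap exactly at the point you flag as "bookkeeping": the $c^3$ weight on the cone term does \emph{not} come from any measure conversion on $C_{u_0}$. In the unweighted identity the cone flux is simply $\int_{C_{u_0}}\bb^{-1}\l F,G\r_e\,\bb\,d\mu_\ga\,dt'=\int_{C_{u_0}}\l F,G\r_e\,d\mu_\ga\,dt'$ ($d\mu_\ga$ is already the intrinsic induced measure, so no powers of $c$ appear), the slice terms carry $d\mu_g$, and the bulk term carries $d\mu_g\,dt'$. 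The statement (\ref{9.11.5.19}), however, pairs a cone term weighted by $c^3$ with a bulk term in the measure $d\mu_e=c^3 d\mu_g$; since both sit \emph{inside} the absolute value on the left, you cannot pass from your unweighted identity to this one by comparability of $d\mu_e$ and $d\mu_g$ or by a triangle inequality (the discrepancy $\int_{C_{u_0}}(c^3-1)\l F,G\r_e$ is of the same size as the quantity being estimated). The paper obtains the correct combination by substituting $F\mapsto c^{3/2}F$, $G\mapsto c^{3/2}G$ into the identity (equivalently, using the weighted current $c^3\l F,G\r_e\bT^\mu$); the price is an extra bulk term $3\bT(\log c)\,\l F,G\r_e$, which combines with $-\Tr k$ through $\Tr k=3\bT\log c-\div v$ (from (\ref{5.24.1.19})) to leave precisely $\div v\c\l F,G\r_e$, bounded by $\|\p v\|_{L_x^\infty}$. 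Your version bounds $\Tr k$ directly by (\ref{9.20.2.19}), which happens to be the same size, but without the weighted current and this cancellation you never arrive at the exact left-hand side of (\ref{9.11.5.19}).

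Two minor remarks: your claimed extra term from "$\bT$ hitting $\delta_{ij}$" is spurious — $\delta_{ij}$ is constant and $\bT$ acts componentwise, so $\bT\l F,G\r_e=\l\bT F,G\r_e+\l F,\bT G\r_e$ exactly (no use of (\ref{cmu1}) is needed here); this is harmless but indicates the heuristic accounting of where the $\p v$-error originates is off. Also note that the slice boundary terms in the derived identity naturally appear with $d\mu_e$ after the $c^{3/2}$ substitution, so quoting them with $d\mu_g$ as in the statement is fine only as an upper bound in the intended applications ($F=G$); in any case the essential missing step is the weighted current and the identity $\Tr k-3\bT(\log c)=-\div v$.
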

\begin{proof}
Let $\sV^\mu=\l F, G\r_e \bT^\mu$. By applying the divergence formula, we have
\begin{equation}\label{9.30.1.19}
\begin{split}
\int_{C_{u_0}\cap \{t'\le t\}} \sV^\mu \bd_\mu u \bb d\mu_\ga dt'&+\int_{\Sigma_{t_\tmin}} \sV^\mu \bd_\mu t-\int_{\Sigma_t} \sV^\mu \bd_\mu t\\
&=-\int_{\widetilde{\D^+}\cap \{u\ge u_0\}\cap \{t_\tmin\le t'\le t\}} \bd_\mu \sV^\mu d\mu_g dt.
\end{split}
\end{equation}
With $\bar \Pi^\nu_\mu=\delta^\nu_\mu+\bT^\nu \bT_\mu$,
\begin{equation*}
{e_i}_\nu\bd_i(\sV^\mu \bar\Pi_\mu^\nu)={e_i}_\nu\bd_i \sV^\mu \bar \Pi^\nu _\mu +\sV^\mu  \bd_i \bar\Pi^\nu_\mu= \bd_i \sV^j \bar \Pi^i_j+{e_i}_\nu \sV^\mu \bd_i (\bT_\mu \bT^\nu),
\end{equation*}
where $e_i=c^{-1}\p_i$ is the orthonormal basis in $(\Sigma_t, g)$, the component of $\sV(e_i)=\sV^i$  and $\bd$ denotes the covariant derivative in the spacetime.

By the definition of $\sV$, $\sV^\mu \bar\Pi_\mu^\nu=0$. Thus  we can derive from the above identity that
\begin{equation*}
\bd_i \sV^i=\l \sV, \bT\r   \Tr k.
\end{equation*}
Using the above identity and the facts that  $\l \sV, \bd u\r= \l F, G\r_e \bb^{-1}$, $\l \sV, \bd t\r= \l F, G\r_e$, we compute
\begin{align*}
\bd_\mu \sV^\mu&=-\bT \l \sV, \bT\r+\bd_i \sV^i=\bT (\l F, G\r_e)-\l F,G\r_e \Tr k\\
&=\l \bT F,G\r_e+\l\bT G, F\r_e-\l F, G\r_e \Tr k.
\end{align*}
Substituting the above identity to (\ref{9.30.1.19}) yields
\begin{align}
\int_{C_{u_0}\cap \{t'\le t\}} &\bb^{-1} \l F, G\r_e \bb d\mu_\ga  dt= \int_{\Sigma_t} \l F, G\r_e  d\mu_g-\int_{\Sigma_{t_\tmin}} \l F, G\r_e d\mu_g\label{9.30.4.19}\\
&-\int_{\widetilde{\D^+}\cap \{u\ge u_0\}\cap\{t_\tmin\le t'\le t\}} \{\l \bT F,G\r_e+\l F, \bT G\r_e-\l F, G\r_e \Tr k\} d\mu_g dt'.\nn
\end{align}
 Since $d\mu_g=c^{-3} d\mu_e$, we can replace  $F$ by $c^\frac{3}{2}F$  and $G$ by $c^\frac{3}{2}G$ in the above identity. This implies
\begin{align*}
\int_{C_{u_0}\cap \{t'\le t\}} & c^3\l F,G\r_e d\mu_\ga  dt= \int_{\Sigma_t}\l F, G\r_e  d\mu_e-\int_{\Sigma_{t_\tmin}}\l F, G\r d\mu_e\\
&-\int_{\widetilde{\D^+}\cap \{u\ge u_0\}\cap\{t_\tmin\le t'\le t\}}  \{\l \bT F,G\r_e+\l F, \bT G\r_e-\l F, G\r_e (\Tr k-3 \bT(\log c) )\} d\mu_e dt.
\end{align*}
By (\ref{k1}), $\Tr k-3 \bT(\log c)=-\div v$. (\ref{9.11.5.19}) follows by  substituting this formula to the above calculation.
\end{proof}

Next, we prove the main result of this subsection.
\begin{proposition}\label{9.30.15.19}
Under the assumptions (\ref{Pba}) and (\ref{wras}), there hold on  $C_u \cap \widetilde{\D^+}$ that,
\begin{align}
&\|\curl \C\|_{L^2(C_u)}\les 1,\label{9.07.6.19}\\
&\|\mu^\delta P_\mu \curl \C\|_{l_\mu^2 L^2(C_u)}\les 1, \quad 0\le \delta\le s'-2,\label{9.11.6.19}\\
&\|\tir\curl^2 \Omega\|_{L_t^2 L_\omega^p(C_u)}\les 1,\quad 0\le 1-\frac{2}{p}<s'-2.\label{9.17.2.19}
\end{align}
\end{proposition}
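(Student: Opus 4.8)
The plan is to run, on the truncated spacetime region $\widetilde{\D^+}\cap\{u'\ge u_0\}\cap\{t_\tmin\le t'\le t\}$, exactly the energy arguments of Section~\ref{eng_vor} --- the proof of (\ref{9.07.5.19}), Lemma~\ref{9.08.11.19} and Proposition~\ref{9.07.16.19} --- but with the divergence identity (\ref{9.07.4.19}) replaced by its null--cone counterpart (\ref{9.11.5.19}), so that the flux $\int_{C_{u_0}}c^3|\curl\C|^2\,d\mu_\ga dt'$ (comparable, since $c\approx 1$, to the $L^2(C_{u_0})$ norm) appears as the boundary term on the null face. Two facts make this work: first, $0\le\tir\les T$ is small throughout $\widetilde{\D^+}$, which lets flux terms appearing linearly on the right be absorbed on the left; second, the initial data $\|\curl\C(0)\|_{L^2}$ and $\|\mu^\delta P_\mu\curl\C(0)\|_{l_\mu^2 L^2}$ are finite because $\fw(0)\in H^{s'}$ forces $\curl\C(0)\in H^{s'-2}_x$ and $0\le\delta\le s'-2$.

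To prove (\ref{9.07.6.19}) I would apply (\ref{9.11.5.19}) with $F=G=\curl\C$. The $\Sigma_t$ boundary integral is $\les\|\curl\C\|_{L^2(\Sigma_t)}^2\les 1$ by the first estimate of (\ref{9.07.5.19}), the term $\int_0^t\|\p v\|_{L_x^\infty}\int_{\Sigma_{t'}}|\curl\C|^2$ is Gronwall--admissible by (\ref{BA1}), and for $u_0<0$ the $\Sigma_0$ term is the bounded initial data. The spacetime integral is, up to a constant, the quantity $I^1$ of (\ref{9.11.7.19}) restricted to the region, which I would decompose as in (\ref{9.07.8.19})--(\ref{9.08.19.19}); the only new feature is that each spatial integration by parts now sits on $\Sigma_{t'}\cap\{u'\ge u_0\}$ and, instead of vanishing, leaves a boundary term over $S_{t',u_0}$ that, upon integrating in $t'$, becomes a cone integral over $C_{u_0}$. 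Using $\p_m(\curl\C)^m=0$, the boundary piece coming from the hard term $I^1_{+,1,1}$ has the schematic form $\int_{C_{u_0}}e^{-\varrho}\,\p^n v^j\,\p_n\Omega_j\,(\curl\C\cdot\bN)$, which I would bound by $\|\p v\cdot\p\Omega\|_{L^2(C_{u_0})}\|\curl\C\|_{L^2(C_{u_0})}$, controlling the first factor by Lemma~\ref{trace2}, the $\p v$--flux in Proposition~\ref{9.10.2.19} and (\ref{10.26.1.19}), and absorbing the second using $\tir\les T$; the spacetime integration--by--parts terms of (\ref{9.08.19.19}) acquire through (\ref{9.11.5.19}) an extra, strictly lower--order, $C_{u_0}$--boundary contribution treated the same way, and the remaining pieces $I^1_{+,2}$, $I^1_{+,1,2,3}$ go through verbatim. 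Gronwall's inequality in $t$ then closes (\ref{9.07.6.19}).

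Estimate (\ref{9.11.6.19}) is obtained the same way: apply (\ref{9.11.5.19}) to $F=G=P_\mu\curl\C$, multiply by $\mu^{2\delta}$, and sum over $\mu>1$, the frequencies $\mu\le 1$ being absorbed into (\ref{9.07.6.19}) via the trace inequality and the finite--band property. The $\Sigma_t$ term is $\les\|\mu^\delta P_\mu\curl\C\|_{l_\mu^2 L^2(\Sigma_t)}^2\les 1$ by Proposition~\ref{9.07.16.19}, and the spacetime term is $\sum_{\mu>1}|\I_\mu|$ as in Lemma~\ref{9.08.11.19}, now on the truncated region; rerunning that proof, the new $S_{t',u_0}$ boundary terms from $\J_\mu$, its trichotomy pieces $\J_{\mu,HL},\J_{\mu,LH},\J_{\mu,HH}$, and $\K_\mu$ are estimated by the dyadic trace inequalities (\ref{9.10.3.19})--(\ref{9.10.4.19}), the dyadic $\p\Omega$--flux (\ref{10.26.1.19}) and the dyadic $\p v$--flux of Proposition~\ref{9.10.2.19}, with $\|\mu^\delta P_\mu\curl\C\|_{l_\mu^2 L^2(C_{u_0})}$ always appearing linearly and hence absorbed; the commutator $[\bT,P_\mu\curl]$ pieces are treated via (\ref{9.08.2.19}). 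For (\ref{9.17.2.19}) I would write, using $\curl\Omega=e^\varrho\C$, that $\curl^2\Omega^m=e^\varrho\big((\curl\C)^m+\ep^{mij}\p_i\varrho\,\C_j\big)$, and apply the Sobolev--trace inequality (\ref{9.17.1.19}) to $F=\curl^2\Omega$ with $\delta$ fixed in $\left(1-\frac{2}{p},\,s'-2\right]$ (permitted since $0\le 1-\frac{2}{p}<s'-2$): the term $\|\mu^\delta\ti P_\mu\curl^2\Omega\|_{l_\mu^2 L^2(C_u)}$ is controlled by (\ref{9.11.6.19}), Lemma~\ref{comp_dyd} applied to $e^\varrho$, and the product estimate for $\p\varrho\cdot\C$ (with $\|\p\varrho\|_{H^1_x}\les 1$ from Corollary~\ref{5.04.12.19} and $\|\C\|_{H^1_x}\les 1$ from (\ref{9.07.5.19})), while $\sup_t\|\curl^2\Omega\|_{L^2(\Sigma_t)}\les\|\p^2\Omega\|_{L^2(\Sigma_t)}\les 1$ by (\ref{9.07.5.19}), and the factor $T^{\f12}$ is harmless.

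The hard part will be the bookkeeping of the $S_{t',u_0}$ boundary terms created by restricting the spatial integrations by parts to $\{u'\ge u_0\}$, which were absent in Section~\ref{eng_vor}: one must check, term by term in the decompositions of $I^1$ and of $\I_\mu$, that the factor carrying $\curl\C$ (respectively $P_\mu\curl\C$) always occurs to the first power, so that after invoking $0\le\tir\les T$ it can be moved to the left--hand side, the complementary factor being controlled by the already--established fluxes of $\p v$ and $\p\Omega$ (Proposition~\ref{9.10.2.19}, (\ref{10.26.1.19})) and the trace inequalities of Section~\ref{null_cone_set}. The genuinely delicate cases are the top--order terms $I^1_{+,1,1}/\J_\mu$ and $I^1_{+,1,2}/\K_\mu$, where both the divergence identity $\p_m(\curl\C)^m=0$ and the spacetime integration by parts of (\ref{9.08.19.19}) must survive the truncation to the cone region.
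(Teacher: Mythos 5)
Your proposal follows essentially the same route as the paper: apply the null-cone divergence identity (\ref{9.11.5.19}) with $F=G=\curl\C$ and then $F=G=P_\mu\curl\C$, rerun the Section \ref{eng_vor} integrations by parts on the truncated region $\widetilde{\D^+}\cap\{u\ge u_0\}$, estimate the new boundary terms on $C_{u_0}$ (the paper's $I^1_{+,1,1,b}$, $I^1_{+,1,2,b}$, $\J_{\mu,b}$, $\K_{\mu,b}$) by trace and product estimates so that the cone flux enters only linearly, and deduce (\ref{9.17.2.19}) from (\ref{9.11.6.19}) via (\ref{9.17.1.19}) together with the pointwise relation between $\curl^2\Omega$ and $\curl\C$. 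One small correction: the absorption of the linear flux terms does not rely on smallness of $\tir\les T$ — the flux appears quadratically on the left and only linearly on the right, so an inequality of the form $X^2\les 1+X$ closes directly, which is exactly how the paper concludes.
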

\begin{proof}
Firstly, it is direct to compute $\ep_{mij} \p^m \C^i=e^{-\varrho}\big(\ep_{mij}\p^m(\curl \Omega)^i-\ep_{mij} \p^m \varrho \C^i\big)$.
By using (\ref{9.18.1.19}) with $s'-2>\delta>1-\frac{2}{p}$, we derive
\begin{align*}
\|\tir\curl^2 \Omega\|_{L^2_t L^p_\omega(C_u)}&\les \|\tir \p \varrho\c \C\|_{L^2_t L^p_\omega(C_u)}+\|\tir\curl \C\|_{L_t^2 L_\omega^p(C_u)}\\
&\les \|\tir^\f12\p \varrho\|_{L_t^2 L^{2p}_\omega(C_u)} \|\tir^\f12 \C\|_{L^\infty_t L^{2p}_\omega(C_u)}+\|\tir\curl \C\|_{L_t^2 L_\omega^p(C_u)}\\
&\les T^\f12\|\p \varrho\|_{L_t^2 L_x^\infty}\|\La^{\delta}\C\|_{L_t^\infty H^1_x}+\|\tir\curl \C\|_{L_t^2 L_\omega^p(C_u)}\nn\\
&\les T^\f12+\|\tir\curl \C\|_{L_t^2 L_\omega^p(C_u)},
 \end{align*}
 where we used (\ref{BA1}) and (\ref{9.07.15.19}) to obtain the last line.

To derive (\ref{9.17.2.19}),  we apply (\ref{9.17.1.19}) to $F=\curl^2 \Omega$ by assuming  (\ref{9.11.6.19}) to derive
\begin{align*}
\|\tir\curl\C\|_{L^2_t L^p_\omega(C_u)}\les \|\mu^\delta \ti{P}_\mu \curl\C\|_{l_\mu^2 L^2(C_u)}+\|\curl\C\|_{L_t^\infty L^2_x}\les 1
\end{align*}
where we also used (\ref{9.07.5.19}).

Next we prove (\ref{9.07.6.19}) and (\ref{9.11.6.19}). The analysis overlaps with Section \ref{eng_vor}, except that all the boundary terms along $C_u$ vanished therein.

 To prove (\ref{9.07.6.19}), we apply (\ref{9.11.5.19}) to $F=G=\curl \C$.  Consider the following integral on $\widetilde{\D^+}\cap \{u\ge u_0\}\cap \{t_\tmin\le t'\le t\}$, and the range of $t'$ will be hidden for short,
 \begin{equation}\label{9.30.2.19}
 \begin{split}
\int_{\widetilde{\D^+}\cap \{u\ge u_0\}}\l \bT \curl \C, \curl \C\r_e d\mu_e dt'&=\int_{\widetilde{\D^+}\cap \{u\ge u_0\}}\l \curl \bT \C, \curl \C\r_e d\mu_e dt'\\
&+\int_{\widetilde{\D^+}\cap \{u\ge u_0\}}\l [\bT, \curl] \C, \curl \C\r_e d\mu_e dt'.
\end{split}
 \end{equation}

By using (\ref{9.07.1.19}) and the first equation in  (\ref{4.23.1.19}), the last line can be bounded by
$$\int_0^t \|\p v\|_{L_x^\infty} \|\p \C\|_{L_x^2}^2 dt'\les 1$$ with the help of  (\ref{9.07.5.19}) and (\ref{BA1}).
Thus
\begin{equation}\label{9.30.5.19}
\big|\int_{\widetilde{\D^+}\cap \{u\ge u_0\}}\l \bT \curl \C, \curl \C\r_e d\mu_e dt'\big|\les \big|\int_{\widetilde{\D^+}\cap \{u\ge u_0\}}\l \curl \bT \C, \curl \C\r_e d\mu_e dt'\big|+1.
\end{equation}

 The first term on the right hand side of (\ref{9.30.5.19}) has the same integrand as the term (\ref{9.11.7.19}). The  only difference is that the integral is on the spacetime domain enclosed by the boundary $\{t'=t_\tmin\}, \{t'=t\}$ and $\{u=u_0\}$. Therefore whenever undertaking integration by parts, we need to keep track of the additional  boundary terms along $C_{u_0}$ compared with the treatment for (\ref{9.11.7.19}).

We employ the same  integration by parts as in the proof of (\ref{9.07.5.19}), which leads to the non-trivial boundary terms in $I^1_{+, 1,1}$ in (\ref{9.30.3.19}) and $I^1_{+,1,2}$ in (\ref{9.08.19.19}) on the cone $C_{u_0}$,  denoted by $I^1_{+,1,1,b}$ and $I^1_{+,1,2,b}$ respectively. Let us compute $I^1_{+,1,1,b}$ first.
\begin{align}\label{9.30.6.19}
\begin{split}
I^1_{+, 1, 1, b}&=\int_{t_{\tmin}}^t \int_{\Sigma_{t'}\cap \{u\ge u_0\} } \p_m(e^{-\varrho} \p^n v_j \p_n \Omega^j \curl \C^m) d\mu_e dt'\\
&=\int_{t_{\tmin}}^t \int_{\Sigma_{t'}\cap \{u\ge u_0\} } \{\div_g(e^{-\varrho} \p^n v^j \p_n \Omega_j \curl \C)\\
&\qquad+3 \p_m \log c\c e^{-\varrho} \p^n v^j \p_n \Omega_j \curl \C^m\}  c^{3} c^{-3} d\mu_e dt' \\
&=\int_{t_{\tmin}}^t \int_{\Sigma_{t'}\cap \{u\ge u_0\} } \div_g(c^3e^{-\varrho} \p^n v^j \p_n \Omega_j \curl \C) d\mu_g dt',
\end{split}
\end{align}
where we calculated $\div_{g} X=\div_e X-3  X(\log c)$ for $\Sigma$ tangent vector field $X$, since $g_{ij}=c^{-2}\delta_{ij}$.
\begin{equation*}
I^1_{+, 1, 1, b}=\int_{C_{u_0}} c^3 e^{-\varrho} \p^n v^j \p_n \Omega_j \curl \C^m \bN^l g_{ml} d\mu_\ga dt'.
 \end{equation*}
By using Lemma \ref{trace2} and (\ref{4.25.1.19})
\begin{equation*}
\|\p \Omega\|_{L^4(S_{t,u})}+\|\p \Omega\|_{L^2(S_{t,u})}\les 1.
\end{equation*}
Thus by using the above inequality and (\ref{BA1}), we have
\begin{align*}
|I^1_{+, 1, 1, b}|&\les \|\p v\|_{L_t^2 L_x^\infty}\|\p \Omega\|_{L^2(S_{t,{u_0}})}\|\curl \C\|_{L^2(C_{u_0})}\les \|\curl \C\|_{L^2(C_{u_0})}.
\end{align*}

For the term in $I^1_{+,1,2}$, when integrated in $\widetilde{\D^+}\cap \{u\ge u_0\}\cap \{t_{\tmin}\le t'\le t\}$, the term
$$\int_{t_\tmin}^t \p_t \int_{\Sigma_{t'}\cap \{u\ge u_0\}}\p_j \varrho \p^m\Omega^j e^{-\varrho} \curl \C_m d\mu_e dt'$$  contributes the additional boundary term
\begin{equation*}
I^1_{+,1,2,b}=\int_{C_{u_0}}c^3 e^{-\varrho}\p_j \varrho \p^m \Omega^j \curl \C_m d\mu_\ga dt'.
\end{equation*}
Alternatively,  if we consider  $I^1_{+,1,2}$ on $\widetilde{\D^+}\cap \{u\ge u_0\}$ by applying (\ref{9.30.4.19}) to $F=c^\frac{3}{2}\p_j \varrho $ and $G=c^\frac{3}{2}\p_m \Omega^j e^{-\varrho} \curl \C^m$, we can get the same additional boundary term as above.
Similar to the estimate for $I^1_{+,1,1,b}$,
\begin{align*}
|I^1_{+, 1, 2, b}|&\les \|\p \varrho\|_{L_t^2 L_x^\infty}\|\p \Omega\|_{L^2(S_{t,{u_0}})}\|\curl \C\|_{L^2(C_{u_0})}\\
&\les \|\curl \C\|_{L^2(C_{u_0})}
\end{align*}
The control of the  rest of the terms  can be found in the estimate for $|I^1|$ in (\ref{9.11.8.19}). Therefore, from (\ref{9.30.5.19}) and the above boundary estimates,  we conclude
\begin{align*}
&|\int_{\widetilde{\D^+}\cap \{u\ge u_0\}}\l \bT \curl \C, \curl \C\r_e d\mu_e dt|\\
&\les |I^1_{+, 1, 1, b}|+|I^1_{+, 1, 2, b}|+1+\int_0^t  (\|\curl\C\|_{L^2(\Sigma_t)}+1)^2 (\|\p v, \p \varrho\|_{L_x^\infty}+1)+\sup_{0\le t'\le t} \| \curl \C\|_{L^2(\Sigma_{t'})}\\
&\les \|\curl \C\|_{L^2(C_{u_0})}+1,
\end{align*}
where we used $\|\p v, \p \varrho\|_{L_t^1 L_x^\infty}\les T^\f12$ and the estimate (\ref{9.07.5.19}) to obtain the last line.

Thus in view of (\ref{9.11.5.19}), we have
\begin{align*}
\|\curl \C\|_{L^2(C_{u_0})}^2\les 1+\|\curl \C\|_{L^2(C_{u_0})},
\end{align*}
which gives $\|\curl \C\|_{L^2(C_{u_0})}\les 1$. This shows (\ref{9.07.6.19}).

To prove (\ref{9.11.6.19}), we apply (\ref{9.11.5.19}) to $F=G=P_\mu \curl \C_i$ to bound
\begin{align}
\sum_{\mu>1}|\int_{C_{u_0}} c^3 \mu^{2\delta}|P_\mu \curl \C|_e^2  d\mu_\ga  dt|&\les \sum_{\mu>1}\big|\int_{\widetilde{\D^+}\cap \{u\ge u_0\}} 2\mu^{2\delta}\l \bT P_\mu \curl \C, P_\mu \curl \C\r_e d\mu_e dt\big|\nn\\
&+ \sup_{t'\le t}\int_{\Sigma_{t'}}\sum_{\mu>1} \mu^{2\delta} |P_\mu \curl \C|^2  d\mu_e,\label{9.30.8.19}
\end{align}
where we also used (\ref{BA1}) for deriving the term in the last line.

For the first term on the right hand side,
\begin{align*}
\B_\mu&=\int_{\widetilde{\D^+}\cap \{u\ge u_0\}} 2\mu^{2\delta}\l \bT P_\mu \curl \C, P_\mu \curl \C\r_e d\mu_e dt'\\
&=\int_{\widetilde{\D^+} \cap \{u\ge u_0\}}2\mu^{2\delta} \l [\bT, P_\mu \curl]\C+\curl P_\mu\bT \C , P_\mu \curl \C\r_e d\mu_e dt'.
\end{align*}
Then
\begin{align}\label{9.11.10.19}
\begin{split}
\sum_{\mu>1}|\B_\mu|&\les \sum_{\mu>1}\int_0^t\|\mu^\delta [\bT, P_\mu \curl ]\C\|_{l_\mu^2 L_x^2}\|\mu^\delta P_\mu \curl \C\|_{l_\mu^2 L_x^2}\\
&+\sum_{\mu>1}\big|\int_{\widetilde{\D^+} \cap \{u\ge u_0\}}\mu^{2\delta} \l \curl P_\mu\bT \C , P_\mu \curl \C\r_e d\mu_e dt'\big|.
\end{split}
\end{align}
The first term on the right can be bounded by using (\ref{9.08.2.19}), (\ref{9.08.6.19}) and (\ref{BA1}),
\begin{align}\label{9.11.13.19}
\begin{split}
\sum_{\mu>1}\int_0^t \|\mu^\delta [\bT, P_\mu \curl ]&\C\|_{l_\mu^2 L_x^2}\|\mu^\delta P_\mu \curl \C\|_{l_\mu^2 L_x^2}\\
&\les \int_0^t (\|\p v\|_{B_{\infty, 2, x}^\delta}+\|\p v\|_{L^\infty_x})\|\p \C\|^2_{H^\delta_x} dt'\les 1.
\end{split}
\end{align}

The last line of (\ref{9.11.10.19}) has the same integrand as $\sum_{\mu>1}|\I_\mu|$ with $\I_\mu$ defined in Lemma \ref{9.08.11.19}.   We will repeat the procedure in the proof of Lemma \ref{9.08.11.19}. Due to the integral region is changed to $\widetilde{\D^+}\cap \{u\ge u_0\}$,  the only difference is to control the two additional boundary terms on $C_u$ generated by the integration by parts in (\ref{9.11.9.19}), which are detailed below
\begin{align*}
\J_{\mu,b}&=\mu^{2\delta}\int_{t_{\tmin}}^t \int_{\Sigma_{t'}\cap \{u\ge u_0\}}\p_m (P_\mu(\p_n v^j \c \p^n \Omega_j e^{-\varrho})P_\mu \curl \C^m)d\mu_e dt',\\
\K_{\mu,b}&=\int_{C_{u_0}}c^3\mu^{2\delta}P_\mu(\p_j \varrho \p_m\Omega^j e^{-\varrho})P_\mu \curl \C^m d\mu_\ga dt',
\end{align*}
where the second term is contributed by $\K_\mu$ in (\ref{9.11.9.19}).

For the first term, in the exactly same calculation as in (\ref{9.30.6.19}), we carry out integration by parts on $\Sigma_{t'}\cap \{u\ge u_0\}$,
\begin{align*}
\J_{\mu,b}&= \int_{\widetilde{\D^+}\cap\{u\ge u_0\}} \mu^{2\delta}\div_g(c P_\mu(\p_n v^j \c \p^n \Omega_j e^{-\varrho})P_\mu \curl \C) d\mu_g dt'\\
&=\mu^{2\delta}\int_{C_{u_0}}c^3  P_\mu(\p_n v^j \c \p^n \Omega_j e^{-\varrho})P_\mu \curl \C^m \bN^n c^{-2}\delta_{mn} d\mu_\ga dt'.
\end{align*}
 Hence
\begin{align}
&\sum_{\mu>1}( |\J_{\mu,b}|+|\K_{\mu,b}|)\les  \|\mu^\delta P_\mu \curl \C\|_{l_\mu^2 L^2(C_{u_0})}\|\mu^\delta P_\mu\big((\p v, \p\varrho)  \c \p \Omega e^{-\varrho}\big)\|_{l_\mu^2 L^2(C_{u_0})}.\label{9.11.12.19}
\end{align}
We then use (\ref{9.10.4.19}) combined with (\ref{9.08.8.19}) to treat the last term
\begin{align}\label{9.11.11.19}
\begin{split}
\|\mu^\delta P_\mu\big((\p v, \p\varrho)  \c \p \Omega e^{-\varrho}\big)\|_{l_\mu^2 L^2(C_{u_0})}&\les T^\f12 \|\La^{\f12+\delta}\big((\p v, \p\varrho)  \c \p \Omega e^{-\varrho}\big)\|_{L_t^\infty L^2_x}\\
&\les T^\f12( \|\La^{1+\delta}(\p v, \p \varrho)\|_{L_t^\infty L_x^2} \|\p \Omega e^{-\varrho}\|_{L_t^\infty H^1_x}\\
&+\|\p v, \p \varrho\|_{L_t^\infty H^1_x}\|\La^{1+\delta}(\p \Omega e^{-\varrho})\|_{L_t^\infty L_x^2}).
\end{split}
\end{align}
Note that for the last term in the above inequality, we can apply (\ref{9.08.21.19}) to $F=\p \Omega$ to bound
\begin{equation*}
\|\La^{1+\delta}(\p \Omega e^{-\varrho})\|_{L_t^\infty L_x^2}\les \|\p \Omega\|_{H^{1+\delta}_x}.
\end{equation*}
Substituting the above inequality to (\ref{9.11.11.19}) gives
\begin{align*}
&\|\mu^\delta P_\mu\big((\p v, \p\varrho)  \c \p \Omega e^{-\varrho}\big)\|_{l_\mu^2 L^2(C_u)}\\
&\les T^\f12( \|\La^{1+\delta}(\p v, \p \varrho)\|_{L_t^\infty L_x^2} \|\p \Omega e^{-\varrho}\|_{L_t^\infty H^1_x}+\|\p v, \p \varrho\|_{L_t^\infty H^1_x}\|\p \Omega \|_{L_t^\infty H^{1+\delta}_x})\\
&\les 1,
\end{align*}
where we employed Corollary \ref{eng_wave}, (\ref{9.23.4.19}), (\ref{9.07.15.19}) and (\ref{5.04.17.19}).

Hence combining the above inequality with  (\ref{9.11.12.19}),  we conclude
\begin{equation}\label{10.25.1.19}
\sum_{\mu>1}( |\J_{\mu,b}|+|\K_{\mu,b}|)\les \|\mu^\delta P_\mu \curl \C\|_{l_\mu^2 L^2(C_{u_0})}.
\end{equation}
Note the change of the integral region of $\I_\mu$ in (\ref{9.30.7.19}) to $\widetilde{\D^+}\cap \{u\ge u_0\}$ only requires us to provide the above two additional estimates.  The remaining estimate for the term is carried out exactly as in Lemma \ref{9.08.11.19}, and controlled by the same bound. 
 Then by the estimate in Lemma \ref{9.08.11.19},  in view of (\ref{9.11.10.19}), (\ref{9.11.13.19}) and using (\ref{9.08.6.19}), we derive
\begin{align*}
\sum_{\mu>1}|\B_\mu|&\les \sum_{\mu>1}(|\J_{\mu,b}|+|\K_{\mu,b}|)+1+ \int_0^t (\|\curl \C\|_{\dot{H}^\delta_x}+1)\{\|\p v\|_{B_{\infty, 2,x}^\delta}\\
&+(1+\|\p v\|_{L_x^\infty})(\|\curl \C\|_{\dot{H}^\delta_x}+1)\} dt' +\sup_{0\le t'\le t}\|\La^\delta\curl \C\|_{L_x^2}(t')\\
&\les 1+\|\mu^\delta P_\mu \curl \C\|_{l_\mu^2 L^2(C_{u_0})},
\end{align*}
where we used (\ref{10.25.1.19}), Corollary \ref{comp_3}, (\ref{BA1}) and (\ref{9.07.15.19}) in the above.

Recall from (\ref{9.30.8.19}), we can conclude by  using (\ref{9.07.15.19}) that
\begin{align*}
\sum_{\mu>1}&|\int_{C_{u_0}} c^3 \mu^{2\delta}|P_\mu \curl \C|_e^2  d\mu_\ga  dt|\\
&\les 1+\|\mu^\delta P_\mu \curl \C\|_{l_\mu^2 L^2(C_{u_0})}+ \sup_{t'\le t}\int_{\Sigma_{t'}} \sum_{\mu>1}\mu^{2\delta} |P_\mu \curl \C|^2  d\mu_e\\
&\les 1+\|\mu^\delta P_\mu \curl \C\|_{l_\mu^2 L^2(C_{u_0})}.
\end{align*}
This implies (\ref{9.11.6.19}). Hence the proof of Proposition \ref{9.30.15.19} is complete.
\end{proof}

\section{Fundamental structures for  the causal geometry of the acoustic spacetime}\label{fundstr}

The Hessian of the optical function $u$ defined in (\ref{optical}) is usually decomposed into the connection coefficients  (\ref{ricc_def}) of  the null tetrads $\{L, \Lb, e_A, A=1,2\}$.
  Among them, we focus on controlling the null second fundamental form $\chi$ and the torsion $\zeta$. $\tr\chi$ is usually controlled by using the Raychaudhuri equation (see (\ref{s1})), which contains the Ricci component $\bR_{LL}$; while for  $\zeta$ and $\chih$, one may rely on the Hodge system on $S_{t,u}$ (see (\ref{dze}), (\ref{dcurl}) and (\ref{dchi})), due to the limited regularity on the Riemann curvature. The estimates on them are coupled together via a bootstrap argument.  The control of $\tr\chi$ plays a more fundamental role since it is crucially used to guarantee the coordinate system by $t, u$ and $\omega \in {\mathbb S}^2$ to be well-defined in $\widetilde{\D^+}$ as explained in Section \ref{null_cone_set}. To control $\tr\chi$, it typically relies on the specific structure in $\bR_{LL}$ to gain regularity over the general control from the spacetime metric, c.f. \cite{KRduke}-\cite{KRd} and \cite{Tataru, Wangricci, Wangrough}.

Due to the decoupling method and a series of cancellations, the regularity of the general spacetime metric is  maintained at the same level as in \cite{Wangrough},  under our assumption of the data. In the acoustic spacetime, the main defect caused by the rough vorticity derivative actually occurs on the acoustic null cone, since $\curl \Omega$ is the main uncontrollable term in $\bR_{LL}$, appearing in the Raychaudhuri equation, which at the first glance fails the estimate on $\tr\chi$, 
and thus collapses any further analysis on the cone.

In order to gain the sufficient regularity for proving Theorem \ref{BT} as well as showing (\ref{pba2})-(\ref{wras}), we have to investigate in a much more delicate level on the specific structure of the acoustic metric. Here we uncover two fundamental structures: one is on the $\sn (\curl \Omega_\bN)$ given in Proposition \ref{struc}; the other is on $k_{\bN\bN}-\f12 \Xi_L$ given in Proposition \ref{6.14.2.19}, where $\Xi$ is a one-form defined in (\ref{ricc6.7.2}).  Both of the structures  are the crucial ingredients for the analysis of $\tr\chi$ carried out in Section \ref{causal_reg}.

We start with recalling the basic calculations by virtue of the null tetrads $\{L, \Lb, e_A, A=1,2\}$, which appeared in \cite{KRduke,Wangrough} for the spacetime therein.
\begin{proposition}\label{6.7con}
\begin{align}
\bd_A e_4=\chi_{AB} e_B-k_{A\bN} e_4, &\qquad  \bd_A e_3=\chib_{AB}e_B+k_{A\bN} e_3, \label{7.21.3.19}\\
\bd_4 e_4=-k_{\bN\bN} e_4,   &\qquad  \bd_4 e_3=2\zb_A e_A+  k_{\bN\bN} e_3,\label{6.29.5.19} \\
\bd_3 e_4=2\zeta_A e_A+k_{\bN\bN} e_4, & \qquad \bd_4 e_A=\sn_L e_A+\zb_A e_4, \label{6.29.6.19}\\
\bd_B e_A=\sn_B e_A+\f12 \chi_{AB} e_3+\f12 \chib_{AB} e_4& \qquad \bd_3 e_3 = (-2\zeta_A+2k_{\bN A}) e_A -k_{\bN\bN} e_3\label{6.29.7.19}\\
\chi_{AB}=\theta_{AB}-k_{AB},\quad\zb^A=-k^A_\bN, &\qquad  \zeta^A=\sn\log \bb+k^A_\bN.\label{3.19.1}
\end{align}
where $e_4=L$ and $e_3=\Lb$.
\end{proposition}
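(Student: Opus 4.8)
\textbf{Proof proposal for Proposition \ref{6.7con}.}

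The plan is to derive all the identities \eqref{7.21.3.19}--\eqref{3.19.1} directly from the definitions in \eqref{ricc_def} together with the basic structural properties of the null frame $\{e_3=\Lb, e_4=L, e_A\}$ established in Section \ref{setupcone}: namely that $L$ is a geodesic generator after the rescaling $L=\bb L'$, that $\Lb=\bT-\bN$, $L=\bT+\bN$, and that $\bg(e_A,e_B)=\ga_{AB}$, $\bg(L,L)=\bg(\Lb,\Lb)=0$, $\bg(L,\Lb)=-2$, $\bg(L,e_A)=\bg(\Lb,e_A)=0$. The key preliminary input is the relation between the spacetime connection $\bd$ and the second fundamental form $k$ of $\Sigma_t$: for $\Sigma_t$-tangent vectors $X,Y$ one has $\bd_X Y=\nab_X Y+k(X,Y)\bT$ and $\bd_X\bT=-k(X,\cdot)^\sharp$ (with the sign convention $k_{ij}=-\f12\Lie_\bT g_{ij}$ adopted in the paper), while the Gauss formula for $S_{t,u}\subset\Sigma_t$ reads $\nab_X Y=\sn_X Y+\theta(X,Y)\bN$ for $X,Y$ tangent to $S_{t,u}$. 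I would assemble these once and then read off each line of the proposition.

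The steps, in order. First I would establish \eqref{3.19.1}: writing $e_4=L=\bT+\bN$ and projecting $\bd_A L=\bd_A\bT+\bd_A\bN$ onto $S_{t,u}$ gives $\chi_{AB}=\bg(\bd_A\bT,e_B)+\bg(\bd_A\bN,e_B)=-k_{AB}+\theta_{AB}$, which is the first identity in \eqref{3.19.1}; the relations $\zb^A=-k^A_\bN$ and $\zeta^A=\sn\log\bb+k^A_\bN$ follow from $\zb_A=\f12\bg(\bd_4 e_3,e_A)$ and $\zeta_A=\f12\bg(\bd_3 e_4,e_A)$ after using $\bd_L L=-k_{\bN\bN}L$ (geodesic property of $L'$ combined with $L=\bb L'$ and $L(\log\bb)=-k_{\bN\bN}$ from \eqref{lb}) and the definition $\bb^{-1}=\bT u$ to compute the normal derivative of $\bN$. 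Second, with \eqref{3.19.1} in hand I would compute the normal component of $\bd_A e_4$: since $\bg(\bd_A e_4,e_3)=-\bg(e_4,\bd_A e_3)$ and $\bg(\bd_A e_4,e_4)=0$, the only surviving $e_4$-component is fixed by $\bg(\bd_A e_4,e_3)$, which after expanding $e_4=\bT+\bN$, $e_3=\bT-\bN$ and using $\bd_A\bT=-k(A,\cdot)^\sharp$ equals $2k_{A\bN}$; hence $\bd_A e_4=\chi_{AB}e_B-k_{A\bN}e_4$, the first half of \eqref{7.21.3.19}. The companion formula for $\bd_A e_3$ is obtained the same way (or by the replacement $\bN\mapsto-\bN$, $\chi\mapsto\chib$, noting the sign flip in the $e_3$-component). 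Third, \eqref{6.29.5.19}--\eqref{6.29.7.19}: the identity $\bd_4 e_4=-k_{\bN\bN}e_4$ is just the geodesic equation for $L$ rewritten via $L(\log\bb)=-k_{\bN\bN}$; the remaining frame derivatives $\bd_4 e_3$, $\bd_3 e_4$, $\bd_4 e_A$, $\bd_B e_A$, $\bd_3 e_3$ are then pinned down component by component, using orthonormality of $\{e_A\}$, the definitions of $\zeta,\zb$ in \eqref{ricc_def}, the Gauss formula for the $e_A$-derivatives, and the already-derived $\bd_A e_4$, $\bd_A e_3$ to supply the mixed components via metric compatibility $X\bg(Y,Z)=\bg(\bd_X Y,Z)+\bg(Y,\bd_X Z)$.

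I do not expect a genuine obstacle here — this is a bookkeeping lemma, structurally identical to the null-frame structure equations used in \cite{KRduke,Wangrough}, and the only place where care is needed is sign consistency. The most error-prone step is tracking the signs introduced by the paper's convention $k_{ij}=-\f12\Lie_\bT g_{ij}$ (so $\bd_i\bT=-k_i{}^j\p_j$ rather than $+$) together with the identification $e_3=\Lb=\bT-\bN$, which is what produces the opposite sign of the $k_{A\bN}$ term in $\bd_A e_3$ versus $\bd_A e_4$ and the $+k_{\bN\bN}$ in $\bd_3 e_4$. Once the sign dictionary is fixed by checking one identity (say $\bd_4 e_4=-k_{\bN\bN}e_4$) against \eqref{lb}, the rest propagate mechanically, and I would simply verify that the contractions with $e_3$, $e_4$, $e_A$ reproduce exactly the right-hand sides listed.
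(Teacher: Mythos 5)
Your plan is sound and is essentially the argument the paper implicitly relies on: Proposition \ref{6.7con} is stated without proof, being recalled from \cite{KRduke,Wangrough}, and what you outline is exactly the bookkeeping those references carry out — the frame relations $L=\bT+\bN$, $\Lb=\bT-\bN$, $\bg(L,\Lb)=-2$, the identity $\bd_X\bT=-k(X,\cdot)^\sharp$ (which is what $k_{ij}=-\f12\Lie_\bT g_{ij}$ gives for the unit-lapse normal, for which also $\bd_\bT\bT=0$), the Gauss formula for $S_{t,u}\subset\Sigma_t$, the geodesic property of $L'$ together with $L\bb=-\bb k_{\bN\bN}$ from (\ref{lb}), and metric compatibility to pin down each component. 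Your key intermediate value $\bg(\bd_A e_4,e_3)=2k_{A\bN}$ is correct, and with it all of (\ref{7.21.3.19})--(\ref{3.19.1}) follow as you describe; for $\zeta_A=\sn_A\log\bb+k_{A\bN}$ the cleanest route is to use $L'=-\bd u$ and the symmetry of $\bd^2u$ rather than trying to compute $\bd_\bT\bN$ directly, but that is a matter of taste.

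One preliminary formula in your write-up is internally inconsistent and should be corrected before you let the signs "propagate mechanically": given $\bd_X\bT=-k(X,\cdot)^\sharp$ one has $\bg(\bd_XY,\bT)=+k(X,Y)$ for $\Sigma_t$-tangent $X,Y$, and since $\bg(\bT,\bT)=-1$ the Gauss decomposition is $\bd_XY=\nab_XY-k(X,Y)\bT$, not $+k(X,Y)\bT$ as you wrote. The place this matters is (\ref{6.29.7.19}): the correct normal part is $\f12\chi_{AB}e_3+\f12\chib_{AB}e_4=-k_{AB}\bT+\theta_{AB}\bN$, consistent with $\chi_{AB}+\chib_{AB}=-2k_{AB}$ and $\chi_{AB}-\chib_{AB}=2\theta_{AB}$, whereas your stated Gauss formula would flip the $\bT$-component. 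Because in practice you can (and should) obtain the $e_3,e_4$ coefficients of $\bd_Be_A$ by pairing with the already-computed $\bd_Be_4,\bd_Be_3$ via metric compatibility, this is a local slip rather than a structural gap; but note that your proposed calibration against (\ref{lb}) only fixes the $k_{\bN\bN}$ signs and would not catch this one, so fix it explicitly.
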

As a direct consequence of (\ref{6.29.5.19})-(\ref{6.29.7.19}), the following decompositions hold under the null tetrad.
\begin{corollary}\label{decom_wave}
Let $h=\f12 \tr\chi$ and $\hb=\f12 \tr\chib$. For a scalar function $f$, there holds
\begin{equation}\label{6.30.1.19}
\Box_\bg f=\sD f-\Lb L f-(\hb-k_{\bN\bN}) L f-h\Lb f+2\zeta^A \sn_A f;
\end{equation}
and
\begin{equation}\label{6.30.2.19}
\Box_\bg f=\sD f-L \Lb f-(h-k_{\bN\bN})\Lb f-\hb L f+2\zb^A \sn_A f.
\end{equation}
\end{corollary}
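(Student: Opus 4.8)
The final statement to prove is Corollary \ref{decom_wave}, giving the two null decompositions of $\Box_\bg f$ for a scalar function $f$.

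\medskip

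\textbf{Proof proposal.} The plan is to compute $\Box_\bg f = \bg^{\mu\nu}\bd_\mu\bd_\nu f$ directly by expanding the trace of the Hessian of $f$ in the null tetrad $\{e_4=L,\, e_3=\Lb,\, e_A,\, A=1,2\}$, using the connection formulas from Proposition \ref{6.7con}. First I would record the dual decomposition of the inverse metric in terms of this frame: since $\bg(L,\Lb)=-2$, $\bg(e_A,e_B)=\ga_{AB}$, and $L,\Lb$ are null and orthogonal to the $e_A$, one has $\bg^{\mu\nu} = -\tfrac12(L^\mu\Lb^\nu + \Lb^\mu L^\nu) + \ga^{AB}e_A^\mu e_B^\nu$, equivalently $\Box_\bg f = -\tfrac12\big(\Hess f(L,\Lb) + \Hess f(\Lb,L)\big) + \ga^{AB}\Hess f(e_A,e_B)$ where $\Hess f(X,Y) = X(Yf) - (\bd_X Y)f$.

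\medskip

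Next I would evaluate the three pieces. For the angular piece, $\ga^{AB}\Hess f(e_A,e_B) = \ga^{AB}(e_A(e_B f) - (\bd_{e_A}e_B)f)$; inserting $\bd_B e_A = \sn_B e_A + \tfrac12\chi_{AB}e_3 + \tfrac12\chib_{AB}e_4$ from (\ref{6.29.7.19}) and recalling that $\ga^{AB}(e_A(e_B f) - (\sn_{e_A}e_B)f) = \sD f$ and $\ga^{AB}\chi_{AB} = \tr\chi = 2h$, $\ga^{AB}\chib_{AB} = 2\hb$, this contributes $\sD f - h\,\Lb f - \hb\, L f$. For the cross terms, using $\Hess f$ is symmetric I would write $-\tfrac12(\Hess f(L,\Lb)+\Hess f(\Lb,L)) = -\Hess f(L,\Lb) = -(L(\Lb f) - (\bd_L\Lb)f)$ or symmetrically $-(\Lb(Lf) - (\bd_\Lb L)f)$. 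Plugging $\bd_4 e_3 = 2\zb^A e_A + k_{\bN\bN}e_3$ from (\ref{6.29.5.19}) into the first form gives $-L\Lb f + 2\zb^A\sn_A f + k_{\bN\bN}\Lb f$; combining with the angular piece yields (\ref{6.30.2.19}). Using instead $\bd_3 e_4 = 2\zeta^A e_A + k_{\bN\bN}e_4$ from (\ref{6.29.6.19}) in the symmetric form gives $-\Lb L f + 2\zeta^A\sn_A f + k_{\bN\bN}L f$, and combining with the angular piece yields (\ref{6.30.1.19}).

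\medskip

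There is essentially no hard obstacle here — this is a routine frame computation — so the only thing to be careful about is bookkeeping: the signs coming from $\bg(L,\Lb)=-2$ (hence the factor $-1$ rather than $-\tfrac12$ once the two symmetric cross terms are combined), and the consistent use of $h=\tfrac12\tr\chi$, $\hb=\tfrac12\tr\chib$ so that the $\ga^{AB}\chi_{AB}$ and $\ga^{AB}\chib_{AB}$ contractions produce $2h$ and $2\hb$ which then halve correctly. One should also note that the terms $-k_{A\bN}e_4$, $k_{A\bN}e_3$ appearing in $\bd_A e_4$, $\bd_A e_3$ do not enter, since those derivatives are not the ones contracted in the trace; only $\bd_B e_A$, $\bd_L\Lb$ (or $\bd_\Lb L$) are needed. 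The two formulas are exchanged under $L\leftrightarrow\Lb$, $h\leftrightarrow\hb$, $\zeta\leftrightarrow\zb$, $k_{\bN\bN}\leftrightarrow -k_{\bN\bN}$ combined with the sign flips in (\ref{6.29.5.19})--(\ref{6.29.6.19}), which provides a useful consistency check on the final signs.
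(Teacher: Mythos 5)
Your computation is correct and is exactly the paper's (essentially unwritten) argument: the paper simply declares the corollary "a direct consequence of (\ref{6.29.5.19})--(\ref{6.29.7.19})", i.e.\ the trace of the Hessian in the null frame with the connection coefficients inserted, which is what you carry out. One harmless quibble: in your closing consistency check the swap $L\leftrightarrow\Lb$, $h\leftrightarrow\hb$, $\zeta\leftrightarrow\zb$ leaves $k_{\bN\bN}$ unchanged (both $\bd_4 e_3$ and $\bd_3 e_4$ carry $+k_{\bN\bN}$), so no sign flip of $k_{\bN\bN}$ is needed to exchange (\ref{6.30.1.19}) and (\ref{6.30.2.19}).
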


\subsubsection{Commutation formulas}
We recall the following commutation relations used in \cite[Section 5]{Wangrough} (see also in \cite{KRduke,KREinst}).
\begin{proposition}
(1) There holds for the scalar functions $f$ that
\begin{equation}\label{3.19.2}
[L,\bT]f=\f12[L, \Lb]f=(\zb^A-\zeta^A)\sn_A f-k_{\bN\bN} \bN f;
\end{equation}
%
(2) There holds for $S_{t,u}$-tangent $m$-covariant tensor fields $U_A$ that
\begin{equation}\label{cmu2}
\begin{split}
&\sn_L\sn_B U_A-\sn_B \sn_L U_A  \\
&=-\chi_{BC}\c \sn_C U_A+\sum_{i}(\chi_{A_i B} \zb_C-\chi_{BC} \zb_{A_i}
+\bR_{{A_i}C4 B})U_{A_1\cdots\ckk C\cdots A_m}
\end{split}
\end{equation}
and for any scalar function $f$ there holds
\begin{equation}\label{cmu_2}
[L,\sn_A] f=- \chi_{AB} \sn_B f.
\end{equation}
Consequently, for any scalar function $f$ there holds
\begin{align}
L\sD f+  \tr\chi \sD f= \sD L f-2\chih\c \sn^2 f-\sn_A\chi_{AC}\sn_C f
+(\tr\chi \zb_C-\chi_{AC}\zb_{A}-\delta^{AB}\bR_{CA4B}) \sn_C f.\label{tran1}
\end{align}
\end{proposition}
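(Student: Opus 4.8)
The final statement collects three commutation formulas for the null frame $\{L,\Lb,e_A\}$ plus the derived identity \eqref{tran1} for $L\sD f$. The plan is to prove each part in turn, deriving everything from the Levi-Civita connection identities of Proposition \ref{6.7con} (which we may assume), the definition of the connection coefficients in \eqref{ricc_def}, and the algebraic relations \eqref{3.19.1}.

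\textbf{Proof of \eqref{3.19.2}.} First I would write $\bT = \f12(L+\Lb)$, so $[L,\bT]f = \f12[L,\Lb]f$, giving the first equality for free. For the second, compute $[L,\Lb]f = \bd_L\Lb f - \bd_\Lb L f = (\bd_L\Lb - \bd_\Lb L)f$ acting on the scalar $f$; this equals $-\bg(\bd_L\Lb - \bd_\Lb L, \cdot)$ paired against $\bd f$, but more directly one expands $\bd_L\Lb$ and $\bd_\Lb L$ in the null frame using \eqref{6.29.5.19} and \eqref{6.29.6.19}: $\bd_4 e_3 = 2\zb_A e_A + k_{\bN\bN}e_3$ and $\bd_3 e_4 = 2\zeta_A e_A + k_{\bN\bN}e_4$. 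Then $[L,\Lb]f = (\bd_4 e_3 - \bd_3 e_4)f = 2(\zb_A-\zeta_A)\sn_A f + k_{\bN\bN}(\Lb f - L f) = 2(\zb_A-\zeta_A)\sn_A f - 2k_{\bN\bN}\bN f$ since $\Lb - L = -2\bN$. Dividing by two yields \eqref{3.19.2}.

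\textbf{Proof of \eqref{cmu2}, \eqref{cmu_2}.} The scalar identity \eqref{cmu_2} is a direct computation: $[L,\sn_A]f = \bd_L(\sn_A f) - \sn_A(Lf)$, and expanding $\bd_L e_A = \sn_L e_A + \zb_A e_4$ from \eqref{6.29.6.19} while noting $\bd_A e_4 = \chi_{AB}e_B - k_{A\bN}e_4$, one finds the $e_4$ and $\zb$ contributions cancel against the $\sn_L$ term, leaving $-\chi_{AB}\sn_B f$. For the tensorial commutator \eqref{cmu2} I would use the standard curvature identity $\sn_L\sn_B U_A - \sn_B\sn_L U_A = R(L,e_B)$-type terms plus correction terms from the fact that $\sn$ is the induced (not ambient) connection; concretely, expand $\bd_L\bd_B - \bd_B\bd_L = \bd_{[L,e_B]} + R(L,e_B)$ acting on $U$, project onto $S_{t,u}$, and use $\bd_B e_4 = \chi_{BC}e_C - k_{B\bN}e_4$ and $\bd_L e_A = \sn_L e_A + \zb_A e_4$ to extract the $-\chi_{BC}\sn_C U_A$ and the $\chi\zb$ terms; the Riemann term $\bR_{A_i C 4 B}$ enters as the ambient curvature contracted against the frame. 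This is bookkeeping-heavy but routine once the frame derivatives are substituted.

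\textbf{Proof of \eqref{tran1}.} Apply $\sn_A$ to \eqref{cmu_2} and contract, or more systematically: write $\sD f = \ga^{AB}\sn_A\sn_B f$ and compute $L\sD f$ using $L\ga^{AB} = -2\chi^{AB}$ together with \eqref{cmu2} applied to the one-tensor $U_B = \sn_B f$. The $L\ga^{AB}$ factor produces $-2\chih\cdot\sn^2 f - \tr\chi\,\sD f$ (splitting $\chi = \chih + \f12\tr\chi\,\ga$), the commutator \eqref{cmu2} produces $\sD Lf$ plus $-\chi_{AC}\sn_C\sn_A f$ type terms and the curvature/$\zb$ terms, and a divergence-of-$\chi$ term $\sn_A\chi_{AC}\sn_C f$ appears from commuting $\sn_A$ past $\sn_B$ acting on $\sn_B f$ and relabeling. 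Collecting everything and using the symmetry of $\sn^2 f$ to absorb $-\chi_{AC}\sn^2 f$ into $-2\chih\cdot\sn^2 f$ (the trace part cancels) gives exactly \eqref{tran1}. The main obstacle is purely organizational: keeping consistent index conventions and sign choices across the four substitutions so that the curvature term appears as $\delta^{AB}\bR_{CA4B}$ rather than a contracted variant; I would fix the frame orientation and the convention $\bR_{\a\b\g\d}$ at the outset to avoid this. No hard analysis is involved — the content is entirely the algebra of the null frame.
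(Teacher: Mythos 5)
The paper offers no proof of this proposition at all: it is explicitly ``recalled'' from \cite[Section 5]{Wangrough} (see also \cite{KRduke,KREinst}), so your proposal supplies a derivation the paper only cites. Your derivation from Proposition \ref{6.7con} is the standard one and is essentially correct. For (\ref{3.19.2}), torsion-freeness gives $[L,\Lb]=\bd_4e_3-\bd_3e_4$, and (\ref{6.29.5.19})--(\ref{6.29.6.19}) together with $e_3-e_4=-2\bN$ yield exactly the stated formula; for (\ref{cmu_2}) the computation is right, though the cancellation is between the two $e_4$-components $\zb_Ae_4$ and $k_{A\bN}e_4$ (via $\zb_A=-k_{A\bN}$ from (\ref{3.19.1})), while the $\sn_Le_A$ term is removed by the definition of the projected derivative acting on $\sn f$, not ``against'' the $e_4$ terms. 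The tensorial identity (\ref{cmu2}) is indeed the ambient commutator $\bd_L\bd_B-\bd_B\bd_L=\bd_{[L,e_B]}+R(L,e_B)$ projected to $S_{t,u}$ with the frame relations of Proposition \ref{6.7con}, as you say.

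One caution on your sketch of (\ref{tran1}): mixing the two bookkeepings risks over-counting the $\chi\c\sn^2f$ term. If you contract in an orthonormal frame, then both factors of $-\chi_{AC}\sn^2_{AC}f$ come from the two commutations (apply (\ref{cmu2}) to $U=\sn f$ and then differentiate (\ref{cmu_2})), and you should not additionally insert $L\ga^{AB}=-2\chi^{AB}$; alternatively, in the pull-back coordinates where $[L,\p_{\omega^A}]=0$, the $-2\chi\c\sn^2f$ comes solely from $L\ga^{AB}$ and the first-order terms from $L\Gamma$. Either route, carried out consistently, produces $-2\chi_{AC}\sn^2_{AC}f=-2\chih\c\sn^2f-\tr\chi\,\sD f$, with the trace part moved to the left-hand side (it does not ``cancel''), $-\sn_A\chi_{AC}\sn_Cf$ from distributing $\sn_A$ over $\chi_{AC}\sn_Cf$, and the curvature term $\delta^{AB}\bR_{AC4B}=-\delta^{AB}\bR_{CA4B}$, which matches (\ref{tran1}). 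So the approach is sound; just fix one convention before collecting terms, as you anticipate.
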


\subsubsection{Null Structure Equations}

 We will rely heavily on the following structure equations for the connection coefficients
on null hypersurfaces $C_u$ in $\widetilde{\D^+}$ (see \cite[Chapter 7]{CK}, \cite{KRduke} and \cite[Section 5]{Wangrough}):
\begin{proposition}[Transport equations and Hodge systems for connection coefficients]
\begin{align}
&L \bb=-\bb { k}_{\bN\bN}, \label{lb}\\
&L\tr\chi+\f12 (\tr\chi)^2=-|\chih|^2-{ k}_{\bN\bN} \tr\chi-\bR_{44}, \label{s1} \displaybreak[0]\\
&\sn_L \chih_{AB}+\f12 \tr\chi \chih_{AB}=-{k}_{\bN\bN} \chih_{AB}-(\bR_{4A4B}-\f12 \bR_{44} \delta_{AB}), \label{s2} \displaybreak[0]\\
&L \tr\chib+\f12 \tr\chi \tr\chib=2\div \zb+k_{\bN\bN} \tr\chib-\chih\c \chibh+2|\zb|^2+\delta^{AB}\bR_{A34B}, \label{mub} \displaybreak[0]\\
&\sn_L \zeta+\f12\tr\chi \zeta=-(k_{B\bN}+\zeta_B) \chih_{AB}-\f12 \tr\chi k_{A\bN}-\f12 \bR_{A4 43}, \label{tran2} \displaybreak[0]\\
&(\sl{\div} \chih)_A+\chih_{AB}\c k_{B\bN}=\f12(\sn \tr\chi+k_{A\bN} \tr\chi)+\bR_{B4BA}, \label{dchi} \displaybreak[0]\\
&\sl{\div} \zeta=\f12(\mu-k_{\bN\bN} \tr\chi-2|\zeta|^2-|\chih|^2-2k_{AB}\chih_{AB})-\f12\delta^{AB}\bR_{A43B}, \label{dze} \displaybreak[0]\\
&\sl{\curl} \zeta=-\f12 \chih\wedge \chibh+\f12 \ep^{AB}\bR_{B43A}, \label{dcurl} \displaybreak[0]\\
&\sn_\Lb \chih_{AB}+\f12 \tr\chib \chih_{AB}=-\f12 \tr\chi\chibh_{AB}+2\sn_A\zeta_B-\div \ze \delta_{AB}+k_{\bN\bN} \chih_{AB} \label{3chi} \\
&\quad\quad\quad\quad\quad \quad\quad+(2\zeta_A\ze_B-|\ze|^2\delta_{AB})+\bR_{A43B}-\f12 \delta^{CD}\bR_{C43D}\delta_{AB},\nn
\end{align}
where the mass aspect function $\mu:=\Lb \tr\chi+\f12 \tr\chi \tr\chib$; and for an $S_{t,u}$-tangent tensor field $F$,  $\sn_L F:=L^\mu\bd_\mu F$, with $\bd$ the covariant derivative of $(\M,\bg)$.

There holds the following Hodge system on $S_{t,u}$ contained in $\widetilde{\D^+}$
\begin{equation}\label{6.17.7.19}
\sl{\div}(\zeta-\zb)=|\zb|^2-|\zeta|^2-(\zb-\zeta)\sn \varphi, \quad\sl{\curl} (\zeta-\zb)=-2\sl{\curl}\zb
\end{equation}
with $\varphi= \log \sqrt{|\ga|}-\log\sqrt{|\ckk\ga|}$ with $\ckk\ga=(t-u)^{-2}\ga\rp{0}$.
\end{proposition}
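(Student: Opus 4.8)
\textbf{Proof proposal for the Hodge system \eqref{6.17.7.19}.}
The plan is to derive the two equations \eqref{6.17.7.19} for $\zeta - \zb$ as a direct consequence of the structure equations \eqref{dze}, \eqref{dcurl}, together with their $\zb$-analogues, so that the curvature terms on the right-hand sides cancel. First I would record the equations satisfied by $\zb$: since $\zb_A = -k_{A\bN}$ by \eqref{3.19.1}, and $\zeta_A = \sn_A\log\bb + k_{A\bN}$, we already have the pointwise identity $\zeta - \zb = \sn\log\bb + 2 k_{\cdot\,\bN}$, but for the Hodge system it is cleaner to work directly from the null structure equations governing $\div$ and $\curl$ of each quantity. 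The divergence of $\zb$ obeys an equation dual to \eqref{dze} obtained by the same computation on the conjugate null direction (this is standard; see \cite[Chapter 7]{CK}, \cite[Section 5]{Wangrough}), and similarly $\sl{\curl}\,\zb$ obeys the equation dual to \eqref{dcurl}. Subtracting these conjugate equations from \eqref{dze} and \eqref{dcurl} respectively, the curvature components $\bR_{A43B}$ (and the corresponding trace term) appear with opposite signs and cancel, leaving only the quadratic Ricci-coefficient terms $|\zb|^2 - |\zeta|^2$ and the $\chih$-$\chibh$ terms; the latter should either cancel or be absorbed into the $(\zb - \zeta)\sn\varphi$ term after using the definition $\varphi = \log\sqrt{|\ga|} - \log\sqrt{|\ckk\ga|}$.

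The second, more delicate input is the appearance of $\sn\varphi$ in the divergence equation. Here I would use that $L\varphi = \tr\chi - \frac{2}{t-u}$ (from $L\log\sqrt{|\ga|} = \tr\chi$ via \eqref{lv}, and $L\log\sqrt{|\ckk\ga|} = -\frac{2}{t-u}$ since $\ckk\ga = (t-u)^{-2}\ga\rp{0}$), so that $\sn\varphi$ measures the deviation of the metric foliation from the canonical one. The term $(\zb - \zeta)\sn\varphi$ in \eqref{6.17.7.19} should arise precisely from converting between the intrinsic divergence $\sl{\div}$ on $S_{t,u}$ with respect to $\ga$ and a reference divergence, i.e.\ from the identity $\sl{\div}_\ga X = \sl{\div}_{\ckk\ga} X + X(\varphi)$ (up to the right constant), applied to $X = \zeta - \zb$; I would need to track this carefully to land on exactly the stated form.

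For $\sl{\curl}(\zeta - \zb) = -2\sl{\curl}\,\zb$: this one is nearly immediate, since $\sl{\curl}$ is linear and $\sl{\curl}(\zeta - \zb) = \sl{\curl}\,\zeta - \sl{\curl}\,\zb$, so it reduces to showing $\sl{\curl}\,\zeta = -\sl{\curl}\,\zb$. From $\zeta_A = \sn_A\log\bb + k_{A\bN}$ and $\zb_A = -k_{A\bN}$, we get $\sl{\curl}\,\zeta = \sl{\curl}(\sn\log\bb) + \sl{\curl}(k_{\cdot\,\bN}) = \sl{\curl}(k_{\cdot\,\bN})$ because the curl of a gradient vanishes, while $\sl{\curl}\,\zb = -\sl{\curl}(k_{\cdot\,\bN})$; hence $\sl{\curl}\,\zeta = -\sl{\curl}\,\zb$ and the claim follows. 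The analogous observation $\zeta - \zb = \sn\log\bb + 2k_{\cdot\,\bN}$ could also give a quick route to the divergence equation: apply $\sl{\div}$, use $\sl{\div}\sn\log\bb = \sl{\lapp}\log\bb$, and then re-express using \eqref{lb} and the transport equation for $\bb$; but I expect this to reproduce the quadratic terms in $|\zeta|^2 - |\zb|^2$ only after nontrivial manipulation with the structure equations, so the subtraction-of-conjugate-equations route is probably the more robust one to write up.

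\textbf{Main obstacle.} The hard part will be bookkeeping the precise form of the right-hand sides so that the stated identity holds \emph{exactly} (not just up to terms of the same schematic type): getting the coefficient $-2$ in the curl equation, verifying that all curvature terms cancel in the divergence equation, and matching the $(\zb - \zeta)\sn\varphi$ term requires care with the conventions for $\sl{\div}$, $\sl{\curl}$ on $S_{t,u}$, the sign of $\zeta$ versus $\zb$, and the relation between $\varphi$ and the area element. I would also need to confirm that no $\sn\log\bb$-quadratic or mixed $\chih\cdot\chibh$ terms survive, which may require invoking the definition $\bb = a$ at $t=0$ and the constraint equations on the initial foliation from Proposition \ref{exten}.
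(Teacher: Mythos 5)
Your treatment of the curl equation is correct and is exactly the paper's argument: by \eqref{3.19.1} one has $\zeta+\zb=\sn\log\bb$, the curl of a gradient vanishes, so $\sl{\curl}\zeta=-\sl{\curl}\zb$ and $\sl{\curl}(\zeta-\zb)=-2\sl{\curl}\zb$. Your opening move for the divergence equation is also the right one in substance: the paper likewise subtracts \eqref{dze} (read, through the definition $\mu=\Lb\tr\chi+\f12\tr\chi\tr\chib$, as an expression for $\Lb\tr\chi$) from \eqref{mub}, and indeed the curvature terms cancel ($\delta^{AB}\bR_{A34B}=\delta^{AB}\bR_{A43B}$ by the curvature symmetries), while the $\chih$--$\chibh$ quadratic terms cancel after using $\chib_{AB}+\chi_{AB}=-2k_{AB}$.

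The genuine gap is in where $(\zb-\zeta)\sn\varphi$ comes from. After your subtraction you are left with the unbalanced transport term $\f12(L\tr\chib-\Lb\tr\chi)$, which you never address, and the mechanism you propose for producing $\sn\varphi$ --- converting between $\sl{\div}$ taken with respect to $\ga$ and with respect to $\ckk\ga$ --- is not what happens and cannot work: \eqref{6.17.7.19} involves only the intrinsic divergence of $\ga$, and no change of divergence operator occurs anywhere. In the paper, $\varphi$ enters solely through the identities $L\varphi=\tr\chi-\frac{2}{\tir}$, $\Lb\varphi=\tr\chib+(2\bb^{-1}-1)\frac{2}{\tir}$, $\bN\varphi=\tr\theta-\frac{2\bb^{-1}}{\tir}$: computing $[L,\Lb]\varphi$ once by the commutation formula \eqref{3.19.2}, which yields $2(\zb-\zeta)\sn\varphi-2k_{\bN\bN}\bN\varphi$, and once by differentiating the above formulas, where the weight terms collapse exactly, via \eqref{lb}, to $4\tir^{-1}\bb^{-1}k_{\bN\bN}$ and cancel against $-2k_{\bN\bN}\frac{2\bb^{-1}}{\tir}$, one obtains the key identity $L\tr\chib-\Lb\tr\chi=-2k_{\bN\bN}\tr\theta+2(\zb-\zeta)\sn\varphi$. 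It is this identity that both disposes of the leftover transport term and generates the $\sn\varphi$ term, after which the remaining $k_{\bN\bN}\tr\theta$ contributions cancel; without it your subtraction never closes to an elliptic identity on a fixed $S_{t,u}$, and your alternative "quick route" through $\sD\log\bb$ would face the same missing input.
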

The schematic form of (\ref{6.17.7.19}) was used in \cite[Section 5]{Wangrough} crucially to provide the control of $\zeta$,  since the regularity of null cones in \cite{Wangrough} is much weaker than the previous works.
   The explicit form in  (\ref{6.17.7.19}) shows clearly  the relation of $\zeta$ and $\zb$, which simplifies the control on $\zeta$ in Section \ref{causal_reg}.
\begin{proof}
The majority of the above equations has appeared for a couple of times in literature.
 We only prove the new formula (\ref{6.17.7.19}).

 We first note that the $\sl{\curl}$ equation is a direct consequence of
$
\zeta+\zb=\sn \log \bb,
$
since the right hand side of this identity vanishes after taking $\sl{\curl}$. The identity itself can be obtained by using the last two equations in (\ref{3.19.1}).

Note  $L(t-u)=1$ and $\Lb(t-u)=1-2\bb^{-1}$.
 To show the first equation,  we use the definition  $\varphi=\log \sqrt{|\ga|}-\log\sqrt{|\ckk \ga|}$ to compute
\begin{equation}\label{9.30.9.19}
 L \varphi=\tr\chi-\frac{2}{\tir}, \quad \Lb \varphi=\tr\chib+(2\bb^{-1}-1)\frac{2}{\tir},\quad \bN \varphi=\tr\theta-\frac{2\bb^{-1}}{\tir},
 \end{equation}
where the last one can be derived by the first two by using $2\bN=L-\Lb$. Hence, due to (\ref{3.19.2}),
\begin{equation}\label{9.30.10.19}
[L, \Lb]\varphi=2(\zb^A-\zeta^A)\sn_A \varphi-2 k_{\bN\bN} \bN \varphi=2(\zb_A-\zeta_A)\sn_A \varphi-2 k_{\bN\bN} (\tr\theta-\frac{2\bb^{-1}}{\tir}).
\end{equation}
On the other hand, by using (\ref{9.30.9.19}), we can derive
\begin{equation}\label{9.30.11.19}
L\tr\chib-\Lb \tr\chi+L((2\bb^{-1}-1)\frac{2}{\tir})+\Lb(\frac{2}{\tir})=-2k_{\bN\bN} (\tr\theta-\frac{2\bb^{-1}}{\tir})+2(\zb-\zeta) \c \sn \varphi.
\end{equation}
Since we can directly check
$$
L((2\bb^{-1}-1)\frac{2}{\tir})+\Lb(\frac{2}{\tir})=4\tir^{-1} L(\bb^{-1})=4\tir^{-1}\bb^{-1} k_{\bN\bN},
$$
substituting the identity to (\ref{9.30.11.19}) leads to a cancellation with the right hand side,
\begin{equation}\label{9.30.12.19}
L\tr\chib-\Lb\tr\chi=-2k_{\bN\bN} \tr\theta+2(\zb-\zeta)\c \sn \varphi.
\end{equation}
Next, we substitute (\ref{mub}) and (\ref{dze}) to the left hand side of the above identity. Using
\begin{equation}\label{10.31.2.19}
\chib-\chi=-2\theta,\qquad \chib_{AB}+\chi_{AB}=-2k_{AB}
\end{equation}
we can obtain
\begin{equation*}
2\div (\zb-\zeta)-2\tr\theta k_{\bN\bN}+2(|\zeta|^2-|\zb|^2)=L \tr\chib-\Lb \tr\chi=-2k_{\bN\bN} \tr\theta+2(\zb-\zeta)\c \sn \varphi.
\end{equation*}
This implies the first equation in (\ref{6.17.7.19}).
 \end{proof}
\subsubsection{Structures of  Ricci curvature in the acoustic spacetime}
As seen in (\ref{pba2}), we need to provide control on $L_x^\infty$ norm of $\tr\chi-\frac{2}{\tir}$. The fundamental structure uncovered in \cite{Kcom} and \cite{KRduke} is  an important  decomposition for $\bR_{LL}$, based on the
 following formula of Ricci component under the cartesian coordinates,
\begin{equation}\label{ricc6.7.1}
\bR_{\a\b}=-\f12 \Box_\bg (\bg_{\a\b})+\f12 (\bd_\a \Xi_\b+\bd_\b \Xi_\a)+S_{\a\b},
\end{equation}
where $\Xi$ is a 1-form defined by
\begin{equation}\label{ricc6.7.2}
\Xi_\ga=(\Ga_{\a\b}^\eta-{\hat \Ga}_{\a\b}^\eta)\bg^{\a\b}\bg_{\ga\eta},
\end{equation}
with $\hat\Ga $ being the Christoffel symbol of a smooth reference metric $\hat \bg$.
For convenience, $\hat\bg$ is chosen to be the Minkowski metric $\bm$. Under this choice, the symmetric two-tensor field
$S_{\a\b}$ is quadratic in $\bp \bg$.
Since
\begin{equation*}
\Ga_{\a\b\ga}=\f12(\p_\b \bg_{\a\ga}+\p_\a \bg_{\b \ga}-\p_\ga \bg_{\a \b}),
\end{equation*}
 we  can directly compute
\begin{equation}\label{9.30.13.19}
\Xi_\ga=\bg^{\a\b}(\p_\a \bg_{\b\ga}-\f12 \p_\ga \bg_{\a\b}).
\end{equation}

Now by adopting the decomposition of $\bR_{\a\b}$ in (\ref{ricc6.7.1}), we will show in (\ref{6.23.2.19}) that
 there holds for the component of Ricci curvature $\bR_{44}$ of the acoustic metric that
\begin{equation}\label{9.30.14.19}
\bR_{44}=L(\Xi_L)-e^{\varrho}\delta_{ij}\bN^j \curl \Omega^i+\Q(\bp \bg, \bp \bg),
\end{equation}
where  $\Xi_L=\Xi_\mu L^\mu$, (alternatively, $\Xi_L=\Xi_4$). For the angular derivative of $\curl \Omega^i \bN^j \delta_{ij}$, we will obtain a trace decomposition.  


Recall from (\ref{metric}) that under the Cartesian coordinate frame $\p_t=\p_0, \p_i, i=1,2,3$
\begin{align}\label{metric_3}
\begin{split}
&\bg_{00}=-1+c^{-2}|v|^2, \quad \bg_{0i}=-c^{-2} v_i, \quad \bg_{ij}=c^{-2}\delta_{ij}\\
&\bg^{00}=-1, \quad \bg^{0i}=-v^i \quad \bg^{ij}=c^2 \delta^{ij}-v^i v^j.
\end{split}
\end{align}

We denote by $\ti \pi=f(\bg) \bp \bg$ with $f$ a smooth function, and $\pi=\ti \pi\c X$, that is   the contraction to $\ti\pi$ with the metric $\bg$ by the tensor fields $L, \Lb$ or $\Pi$ in (\ref{metric_2}) denoted in general by $X$.

We first prove the following  decompositions by direct calculations.
\begin{proposition}[Decompositions of Ricci components]\label{struc}
\begin{align}
&\bR_{34}=-c^{-2}\bd^\a(v^i)\bd_\a(v^j)\delta_{ij}+\f12 c^2 \Box_\bg (c^{-2})+\f12 (\bd_L\Xi_{\Lb}+\bd_\Lb \Xi_L) +S_{\bT \bT}-S_{\bN\bN},\label{5.27.2.19}\\
&\bR_{44}=-\exp \varrho \bN^j \curl \Omega_j+\delta^{ij} c^{-2} \bN^j \sQ^i-\f12 c^2 \Box_\bg (c^{-2})+\bd_L \Xi_L +S_{44},\nn\\
&\quad\quad\quad -c^{-2} \bd^\a(v^i) \bd_\a(v^j) \delta_{ij}+2\delta_{ij}\bN^j \bd^\a(c^{-2}) \bd_\a(v^i),\label{6.23.2.19}\displaybreak[0]\\
  &\Pi^{ij} \p_j (\bN^m \curl \Omega_m)=\Pi^i_l\bd_L(\Pi^{jl} \curl\Omega_j)+\Pi^{ij}\tensor{\ep}{_{jm}^l}\curl^2 \Omega_l \bN^m+ \p \Omega\c (\chi+\pi)\c X,
   \label{9.12.2.19}\\
&\Pi^{ij}\bR_{ij}=-c^2 \Box_\bg (c^{-2})+\sn_A \Xi^A+\tr\theta \Xi_\bN+\pi\c \pi,\label{6.16.1.19}
\end{align}
where $\Pi$ is defined in (\ref{metric_2}), $\c X$ means contracted by the combination of  null vector fields $L, \Lb$, or by $\Pi$.
\end{proposition}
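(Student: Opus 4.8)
\textbf{Proof proposal for Proposition \ref{struc}.}

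The plan is to derive each of the four decompositions by a direct unwinding of the defining formulas, exploiting only the special algebraic structure of the acoustic metric \eqref{metric_3} and the Euler equations \eqref{4.23.1.19}--\eqref{4.10.3.19}. For \eqref{5.27.2.19} and \eqref{6.23.2.19}, I would start from the general identity \eqref{ricc6.7.1}, i.e. $\bR_{\a\b}=-\f12\Box_\bg(\bg_{\a\b})+\f12(\bd_\a\Xi_\b+\bd_\b\Xi_\a)+S_{\a\b}$ with $S$ quadratic in $\bp\bg$, and contract with the null frame. The term $-\f12\Box_\bg(\bg_{\a\b})$ is where the Euler equations enter: writing $\bg_{ij}=c^{-2}\delta_{ij}$, $\bg_{0i}=-c^{-2}v_i$, $\bg_{00}=-1+c^{-2}|v|^2$, one reduces $\Box_\bg$ applied to these components to $\Box_\bg(c^{-2})$, $\Box_\bg v^i$ and quadratic remainders; then one substitutes \eqref{4.10.1.19}, namely $\Box_\bg v^i=-e^\varrho c^2\curl\Omega^i+\sQ^i$, which is precisely the source of the $-e^\varrho\bN^j\curl\Omega_j$ term once the frame contraction $L=\bT+\bN$ and $\Lb=\bT-\bN$ are used. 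The crucial bookkeeping is to check that all the ``bad'' first-derivative-of-$v$ terms that are not captured by $\curl\Omega$ or $\bd_L\Xi_L$ actually assemble into the stated quadratic expressions $-c^{-2}\bd^\a v^i\bd_\a v^j\delta_{ij}$ and $2\delta_{ij}\bN^j\bd^\a(c^{-2})\bd_\a v^i$ (these arise from commuting the frame vectors past $\Box_\bg$ and from $S_{44}$), together with the genuinely harmless $S_{44}$, $S_{\bT\bT}-S_{\bN\bN}$ pieces. I would organize this by first computing $\bd_\a\Xi_\b+\bd_\b\Xi_\a$ contracted with $L^\a L^\b$ and with $L^\a\Lb^\b$, recording that $\bd_L\Xi_L$ and $\bd_L\Xi_\Lb+\bd_\Lb\Xi_L$ are the only surviving first-order-in-$\Xi$ objects (the antisymmetric/mixed contributions being absorbed into the quadratic $S$-type terms via Proposition \ref{6.7con}).

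For \eqref{9.12.2.19}, the trace decomposition of the angular derivative of $\bN^m\curl\Omega_m$, I would proceed differently, working intrinsically on $S_{t,u}$. Write $\bN^m\curl\Omega_m=\bN^m\tensor{\ep}{_m^{ij}}\p_i\Omega_j$ and apply the spatial derivative $\Pi^{ij}\p_j$; the point is to move the derivative onto $\curl\Omega$ to produce $\curl^2\Omega$, and onto $L$-direction to produce $\sn_L$. Concretely one splits $\Pi^{ij}\p_j(\bN^m\curl\Omega_m)$ using the orthogonal decomposition of $\p$ along $\Sigma_t$ into $\bN$-direction and $S_{t,u}$-tangential directions, then uses $L=\bT+\bN$ together with the transport equation \eqref{4.10.3.19} (or rather its consequences for $\curl\Omega$) to trade $\bN$-derivatives for $L$-derivatives modulo $\bT$-derivatives, and finally invokes Proposition \ref{6.7con} (the $\bd$-of-frame formulas \eqref{7.21.3.19}--\eqref{3.19.1}) to identify the error terms as $\p\Omega\cdot(\chi+\pi)\cdot X$. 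The identity $\Pi^i_l\bd_L(\Pi^{jl}\curl\Omega_j)$ on the right is the ``good'' term: it is an $L$-derivative along the cone, hence controllable by the flux estimates of Section \ref{flux_C}; the $\curl^2\Omega$ term is the double-curl flux quantity bounded in Proposition \ref{9.30.15.19}; everything else is genuinely lower order. Care is needed to track the projections $\Pi$ correctly, since $\curl^2\Omega$ is a bulk spatial object and we only want its $S_{t,u}$-tangential, $\bN$-contracted part.

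For \eqref{6.16.1.19}, I would take the trace of \eqref{ricc6.7.1} over $\Pi^{ij}$: the $\Box_\bg(\bg_{ij})$ piece gives $-c^2\Box_\bg(c^{-2})$ (using $\bg_{ij}=c^{-2}\delta_{ij}$ and $\Pi^{ij}\delta_{ij}=c^2\cdot 2$ up to the correction from $\bT,\bN$ which contributes quadratically), the $\bd_i\Xi_j$ piece traced over $\Pi$ produces $\sn_A\Xi^A+\tr\theta\,\Xi_\bN$ after one uses $\bd_i e_j=\sn_i e_j+\tfrac12\chi_{ij}e_3+\cdots$ and $\theta(X,Y)=\langle\nab_X\bN,Y\rangle$ from \eqref{7.15.7.19} to convert the ambient divergence into the induced divergence on $S_{t,u}$ plus its trace against $\bN$, and $S_{ij}$ traced gives the $\pi\cdot\pi$ remainder. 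The main obstacle across the whole proposition is the second identity \eqref{6.23.2.19}: it requires isolating $e^\varrho\bN^j\curl\Omega_j$ as a \emph{single clean term} while simultaneously showing that no other un-absorbed first derivative of $\Omega$ survives — any leftover $\p\Omega$ term not of the form $\bd_L(\cdots)$ or quadratic would be fatal for the later $\tr\chi$ analysis. This is a matter of patient frame computation, but one must be scrupulous about which terms in $S_{44}$ and in the commutators $[\bd_L,\bd_\Lb]$, $[L,e_A]$ combine with the raw $-\f12\Box_\bg(\bg_{44})$ contribution; the Euler relation $\bT v^i=-c^2\delta^{ia}\p_a\varrho$ from \eqref{4.23.1.19} is used repeatedly to rewrite $\bT$-derivatives of $v$ in terms of $\p\varrho$, which is how the mixed term $2\delta_{ij}\bN^j\bd^\a(c^{-2})\bd_\a v^i$ emerges with the right coefficient.
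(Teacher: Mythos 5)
Your proposal follows essentially the same route as the paper: contract the identity (\ref{ricc6.7.1}) with the null frame using the explicit components (\ref{metric_3}), substitute the wave equation (\ref{4.10.1.19}) for $\Box_\bg v^i$ to isolate $-e^\varrho\bN^j\curl\Omega_j$ in $\bR_{44}$, swap derivative indices to produce $\curl^2\Omega$ and trade $\bN=L-\bT$ against the vorticity transport equation for (\ref{9.12.2.19}), and take the $\Pi$-trace with (\ref{10.01.2.19}) for (\ref{6.16.1.19}). Two small misattributions that would self-correct in the actual computation: the mixed term $2\delta_{ij}\bN^j\bd^\a(c^{-2})\bd_\a v^i$ comes from the Leibniz rule applied to $\Box_\bg(c^{-2}v_i)$, not from invoking $\bT v^i=-c^2\delta^{ia}\p_a\varrho$, and for (\ref{5.27.2.19}) no wave-equation substitution is needed at all, since writing $\bR_{34}=\bR_{\bT\bT}-\bR_{\bN\bN}$ makes the $\bT^\a\bT^\b\Box_\bg\bg_{\a\b}$ contraction telescope, leaving only the quadratic term $2c^{-2}\bd^\a v^i\bd_\a v^j\delta_{ij}$.
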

\begin{remark}
Note that for a $\Sigma_t$ tangent tensor $F$
\begin{equation}\label{10.01.2.19}
\nab_A F_B= \sn_A F_B+\theta_{AB} F_\bN.
\end{equation}
Thus, by using ${}\rp{g}\Ga$ to represent the Christoffel symbol of the Riemiannian metric $g$, we can derive
\begin{align}\label{6.30.4.19}
\begin{split}
\bN^i\curl F_i&=\bN^i\ep_i^{\ mn} \p_m(F_n)=\bN^i \ep_i^{\ AB} (\nab_A F_B+{}\rp{g}\Ga_{mn}^l F_l e_A^m e_B^n)\\
&=\ep^{AB}(\sn_A F_B+\theta_{AB} F_\bN +{}\rp{g}\Ga_{mn}^l F_l e_A^m e_B^n)=\ep^{AB} \sn_A F_B,
\end{split}
\end{align}
where other terms are cancelled since $\ep^{AB}$ is the volume form of $(S_{t,u}, \ga)$, anti-symmetric about $A, B=1,2$.

Therefore the first term on the right of (\ref{6.23.2.19}) is
\begin{equation}\label{9.12.1.19}
\exp \varrho \bN^i \curl \Omega_i= \exp \varrho \ep^{AB} \sn_A \Omega_B,
\end{equation}
which does not directly take the form of $\sn_L P+E$ with the scalar functions $P$ and $E$ verifying good estimates.
\end{remark}
\begin{proof}[Proof of Proposition \ref{struc}]
 We first compute by using (\ref{ricc6.7.1})
\begin{align*}
\bR_{34}&=\bR_{\bT \bT}-\bR_{\bN \bN}\\
&=-\f12 (\bT^\a \bT^\b-\bN^\a \bN^\b) \Box_\bg \bg_{\a\b}+\f12 (\bd_L\Xi_{\Lb}+\bd_\Lb \Xi_L)+S_{\bT \bT}-S_{\bN \bN}.
\end{align*}
In view of (\ref{metric_3}), we compute
\begin{align*}
\bT^\a \bT^\b \Box_\bg \bg_{\a\b}&= \bT^0 \bT^0 \Box_{\bg} \bg_{00}+2\bT^0 \bT^i \Box_\bg \bg_{0i}+\bT^i \bT^j \Box_\bg \bg_{ij}\\
&=\Box_\bg (-1+c^{-2}|v|^2)-2 v^i \Box_\bg (c^{-2}v_i)+v^i v^j \delta_{ij}\Box_\bg (c^{-2})\\
&=2c^{-2}\bd^\a (v^i) \bd_\a (v^j)\delta_{ij};
\end{align*}
and
\begin{equation*}
\bN^\a \bN^\b \Box_\bg \bg_{\a\b}=\bN^i \bN^j\Box_\bg \bg_{ij}=\bN^i \bN^j \delta_{ij}\Box_\bg(c^{-2})=c^2\Box_\bg (c^{-2}).
\end{equation*}
Thus we can obtain (\ref{5.27.2.19}).

Next we calculate $\bR_{44}$.  Noting that $L=\bT+\bN$ gives $L^i=\bT^i+\bN^i=v^i+\bN^i$, we have
\begin{align}
&L^\a L^\b \Box_\bg \bg_{\a\b}\nn\\
&=L^0 L^0 \Box_\bg \bg_{00}+2 L^0 L^i \Box_\bg \bg_{0i}+ L^i L^j \Box_\bg \bg_{ij}\nn\\
&=\Box_\bg \bg_{00}+2(v^i+\bN^i) \Box_\bg \bg_{0i}+(v^i+\bN^i)(v^j+\bN^j)  \Box_\bg \bg_{ij}\nn\\
&= \Box_\bg(-1+c^{-2}|v|^2)+2(v^i +\bN^i)\Box_\bg(-c^{-2}v_i)+|v+\bN|_e^2 \Box_\bg(c^{-2})\nn\\
&=\Box_\bg(c^{-2}|v|^2)-2 v^i \Box_\bg(c^{-2}v_i)-2\bN^i \Box_\bg(c^{-2}v_i)+|v+\bN|^2_e \Box_\bg(c^{-2})\nn\\
&= -2\delta_{ij}\bN^j \Box_\bg(c^{-2}v^i)+\delta_{ij}(2v^i \bN^j+\bN^i \bN^j)\Box_\bg (c^{-2})+2c^{-2} \bd^\a(v^i) \bd_\a(v^j) \delta_{ij} \nn\\
&=-2\delta_{ij}c^{-2}\bN^j \Box_\bg v^i+c^2 \Box_\bg (c^{-2})+2c^{-2} \bd^\a(v^i) \bd_\a(v^j) \delta_{ij}-4\delta_{ij}\bN^j \bd^\a(c^{-2}) \bd_\a(v^i)\nn.
\end{align}
Substituting (\ref{4.10.1.19}) to the first term on the right hand side yields
\begin{align}
L^\a L^\b \Box_\bg \bg_{\a\b}&= 2\exp\varrho\delta_{ij} \bN^j \curl \Omega^i-2\delta_{ij} c^{-2} \bN^j \sQ^i\nn\\
&+c^2 \Box_\bg (c^{-2})+2c^{-2} \bd^\a(v^i) \bd_\a(v^j) \delta_{ij}-4\delta_{ij}\bN^j \bd^\a(c^{-2}) \bd_\a(v^i)\nn.
\end{align}
(\ref{6.23.2.19})  follows by substituting the above identity to the formula   (\ref{ricc6.7.1}) for $\bR_{LL}$.

Now we prove (\ref{9.12.2.19}).
 \begin{align*}
 \Pi^{ij} \p_j (\bN^m \curl \Omega_m)&=\Pi^{ij} (\p_j \curl \Omega_m\bN^m+\curl \Omega_m \p_j \bN^m)\\
 &=\Pi^{ij} (\p_m (\curl \Omega)_j \bN^m)+\Pi^{ij} \tensor{\ep}{_{jm}^l} \curl^2\Omega_l \bN^m+\Pi^{ij} \curl \Omega_m \p_j \bN^m\\
 &= \Pi^{ij}( L(\curl\Omega_j)-\bT(\curl \Omega_j))+\Pi^{ij} \tensor{\ep}{_{jm}^l} \curl^2\Omega_l \bN^m+\Pi^{ij} \curl \Omega_m \p_j \bN^m.
 \end{align*}
 By using (\ref{4.25.4.19}) and the first equation in (\ref{4.23.1.19}), we have
 \begin{equation}\label{9.12.3.19}
 \bT\curl \Omega_j=\bT \varrho \curl\Omega_j +\p v \p \Omega=\p v\p \Omega.
 \end{equation}
 For the other term,
 \begin{align}\label{10.01.1.19}
 \begin{split}
 \Pi^i_l\Pi^{jl} L (\curl \Omega)_j&= \Pi^i_l\Pi^{jl}\big(\bd_L(\curl \Omega)_j+ {}\rp{\bg}\Ga\c \curl \Omega\big)\\
 &=\Pi^i_l\bd_L\big(\Pi^{jl} \curl\Omega_j\big)-\Pi^i_l \curl \Omega_j \bd_L \Pi^{jl}+ \Pi {}\rp{\bg}\Ga\c \curl \Omega\\
 &=\Pi^i_l\bd_L(\Pi^{jl} \curl\Omega_j)+\curl\Omega_\bN k_{\bN j}\Pi^{ij}  +\pi\c \curl \Omega.
 \end{split}
 \end{align}
 For deriving the last line, by using (\ref{6.29.5.19}) and (\ref{3.19.1}) we computed
 \begin{align*}
 \Pi^{ij}\curl \Omega_j \bd_L \Pi^{jl}&=\Pi^i_\nu \curl \Omega_\mu(\bd_L \bT^\mu \bT^\nu +\bT^\mu \bd_L \bT^\nu-\bd_L \bN^\mu \bN^\nu-\bN^\mu \bd_L \bN^\nu)\\
 &=-\Pi^i_\nu \curl \Omega_l \bN^l \bd_L \bN^\nu=-\curl\Omega_\bN k_{\bN j}\Pi^{ij}.
 \end{align*}
 Also using $\Pi^{ij}\p_j \bN^m=\Pi^{ij}(\nab_j \bN^m-{}\rp{g}\Ga\c \bN)=(\chi+\pi)\c X$,
 (\ref{9.12.2.19}) can then be derived by combining (\ref{10.01.1.19}) with (\ref{9.12.3.19}).

At last we prove (\ref{6.16.1.19}) by using (\ref{ricc6.7.1}) and (\ref{10.01.2.19}),
\begin{align*}
\Pi^{ij} \bR_{ij}&=-\f12 \Pi^{ij} \Box_\bg \bg_{ij}+\Pi^{ij} \bd_i \Xi_j+\Pi^{ij}S_{ij}\\
 &=-\f12 \Pi^{ij}c^{-2} \delta_{ij} c^2 \Box_\bg (c^{-2})+\sn_A \Xi^A+\tr\theta \Xi_\bN+\pi\c \pi\\
 &=-c^2\Box_\bg(c^{-2})+\sn_A \Xi^A+\tr\theta \Xi_\bN+\pi\c \pi,
\end{align*}
which gives (\ref{6.16.1.19}).
\end{proof}

Besides the structure of Ricci curvature, we give an important cancellation between $k_{\bN\bN}$ and $\Xi_4$.
\begin{proposition}\label{6.14.2.19}
\begin{align}
&\Xi_\mu\bT^\mu=\Tr k,\label{6.21.9.19}\\
&\Xi_j=\p_j(\log c-\varrho)\label{6.14.1.19},\\
&k_{\bN\bN}=\f12 \big(\Xi_L-L(\log c+\varrho)-2L(v)_\bN\big),\label{7.04.8.19}
\end{align}
where we denote, for any vector field $Y$,  $Y(v)_\bN=Y(v^i) \bN^j \bg_{ij}$.

\end{proposition}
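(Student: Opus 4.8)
The plan is to prove all three identities by direct computation in the Cartesian coordinate frame, starting from the explicit formula (\ref{9.30.13.19}) for $\Xi$, the metric components (\ref{metric_3}), and the formulas (\ref{k1}), (\ref{5.24.1.19}) for the second fundamental form. The first step I would carry out is to simplify the trace term in $\Xi$: since $\det\bg=-c^{-6}$ (this is the ADM form with lapse $1$, shift $-v$, and spatial metric $c^{-2}\delta$, consistent with $d\mu_g=c^{-3}d\mu_e$), the Jacobi identity gives $\bg^{\a\b}\p_\ga\bg_{\a\b}=\p_\ga\log|\det\bg|=-6\p_\ga\log c$, so that
\begin{equation*}
\Xi_\ga=\bg^{\a\b}\p_\a\bg_{\b\ga}+3\p_\ga\log c .
\end{equation*}
It then only remains to evaluate the single-derivative term $\bg^{\a\b}\p_\a\bg_{\b\ga}$ for $\ga=j$ spatial, and to contract $\Xi$ with $\bT$.

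For (\ref{6.14.1.19}) I would substitute $\bg^{00}=-1$, $\bg^{0i}=-v^i$, $\bg^{ij}=c^2\delta^{ij}-v^iv^j$ and $\bg_{0j}=-c^{-2}v_j$, $\bg_{ij}=c^{-2}\delta_{ij}$ into $\bg^{\a\b}\p_\a\bg_{\b j}$ and expand by the product rule; the terms proportional to $\p(c^{-2})$ that are quadratic in $v$ cancel in pairs (after using $v_j=v^j$), leaving $c^{-2}(\p_t+v^i\p_i)v_j+c^2\p_j(c^{-2})=c^{-2}\bT v_j-2\p_j\log c$. Combined with the reduction above this yields $\Xi_j=c^{-2}\bT v_j+\p_j\log c$, and then the second equation of (\ref{4.23.1.19}), $\bT v_j=-c^2\p_j\varrho$, gives $\Xi_j=\p_j(\log c-\varrho)$.

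For (\ref{6.21.9.19}) the cleanest route I see is to note that $\bT_\eta=\bg_{\mu\eta}\bT^\mu=-\delta_\eta^0$ (equivalently $\bg^{\a\b}\p_\b t=-\bT^\a$), so contracting the defining relation $\Xi_\ga=\Ga^\eta_{\a\b}(\bg)\,\bg^{\a\b}\bg_{\ga\eta}$ (recall $\hat\Ga$ vanishes for the Cartesian frame of $\bm$) with $\bT^\ga$ gives $\Xi_\mu\bT^\mu=-\Ga^0_{\a\b}\bg^{\a\b}=\Box_\bg t$. Then
$\Box_\bg t=\tfrac{1}{\sqrt{|\bg|}}\p_\a\big(\sqrt{|\bg|}\,\bg^{\a0}\big)=c^3\big(-\p_t(c^{-3})-\p_i(c^{-3}v^i)\big)=3\bT\log c-\div v$, which equals $\Tr k$ by (\ref{5.24.1.19}). (Alternatively one could compute $\Xi_0+v^j\Xi_j$ directly from the displayed formula for $\Xi_\ga$; this is longer but uses nothing beyond the first step.)

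Finally, for (\ref{7.04.8.19}) I would decompose $L=\bT+\bN$ and invoke the first two identities: since $\bN$ is $\Sigma_t$-tangent, $\Xi_L=\Xi_\mu\bT^\mu+\Xi_i\bN^i=\Tr k+\bN(\log c-\varrho)$. Substituting $\Tr k=3\bT\log c-\div v$ and $\bT\varrho=-\div v$ from (\ref{5.24.1.19}) and (\ref{4.23.1.19}), the right-hand side of (\ref{7.04.8.19}) collapses to $\bT\log c-\bN\varrho-L(v)_\bN$. On the other hand, from (\ref{k1}) together with the normalization $\bN^i\bN^i=c^2$ forced by $|\bN|_g=1$, one obtains $k_{\bN\bN}=\bT\log c-c^{-2}\bN^i\bN^j\p_i v_j$, while $L(v)_\bN=c^{-2}L(v^i)\bN^i=-\bN\varrho+c^{-2}\bN^i\bN^j\p_i v_j$ after using $\bT v^i=-c^2\p_i\varrho$; comparing the two expressions completes the proof. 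The argument is entirely elementary; the only real care needed is in the index bookkeeping of the second step, and in the parallel computation of $k_{\bN\bN}$ and $L(v)_\bN$ in the last step, where the conformal weight $c^{-2}$ and the Euclidean normalization $\bN^i\bN^i=c^2$ must be tracked consistently so that the quadratic-in-$v$ terms cancel exactly.
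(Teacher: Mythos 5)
Your proposal is correct, and all three identities are verified by the same elementary route the paper takes: direct computation in Cartesian coordinates from (\ref{9.30.13.19}) and (\ref{metric_3}), combined with (\ref{k1}), (\ref{5.24.1.19}) and the Euler equations (\ref{4.23.1.19}); your treatment of (\ref{6.14.1.19}) and of (\ref{7.04.8.19}) (via $\Xi_L=\Tr k+\bN(\log c-\varrho)$ and the parallel evaluation of $k_{\bN\bN}$ and $L(v)_\bN$ with the normalization $\delta_{ij}\bN^i\bN^j=c^2$) reproduces the paper's argument almost verbatim. The two places where you genuinely deviate are streamlinings rather than a new strategy: you dispose of the trace term by Jacobi's formula, $\bg^{\a\b}\p_\ga\bg_{\a\b}=\p_\ga\log|\det\bg|=-6\p_\ga\log c$ (using $\det\bg=-c^{-6}$, which is correct for this metric), where the paper expands it by brute force; and for (\ref{6.21.9.19}) you use $\bT_\eta=-\delta^0_\eta$ to identify $\Xi_\mu\bT^\mu=-\bg^{\a\b}\Ga^0_{\a\b}=\Box_\bg t$ and then compute $\Box_\bg t=3\bT\log c-\div v$ from the divergence form, whereas the paper expands $\bg^{\a\b}\p_\a\bg_{\b\ga}\bT^\ga$ term by term. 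Your route buys shorter computations and the conceptually transparent identity $\Xi(\bT)=\Box_\bg t=\Tr k$; the paper's buys nothing beyond index bookkeeping but needs no auxiliary facts. One cosmetic remark: in the computation of $\Xi_j$ the cancellations involve both terms linear in $v$ (the $\p_0(c^{-2})v_j$ pair) and quadratic in $v$ (the $v^iv_j\p_i(c^{-2})$ pair), not only quadratic ones as you say, but the end result $\Xi_j=c^{-2}\bT v_j+\p_j\log c$ and hence (\ref{6.14.1.19}) is exactly right.
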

\begin{proof}
We first compute $\Xi_\ga \bT^\ga$ by using (\ref{9.30.13.19}).
\begin{align*}
\Xi_{\bT}&=\bg^{\a\b}(\p_\a \bg_{\b \ga}-\f12 \p_\ga \bg_{\a\b}) \bT^\ga\\
&=\bg^{\a\b} \p_\a \bg_{\b0} \bT^0+\bg^{\a\b} \p_\a \bg_{\b i} \bT^i-\f12 \bg^{\a\b} \bT(\bg_{\a\b}).
\end{align*}
The last term on the right hand side can be computed as follows
\begin{align*}
\bg^{\a\b} \bT (\bg_{\a\b})&=2(-v^i) \bT(-c^{-2} v_i)+(c^2\delta^{ij}-v^i v^j )\bT(c^{-2}\delta_{ij})-\bT(-1+c^{-2}|v|^2)\\
&=2 v^i \bT(c^{-2} v_i) +c^2 \delta^{ij} \bT(c^{-2})\delta_{ij}-|v|^2 \bT(c^{-2})-\bT(c^{-2}|v|^2)\\
&=-6 \bT\log c.
\end{align*}
Now we compute the remaining terms
\begin{align*}
&\bg^{\a\b} \p_\a \bg_{\b0} \bT^0+\bg^{\a\b} \p_\a \bg_{\b i} \bT^i\\
&=\bg^{00} \p_0\bg_{00}+\bg^{0i}(\p_0 \bg_{i0}+\p_i \bg_{00})+\bg^{ij}\p_i \bg_{j0}\\
&+\bg^{00} \p_0 \bg_{0i}v^i+\bg^{0j}(\p_0 \bg_{ij}+\p_j \bg_{0i}) v^i +\bg^{lj}\p_l \bg_{ji} v^i\\
&=-\p_0(-1+c^{-2}|v|^2)-v^i (\p_0(-c^{-2}v_i)+\p_i(-1+c^{-2}|v|^2))+(c^2\delta^{ij}-v^i v^j) \p_i (-c^{-2} v_j)\\
&+(-1)\p_0(-c^{-2} v_i) v^i +(-v^j)v^i (\p_0 (c^{-2}\delta_{ij})+\p_j (-c^{-2}v_i))+(c^2\delta^{lj}-v^l v^j )\p_l (c^{-2}\delta_{ij})v^i \\
&=-\bT(c^{-2}|v|^2)+v^i \bT(c^{-2}v_i)+c^2 \delta^{ij}\p_i(-c^{-2}v_j)+\bT(c^{-2}v_i)v^i-v^j \bT(c^{-2}\delta_{ij})v^i\\
&\quad+c^2 \delta^{lj}\p_l (c^{-2}\delta_{ij})v^i\\
&=- \div v.
\end{align*}
Combining the above two calculations yields (\ref{6.21.9.19}) in view of (\ref{5.24.1.19}).

We now consider (\ref{6.14.1.19}).
\begin{align*}
\bg^{\a\b} \p_\a \bg_{\b j}&=\bg^{00} \p_0 \bg_{0j}+\bg^{0i} \p_0 \bg_{ij}+\bg^{i0} \p_i \bg_{0j}+\bg^{il} \p_i \bg_{lj}\\
&=- \p_0(-c^{-2} v_j)+(-v^i) [\p_0 (c^{-2} \delta_{ij})+\p_i(-c^{-2} v_j)]+\bg^{il} \p_i(c^{-2}\delta_{lj})\\
&=\p_0(c^{-2} v_j)-v^i[\p_0(c^{-2}) \delta_{ij}-\p_i(c^{-2} v_j)]+(c^2 \delta^{il}-v^j v^l) \p_i(c^{-2}) \delta_{lj}\\
&=c^{-2} \p_0 v_j+v^i \p_i (c^{-2} v_j)+ (c^2 \delta^{il}-v^i v^l) \p_i(c^{-2}) \delta_{lj}\\
&=c^{-2}\bT v_j-2\p_j \log c,
\end{align*}
\begin{align*}
\bg^{\a\b} \p_j \bg_{\a\b}&=2 \bg^{0i} \p_j \bg_{0i}+\bg^{00}\p_j \bg_{00}+\bg^{il}\p_j \bg_{il}\\
&=-2v^i \p_j (-c^{-2} v_i)-\p_j(-1+c^{-2}|v|^2)+(c^2 \delta^{il}-v^i v^l)\p_j \bg_{il}\\
&=2 v^i \p_j (c^{-2} v_i)-\p_j (c^{-2} |v|^2)+(c^2 \delta^{il}-v^i v^l) \p_j (c^{-2} \delta_{il})\\
&=-6 \p_j \log c.
\end{align*}
Combining the above calculations with (\ref{9.30.13.19}) implies
\begin{equation*}
\Xi_j= \bg^{\a\b} (\p_\a\bg_{\b j}-\f12 \p_j \bg_{\a\b})=c^{-2}\bT v_j+\p_j \log c
\end{equation*}
and the second equation in  (\ref{4.23.1.19}) gives (\ref{6.14.1.19}).

From (\ref{6.21.9.19}) and (\ref{6.14.1.19}) we have
\begin{equation}\label{6.22.3.19}
\Xi_\mu L^\mu= \Tr k+\bN(\log c-\varrho).
\end{equation}
Combining (\ref{6.22.3.19}) with (\ref{5.24.1.19}) yields
\begin{equation}\label{6.23.10.19}
\Xi_L= 2\bT (\log c+\varrho)+L (\log c-\varrho).
\end{equation}
We derive by using (\ref{k1}) that
\begin{equation*}
k_{\bN \bN}=-c^{-2}\delta_{ij}\bN (v^i) \bN^j+\bT \log c=-\bg_{ij}\bN(v^i) \bN^j+\bT \log c .
\end{equation*}
By using the second equation in (\ref{4.23.1.19}) and (\ref{6.23.10.19}), we have
\begin{align*}
k_{\bN\bN}&= \bT\log c-\bN \varrho-L(v)_\bN=\bT(\log c+\varrho)-L \varrho-L(v)_\bN\\
&=\f12(\Xi_L-L(\log c+\varrho))-L(v)_\bN.
\end{align*}
This gives (\ref{7.04.8.19}). The proof of Proposition \ref{6.14.2.19} is complete.
\end{proof}

Finally, we recall a result from the previous works \cite{KREins2, Wangricci}, and \cite[Lemma 5.12]{Wangrough}.
\begin{lemma}[Decomposition of Riemann curvature]\label{6.23.17.19}
We denote by $\ti \pi=f(\bg) \bp \bg$ with $f$ a smooth function, and $\pi=\ti \pi\c X$.
Let $\bA= \chih, \tr\chi-\frac{2}{\tir}, \pi$, and $\bE =  \bA\c \pi + \tr \chi \c \pi$.
 \begin{enumerate}
\item[(i)] Let $\D_*=(\sn, \sn_L)$. There hold
$$
{\emph \bR}_{4A4B}, {\emph\bR}_{A443}, {\emph \bR}_{44}, {\emph \bR}_{4A}=\D_* \pi+\bE.
$$

\item[(ii)]  There exist scalar $\pi$, 1-form $\bE$ and $S_{t,u}$ tangent 2-vector $\pi_{AB}$  such that
\begin{equation*}
\delta^{AB} {\emph \bR}_{CA4B}=\sn_C \pi+\sn^B\pi_{CB}+\bE_C \quad \mbox{ and } \quad
{\emph \bR}_{CA4B}=\sn \pi+\bE.
\end{equation*}

\item[(iii)]There exists 1-form $\pi$ and scalar $\bE$ that ${\emph \bR}_{ABAB}={\sl{\div}} \pi+\bE$.

\item[(iv)] There exist 1-forms $\pi$ and scalar $\bE$ such that
\begin{equation*}
\delta^{AB} {\emph\bR}_{B43A}={\sl{\div}} \pi+\bE, \quad \ep^{AB} {\emph\bR}_{A43B}={\sl{\curl}} \pi+\bE.
\end{equation*}
\end{enumerate}
\end{lemma}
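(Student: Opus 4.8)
The plan is to follow the computation of \cite[Lemma 5.12]{Wangrough} (see also \cite{KREins2, Wangricci}), adapted to the acoustic metric by using the compressible Euler equations to control $\Box_\bg$ of the Cartesian components of $\bg$. The starting point is the coordinate expression of the Riemann tensor,
\begin{equation*}
\bR_{\a\b\ga\d}=\f12\big(\p^2_{\a\d}\bg_{\b\ga}+\p^2_{\b\ga}\bg_{\a\d}-\p^2_{\a\ga}\bg_{\b\d}-\p^2_{\b\d}\bg_{\a\ga}\big)+\bg_{\mu\nu}\big(\Ga^\mu_{\a\d}\Ga^\nu_{\b\ga}-\Ga^\mu_{\a\ga}\Ga^\nu_{\b\d}\big),
\end{equation*}
together with the identity (\ref{ricc6.7.1}) for the Ricci part and the explicit form (\ref{9.30.13.19}) of $\Xi$. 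The products $\Ga\c\Ga$, once contracted with the null tetrad $\{L,\Lb,e_A\}$, are of the form $\pi\c\pi$, and hence are absorbed into $\bE$ after inserting the trace/traceless split of $\chi$.

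For the second order terms one makes the standard observation: in each of the required contractions $\bR_{4A4B}$, $\bR_{A443}$, $\bR_{4A}$, $\bR_{44}$, $\delta^{AB}\bR_{CA4B}$, $\bR_{ABAB}$, $\delta^{AB}\bR_{B43A}$, $\ep^{AB}\bR_{A43B}$, the bad direction $e_3=\Lb$ never appears twice, so each factor $\p^2\bg$ either is already hit by a good direction $e_4=L$ or $e_A$ — in which case it is of the form $\D_*(\text{first derivatives of }\bg)=\D_*\ti\pi\c X$ modulo $\chi$-terms coming from $\bd_A e_B$, $\bd_L e_A$ in Proposition \ref{6.7con} — or else it can be completed, by adding the appropriate $\bd\Xi$ contribution from (\ref{ricc6.7.1}) (recall $\Xi_j=\p_j(\log c-\varrho)$ and $\Xi_\bT=\Tr k$ from (\ref{6.14.1.19})--(\ref{6.21.9.19}), both of $\pi$ type), into the combination $\bg^{\a\b}\p^2_{\a\b}\bg_{\mu\nu}=\Box_\bg\bg_{\mu\nu}-\Ga\c\p\bg$. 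Then one substitutes the Euler system: by (\ref{4.23.1.19}) and (\ref{4.10.2.19}), $\Box_\bg\bg_{ij}=\delta_{ij}\Box_\bg(c^{-2})=C(\varrho)\sQ^0+C(\varrho)(\bp\varrho)^2$ is quadratic, hence of type $\bE$; and $\Box_\bg\bg_{0\mu}$, $\Box_\bg\bg_{00}$ are handled the same way via (\ref{4.10.1.19}), the only non-quadratic contribution being the vorticity term $e^\varrho c^2\curl\Omega$, which is exactly the term already extracted in Proposition \ref{struc} (compare (\ref{6.23.2.19})) and is accounted for there; all remaining pieces are again of type $\bE$.

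The final step is bookkeeping. To put the surviving good derivatives into the form $\D_*\pi$ with $\D_*=(\sn,\sn_L)$ one replaces Cartesian partials acting on $S_{t,u}$-tangent or $L$-contracted quantities by $\sn$ and $\sn_L$ through (\ref{6.7con}), throwing the connection coefficients $\chi$, $k_{A\bN}$, $\zeta$, $\zb$ (all $\pi$- or $\chi$-type) into $\bE=\bA\c\pi+\tr\chi\c\pi$; the commutation relations (\ref{cmu2}) and (\ref{cmu_2}) are used whenever two such derivatives must be reordered. For parts (ii)--(iv) one additionally uses antisymmetry in the pair $A,B$: the trace $\delta^{AB}\bR_{CA4B}$ yields $\sn_C\pi$ together with a divergence $\sn^B\pi_{CB}$ of an $S_{t,u}$-tangent $2$-vector; $\bR_{ABAB}$ and $\delta^{AB}\bR_{B43A}$ produce $\sl{\div}\pi$, while the fully antisymmetric contraction $\ep^{AB}\bR_{A43B}$ produces $\sl{\curl}\pi$, exactly as in the vacuum case. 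Collecting terms gives the four asserted identities.

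The main obstacle is the case-by-case verification in the second paragraph: for each contraction one must check that after adding the $\bd\Xi$ terms of (\ref{ricc6.7.1}) no $\Lb\Lb$-derivative of the metric survives, and that the $\Box_\bg\bg_{\mu\nu}$ pieces so produced genuinely collapse — via (\ref{4.10.1.19})--(\ref{4.23.1.19}) — to quadratic expressions plus the single explicitly-identified vorticity term, so that the schematic error $\bE=\bA\c\pi+\tr\chi\c\pi$ is not contaminated. This is a finite but delicate computation; it is rendered routine by the cancellation structures already recorded in Propositions \ref{struc} and \ref{6.14.2.19}, which confirm that the bad directional derivatives enter only through $\Box_\bg$ and through $\curl\Omega$.
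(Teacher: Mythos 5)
Your overall strategy is workable and close in spirit to the paper's, but as written it has a concrete gap in part (iv), and the paper's own route is simpler for (i)--(iii). The paper does not start from the second--derivative coordinate formula at all: it quotes the algebraic decomposition $\bR_{\a\b\ga\d}=\bd_\a\cir{\pi}_{\b\d\ga}+\bd_\b\cir{\pi}_{\a\ga\d}-\bd_\a\cir{\pi}_{\b\ga\d}-\bd_\b\cir{\pi}_{\d\a\ga}+E$, with $\cir{\pi}=\bp\bg$ and $E=\bg\c\ti\pi\c\ti\pi$, in which the derivatives fall only on the \emph{first two} slots. Since in every contraction appearing in (i)--(iii) those two slots can be arranged (using the pair symmetry) to be $e_4$ or $e_A$, contracting with the tetrad and converting $\bd_4,\bd_A$ into $\sn_L,\sn$ via Proposition \ref{6.7con} gives (i)--(iii) immediately, with no case analysis, no use of the Euler equations, and no special discussion of the vorticity. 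Your detour through $\Box_\bg\bg_{\mu\nu}$ and (\ref{4.10.1.19}) is unnecessary there, and it leaves a loose end: if you do substitute (\ref{4.10.1.19}) into $\Box_\bg\bg_{0i}$, the leftover $e^\varrho c^2\curl\Omega$ is not "accounted for" by Proposition \ref{struc}; to match the asserted form $\D_*\pi+\bE$ you must still observe that only $\curl\Omega\c\bN=\ep^{AB}\sn_A\Omega_B$ enters (cf. (\ref{6.30.4.19}), (\ref{9.12.1.19})), i.e.\ that it is itself of type $\sn\pi$.

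The genuine gap is your claim that the contractions in (iv) follow "by antisymmetry, exactly as in the vacuum case." For the divergence part this fails: with $(\a,\b,\ga,\d)=(B,4,3,A)$ the second--derivative formula contains the term $\f12\,\delta^{AB}L\Lb(\bg_{\mu\nu})e_A^\mu e_B^\nu$ (essentially $L\Lb(c^{-2})$), which survives the $\delta^{AB}$ contraction, cannot be reordered to have an angular outer derivative, and is of type $\sn_L\pi$ rather than $\sl{\div}\pi+\bE$. Eliminating it genuinely requires the non--vacuum input: one writes $\delta^{AB}\bR_{B43A}=\delta^{AB}\bR_{AB}-\delta^{AB}\delta^{CD}\bR_{ACBD}$, handles the second term by (iii), and for $\delta^{AB}\bR_{AB}$ uses the Ricci decomposition (\ref{6.16.1.19}) together with the wave equation (\ref{4.10.2.19}), whose null decomposition (\ref{6.30.2.19}) is exactly what trades the $L\Lb(c^{-2})$ term for $\sl{\div}\pi+\bE$; similarly the curl part is obtained from $\ep^{AB}\bR_{AB43}=-2\ep^{AB}\bR_{A43B}$ so that the derivatives sit on the $A,B$ slots. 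This is the "minor change due to the change of the spacetime metric" the paper flags: since $\mbox{\bf Ric}\neq 0$ for the acoustic metric, (iv) is not merely vacuum bookkeeping. Your proposal lists the right ingredients ((\ref{ricc6.7.1}), the $\Xi$ components, the Euler system), but the step where they must actually be deployed --- the reduction of $\delta^{AB}\bR_{B43A}$ to Ricci plus (iii) --- is missing, and without it the asserted $\sl{\div}\pi+\bE$ form is not reached.
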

This result follows from the similar argument in \cite[Section 4]{KREins2}.
 We  recall the argument in  \cite[Proposition 4.1]{KREins2}. There holds under the coordinate frame $e_\a, e_\b, e_\ga, e_\d$ in $(\M, \bg)$ the following decomposition,
\begin{equation*}
\bR_{\a\b\ga \d}=\bd_\a \cir{\pi}_{\b\d\ga}+\bd_\b \cir{\pi}_{\a\ga\d}-\bd_\a \cir{\pi}_{\b\ga\d}-\bd_\b \cir{\pi}_{\d\a\ga}+E_{\a\b\ga\d}
\end{equation*}
with $E=\bg\c \ti \pi \c\ti \pi$ and $\cir{\pi}_{\a\b\ga}=\bp_\ga \bg_{\a\b}$. We contract the above identity by the null tetrad, and use Proposition \ref{6.7con} for the covariant derivatives on $L, \Lb, \Pi$. This gives the results in (i)-(iii).

 The proof of (iv) needs a minor change due to the change of the spacetime metric. Let us  compute with the help of Bianchi identity that
\begin{equation*}
\delta^{AB}\bR_{B43A}=\delta^{AB}(\bR_{AB}-\delta^{CD}\bR_{ACBD}), \quad \ep^{AB} \bR_{AB43} =-2 \ep^{AB} \bR_{A43B}.
\end{equation*}
 For $\delta^{AB}\bR_{AB}$ we use
$
\delta^{AB}\bR_{AB}=\sn_A \Xi_A+\bE
$
which follows from (\ref{6.16.1.19})  together with (\ref{4.10.2.19}). We then can obtain (iv) by using (iii) and the above calculations.

\section{Causal geometry of the acoustic spacetime}\label{causal_reg}

In this section, we establish a set of crucial estimates on connection coefficients in $\widetilde{\D^+}$ set up in Section \ref{null_cone_set} under the rescaled coordinates, that is   $(t, x) \to (\la(t-t_k), \la x)$ as
 done in (\ref{7.26.1}). Here $\la\ge\La>1$ with $\La$ sufficiently large and fixed. Recall that $\widetilde{\D^+}$ is contained in  $I_*\times {\mathbb R}^3$ with $I_*= [0,\tau_*]$ and $\tau_*\le \la^{1-8\ep_0}T$.
According to (\ref{BA2}) and Corollary \ref{comp_3}, the rescaled components of the metric $\bg$ satisfy the estimates
\begin{equation}
\| \ti\pi\|_{L_t^2L_x^\infty(I\times {\mathbb R}^3)}+\la^{\delta_0}\left(\sum_{\mu\ge 2 }\mu^{2\delta_0}
\| P_\mu\ti\pi\|^2_{L_t^2 L_x^\infty(I\times {\mathbb R}^3)}\right)^{\f12}\les
\la^{-1/2-4\ep_0}, \label{pi.2}
\end{equation}
where $\delta_0=s'-2$, $P_\mu$ is the Littlewood-Paley projection in (\ref{BA2}),   and $\ti \pi$ denotes the collection of terms taking the form of $f(\bg) \bp \bg$, with $f$ being a smooth function of its variables. To derive the last inequality in (\ref{pi.2}) we combined (\ref{BA2}) with applying Lemma \ref{10.26.3.19} to $G=f$  and using (\ref{5.04.17.19}), followed with rescaling. In the following sections we will work under the condition (\ref{pi.2}).

We fix the convention that
\begin{eqnarray}
\tir = t-u, && \widetilde{\tr\chi}=\tr\chi+\Xi_4, \qquad \mho=\frac{\bb^{-1}-1}{\tir},\nn\\
 z=\widetilde{\tr\chi}-\frac{2}{t-u}, && \sY=\bb(\widetilde{\tr\chi}-\frac{2}{\bb(t-u)})=\bb(z-2\mho). \label{11.3.1.19}
\end{eqnarray}

Similar to \cite[Lemma 5.1]{Wangrough}, we have the following results for the initial data along the null cones for the geometric quantities.
\begin{lemma}\label{inii}
Let $t_\tmin =\max\{u, 0\}$.
 \begin{enumerate}
\item[(i)]On any null cone $C_u$ initiating from a point on the time axis $\Ga^+$ at $t=u\ge 0$, there hold
\begin{align*}
&\tir z,\bb-1, \sn \bb,  \tir\sn z, \tir^2\mu\rightarrow 0 \mbox{ as }
t\rightarrow u,\quad \lim_{t\rightarrow u}\|\chih,\zeta,\zb,k, \curl \Omega\|_{L^\infty(S_{t,u})}<\infty.
\end{align*}
Along any null cone $C_u$ in $\widetilde{\D^+}$ ,
\begin{equation}\label{10.2.1.19}
 \sY, \sn \sY\rightarrow 0  \mbox{ as } t\rightarrow t_{\tmin}.
\end{equation}
\item[(ii)] Let $\gac:=(t-u)^{-2}\ga$ be the rescaled metric on $S_{t,u}$ and let ${\gamma}^{(0)}$ denote the
canonical metric on ${\Bbb S}^2$. Then, relative to the pull-back coordinates by the null geodesic flow $\Upsilon(t,\cdot, u): {\mathbb S}^2\rightarrow S_{t,u}$,  there hold
\begin{equation}
\lim_{t\rightarrow t_\tmin}\stackrel{\circ}
\gamma_{ab}={\gamma}_{ab}^{(0)},\qquad  \lim_{t\rightarrow
t_\tmin}\p_c\!\!\stackrel{\circ}\ga_{ab}=\p_c{\ga}_{ab}^{(0)},\label{8.1.1}
\end{equation}
where $a, b, c=1,2$.

\item[(iii)]
On $\bigcup_{\fv\in (0,\fv_*]}S_\fv$ there hold $\bb-a\rightarrow 0$, $|\fv z|\les \la^{-4\ep_0}$
and $\|\fv^\frac{3}{2} \sn z\|_{L_\fv^\infty L_\omega^p}+\|\fv^\f12 z\|_{L^\infty}\les \la^{-\f12}$,
where $\|F\|_{L_\fv^\infty L_\omega^p} = \sup_{\fv\in (0, \fv_*]} \left(\int_{S_\fv} |F|^p d\omega\right)^{1/p}$
for any tensor field $F$.
\end{enumerate}
\end{lemma}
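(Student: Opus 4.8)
\textbf{Proof proposal for Lemma \ref{inii}.}

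The plan is to organize the proof around the construction of the optical function in Section \ref{setupcone}, separating the two kinds of null cones: those initiating from the time axis $\Ga^+$ (where $u=t\ge 0$ is the vertex value) and those initiating from the $\fv$-foliation $S_\fv$ on $\Sigma_0$ (where $u=-\fv<0$). For (i), on a cone with vertex on $\Ga^+$ at $t=u$, the key observations are the normalizations recorded in Section \ref{setupcone}: $\bN\to\bN_\omega$, $\bb\to 1$ as $u\to t$, and $\tir=t-u\to 0$. I would first show $\bb-1$ and $\sn\bb$ vanish by integrating (\ref{lb}) from the vertex, using $L\log\bb=-k_{\bN\bN}$ and that $k$ is bounded on $S_{t,u}$ by the trace inequality (\ref{trace1}) together with the energy bounds of Corollary \ref{eng_wave}; the $L^\infty(S_{t,u})$ finiteness of $\chih,\zeta,\zb,k,\curl\Omega$ follows likewise from (\ref{trace1}), (\ref{9.10.4.19}) and the structure equations, with $\curl\Omega$ controlled through (\ref{4.25.1.19}). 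For $\tir z\to 0$, $\tir\sn z\to 0$, $\tir^2\mu\to 0$ one uses the transport equations for $z$ (equation (\ref{lz}) referenced in the introduction) and $\mu$ (from (\ref{mub}) and the definition $\mu=\Lb\tr\chi+\tfrac12\tr\chi\tr\chib$): near the vertex $\tr\chi\sim 2/\tir$ and the precise rate of approach, controlled by the Raychaudhuri equation (\ref{s1}) with the Ricci decomposition (\ref{6.23.2.19}), kills the weighted quantities. The statement (\ref{10.2.1.19}) about $\sY$ and $\sn\sY$ follows by combining $\sY=\bb(z-2\mho)$ with $z\to 0$ (scaled appropriately) and $\mho=(\bb^{-1}-1)/\tir\to$ finite limit on $\Ga^+$-cones, while on cones from $S_\fv$ the datum (\ref{6.29.1.19}) with $\bb=a$ and Proposition \ref{exten} give the vanishing as $t\to t_\tmin=0$.

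For (ii), this is the standard fact that the null geodesic flow emanating from a point $p$ on $\Ga^+$ (or from a smooth $S_\fv$) induces, in the pull-back coordinates $\omega\in{\mathbb S}^2$, a rescaled metric $\gac=\tir^{-2}\ga$ which tends to the round metric ${\ga}^{(0)}$ with matching first derivatives in the limit. I would derive this from the transport equation (\ref{trscoord2}), $\tfrac{d}{dt}\ga(\p_{\omega^A},\p_{\omega^B})=2\chi(\p_{\omega^A},\p_{\omega^B})$, rewriting it for $\gac$ as a transport equation whose right-hand side is $(\tr\chi-\tfrac2\tir)\gac+2\chih$, and then integrating from the vertex: the $L^1_t$-smallness of $\chih$ and of $\tr\chi-\tfrac2\tir$ near the vertex (which in turn is part of (i) and of Proposition \ref{cone_reg}) forces $\gac\to{\ga}^{(0)}$. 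The first-derivative statement (\ref{8.1.1}) is obtained by differentiating this transport equation in $\omega$ and using that $\sn$-derivatives of $\chih$ and of $z$ are integrable near the vertex; for the $S_\fv$-cones one instead starts from the datum (\ref{a_5}) in Proposition \ref{exten}. For (iii), on $\bigcup_\fv S_\fv$ the claims $\bb-a\to 0$, $|\fv z|\lesssim\la^{-4\ep_0}$, $\|\fv^{3/2}\sn z\|_{L^\infty_\fv L^p_\omega}+\|\fv^{1/2}z\|_{L^\infty}\lesssim\la^{-1/2}$ are the $t=0$ traces of the corresponding spacetime estimates; they follow directly from Proposition \ref{exten}, in particular from (\ref{a_3}), (\ref{a_4}), (\ref{4a_6}), (\ref{w8.1.1}) and the amc relation (\ref{amc}), by expressing $z=\widetilde{\tr\chi}-\tfrac{2}{t-u}$ at $t=0$, $u=-\fv$, in terms of $\tr\theta$, $k_{\bN\bN}$, $\Xi_4$ and $a$.

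I expect the main obstacle to be the sharp rate-of-vanishing near the vertex $\Ga^+$ for the weighted quantities $\tir z$, $\tir\sn z$, and especially $\tir^2\mu$, because the transport equations for $z$ and $\mu$ contain the Ricci term $\bR_{LL}$, into which $\curl\Omega$ enters through (\ref{6.23.2.19}), and near the vertex $\curl\Omega$ is only controlled in $L^\infty(S_{t,u})$ (not better). The resolution is to use the trace decomposition (\ref{9.12.2.19}) of $\sn(\curl\Omega_\bN)$ and the cancellation (\ref{7.04.8.19}) between $k_{\bN\bN}$ and $\Xi_L$ — precisely the structural identities of Section \ref{fundstr} — so that the genuinely dangerous terms appear in the form $L(\text{bounded})$ and can be integrated against the $\tir^{-2}$ or $\tir^{-1}$ weights without loss; this is the same mechanism that will be used in Section \ref{causal_reg} to control $\tr\chi-\tfrac2\tir$, so Lemma \ref{inii} is essentially the $t\to t_\tmin$ endpoint case of Proposition \ref{cone_reg}, and I would prove the two together or at least cite the relevant transport estimates from there. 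A secondary technical point is justifying the interchange of limits and integrations (dominated convergence along $C_u$), for which the uniform bounds (\ref{pi.2}) and the flux estimates of Section \ref{c_flux} suffice.
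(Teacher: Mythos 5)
Your treatment of (iii) (and of the $u<0$ cones) matches the paper: those statements are read off directly from Proposition \ref{exten}, e.g.\ (\ref{amc}), (\ref{a_3})--(\ref{4a_6}), (\ref{w8.1.1}). But for the cones emanating from the time axis the paper does not argue the way you do: it invokes the local expansions of the optical quantities at the vertex established in the author's earlier works (\cite{Wangthesis, Wang09} and \cite[Section 2]{Wangricci}), and your proposed substitute has a genuine gap. Integrating the transport equations (\ref{lb}), (\ref{lz}), (\ref{trscoord2}) ``from the vertex'' is circular: to start those integrations you need precisely the limiting values at the vertex --- that $\tir\tr\chi\to 2$, that the pulled-back area element behaves like $\tir^2$, that $\bb\to 1$ with a rate, etc. --- and these vertex asymptotics are the content of the lemma, not consequences of the forward ODEs. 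They come from a Taylor-type analysis of the null exponential map at a point (the cited local expansions), not from the structure equations plus flux bounds; this is also why your suggestion to prove Lemma \ref{inii} ``together with Proposition \ref{cone_reg}'' cannot work as stated, since the transport arguments in Section \ref{causal_reg} (Lemma \ref{tsp2}) take the vanishing/finiteness of the vertex limits as input data.

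A second concrete problem is your argument for (\ref{10.2.1.19}). Writing $\sY=\bb(z-2\mho)$, one has near an axis vertex $\bb^{-1}-1\approx k_{\bN\bN}(p)\,\tir$, so $\mho=(\bb^{-1}-1)/\tir$ tends to a generically \emph{nonzero} finite limit; hence ``$z\to0$ and $\mho\to$ finite limit'' does not give $\sY\to0$. What actually happens is a cancellation between the vertex limits of $z$ (note the lemma only claims $\tir z\to0$, not $z\to0$) and $2\mho$, and exhibiting it again requires the second-order vertex expansions of $\tr\chi$, $\Xi_4$ and $\bb$; the same remark applies to $\tir\sn z$, $\tir^2\mu$ and to the derivative statement in (\ref{8.1.1}). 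Your instinct that the rough $\curl\Omega$ is the delicate point is also misplaced here: at the vertex the issue is purely the local expansion of the geometry (for which the stated conclusion is only finiteness of the $L^\infty(S_{t,u})$ limits of $\chih,\zeta,\zb,k,\curl\Omega$), and the structural identities (\ref{9.12.2.19}), (\ref{7.04.8.19}) are used later, in Sections \ref{causal_reg}--\ref{conf}, not in the proof of this lemma.
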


 When $u\ge 0$, the proof is based on the local expansion of the geometric quantities at the vertex of the cone $C_u$. The items (i) and (ii) in Lemma \ref{inii} can be found from \cite{Wangthesis, Wang09} and  \cite[Section 2]{Wangricci}, if $u>0$.
If $u<0$, the results are based on Proposition \ref{exten}.
The item (iii)  also follows from Proposition \ref{exten}.

Now we state the main result of this section.
\begin{proposition}\label{cone_reg}
Let $p$ be a fixed number satisfying $0<1-\frac{2}{p}<s'-2$. Let $\D_*=(\sn, \sn_L)$. Under the assumption (\ref{BA2}), there hold on $\widetilde{\D^+}\subset[0,\tau_*]\times \Sigma$ the estimates,
\begin{align}
&\tir \widetilde{\tr\chi}\approx 1, \|\tir^\f12 z\|_{L^\infty(\widetilde{\D^+})}\les \la^{-\f12}\label{comp2}\\
&\|\tir^\frac{3}{2}\sn z\|_{L_t^\infty L_u^\infty L_\omega^p(\widetilde{\D^+})}\les \la^{-\f12}\label{ricp}\\
&\|\tir \sn ( \chih,  z)\|_{L_t^2 L_\omega^p(C_u\cap \widetilde{\D^+})}\les \la^{-\f12}\label{sna}\\
&\| z, \chih, \tr\chi-\frac{2}{\tir}, \zeta\|_{L_t^{\frac{q}{2}}L_x^\infty(\widetilde{\D^+})}\les \la^{\frac{2}{q}-1-4\ep_0(\frac{4}{q}-1)},\, 2<q<4\label{ric1.1}\\
&\|\frac{\bb^{-1}-1}{\tir}\|_{L_t^2 L_\omega^\infty(C_u\cap \widetilde{\D^+})}+\|\frac{\bb^{-1}-1}{\tir^\f12}\|_{L_\omega^{2p}(C_u\cap \widetilde{\D^+})}+\|\tir \D_*(\frac{\bb^{-1}-1}{\tir})\|_{L_t^2 L_\omega^p(C_u\cap \widetilde{\D^+})}\les \la^{-\f12}\label{ric4}
\end{align}
and there holds in $\D^+$ (that is where  $0\le u\le t\le \tau_*$),
\begin{equation}\label{ric1}
\| z, \chih, \zeta, \tr\chi-\frac{2}{\tir},\frac{\bb^{-1}-1}{\tir}\|_{L_t^2 L_x^\infty}\les \la^{-\f12-4\ep_0}.
\end{equation}
\end{proposition}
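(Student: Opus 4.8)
\textbf{Proof plan for Proposition \ref{cone_reg}.}
The plan is to run a continuity/bootstrap argument on $\widetilde{\D^+}$ in which all the estimates \eqref{comp2}--\eqref{ric1} are assumed with slightly weaker constants and then recovered with the stated constants; this is the standard scheme of \cite{KRduke, Wangrough}, and the core input that is genuinely new here is the treatment of the vorticity defect in the Raychaudhuri equation. First I would record the structural identities from Section \ref{fundstr} that will be used throughout: the decomposition $\bR_{LL}=L(\Xi_L)-e^\varrho\bN^i\curl\Omega_i+\Q(\bp\bg,\bp\bg)$ from \eqref{6.23.2.19}, the cancellation $k_{\bN\bN}-\f12\Xi_L=-\f12\big(L(\log c+\varrho)+2L(v)_\bN\big)$ from \eqref{7.04.8.19}, the trace decomposition \eqref{9.12.2.19} for $\Pi^{ij}\p_j(\bN^m\curl\Omega_m)$, and the Hodge system \eqref{6.17.7.19} for $\zeta-\zb$. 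I would also invoke the flux control of Section \ref{c_flux}: the $H^{2+\ep}$ flux for $(v,\varrho)$ from Proposition \ref{9.10.2.19} and, crucially, the highest-order flux for $\curl\C$ from Proposition \ref{9.30.15.19}, together with the consequences \eqref{9.07.6.19}--\eqref{9.17.2.19} and the trace/Sobolev toolkit of Section \ref{null_cone_set} (Proposition \ref{basic1}, Lemma \ref{trace2}, and the dyadic trace inequalities \eqref{9.10.3.19}--\eqref{9.18.1.19}). The initial data along the cones are supplied by Lemma \ref{inii} and Proposition \ref{exten}.

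The heart of the proof is the pointwise estimate \eqref{comp2}--\eqref{ricp} for $z$ and $\sn z$. Here I would introduce $\widetilde{\tr\chi}=\tr\chi+\Xi_4$ so that the Raychaudhuri equation \eqref{s1} becomes, after using \eqref{6.23.2.19}, a transport equation of the form $Lz+\tfrac{2z}{\tir}=e^\varrho\curl\Omega_\bN+\tfrac{2}{\tir}(\Xi_L-k_{\bN\bN})+(\text{good terms})$ as in \eqref{lz}; the term $e^\varrho\curl\Omega_\bN=e^\varrho\ep^{AB}\sn_A\Omega_B$ by \eqref{9.12.1.19} is only in $H^{\f12+}(S_{t,u})$, short of $\f12$ derivative. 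Following the strategy outlined in the introduction, I would differentiate to get the transport equation \eqref{10.13.7.19} for $\sn z$, use the trace decomposition \eqref{9.12.2.19} to write the angular-derivative obstruction as $\sn_L(e^\varrho\curl\Omega)_A$ plus a term involving $\curl^2\Omega$ (which is controlled in $L^2_tL^p_\omega(C_u)$ by \eqref{9.17.2.19}), and combine the first terms on both sides to obtain a transport equation for $\sn_A z-e^\varrho(\curl\Omega)_A$ with no singular higher-order source — this is \eqref{ldz_2}. Integrating along $C_u$ and applying the Sobolev inequality \eqref{sob_9.18}/\eqref{10.15.1.19} then yields the pointwise bound on $\tir^{1/2}z$ and \eqref{ricp}. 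For the integral bound \eqref{ric1.1} near the time axis, where $\tir\to0$, the pointwise estimate is insufficient, so I would switch to the normalized quantity $\sY=\bb(\widetilde{\tr\chi}-\tfrac{2}{\bb\tir})$ defined in \eqref{11.3.1.19}: using \eqref{7.04.8.19} and $L\log\bb=-k_{\bN\bN}$ from \eqref{lb}, the transport equation \eqref{11.7.2.19} for $\sn\sY$ has its singular source $-\tfrac{2}{\tir}\sn(\Xi_L-2k_{\bN\bN})$ rewritten as $L\sn\pi_1+l.o.t.$ with $\pi_1=2\sn(\log c+\varrho)+4\sn v_\bN$, so that $\sn_A\sY-\bb e^\varrho(\curl\Omega)_A+\tfrac{\pi_1}{\tir}$ satisfies a transport equation free of the dangerous term; this gives $\|\sY\|_{L^{q/2}_tL^\infty_x}$ and hence \eqref{ric1.1} via $z=\bb^{-1}\sY+2\mho$.

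The remaining pieces are more routine within the bootstrap. For $\zeta$ and $\chih$ I would use the Hodge systems \eqref{dze}, \eqref{dcurl}, \eqref{dchi} together with Lemma \ref{6.23.17.19}(iv) to express the curvature sources as $\div$ or $\curl$ of $\pi$ plus lower-order $\bE$, then apply $L^p$ elliptic estimates on $S_{t,u}$ and the flux bounds of Proposition \ref{9.10.2.19}; the improved treatment of $\zeta$ via the explicit system \eqref{6.17.7.19} relating $\zeta-\zb$ reduces everything to $\zb=-k_\bN$, which is controlled directly by the metric flux. The lapse estimates \eqref{ric4} follow from $L\bb=-\bb k_{\bN\bN}$ in \eqref{lb} by integrating along $C_u$ and commuting with $\sn$, using the initial value $\bb\to1$ on $\Ga^+$ and $\bb=a$ on $\{t=0\}$ with the bounds of Proposition \ref{exten}; the comparability $\tir\widetilde{\tr\chi}\approx1$ and $v_t/\tir^2\approx1$ in \eqref{wras} come out of $L v_t=v_t\tr\chi$ from \eqref{lv} once \eqref{comp2} is in hand. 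Finally, the $L^2_t$ estimate \eqref{ric1} on $\D^+$ (where $0\le u\le t$) is obtained by interpolating the $L^{q/2}_t$ bound \eqref{ric1.1} against the $C^0_t$-type control and optimizing in $q$, exactly as in \cite[Section 5]{Wangrough}. I expect the main obstacle to be precisely the closing of the $\sn z$ estimate: one must verify that after the cancellation in \eqref{10.13.7.19}--\eqref{ldz_2} every surviving source term on $C_u$ is actually controlled by the double-curl flux $\|\mu^{s'-2}P_\mu\curl\C\|_{l^2_\mu L^2(C_u)}+\|\curl\C\|_{L^2(C_u)}$ of Proposition \ref{9.30.15.19} and the $L^2_tL^p_\omega$ bound \eqref{9.17.2.19} on $\curl^2\Omega$ — in particular the commutator and nonlinear terms generated when differentiating \eqref{lz}, and the coupling back to $\chih$ and $\zeta$ through $\bR_{A4A B}$, must all be absorbed by the bootstrap constants, and keeping the weights $\tir$ and the powers of $\la$ sharp here is the delicate part.
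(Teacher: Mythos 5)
Your plan coincides with the paper's actual proof in all of its main ingredients: the bootstrap on $\|\chih,z,\zeta\|_{L_t^2L_\omega^\infty}$, the renormalized Raychaudhuri/transport equation for $\sn_Az-e^\varrho(\curl\Omega)_A$ built from \eqref{6.23.2.19} and \eqref{9.12.2.19}, the further normalization $\sY$ together with the trace decomposition of $\sn(\Xi_4-2k_{\bN\bN})$ coming from \eqref{7.04.8.19}, the Hodge system \eqref{6.17.7.19} for $\zeta-\zb$, the lapse estimates from \eqref{lb}, and the vorticity flux of Proposition \ref{9.30.15.19} as the input that absorbs the Ricci defect. Up to that point the proposal is essentially the paper's argument.

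There is, however, a genuine gap in your last step: you propose to obtain \eqref{ric1} by interpolating the $L_t^{q/2}$ bound \eqref{ric1.1} against ``$C_t^0$-type control'' and optimizing in $q$. This cannot work. First, there is no uniform-in-time endpoint to interpolate against: the only pointwise control available is $\tir^{1/2}|z|\les\la^{-1/2}$, which degenerates on the time axis, and it is precisely this degeneracy that forces the $\sY$ construction. Second, the exponents do not match: as $q\to2$ the right-hand side of \eqref{ric1.1} only tends to $\la^{-4\ep_0}$, far weaker than the claimed $\la^{-1/2-4\ep_0}$, because \eqref{ric1.1} lives on $\widetilde{\D^+}$ where the data on $\{t=0\}$ (Proposition \ref{exten}) contribute and where the $t^{-1/2}$ decay limits time integrability; the sharp $L_t^2L_x^\infty$ bound is special to $\D^+$. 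In the paper \eqref{ric1} is proved directly, not by interpolation: for $u\ge0$ the cone-vertex data vanish, $\mho$ is handled by the Hardy--Littlewood maximal function applied to $k_{\bN\bN}$ (giving \eqref{9.14.2.19}), $\zeta$ gets the extra $\la^{-4\ep_0}$ from Proposition \ref{cz.2} applied to \eqref{6.17.7.19} together with the frequency-localized assumption \eqref{pi.2} (this is \eqref{9.14.10.19}), and these sharp bounds are then fed as source terms into the $\sY$ transport equation to produce \eqref{9.13.6.19}--\eqref{9.16.2.19}, and into the normalized Codazzi equation $\sl{\div}(\bb\chih)=\f12\sn\sY+\cdots$ (again via Proposition \ref{cz.2}, not plain $L^p$ elliptic estimates, which would only yield $\la^{-1/2}$) to produce \eqref{9.16.3.19}. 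So the $\la^{-4\ep_0}$ gain in $\D^+$ is a structural consequence of vanishing vertex data plus \eqref{pi.2}, and your plan needs this direct argument in place of the interpolation step; the logical order is in fact the reverse of what you wrote, since parts of \eqref{ric1.1} (e.g.\ $z$ in the region $u<0$) are deduced from the pointwise bound while \eqref{ric1} is established independently.
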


\begin{proposition}\label{ricpr}
Let $p$ be as fixed in Proposition \ref{cone_reg}. On the null cone $C_u$ contained in $\widetilde{\D^+}$, there hold
\begin{align}
&\|z\|_{L_t^2 L_\omega^\infty(C_u\cap \widetilde{\D^+})}+\|\chih\|_{L_t^2 L_\omega^\infty(C_u\cap \widetilde{\D^+})}+\|\zeta\|_{L_t^2 L_\omega^\infty(C_u\cap \widetilde{\D^+})}\les\la^{-\f12}, \label{ric3.18.1}\\
 &\|\p_\omega( \stackrel{\circ}{\ga}-\ga^{(0)})\|_{L_\omega^p L_t^\infty(C_u\cap \widetilde{\D^+})}
 \le \la^{-4\ep_0}, \quad \|\stackrel{\circ}{\ga}-\ga^{(0)}\|_{L^\infty}\les \la^{-4\ep_0}, \label{8.0.3}
\end{align}
where $\stackrel{\circ}{\ga}=(t-u)^{-2}\ga$. \begin{footnote}{We may hide the range for $u,t$ in $\D^+$ or $\widetilde{\D^+}$ for short and refer to Section \ref{setupcone} for their definitions.}\end{footnote}
\end{proposition}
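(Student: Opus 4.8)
\textbf{Proof strategy for Proposition \ref{ricpr}.} The plan is to run a bootstrap argument inside $\widetilde{\D^+}$ in tandem with Proposition \ref{cone_reg}, treating the two propositions as a single coupled system of estimates whose closure is driven by the transport equations along $C_u$ and the flux controls of Section \ref{c_flux}. The starting point for the $L_t^2 L_\omega^\infty$ estimates in \eqref{ric3.18.1} is the pointwise bound $\|\tir^{1/2}z\|_{L^\infty}\les \la^{-1/2}$ from \eqref{comp2} together with the $L_t^\infty L_u^\infty L_\omega^p$ control of $\tir^{3/2}\sn z$ in \eqref{ricp}; feeding these into the sharp trace/Sobolev inequality \eqref{sob_9.18} on $S_{t,u}$, namely $\tir^{1/2}\|F\|_{L^\infty(S_{t,u})}\les \|\tir\sn F\|_{L^4(S_{t,u})}+\|F\|_{L^4(S_{t,u})}$, with $F=z$ and interpolating between the $L^\infty$ pointwise bound and the $L^p_\omega$ bound on $\tir\sn z$, yields $\|z(t,u,\cdot)\|_{L_\omega^\infty(S_{t,u})}\les \la^{-1/2}\tir^{-1/2+\delta}$ for a small $\delta$; squaring and integrating $dt$ over $C_u$ (where $0\le\tir\les\la^{-8\ep_0}T\les T\le 1$, as noted right before Section \ref{c_flux}) gives the claimed $\la^{-1/2}$ after absorbing the harmless powers of $\tir$. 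The same mechanism applies to $\chih$ via the $L_t^2 L_\omega^p$ control of $\tir\sn\chih$ in \eqref{sna} and the transport/Hodge estimates for $\chih$ already bundled into Proposition \ref{cone_reg}; and to $\zeta$ via the Hodge system \eqref{dze}--\eqref{dcurl}, or more efficiently via \eqref{6.17.7.19} which expresses $\zeta-\zb$ in terms of $\sn\varphi$ and lower-order quantities, with $\zb=-k_\bN^A$ controlled by the flux estimates for $\bp v,\bp\varrho$ in Proposition \ref{9.10.2.19}.

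For the metric comparison \eqref{8.0.3}, I would integrate the transport equation \eqref{trscoord2}, $\frac{d}{dt}\ga(\p_{\omega^A},\p_{\omega^B})=2\chi(\p_{\omega^A},\p_{\omega^B})$, rewritten for the rescaled metric $\cga=\tir^{-2}\ga$. Using $\tir\tr\chi\approx 1$ and $z=\widetilde{\tr\chi}-2/\tir$ from \eqref{comp2}, the evolution of $\cga-\ga^{(0)}$ has right-hand side controlled by $\tir(\chih, z, \Xi_4)$, which is in $L_t^1 L_\omega^\infty(C_u)$ with size $\la^{-1/2}T^{1/2}\les\la^{-4\ep_0}$ by \eqref{ric3.18.1} and \eqref{pi.2}; the initial data is handled by Lemma \ref{inii}(ii) for cones with vertex on $\Ga^+$ (where $\cga\to\ga^{(0)}$, $\p_\omega\cga\to\p_\omega\ga^{(0)}$) and by Proposition \ref{exten}, specifically \eqref{a_5}, for cones emanating from $S_\fv$ at $t=0$. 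For the $\p_\omega$-derivative estimate in $L_\omega^p L_t^\infty$, I would differentiate \eqref{trscoord2} in $\omega$, producing a term $\p_\omega\chi$ which I estimate via $\sn\chi = \sn\chih + \f12\sn\tr\chi\,\ga + (\text{connection})\cdot\chi$, with $\sn\tr\chi = \sn z - \sn\Xi_4$; the $\tir\sn z$ and $\tir\sn\chih$ terms are exactly what \eqref{ricp}, \eqref{sna} provide, and $\sn\Xi_4$ is controlled by the trace decomposition of Proposition \ref{struc} together with the highest-order flux for $\curl\C$ in Proposition \ref{9.30.15.19}.

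The main obstacle I anticipate is the interdependence: the $L_t^2 L_\omega^\infty$ bounds in \eqref{ric3.18.1} are used (via \eqref{pba2}, \eqref{Pba}, \eqref{wras} and the trace inequalities of Lemma \ref{trace2}, Proposition \ref{basic1}) to justify the very estimates in Proposition \ref{cone_reg} that feed back into \eqref{ric3.18.1}. So the honest proof must establish Proposition \ref{cone_reg} and Proposition \ref{ricpr} \emph{simultaneously} in one continuity/bootstrap argument — exactly as flagged in the organization of proof (``we prove Proposition \ref{cone_reg} simultaneously with other estimates therein''). Concretely, I would set up bootstrap assumptions on $z,\chih,\zeta,\sn z,\sn\chih$ of the form appearing in the conclusions, with the constants slightly weakened; derive all transport and Hodge estimates along $C_u$ under these assumptions using the flux bounds of Section \ref{c_flux} (which themselves only require \eqref{Pba}, \eqref{wras}); and then recover the assumptions with improved constants, the $\la^{-1/2}$ and $\la^{-4\ep_0}$ smallness coming from the short rescaled lifespan $\tau_*\le\la^{1-8\ep_0}T$ and the metric smallness \eqref{pi.2}. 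The delicate point within this is the $\tr\chi$ transport equation \eqref{s1}, where $\bR_{44}$ contains the rough term $e^\varrho\bN^j\curl\Omega_j$; here one must use the cancellation structures of Proposition \ref{struc} (writing $\bR_{44}=L(\Xi_4)-e^\varrho(\curl\Omega)_\bN + \text{good}$) and Proposition \ref{6.14.2.19} (the identity \eqref{7.04.8.19} for $k_{\bN\bN}$), renormalizing to $z$ and then to $\sY$ as in \eqref{11.3.1.19}, so that the angular-derivative estimate for $\sn z$ closes with only the double-curl flux of $\curl\C$ rather than an $L_t^1 L_x^\infty$ bound on $\p\Omega$ that is unavailable under the hypotheses of Theorem \ref{thm1}.
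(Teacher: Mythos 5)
Your overall architecture — running one simultaneous bootstrap with Proposition \ref{cone_reg} under assumptions of the form (\ref{ba3.18.1})--(\ref{bb_3}), using (\ref{6.17.7.19}) to reduce $\zeta$ to $\sn\varphi$ and $\zb$, and proving (\ref{8.0.3}) by integrating (\ref{trscoord2}) and its angular derivative with the initial data from Lemma \ref{inii}(ii) and Proposition \ref{exten} — is the paper's route. But there is a genuine gap in your mechanism for the central estimate in (\ref{ric3.18.1}), the $L_t^2L_\omega^\infty$ bound for $z$. You propose to start from the weighted $L_t^\infty$ bounds (\ref{comp2}) and (\ref{ricp}), feed them into (\ref{sob_9.18}), ``interpolate'' to get $\|z\|_{L_\omega^\infty(S_{t,u})}\les\la^{-1/2}\tir^{-1/2+\delta}$, and then integrate in $t$, dismissing the powers of $\tir$ because ``$\tir\les\la^{-8\ep_0}T\le 1$''. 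Two things fail here. First, that range of $\tir$ holds only in the original coordinates (Sections 5--6); Propositions \ref{cone_reg} and \ref{ricpr} are stated after the rescaling $(t,x)\to(\la(t-t_k),\la x)$, where $\tir$ ranges up to $\sim\tau_*\le\la^{1-8\ep_0}T$, so powers of $\tir$ are emphatically not harmless. Second, the claimed intermediate bound $\la^{-1/2}\tir^{-1/2+\delta}$ does not follow from the stated ingredients: both pointwise-in-$t$ inputs you have, $\|\tir\sn z\|_{L_\omega^p}\les\la^{-1/2}\tir^{-1/2}$ from (\ref{ricp}) and $\|z\|_{L_\omega^p}\les\la^{-1/2}\tir^{-1/2}$ from (\ref{pric2})/(\ref{comp2}), carry the same weight $\tir^{-1/2}$, so Sobolev embedding only yields $\|z\|_{L_\omega^\infty}\les\la^{-1/2}\tir^{-1/2}$, whose $L_t^2$ norm diverges logarithmically at the vertex; and even granting an extra $\tir^{\delta}$, integrating over the rescaled cone produces a loss $\tau_*^{\delta}\sim\la^{\delta(1-8\ep_0)}$, which neither reaches $\la^{-1/2}$ nor improves the bootstrap constant $\la^{-1/2+\ep_0}$.

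The fix is already in your toolbox and is what the paper does: do not route through the weighted $L_t^\infty$ estimates at all, but apply the Sobolev inequality (\ref{sobinf}) on each $S_{t,u}$ and then take $L_t^2$ along $C_u$, using the $L_t^2L_\omega^p$ bounds of both the quantity and its tangential derivative, namely $\|z,\chih\|_{L_t^2L_\omega^p}$ from (\ref{pric1}), $\|\tir\sn z,\tir\sn\chih\|_{L_t^2L_\omega^p}$ from (\ref{sna}), and $\|\zeta\|_{L_t^2L_\omega^p}+\|\tir\sn\zeta\|_{L_t^2L_\omega^p}\les\la^{-1/2}$ from the Hodge system (\ref{6.17.7.19}) (your Proposition \ref{7.04.1.19}-type step); this gives $\|z,\chih,\zeta\|_{L_t^2L_\omega^\infty(C_u)}\les\la^{-1/2}$ with no $\tir$-weights to absorb. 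You in fact describe exactly this for $\chih$, but not for $z$, which is the quantity driving the bootstrap. A minor further point: in your treatment of $\p_\omega(\cga-\ga^{(0)})$, the term $\sn\Xi_4$ is simply of type $\tir^{-1}\cdot(\tir\sn\pi)$ and is controlled by the flux estimate (\ref{flux_3}); the trace decomposition of Proposition \ref{struc} and the double-curl flux of Proposition \ref{9.30.15.19} enter upstream, in establishing (\ref{sna}) and (\ref{ricp}) through the renormalized equation (\ref{ldz_2}), not at this stage.
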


As a consequence of (\ref{ric3.18.1}) and (\ref{pi.2})
\begin{equation}\label{10.11.3.19}
\|\bA\|_{L_t^2 L_\omega^\infty(C_u\cap \widetilde{\D^+})}\les \la^{-\f12}.
\end{equation}

The proof of the above results rely on a bootstrap argument. We make the bootstrap assumption on any $C_u$ contained in  $\widetilde {\D^+}$,
\begin{align}
&\|\chih\|_{L_t^2 L_\omega^\infty(C_u)}+\|z\|_{L_t^2 L_\omega^\infty(C_u)}+\|\zeta\|_{L_t^2 L_\omega^\infty(C_u)}\le \la^{-\f12 +\ep_0},\label{ba3.18.1}\\
 &\|\p_\omega(\stackrel{\circ}{\ga}-\ga^{(0)})\|_{L_t^\infty L_\omega^p}\le \la^{-\ep_0}, \,\|\stackrel{\circ}{\ga}-\ga^{(0)}\|_{L^\infty}\le \la^{-\ep_0}\label{ba3},\\
\end{align}
We also assume that on any $S_{t,u}\subset\widetilde{\D^+}$, there hold
\begin{align}
& \|\tr\theta-\frac{2}{\tir}\|_{L^3(\Sigma_t\cap \widetilde{\D^+})}\le 1, \quad \| (\la\tir)^\f12(\sn \bb, \chih)\|_{L_\omega^p}\le \la^{2\ep_0} \label{aux_1}, \displaybreak[0]\\
& |\bb-1|\le \f12, \label{bb_3}
\end{align}
where $0<1-\frac{2}{p}<s-2$ is fixed. 

By repeating the proof in \cite[Lemma 5.4]{Wangrough} with the help of the transport equations (\ref{lb}), (\ref{lv}), and the data in Lemma \ref{inii}, we can derive as a direct consequence of the estimate of $z$ in (\ref{ba3.18.1}), (\ref{pi.2}) and  (\ref{bb_3})  the following result
\begin{lemma}\label{6.17.1}
On $\widetilde{\D^+}$ there holds
\begin{align}
v_t &\approx (t-u)^2, \label{comp_3_27}\\
|\bb-1| &\les \la^{-4\ep_0}<\frac{1}{4} \label{bb_4}.
\end{align}
\end{lemma}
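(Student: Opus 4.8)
\textbf{Proof plan for Lemma \ref{6.17.1}.}
The plan is to integrate the transport equation \eqref{lb}, namely $L\log\bb=-k_{\bN\bN}$, along each null geodesic generator $\Upsilon_\omega$ of $C_u$ from the vertex (or from $S_\fv$ when $u<0$), using the initial value $\bb=1$ on $\Ga^+$ (resp.\ $\bb=a$ on $S_\fv$, with $|a-1|\les\la^{-4\ep_0}$ by Proposition \ref{exten}) supplied by Lemma \ref{inii}(i),(iii). This gives $\log\bb(t,\omega,u)=\log\bb(t_\tmin)-\int_{t_\tmin}^t k_{\bN\bN}\,dt'$, so the whole matter reduces to an $L^1_t$ bound along $C_u$ for $k_{\bN\bN}$. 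The key observation is the cancellation identity \eqref{7.04.8.19}: $k_{\bN\bN}=\tfrac12\bigl(\Xi_L-L(\log c+\varrho)-2L(v)_\bN\bigr)$. The terms $L(\log c+\varrho)$ and $L(v)_\bN$ are total derivatives along $L$ up to $S_{t,u}$-tangential pieces controlled by $\chi$ and $\pi$; more simply, $L(\log c+\varrho)$ and $L(v^i)$ are just components of $\bp\bg$ contracted with the null frame, hence are of type $\pi$, and $\Xi_L$ is likewise of type $\pi$ (it is $f(\bg)\bp\bg$ by \eqref{9.30.13.19}). Therefore $k_{\bN\bN}=\pi\cdot X$ in the notation of Section \ref{fundstr}, and \eqref{pi.2} together with the trivial embedding $\|\pi\|_{L^1_t L^\infty_x}\les T^{1/2}\|\pi\|_{L^2_t L^\infty_x}$ along the cone (using $\tir\les T$ in $\widetilde{\D^+}$) yields $\|k_{\bN\bN}\|_{L^1_t L^\infty_\omega(C_u)}\les \la^{-1/2-4\ep_0}\tau_*^{1/2}\les \la^{-4\ep_0}$ after the rescaling, which is $<\log\tfrac54$ for $\La$ large.

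First I would record the precise decomposition: along $\Upsilon_\omega$,
\[
|\log\bb(t)-\log\bb(t_\tmin)|\le \int_{t_\tmin}^t |k_{\bN\bN}|\,dt'
\les \int_{t_\tmin}^t |\pi|\,dt',
\]
and then estimate the right side by H\"older in $t$ followed by \eqref{pi.2}. Care is needed that the length of the $t$-interval is at most $\tau_*\le\la^{1-8\ep_0}T$, so the $L^1_tL^\infty_x$ norm picks up the factor $\tau_*^{1/2}\les\la^{(1-8\ep_0)/2}T^{1/2}$, and multiplied by $\la^{-1/2-4\ep_0}$ this gives $\les\la^{-8\ep_0}T^{1/2}\les\la^{-4\ep_0}$ for $T\le1$; this proves \eqref{bb_4} with the strict inequality $|\bb-1|<1/4$ once $\La$ is chosen large enough. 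This simultaneously confirms the bootstrap assumption \eqref{bb_3}, so the argument is consistent. For the comparability \eqref{comp_3_27} of the area element, I would integrate \eqref{lv}, $L(v_t)=v_t\tr\chi$, i.e.\ $L\log v_t=\tr\chi$, and write $\tr\chi=\tfrac2\tir + z-\Xi_4$ using \eqref{11.3.1.19}; since $L\tir=1$, $\int_{t_\tmin}^t \tfrac{2}{\tir}\,dt'=2\log\tfrac{\tir(t)}{\tir(t_\tmin)}$ produces exactly the Euclidean cone area growth $\tir^2$, while the remainder $\int(z-\Xi_4)\,dt'$ is $O(\la^{-4\ep_0})$ by the same H\"older argument applied to $\|z\|_{L^2_tL^\infty_\omega}\le\la^{-1/2+\ep_0}$ (bootstrap \eqref{ba3.18.1}) and $\|\Xi_4\|_{L^2_tL^\infty_x}\les\la^{-1/2-4\ep_0}$ (from \eqref{pi.2}). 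Exponentiating gives $v_t\approx(t-u)^2$ on $\widetilde{\D^+}$, using also the normalization $v_t/\tir^2\to1$ as $t\to t_\tmin$ from Lemma \ref{inii}(ii),(iii).

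The main obstacle — though a mild one at this stage — is bookkeeping the rescaling and the different initial conditions: one must treat the two families of cones (those with vertex on $\Ga^+$, where $\bb\to1$, and those foliated from $S_\fv$ at $t=0$, where $\bb\to a$ with $|a-1|\les\la^{-4\ep_0}$) uniformly, and verify that in both cases the initial error plus the integrated error stays well below the thresholds $1/4$ and $\la^{-4\ep_0}$. Since the initial error $|a-1|$ is already of the favourable size $\la^{-4\ep_0}$ by Proposition \ref{exten}, and the dynamical contribution is $\les\la^{-8\ep_0}T^{1/2}$, there is ample room, so no genuine difficulty arises; the proof is essentially a one-line integration of \eqref{lb} and \eqref{lv} once the structural identity \eqref{7.04.8.19} and the metric regularity \eqref{pi.2} are in hand. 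This mirrors \cite[Lemma 5.4]{Wangrough}, to which we refer for the routine details of the null-geodesic integration.
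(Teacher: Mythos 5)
Your proposal is correct and follows essentially the same route as the paper, which simply integrates the transport equations \eqref{lb} and \eqref{lv} along the null generators with the initial data of Lemma \ref{inii}/Proposition \ref{exten}, and controls the integrands via \eqref{pi.2} and the bootstrap bound on $z$ in (\ref{ba3.18.1}) (the paper does this by citing the analogous Lemma 5.4 of \cite{Wangrough}). The only cosmetic point is that the appeal to \eqref{7.04.8.19} is unnecessary: $k_{\bN\bN}$ is already of type $\pi$ directly from \eqref{k1} (cf. \eqref{9.20.2.19}), and your Hölder-in-$t$ bookkeeping with the interval length $\tau_*$ (rather than the stray ``$\tir\les T$'' remark) is the correct computation.
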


\begin{remark}\label{10.26.4.19}
In view of (\ref{pi.2}) and (\ref{3.19.1}), (\ref{pba2}) can be proved by  (\ref{ric1.1}).  The first assumption in (\ref{wras}) and the second assumption in (\ref{Pba}) are included in  (\ref{8.0.3}) and (\ref{aux_1}).  The first assumption in  (\ref{Pba}) is  proven in Lemma \ref{6.17.1}.
\end{remark}

In what follows, we will frequently use Lemma \ref{6.17.1} without explicit mention.
Next we recall important inequalities for carrying out analysis.

Using Lemma \ref{6.17.1} and the second assumptions in (\ref{ba3}), we can show that on $\widetilde{\D^+}$ there hold the following Sobolev inequalities
and trace inequalities:

\begin{enumerate}
\item[$\bullet$] For any scalar function or $S_{t,u}$-tangent tensor field $F$ (see \cite{Wang10}), there holds
\begin{align}
&\|F\|_{L_u^2 L_\omega^2}\les \|\tir \sn_N F\|_{L^2_u L_\omega^2}+\|\tir^\f12 F\|_{L_u^\infty L_\omega^2}. \label{trc_2}
\end{align}

\item[$\bullet$] For any scalar function or $S_{t,u}$-tangent tensor field $F$ and $2<q<\infty$ there hold (see \cite{CK,KRsurf,KRduke}
\begin{align}
&\|F\|_{L_\omega^q(S_{t, u})} \les \|\tir \sn F\|_{L_\omega^2(S_{t, u})}^{1-\frac{2}{q}}\|F\|_{L_\omega^2(S_{t, u})}^{\frac{2}{q}}
+\|F\|_{L_\omega^2(S_{t, u})}, \label{sob}\\
&\|F\|_{L_\omega^\infty(S_{t,u})} \les \|r \sn F\|_{L_\omega^q(S_{t,u})}+\|F\|_{L_\omega^2(S_{t,u})}. \label{sobinf}
\end{align}

\item[$\bullet$]
For $q\ge 2$ and any scalar functions or tensor fields $F$ defined on $\widetilde{\D^+}\cap \Sigma_t$ there holds
\begin{equation}\label{tran_sob}
\|\tir^{\f12-\frac{1}{q}}F\|^2_{L_x^{2q} L_u^\infty}\les \|F\|_{L_\omega^\infty L_u^2} \left(\|\tir \sn_N F\|_{L_\omega^q L_u^2}+\|F\|_{L_\omega^q L_u^2}\right).
\end{equation}
For  $\widetilde{\D^+}\cap C_u$,
\begin{equation}\label{7.04.20.19}
\|\tir^{\f12-\frac{1}{q}}F\|^2_{L_x^{2q} L_t^\infty(C_u)}\les \|F\|_{L_\omega^\infty L_t^2} \left(\|\tir \sn_L F\|_{L_\omega^q L_t^2}+\|F\|_{L_\omega^q L_t^2}\right),
\end{equation}
see \cite[Lemma 2.13]{Wangricci}, and the proof in  \cite[Section 8]{Wang10}.
\end{enumerate}

\subsection*{Hardy-Littlewood maximal function} For a scalar function $f(t)$ defined on $[0, \tau_*]$, its Hardy-Littlewood maximal function is defined by
\begin{equation*}
\M(f)(t)=\sup_{0\le t'\le \tau_*} \frac{1}{|t-t'|}\int_{t'}^t |f(\tau)| d\tau.
\end{equation*}
It is well-known that for any $1<q<\infty$ there holds
\begin{equation}\label{hlm}
\|\M(f)\|_{L_t^q} \les \|f\|_{L_t^q}.
\end{equation}

Using Lemma \ref{6.17.1} and the second assumptions in (\ref{ba3}), we also obtain the following control along the null cones.
\begin{proposition}[$L^p$ Control of the flux]\label{7.13.2.19}
Let $1-\frac{2}{p_*}\le \delta_0=s'-2$,
 there hold for $2\le q<p_*$ on $\widetilde{\D^+}$ the following inequalities
\begin{align}
 &\| \curl^2 \Omega\|_{L^2(C_u\cap \widetilde{\D^+})}+\|\tir^{1-\frac{2}{q}} \curl^2\Omega\|_{L_t^2 L_x^q(C_u\cap \widetilde{\D^+})}\les \la^{-\frac{3}{2}}\label{9.13.2.19}\\
 &\|\tir^\f12\p \Omega\|_{L^{2q}_\omega(\Sigma_t\cap \widetilde{\D^+})}\les \la^{-\frac{3}{2}}.\label{9.13.3.19}
\end{align}
 Under the assumption (\ref{aux_1}),  with $\D_*=(\sn, \sn_L)$, there hold for $0\le 1-\frac{2}{p}< s-2$ that
\begin{align}
&\|\tir^{1-\frac{2}{q}} \bp \ti\pi\|_{L_u^2 L_x^p(\Sigma_t\cap \widetilde{\D^+})}\les \la^{-\f12}\label{6.18.3.19}\\
 &\|\ti\pi\|_{L_u^2 L_\omega^p(\Sigma_t\cap \widetilde{\D^+})}+\|\tir^\f12 \ti\pi\|_{L^\infty L_\omega^{2p}(\Sigma_t \cap \widetilde{\D^+})}\les \la^{-\f12}\label{6.18.4.19}\\
 &\|\D_* \ti\pi\|_{L^2(C_u\cap \widetilde{\D^+})}+\|\tir^{1-\frac{2}{p}} \D_*\ti\pi\|_{L_t^2 L_x^p(C_u\cap\widetilde{\D^+})}\les \la^{-\f12}\label{6.18.5.19}\\
 &\|\tir (\sn \pi,\sn_L  \pi), \pi\|_{L_t^2 L_\omega^p(C_u\cap \widetilde{\D^+})}\les \la^{-\f12}.\label{flux_3}
\end{align}
\end{proposition}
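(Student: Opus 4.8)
\textbf{Proof plan for Proposition \ref{7.13.2.19}.}

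The plan is to split the statement into two groups: the vorticity estimates \eqref{9.13.2.19}--\eqref{9.13.3.19}, which follow from the flux control of $\curl\fC$ already established in Proposition \ref{9.30.15.19} together with elliptic comparison estimates; and the metric/connection estimates \eqref{6.18.3.19}--\eqref{flux_3}, which follow from the $H^{2+\ep}$ energy and flux propagation in Proposition \ref{9.10.2.19} combined with the trace inequalities \eqref{9.17.1.19}--\eqref{9.18.1.19}, \eqref{trc_2}, \eqref{sob}--\eqref{7.04.20.19}, after rescaling by $(t,x)\mapsto(\la(t-t_k),\la x)$. The rescaling contributes the powers of $\la$: each spatial derivative of a rescaled quantity carries a factor $\la^{-1}$, each Sobolev-norm-on-$\R^3$ of $\p\bg$ is of size $\la^{-1/2-4\ep_0}$ by \eqref{pi.2} and Corollary \ref{comp_3}, and the length scales $\tir$ live in $[0,\tfrac95\tau_*]$ with $\tau_*\le\la^{1-8\ep_0}T$.

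For \eqref{9.13.2.19}, I would first recall that $\ep_{mij}\p^m\C^i=e^{-\varrho}(\ep_{mij}\p^m(\curl\Omega)^i-\ep_{mij}\p^m\varrho\,\C^i)$, so $\curl^2\Omega=e^\varrho\curl\C+\p\varrho\cdot\C$ (symbolically). The term $e^\varrho\curl\C$ is controlled on $C_u$ directly by \eqref{9.07.6.19} and \eqref{9.11.6.19} after rescaling, where the rescaled flux gains the claimed $\la^{-3/2}$ (one power $\la^{-1}$ from the extra derivative relative to the $H^1_x$ energy plus the $\la^{-1/2}$ from the base $L^2_x$ norm, which is $O(1)$ before rescaling and $O(\la^{-3/2})$ after, since $\|\curl\C\|_{L^2(C_u)}$ scales like $\la^{-2}\cdot\la^{1/2}$ in the volume element — I would track this bookkeeping carefully). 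The lower-order term $\p\varrho\cdot\C$ is handled by Hölder on $S_{t,u}$ together with \eqref{9.18.1.19} applied to $\C$ and the $L^2_tL^\infty_x$ control of $\p\varrho$ from \eqref{BA1}; this is exactly the computation already carried out at the start of the proof of Proposition \ref{9.30.15.19} for \eqref{9.17.2.19}, so I would simply cite it and insert the rescaling powers. The $L^q_x$-in-$\omega$ version uses \eqref{9.18.1.19} and \eqref{9.07.15.19} in the same way. Estimate \eqref{9.13.3.19} is the trace inequality \eqref{9.18.1.19} applied to $F=\p\Omega$, using the highest-order interior bound $\|\La^\delta\p^2\Omega\|_{L^2_x}\les1$ from \eqref{9.07.15.19}, and then inserting the rescaling.

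For the $\ti\pi$ estimates, the key point is that $\ti\pi=f(\bg)\bp\bg$ and, by Remark \ref{10.22.2.19} and Corollary \ref{comp_3}, $\bp\bg$ is controlled at the level of $\p v,\bp\varrho$, i.e.\ $\p^2 v,\p\bp\varrho$ at the next order, whose interior and flux bounds are Proposition \ref{9.10.2.19} (plus Corollary \ref{eng_wave}). I would derive \eqref{6.18.3.19} from \eqref{10.2.5.19} applied to $F=\p\ti\pi$ with $\delta$ just above $1-\tfrac2q$, using \eqref{9.10.3.19}--\eqref{9.10.4.19} to pass commutators with $f(\bg)$ through $P_\mu$ (as in Lemma \ref{comp_dyd}); \eqref{6.18.4.19} from \eqref{trace1} and \eqref{9.18.1.19}; \eqref{6.18.5.19} from the flux bound in Proposition \ref{9.10.2.19} together with $\|\sn\p\eta\|,\|\sn_L\p\eta\|$ controlled as in \eqref{9.11.1.19}; and \eqref{flux_3} from \eqref{9.17.1.19} applied to the contracted quantity $\pi=\ti\pi\cdot X$, using $|\sn X|,|\sn_L X|\les|\p\bg|+|\chi|$ and absorbing $\chi$ via \eqref{comp2}. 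In all cases the $\la$-power $\la^{-1/2}$ comes from the combination of the $\la^{-1/2-4\ep_0}$ in \eqref{pi.2}, the $\la^{-1}$ per extra derivative, and the factor $\tau_*^{1/2}\les\la^{1/2-4\ep_0}T^{1/2}$ or $T^{1/2}$ absorbed from the time integration; the extra room $\la^{4\ep_0}$ is exactly what makes the bootstrap \eqref{ba3.18.1}--\eqref{aux_1} improvable.

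\textbf{Main obstacle.} The hard part is \eqref{6.18.5.19} and \eqref{flux_3} for the torsion- and $\chi$-type components, because bounding $\sn_L\ti\pi$ on $C_u$ requires $\p\bT v$ type quantities along the cone, and $\bT v=-c^2\p\varrho$ forces us through the second equation of \eqref{4.23.1.19} onto $\sn\p\varrho$, $\sn_L\p\varrho$ and the quadratic $|\bp\varrho|^2$; controlling $\||\bp\varrho|^2\|_{L^2(C_u)}$ and similar products on the cone with only the $H^{2+\ep}$ flux of $\varrho$ (not $H^{2+\ep}$ of $\bT v$ itself, which the remark after Proposition \ref{9.10.2.19} explicitly declines to prove) means I must use Lemma \ref{trace2}, \eqref{5.04.17.19} and a careful Hölder split exactly as in \eqref{10.2.2.19} in the proof of Proposition \ref{7.24.4.19}, and keep the $\tir$-weights matched so that the $L^q_x$-on-$C_u$ norms close. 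Getting the $\tir$-powers and the $\la$-powers to align simultaneously across all four estimates \eqref{6.18.3.19}--\eqref{flux_3}, while only invoking the flux bounds that were actually proved, is where the bookkeeping is most delicate.
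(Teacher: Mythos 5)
Your plan is correct and follows essentially the same route as the paper: the vorticity bounds \eqref{9.13.2.19}--\eqref{9.13.3.19} are obtained as the rescalings of \eqref{9.17.2.19} and of the second estimate in \eqref{10.26.1.19} (both already resting on the $\curl\fC$ flux of Proposition \ref{9.30.15.19} and the elliptic/trace comparisons), while the $\ti\pi$ bounds come from feeding the fluxes of Proposition \ref{7.24.4.19}, Proposition \ref{9.10.2.19} and Corollary \ref{eng_wave} into a cone trace/Littlewood--Paley inequality (the paper imports \eqref{10.2.3.19} from \cite{Wangricci} where you invoke the in-paper inequalities \eqref{9.17.1.19}, \eqref{10.2.5.19}, \eqref{9.18.1.19}), with the missing top-order flux of $\bT v$ recovered exactly as you anticipate via $\bT v=-c^2\p\varrho$, i.e. \eqref{10.2.2.19}, and the quadratic $(\bp\varrho)^2$ absorbed through the second estimate of \eqref{6.18.4.19} times $\tau_*^{1/2}$. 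One bookkeeping remark: the power $\la^{-3/2}$ in \eqref{9.13.2.19}--\eqref{9.13.3.19} is most transparently obtained by noting that $\Omega\sim\p v$ itself acquires a factor $\la^{-1}$ under the rescaling \eqref{7.26.1}, so $\curl^2\Omega$ carries $\la^{-3}$ against a cone volume factor $\la^{3/2}$; your factor count lands on the right exponent but attributes the powers differently, so track this when writing the details.
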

By using  Corollary \ref{eng_wave}, Proposition \ref{7.24.4.19} and Proposition \ref{9.10.2.19}, for the case that $\ti \pi=\bp \Phi$ with $\Phi=v, \varrho$,  the proof of the above results is similar to \cite[Lemma 5.5]{Wangrough}; and the results for the general form of $\ti\pi$ follow as a consequence as in \cite[Proposition 5.6]{Wangrough}.  The proof of \cite[Lemma 5.5]{Wangrough} is based on the proof in \cite[Proposition 2.6]{Wangricci} under the assumption (\ref{aux_1}).

 Recall that in Proposition \ref{9.10.2.19}, we did not provide the control on the flux of $\bT v$ upto the highest order. This slightly influences the proof of  (\ref{6.18.5.19}). The terms of  vorticity in (\ref{9.13.2.19}) and (\ref{9.13.3.19}) did not appear in the previous works on quasilinear wave equations. Thus we will focus on the proof of (\ref{9.13.2.19}), (\ref{9.13.3.19}) and (\ref{6.18.5.19}) for the case $\ti\pi=\bp (v, \varrho)$.  The proof of (\ref{flux_3}) can be found in \cite[Lemma 5.7]{Wangrough} under the assumption (\ref{aux_1}), and using the estimates (\ref{6.18.5.19}) and (\ref{pi.2}).
\begin{proof}
(\ref{9.13.3.19}) is the second estimate in (\ref{10.26.1.19}) after rescaling.
 The  estimates in (\ref{9.13.2.19}) are consequences of  (\ref{9.17.2.19}) after rescaling.


 To prove (\ref{6.18.5.19}), we recall from \cite[Proposition 2.5, Proposition 2.6]{Wangricci} that
  \begin{align}\label{10.2.3.19}
  \begin{split}
  \|\tir^{1-\frac{2}{p}}(\sn f, L f)\|_{L_t^2 L_x^p(C_u)}&\les\sF^\f12[f](C_u)+\sum_{l>0}l^{1-\frac{2}{p}}\big(\sF^\f12[P_\ell f](C_u)\big)\\
  &+\|f\|_{H^{s-1}(\Sigma_{\tau_*})}+\|f\|_{H^{s-1}(\Sigma_{t_\tmin})},
  \end{split}
  \end{align}
   and if the cone $C_u$ is initiated from $\Ga^+$, the last term of the last line actually vanishes.

We can apply the above inequality to $f=\p v, \bp \varrho$ to obtain  for $0\le 1-\frac{2}{p}< s-2$
\begin{equation}\label{10.27.1.19}
  \|\tir^{1-\frac{2}{p}} (\sn f, L f)\|_{L_t^2 L_x^p(C_u)}\les \la^{-\f12},
\end{equation}
which is due to Proposition \ref{7.24.4.19} and Proposition \ref{9.10.2.19}, and Corollary \ref{eng_wave}, followed with rescaling.

To complete the proof of (\ref{6.18.5.19}), we need to obtain the same control for $f=\bT v$. From  (\ref{10.2.2.19}), we bound
 \begin{align*}
 \|\tir( L \bT v, \sn \bT v)\|_{L_t^2 L_\omega^p(C_u\cap \widetilde{\D^+})}&\les \|\tir (L \p \varrho, \sn \p \varrho, (\bp \varrho)^2) \|_{L_t^2 L_\omega^p(C_u\cap \widetilde{\D^+})}.
 \end{align*}
 The first two terms on the right hand side can be bounded by (\ref{10.27.1.19}). For the quadratic term, we use the second inequality in (\ref{6.18.4.19}) to derive
 \begin{align*}
 \|\tir (\bp \varrho)^2) \|_{L_t^2 L_\omega^p(C_u\cap \widetilde{\D^+})}&\les \||\tir^\f12 \bp \varrho|^2\|_{L^\infty_t L_\omega^{2p}(C_u)}\tau_*^\f12\les\la^{-1+\f12-4\ep_0}\les \la^{-\f12-4\ep_0}.
 \end{align*}
 Hence, we conclude
 \begin{equation*}
 \|\tir( L \bT v, \sn \bT v)\|_{L_t^2 L_\omega^p(C_u\cap \widetilde{\D^+})}\les \la^{-\f12}.
 \end{equation*}
 The proof of (\ref{6.18.5.19}) is completed.

\end{proof}

\subsection*{The transport lemma} We will use transport equations to control the connection coefficients.
The following result can be derived in view of (\ref{lv}) and (\ref{comp_3_27}) and will be frequently used.

\begin{lemma}[The transport lemma]\label{tsp2}
For $C_u$ contained in $\widetilde{\D^+}$ let $t_\tmin =\max\{u, 0\}$. For any $S_{t,u}$-tangent tensor field $F$ satisfying
\begin{equation*}
\sn_L F+\frac{m}{2} {\emph\tr}\chi F= W
\end{equation*}
with a constant $m$, there holds
\begin{equation*}
v_t^{\frac{m}{2}} F(t)=\lim_{\tau\rightarrow t_{\tmin}}v_\tau^{\frac{m}{2}} F(\tau)+\int_{t_{\tmin}}^t v_{t'}^{\frac{m}{2}} Wdt'.
\end{equation*}
Similarly, for the transport equation
\begin{equation*}
\sn_L F+\frac{m}{t-u} F=G\c F+W
\end{equation*}
with a constant $m$, if $\|G\|_{L_\omega^\infty L_t^1}\le C$, then there holds
\begin{equation*}
\tir^{m} |F(t)|\les\lim_{\tau\rightarrow t_{\tmin}}(\tau-u)^{m}|F(\tau)|+\int_{t_\tmin}^t {(t'-u)}^{m} |W| d\tt.
\end{equation*}
The same result holds when $\frac{2}{t-u}$ in the transport equation is replaced by ${\emph\tr} \chi$.
The above integrals are taken along null geodesics on $C_u$.
\end{lemma}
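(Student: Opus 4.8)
\textbf{Plan of proof for Lemma \ref{tsp2} (The transport lemma).}
The statement is a routine ODE-integration result adapted to the null foliation, so the plan is simply to reduce each assertion to the fundamental theorem of calculus along the generators of $C_u$. Recall from the set-up in Section \ref{setupcone} that along $C_u$ one has $L=\p_t$ in the transport coordinates $(t,\omega)$, with the pull-back coordinate frame $\{\p_{\omega^A}\}$, and that $\Lie_L\ga=2\chi$, hence $L(v_t)=v_t\,\tr\chi$ by \eqref{lv}; moreover $v_t\approx(t-u)^2$ by \eqref{comp_3_27} in Lemma \ref{6.17.1}, and the null exponential map $\Upsilon(t,\cdot,u):{\mathbb S}^2\to S_{t,u}$ is a global diffeomorphism on $\widetilde{\D^+}$.

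\emph{Step 1: the weighted transport equation with $\frac{m}{2}\tr\chi$.} For an $S_{t,u}$-tangent tensor $F$ with $\sn_L F+\frac{m}{2}\tr\chi F=W$, I would compute $\sn_L\!\big(v_t^{m/2}F\big)=\tfrac{m}{2}v_t^{m/2-1}L(v_t)F+v_t^{m/2}\sn_L F = \tfrac{m}{2}v_t^{m/2}\tr\chi F+v_t^{m/2}W-\tfrac{m}{2}v_t^{m/2}\tr\chi F = v_t^{m/2}W$. Since $\sn_L=\p_t$ on components relative to the transport frame $\{\p_{\omega^A}\}$ (the frame is $L$-parallel-transported by construction, cf.\ \eqref{trscoord}, \eqref{trscoord2}), this is a genuine first-order ODE in $t$ along each generator $\Upsilon_\omega$. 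Integrating from $t_\tmin$ to $t$ and using that the limit $\lim_{\tau\to t_\tmin}v_\tau^{m/2}F(\tau)$ exists (this is where Lemma \ref{inii} and the regularity of the geometric data at the vertex or on $S_{\fv}$ enter) yields the first identity.

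\emph{Step 2: the transport equation with $\frac{m}{t-u}$.} For $\sn_L F+\frac{m}{t-u}F=G\cdot F+W$, note $L(t-u)=1$, so $L\big((t-u)^m F\big)=m(t-u)^{m-1}F+(t-u)^m\sn_L F=(t-u)^m(G\cdot F+W)$. Writing $H=(t-u)^m F$ this gives $\sn_L H=(t-u)^m W+G\cdot H$; Gronwall's inequality along the generator, together with the hypothesis $\|G\|_{L^\infty_\omega L^1_t}\le C$, produces $(t-u)^m|F(t)|\les\lim_{\tau\to t_\tmin}(\tau-u)^m|F(\tau)|+\int_{t_\tmin}^t(t'-u)^m|W|\,dt'$, which is the second assertion. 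The final remark — that $\tfrac{2}{t-u}$ may be replaced by $\tr\chi$ — follows from \eqref{comp2} in Proposition \ref{cone_reg}, i.e.\ $\tir\,\widetilde{\tr\chi}\approx 1$ combined with $\tir\tr\chi\approx 1$ (since $\Xi_4=\tilde\pi$ is lower order under \eqref{pi.2}), so $\tr\chi=\frac{2}{t-u}(1+O(\tir))$ and the $O(\tir)$ correction is absorbed into the $G\cdot F$ term; alternatively one integrates directly using $v_t^{1/2}\approx t-u$ from \eqref{comp_3_27}.

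\emph{Main obstacle.} There is no serious analytic obstacle here; the only point requiring care is the justification of the limiting (initial) values as $\tau\to t_\tmin$ and the interchange of integration with the $\omega$-dependence. Concretely, one must check that along cones with vertex on $\Ga^+$ (the case $t_\tmin=u\ge 0$) the weights $v_\tau^{m/2}\sim(\tau-u)^m$ tame the possible blow-up of $F$ near the vertex — this is exactly the content of Lemma \ref{inii}(i) — while for cones initiating from $S_\fv$ at $t=0$ one invokes Lemma \ref{inii}(iii) and Proposition \ref{exten}. So the ``hard part'' is purely bookkeeping: confirming that the data listed in Lemma \ref{inii} are compatible with each weight exponent $m$ appearing in the applications, after which the two identities are immediate from the product rule and Gronwall.
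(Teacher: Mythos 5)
Your proof is correct and is essentially the derivation the paper intends (the paper offers no detailed proof, stating only that the lemma follows from (\ref{lv}) and (\ref{comp_3_27})): weight by $v_t^{m/2}$, respectively $(t-u)^m$, integrate along the null generators using the limiting values from Lemma \ref{inii}, and apply Gronwall with $\|G\|_{L_\omega^\infty L_t^1}\le C$, with the $\tr\chi$ variant handled via $v_t\approx(t-u)^2$. One small correction: the pull-back frame $\{\p_{\omega^A}\}$ is Lie-transported along $L$ ($[L,\p_{\omega^A}]=0$), not parallel-transported, so to integrate the tensorial identity $\sn_L\big(v_t^{m/2}F\big)=v_t^{m/2}W$ componentwise one should use a parallel-propagated (Fermi) frame along each generator, or argue on $|F|$ via $L|F|\le|\sn_L F|$ for the inequality version; this does not affect the conclusion.
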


We will also employ the Codazzi equations on the spheres $S_{t,u}$ for which we recall the following elliptic estimates, which  hold under the assumption  (\ref{ba3}).
\begin{lemma}\label{hdgm1}
Let $\D$ denote either $\D_1$ or $\D_2$ and let $p>2$ be the number in (\ref{ba3}). Then for
$2\le q\le p$ there holds
\begin{equation*}
\|\sn  F\|_{L^q(S_{t,u})}+\|\tir^{-1}F\|_{L^q(S_{t,u})}\les \| \D F\|_{L^q(S_{t,u})}
\end{equation*}
for any $S_{t,u}$-tangent tensor $F$ in the domain  of $\D$.
\end{lemma}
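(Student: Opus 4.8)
\textbf{Proof proposal for Lemma \ref{hdgm1}.}
The statement is the standard $L^q$ elliptic estimate for the Hodge operators $\D_1$ (divergence plus curl acting on one-forms) and $\D_2$ (symmetrized divergence acting on symmetric traceless two-tensors) on the two-sphere $S_{t,u}$, rescaled to unit size. The plan is to reduce the estimate on $(S_{t,u},\ga)$ to the corresponding estimate on the fixed round sphere $({\mathbb S}^2,\ga^{(0)})$ via the rescaled metric $\gac=\tir^{-2}\ga$, which by the bootstrap assumption (\ref{ba3}) is $C^1$-close to $\ga^{(0)}$ in $L^\infty\cap W^{1,p}_\omega$ with $p>2$. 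First I would record the scaling: if $F$ is an $S_{t,u}$-tangent tensor of rank $\ell$, then $\sn F$ and $\D F$ scale with one extra factor of $\tir^{-1}$ relative to $F$ measured in $\ga$, and the $L^q(S_{t,u})$ norm of a rank-$\ell$ tensor equals $\tir^{2/q-\ell}$ times the $L^q_\omega$ norm computed with $\gac$; hence it suffices to prove $\|\sn^{\gac} F\|_{L^q_\omega(\gac)}+\|F\|_{L^q_\omega(\gac)}\les \|\D^{\gac}F\|_{L^q_\omega(\gac)}$ with constant depending only on the closeness parameter in (\ref{ba3}), and then the factor $\tir^{-1}$ in front of $\tir^{-1}F$ in the statement is exactly what the scaling produces.

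For the estimate on $(\mathbb S^2,\gac)$ I would proceed in two steps. Step one: establish it for the exact round metric $\ga^{(0)}$. There the operators $\D_1$ and $\D_2$ are, up to lower-order curvature terms, $\sqrt{2}$ (resp.\ appropriately normalized) multiples of $(\sl{\div},\sl{\curl})$-type first-order systems whose $L^q$-coercivity for $1<q<\infty$ follows from Calderon--Zygmund theory on the compact manifold $\mathbb S^2$; equivalently one uses $\|F\|_{W^{1,q}(\mathbb S^2)}\les \|\D F\|_{L^q(\mathbb S^2)}$ once one notes that $\D_1$ and $\D_2$ have trivial kernel (the kernel of $\D_2$ on a round sphere consists of conformal Killing tensors, which vanish after we restrict to traceless symmetric tensors orthogonal to — here there is nothing to subtract because on $\mathbb S^2$ the relevant kernel for the $L^q$ estimate is handled by the fact that $\D_2^*\D_2$ is an isomorphism on the orthogonal complement, and this complement is all of the space for the range of $q$ and the precise $\D_2$ used, cf.\ \cite{CK, KRduke}). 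So this step is literally citing the classical fixed-background Hodge estimate. Step two: perturb. Write $\D^{\gac}=\D^{(0)}+(\D^{\gac}-\D^{(0)})$; the difference operator has coefficients controlled by $\gac-\ga^{(0)}$ and its first derivatives, hence by (\ref{ba3}) it is bounded from $W^{1,q}_\omega$ to $L^q_\omega$ with small norm (using $W^{1,p}_\omega\hookrightarrow L^\infty_\omega$ for $p>2$ to multiply coefficient and derivative factors, and $q\le p$). A standard Neumann-series/absorption argument then upgrades the $\ga^{(0)}$-estimate to a $\gac$-estimate with a slightly worse but still universal constant. Finally I would undo the rescaling to recover the claimed inequality on $S_{t,u}$, uniformly over $t,u$ with $S_{t,u}\subset\widetilde{\D^+}$.

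The main obstacle, and the only genuinely nontrivial point, is Step two: making the perturbation argument quantitative with the \emph{low} regularity available for $\gac-\ga^{(0)}$, namely only $L^\infty\cap W^{1,p}_\omega$ with $p$ barely above $2$ (since $0<1-\tfrac2p<s-2$ and $s$ is close to $2$). One must check that the commutator $[\D^{\gac},\D^{(0)}{}^{-1}]$-type terms do not cost a derivative one cannot pay for: the Christoffel symbols of $\gac$ differ from those of $\ga^{(0)}$ by $\p_\omega(\gac-\ga^{(0)})\in L^p_\omega$, and these multiply the unknown $F\in W^{1,q}_\omega\hookrightarrow L^\infty_\omega$, so the product lies in $L^p_\omega\subset L^q_\omega$ for $q\le p$ with norm $\les\|\p_\omega(\gac-\ga^{(0)})\|_{L^p_\omega}\|F\|_{L^\infty_\omega}\les \la^{-\ep_0}\|F\|_{W^{1,q}_\omega}$, which is small and hence absorbable. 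Once this bookkeeping is done the result follows; I expect the whole argument to be essentially the same as the corresponding lemmas in \cite{KRduke, KREins2, Wangrough}, so in the write-up I would present the scaling reduction carefully and then invoke those references for the fixed-background estimate and the perturbation lemma.
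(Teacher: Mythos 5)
Your proposal is sound and matches the paper's treatment: the paper states Lemma \ref{hdgm1} without proof, recalling it as the standard rescaled Hodge-elliptic estimate valid under the closeness assumption (\ref{ba3}), which is exactly the scaling reduction to $\gac=\tir^{-2}\ga$ plus perturbation off the round sphere (with the fixed-background $L^q$ estimate and trivial kernels of $\D_1,\D_2$ on ${\mathbb S}^2$) that you outline and would cite from \cite{CK,KRduke,KREins2}. The only slip is at the endpoint $q=2$, where $W^{1,2}({\mathbb S}^2)\not\hookrightarrow L^\infty$; there the absorption of the Christoffel-difference term should instead use H\"older with $\p_\omega(\gac-\ga^{(0)})\in L^p_\omega$, $p>2$, and $W^{1,2}\hookrightarrow L^r_\omega$ for $r$ large, which still gives a small $L^2_\omega$ contribution and closes the argument.
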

It follows from the above result, (\ref{sob}), (\ref{sobinf})  and the duality argument that 
\begin{proposition}\label{cz}
Let $F$ be a covariant symmetric traceless $2$-tensor satisfying the Hodge system
\begin{equation}\label{fe1}
{\sl{\div}} F=\sn G+e \qquad \mbox{on } S_{t,u}
\end{equation}
for some scalar function $G$ and $1$-form $e$. For $2<q<\infty$ and $\frac{1}{q'}=\frac{1}{2}+\frac{1}{q}$ there hold
\begin{equation}\label{lpp2}
\|F\|_{L^q(S_{t,u})}\les\|G\|_{L^q(S_{t,u})}+\|e\|_{L^{q'}(S_{t,u})};
\end{equation}
and 
\begin{equation}\label{cz0}
\|F\|_{L^\infty(S_{t,u})}\les\tir^{1-\frac{2}{q}}(\|\sn G\|_{L^q(S_{t,u})}+\|e\|_{L^q(S_{t,u})}).
\end{equation}
Similarly, for the Hodge system
\begin{equation}\label{divcurl}
\left\{
\begin{array}{lll}
{\sl{\div}} F= \sn\c G_1+e_1,\\
{\sl{\curl}} F=\sn\c G_2+e_2,
\end{array}
\right.
\end{equation}
with $1$-forms $G=(G_1, G_2)$  and scalar functions $e=(e_1, e_2)$, there hold (\ref{lpp2}) and (\ref{cz0}) for any $q>2$.
\end{proposition}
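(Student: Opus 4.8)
The final statement to prove is Proposition~\ref{cz}, the $L^q$ and $L^\infty$ Hodge estimates for symmetric traceless $2$-tensors on $S_{t,u}$ solving $\sl{\div}F=\sn G+e$, and for the $\div$--$\curl$ system. The plan is to reduce both estimates to the elliptic estimate of Lemma~\ref{hdgm1} together with the Sobolev inequalities \eqref{sob}, \eqref{sobinf} on $S_{t,u}$, exactly as in the corresponding arguments of \cite{KRduke, KREins2, Wangrough}, all of which are valid under the bootstrap assumption \eqref{ba3} (which is in force in this section). The key algebraic input is the standard fact that, for a symmetric traceless $2$-tensor $F$ on a $2$-surface, the operator $\sl{\div}$ is (up to lower order curvature terms controlled by $\tir^{-2}$, hence by \eqref{ba3} and Lemma~\ref{6.17.1}) elliptic of the type covered by $\D_2$ in Lemma~\ref{hdgm1}; similarly the pair $(\sl{\div},\sl{\curl})$ acting on $1$-forms is of $\D_1$-type.

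First I would treat \eqref{lpp2}. Apply Lemma~\ref{hdgm1} with the $\D_2$ operator to get $\|\sn F\|_{L^{q'}(S_{t,u})}+\|\tir^{-1}F\|_{L^{q'}(S_{t,u})}\les \|\sl{\div}F\|_{L^{q'}(S_{t,u})}=\|\sn G+e\|_{L^{q'}(S_{t,u})}$ for $2\le q'\le p$; here $q'<2$ is the issue, so instead one argues by duality: pair $F$ against a test symmetric traceless tensor $H$, integrate by parts to move $\sl{\div}$ onto $H$, and use the $L^q$-elliptic estimate for the dual problem. Concretely, $\int_{S_{t,u}}\langle F,\sl{\div}^*\phi\rangle = \int_{S_{t,u}}\langle \sl{\div}F,\phi\rangle = \int_{S_{t,u}}\langle \sn G+e,\phi\rangle = -\int G\,\sl{\div}\phi + \int\langle e,\phi\rangle$, and then bound $\|\sl{\div}\phi\|_{L^{q'}}$ via Lemma~\ref{hdgm1} by $\|\sn\phi\|_{L^{q'}}+\|\tir^{-1}\phi\|_{L^{q'}}$, using $\|\phi\|_{W^{1,q'}}\les\|\sl{\div}^*\phi\|_{L^{q'}}$ for the adjoint Hodge operator. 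Choosing the exponents with $\tfrac1{q'}=\tfrac12+\tfrac1q$ and using $L^{q'}$--$L^q$ duality on $S_{t,u}$ (the Hölder conjugate of $q'$ being $2q/(q-2)$, and the embedding $W^{1,q'}\hookrightarrow L^{2q/(q-2)}$ on the surface from \eqref{sob}) gives $\|F\|_{L^q(S_{t,u})}\les \|G\|_{L^q(S_{t,u})}+\|e\|_{L^{q'}(S_{t,u})}$. The weights in $\tir$ work out by the scaling-invariance of \eqref{sob} and \eqref{sobinf} under $\ga\mapsto\tir^{-2}\ga$, guaranteed by Lemma~\ref{6.17.1}.

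Next, \eqref{cz0}: from \eqref{sobinf} we have $\|F\|_{L^\infty(S_{t,u})}\les \|\tir\sn F\|_{L^q(S_{t,u})}+\|F\|_{L^2(S_{t,u})}$ for $q>2$. Apply Lemma~\ref{hdgm1} ($\D_2$) in $L^q$ to bound $\|\sn F\|_{L^q}+\|\tir^{-1}F\|_{L^q}\les\|\sl{\div}F\|_{L^q}=\|\sn G+e\|_{L^q}$; the lower-order curvature terms generated in passing from $\sl{\div}$ to $\D_2$ are of the form $\tir^{-2}F$ times bounded metric coefficients, absorbed on the left after multiplying by $\tir$. Hence $\|\tir\sn F\|_{L^q}\les\tir(\|\sn G\|_{L^q}+\|e\|_{L^q})$, and $\|F\|_{L^2(S_{t,u})}\les \tir^{2/q-2/2}\|F\|_{L^q(S_{t,u})}\les \tir^{1-2/q}(\|\sn G\|_{L^q}+\|e\|_{L^q})$ by Hölder on the surface plus \eqref{lpp2} (with the lower-order curvature term reabsorbed), yielding $\|F\|_{L^\infty(S_{t,u})}\les\tir^{1-2/q}(\|\sn G\|_{L^q(S_{t,u})}+\|e\|_{L^q(S_{t,u})})$. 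Finally, for the $\div$--$\curl$ system \eqref{divcurl} I would use the corresponding $\D_1$-type elliptic estimate of Lemma~\ref{hdgm1}: decomposing the overdetermined first-order system into a second-order elliptic equation $\sl{\Delta}F=\sn(\sl{\div}G_1 \text{ or } \sl{\curl}G_2)+\cdots$, or more efficiently pairing against Hodge-dual test tensors as above, one obtains $\|F\|_{L^q}\les\|G\|_{L^q}+\|e\|_{L^{q'}}$ and $\|F\|_{L^\infty}\les\tir^{1-2/q}(\|\sn G\|_{L^q}+\|e\|_{L^q})$ in exactly the same way, for every $q>2$.

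The main obstacle is purely bookkeeping rather than conceptual: one must check that in passing from the geometric operators $\sl{\div}$, $(\sl{\div},\sl{\curl})$ to the abstract operators $\D_2$, $\D_1$ of Lemma~\ref{hdgm1}, all the Gauss-curvature correction terms that appear are genuinely lower order — i.e., of the schematic form (bounded coefficient)$\,\times\,\tir^{-2}F$ — so that they are controlled by \eqref{ba3} and Lemma~\ref{6.17.1} and can be absorbed into the left side or into the $\|\tir^{-1}F\|_{L^q}$ term. The duality step for $q'<2$ also requires care: one needs the adjoint Hodge operator to satisfy the same $L^{q'}$-elliptic estimate, which again follows from Lemma~\ref{hdgm1} applied to the dual tensor type. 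Once these routine verifications are in place, the weighted inequalities follow by scaling invariance of the surface Sobolev inequalities, which is the only place the geometry of $S_{t,u}$ enters and is exactly what \eqref{comp_3_27} and \eqref{ba3} provide.
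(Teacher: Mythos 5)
Your proposal follows essentially the same route as the paper, which itself justifies Proposition \ref{cz} in one line as a consequence of the elliptic estimate of Lemma \ref{hdgm1}, the Sobolev inequalities \eqref{sob}--\eqref{sobinf}, and a duality argument to avoid differentiating $G$; your write-up is a correct elaboration of exactly these ingredients (note only that $\D_2$ and $\D_1$ \emph{are} the operators $\sl{\div}$ on symmetric traceless $2$-tensors and $(\sl{\div},\sl{\curl})$ on $1$-forms, so no curvature correction terms need to be absorbed, and the Hölder step should read $\|F\|_{L^2(S_{t,u})}\les \tir^{1-\frac{2}{q}}\|F\|_{L^q(S_{t,u})}$).
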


\begin{proposition}\label{cz.2}
Let $F$ and $G$ be $S_{t,u}$-tangent tensor fields of suitable type satisfying (\ref{fe1}) or
(\ref{divcurl}) with certain term $e$. Suppose $G$ is  a projection of a
tensor field $\ti G$ to tangent space of $S_{t,u}$ by $ \Pi_{\mu}^{\mu'}{\ti G}_{\mu'\cdots}$ or
takes the form of $f(\bb)\bN^\mu{\ti G}_{\mu\cdots}, f(\bb) L^\mu{\ti G}_{\mu\cdots}$, where $f$ is a smooth function of $\bb$. Under the assumption (\ref{aux_1}),  for $q>2$, $1\le c<\infty$ and
$\delta>0$ sufficiently close to $0$, there holds
\begin{align}
\|F\|_{L^\infty(S_{t,u})}\les \|\mu^{\delta}P_\mu \ti G\|_{l_\mu^c L^\infty(S_{t,u})}
+\|\ti G\|_{L^\infty(S_{t,u})}+\tir^{1-\frac{2}{q}}\|e\|_{L^q(S_{t,u})}.\label{cz2}
\end{align}
Here $\ti G$ is regarded as its components under the coordinate frame $\p_t, \p_1, \p_2, \p_3$.
\end{proposition}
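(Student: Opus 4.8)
The plan is to prove Proposition \ref{cz.2} by combining the scale-invariant $L^\infty$ Hodge estimate \eqref{cz0} from Proposition \ref{cz} with a Littlewood-Paley decomposition of the source tensor $\ti G$, exploiting the assumption \eqref{aux_1}. The point is that \eqref{cz0} gives
\[
\|F\|_{L^\infty(S_{t,u})}\les \tir^{1-\frac{2}{q}}\bigl(\|\sn G\|_{L^q(S_{t,u})}+\|e\|_{L^q(S_{t,u})}\bigr),
\]
but $\sn G$ is genuinely one derivative worse than $\ti G$; the assertion is that a logarithmically divergent sum over dyadic frequencies can be summed because of the $\delta$-improved Besov-type bound $\|\mu^\delta P_\mu \ti G\|_{l_\mu^c L^\infty}$. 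So first I would write $\ti G = \sum_{\mu} P_\mu \ti G$ (with $P_{\le 1}$ absorbing all low frequencies), split $F$ accordingly into $F = F_{\le 1} + \sum_{\mu>1} F_\mu$ where $F_\mu$ solves the corresponding Hodge system \eqref{fe1} (resp. \eqref{divcurl}) with source $G$ replaced by its frequency-$\mu$ projection $G_\mu$ — here one uses that the projection-to-$S_{t,u}$ operators and the maps $\ti G\mapsto f(\bb)\bN^\mu\ti G_\mu$ etc. commute with $P_\mu$ up to lower-order commutator errors controlled by $\bp\bg$-type terms, which feed into $e$.

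Second, for each dyadic piece I would estimate $\|\sn G_\mu\|_{L^q(S_{t,u})}$ using the finite-band property $\|\sn P_\mu \ti G\|_{L^q}\les \mu\|P_\mu \ti G\|_{L^q}$ together with $|\sn f|\les|\p f|$ and the inclusion $L^\infty \hookrightarrow L^q(S_{t,u})$ modulo the area factor $v_t\approx\tir^2$ from Lemma \ref{6.17.1}; this yields a bound of the shape $\tir^{1-\frac2q}\|\sn G_\mu\|_{L^q}\les \tir^{1-\frac2q}\tir^{\frac2q}\,\mu\,\|P_\mu\ti G\|_{L^\infty(S_{t,u})} = \tir\,\mu\,\|P_\mu\ti G\|_{L^\infty}$. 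That linear-in-$\mu$ growth is clearly too weak on its own, so the actual gain must come from the Bernstein-type trade-off on $S_{t,u}$: restricting a frequency-$\mu$ function of three variables to the two-sphere and interpolating between $L^q(S_{t,u})$ and $L^\infty(S_{t,u})$, one can convert part of the $\mu$-power into a $\tir$-power and into an $L^2(S_{t,u})$ piece controlled by the flux, exactly as in the trace inequalities \eqref{9.10.4.19}, \eqref{9.18.1.19}. Concretely I expect the right split to produce, for each $\mu>1$, a bound of the form $\tir^{1-\frac2q}\|\sn G_\mu\|_{L^q(S_{t,u})}\les \mu^{\,1-\frac2q}\|P_\mu\ti G\|_{L^\infty(S_{t,u})}\les \mu^{-\delta}\bigl(\mu^\delta\|P_\mu\ti G\|_{L^\infty}\bigr)$, after absorbing the $\tir$-weights against the $S_{t,u}$-Sobolev constants and choosing $q$ close enough to $2$ that $1-\frac2q<\delta$. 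Summing in $\mu$ with Hölder in $\ell^c_\mu$ against $\ell^{c'}_\mu$ (a convergent geometric series since $\delta - (1-\frac2q)>0$) then gives $\sum_{\mu>1}\|F_\mu\|_{L^\infty(S_{t,u})}\les \|\mu^\delta P_\mu\ti G\|_{l^c_\mu L^\infty(S_{t,u})}$, while the low-frequency piece $F_{\le 1}$ is controlled directly by \eqref{cz0} with $\|\sn G_{\le1}\|_{L^q}\les \|\ti G\|_{L^\infty(S_{t,u})}$ via Bernstein. Adding the $\tir^{1-\frac2q}\|e\|_{L^q(S_{t,u})}$ contribution, which passes through untouched, yields \eqref{cz2}.

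Third, I would record carefully that the commutator errors generated when moving $P_\mu$ past the projection $\Pi$ or past $f(\bb)\bN^\mu$, $f(\bb)L^\mu$ are of the schematic type $(\chi+\pi)\cdot\ti G$ plus $\mu^{-1}(\partial(\text{metric}))\cdot\partial\ti G$-type pieces (using $\bp\bg\in\ti\pi$ and $\chi\in\pi+\tir^{-1}$), hence can be lumped into an enlarged error $e$ already controlled by the stated norm after applying the trace inequality \eqref{trace1} and \eqref{aux_1}; this is where assumption \eqref{aux_1} — i.e. $\|(\lambda\tir)^{\frac12}(\sn\bb,\chih)\|_{L^p_\omega}\le\lambda^{2\ep_0}$ and $|\bb-1|\le\frac12$ — is genuinely used, since it keeps $\bb$ a harmless bounded factor and keeps $\sn\bb,\chih$ from spoiling the summation.

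The main obstacle will be the second step: making the frequency summation genuinely converge. A naive application of \eqref{cz0} to each piece loses a full derivative and the $\sum_\mu 1$ diverges; the entire content of the proposition is that the two-dimensional restriction to $S_{t,u}$ plus the $\delta$-improvement buys back just enough. I expect the delicate point to be the correct bookkeeping of the $\tir$-weights: the Sobolev/Bernstein constants on $S_{t,u}$ carry powers of $\tir$ (through $v_t\approx\tir^2$ and the comparison $\stackrel{\circ}{\ga}\approx\ga^{(0)}$ from \eqref{8.0.3}/\eqref{ba3}), and one must check these cancel against the explicit $\tir^{1-\frac2q}$ in \eqref{cz0} so that the final estimate is genuinely scale-invariant and $\tir$-free, matching the form of \eqref{cz2}. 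Once the per-piece estimate $\|F_\mu\|_{L^\infty(S_{t,u})}\les \mu^{-\delta}\bigl(\mu^\delta\|P_\mu\ti G\|_{L^\infty}+\text{(flux/error)}\bigr)$ is established with a uniform constant, the rest is a routine Hölder-in-$\mu$ and geometric-series argument together with the low-frequency and error contributions.
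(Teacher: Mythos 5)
Your overall architecture (decompose $\ti G=\sum_\mu P_\mu\ti G$, solve the Hodge system piecewise, handle low frequencies by \eqref{cz0}, and sum using the $\mu^\delta$-weighted norm via H\"older in $\mu$) is indeed the skeleton of the argument the paper relies on; note, though, that the paper does not reprove this proposition at all — it reduces it to \cite[Lemma 5.5 and Section 5]{Wangricci}, the only new content being the check that the factor $f(\bb)$ is admissible thanks to \eqref{aux_1} and \eqref{bb_4}. The genuine gap in your proposal sits exactly where you flag the ``main obstacle'': the per-piece estimate for $\mu\tir\gtrsim 1$. The inequality you ``expect'', namely $\tir^{1-\frac2q}\|\sn G_\mu\|_{L^q(S_{t,u})}\les \mu^{1-\frac2q}\|P_\mu\ti G\|_{L^\infty(S_{t,u})}$, is false in general: for a plane-wave-type $\ti G$ oscillating at frequency $\mu$ restricted to a sphere of radius $\tir\approx 1$, the tangential gradient of $G_\mu$ has genuine size $\mu\|P_\mu\ti G\|_{L^\infty}$ on a set of full measure, so the left-hand side is of size $\tir\mu\,\|P_\mu\ti G\|_{L^\infty}$, not $\mu^{1-2/q}$. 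No Bernstein/trace manipulation on $S_{t,u}$ can repair this, because the full derivative is lost before the elliptic estimate is applied; and the flux-type quantities you invoke do not even appear on the right-hand side of \eqref{cz2}, which is a fixed-surface statement. Thus applying \eqref{cz0} dyadically cannot close the sum in the high-frequency regime (which is the whole content, since $\tir$ can be as large as $\sim\la^{1-8\ep_0}$). In the cited proof the gain comes instead from the order-zero character of $\D^{-1}\sn$: one represents $F_\mu=\D^{-1}\sn(\Pi P_\mu\ti G)$ through the Green's kernel of the Hodge system on $S_{t,u}$, integrates by parts on the surface to move $\sn$ onto the kernel away from the diagonal, uses the kernel bounds $|K|\les d^{-1}$, $|\sn K|\les d^{-2}$ (available under the bounded-geometry assumptions \eqref{aux_1}, \eqref{ba3}), and controls the near-diagonal region $d\les\mu^{-1}$ by $|\sn P_\mu\ti G|\les\mu\|P_\mu\ti G\|_{L^\infty}$; this produces a factor $\log(2+\mu\tir)$, which is what the weight $\mu^{-\delta}$ absorbs for every $\delta>0$. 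With your claimed bound the summation would only close when $1-\frac2q<\delta$, which is incompatible with the statement's uniformity in $q>2$ with $\delta$ close to $0$.

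Two smaller corrections. First, no commutator between $P_\mu$ and the maps $\Pi$, $f(\bb)\bN^\mu$, $f(\bb)L^\mu$ is needed: decompose the Cartesian components of $\ti G$ and project each piece, so that $G=\sum_\mu \Pi P_\mu\ti G$ exactly; the commutator errors you propose to lump into $e$ would involve $\p\ti G$ and are not controlled by the right-hand side of \eqref{cz2}, so that detour would itself create a gap. Second, the role of \eqref{aux_1} and \eqref{bb_4} is only to guarantee the bounded geometry of $S_{t,u}$ (hence the kernel bounds and the harmlessness of $f(\bb)$), not to feed $\sn\bb$ or $\chih$ into the frequency summation.
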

We incorporate the additional factor $f(\bb)$ in the form of the definition for $G$.  Since we have $\|f(\bb)\|_{L^\infty}+\|\tir\sn( f(\bb))\|_{L_\omega^q}\les 1 $ for $0\le 1-\frac{2}{q}< s-2$  due to (\ref{aux_1}) and (\ref{bb_4}), this satisfies the condition used in \cite[Lemma 5.5]{Wangricci}.
The proof of the above result can follow the same as in \cite[Section 5]{Wangricci}. In application, we only use $f(\bb)=\bb,$ or $1$.

\subsection*{A sketch for the proof of Proposition \ref{cone_reg} and Proposition \ref{ricpr}.}
 
In comparison with the analysis in \cite[Section 5]{Wangrough},  due to the rough $\curl \Omega$, we have to carry out  normalizations by using (\ref{6.23.2.19}), (\ref{9.12.2.19}) and (\ref{7.04.8.19}) in order to obtain  the estimates of $z$ and $\sn z$. We also simplify  the estimate of $\zeta$   by using (\ref{6.17.7.19}) and Proposition \ref{cz.2}. It is used in controlling $\sY$ and also $\chih$.

(1)  We derive symbolic structure equations (\ref{lchi})-(\ref{ldz_2}) with the help of  (\ref{6.23.2.19}), (\ref{9.12.2.19}) and Proposition \ref{6.23.17.19}, then  achieve a set of preliminary estimates in Section \ref{prel_1} by using transport equations, which controls $\mho$ as desired and improves the auxiliary assumption in (\ref{aux_1}).

 (2) In Section \ref{causal_step1}, by using (\ref{ldz_2}), we
    obtain the bound $\|\tir^\frac{3}{2}\sn z\|_{L^p_\omega(S_{t,u})}$ with $2\le p< s'-2$, and with the help of the Codazzi equation (\ref{dchi1}) for $\sn \chih$ the bounds in (\ref{sna}).  Once the $L^p$ bound is obtained, we have the pointwise
     control on $\tir^\f12 z$ by Sobolev embedding. In Section \ref{causal_step2}, we obtain the sharp bound on $\zeta$ with the help of the Hodge system (\ref{6.17.7.19}). In view of the equation (\ref{trscoord2}) for propagating the metric component of $\ga$ and the Sobolev embedding (\ref{sobinf}), the proof of Proposition \ref{ricpr} is complete.

(3) In the region where $u\approx t$, the bound on  $\|z\|_{L_t^2 L^\infty(\D^+)}$ can not follow from the pointwise control of $\tir^\f12 z$, since $\tir$ can be close to $0$ in such region. This step is completed in  Proposition \ref{10.17.1.19} in Section \ref{10.4.1.19}.  To remove the potential signularity, we derive the  transport equation (\ref{10.26.2.19}) for $\sn \sY$, from which we see the potential singularity  arises exactly due to $\sn(k_{\bN\bN}-\f12 \Xi_4)$.
     We then derive the trace decomposition of $\sn(k_{\bN\bN}-\f12 \Xi_4)$ by using  (\ref{7.04.8.19}),  and hence use the structure to remove the singular term. With the help of the normalized transport equation (\ref{9.15.5.19}),  we can obtain the $L^p_\omega$ bound on $\tir\sn \sY$, and the favourable control on $\sY$ and  $z$ follow as consequences. With the bound on $\sn \sY$, we achieve the strong norms on $\chih$ in (\ref{ric1.1}) and  (\ref{ric1}) by normalizing the Codazzi equation (\ref{dchi1}) and applying Proposition \ref{cz} and Proposition \ref{cz.2}. The proof of Proposition \ref{cone_reg} hence can be completed.
    
\subsection{The preliminary estimates on $\chih$, $z$ and $\zeta$}\label{prel_1}
The goal of this subsection is to show the following preliminary estimates.
\begin{proposition}[$L^p$ and $L^{2p}$ estimates on $\widetilde{\D^+}$]\label{p1}
Let $0\le 1-\frac{2}{p}< s-2$, and let $\mho=\frac{\bb^{-1}-1}{\tir}$. There hold in $\widetilde{\D^+}$ the following estimates
\begin{align}
&\|\mho\|_{L_t^2 L^\infty_\omega}+\|\tir\D_*\mho\|_{L_t^2 L_\omega^p}+\|\tir^\f12\sn\log \bb\|_{L_\omega^p} +\|\tir^\f12\mho\|_{L_\omega^{2p}}\les \la^{-\f12},\label{9.14.1.19}
\end{align}
\begin{equation}\label{9.14.2.19}
\|\mho\|_{L_t^2 L^\infty(\D^+)}\les \la^{-\f12-4\ep_0}, \quad \|\mho\|_{L_t^\frac{q}{2} L^\infty(\widetilde{\D^+})}\les\la^{\frac{2}{q}-1-4\ep_0(\frac{4}{q}-1)}, 2<q<4.
\end{equation}
Let $\bA=\chih, \zeta, z, \pi$. There hold
\begin{align}
&\|\bA, \tir \sn_L \bA\|_{L_t^2 L_\omega^p(C_u)}\les \la^{-\f12}\label{pric1}\\
&\|\tir^\f12 \bA\|_{L_\omega^p}\les \la^{-\f12}\label{pric2}\\
&\|\tir^\f12 \bA \|_{L^{2p}_\omega}\les \la^{-\f12} \label{pric3}
\end{align}
\end{proposition}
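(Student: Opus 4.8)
\textbf{Proof proposal for Proposition \ref{p1}.}

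The plan is to establish the estimates \eqref{9.14.1.19}--\eqref{pric3} by a bootstrap argument inside $\widetilde{\D^+}$, using the transport equations for the connection coefficients together with the flux bounds from Proposition \ref{7.13.2.19}, the Ricci decompositions of Section \ref{fundstr}, and the data at $t=t_\tmin$ recorded in Lemma \ref{inii}. First I would derive the symbolic structure equations announced in the sketch: from the Raychaudhuri equation \eqref{s1}, the equation \eqref{lb} for $\bb$, and the decomposition \eqref{6.23.2.19} of $\bR_{44}$ combined with $\tr\chi = \widetilde{\tr\chi} - \Xi_4$ and $z = \widetilde{\tr\chi} - \tfrac{2}{\tir}$, one gets a transport equation of the schematic form $L z + \tfrac{2z}{\tir} = e^\varrho\,\curl\Omega_\bN + \tfrac{2}{\tir}(\Xi_L - k_{\bN\bN}) + (\D_*\pi + \bE)$, plus nonlinear lower-order terms; and from \eqref{tran2} a transport equation for $\zeta$, from \eqref{s2} one for $\chih$. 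I would also record, using $L\log\bb = -\bb k_{\bN\bN}$ and $L(t-u) = 1$, the transport equation for $\mho = \frac{\bb^{-1}-1}{\tir}$, which has the form $L\mho + \tfrac{2}{\tir}\mho = \tir^{-1} k_{\bN\bN}(\bb^{-1}) + \cdots$.

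Next I would run the transport lemma (Lemma \ref{tsp2}) along the null geodesics generating $C_u$, integrating from $t_\tmin$ where all the relevant quantities vanish by Lemma \ref{inii}(i),(iii) (when $u<0$ the data come from Proposition \ref{exten}). The weight $\tir^{m}$ in Lemma \ref{tsp2} converts the $\tfrac{2}{\tir}$-coefficient into a favorable factor. The key inputs are: the $L^2$-flux control of $\curl\Omega_\bN$ and $\curl^2\Omega$ from \eqref{9.13.2.19}--\eqref{9.13.3.19} (giving the $\la^{-3/2}$ gain that, integrated in $t$ over the slab of length $\les\la^{-8\ep_0}T$, beats the $\tir$-weights), the flux bounds \eqref{6.18.3.19}--\eqref{flux_3} for $\bp\ti\pi$ and $\D_*\pi$, and the bootstrap assumptions \eqref{ba3.18.1}--\eqref{bb_3} to absorb the nonlinear terms $\bA\cdot\pi$, $\tr\chi\cdot\pi$. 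For the $\curl\Omega_\bN$ term in the $z$-equation one cannot integrate it pointwise since it is only an angular derivative $\ep^{AB}\sn_A\Omega_B$; here I would use the decomposition $\curl\Omega_\bN = \tfrac{2}{\tir}(\Xi_L-k_{\bN\bN}) + \cdots$ only implicitly and instead first bound $\tir^{1-2/p}\curl\Omega_\bN$ in $L^2_t L^p_\omega$ directly from \eqref{9.13.3.19} via the trace/Sobolev inequalities \eqref{sob}, \eqref{tran_sob}. Combining the transport bounds with Lemma \ref{hdgm1}, the Codazzi elliptic estimate (Proposition \ref{cz}) for $\chih$, and the Hodge system \eqref{6.17.7.19} for $\zeta-\zb$ (with $\zb = -k_\bN$ controlled by $\pi$) yields \eqref{pric1}. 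Then \eqref{pric2} follows from \eqref{pric1} by the Sobolev inequality on $S_{t,u}$ (integrating $\|\tir\sn_L\bA\|$ along $C_u$ to get $\|\tir\sn\bA\|$, or using the transport structure directly), and \eqref{pric3} from the $L^{2p}$-trace inequality \eqref{tran_sob}/\eqref{7.04.20.19} together with \eqref{6.18.4.19}. The estimates \eqref{9.14.1.19} for $\mho$ come the same way from its transport equation, using $\|k_{\bN\bN}\|$ controlled by $\pi$ and \eqref{lb}, and \eqref{9.14.2.19} follows by interpolating the $L^2_t$-bound with the $L^\infty_t$-bound in the region $u\ge 0$ (where $\tir\le t$) exactly as the corresponding step in \cite[Section 5]{Wangrough}, using that the slab has $t$-length $\les\la^{-8\ep_0}T$.

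The main obstacle I expect is the treatment of the $\curl\Omega$ and $\curl^2\Omega$ contributions to the $z$- and $\sn z$-type estimates: these are precisely the terms with no analogue in the quasilinear wave case, and the available flux $\|\curl^2\Omega\|_{L^2(C_u)}\les\la^{-3/2}$ and $\|\tir^{1-2/p}\curl^2\Omega\|_{L^2_t L^p_\omega}\les\la^{-3/2}$ are one half-derivative weaker than what a naive restriction of the $H^{s'-1}_x$ bound to $S_{t,u}$ would need. The resolution is to never put $\curl^2\Omega$ on $S_{t,u}$ pointwise but always keep it in the cone norm $L^2_t L^p_\omega(C_u)$, pairing it against the $\tir$-weights generated by the transport lemma; the slab shortness $\tau_*\les\la^{1-8\ep_0}T$ then supplies the extra smallness needed to close \eqref{ba3.18.1}. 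A secondary technical point is that Proposition \ref{9.10.2.19} did not provide the top-order flux for $\bT v$, so wherever $\bp\ti\pi$ appears with a $\bT v$-component I would substitute $\bT v = -c^2\p\varrho$ via \eqref{4.23.1.19} and use the $\p\varrho$-flux plus the quadratic bound $\|\tir(\bp\varrho)^2\|_{L^2_t L^p_\omega}\les\la^{-1/2-4\ep_0}$ obtained from \eqref{6.18.4.19}, exactly as in the proof of \eqref{6.18.5.19} above. With these in hand the bootstrap assumptions are recovered with the improved constants $\la^{-1/2}$ in place of $\la^{-1/2+\ep_0}$, closing the argument.
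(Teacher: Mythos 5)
Your proposal follows essentially the paper's own route: derive the symbolic transport equations for $z,\chih,\sn\log\bb$ from \eqref{s1}, \eqref{s2}, \eqref{lb} and the Ricci decomposition \eqref{6.23.2.19}, integrate them with the transport lemma (Lemma \ref{tsp2}) from the data of Lemma \ref{inii}/Proposition \ref{exten}, feed in the vorticity flux \eqref{9.13.3.19} and the $\pi$-flux \eqref{flux_3}--\eqref{6.18.5.19}, absorb the quadratic terms with the bootstrap bound \eqref{ba3.18.1} and the slab shortness $\tau_*^{1/2}\la^{-1/2+\ep_0}\les\la^{-3\ep_0}$, and obtain \eqref{pric3} from \eqref{7.04.20.19}; for $\mho$, integrate $L(\bb^{-1})=\bb^{-1}k_{\bN\bN}$ and use \eqref{pi.2} (with the maximal function / H\"older in $t$ for \eqref{9.14.2.19}). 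So the core mechanism is the right one.

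Three places where your specific choices would not go through as written, and where the paper's choices matter. First, for $\zeta$ at this preliminary stage the paper does \emph{not} use the Hodge system \eqref{6.17.7.19}: its source contains $(\zb-\zeta)\c\sn\varphi$, and the control of $\sn\varphi$ (Lemma \ref{larea}) requires the $\sn(\tr\chi-\tfrac{2}{\tir})$ bounds \eqref{sna}, which are only proved in the next step; invoking \eqref{6.17.7.19} here is circular. Instead the paper writes $\zeta=\sn\log\bb+k_{A\bN}$, propagates $\sn\log\bb$ by \eqref{lb1} (this also yields the third term of \eqref{9.14.1.19}, needed anyway), and gets $\sn_L\zeta$ from \eqref{lb1} plus $\sn_L\pi$ flux; \eqref{6.17.7.19} is reserved for the later $\sn\zeta$ estimate (Proposition \ref{7.04.1.19}). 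Similarly $\chih$ is handled purely by the transport equation \eqref{lchi}, not by Codazzi, which is deferred. Second, \eqref{pric2} cannot be deduced from \eqref{pric1} by ``Sobolev on $S_{t,u}$'' or by naively integrating $\tir\sn_L\bA$ along $C_u$: near the vertex $\int_u^t(t'-u)^{-2}dt'$ diverges, so that reduction fails; it is the weighted transport lemma itself (your parenthetical ``using the transport structure directly'', i.e.\ the estimate for $\tir^{-1/2}\int_{t_\tmin}^t\tir(\ldots)$) that produces $\|\tir^{1/2}\fA\|_{L^p_\omega}$ simultaneously with the $L^2_tL^p_\omega$ bound. Third, your stated main obstacle (the $\curl^2\Omega$ flux and the trace decomposition of $\ep^{AB}\sn_A\Omega_B$) belongs to the next level ($\sn z$, Section \ref{causal_step1}); at the level of Proposition \ref{p1} only the $\curl\Omega$ flux \eqref{9.13.3.19} enters, and it is used exactly as you say. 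Relatedly, this proposition does not by itself ``recover \eqref{ba3.18.1} with improved constants'': that bootstrap is in $L^\infty_\omega$ and is closed only after the angular-derivative estimates \eqref{10.4.2.19}, \eqref{7.03.5.19} via \eqref{sobinf}; here the bootstrap assumptions remain in force.
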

\begin{remark}
Note due to $\zeta=\sn \log \bb+k_{A\bN}$ in (\ref{3.19.1}), (\ref{pric2}) improves the second estimates in (\ref{aux_1}). Due to $\tr\theta-\frac{2}{\tir}=z+\pi$ in view of (\ref{3.19.1}),  and  $\tir\les \la^{1-8\ep_0}T$, using (\ref{pric3}) gives $\|\bA\|_{L^3(\Sigma_t\cap \widetilde{\D^+})}\les \la^{-4\ep_0}\les 1$. Thus we can prove the assumption on $\tr\theta $ in (\ref{aux_1}).
\end{remark}

To prove the above proposition, we first give the symbolic version of their null structure equations.
\begin{lemma}
Let ${\bA}$ denote terms $\chih, z, \pi$. There hold, schematically,
\begin{align}
&\sn_L \chih+\f12 \tr\chi\chih=\pi\c {\bA}+\tir^{-1}\pi+(\sn, \sn_L) \pi+\exp\varrho \curl \Omega_\bN,  \label{lchi}\\
& Lz+\frac{2z}{t-u}=-\f12 \Xi_4^2+\left(\Xi_4-k_{\bN\bN}\right)\widetilde{\tr\chi}-|\chih|^2-\f12 z^2+\pi\c \pi\nn\\
&\qquad \qquad\qquad+\exp\varrho \curl \Omega_\bN, \label{lz}\\
&\sn_L \sn\log \bb+\f12 \tr\chi \sn \log \bb=-\chih\c \sn \log \bb-\sn(k_{\bN\bN})\label{lb1}\\
& {\sl{\div}} \chih=\f12 \left(\sn z-\sn \Xi_4 \right)+\sn \pi+\frac{\pi}{t-u}+{\bA}\c \pi.\label{dchi1}\\
& \sn_L (\sn z-e^\varrho\curl \Omega)_A+\frac{3}{t-u} (\sn z-e^\varrho(\curl \Omega))_A\label{ldz_2}\\
&\quad\quad\quad = {\bA}\c\sn z+\frac{1}{(t-u)}\left(\sn(\Xi_4-k_{\bN\bN})+e^\varrho(\curl \Omega)_A\right) + (z, \pi)\c (\sn  \pi+e^\varrho(\curl \Omega)_A)\nn\\
&\quad \quad \quad +e^{\varrho} \big({e_A}_i \Pi^{ij} \tensor{\ep}{_{jm}^l}(\curl^2\Omega)_l\bN^m+X\c(\chi+\pi)
\c \p \Omega \big)+\sn \chih\c \chih,\nn
\end{align}
where $\cdot X$ represents the contractions  with $L, \Lb, \Pi$.
\end{lemma}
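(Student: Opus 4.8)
The five identities \eqref{lchi}--\eqref{ldz_2} are all obtained by rewriting the null structure equations of Section \ref{fundstr} with the geometric decompositions of the Ricci curvature, so the proof is a bookkeeping exercise in which the main work is to track how $\curl\Omega$ and $\curl^2\Omega$ enter. First I would record the symbolic conventions: $\bA$ stands for $\chih, z, \pi$, and $\pi = \ti\pi\cdot X$ where $\ti\pi = f(\bg)\bp\bg$ is a general quadratic-background contraction; $\tr\chi - \tfrac{2}{\tir}$ differs from $z$ only by $\Xi_4$, which is a $\pi$ term by \eqref{9.30.13.19}; $k_{ij}$ and all connection coefficients $\zeta,\zb,\theta,k_{\bN\bN}$ are of the form $\pi$ by \eqref{3.19.1} and \eqref{k1}; and $\bN^\mu\bN^\nu$, $L^\mu$, etc., applied to $\bp\bg$ are again $\pi$. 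With these conventions, any term built from one derivative of the metric contracted with null vectors is $\pi$, any product of two such is $\pi\cdot\pi$, and the curvature terms $\bR_{4A4B}$, $\bR_{44}$, $\bR_{4A}$ reduce via Lemma \ref{6.23.17.19}(i) to $\D_*\pi + \bA\cdot\pi + \tr\chi\cdot\pi$, \emph{except} for the non-smooth $\curl\Omega$ piece that \eqref{6.23.2.19} exhibits explicitly.

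\textbf{Equation \eqref{lchi}.} Start from \eqref{s2}: $\sn_L\chih + \tfrac12\tr\chi\chih = -k_{\bN\bN}\chih - (\bR_{4A4B} - \tfrac12\bR_{44}\delta_{AB})$. The term $k_{\bN\bN}\chih$ is $\pi\cdot\bA$. By Lemma \ref{6.23.17.19}(i), $\bR_{4A4B} = \D_*\pi + \bA\cdot\pi + \tr\chi\cdot\pi$, and $\tr\chi = \tfrac{2}{\tir} + z + \Xi_4 = \tfrac{2}{\tir} + \bA$, so $\tr\chi\cdot\pi = \tir^{-1}\pi + \bA\cdot\pi$, which is absorbed. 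For $\bR_{44}$ I would use \eqref{6.23.2.19}: the right side there is $\bd_L\Xi_L + S_{44} + (\text{quadratic }\bp\bg\text{ terms}) - e^\varrho\bN^j\curl\Omega_j$. Now $\bd_L\Xi_L = L(\Xi_L) + (\text{Christoffel})\cdot\Xi = \sn_L\pi + \pi\cdot\pi$ (using \eqref{6.7con} to expand $\bd_L$ on a one-form, noting $\Xi$ restricted appropriately is $\pi$), and $\bd^\a v\,\bd_\a v$, $\bd^\a(c^{-2})\bd_\a v$, $\Box_\bg(c^{-2})$ are all $\pi\cdot\pi$ plus, in the $\Box_\bg$ case, $\D_*\pi$ via Corollary \ref{decom_wave}; $\delta_{ij}\bN^j\sQ^i/c^2$ is $\pi\cdot\pi$. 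Collecting, $\bR_{44} = \D_*\pi + \pi\cdot\pi - e^\varrho\curl\Omega_\bN$ up to lower-order, and $e^\varrho\curl\Omega_\bN = e^\varrho\bN^j\curl\Omega_j$ with $\Xi$-contraction $\curl\Omega_\bN$. Feeding back gives \eqref{lchi} with $\bR_{44}$ contributing the $e^\varrho\curl\Omega_\bN$ term and the rest folding into $\pi\cdot\bA$, $\tir^{-1}\pi$, $(\sn,\sn_L)\pi$.

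\textbf{Equations \eqref{lz}, \eqref{lb1}, \eqref{dchi1}.} For \eqref{lz}: by definition $\widetilde{\tr\chi} = \tr\chi + \Xi_4$, so $L\widetilde{\tr\chi} = L\tr\chi + L\Xi_4$; plug in the Raychaudhuri equation \eqref{s1}, $L\tr\chi + \tfrac12(\tr\chi)^2 = -|\chih|^2 - k_{\bN\bN}\tr\chi - \bR_{44}$, and the key observation is that the $\bd_L\Xi_L = L\Xi_4 + \pi\cdot\pi$ piece of $\bR_{44}$ from \eqref{6.23.2.19} cancels the $L\Xi_4$ coming from differentiating $\widetilde{\tr\chi}$ — this is precisely the Klainerman--Rodnianski renormalization. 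What survives on the right is $-|\chih|^2 - k_{\bN\bN}\tr\chi - \tfrac12(\tr\chi)^2 + L\Xi_4 - \bR_{44} + \cdots$; rewriting $\tfrac12(\tr\chi)^2 + k_{\bN\bN}\tr\chi$ in terms of $z$ and $\widetilde{\tr\chi}$, using $\tr\chi = \widetilde{\tr\chi} - \Xi_4$ and $\widetilde{\tr\chi} = z + \tfrac{2}{\tir}$, and computing $L(\tfrac{2}{\tir}) = -\tfrac{2}{\tir^2}$ so that $Lz = L\widetilde{\tr\chi} + \tfrac{2}{\tir^2}$, after algebra one lands on $Lz + \tfrac{2z}{\tir} = -\tfrac12\Xi_4^2 + (\Xi_4 - k_{\bN\bN})\widetilde{\tr\chi} - |\chih|^2 - \tfrac12 z^2 + \pi\cdot\pi + e^\varrho\curl\Omega_\bN$, which is \eqref{lz}. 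Equation \eqref{lb1} is immediate: apply $\sn$ to \eqref{lb}, $L\log\bb = -k_{\bN\bN}$, and use the commutator \eqref{cmu_2} $[L,\sn_A]f = -\chi_{AB}\sn_B f$ with $\chi = \tfrac{2}{\tir}\ga + \chih + \pi$, so $\sn_L\sn\log\bb + \tfrac12\tr\chi\sn\log\bb = -\chih\cdot\sn\log\bb - \sn k_{\bN\bN}$ (the $\tfrac1\tir$ trace part of $[L,\sn]$ combines with $\tfrac12\tr\chi$). Equation \eqref{dchi1} is the Codazzi equation \eqref{dchi}: $\sl\div\chih + \chih\cdot k_{\bN} = \tfrac12(\sn\tr\chi + k_{A\bN}\tr\chi) + \bR_{B4BA}$; write $\tr\chi = \widetilde{\tr\chi} - \Xi_4 = z + \tfrac{2}{\tir} - \Xi_4$ so $\sn\tr\chi = \sn z - \sn\Xi_4 + \sn(\tfrac2\tir)$ and $\sn\tfrac2\tir = -\tfrac{2}{\tir^2}\sn\tir$ which is $\tir^{-1}\pi$-type (since $\sn\tir$ involves $\sn u$, controlled), $k_{A\bN}\tr\chi = \pi\cdot(\tir^{-1} + \bA)$, $\chih\cdot k_\bN = \bA\cdot\pi$, and $\bR_{B4BA} = \sn\pi + \bE$ by Lemma \ref{6.23.17.19}(ii).

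\textbf{Equation \eqref{ldz_2} --- the main obstacle.} This is the hardest step because it requires the trace decomposition \eqref{9.12.2.19} of the angular derivative of $\curl\Omega_\bN$. I would differentiate \eqref{lz} by $\sn$: commuting $\sn_A$ past $L$ via \eqref{cmu_2} produces $\sn_L\sn_A z + \tfrac{2}{\tir}\sn_A z + \tfrac1\tir\sn_A z$ on the left (the extra $\tfrac1\tir$ from $[L,\sn_A] = -\chi_{AB}\sn_B = -\tfrac2\tir\sn_A + \bA\cdot\sn$, so the total coefficient is $\tfrac{3}{\tir}$, matching the $\tfrac{3}{t-u}$ in \eqref{ldz_2}). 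On the right, $\sn$ hitting $-\tfrac12\Xi_4^2$, $(\Xi_4-k_{\bN\bN})\widetilde{\tr\chi}$, $-\tfrac12 z^2$, $\pi\cdot\pi$ gives $(z,\pi)\cdot(\sn\pi + \cdots)$, $\tir^{-1}\sn(\Xi_4 - k_{\bN\bN})$ (from $\widetilde{\tr\chi}\approx\tfrac2\tir$), and $\bA\cdot\sn z$; $\sn$ hitting $-|\chih|^2$ gives $\sn\chih\cdot\chih$. The delicate term is $\sn_A(e^\varrho\curl\Omega_\bN)$: by \eqref{9.12.2.19}, $\Pi^{ij}\p_j(\bN^m\curl\Omega_m) = \Pi^i_l\bd_L(\Pi^{jl}\curl\Omega_j) + \Pi^{ij}\ep_{jm}{}^l\curl^2\Omega_l\bN^m + \p\Omega\cdot(\chi+\pi)\cdot X$; contracting with ${e_A}_i$ and including the $e^\varrho$ factor (whose derivative is $\p\varrho\cdot(\cdots) = \pi\cdot(\cdots)$, lower order), the first piece is $\sn_L(e^\varrho\curl\Omega)_A$ up to a $\pi\cdot\curl\Omega$ term from differentiating $e^\varrho$ and from $\bd_L$ acting on the projection $\Pi$ (which by \eqref{6.29.5.19}--\eqref{6.29.6.19} produces $k_{\bN}\cdot\curl\Omega_\bN$, of type $\pi\cdot e^\varrho\curl\Omega$). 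Hence $\sn_A(e^\varrho\curl\Omega_\bN) = \sn_L(e^\varrho\curl\Omega)_A + e^\varrho{e_A}_i\Pi^{ij}\ep_{jm}{}^l(\curl^2\Omega)_l\bN^m + e^\varrho\,\p\Omega\cdot(\chi+\pi)\cdot X + (\text{l.o.t.})$. Moving the $\sn_L(e^\varrho\curl\Omega)_A$ to the left side and absorbing it into $\sn_L$ applied to $\sn z - e^\varrho\curl\Omega$, and likewise writing the $\tfrac{3}{\tir}(e^\varrho\curl\Omega)_A$ correction, produces \eqref{ldz_2} with the stated right side: $\bA\cdot\sn z$, $\tir^{-1}(\sn(\Xi_4-k_{\bN\bN}) + e^\varrho(\curl\Omega)_A)$, $(z,\pi)\cdot(\sn\pi + e^\varrho(\curl\Omega)_A)$, the curvature-of-vorticity term $e^\varrho{e_A}_i\Pi^{ij}\ep_{jm}{}^l(\curl^2\Omega)_l\bN^m$, the transport term $e^\varrho X\cdot(\chi+\pi)\cdot\p\Omega$, and $\sn\chih\cdot\chih$. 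The bookkeeping to verify that every remaining term genuinely falls into one of these symbolic classes — in particular tracking that the various $\tir^{-1}$-weighted and $\bA$-weighted terms from commutators and from $\widetilde{\tr\chi} - \tfrac2\tir = z + \Xi_4$ assemble correctly — is where I expect the only real care to be needed; there is no conceptual difficulty beyond patiently applying Proposition \ref{6.7con}, the commutators \eqref{cmu_2}, \eqref{cmu2}, and the decompositions \eqref{6.23.2.19}, \eqref{9.12.2.19}, \eqref{7.04.8.19}, and Lemma \ref{6.23.17.19}.
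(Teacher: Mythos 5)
Your proposal is correct and follows essentially the paper's own route: it derives \eqref{lchi}--\eqref{dchi1} by inserting the Ricci identity \eqref{6.23.2.19} (in the reduced form $\bR_{44}=L(\Xi_4)+\pi\c\pi-e^\varrho\curl\Omega_\bN$, the paper's \eqref{6.30.5.19}) and the curvature decompositions of Lemma \ref{6.23.17.19} into \eqref{s1}, \eqref{s2}, \eqref{lb}, \eqref{dchi}, exploiting the same $L\Xi_4$ cancellation for \eqref{lz}, and obtains \eqref{ldz_2} exactly as the paper does, by differentiating \eqref{lz}, invoking the trace decomposition \eqref{9.12.2.19}, and renormalizing with $(\sn_L+\frac{3}{\tir})(e^\varrho\curl\Omega)_A$. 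The only slips are cosmetic and do not affect the argument: for \eqref{lz} one must use the wave equation \eqref{4.10.2.19} (not the null decomposition of $\Box_\bg$) so that $c^2\Box_\bg(c^{-2})$ and the $\sQ$-terms are genuinely $\pi\c\pi$, since no $\D_*\pi$ term is allowed on the right of \eqref{lz}; moreover $\sn\tir=0$ on $S_{t,u}$, so the extra $\sn(2/\tir)$ term you insert in \eqref{dchi1} vanishes, and the trace part of $\chi_{AB}\sn_B$ contributes $\tir^{-1}\sn_A$ (not $2\tir^{-1}\sn_A$), consistent with the total coefficient $\frac{3}{\tir}$ you correctly state.
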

Hence with $\fA$  an element of $\chih, z, \sn \log \bb$, there holds the symbolic formula
\begin{equation}\label{10.3.1.19}
\sn_L \fA+\frac{m}{\tir}\fA=\bA\c \bA+\D_*\pi+\tir^{-1}\pi+\exp\varrho \curl \Omega_{\bN}, \quad m= 1,2
\end{equation}
where on the right hand side $\bA=z, \chih, \zeta, \pi$, and all the possible terms which appear in the collection of the transport equation for $\fA$ are included. $m=1$ if $\fA=\chih, \sn \log \bb$ and $m=2$ if $\fA=z$.

\begin{proof}
(\ref{lb1}) can be directly obtained by using (\ref{lb}) and (\ref{cmu2}).

Combining (\ref{6.23.2.19}) with the equations (\ref{4.10.1.19}), (\ref{4.10.2.19}),  and  $\bd_L L=-k_{\bN\bN} L$ in (\ref{6.29.5.19}), we can obtain
\begin{equation}\label{6.30.5.19}
\bR_{44}=-\exp\varrho \curl \Omega_\bN+\pi \c \pi+L(\Xi_4).
\end{equation}

In view of the definition of $\widetilde{\tr\chi}$, substituting (\ref{6.30.5.19}) into (\ref{s1}) yields
\begin{equation}\label{9.15.1.19}
L\widetilde{\tr\chi}+\f12 (\widetilde{\tr\chi})^2=(\Xi_4-k_{\bN\bN})\widetilde{\tr\chi}-\f12 \Xi_4^2-|\chih|^2+\pi\c \pi+\exp\varrho \curl \Omega_\bN,
\end{equation}
which gives (\ref{lz}) by using the definition of $z$.
  (\ref{lchi}) can be obtained by the substitutions of (\ref{6.30.5.19}) and  Lemma \ref{6.23.17.19} (i) into (\ref{s2}). The proof of (\ref{dchi1}) can be obtained by substituting Lemma \ref{6.23.17.19} (ii) into (\ref{dchi}).

 To derive (\ref{ldz_2}), we directly take the covariant derivative on (\ref{lz})  and use the commutation formula (\ref{cmu2}) to obtain
\begin{equation}\label{9.12.4.19}
\sn_L  \sn z+\frac{3}{t-u} \sn z=- \chih\c \sn z+\f12 (\Xi_4-z) \sn z+\sn G,
\end{equation}
 where $G$ denotes the right hand side of (\ref{lz}).
 With the help of   (\ref{9.12.2.19}), we derive
 \begin{align}\label{9.16.1.19}
 \begin{split}
 &\sn_A(\exp\varrho \curl \Omega_\bN)\\
 &=\sn_L(e^\varrho\curl \Omega)_A+e^{\varrho} \big({e_A}_i \Pi^{ij} \tensor{\ep}{_{jm}^l}(\curl^2\Omega)_l\bN^m+(\pi+\chi)
 \c \p \Omega\c X).
 \end{split}
 \end{align}
 Note that the first term on the right is of the type $\bp \curl \Omega$, which is the second order derivative of $\Omega$. There is no direct bound for this term. Therefore we renormalize the equation (\ref{9.12.4.19}) by subtracting $(\sn_L+\frac{3}{t-u})(e^\varrho\curl \Omega)_A$ from both sides.
 This gives  (\ref{ldz_2}).
\end{proof}
\begin{proof}[Proof of (\ref{9.14.1.19}) and (\ref{9.14.2.19})]
Recall from Lemma \ref{inii}, $\lim_{t\rightarrow t_{\tmin}}(\bb-a)=0$ with $a=1$ if $u\ge 0$ and   the function $a$ in Proposition \ref{exten} otherwise.
By using (\ref{lb}), we have
\begin{equation}\label{9.16.4.19}
\frac{\bb^{-1}-a^{-1}}{\tir}=\frac{1}{\tir}\int_{t_{\tmin}}^t L(\bb^{-1})=\frac{1}{\tir}\int_{t_{\tmin}}^t \bb^{-1}k_{\bN\bN} dt',
\end{equation}
where $a=1$ if $u\ge 0$. While for $u\le 0$,  we derive in view of
   $\frac{\bb^{-1}-1}{\tir}=\frac{\bb^{-1}-a^{-1}}{\tir}+\frac{a^{-1}-1}{\tir}$,
\begin{equation}\label{9.16.5.19}
\frac{\bb^{-1}-1}{\tir}=\frac{1}{\tir}\int_{t_{\tmin}}^t \bb^{-1}k_{\bN\bN} dt'+\frac{a^{-1}-1}{\tir}.
\end{equation}
For both cases in the above, by using (\ref{bb_4}) and (\ref{w8.1.1}),
\begin{align*}
\|\frac{\bb^{-1}-1}{\tir}\|_{L_t^2 L_\omega^\infty(C_u\cap \widetilde{\D^+})}&\les \|k\|_{L_t^2 L_\omega^\infty}+\|\frac{1-a^{-1}}{\fv^\f12} \fv^\f12 \tir^{-1}\|_{L_t^2}\\
&\les \la^{-\f12}+\la^{-\f12}\|\fv^\f12 (t+\fv)^{-1}\|_{L_t^2}
\end{align*}
where we used (\ref{pi.2}) for the bound of $k$, and the last term vanishes unless $\fv=-u,$ for $ u<0$.
Hence by direct calculation,
\begin{equation*}
\|\frac{\bb^{-1}-1}{\tir}\|_{L_t^2 L_\omega^\infty(C_u\cap \widetilde{\D^+})}\les \la^{-\f12}.
\end{equation*}

Next, when $u\ge 0$, by using (\ref{pi.2}) and (\ref{hlm})
\begin{align*}
\|\frac{\bb^{-1}-1}{\tir}\|_{L_t^2 L^\infty(\D^+)}&\les  \|\tir^{-1}\int_u^t \|k_{\bN\bN}\|_{L_\omega^\infty}\|_{L_t^2 L_u^\infty}\les \|\M(\|k_{\bN\bN}\|_{L_\omega^\infty})\|_{L_t^2}\\
&\les \|k_{\bN\bN}\|_{L_t^2 L^\infty(\D^+)}\les\la^{-\f12-4\ep_0};
\end{align*}
similar to the above estimate, if $u\le 0 $, noting that  $\sup_{0\le \fv\le \fv_*}\fv^\f12 \tir^{-1}\approx t^{-\f12}$,
we can derive with the help of (\ref{9.16.5.19}), (\ref{w8.1.1}) and (\ref{pi.2}) that
\begin{align*}
\|\frac{\bb^{-1}-1}{\tir}\|_{L_t^\frac{q}{2} L^\infty}&\|k_{\bN\bN}\|_{L_t^\frac{q}{2} L^\infty}+\|\frac{1-a^{-1}}{\fv^\f12} \fv^\f12 \tir^{-1}\|_{L_t^\frac{q}{2} L_u^\infty}\\
&\les \|k\|_{L_t^\frac{q}{2} L^\infty}+\la^{\frac{2}{q}-1-4\ep_0(\frac{4}{q}-1)}\\
&\les \la^{\frac{2}{q}-1-4\ep_0(\frac{4}{q}-1)}.
\end{align*}
Next we prove the derivative estimate in (\ref{9.14.1.19}). By using (\ref{lb}),
$$\tir L\mho=L(\bb^{-1})-\tir^{-1}(\bb^{-1}-1)=\bb^{-1}k_{\bN\bN}-\mho,$$
on $C_u \cap \widetilde{\D^+}$, we derive  by using (\ref{pi.2}) and  the first estimate of (\ref{9.14.1.19}) which has been proved above
\begin{equation*}
\|\tir L\mho\|_{L_t^2 L_\omega^p}\les \|k_{\bN\bN}\|_{L_t^2 L_\omega^p}+\|\mho\|_{L_t^2 L_\omega^p}\les \la^{-\f12}.
\end{equation*}
Recall that by using  the first two estimates in (\ref{ba3.18.1}) and (\ref{pi.2})
\begin{align*}
\|\tr\chi-\frac{2}{\tir}, k_{\bN\bN}\|_{L_\omega^\infty L_t^1}\les \la^{-3\ep_0}.
\end{align*}
Hence we can apply Lemma \ref{tsp2} to (\ref{lb1}),
by using (i) in  Lemma \ref{inii} for $\sn \bb$ when $u\ge 0$. By using (\ref{flux_3}), this leads to
\begin{equation*}
\|\sn \log \bb\|_{L_t^2 L_\omega^p}+\tir^\f12 \|\sn\log \bb\|_{L_\omega^p}\les \|\tir\sn k_{\bN\bN}\|_{L_t^2 L_\omega^p}\les\la^{-\f12}.
\end{equation*}
Similarly, if $u\le 0$, we use the initial condition in (\ref{a_3}) to derive
\begin{align*}
\tir\|\sn\log \bb\|_{L_\omega^p}\les \int_{t_{\tmin}}^t \|\tir \sn k_{\bN\bN}\|_{L_\omega^p} dt'+\lim_{t\rightarrow 0} \|\tir\sn \log a\|_{L_\omega^p}.
\end{align*}
Noting that at $t=0$, (\ref{a_3}) implies $\tir^{-1}\fv^\f12\|\fv^\f12 \sn \log a\|_{L_\omega^p}\les \la^{-\f12} \tir^{-1} \fv^\f12$.
  Thus if $\fv$ is fixed,
  \begin{align*}
\|\tir^{-1}\fv^\f12\|\fv^\f12 \sn \log a\|_{L_\omega^p}\|_{L_t^2}\les \la^{-\f12},
  \end{align*}
  and
  \begin{align*}
  \tir^{-\f12} \fv^\f12 \|\fv^\f12 \sn \log a \|_{L_\omega^p}\les \la^{-\f12}.
  \end{align*}
  We then conclude by  using (\ref{flux_3}) on $C_u\cap \widetilde{\D^+}$
  \begin{align*}
  \tir^\f12 \|\sn \log \bb\|_{L_\omega^p}&+\|\sn \log \bb\|_{L_t^2 L_\omega^p}\les \la^{-\f12}+\|\tir \sn k_{\bN\bN}\|_{L_t^2 L_\omega^p}\les \la^{-\f12}.
  \end{align*}
  Thus the second and the third estimates in (\ref{9.14.1.19}) are proved. The last one follows by applying (\ref{7.04.20.19}) with the help of the first two estimates in (\ref{9.14.1.19}). The proofs for (\ref{9.14.1.19}) and (\ref{9.14.2.19}) are therefore complete.
\end{proof}

\begin{proof}[Proof  of (\ref{pric1})-(\ref{pric3})]
Note (\ref{pric1})-(\ref{pric3}) hold for $\bA=\pi$, which is a consequence of (\ref{flux_3}) and the last estimate of (\ref{6.18.4.19}). To prove (\ref{pric1})-(\ref{pric3}), we focus on the cases when $\bA=z, \chih, \zeta$.

We first note from (\ref{ba3.18.1}) and (\ref{pi.2})
\begin{equation}\label{a6.19}
\|\bA\|_{L_t^2 L_\omega^\infty}\les \la^{-\f12+\ep_0}.
\end{equation}
Since we have proved in (\ref{9.14.1.19}) that for $0\le 1-\frac{2}{p}\le s-2$  there holds
\begin{equation}\label{10.3.2.19}
  \tir^\f12 \|\sn \log \bb\|_{L_\omega^p}+\|\sn \log \bb\|_{L_t^2 L_\omega^p}\les \la^{-\f12}.
\end{equation}
With the help of (\ref{flux_3}), substituting the estimate and (\ref{a6.19}) in (\ref{lb1}) implies
\begin{align*}
\|\tir \sn_L \sn \log \bb\|_{L_t^2 L_\omega^p(C_u)}&\le \|\tir \bA \sn \log \bb\|_{L_t^2 L_\omega^p(C_u)}+\|\tir \sn \pi\|_{L_t^2 L_\omega^p(C_u)}\\
&\le \|\bA, \pi\|_{L_t^2 L_\omega^\infty}\|\tir \sn \log \bb\|_{L_t^\infty L_\omega^p}+\la^{-\f12}\les \la^{-\f12-3\ep_0}+\la^{-\f12}.
\end{align*}
Thus, by using the last identity in (\ref{3.19.1}) and (\ref{flux_3}),
$$
\|\tir \sn_L \zeta\|_{L_t^2 L_\omega^p(C_u)}\les \|\sn_L \pi\|_{L_t^2 L_\omega^p(C_u)}+\|\tir \sn_L \sn \log \bb\|_{L_t^2 L_\omega^p(C_u)}\les \la^{-\f12}.
 $$
 Similarly, by using (\ref{3.19.1}), (\ref{10.3.2.19}),  the estimates for $\pi$ in (\ref{pric1}) and (\ref{pric2}), we have
 \begin{align*}
& \|\zeta\|_{L_t^2 L_\omega^p}\les \|\sn \log \bb\|_{L_t^2 L_\omega^p}+\|k\|_{L_t^2 L_\omega^p}\les \la^{-\f12},\\
&\|\tir^\f12 \zeta\|_{L_\omega^p}\les \|\tir^\f12 \pi\|_{L_\omega^p}+\|\tir^\f12 \sn\log \bb\|_{L_\omega^p}\les \la^{-\f12}.
 \end{align*}
Thus  (\ref{pric1}) and (\ref{pric2}) hold for $\zeta$.

Note for any fixed point $p=(t, u, \omega)\in \widetilde{\D^+}$, there exists a unique null geodesic through the point such that  $p=\Upsilon(t, u, \omega)$,  which is either initiated from $ S_{0,-u}$ at the slice of $\{t=0\}$ or from the vertex $t=u$ at the time axis $\Ga^+$ if $u\ge 0$. Thus, by applying Lemma \ref{tsp2}, for both $\fA=\chih$ and $z$, with $m=1$ or $2$ we derive
\begin{equation}\label{10.3.5.19}
\tir^m |\fA(t)|\les \lim_{\tau\rightarrow t_\tmin}| (\tau-u)^m \fA(\tau)|+\int_{t_\tmin}^t\tir^m\big( |\bA\c\bA|+|\D_* \pi|+|\tir^{-1}\pi|+|\curl \Omega|\big)dt'.
\end{equation}

If $u\ge 0$,  for the data of $\fA=z, \chih$, we apply the result (i) in  Lemma \ref{inii}, the first term on the right of (\ref{10.3.5.19}) vanishes.

If $u< 0$, for $\fA=\chih$, we note that due to (\ref{a_3}), (\ref{3.19.1}) and applying the second estimate in (\ref{6.18.4.19}) for $\pi$,
\begin{equation*}
 \|\fv^{\f12-\frac{2}{q}}\hat\chi\|_{L^{q}(S_\fv)}\les \la^{-\f12},\quad 0\le 1-\frac{2}{q}<s-2;
\end{equation*}
for $\fA=z$, we apply (iii) in Lemma \ref{inii}. Thus, for $\fv>0$ fixed,  since $\tir=t+\fv$,
\begin{align}\label{10.10.1.19}
\begin{split}
&\|\tir^{-m+\f12}\fv^m \fA(0, \fv, \omega)\|_{L_t^\infty L_\omega^p}\les \|\fv^\f12 \fA(0, \fv, \omega)\|_{L_\omega^p}\les \la^{-\f12},\\
&\|\tir^{-m}\fv^m\fA(0,\fv, \omega)\|_{L_t^2 L_\omega^p}\le\|\tir^{-1}\fv\fA(0, \fv, \omega)\|_{L_t^2 L_\omega^p}\les \fv^\f12 \|\fA(0, \fv, \omega)\|_{L_\omega^p}\les \la^{-\f12}.
\end{split}
\end{align}
Therefore  by using (\ref{10.3.5.19}), in both cases, for $\fA=\chih, z$,
\begin{align*}
\|\tir^\f12 \fA\|_{L_\omega^p}&\les \la^{-\f12}+\|\tir^{-\f12}\int_{t_\tmin}^t \tir (|\D_*\pi|+|\bA\c \bA|+|\tir^{-1}\pi|+|\curl \Omega|) dt'\|_{L_\omega^p}\\
\|\fA\|_{L_t^2 L_\omega^p}&\les \la^{-\f12}+\|\tir^{-1} \int_{t_\tmin}^t \tir (|\D_*\pi|+|\bA\c \bA|+|\tir^{-1}\pi|+|\curl \Omega|) dt'\|_{L_\omega^p},
\end{align*}
 which lead to
 \begin{align}
 \|\tir^\f12 \fA\|_{L_\omega^p}+\|\fA\|_{L_t^2 L_\omega^p}&\les \la^{-\f12}+\|\tir \D_*\pi\|_{L_t^2 L_\omega^p(C_u)}+\|\pi\|_{L_t^2 L_\omega^p(C_u)}\nn\\
 &+\|\tir\curl \Omega\|_{L_t^2 L_\omega^p(C_u)}+\|\tir \bA \c\bA\|_{L_t^2 L_\omega^p(C_u)}\label{10.3.4.19}.
 \end{align}
 We apply (\ref{flux_3}) and (\ref{9.13.3.19}) to derive
  \begin{equation}\label{10.3.7.19}
  \|\tir \D_*\pi\|_{L_t^2 L_\omega^p(C_u)}+\|\pi\|_{L_t^2 L_\omega^p(C_u)}+\|\tir\curl \Omega\|_{L_t^2 L_\omega^p(C_u)}\les \la^{-\f12}.
  \end{equation}
  Thus by using (\ref{a6.19})
 \begin{align}
 \|\tir^\f12 \fA\|_{L_\omega^p}+\|\fA\|_{L_t^2 L_\omega^p}&\les \la^{-\f12}+\|\tir\bA\c\bA\|_{L_t^2 L_\omega^p(C_u)}\nn\\
 &\les \la^{-\f12}+\|\bA\|_{L_t^2 L_\omega^\infty}\|\tir \bA\|_{L_t^\infty L_\omega^p}\nn\\
 &\les \la^{-\f12}+\la^{-\f12+\ep_0}\tau_*^\f12 (\|\tir^\f12\chih\|_{L_t^\infty L_\omega^p}+\|\tir^\f12 z\|_{L_\omega^p}+\la^{-\f12}),\label{10.3.6.19}
 \end{align}
 where we used the proved estimate of (\ref{pric2}) for $\bA=\zeta, \pi$.
 Note $\tau_*^{\f12}\la^{-\f12+\ep_0}\les \la^{-3\ep_0}$, which can be sufficiently small with $\la\ge \La$ for sufficiently large $\La$. Therefore, we can conclude
 \begin{align*}
 \|\tir^\f12\chih\|_{L_t^\infty L_\omega^p}+\|\tir^\f12 z\|_{L_\omega^p}\les \la^{-\f12}
 \end{align*}
 which completes the proof of (\ref{pric2}). Substituting the above estimate to  (\ref{10.3.6.19})   yields the bound $\|\fA\|_{L_t^2 L_\omega^p}\les \la^{-\f12}$. Thus the first estimate of (\ref{pric1}) is complete.

 Note that we have shown by using (\ref{pric2}) and (\ref{a6.19})
 \begin{equation*}
 \|\tir \bA\c \bA\|_{L_t^2 L_\omega^p}\les \la^{-\f12-3\ep_0}.
 \end{equation*}
 Combining this estimate with (\ref{10.3.7.19}) and (\ref{10.3.1.19}), also using the first estimate in (\ref{pric1}), we can obtain the last estimate in (\ref{pric1}) for $\bA=\chih, z$. Since other cases have been proven, the estimate of (\ref{pric1}) is also proved.

For $\fA=\chih, z, \zeta$, by applying (\ref{7.04.20.19}) with the help of (\ref{ba3.18.1}), (\ref{pric1}) and Minkowski inequality, we can obtain
 \begin{equation}\label{10.3.8.19}
 \|\tir^\f12 \fA\|_{L_\omega^{2p}}^2\les \|\fA\|_{L_\omega^\infty L_t^2}(\|\tir \sn_L \fA\|_{L_\omega^p L_t^2(C_u)}+\|\fA\|_{L_\omega^p L_t^2(C_u)})\les\la^{-1}.
 \end{equation}
 This proof is completed.
\end{proof}

\subsubsection{ Control of $\sn z, \,\sn \chih$ and $\tir^\f12|z|$}\label{causal_step1}
In this subsection, we give the estimates of $\sn z$ and  $\sn\chih$ in (\ref{ricp}) and (\ref{sna}). We also  prove  the bound of   $|z|$ in (\ref{comp2}) as a consequence.

Since the right hand side of (\ref{lz}) is not bounded in $L_\omega^\infty L_t^1$,  the pointwise estimate for $z$ does not directly follow. There hold in view of the Sobolev embedding (\ref{sobinf}),
\begin{align}
&|\tir^\f12 z|\les \|\tir^\f12 (\tir\sn)\rp{\le 1} z\|_{L_\omega^p}, \label{9.14.8.19}\\
&\|z\|_{L_t^2 L^\infty(\D^+)}\les \| (\tir\sn)\rp{\le 1} z\|_{L_t^2 L_u^\infty L_\omega^p(\D^+)},\nn
\end{align}
where  $p>2$.

  It is natural to consider the bounds on  the right hand sides of the above two inequalities. Nevertheless, there is no direct estimate of $\|(\tir \sn)\rp{\le 1}z\|_{L_t^2 L_u^\infty L_\omega^p(\D^+)}$. Therefore, as the first step, we give the bound of $\tir^\f12 |z| $ via the first inequality. In the second step, we will derive the bound for $\|z\|_{L_t^2 L^\infty}$ by carrying out a further normalization on $\tr\chi$ in Section \ref{10.4.1.19}.

We first derive the  estimates in (\ref{ricp}) and (\ref{sna}), which are recast below.
\begin{proposition}
For $0\le 1-\frac{2}{p}< s'-2$, there hold
\begin{equation}\label{10.4.2.19}
\|\tir^\frac{3}{2}\sn z\|_{L_t^\infty L_u^\infty L_\omega^p(\widetilde{\D^+})}+\|\tir \big(\sn ( \chih,  z)\big)\|_{L_t^2 L_\omega^p(C_u\cap \widetilde{\D^+})}\les \la^{-\f12}.
\end{equation}
\end{proposition}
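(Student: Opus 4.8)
The plan is to control the two quantities $\tir^{3/2}\sn z$ (pointwise in $L^p_\omega$) and $\tir\sn(\chih,z)$ (in $L^2_t L^p_\omega(C_u)$) by running the transport argument on the renormalized equation \eqref{ldz_2} and coupling it to the Codazzi equation \eqref{dchi1}, using the flux estimates of Proposition \ref{7.13.2.19} (in particular \eqref{9.13.2.19}, \eqref{flux_3}) together with the preliminary bounds of Proposition \ref{p1}. The key observation exploited throughout is that the genuinely problematic term in the transport equation for $\sn z$ is not $\curl\Omega$ itself but its angular derivative; the trace decomposition \eqref{9.16.1.19} (equivalently \eqref{9.12.2.19}) converts $\sn_A(e^\varrho\curl\Omega_\bN)$ into $\sn_L(e^\varrho\curl\Omega)_A$ plus a $\curl^2\Omega$ term plus lower-order pieces, which is exactly why \eqref{ldz_2} is written for the \emph{modified} unknown $\sn_A z - e^\varrho(\curl\Omega)_A$: the bad $\sn_L$ term has been absorbed into the left-hand side.

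First I would apply the transport lemma, Lemma \ref{tsp2}, to \eqref{ldz_2} with $m=3$. Along each null geodesic on $C_u\cap\widetilde{\D^+}$, and using that the data term $\lim_{t\to t_\tmin}\tir^3(\sn z - e^\varrho\curl\Omega)_A$ vanishes (for $u\ge 0$ by Lemma \ref{inii}(i), for $u<0$ by Proposition \ref{exten} as in \eqref{10.10.1.19}), this yields
\begin{align*}
\tir^{3}|(\sn z - e^\varrho\curl\Omega)_A|(t) &\les \int_{t_\tmin}^t \tir^{3}\Big(|\bA\c\sn z| + \tir^{-1}|\sn(\Xi_4-k_{\bN\bN})| + \tir^{-1}|e^\varrho\curl\Omega| \\
&\qquad + |(z,\pi)\c(\sn\pi+e^\varrho\curl\Omega)| + e^\varrho|\curl^2\Omega| + |X\c(\chi+\pi)\c\p\Omega| + |\sn\chih\c\chih|\Big)\,dt'.
\end{align*}
Dividing by $\tir^{5/2}$ and taking $L^p_\omega$ norms, the right-hand side is estimated term by term: the $\tir^{-1}\sn(\Xi_4-k_{\bN\bN})$ contribution is $\les \|\tir\D_*\pi\|_{L^2_tL^p_\omega}\les\la^{-1/2}$ by \eqref{flux_3} and Proposition \ref{6.14.2.19}; the $\curl^2\Omega$ term is $\les\|\tir^2\curl^2\Omega\|_{L^2_tL^p_\omega}$, controlled by \eqref{9.13.2.19} after a H\"older step in $t$; the $\curl\Omega$, $\p\Omega\c(\chi+\pi)$ and $\sn\pi$ terms are handled by \eqref{9.13.3.19}, \eqref{flux_3} and the $L^2_tL^\infty_\omega$ smallness \eqref{a6.19} of $\bA$; the quadratic $\bA\c\sn z$ and $\sn\chih\c\chih$ pieces are absorbed by a bootstrap, using Proposition \ref{p1}, the smallness factor $\tau_*^{1/2}\la^{-1/2+\ep_0}\les\la^{-3\ep_0}$, and (for $\sn\chih$) the Codazzi relation \eqref{dchi1} which expresses $\sn\chih$ in terms of $\sn z$, $\sn\pi$, $\tir^{-1}\pi$ and $\bA\c\pi$. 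Running the bootstrap closes $\|\tir^{3/2}(\sn z - e^\varrho\curl\Omega)_A\|_{L^\infty_t L^\infty_u L^p_\omega}\les\la^{-1/2}$, and since $\|\tir^{3/2}e^\varrho\curl\Omega\|_{L^p_\omega}\les\la^{-1/2}$ by \eqref{9.13.3.19}, the first bound in \eqref{10.4.2.19} follows; the $L^2_tL^p_\omega(C_u)$ bound on $\tir\sn z$ comes from integrating the same inequality in $t$ (dividing by $\tir^2$ rather than $\tir^{5/2}$), and the bound on $\tir\sn\chih$ then follows from \eqref{dchi1}, Lemma \ref{hdgm1} and the $\sn z$ bound just obtained. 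Finally $\tir^{1/2}|z|\les\la^{-1/2}$ is read off from the Sobolev inequality \eqref{9.14.8.19} together with $\|\tir^{3/2}\sn z\|_{L^p_\omega}\les\la^{-1/2}$ and the $L^p_\omega$ bound $\|\tir^{1/2}z\|_{L^p_\omega}\les\la^{-1/2}$ from \eqref{pric2}, giving $\tir\widetilde{\tr\chi}\approx 1$ in \eqref{comp2}.

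The main obstacle is the bookkeeping of the $\curl^2\Omega$ and $\sn_L(e^\varrho\curl\Omega)$ terms: these are one derivative rougher than anything available for $\p^2\Omega$ pointwise, so every estimate must go through the double-curl flux $\|\tir^2\curl^2\Omega\|_{L^2_tL^p_\omega(C_u)}\les\la^{-3/2}$ of Proposition \ref{7.13.2.19}, and one must be careful that the $\tir$-weights line up (this is why the transport is run at weight $m=3$ rather than $m=2$) and that the H\"older exponent $p$ stays in the admissible range $0<1-\tfrac2p<s'-2$ dictated by \eqref{9.13.2.19}. A secondary subtlety is the initial-data analysis on $\{t=0\}$ for $u<0$, where one must invoke the sharp $L^{q}(S_\fv)$ bounds on $\hat\theta$, $\sn\log a$ and $z$ from Proposition \ref{exten} and check, as in \eqref{10.10.1.19}, that after multiplication by $\tir^{-m}\fv^m$ and integration these data contributions are $\les\la^{-1/2}$ uniformly. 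Everything else is a routine but lengthy application of the transport lemma, the Sobolev inequalities \eqref{sob}--\eqref{sobinf}, \eqref{7.04.20.19}, and Gronwall, exactly paralleling \cite[Section 5]{Wangrough} but with the vorticity terms inserted at the points indicated above.
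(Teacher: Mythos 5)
Your proposal is correct and follows essentially the same route as the paper: the transport lemma applied to the renormalized equation \eqref{ldz_2} for $\sn_A z-e^\varrho(\curl\Omega)_A$ with vanishing/small data from Lemma \ref{inii} and Proposition \ref{exten}, the double-curl and $\D_*\pi$ flux bounds of Proposition \ref{7.13.2.19}, and closure by coupling with the Codazzi equation \eqref{dchi1} via Lemma \ref{hdgm1} and absorbing the quadratic terms through the small factor $\tau_*^{1/2}\la^{-1/2+\ep_0}$. The only cosmetic differences are that the paper closes the coupling by direct substitution (first the $L^2_tL^p_\omega$ bounds, then the $\tir^{3/2}$ pointwise-in-$t$ bound) rather than a formal bootstrap, and it treats $\sn(\Xi_4-k_{\bN\bN})$ simply as $\sn\pi$ without invoking Proposition \ref{6.14.2.19}.
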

\begin{proof}
By using (\ref{ldz_2}), according to (\ref{a6.19}), we employ Lemma \ref{tsp2} to derive that
\begin{align}\label{9.14.6.19}
\begin{split}
\ti r^{3} |\sn_A z-e^\varrho (\curl\Omega)_A|&\les \lim_{\tau\rightarrow t_{\tmin}}\left| (\tau-u)^3(\sn z-e^\varrho\curl \Omega)_A(\tau)\right|\\
&+\int_{t_\tmin}^t \left(\tir^2|\sn \pi|+\tir^3 |\sn \chih\c \chih|+ \tir^3 |(z, \pi)\c \sn  \pi|\right)\\
&+\int_{t_\tmin}^t \tir^3\left( | \p \Omega \c (\bA+\tir^{-1})|+|\curl^2 \Omega|\right).
\end{split}
\end{align}
Hence, we bound
\begin{align*}
\|\tir&(\sn_A z-e^\varrho (\curl \Omega)_A)\|_{ L_\omega^p}\\
&\les\|\tir^{-2} |\min(u,0)|^3(|\sn z|+|\curl\Omega|)(0)\|_{L_\omega^p}+ \tir^{-1}\int_{t_\tmin}^t\|\tir\sn\pi\|_{L_\omega^p}dt'\\
&+\tir^{-1}\int_{t_\tmin}^t \|\tir\p \Omega, \tir^2 \curl^2 \Omega  \|_{L_\omega^p} dt'+\|\tir \sn \chih, \tir \sn \pi, \tir \p \Omega\|_{L_t^2 L_\omega^p}\|\bA\|_{L_t^2L_\omega^\infty}.
\end{align*}

Due to $\tir \les \tau_*$, by (\ref{9.13.3.19}),
\begin{equation}\label{lpcurl}
\|\tir\p \Omega\|_{L_\omega^p}\les \la^{-1-4\ep_0}.
\end{equation}
By (\ref{9.13.2.19}) and (\ref{lpcurl})
\begin{equation*}
\tir^{-1}\int_{t_{\tmin}}^t \|\tir\p \Omega, \tir^2 \curl^2 \Omega  \|_{L_\omega^p} dt'\les \la^{-1-4\ep_0};
\end{equation*}
and by using (\ref{flux_3}) and (\ref{lpcurl})
\begin{align*}
&\|\tir \sn \pi, \tir \p \Omega \|_{L_t^2 L_\omega^p}\les \la^{-\f12}.
\end{align*}

Hence by using the above three estimates,  in view of Lemma \ref{inii} (i) and (iii), and (\ref{a6.19}), we then obtain
\begin{equation}\label{9.14.7.19}
\|\ti r \sn z\|_{L_\omega^p} \les \tir^{-2}|\min(u,0)|^\frac{3}{2}\la^{-\f12}+\tir^{-1}\int_{t_\tmin}^t\|\tir \sn \pi\|_{L_\omega^p} dt'+\la^{-1+\ep_0}+\la^{-\f12+\ep_0}\| \tir\sn \chih\|_{L_t^2 L_\omega^p},
\end{equation}
which implies
\begin{align}\label{2psnz}
\|\ti r \sn z\|_{L_t^2 L_\omega^p(\widetilde{\D^+})} &\les \la^{-\f12}+\|\tir \sn \pi\|_{L_t^2 L_\omega^p(\widetilde{\D^+})}+\la^{-3\ep_0}\|\tir \sn \chih\|_{L_t^2 L_\omega^p(\widetilde{\D^+})} \nn \\
& \les \la^{-\f12}+\la^{-2\ep_0}\|\tir \sn \chih\|_{L_t^2 L_\omega^p(\widetilde{\D^+})},
\end{align}
where we used (\ref{hlm}) and (\ref{flux_3}).

By using (\ref{dchi1}) and Lemma \ref{hdgm1},
\begin{align}\label{9.14.5.19}
\begin{split}
\|\tir \sn \chih\|_{L_\omega^p}+\|\chih\|_{L_\omega^p}&\les \|\tir(\sn \pi,\tir^{-1}\pi, \sn z)\|_{L_\omega^p}+\|\tir\bA\c \pi\|_{L_\omega^p}\\
&\les \|\tir(\sn \pi,\tir^{-1}\pi, \sn z)\|_{L_\omega^p}+\la^{-1},
\end{split}
\end{align}
where we estimated by using (\ref{pric3})
\begin{align*}
\|\tir \bA\c \pi\|_{L_\omega^p}\les \|\tir^\f12 \bA\|_{L_\omega^{2p}}\|\tir^\f12 \pi\|_{L_\omega^{2p}}\les \la^{-1}.
\end{align*}
Taking $L_t^2$ norm of (\ref{9.14.5.19}) along  $C_u$ in $\widetilde{\D^+}$ yields
\begin{align*}
\|\tir\sn \chih\|_{L_t^2 L_\omega^p(C_u)}+\|\chih\|_{L_t^2 L_\omega^p(C_u)}\les \la^{-\f12}+\|\tir \sn z\|_{L_t^2 L_\omega^p(C_u)},
\end{align*}
where we used (\ref{flux_3}). Substituting the above estimate to the last term of (\ref{2psnz}) gives
\begin{equation*}
\|\tir\sn \chih, \tir \sn z\|_{L_t^2 L_\omega^p(C_u)}\les \la^{-\f12},
\end{equation*}
for all $C_u$ in $\widetilde{\D^+}$.

With the first estimate in the above,  we can use (\ref{9.14.7.19}) and  (\ref{flux_3}) to bound
\begin{align*}
\|\tir^\frac{3}{2}\sn z\|_{ L_\omega^p}&\les \la^{-\f12}+ \|\tir\sn\pi \|_{L_t^2 L_\omega^p(C_u)}+\tau_*^\f12.
\la^{-\f12+\ep_0}\|\tir \sn \chih\|_{L_t^2 L_\omega^p(C_u)}\les \la^{-\f12}.
\end{align*}
Thus we completed the proof of (\ref{10.4.2.19}), which are the estimates for $\sn \chih, \sn z$ in (\ref{ricp}) and (\ref{sna}).
\end{proof}
This also proved the second estimate in (\ref{comp2}) in view of (\ref{9.14.8.19}) and (\ref{pric2}) for $z$. The first estimate of (\ref{comp2}) follows as a consequence.

From (\ref{comp2}), in the region $u\le 0$, we can derive
\begin{equation*}
|z|\les (t+\fv)^{-\f12}\la^{-\f12}
\end{equation*}
and thus if $u<\frac{5t}{6}$
\begin{equation}\label{10.5.5.19}
|z|\les t^{-\f12}\la^{-\f12}.
\end{equation}
Hence, the estimate for $z$ in (\ref{ric1.1}) and (\ref{ric1})  hold in the region where $u\le  \frac{5t}{6}$, whereas  in the region that $u\ge \frac{5t}{6}$ we need to seek for a different approach. This is achieved in Section \ref{10.4.1.19} by considering the transport equations of $\sY$ and $\sn\sY$. Such analysis will be based on the  estimate of $\zeta$ in (\ref{ric1}).

Before proceeding to the control of $\zeta$,  we give a consequence of (\ref{sna}).
\begin{lemma}\label{larea}
Let $\varphi:=\log \sqrt{|\ga|}-\log \sqrt{|\ckk\ga|}$ on $S_{t, u}$ and  $0\le 1-\frac{2}{p}< s'-2$.  There hold on $\widetilde{\D^+}$ the estimates
\begin{equation*}
\|\tir^\f12\sn\varphi\|_{L_t^\infty L_\omega^p(C_u)}+\|\sn \varphi\|_{L_t^2 L_\omega^p(C_u)}\les \la^{-\f12}.
\end{equation*}
\end{lemma}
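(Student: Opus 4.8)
The plan is to recall from the null structure equations the transport equation satisfied by $L\varphi$ and use it to reduce the estimate for $\sn\varphi$ to the already-established bound for $\sn z$. From (\ref{9.30.9.19}) we have $L\varphi=\tr\chi-\frac{2}{\tir}$, and combining with the definition $\widetilde{\tr\chi}=\tr\chi+\Xi_4$ together with $z=\widetilde{\tr\chi}-\frac{2}{\tir}$, we can write $L\varphi=z-\Xi_4=z+\pi$ schematically, where $\pi$ is a contraction of $\ti\pi=f(\bg)\bp\bg$ with the null frame. Applying the commutator formula (\ref{cmu_2}), i.e. $[L,\sn_A]\varphi=-\chi_{AB}\sn_B\varphi$, we obtain the transport equation
\begin{equation*}
\sn_L\sn_A\varphi+\tr\chi\,\sn_A\varphi=\sn_A(z+\pi)+\chih\c\sn\varphi-\Big(\tr\chi-\frac{2}{\tir}\Big)\sn_A\varphi,
\end{equation*}
so that $\sn_L\sn_A\varphi+\frac{2}{\tir}\sn_A\varphi=\sn_A(z+\pi)+(\chih+z+\pi)\c\sn\varphi$, which is of the schematic form treated in Lemma \ref{tsp2} with $m=2$ and $G=\chih+z+\pi$.

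The key input is then that $\|G\|_{L_\omega^\infty L_t^1}=\|\chih,z,\pi\|_{L_\omega^\infty L_t^1}\les\la^{-3\ep_0}<\infty$, which follows from the preliminary pointwise-in-$\omega$ bounds (\ref{a6.19}) in $\bA\in\{\chih,z,\zeta,\pi\}$ together with Cauchy-Schwarz in $t$ over the interval of length $\les\tau_*\le\la^{1-8\ep_0}T$. Hence Lemma \ref{tsp2} gives
\begin{equation*}
\tir^2|\sn_A\varphi(t)|\les\lim_{\tau\to t_\tmin}(\tau-u)^2|\sn\varphi(\tau)|+\int_{t_\tmin}^t\tir^2\big(|\sn z|+|\sn\pi|+|(\chih+z+\pi)\c\sn\varphi|\big)\,dt'.
\end{equation*}
The initial-data term vanishes when $u\ge 0$ by the local expansion at the vertex (Lemma \ref{inii}(i), since $\sn\varphi\to 0$ as $t\to u$), and when $u<0$ it is controlled on $S_\fv$ by (\ref{4a_6}) in Proposition \ref{exten}, which gives $\|\fv^{\f12-\frac2{q_*}}\sn(\log\sqrt{|\ga|}-\log\sqrt{|\ckk\ga|})\|_{L^{q_*}(S_\fv)}\les\la^{-\f12}$; a rescaling-in-$\tir$ argument identical to (\ref{10.10.1.19}) converts this into $\|\tir^{-2}\fv^2\sn\varphi(0)\|_{L_t^\infty L_\omega^p}\les\la^{-\f12}$ and the analogous $L_t^2 L_\omega^p$ bound. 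For the integral term, I would divide both sides by $\tir^{3/2}$ (resp. $\tir^2$), use $\tir^{-1}\int_{t_\tmin}^t\tir\,|\sn z|,\tir\,|\sn\pi|\,dt'$ bounded via $\|\tir\sn z\|_{L_t^2L_\omega^p(C_u)}\les\la^{-\f12}$ from (\ref{10.4.2.19})/(\ref{sna}) and $\|\tir\sn\pi\|_{L_t^2L_\omega^p(C_u)}\les\la^{-\f12}$ from (\ref{flux_3}), and Hölder in $t$ for the cubic terms: $\|\tir(\chih+z+\pi)\c\sn\varphi\|_{L_t^1L_\omega^p}\les\|\chih,z,\pi\|_{L_t^2L_\omega^\infty}\|\tir\sn\varphi\|_{L_t^\infty L_\omega^p}$, whose coefficient $\les\la^{-\f12+\ep_0}\tau_*^{1/2}\les\la^{-3\ep_0}$ is small for $\la\ge\La$. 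This is the usual bootstrap-absorption step.

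Carrying these estimates out yields $\|\tir^{1/2}\sn\varphi\|_{L_t^\infty L_\omega^p(C_u)}\les\la^{-\f12}+\la^{-3\ep_0}\|\tir^{1/2}\sn\varphi\|_{L_t^\infty L_\omega^p(C_u)}$ and the analogous inequality with the $L_t^2L_\omega^p$ norm on the left-hand side; absorbing the small term on the right for $\la$ large gives both bounds in the statement. The only genuine subtlety—hence the step I expect to be the main obstacle—is the treatment of the initial data when $u<0$: one must verify that the $S_\fv$-foliation estimate (\ref{4a_6}) is compatible with the rescaled weight $\tir^{-2}\fv^2$ uniformly as $\fv\to 0$ and along the whole cone, exactly as in (\ref{10.10.1.19}); everything else is a direct application of Lemma \ref{tsp2} together with the flux bounds of Proposition \ref{7.13.2.19} and the preliminary estimates of Proposition \ref{p1}.
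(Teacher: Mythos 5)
Your proposal is correct and follows essentially the same route as the paper: differentiate $L\varphi=\tr\chi-\frac{2}{\tir}$ using the commutation formula (\ref{cmu_2}), apply the transport Lemma \ref{tsp2} with the cubic terms absorbed through the $G$-factor, bound the source by the $\sn z$ and $\sn\pi$ estimates of (\ref{sna}) and (\ref{flux_3}) (using $\tr\chi-\frac{2}{\tir}=z-\Xi_4$), and handle the $u<0$ initial data via (\ref{4a_6}) exactly as in (\ref{10.10.1.19}). The only slips are minor: the commutator actually produces the coefficient $\f12\tr\chi\,\sn_A\varphi$, not $\tr\chi\,\sn_A\varphi$, so the normalized transport equation carries $\frac{1}{\tir}$ (i.e.\ $m=1$ in Lemma \ref{tsp2}, as in the paper) rather than your $\frac{2}{\tir}$ — harmless here, since the absorption using $\tir'\le\tir$ and the data treatment work for either weight — and the vanishing of the vertex limit for $u\ge 0$ comes from (\ref{8.1.1}) in Lemma \ref{inii}(ii) (convergence of $\p_\omega\cir{\ga}$), since $\sn\varphi$ is not among the quantities listed in item (i).
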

\begin{proof}
By using (\ref{cmu2}) and $L\varphi=\tr\chi-\frac{2}{t-u}$ we derive that
\begin{equation*}
\sn_L \sn\varphi+\frac{1}{2}\tr\chi\sn\varphi=-\chih \c\sn\varphi+\sn (\tr\chi-\frac{2}{t-u}).
\end{equation*}

By using (\ref{a6.19}),  with $m=1$ we  apply  Lemma \ref{tsp2} to obtain
\begin{align*}
\tir |\sn \varphi|\les \lim_{\tau\rightarrow t_\tmin} (\tau-u)|\sn \varphi|(\tau)+\int_{t_\tmin}^t \tir |\sn (\tr\chi-\frac{2}{\tir})| dt'.
\end{align*}
For null cones $C_u$ with $u\ge 0$ we  use (\ref{8.1.1}) to see the limit on the right hand side vanishes;  and for null cones $C_u$ with $u<0$ we use (\ref{4a_6}). Hence for $S_{t,u}$ and $C_u$ contained in $\widetilde{\D^+}$,
\begin{align*}
&\|\tir^\f12 \sn \varphi\|_{L_\omega^p(S_{t,u})}\les \|\tir^{-\f12} \min(u, 0)\sn \varphi\|_{L_\omega^p(S_{t,u})}+\|\tir \sn (\tr\chi-\frac{2}{\tir})\|_{L_t^2 L_\omega^p(C_u\cap \widetilde{\D^+})},\\
&\|\sn \varphi\|_{L_t^2 L_\omega^p(C_u)}\les \|\tir^{-1}\min(u,0)\sn \varphi\|_{L_t^2 L_\omega^p(C_u)}+\|\tir \sn (\tr\chi-\frac{2}{\tir})\|_{L_t^2 L_\omega^p(C_u\cap \widetilde{\D^+})}.
\end{align*}
Similar to  (\ref{10.10.1.19}), we can bound the first terms on the right hand side by $\la^{-\f12}$. Consequently,
\begin{align*}
\|\tir^\f12 \sn \varphi\|_{L_\omega^p(S_{t,u})}+\|\sn \varphi\|_{L_t^2 L_\omega^p(C_u)}\les \la^{-\f12} +\|\tir \sn (\tr\chi-\frac{2}{\tir})\|_{L_t^2 L_\omega^p(C_u\cap \widetilde{\D^+})}.
\end{align*}
Since $\tr \chi - \frac{2}{t-u}= z -\Xi_4$, we may use (\ref{sna}) and (\ref{flux_3}) to obtain
$
\left\|\tir\sn (\tr\chi-\frac{2}{\tir})\right\|_{L_t^2 L_\omega^p(C_u)}\les \la^{-\f12}.
$
Therefore
\begin{align*}
\|\tir^\f12 \sn \varphi\|_{L_\omega^p(S_{t,u})}+\|\sn \varphi\|_{L_t^2 L_\omega^p(C_u)}\les\la^{-\f12}.
\end{align*}
Hence the proof of Lemma \ref{larea} is complete.
\end{proof}

\subsubsection{Estimates of $\zeta$.}\label{causal_step2}
\begin{proposition}\label{7.04.1.19}
Let $0\le 1-\frac{2}{p}<s'-2$. There holds for $C_u \cap \widetilde{\D^+}$ that
\begin{equation} \label{7.03.5.19}
\|\tir\sn \zeta\|_{L_t^2 L^p_\omega(C_u)}+\|\zeta\|_{L_t^2 L^p_\omega(C_u)}\les\la^{-\f12}.
\end{equation}
\end{proposition}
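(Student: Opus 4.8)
The plan is to exploit the Hodge system \eqref{6.17.7.19} for the $1$-form $\zeta-\zb$ on the spheres $S_{t,u}$, combined with the elliptic estimate of Lemma \ref{hdgm1}. Since the $L_t^2L_\omega^p$ bound on $\zeta$ alone is already contained in \eqref{pric1}, the substantive new point is the angular derivative $\sn\zeta$; both come out of the Hodge system at once. Recall from \eqref{3.19.1} that $\zb=-k_\bN$ is of type $\pi$, so \eqref{flux_3} gives $\|\zb,\tir\sn\zb\|_{L_t^2L_\omega^p(C_u)}\les\la^{-\f12}$; hence it suffices to control $\zeta-\zb$ and then add back $\zb$.

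First I would apply Lemma \ref{hdgm1} with $\D=\D_1$ to $F=\zeta-\zb$ on each $S_{t,u}\subset\widetilde{\D^+}$; since $\tir=t-u$ is constant on $S_{t,u}$ the weight passes freely through the sphere norms, and one obtains
\begin{equation*}
\|\tir\sn(\zeta-\zb)\|_{L_\omega^p(S_{t,u})}+\|\zeta-\zb\|_{L_\omega^p(S_{t,u})}\les\|\tir\,\sl{\div}(\zeta-\zb)\|_{L_\omega^p(S_{t,u})}+\|\tir\,\sl{\curl}(\zeta-\zb)\|_{L_\omega^p(S_{t,u})}.
\end{equation*}
By the second equation of \eqref{6.17.7.19}, $\sl{\curl}(\zeta-\zb)=-2\sl{\curl}\zb$ is of the schematic form $\sn\pi$, so after taking $L_t^2$ along $C_u$ its contribution is $\les\|\tir\sn\pi\|_{L_t^2L_\omega^p(C_u)}\les\la^{-\f12}$ by \eqref{flux_3}. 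By the first equation of \eqref{6.17.7.19}, $\sl{\div}(\zeta-\zb)=|\zb|^2-|\zeta|^2-(\zb-\zeta)\sn\varphi$, which is schematically $\bA\c\bA+\bA\c\sn\varphi$ with $\bA=\zeta,\zb$.

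For the quadratic term I would use \eqref{a6.19}, \eqref{pric2} and $\tir\les\tau_*$ to bound $\|\tir\,\bA\c\bA\|_{L_t^2L_\omega^p(C_u)}\les\|\bA\|_{L_t^2L_\omega^\infty}\|\tir\bA\|_{L_t^\infty L_\omega^p}\les\la^{-\f12+\ep_0}\tau_*^\f12\la^{-\f12}\les\la^{-\f12-3\ep_0}$, using $\tau_*^\f12\les\la^{\f12-4\ep_0}$; for the term $\bA\c\sn\varphi$ I would pair \eqref{a6.19} with the bound $\|\tir^\f12\sn\varphi\|_{L_t^\infty L_\omega^p(C_u)}\les\la^{-\f12}$ from Lemma \ref{larea}, giving $\|\tir\,\bA\c\sn\varphi\|_{L_t^2L_\omega^p(C_u)}\les\|\bA\|_{L_t^2L_\omega^\infty}\|\tir\sn\varphi\|_{L_t^\infty L_\omega^p}\les\la^{-\f12-3\ep_0}$. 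Collecting these bounds in the displayed inequality, taking $L_t^2$ along $C_u$, and finally writing $\zeta=(\zeta-\zb)+\zb$ and invoking \eqref{flux_3} for $\zb$ yields \eqref{7.03.5.19}. I do not anticipate a genuine obstruction here: the one point requiring care is that the $L_\omega^\infty$ factor in the two product estimates must be absorbed by the smallness of $\tau_*$ relative to $\la^{\ep_0}$, exactly the device already used repeatedly in the proof of Proposition \ref{p1}; the real work has been front-loaded into Lemma \ref{larea} (control of $\sn\varphi$) and the flux bound \eqref{flux_3}.
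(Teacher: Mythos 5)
Your proposal is correct and follows essentially the same route as the paper: the Hodge system \eqref{6.17.7.19} for $\zeta-\zb$, the elliptic estimate of Lemma \ref{hdgm1}, Lemma \ref{larea} for $\sn\varphi$, and \eqref{flux_3} for $\zb$, $\sn\zb$. The only (harmless) deviation is in the term $(\zb-\zeta)\sn\varphi$ and the quadratic term, where the paper uses \eqref{sobinf} plus \eqref{pric3} and absorbs $\|(\tir\sn)^{\le 1}(\zeta-\zb)\|_{L_\omega^p}\|\tir\sn\varphi\|_{L_\omega^p}$ into the left-hand side, while you instead invoke the bootstrap bound \eqref{a6.19} together with \eqref{pric2} and the smallness of $\tau_*^{1/2}\la^{-1/2}$; both yield the stated $\la^{-1/2}$ bound within the bootstrap scheme.
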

\begin{proof}
 By using Lemma \ref{hdgm1}, (\ref{6.17.7.19}), (\ref{pric3}) and (\ref{sobinf}) on $S=S_{t,u}$,
\begin{align*}
\|\tir \sn(\zeta-\zb)\|_{L^p_\omega(S)}&+\|\zeta-\zb\|_{L^p_\omega(S)}\\
&\les \|\zeta-\zb\|_{L^\infty(S)}\|\tir \sn \varphi\|_{L_\omega^p(S)}+\|\tir |\bA|^2\|_{L^p_\omega(S)}+\|\tir\sn \zb\|_{L^p_\omega(S)}\\
&\les \|(\tir\sn)\rp{\le 1}(\zeta-\zb)\|_{L^p_\omega(S)}\|\tir \sn \varphi\|_{L_\omega^p(S)}+\la^{-1}+\|\tir\sn \zb\|_{L_\omega^p(S)}.
\end{align*}
Thus by using the first estimate in Lemma \ref{larea} and (\ref{flux_3}), we can obtain
\begin{equation*}
\|\tir\sn (\zeta-\zb)\|_{L_t^2 L^p_\omega(C_u)}+\|\zeta-\zb\|_{L_t^2 L^p_\omega(C_u)}\les\la^{-\f12},
\end{equation*}
which implies (\ref{7.03.5.19}) by applying (\ref{flux_3}) again to $\zb=-k_{A\bN}$.
\end{proof}

  Now apply (\ref{sobinf}) to $\chih$, $z$ and $\zeta$, together with using the first inequality in (\ref{pric1}), (\ref{10.4.2.19}) and (\ref{7.03.5.19}). This leads to the estimate for any $C_u$ contained in $\widetilde{\D^+}$,
\begin{equation}\label{9.14.9.19}
\|\chih, z, \zeta\|_{L_t^2 L_\omega^\infty(C_u)}\les \la^{-\f12},
\end{equation}
which gives (\ref{ric3.18.1}) and improves (\ref{ba3.18.1}).

As a consequence of the estimates of $\chih$ and $z$ in (\ref{ric3.18.1}) together with (\ref{sna}), (\ref{8.0.3}) can be proved by using the transport equation (\ref{trscoord2}) and its angular derivative.  (See the proof in \cite[Section 5.5.2]{Wangrough}.) Thus the proof of  Proposition \ref{ricpr} is complete.

Next we provide the control of $\zeta$ in (\ref{ric1.1}) and (\ref{ric1}). The following result is actually stronger than stated therein, and will be crucially used in the proof of estimate of $\chih$ in (\ref{ric1.1}) and (\ref{ric1}).
\begin{proposition}
There holds on $\widetilde{\D^+}$ that
\begin{equation}\label{9.14.10.19}
\|\zeta\|_{L_t^2 L^\infty_x(\widetilde{\D^+})}\les \la^{-\f12-4\ep_0}.
\end{equation}
\end{proposition}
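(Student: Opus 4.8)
The plan is to upgrade the $L^2_t L^p_\omega$ control of $\zeta$ from Proposition~\ref{7.04.1.19} to an $L^2_t L^\infty_x$ bound on the full region $\widetilde{\D^+}$ by feeding the already-established estimates into the Hodge system (\ref{6.17.7.19}) together with the elliptic and trace machinery of Lemma~\ref{hdgm1}, Proposition~\ref{cz} and Proposition~\ref{cz.2}. First I would record that, since $\zeta = \sn\log\bb + k_{A\bN}$ by (\ref{3.19.1}) and $k$ is part of $\ti\pi$, it suffices to control $\zeta - \zb = \zeta + k_{A\bN}$ in $L^2_t L^\infty_x$, because the remaining piece $\zb = -k_{A\bN}$ is covered by the metric-component flux estimate (\ref{6.18.5.19}) combined with a Sobolev embedding $L^q_x(\Sigma_t) \hookrightarrow L^\infty$ in the appropriate sense, or more directly by the $L^2_t L^\infty$ bound on $\pi$ already available in the rescaled setting via (\ref{pi.2}) and Proposition~\ref{7.13.2.19}. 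Then I would apply Proposition~\ref{cz.2} to the Hodge system (\ref{6.17.7.19}): the right-hand sides are $\sl{\div}(\zeta-\zb) = |\zb|^2 - |\zeta|^2 - (\zb - \zeta)\sn\varphi$ and $\sl{\curl}(\zeta - \zb) = -2\sl{\curl}\zb$, so the source terms are a scalar $e$ of quadratic type in $\bA$ plus the term $(\zb - \zeta)\sn\varphi$, together with $G = \zb$ (which is of the form $\bN^\mu \ti G_\mu$ required by Proposition~\ref{cz.2}). The conclusion (\ref{cz2}) then gives, pointwise in $(t,u)$,
\begin{equation*}
\|\zeta - \zb\|_{L^\infty(S_{t,u})} \les \|\mu^\delta P_\mu \ti\pi\|_{l^c_\mu L^\infty(S_{t,u})} + \|\ti\pi\|_{L^\infty(S_{t,u})} + \tir^{1-\frac2q}\|e\|_{L^q(S_{t,u})}.
\end{equation*}

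\textbf{Key steps in order.} (1) Reduce to estimating $\zeta - \zb$, pushing $\zb$ into the $\ti\pi$-controlled part. (2) Estimate the error term $e$: the quadratic pieces $|\zeta|^2, |\zb|^2$ are handled by the $L^2_t L^\infty_\omega$ bound (\ref{9.14.9.19}) on $\bA$ paired with the $L^p_\omega$ bounds (\ref{pric2})--(\ref{pric3}), and $\tir^{1-2/q}\|(\zb-\zeta)\sn\varphi\|_{L^q}$ is controlled using the $L^\infty(S_{t,u})$ bound of $\zeta-\zb$ (fed back by a small-constant absorption, exactly the scheme used inside the proof of Proposition~\ref{7.04.1.19}) together with the $L^2_t L^p_\omega$ bound on $\sn\varphi$ from Lemma~\ref{larea}. (3) Estimate the main terms: $\|\ti\pi\|_{L^\infty(S_{t,u})}$ and $\|\mu^\delta P_\mu \ti\pi\|_{l^c_\mu L^\infty(S_{t,u})}$ are directly the rescaled Littlewood--Paley quantities appearing in (\ref{pi.2}), which after taking $L^2_t$ norms produce the desired $\la^{-1/2 - 4\ep_0}$ decay. (4) Take $L^2_t$ norms of the pointwise-in-$S_{t,u}$ inequality along $C_u$ and then take the supremum over $u$, using that $\tir \les \la^{1-8\ep_0}T$ in $\widetilde{\D^+}$ so the weight $\tir^{1-2/q}$ is harmless, to arrive at (\ref{9.14.10.19}); restricting to $\D^+$ gives the $\zeta$-part of (\ref{ric1}) and (\ref{ric1.1}) as an immediate corollary.

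\textbf{Main obstacle.} The delicate point is the term $(\zb - \zeta)\sn\varphi$ in the divergence equation: it is not lower order a priori, since $\sn\varphi$ carries essentially the same regularity as $\tr\chi - 2/\tir$. The resolution is the bound on $\sn\varphi$ in Lemma~\ref{larea}, which was itself obtained using (\ref{sna}); combined with the $L^2_t L^\infty_\omega$ smallness of $\zeta - \zb$ from (\ref{9.14.9.19}) one gets a genuinely small coefficient, allowing either a direct estimate or a bootstrap absorption against the left-hand side. A secondary technical issue is that Proposition~\ref{cz.2} requires the auxiliary assumption (\ref{aux_1}), which at this stage of the argument has already been improved (the remark following Proposition~\ref{p1} upgrades the relevant pieces), so one must be careful to invoke the improved, non-circular versions. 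Everything else is a routine assembly of the flux estimates of Proposition~\ref{7.13.2.19}, the Sobolev/trace inequalities (\ref{sob})--(\ref{sobinf}), and the preliminary bounds of Proposition~\ref{p1}.
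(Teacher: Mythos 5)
Your proposal is correct and follows essentially the same route as the paper: apply Proposition \ref{cz.2} to the Hodge system (\ref{6.17.7.19}) for $\zeta-\zb$, control the $(\zeta-\zb)\c\sn\varphi$ source via Lemma \ref{larea} with a small-constant absorption of $\|\zeta\|_{L_t^2L_x^\infty}$ into the left-hand side, and bound the remaining terms by (\ref{pi.2}) and (\ref{pric3}). The only bookkeeping point to fix is the norm pairing in the absorption: one places the weighted $\sn\varphi$ bound in $L_t^\infty$ (namely $\|\tir^{\f12}\sn\varphi\|_{L_t^\infty L_\omega^p}\les \la^{-\f12}$, hence $\|\tir\sn\varphi\|_{L^\infty L_\omega^p}\les \la^{-4\ep_0}$) against $\|\zeta-\zb\|_{L_t^2 L_x^\infty}$, rather than pairing two $L_t^2$-in-time bounds as your steps (2) and the obstacle paragraph suggest.
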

\begin{proof}
Applying Proposition \ref{cz.2} to (\ref{6.17.7.19}) implies
\begin{align}\label{10.5.1.19}
\|\zeta\|_{L^\infty(S_{t,u})}&\les \|\mu^{\delta}P_\mu \ti\pi\|_{l_\mu^c L^\infty(S_{t,u})}
+\|\ti\pi\|_{L^\infty(S_{t,u})}+\tir^{1-\frac{2}{p}}\|(\zeta-\zb)\c\sn\varphi,\bA\c \bA\|_{L^p(S_{t,u})}
\end{align}
with $0<\delta<s'-2$  sufficiently small and  $0<1-\frac{2}{p}<s'-2$.

By using the first estimate in Lemma \ref{larea} and (\ref{pi.2})
\begin{align*}
\|\tir(\zeta-\zb)\c\sn\varphi\|_{L_t^2 L_u^\infty L^p_\omega(\widetilde{\D^+})}&\les \|\tir \sn \varphi\|_{L^\infty L^p_\omega(\widetilde{\D^+})}\| (\zeta-\zb)\|_{L_t^2 L_x^\infty(\widetilde{\D^+})}\\
&\les\la^{-4\ep_0} (\|\zeta\|_{L_t^2L_x^\infty(\widetilde{\D^+})}+\|\zb\|_{L_t^2 L_x^\infty(\widetilde{\D^+})})\\
&\les \la^{-4\ep_0}\|\zeta\|_{L_t^2 L_x^\infty(\widetilde{\D^+})}+\la^{-\f12-8\ep_0}.
\end{align*}
Substituting the above inequality into (\ref{10.5.1.19}) and using (\ref{pi.2}) and  (\ref{pric3}), we can obtain (\ref{9.14.10.19}) after taking $L_t^2 L_u^\infty$ norm.
\end{proof}


\subsection{Improved estimates of $z$ and  $\chih$}\label{10.4.1.19}
In order to complete the sets of estimates (\ref{ric1}) and (\ref{ric1.1}), due to $\tr\chi-\frac{2}{\tir}=z+\pi$, it only remains to provide the estimates of $z$ and $\chih$ therein. We first focus on deriving the improved estimates for $z$ below, for which we construct the quantity of $\sY$ whose angular derivative exhibits favourable structures. With the help of the control of $\sY$, we can derive the estimate on $\chih$ by applying estimate (\ref{cz0}) to a normalized equation based on (\ref{dchi}).

\begin{proposition}\label{10.17.1.19}
Let $0\le 1-\frac{2}{p}<s'-2$. There hold the following estimates, 
\begin{align}
&\|z\|_{L_t^2 L_u^\infty L_\omega^p(\D^+)}+\|\tir \sn \sY\|_{L_t^2 L_u^\infty L_\omega^p(\D^+)}\les \la^{-\f12-4\ep_0}\label{9.13.6.19}\\
&\|z\|_{L_t^2 L_x^\infty(\D^+)}\les \la^{-\f12-4\ep_0} \label{9.16.2.19}\\
&\|\tir \sn \sY\|_{L_t^{\frac{q}{2}}L_u^\infty L_\omega^p(\widetilde{\D^+})}\les \la^{\frac{2}{q}-1-4\ep_0(\frac{4}{q}-1)}, \quad 2<q<4\label{11.2.1.19}\\
&\|\chih\|_{L_t^2 L_x^\infty(\D^+)}\les\la^{-\f12-4\ep_0}\label{9.16.3.19}\\
&\| z, \chih\|_{L_t^{\frac{q}{2}}L_x^\infty(\widetilde{\D^+})}\les \la^{\frac{2}{q}-1-4\ep_0(\frac{4}{q}-1)}, \, 2<q<4.\label{10.5.2.19}
\end{align}
\end{proposition}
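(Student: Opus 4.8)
The plan is to obtain all five estimates from the behaviour of the renormalized quantity $\sY=\bb\widetilde{\tr\chi}-\frac{2}{\tir}$ of (\ref{11.3.1.19}), via the algebraic identity $z=\bb^{-1}\sY+2\mho$, $\mho=\frac{\bb^{-1}-1}{\tir}$, whose norms are already controlled by (\ref{9.14.1.19})--(\ref{9.14.2.19}) and with $\bb^{-1}\approx 1$ by Lemma \ref{6.17.1}. First I would derive the transport equation for $\sY$ from $L\bb=-\bb k_{\bN\bN}$ in (\ref{lb}), the Raychaudhuri-type equation (\ref{9.15.1.19}) for $\widetilde{\tr\chi}$, and $L\tir=1$; a direct computation in which the $\tir^{-2}$ contributions cancel gives, schematically,
\begin{equation*}
L\sY+\frac{2}{\tir}\sY=\bb\widetilde{\tr\chi}\,(\Xi_4-2k_{\bN\bN})+\bb e^\varrho\curl\Omega_\bN+\Q,
\end{equation*}
where $\Q$ collects terms quadratic in $\{\chih,z,\zeta,\pi\}$, the product $z\,\sY$, and the already-controlled quantity $\mho$. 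The crucial point is Proposition \ref{6.14.2.19}: since $k_{\bN\bN}=\f12(\Xi_L-L(\log c+\varrho)-2L(v)_\bN)$ and $\Xi_4=\Xi_L$, the dangerous factor satisfies $\Xi_4-2k_{\bN\bN}=L(\log c+\varrho)+2L(v)_\bN$, i.e. it is itself a null derivative of metric components. Integrating along the null geodesics of $C_u$ by Lemma \ref{tsp2}, with the vanishing data (\ref{10.2.1.19}) at $t_\tmin$ for cones issuing from $\Ga^+$ and the initial estimates of Proposition \ref{exten} (together with Lemma \ref{larea} and Proposition \ref{p1}) for cones issuing from the surfaces $S_\fv$, bounding the $\curl\Omega_\bN$ term by (\ref{9.13.3.19}) and $\Q$ by (\ref{ric3.18.1}), I would obtain the $L_\omega^p$ and $L_\omega^\infty$ control of $\sY$; the gain $\la^{-4\ep_0}$ on $\D^+$ (respectively the power $\la^{\frac{2}{q}-1-4\ep_0(\frac{4}{q}-1)}$ on $\widetilde{\D^+}$) comes from the Hardy--Littlewood maximal inequality (\ref{hlm}), the short life-span $\tau_*\les\la^{1-8\ep_0}T$, and the gains already present in (\ref{9.14.2.19}) and (\ref{9.14.10.19}).

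The heart of the proof is the control of $\sn\sY$. Differentiating the $\sY$-equation with $[\sn_A,L]f=\chi_{AB}\sn_B f$ and using the decomposition (\ref{9.12.2.19}), which turns $\sn_A(e^\varrho\curl\Omega_\bN)$ into $\sn_L(e^\varrho\curl\Omega)_A$ plus the genuinely second-order term $e^\varrho{e_A}_i\Pi^{ij}\tensor{\ep}{_{jm}^l}(\curl^2\Omega)_l\bN^m$ plus lower-order terms, I arrive at the transport equation (\ref{11.7.2.19}) for $\sn_A\sY-\bb e^\varrho(\curl\Omega)_A$, whose only remaining singularity is $\frac{2}{\tir}\sn(\Xi_L-2k_{\bN\bN})$. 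Using $\Xi_L-2k_{\bN\bN}=L(\log c+\varrho)+2L(v)_\bN$ and commuting $\sn$ past $L$, I would establish the trace decomposition $2\sn(\Xi_L-2k_{\bN\bN})=L\pi_1$ plus lower-order terms, with $\pi_1=2\sn(\log c+\varrho)+4\sn v^i\bN^j\bg_{ij}$, so that the modified quantity $F:=\sn_A\sY-\bb e^\varrho(\curl\Omega)_A+\tir^{-1}\pi_1$ satisfies a transport equation free of the singular higher-order term,
\begin{equation*}
\sn_L F+\frac{2}{\tir}F=\bb e^\varrho\,{e_A}_i\,\Pi^{ij}\tensor{\ep}{_{jm}^l}(\curl^2\Omega)_l\bN^m+W,
\end{equation*}
with $W$ built only from $\bA\c\sn z$, $(z,\pi)\c(\sn\pi+\cdots)$, $\sn\chih\c\chih$, and angular/null derivatives of $\ti\pi$. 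Applying Lemma \ref{tsp2}, then Proposition \ref{7.13.2.19} for $\|\tir^2\curl^2\Omega\|$ and the $\D_*\ti\pi$ fluxes, Proposition \ref{p1} with (\ref{10.4.2.19}) and (\ref{ric3.18.1}) for the terms of $W$, and (\ref{9.13.3.19}) to bound $\tir\,\curl\Omega$ and $\pi_1$ — using the vanishing/initial data of Lemma \ref{inii} and Proposition \ref{exten} for $F$ at $t_\tmin$ — I would obtain $\|\tir\sn\sY\|_{L_t^2L_u^\infty L_\omega^p(\D^+)}\les\la^{-\f12-4\ep_0}$ and its $L_t^{q/2}$ analogue on $\widetilde{\D^+}$. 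Together with the $\sY$-estimates this proves (\ref{9.13.6.19}) and (\ref{11.2.1.19}); and (\ref{9.16.2.19}) follows from the Sobolev inequality (\ref{sobinf}) applied to $z$ after writing $\sn z=\bb^{-1}\sn\sY+\sn(\bb^{-1})\,\sY+2\sn\mho$ and invoking the $\bb$- and $\mho$-estimates of Proposition \ref{p1}.

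For $\chih$ I would normalize the Codazzi equation (\ref{dchi1}). There the term $\f12(\sn z-\sn\Xi_4)=\sn(\f12 z)$ carries a bare derivative which, when fed into the elliptic estimate (\ref{cz0}), would demand $\sn^2 z$; instead I substitute $z=\bb^{-1}\sY+2\mho$ and the decomposition of $\sn\sY$ above, so that $\sl{\div}\chih$ takes the form $\sn G+e$ with $G$ built from the scalar $\f12 z$ and $\pi$ and with $e$ collecting $\tir^{-1}\pi$, $\bA\c\pi$, the $\curl^2\Omega$-content of $\sn\sY$, and lower-order terms. Proposition \ref{cz} and Proposition \ref{cz.2} then bound $\|\chih\|_{L^\infty(S_{t,u})}$ by $\tir^{1-2/q}(\|\sn z,\sn\pi\|_{L^q(S_{t,u})}+\|e\|_{L^q(S_{t,u})})$ plus Littlewood--Paley contributions, which after taking $L_t^2$ on $\D^+$ (respectively $L_t^{q/2}$ on $\widetilde{\D^+}$) are controlled by the fluxes (\ref{sna}), (\ref{6.18.5.19}), the double-curl flux (\ref{9.11.6.19}) of Proposition \ref{9.30.15.19}, and (\ref{hlm}); this yields (\ref{9.16.3.19}) and the $\chih$ part of (\ref{10.5.2.19}), while the $z$ part of (\ref{10.5.2.19}) follows from (\ref{sobinf}) and (\ref{11.2.1.19}) as for (\ref{9.16.2.19}). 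I expect the main obstacle to be the two-stage cancellation — first the identity $\Xi_4-2k_{\bN\bN}=L(\log c+\varrho)+2L(v)_\bN$ to tame the $\sY$-equation, then the trace decomposition $2\sn(\Xi_L-2k_{\bN\bN})=L\pi_1$ (plus lower-order terms) and the insertion of $\tir^{-1}\pi_1$ to tame the $\sn\sY$-equation — together with the bookkeeping that every residual, in particular the second-order term carrying $\curl^2\Omega$ and all commutators generated in moving $\sn$ past $L$, is absorbed by the fluxes of Section \ref{c_flux}, while extracting uniformly the sharp power $\la^{\frac{2}{q}-1-4\ep_0(\frac{4}{q}-1)}$ over $\widetilde{\D^+}$.
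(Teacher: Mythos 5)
Your skeleton is essentially the paper's: you renormalize with $\bb$ to form $\sY$, use Proposition \ref{6.14.2.19} to rewrite $\Xi_4-2k_{\bN\bN}$ as null derivatives of the metric, use (\ref{9.12.2.19}) to convert $\sn(e^\varrho\curl\Omega_\bN)$ into $\sn_L(e^\varrho\curl\Omega)_A$ plus a $\curl^2\Omega$ term, subtract $\bb e^\varrho(\curl\Omega)_A$ and $\tir^{-1}\pi_1$ to remove the singular higher-order term, integrate with Lemma \ref{tsp2}, and then recover $z$ from $z=\bb^{-1}\sY+2\mho$ and $\chih$ from a normalized Codazzi equation. These are exactly the two cancellations the paper exploits. (Minor deviations that are fine: the paper gets the first estimate in (\ref{9.13.6.19}) by integrating (\ref{lz}) for $z$ directly rather than through the $\sY$-equation, and it proves (\ref{11.2.1.19}) not by rerunning the transport argument on $\widetilde{\D^+}$ but by the direct product-rule bound $\|\tir\sn\sY\|_{L^p_\omega}\les t^{-\f12}\|\tir^{\frac32}\sn z\|_{L^p_\omega}+\|\zeta\|_{L^p_\omega}+\|\pi\|_{L^p_\omega}+\la^{-1}$ in the region $u\le\frac{5t}{6}$; your route could work but you must track the non-vanishing data of the renormalized quantity on $S_\fv$, since only $\sY,\sn\sY$ vanish there, not $\curl\Omega$ or $\tir^{-1}\pi_1$.)

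The genuine gap is in how you close (\ref{9.16.2.19}) and especially (\ref{9.16.3.19}), (\ref{10.5.2.19}). For $\chih$ you claim the $L_t^2$ (resp. $L_t^{q/2}$) bound follows from $\tir^{1-2/q}(\|\sn z,\sn\pi\|_{L^q(S_{t,u})}+\|e\|_{L^q})$ controlled by the fluxes (\ref{sna}), (\ref{6.18.5.19}) and (\ref{9.11.6.19}); but those are per-cone $L_t^2L_\omega^p(C_u)$ bounds of size only $\la^{-\f12}$, so they can neither produce the gain $\la^{-\f12-4\ep_0}$ nor the $L_t^2L_x^\infty(\D^+)$ structure (which needs a sup over $u$ inside the $t$-integral). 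The whole point of having proved the second estimate in (\ref{9.13.6.19}) is that the correct input is $\|\tir\sn\sY\|_{L_t^2L_u^\infty L_\omega^p(\D^+)}\les\la^{-\f12-4\ep_0}$: the paper writes $\sl{\div}(\bb\chih)=\f12\sn\sY+\bb\sn\pi+\tir^{-1}(\bb\pi+\sn\bb)+\bb\bA\c\bA$ and applies (\ref{cz0}) and Proposition \ref{cz.2}, so that $\|\chih\|_{L^\infty(S_{t,u})}\les\|\tir^{1-\frac2p}\sn\sY\|_{L^p(S_{t,u})}+\|\mu^{0+}P_\mu\ti\pi\|_{l_\mu^2L^\infty}+\|\pi,\zeta\|_{L^\infty}+\la^{-1}$, and the gain comes from (\ref{9.13.6.19}), (\ref{11.2.1.19}), (\ref{pi.2}) and (\ref{9.14.10.19}), not from cone fluxes; also your remark that (\ref{cz0}) "would demand $\sn^2 z$" is off — with scalar $G=\f12 z$ it demands only $\sn z$, whose defect is the missing $4\ep_0$ and missing $u$-uniformity, which is why $\sY$ is used. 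Similarly, for (\ref{9.16.2.19}) you apply Sobolev to $z$ and bound $\sn\mho$ by "the $\mho$-estimates of Proposition \ref{p1}", but (\ref{9.14.1.19}) gives only $\|\tir\D_*\mho\|_{L_t^2L_\omega^p(C_u)}\les\la^{-\f12}$, again per cone and without the $4\ep_0$; you must either apply (\ref{sobinf}) to $\sY$ itself and then use $z=\bb^{-1}\sY+2\mho$ together with the $L^\infty$ bound (\ref{9.14.2.19}) (the paper's route), or replace the citation by the identity $\tir\sn\mho=-\bb^{-1}\sn\log\bb=-\bb^{-1}(\zeta-k_{\c\bN})$ combined with (\ref{9.14.10.19}) and (\ref{pi.2}). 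As written, your chain only yields $\la^{-\f12}$ for these three estimates.
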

\begin{proof}
We first prove (\ref{9.13.6.19}). By integrating the transport equation (\ref{lz}) along null geodesics,
we have from Lemma \ref{tsp2} that
\begin{align}\label{tirz}
\begin{split}
\tir^2|z(t)|&\les \left|\lim_{\tau\rightarrow t_{\tmin}} (\tau-u)^2z(\tau)\right|\\
&+|\int_{t_\tmin}^t \tir^2 \left( |\bA\c\bA| + |\tir^{-1}\pi|+  |\curl \Omega| \right)dt'|.
\end{split}
\end{align}
For null cones $C_u$ with $u\ge 0$, the limit term  vanishes  due to  Lemma \ref{inii} (i).  Thus we can bound
\begin{align*}
|z(t)|&\les  \tir^{-1}\int_{t_\tmin}^t \tir \big( |\bA\c\bA| + |\curl \Omega|+\tir^{-1}|\pi|\big)dt'.
\end{align*}
Note that by using   (\ref{pric3})
\begin{equation*}
\|\tir \bA\c \bA\|_{ L_\omega^p}\les\|\tir^\f12 \bA\|_{L_t^\infty L_\omega^{2p}}^2\les \la^{-1}.
\end{equation*}
Using the above estimate and (\ref{lpcurl}), taking $L^p_\omega$ norm of $z$ gives
\begin{align}
\|z\|_{L_\omega^p}& \les  \|\tir \curl \Omega\|_{L_t^\infty L_\omega^p}+ \|\tir \bA\c\bA\|_{L_t^\infty L_\omega^p}+\tir^{-1}\int_u^t \|\pi\|_{L_\omega^p}dt'\nn\\
&\les \la^{-1}+\tir^{-1}\int_u^t \|\pi\|_{L_\omega^p}dt'.\label{9.14.3.19}
\end{align}

It follows by using (\ref{hlm}) and (\ref{pi.2}) that
\begin{align*}
(\int_0^{\tau_*}\sup_{0\le u'\le t}\|z\|_{L_\omega^p(S_{t,u'})}^2 dt)^\f12\les \la^{-\f12-4\ep_0}+\|\pi\|_{L_t^2 L^\infty(\D^+)}\les \la^{-\f12-4\ep_0}.
\end{align*}
Thus we obtained the first estimate in (\ref{9.13.6.19}).

Recall from the line of (\ref{9.14.6.19}), the first term of  $\sn \pi$ on the right hand side is a higher order linear term, which comes from differentiating
$(\Xi_4-k_{\bN\bN})\widetilde{\tr\chi}$ in (\ref{lz}). It becomes a singular term in particular in the region allowing $\tir$ to be close to $0$,  since $
\sn \pi$ is not sufficiently smooth. This is the main hurdle for us to achieve $\|\tir \sn z\|_{L_t^2 L_u^\infty L_\omega^p(\D^+)}$.
Our strategy is to construct the quantity $\sY$, whose major part is $z$. We will see the transport equation of $\sn \sY$ does not contain such higher order linear term, which gets around
of the potential singularity.

We recall from (\ref{7.04.8.19}) that
\begin{equation*}
\Xi_4 -2 k_{\bN\bN}=L(\log c+\varrho)+2 c^{-2} L(v^i) \bN^j \delta_{ij}.
\end{equation*}
By using (\ref{cmu2}),
\begin{align*}
\sn(\Xi_4-2k_{\bN\bN})&=\sn L(\log c+\varrho)+2\sn( L (v^i) \bN^j c^{-2}\delta_{ij})\\
&=\sn_L\sn(\log c+\varrho)+2\sn_L\sn (v^i)\bN^j c^{-2} \delta_{ij}+\chi\c \sn (\log c+\varrho)\\
&+2\chi\c\sn (v^i)\bN^j c^{-2}\delta_{ij}+2L(v^i) \sn(\bN^j g_{ij})\\
&=\sn_L\{\sn(\log c+\varrho)+2\sn (v^i) \bN^j c^{-2} \delta_{ij}\}-2\sn_L(\bN^j c^{-2} \delta_{ij}) \sn(v^i)\\
&+\chi\c (\sn (\log c+\varrho)+2\sn (v^i)\bN^j c^{-2}\delta_{ij})+2L(v^i) \sn(\bN^j g_{ij}).
\end{align*}
By using Proposition \ref{6.7con}, we can derive symbolically
\begin{equation}\label{10.5.3.19}
\sn_L(\bN^j c^{-2} \delta_{ij}) \sn(v^i), L(v^i) \sn(\bN^j g_{ij})=\pi\c \pi+\chi\c\pi,
\end{equation}
where $\pi\c \bg$ can be also regarded  of the type of $\pi$.
Indeed, from (\ref{6.29.5.19}) and $2\bN=L-\Lb$, we compute
 $$2\l\bd_L \bN, e_A\r=\l \bd_L L, e_A\r-\l\bd_L \Lb, e_A\r=-2\zb_A=2k_{A\bN}.$$
   Also by denoting  ${}\rp{\bg}\Ga\c X$ as $\pi$, we can write
   $
  \sn_L(\bN^j c^{-2} \delta_{ij}) \sn(v^i)=\pi\c \pi \c\bg$.

 The symbolic formula for the other term in (\ref{10.5.3.19}) can be obtained by noting $\sn_A \bN^i=\theta_{AB}e_B^i=(\chi_{AB}+k_{AB})e_B^i$ due to the first
 identity in (\ref{3.19.1}).

For convenience, we denote $\pi_1=\sn (\log c+\varrho)+2\sn (v^i) \bN^j c^{-2}\delta_{ij}$,  which can be symbolically regarded as $\pi$.
Thus
\begin{equation}\label{9.15.2.19}
\sn(\Xi_4-2k_{\bN\bN})=\sn_L(\pi_1)+\chi\c \pi+\pi\c \pi.
\end{equation}

In view of
\begin{equation}\label{9.15.4.19}
\chi_{AB}=\f12 \delta_{AB}\tr\chi+\chih_{AB}=\f12 \delta_{AB} (\widetilde{\tr\chi}-\Xi_4)+\chih_{AB},
\end{equation}
and in view of the definition that $z=\widetilde{\tr\chi}-\frac{2}{\tir}$, we derive from (\ref{9.15.2.19}) the symbolic form
\begin{equation}\label{9.15.3.19}
\sn(\Xi_4-2k_{\bN\bN})=\sn_L(\pi_1)+(\bA+\tir^{-1})\c \pi.
\end{equation}

On the other hand, we multiply (\ref{9.15.1.19}) by $\bb$ and apply (\ref{lb})
\begin{equation*}
L (\bb \widetilde{\tr\chi})+\f12 \bb (\widetilde{\tr\chi})^2=\frac{2}{\tir}(\Xi_4-2k_{\bN\bN})+(\Xi_4-2k_{\bN\bN})(\bb \widetilde{\tr\chi}-\frac{2}{\tir})+\bb\c G,
\end{equation*}
with
\begin{equation}\label{9.15.6.19}
G=\exp\varrho \curl \Omega_\bN-|\chih|^2+\pi\c \pi,
\end{equation}
where the last term of $G$ is a symbolic representation.

We differentiate the above transport equation with the help of (\ref{cmu2})
\begin{align}
\sn_L \sn (\bb \widetilde{\tr\chi})+\widetilde{\tr\chi}\sn (\bb\widetilde{\tr\chi})&=-\chi\c \sn(\bb \widetilde{\tr\chi})-\f12 \sn (\bb^{-1})(\bb\widetilde{\tr\chi})^2+\frac{2}{\tir}\sn(\Xi_4-2 k_{\bN\bN})\nn\\
&+\sn\{(\Xi_4-2k_{\bN\bN})(\bb \widetilde{\tr\chi}-\frac{2}{\tir})+\bb\c G\}.\label{10.26.2.19}
\end{align}
By using (\ref{9.15.4.19})
\begin{align*}
\sn_L  \sn (\bb \widetilde{\tr\chi})&+\frac{3}{2}\widetilde{\tr\chi}\sn (\bb\widetilde{\tr\chi})-\frac{2}{\tir}\sn(\Xi_4-2 k_{\bN\bN})-\bb \sn G\\
&=(-\chih+\pi)\sn(\bb\widetilde{\tr\chi})+\sn\pi(\bb \widetilde{\tr\chi}-\frac{2}{\tir})+\sn \bb (G+( \widetilde{\tr\chi})^2),
\end{align*}
where the right hand side is a symbolic expression.
In view of $z=\widetilde{\tr\chi}-\frac{2}{\tir}$, applying (\ref{9.15.3.19}) to the above identity gives
\begin{align*}
&\sn_L (\sn(\bb\widetilde{\tr\chi})-\frac{2}{\tir}\pi_1)+\frac{3}{2}\widetilde{\tr\chi}(\sn(\bb\widetilde{\tr\chi})-\frac{2}{\tir}\pi_1)\\
&=\bA(\sn(\bb\widetilde{\tr\chi})-\frac{2}{\tir}\pi_1)+\tir^{-1}\pi(\bA+\tir^{-1})+\sn\pi(\bb \widetilde{\tr\chi}-\frac{2}{\tir})\\
&+\sn \bb (G+(\widetilde{\tr\chi})^2)+\bb \sn G.
\end{align*}
For the term $\bb\sn G$, with $\ti G=-|\chih|^2+\pi\c \pi$, we substitute the trace decomposition in (\ref{9.16.1.19}) to treat
the leading term from $G$ in (\ref{9.15.6.19}).
This leads to
\begin{equation}\label{9.15.5.19}
\begin{split}
&\sn_L (\sn_A\sY-\bb e^\varrho(\curl \Omega)_A-\frac{2}{\tir} \pi_1)+\frac{3}{2}\widetilde{\tr\chi}((\sn_A \sY-\bb  e^\varrho(\curl \Omega)_A-\frac{2}{\tir}\pi_1)\\
&=\bA (\sn \sY-\bb  e^\varrho(\curl \Omega)_A-\frac{2}{\tir}\pi_1)+\bb(\tir^{-1}+\bA) e^\varrho \curl \Omega+\sn\pi\c \sY\\
&+\tir^{-1}\pi(\bA+\tir^{-1})+\sn \bb (\ti G+(\widetilde{\tr\chi})^2)+\bb \sn \ti G\\
&+\bb e^{\varrho} \big({e_A}_i \Pi^{ij} \tensor{\ep}{_{jm}^l}(\curl^2\Omega)_l\bN^m+(\pi+\chi) \p \Omega\c X \big).
\end{split}
\end{equation}

We note in general for $C_u \cap \widetilde{\D^+}$,   combining (\ref{ric3.18.1}) and (\ref{pi.2})  gives
\begin{equation}\label{10.5.4.19}
\|\bA\|_{L_t^2 L^\infty_\omega(C_u)}\les \la^{-\f12}.
\end{equation}
By using Lemma \ref{inii} (i), when $u\ge 0$, $\sn \sY\rightarrow 0$ as $t\rightarrow u$,  $\tir \pi_1\rightarrow 0$ and $\tir |\curl \Omega|\rightarrow 0$.
Due to  the fact that $\|\bA\|_{L_\omega^\infty L_t^1}\les 1$ derived from (\ref{10.5.4.19}), we apply  Lemma \ref{tsp2} to (\ref{9.15.5.19}) to derive
\begin{align*}
&\ti r^{3} |\sn\sY-\bb e^\varrho (\curl \Omega)_A-\frac{2}{\tir} \pi_1|\\
&\les \int_{t_\tmin}^t \tir^3 \left|\sn\pi\c \sY
+\tir^{-1}\pi(\bA+\tir^{-1})+\sn \bb (\ti G+(\widetilde{\tr\chi})^2)+\bb \sn \ti G\right| dt'\\
&+\int_{t_{\tmin}}^t\tir^3 \bb e^{\varrho} \left|{e_A}_i \Pi^{ij} \tensor{\ep}{_{jm}^l}(\curl^2\Omega)_l\bN^m+(\tir^{-1}+\bA) e^\varrho \curl \Omega
+(\pi+\chi) \p \Omega\c X\right| dt',
\end{align*}
where $t_\tmin=u$, since $u\ge 0$.

 By using $|\bb-1|\le \f12$ and (\ref{comp2}), $\zeta=\sn\log \bb+\pi$ and $z=\widetilde{\tr \chi}-\frac{2}{\tir}$, we can derive symbolically,
\begin{align*}
\ti r^{3} &|\sn\sY-\bb e^\varrho (\curl \Omega)_A-\frac{2}{\tir} \pi_1|\\
&\les\int_{t_\tmin}^t \tir^3 \left(|\sn\pi\c \sY|+\tir^{-1}|\bA\c \bA|+(|\zeta|+|\pi|) \c (|\ti G|+\tir^{-2})+ |\sn \ti G|\right)dt'\\
&+\int_{t_{\tmin}}^t\tir^3 \big(|\curl^2\Omega|+\tir^{-1} | \p \Omega|+|\bA \c \p \Omega|\big)dt'.
\end{align*}
Hence
\begin{align}
\tir\|\sn \sY&-\bb e^\varrho (\curl \Omega)_A-\frac{2}{\tir}\pi_1\|_{L_\omega^p}\label{9.18.2.19}\\
&\les  \int_{t_{\tmin}}^t \tir(\|\sn \pi\c \sY\|_{L_\omega^p}+\|\curl^2 \Omega\|_{L_\omega^p}) dt'+\tir^{-1}\int_{t_{\tmin}}^t  (\|\zeta\|_{L_\omega^p}+\|\pi\|_{L_\omega^p}+\|\tir \p \Omega\|_{L_\omega^p}) dt'\nn\\
&+\int_{t_{\tmin}}^t\tir \left(\||\bA|(\tir^{-1}|\bA|+|\ti G|)\|_{L_\omega^p}+\|\sn \ti G\|_{L_\omega^p}+\|\bA \c \p \Omega\|_{L_\omega^p}\right)\nn.
\end{align}
We can compute
$$|\sn \ti G|\les  |\sn \chih| \c |\chih|+|\sn \pi| |\pi|.$$

 We then use (\ref{10.5.4.19}), (\ref{lpcurl}), the first estimate in (\ref{sna}) and (\ref{flux_3}) to obtain
\begin{align}\label{9.18.3.19}
\|\tir \sn \ti G\|_{L_t^1 L_\omega^p(C_u)}&+\|\tir \bA\c \p \Omega\|_{L_t^1 L_\omega^p(C_u)}\\
&\les \|\bA|\|_{L_t^2 L_\omega^\infty(C_u)}\|\tir( \sn \chih, \sn \pi, \p \Omega)\|_{L_t^2 L^p_\omega(C_u)}\les \la^{-1}.\nn
\end{align}

And for the lower order terms  since $\ti G=\bA\c \bA$,
\begin{align}\label{9.18.4.19}
\begin{split}
\|\tir \bA &\c \bA\c(\bA+\tir^{-1})\|_{L_t^1 L_\omega^p(C_u)}\\
&\les\|\bA\c\bA\|_{L_t^1 L_\omega^p(C_u)}+  \|\bA\|_{L_t^2 L_\omega^\infty(C_u)}\|\tir \bA\c \bA\|_{L_t^2 L_\omega^p(C_u)}\\
&\les \|\bA\|_{L_t^2 L_\omega^\infty(C_u)}(\|\bA\|_{L_t^2 L_\omega^p(C_u)}+\|\tir \bA\c \bA\|_{L_t^2 L_\omega^p(C_u)})\les \la^{-1},
\end{split}
\end{align}
where we used (\ref{10.5.4.19}), (\ref{pric1}) and (\ref{pric3}).

Combining  $z=\bb^{-1}\sY+2\mho$, the first estimate in (\ref{9.14.1.19}) and the estimate for $z$ in (\ref{9.14.9.19}) gives
\begin{equation*}
\|\sY\|_{L_t^2 L_\omega^\infty(C_u\cap \widetilde{\D^+})}\les \la^{-\f12}.
\end{equation*}
Using (\ref{flux_3}) and the above estimate implies
\begin{align*}
\|\tir \sn \pi \c \sY\|_{L_t^1 L_\omega^p(C_u)}&\les \|\sY\|_{L_t^2 L_\omega^\infty(C_u)}\|\tir \sn \pi\|_{L_t^2 L_\omega^p(C_u)}\les \la^{-1}.
\end{align*}
By using the above estimate together with (\ref{9.18.3.19}), (\ref{9.18.4.19}) and the second estimate in  (\ref{9.13.2.19}), we can bound on $S_{t,u}$ contained in $\D^+$,
\begin{align*}
\tir\|\sn \sY&-\bb e^\varrho (\curl \Omega)_A-\frac{2}{\tir}\pi_1\|_{L_\omega^p}\les\la^{-1}+ \tir^{-1}\int_{t_\tmin}^t (\|\zeta\|_{L_\omega^p}+\| \pi\|_{L_\omega^p}) dt'.
\end{align*}
It then follows by using (\ref{hlm}), (\ref{pi.2}) and (\ref{9.14.10.19}) that
\begin{align*}
\tir\|\sn \sY-\bb e^\varrho (\curl \Omega)_A-\frac{2}{\tir}\pi_1\|_{L_t^2 L_u^\infty L_\omega^p(\D^+)}&\les \|\zeta, \pi\|_{L_t^2 L_x^\infty(\D^+)}+\la^{-\f12-4\ep_0}\\
&\les \la^{-\f12-4\ep_0}.
\end{align*}
Using (\ref{pi.2}) and (\ref{lpcurl}),
\begin{equation*}
\|\tir \sn \sY\|_{L_t^2 L_u^\infty L_\omega^p(\D^+)}\les \la^{-\f12-4\ep_0}.
\end{equation*}
This is the second estimate of (\ref{9.13.6.19}).

Using (\ref{9.14.2.19}), $\sY=\bb(z-2\mho)$ and the first estimate in (\ref{9.13.6.19}),
\begin{align*}
\|\sY\|_{L_t^2 L_u^\infty L_\omega^p(\D^+)}&\les \la^{-\f12-4\ep_0}.
\end{align*}
We combine the above two estimates and use the Sobolev embedding (\ref{sobinf}) to conclude
\begin{equation*}
\|\sY\|_{L_t^2 L^\infty(\D^+)}\les \la^{-\f12-4\ep_0}.
\end{equation*}
Using $ z=\bb^{-1}\sY+2\mho$  and the first estimate of (\ref{9.14.2.19})  gives (\ref{9.16.2.19}). For the estimate of $z$ in (\ref{10.5.2.19}), it remains to consider the region $u<0$, which can be derived immediately by integrating (\ref{10.5.5.19}).

Next, we consider $\sn\sY$ in the region where  $-\fv_*\le u\le \frac{5t}{6}$. For $S=S_{t,u}$ contained in this region, we derive directly from the definition of $\sY$ by using the first estimate in (\ref{10.4.2.19}), the second estimate in (\ref{ric4}) and (\ref{pric3})  that
\begin{align*}
\|\tir \sn \sY\|_{L_\omega^p(S)}&\les \|\tir \sn z\|_{L_\omega^p(S)}+\|\tir \sn \bb \c (z-2\mho)\|_{L_\omega^p(S)}+\|\tir \sn \mho\|_{L_\omega^p(S)}\\
&\les t^{-\f12}\|\tir^\frac{3}{2} \sn z\|_{L_\omega^p(S)}+\la^{-1}+\|\sn \bb\|_{L_\omega^p(S)}\\
&\les t^{-\f12} \la^{-\f12}+\la^{-1}+\|\zeta\|_{L_\omega^p(S)}+\|\pi\|_{L_\omega^p(S)}.
\end{align*}
As a consequence of the above estimate, (\ref{9.13.6.19}), (\ref{9.14.10.19}) and (\ref{pi.2}), we conclude
\begin{align*}
\|\tir \sn \sY\|_{L_t^{\frac{q}{2}}L_u^\infty L_\omega^p(\widetilde{\D^+})}&\les \|\zeta, \pi\|_{L_t^\frac{q}{2} L^\infty(\widetilde{\D^+})}+\la^{(1-8\ep_0)(\frac{2}{q}-\f12)-\f12}\\
&\les \la^{\frac{2}{q}-1-4\ep_0(\frac{4}{q}-1)}.
\end{align*}
This shows (\ref{11.2.1.19}).

Next, consider the estimate for $\chih$ in (\ref{9.16.3.19}). In view of (\ref{dchi1}), we can derive
\begin{equation*}
\sl{\div}(\bb\chih) = \f12 \sn \sY+  + \bb\sn \pi + \tir^{-1}(\bb\pi+\sn \bb) +\bb \bA\c \bA.
\end{equation*}
 Note $|\bb-1|\le \f12$ due to (\ref{bb_4}).  Applying (\ref{cz0}) and Proposition \ref{cz.2} yields
\begin{align*}
\|\chih\|_{L^\infty(S_{t,u})}&\les \|\D_2^{-1} (\sn \sY)\|_{L^\infty(S_{t, u})} + \|\D_2^{-1} \left(\sn (\bb\pi) + \tir^{-1}(\bb\pi+\sn \bb) +\bb \bA\c \bA\right)\|_{L^\infty(S_{t, u})}\\
& \les  \|\tir^{1-\frac{2}{p}}\sn \sY\|_{L^p(S_{t,u})}+ \|\mu^{0+}P_\mu \ti\pi\|_{l_\mu^2 L^\infty (S_{t,u})}+\|\tir^{1-\frac{2}{p}}(\bA\c \bA, \tir^{-1}\pi, \tir^{-1}\zeta) \|_{L^{p}(S_{t,u})}\\
& \les \|\tir^{1-\frac{2}{p}}\sn \sY\|_{L^p(S_{t,u})}+ \|\mu^{0+}P_\mu \ti\pi\|_{l_\mu^2 L^\infty (S_{t,u})}+\|\pi, \zeta\|_{L^\infty(S_{t,u})}+\la^{-1},
\end{align*}
where $0<1-\frac{2}{p}<s'-2$, and  for the last inequality we used (\ref{pric3}).  By virtue of (\ref{pi.2}), (\ref{9.14.10.19}) and the estimate for $\sY$ in (\ref{9.13.6.19}),
we have
$$
\|\chih\|_{L_t^2 L_x^\infty(\D^+)} \les  \la^{-\f12-4\ep_0};
$$
and  by using (\ref{pi.2}), (\ref{9.14.10.19}) and (\ref{11.2.1.19}) with $2<q<4$, we have
$$
\|\chih\|_{L_t^\frac{q}{2} L_x^\infty(\widetilde{\D^+})}\les \la^{\frac{2}{q}-1-4\ep_0(\frac{4}{q}-1)}.
$$
We therefore have obtained the estimates for $\chih$ in (\ref{9.16.3.19}) and (\ref{10.5.2.19}). Thus the proof of Proposition \ref{10.17.1.19} is complete.
\end{proof}

In summary,  (\ref{9.14.10.19}) is stronger than the estimates of $\zeta$ stated in (\ref{ric1}) and (\ref{ric1.1}), the estimates (\ref{9.16.3.19}) and  (\ref{10.5.2.19}) are included in (\ref{ric1}) and (\ref{ric1.1}). Since $\tr\chi-\frac{2}{\tir}=z+\pi$, using (\ref{pi.2}) and combining the estimates for $z$ in  (\ref{ric1}) and (\ref{ric1.1}), the estimates of $\tr\chi-\frac{2}{\tir}$ therein can be obtained immediately. Thus,  the proof of Proposition \ref{cone_reg} is complete.

\section{Regularity of the conformal metric and the mass aspect function}\label{conf}

Let us  set  in $\D^+$
\begin{equation}\label{6.22.17.19}
L \sigma=\f12 \Xi_L, \qquad \sigma(\Ga^+)=0
\end{equation}
where $\Ga^+$ is the time axis.
In order to prove  Theorem \ref{BT}, we need to carry out the conformal change of metric in the spacetime region $\D^+$ by  introducing the  metric $\ti \bg=e^{2\sigma} \bg$.
 The conformal method was introduced in \cite{Wangrough} to treat  the term $-\Xi_L$ in $\tr\chi-\widetilde{\tr\chi}$. It is in particular crucial for solving the significant difficulty caused by the weak regularity on $\sn \tr\chi$ and $\mu$ due to the rough data in \cite{Wangrough} for the equation (\ref{4.10.4.19}). The rough $\Xi_4$ derivative causes the same hurdle  in the general acoustic spacetime as for the irrotational case. Therefore, in this section, we provide the necessary control on $\sigma$ and $\mu$ for proving Theorem \ref{BT}. This theorem is the main building block to complete the dispersive estimate as indicated in Section \ref{10.30.1.19}. By the completion of the section, we will be able to achieve the complete set of estimates for the geometric quantities  required by reproducing the proof of Theorem \ref{BT} in \cite[Section 7]{Wangrough}. 
 
In this section, instead of bounding $\ckk\mu$ directly as in \cite{Wangrough}, we introduce a further normalization on the mass aspect function in (\ref{10.5.6.19}) to cope with the issue of the rough vorticity derivative, with the help of the transport equation of $\curl \Omega$ in (\ref{4.25.4.19}). It turns out that the influence of the rough vorticity can be reduced to be  lower order, which can be seen from Proposition \ref{Trans_mu} and the resulting estimates in Proposition \ref{6.23.9.19}. We also take advantage of the $\sl{\curl}$ structure for the vorticity on $S_{t,u}$ in (\ref{9.12.1.19}) to prove the decomposition of $\sn \sigma$ in Proposition \ref{dcmpsig}.

We recall the preliminary estimates, which can be obtained in the exact same way as in \cite[Lemma 6.1]{Wangrough}.
\begin{lemma}
Let $0\le 1-\frac{2}{p}<s-2$. Within $\D^+$, there hold
\begin{align}
\|\tir^\f12 L \sigma\|_{L_\omega^{2p}(C_u)}+\|r^{\f12-\frac{2}{p}}\sn \sigma\|_{L_x^p L^\infty(C_u)}+\|\sn \sigma\|_{L_t^2 L_\omega^p(C_u)}\les \la^{-\f12}\label{6.22.18.19}\\
\|\sigma\|_{L^\infty}\les \la^{-8\ep_0}, \quad \|\tir^{-\f12} \sigma\|_{L^\infty} \les \la^{-\f12-4\ep_0},\nn
\end{align}
\end{lemma}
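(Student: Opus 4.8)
Within $\D^+$ every outgoing null cone $C_u$ has its vertex on the time axis $\Ga^+$ at $t=u\ge 0$, where $\sigma=0$; integrating $L\sigma=\f12\Xi_L$ along the null generators of $C_u$ gives the representation $\sigma(t,u,\omega)=\f12\int_u^t\Xi_L\,dt'$. The plan is to read off all the asserted bounds from this formula together with the transport equation for $\sn\sigma$ obtained by commuting $\sn$ through $L$, using that $\Xi_L=\Xi_\mu L^\mu$ is a $\pi$-term (a contraction of $\ti\pi=f(\bg)\bp\bg$ with the null frame) while $\sn\Xi_L=\sn\pi+(\chi+\pi)\c\pi$ is of $\sn\pi$-type, so that the flux and pointwise bounds of Proposition \ref{7.13.2.19}, the cone regularity of Proposition \ref{cone_reg}, and (\ref{pi.2}) apply directly.

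The pointwise estimates on $\sigma$ are immediate. Since $\Xi_L$ is a $\pi$-term and $\tau_*\le\la^{1-8\ep_0}T$ with $T\le 1$, the bound (\ref{pi.2}) gives $\|\sigma\|_{L^\infty}\les\tau_*^\f12\|\ti\pi\|_{L^2_tL^\infty_x}\les\la^{-8\ep_0}$, while Cauchy--Schwarz in $t'\in[u,t]$ gives $\tir^{-\f12}|\sigma|\les\|\Xi_L\|_{L^2_tL^\infty_x}\les\la^{-\f12-4\ep_0}$. The term $\|\tir^\f12 L\sigma\|_{L^{2p}_\omega(C_u)}$ is, because $L\sigma=\f12\Xi_L$, a weighted cone norm of $\pi$, hence contained in (\ref{6.18.4.19}), (\ref{flux_3}) and the trace inequality (\ref{7.04.20.19}) (using $v_t\approx\tir^2$ from Lemma \ref{6.17.1} to pass between the $L^{2p}_\omega$ and $L^{2p}_x$ conventions). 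For $\sn\sigma$, the commutation formula (\ref{cmu_2}) together with the splitting $\chi=\frac{1}{\tir}\ga+\f12(z+\pi)\ga+\chih$ yields $\sn_L\sn\sigma+\frac{1}{\tir}\sn\sigma=-\bA\c\sn\sigma+\f12\sn\Xi_L$, with $\bA$ ranging over $z,\chih,\pi$. Since $\|\bA\|_{L^\infty_\omega L^1_t(C_u)}\les\tau_*^\f12\la^{-\f12}\les 1$ by (\ref{ric3.18.1}) and (\ref{10.11.3.19}), and $\tir|\sn\sigma|\to 0$ at the vertex, the transport lemma \ref{tsp2} gives $\tir|\sn\sigma(t)|\les\int_u^t\tir|\sn\Xi_L|\,dt'$. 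Taking $L^p_\omega$ norms and using the maximal function bound (\ref{hlm}) yields $\|\sn\sigma\|_{L^2_tL^p_\omega(C_u)}\les\|\tir\sn\pi\|_{L^2_tL^p_\omega(C_u)}\les\la^{-\f12}$ by (\ref{flux_3}) (the residual $\bA\c\pi$ contributions being absorbed via Proposition \ref{cone_reg}); Cauchy--Schwarz in $t'$ and the Minkowski inequality instead give $\|\tir^\f12\sn\sigma\|_{L^\infty_tL^p_\omega(C_u)}\les\la^{-\f12}$, which, after rewriting the weight with $v_t\approx\tir^2$, is exactly $\|r^{\f12-\frac{2}{p}}\sn\sigma\|_{L^p_xL^\infty(C_u)}\les\la^{-\f12}$.

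The only genuinely delicate point is the initialization of the transport equations for $\sigma$ and $\sn\sigma$ at the vertex of $C_u$ --- that $\tir|\sn\sigma|\to 0$ as $t\to u$ --- which is established exactly as the corresponding items in Lemma \ref{inii}(i), from the smooth local expansion of the connection coefficients at a point of $\Ga^+$; everything else is a routine bookkeeping of the $\pi$- and $\sn\pi$-flux estimates of Proposition \ref{7.13.2.19} and the cone estimates of Proposition \ref{cone_reg}. This reproduces the argument of \cite[Lemma 6.1]{Wangrough}.
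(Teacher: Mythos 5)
Your proposal is correct and follows the same route as the paper, which offers no independent argument here but simply remarks that the estimates ``can be obtained in the exact same way as in \cite[Lemma 6.1]{Wangrough}'', i.e.\ precisely your scheme: integrate $L\sigma=\f12\Xi_L$ and the commuted transport equation for $\sn\sigma$ along the generators of $C_u$ (with vanishing data at the vertex on $\Ga^+$) and invoke the $\pi$, $\sn\pi$ flux bounds of Proposition \ref{7.13.2.19}, the cone estimates of Proposition \ref{cone_reg}, and (\ref{pi.2}). One small labeling point: the second norm in (\ref{6.22.18.19}) is $L^p_x L^\infty_t(C_u)$, with the supremum in $t$ taken inside the angular integral, so it is stronger than the $L^\infty_t L^p_\omega$ norm you name; however, your Cauchy--Schwarz step along each generator bounds $\sup_t \tir^{\f12}|\sn\sigma|(\cdot,\omega)$ pointwise in $\omega$ by $\|\tir\sn\Xi_L\|_{L^2_t}(\omega)$, and Minkowski's inequality then gives exactly the claimed $L^p_xL^\infty_t$ bound, so the argument as written does deliver the statement.
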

The above estimates of $\sn \sigma$ are much weaker for our purpose, the improved estimate on $\sn\sigma$ will be achieved with the help of the estimate of a normalized mass aspect function $\ckk \mu$ (see (\ref{6.23_mu}) for the definition). Let us first recall the null transport equation for the mass aspect function $\mu$ from \cite[(6.12)]{Wangrough}.
\begin{lemma}
\begin{align}\label{6.22.19.19}
L\mu&+\tr\chi\mu=R(\mu)-k_{\bN\bN}L\tr\chi +2(\zb_A-\ze_A)\sn_A\tr\chi+\f12\left( \tr\chi\chih\c\chibh+\tr\chib|\chih|^2\right)\nn\\
& +\tr\chi\left(\sl{\div}\zb+|\zb|^2+\f12\delta^{AB} \bR_{A34B}+L(k_{\bN\bN})-(\sl{\div} \pi + \sl \bE)\right) \nn\\
& -2\chih_{AB} \left(2\sn_A\zeta_B+k_{\bN\bN}\chih_{AB}+2\zeta_A\zeta_B+\bR_{A43B}\right),
\end{align}
where $\sl\bE= \bA\c \pi+\tr\chi\c \pi$, and
$$
R(\mu):=-\Lb \bR_{44}-\tr\chi \bR_{34}-\f12\tr\chib \bR_{44}.
$$
\end{lemma}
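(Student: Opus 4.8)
The plan is to derive \eqref{6.22.19.19} by differentiating the definition $\mu=\Lb\tr\chi+\f12\tr\chi\tr\chib$ along $L$ and systematically substituting the null structure equations collected in Section~\ref{fundstr}. Writing $L\mu=L\Lb\tr\chi+\f12(L\tr\chi)\,\tr\chib+\f12\tr\chi\,(L\tr\chib)$, the three pieces are handled separately. For $\f12(L\tr\chi)\tr\chib$ and $\f12\tr\chi(L\tr\chib)$ I would directly insert the Raychaudhuri equation \eqref{s1} and the transport equation \eqref{mub} for $\tr\chib$; this already produces the curvature contributions $-\f12\tr\chib\bR_{44}$ and $\f12\tr\chi\,\delta^{AB}\bR_{A34B}$, the quadratic terms $\tr\chi|\zb|^2$, $\pm\f12\tr\chi\chih\c\chibh$, $\pm\f12\tr\chib|\chih|^2$, the divergence term $\tr\chi\,\sl{\div}\zb$, and cubic-in-$\tr\chi$ terms which cancel against the $\f12(\tr\chi)^2\tr\chib$ produced by the left side once every bare $\Lb\tr\chi$ is rewritten through $\Lb\tr\chi=\mu-\f12\tr\chi\tr\chib$.

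The heart of the computation is the mixed term $L\Lb\tr\chi$. Here I would first commute with the scalar commutation formula \eqref{3.19.2}, $L\Lb\tr\chi=\Lb L\tr\chi+2(\zb_A-\ze_A)\sn_A\tr\chi-2k_{\bN\bN}\bN\tr\chi$, and split the last term via $2\bN=L-\Lb$ to extract the term $-k_{\bN\bN}L\tr\chi$ that appears on the right of \eqref{6.22.19.19} (the residual $k_{\bN\bN}\Lb\tr\chi$ is again reabsorbed into $\mu$). Next I would apply $\Lb$ to \eqref{s1}: the resulting $-\Lb\bR_{44}$ is the leading part of $R(\mu)$; the $\Lb$-derivative of $|\chih|^2$ is evaluated by contracting the propagation equation \eqref{3chi} for $\sn_\Lb\chih$ with $\chih$, which generates $-\tr\chi\chih\c\chibh$, $\tr\chib|\chih|^2$, the term $-2\chih_{AB}(2\sn_A\zeta_B+k_{\bN\bN}\chih_{AB}+2\zeta_A\zeta_B+\bR_{A43B})$, and metric-derivative corrections of quadratic type; finally, the $\Lb$- and $\bN$-derivatives of the factor $k_{\bN\bN}\tr\chi$ are rewritten, using the identities \eqref{6.14.1.19}--\eqref{7.04.8.19} from Proposition~\ref{6.14.2.19} and the Ricci decomposition \eqref{6.16.1.19}, so that $\bN k_{\bN\bN}$ turns into $\sl{\div}\pi+\sl\bE$ and what survives is $\tr\chi\,L(k_{\bN\bN})$. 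All the remaining quadratic leftovers are of the schematic type $\bA\c\pi+\tr\chi\c\pi=\sl\bE$, since $k$, $\theta$, $\zeta$, $\zb$ and the metric-derivative quantities are all of type $\pi$ (respectively $\bA$).

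The last step is to collect the scattered Riemann components into $R(\mu)=-\Lb\bR_{44}-\tr\chi\bR_{34}-\f12\tr\chib\bR_{44}$; this requires the first and second Bianchi identities together with the frame decompositions of Lemma~\ref{6.23.17.19}, which in particular convert $\tr\chi\,\delta^{AB}\bR_{A34B}$ (plus the analogous curvature hidden in the $\Lb\bR_{44}$ and metric-derivative terms) into $-\tr\chi\bR_{34}$ modulo $\sl\bE$. I expect this curvature reorganization, rather than any individual differential identity, to be the main obstacle: it is purely combinatorial bookkeeping but unforgiving with signs. I note, however, that the computation is word-for-word the one carried out in \cite[Section~6]{Wangrough}: the only feature special to the acoustic spacetime is that $\curl\Omega$ enters $\bR_{44}$ and $\bR_{34}$ (through \eqref{6.23.2.19} and \eqref{5.27.2.19}), and since these Ricci components are kept abstract inside $R(\mu)$, equation \eqref{6.22.19.19} holds in the present setting with no change in form.
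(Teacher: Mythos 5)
Your proposal is correct in outline, and in the decisive respect it coincides with what the paper actually does: the paper gives no derivation of \eqref{6.22.19.19} at all, but simply recalls it from \cite[(6.12)]{Wangrough}, exactly on the grounds you state at the end — the Ricci components are kept abstract inside $R(\mu)$, so the identity is one of general null geometry and is unaffected by the acoustic-specific form of $\bR_{44}$, $\bR_{34}$. Your reconstruction of the computation has the right skeleton (expand $L\mu$, commute with \eqref{3.19.2}, differentiate \eqref{s1} along $\Lb$, insert \eqref{mub}, and contract \eqref{3chi} with $\chih$), but two bookkeeping points are off. First, the residual $k_{\bN\bN}\Lb\tr\chi$ produced by writing $-2k_{\bN\bN}\bN\tr\chi=-k_{\bN\bN}L\tr\chi+k_{\bN\bN}\Lb\tr\chi$ is not ``reabsorbed into $\mu$'' (that would create a $k_{\bN\bN}\mu$ term absent from \eqref{6.22.19.19}); it cancels identically against the $-k_{\bN\bN}\Lb\tr\chi$ that arises when $\Lb$ hits the term $-k_{\bN\bN}\tr\chi$ in \eqref{s1}. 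Second, no Bianchi identity is needed, and $\f12\tr\chi\,\delta^{AB}\bR_{A34B}$ is not converted into $-\tr\chi\bR_{34}$: that term, produced directly by \eqref{mub}, survives verbatim in \eqref{6.22.19.19}, while the $-\tr\chi\bR_{34}$ inside $R(\mu)$ enters when the leftover $-\tr\chi\,\Lb(k_{\bN\bN})$ is rewritten as $\tr\chi\big(L(k_{\bN\bN})-2\bT(k_{\bN\bN})\big)$ and one uses that $2\bT(k_{\bN\bN})-\bR_{34}$ is of the schematic form $\sl{\div}\pi+\sl\bE$ — a decomposition of a Ricci component in the spirit of \eqref{ricc6.7.1} and Lemma \ref{6.23.17.19}, not a Bianchi identity (the acoustic-specific identities of Proposition \ref{6.14.2.19} are not required for this schematic statement). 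With these corrections your expansion reproduces \eqref{6.22.19.19} term by term — the quadratic $\chih$ terms come out as $+\f12(\tr\chi\chih\c\chibh+\tr\chib|\chih|^2)$ after combining the \eqref{3chi} contraction with the contributions of \eqref{s1} and \eqref{mub} — and, since the paper itself delegates the computation to \cite{Wangrough}, your closing observation is precisely the justification the paper relies on.
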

By virtue of (\ref{5.27.2.19}), (\ref{6.23.2.19}),  (\ref{6.7con}) and the fact that $S=\pi\c \pi$, we can derive that
\begin{align*}
\bR_{44} & = L(\Xi_L)+k_{\bN\bN} \Xi_4 + \pi\c\pi-e^{\varrho}\delta_{ij}\bN^j \curl \Omega^i,\\
\bR_{34}&=\f12( \Lb(\Xi_L)+L(\Xi_\Lb))+\Xi\c (\zeta+\zb)+k_{\bN\bN}\c \Xi+\pi\c\pi,\\
-\Lb \bR_{44}&=-\Lb L (\Xi_L)-\Lb (k_{\bN\bN}) \Xi_L +k_{\bN\bN}\c (\zeta+k)\c \Xi+k \c\bd \Xi\\
&+\delta_{ij}\Lb (e^\varrho\bN^j \curl \Omega^i)+\Lb (\pi\c\pi),
\end{align*}
for which we used (\ref{4.10.2.19}), (\ref{6.29.5.19}) and (\ref{6.29.6.19}) for simplifying the right hand sides of the above formulas.

Hence
\begin{equation}\label{6.23.3.19}
\begin{split}
R(\mu)&=-L \Lb (\Xi_L)-\f12 \tr\chi\Lb (\Xi_L)-\f12 \tr\chib L (\Xi_L)-\f12 \tr\chi L (\Xi_\Lb)\\
&+\delta_{ij}(\Lb(e^{\varrho}\bN^j \curl \Omega^i)+\f12 e^\varrho\tr\chib\bN^i \curl \Omega^j)+\tr\chi \pi\c\pi+\bA \c \bd \ti \pi+\bA^3.
\end{split}
\end{equation}
The first term of the last line can be written as
\begin{equation}\label{6.23.6.19}
\delta_{ij}\Lb(e^\varrho\bN^j \curl \Omega^i)= \delta_{ij}(-L+2\bT)(e^\varrho\bN^j \curl \Omega^i).
\end{equation}

Meanwhile in view of (\ref{tran1}) and (\ref{6.22.17.19}), we have
\begin{align}
L \sD \sigma+ \tr\chi \sD \sigma&=\f12 \sD (\Xi_4)-2\chih_{AC} \sn_A \sn_C \sigma- \sn_A\tr\chi \sn_A \sigma-2 \delta^{AB}\bR_{CA4B}\c \sn_C\sigma
\nn\\
&-\left(\f12 k_{A\bN} \tr\chi-\chih_{AB} k_{B\bN}\right)\sn_A\sigma-\chi_{AB} \zb_A \sn_B\sigma\label{6.23.4.19}.
\end{align}
Applying (\ref{6.30.2.19}) to $\Xi_4$ gives  the null decomposition of $\Box_\bg (\Xi_4)$
\begin{equation}\label{6.23.5.19}
\Box_\bg (\Xi_4) =- L \Lb (\Xi_4)+\sD (\Xi_4)-\f12 \tr\chi \Lb (\Xi_4)-\f12 \tr\chib L (\Xi_4)+2 \zb^A \sn_A (\Xi_4)+k_{NN} \Lb (\Xi_4).
\end{equation}
Apart from the terms contributed by vorticity, we observe that the leading terms of $R(\mu)$ and  the right hand side of (\ref{6.23.4.19}) contain all the second order terms of (\ref{6.23.5.19}). As in \cite[Section 6]{Wangrough} we can derive a transport equation for the renormalized mass aspect function
\begin{equation}\label{6.23_mu}
\ckk \mu= 2\sD \sigma +\mu- \tr\chi k_{\bN\bN}+\f12\tr\chi \Xi_\Lb.
\end{equation}
To further cancel the higher order terms of vorticity  in $R(\mu)$,  we will derive the transport equation with the help of the decomposition of (\ref{6.23.6.19}) for the following quantity
 \begin{equation}\label{10.5.6.19}
 \ti \mu=\ckk\mu+\delta_{ij}e^\varrho\bN^i \curl \Omega^j.
 \end{equation}
 Similar to the calculation in \cite[(6.15) and Lemma 6.2]{Wangrough} for $\ckk\mu$,  we derive for $\ti\mu$ that
\begin{align}
L\ti \mu+\tr\chi \ti\mu&=\Box_\bg( \Xi_4)-2\left(2 \delta^{AB} \bR_{CA4B}+\f12 k_{A\bN} \tr\chi-\chih\c k+\chi\c \zb\right)\sn\sigma\label{mu2}\\
& +2(\zb-\ti\zeta) \sn \tr\chi -4 \chih\c\sn\ti\ze-2\chih_{AB} \big(k_{\bN\bN}\chih_{AB}+2\zeta_A\zeta_B+\bR_{A43B}\big)\nn\\
&+\tr\chi \left(\sl{\div}\zb+|\zb|^2+\f12\delta^{AB} \bR_{A34B}\right)+\f12 \left( \tr\chi\chih\c\chibh+\tr\chib|\chih|^2\right)\nn\\
&+2k_{\bN\bN} \left(|\chih|^2+k_{\bN\bN} \tr\chi+\bR_{44}\right)+ \bA \c(\bd \bpi+\bE)+\tr\chi \left(\sl{\div} \pi+\bE\right)\nn\\
&+2\delta_{ij}\bT(e^\varrho\curl \Omega^i\bN^j)+(\tr\chi+\f12 \tr\chib)\delta_{ij}e^\varrho\bN^i \curl\Omega^j,\label{6.23.7.19}
\end{align}
 where $\ti\zeta=\sn\sigma+\zeta$ and
$\bE= \bA\c \bA +\tr\chi \c \bA$.

Next we simplify the above equation in two steps.

{\bf Step 1.}
Recall  (\ref{4.25.4.19}) with $\fC=e^{-\varrho} \curl \Omega$. Also using the first equation in (\ref{4.23.1.19}), we  derive
\begin{equation}\label{6.23.8.19}
\bT(\curl\Omega^i)=\p v \p \Omega.
\end{equation}
By using Proposition \ref{6.7con} and (\ref{3.19.2}), we can compute  $\bd_\bT \bN=\frac{1}{4}(\bd_L L-\bd_\Lb \Lb+[\Lb,L])=\zeta_A e_A$. Therefore,
\begin{equation*}
\bT (\bN^i)=\bd_\bT \bN+\pi=\zeta_A e_A^i+\pi.
\end{equation*}
 Denote  the line of (\ref{6.23.7.19}) by $\sI$ for which we derive
\begin{align}
\sI&= e^\varrho\big(\bT \varrho \bN^i \curl \Omega^j \delta_{ij}+(\zeta+\pi) \curl \Omega+ \p v \c \p \Omega\c \bN^i\big)+(\tr\chi+\f12 \tr\chib)\delta_{ij}e^\varrho\bN^i \curl\Omega^j\nn\\
&=e^\varrho \bA \p \Omega \c X+\tir^{-1}\delta_{ij}e^\varrho \bN^i \curl \Omega^j.\label{6.23.19.19}
\end{align}

{\bf Step 2.}
We next compute the term $\Box_\bg (\Xi_L)$ by using (\ref{6.23.10.19}),
\begin{equation}\label{6.23.12.19}
\Box_\bg (\Xi_L)=2  \Box_\bg \left(\bT(\log c+\varrho)\right)+\Box_\bg\left(L(\log c-\varrho)\right).
\end{equation}
Note that there hold for scalar functions $\phi$ the following commutation formula,
\begin{equation}\label{6.23.11.19}
\begin{split}
[\Box_\bg, \bT] \phi&=-\bT \Tr k \bT \phi+[\Delta_g , \bT]\phi\\
&=-\bT \Tr k \bT \phi+\nab_g (k\nab_g \phi)+k\c \nab_g^2 \phi+\tensor{\bR}{^\mu_{i\bT i}}\bd_\mu \phi\\
&=\bg(\bp \ti \pi\c \bp \phi+\p^2 \phi\c \ti\pi).
\end{split}
\end{equation}
Combining the equation (\ref{4.10.2.19}) with the above commutation formula for $\phi=\log c+\varrho$ gives
\begin{equation}\label{6.23.13.19}
\Box_\bg \bT (\log c+\varrho)=f(\varrho)\bp \ti \pi \c \ti\pi+f(\varrho)\c{\ti\pi}^3,
\end{equation}
where $f(\varrho)$ represents smooth functions of $\varrho$, and we used the fact that $\ti\pi\c \bg$ still can be denoted by  $\ti\pi$.

Next we employ the null decomposition of the operator $\Box_\bg$ in (\ref{6.30.2.19}),  (\ref{3.19.2}) and (\ref{tran1}) to compute for scalar functions $\phi$
\begin{equation}\label{comL}
\begin{split}
[\Box_\bg, L] \phi&=L([L,\Lb]\phi)+[\sD, L] \phi+\f12 L \tr\chib L \phi+2\zb\c [\sn, L]\phi-2\sn_L \zb\c \sn \phi\\
&+(k_{\bN\bN}-\f12 \tr \chi)[\Lb, L]\phi-\sn_L (k_{\bN\bN}-\f12 \tr\chi)\Lb \phi \\
&=\sn_L(2(\zb-\zeta)\c\sn \phi-2 k_{\bN\bN}\bN \phi)+\tr\chi \sD \phi+2\chih\c \sn^2 \phi+\sl{\div}\chi_C \sn_C \phi\\
&-(\tr\chi\zb-\chi_{AC} \zb_A-\delta^{AB}\bR_{CA4B})\sn_C \phi+\f12 L \tr\chib L\phi+2\zb\c \chi\c \sn \phi-2\sn_L \zb\c \sn \phi\\
&+(k_{\bN\bN}-\f12 \tr\chi)\big(2(\zeta-\zb)\c \sn \phi+2 k_{\bN\bN} \bN \phi\big)-L(k_{\bN\bN}-\f12 \tr\chi)\Lb \phi,
\end{split}
\end{equation}
where we employed (\ref{3.19.2}) and (\ref{cmu_2}) to derive the last identity.

We now claim that
\begin{align}
&L \tr\chi=-2\tir^{-2}+ \D_* \pi+\bE,\label{10.10.2.19}\\
&L \tr\chib=\sl{\div}\pi+\sn \pi+\bE+2\tir^{-2},\label{6.23.14.19}\\
&\sn_L \zeta= \tr\chi \bA+\D_* \pi+\bE,\label{6.23.15.19}\\
&\sl{\div} \chi=\sn \tr\chi+\sn \pi+\bE\label{6.23.16.19},
\end{align}
where $\bE=\bA\c \bA+\tr\chi\c \bA$ and $\bA=\chih, z, \pi, \zeta$.

To see (\ref{10.10.2.19}), we recast (\ref{s1}) by using Lemma \ref{6.23.17.19} (i),
\begin{equation*}
L \tr\chi=-\f12(z+\pi+\frac{2}{\tir})\tr\chi+ \D_* \pi+\bE=-\tir^{-1}\tr\chi+\D_* \pi +\bE.
\end{equation*}
Note
\begin{align}\label{10.31.1.19}
\tir^{-1}\tr\chi=(\tir^{-1}-\f12 \tr\chi)(\tr\chi-\frac{2}{\tir})+\frac{2}{\tir^2}+\f12 \tr\chi(\tr\chi-\frac{2}{\tir})=2\tir^{-2}+\bE.
\end{align}
Combining the above two identities, (\ref{10.10.2.19}) follows as a consequence.

By using (\ref{10.31.2.19}), we can derive
\begin{align*}
\tr\chib\tr\chi=-\tr\chi^2-2\tr k \c \tr\chi=-(\tr\chi-\frac{2}{\tir}+\frac{2}{\tir})\tr\chi+\bE=-\frac{2}{\tir} \tr\chi+\bE.
\end{align*}
Combining the above identity with (\ref{10.31.1.19}), (\ref{6.23.14.19}) can be obtained in view of  (\ref{mub}) and the decomposition of curvature in Lemma \ref{6.23.17.19} (iv). (\ref{6.23.15.19}) can be obtained in view  of (\ref{tran2}) and Lemma \ref{6.23.17.19} (i). (\ref{6.23.16.19}) can be obtained in view of (\ref{dchi}) and Lemma \ref{6.23.17.19} (ii).

Now we set $\phi=\log c-\varrho$ in (\ref{comL}), then substitute the symbolic formulas (\ref{10.10.2.19})-(\ref{6.23.16.19}) to (\ref{comL}). Also by using Lemma \ref{6.23.17.19} (ii), we can derive
\begin{equation}\label{6.23.18.19}
[\Box_\bg, L](\log c-\varrho)=\tir^{-1}\sl{\div}\pi+\tir^{-2}\pi+(\sn \tr\chi, \D_*\pi, \bE)\c\bA.
\end{equation}

By using (\ref{4.10.2.19})
\begin{equation*}
L \Box_\bg(\log c-\varrho)= f(\varrho)\D_* \ti\pi\c \ti\pi+f(\varrho)\c (\ti \pi)^3.
\end{equation*}
Combining the above two identities yields
\begin{equation*}
\Box_\bg (L(\log c-\varrho))=f(\varrho)\D_* \ti\pi\c \ti\pi+f(\varrho)\c(\ti \pi)^3 +\tir^{-1}\sl{\div}\pi+\tir^{-2}\pi+(\sn \tr\chi, \D_*\pi, \bE)\c\bA.
\end{equation*}
Now combining the above identity with (\ref{6.23.13.19}) in view of (\ref{6.23.12.19}) gives
\begin{equation}\label{11.1.3.19}
\Box_\bg (\Xi_L)=f(\varrho)\bd \ti\pi\c \ti\pi+f(\varrho)\c(\ti \pi)^3 +\tir^{-1}\sl{\div}\pi +\tir^{-2}\pi+(\sn \tr\chi, \bd\pi, \bE)\c \bA.
\end{equation}
Since we will carry out $L^p$ estimate instead of the derivative estimates of the right hand side of the above equation, we can drop the smooth function $f(\varrho)$ since $|f(\varrho)|\les 1$.

\begin{proposition}\label{Trans_mu}
For $\ti\mu$ defined in (\ref{10.5.6.19}), there holds the transport equation
\begin{align}
(L\ti\mu +\tr\chi\ti\mu) -(\tir^{-1} (\sl{\div} \pi+\ep^{AB}\sn_A  \fw_B)  +\tir^{-2}\pi)&=\chih \c\sn \ti \zeta+\sn \sigma\c (\bE+\sn \pi+\sn \tr\chi)\nn\\
&+\bA\c(\sn \tr\chi, \bE, \bd \ti\pi)+e^\varrho\p \Omega\bA \label{mutrans},
\end{align}
where $\bA= \chih, \zeta, \pi, z$, $\bE= \bA\c \bA + \tr\chi \c \bA$,  and ``$\c X$" on the right hand side has been omitted since $X$ are all bounded frames \begin{footnote}{This means $|X^\mu|\les 1$ where $X=X^\mu\p_\mu$ is the decomposition relative to the Cartesian coordinates.} \end{footnote} and the equation  will not be further differentiated.
  For $\pi$ and $\ti \pi$ we refer to Section \ref{causal_reg} for their meaning.
\end{proposition}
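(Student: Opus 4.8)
The plan is to start from the transport equation (\ref{mu2})--(\ref{6.23.7.19}) for $\ti\mu$, which has already been recorded above (it is obtained, as in \cite[(6.15) and Lemma 6.2]{Wangrough}, from the evolution equation (\ref{6.22.19.19}) for $\mu$ by inserting into $R(\mu)$ the decompositions (\ref{6.23.3.19})--(\ref{6.23.6.19}) of the Ricci components, the null decomposition (\ref{6.23.5.19}) of $\Box_\bg(\Xi_4)$ and the transport identity (\ref{6.23.4.19}) for $\sD\sigma$, so that the second-order terms of $\Xi_4$ cancel against the definition (\ref{6.23_mu}) of $\ckk\mu$, and then adding $\delta_{ij}e^\varrho\bN^i\curl\Omega^j$ as in (\ref{10.5.6.19})). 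The whole content of the Proposition is then the simplification of the right-hand side of (\ref{mu2})--(\ref{6.23.7.19}). I note already here why the normalization (\ref{10.5.6.19}) is the right one: in $R(\mu)$ the term $\delta_{ij}\Lb(e^\varrho\bN^j\curl\Omega^i)$, split by (\ref{6.23.6.19}) as $\delta_{ij}(-L+2\bT)(e^\varrho\bN^j\curl\Omega^i)$, contributes $-L(\delta_{ij}e^\varrho\bN^j\curl\Omega^i)$ on the right, which upon transfer to the left completes $L\ckk\mu$ into $L\ti\mu$.

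I would then carry out three substitutions. First, replace the vorticity line $\sI$ of (\ref{6.23.7.19}) by its reduced form (\ref{6.23.19.19}); this is Step 1, using $\bT(\curl\Omega^i)=\p v\,\p\Omega$ from (\ref{6.23.8.19}), $\bT(\bN^i)=\zeta_Ae_A^i+\pi$ and $\bT(e^\varrho)=\pi$ from Proposition \ref{6.7con} and (\ref{3.19.2}), together with $\tr\chi+\f12\tr\chib=\tir^{-1}+\bA$ (from (\ref{10.31.2.19}) and $\tr\chi=\tir^{-1}+\bA$). Second, replace $\Box_\bg(\Xi_4)=\Box_\bg(\Xi_L)$ by its null-frame decomposition (\ref{11.1.3.19}) from Step 2, obtained via the commutator identities (\ref{6.23.11.19}) and (\ref{comL}), the field equation (\ref{4.10.2.19}), and the schematic structure equations (\ref{10.10.2.19})--(\ref{6.23.16.19}). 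Third, in the remaining explicit terms of (\ref{mu2}) decompose each Riemann component $\bR_{CA4B},\bR_{A43B},\bR_{A34B},\bR_{44}$ by Lemma \ref{6.23.17.19}, rewrite $k_{\bN\bN}$, $\zb=-k\cdot e$ and $\zeta=\sn\log\bb+k\cdot e$ schematically as $\pi$, use $\tr\chi=\tir^{-1}+\bA$, and pass from $e^\varrho\curl\Omega_\bN$ to the intrinsic angular curl $\ep^{AB}\sn_A\fw_B$ via (\ref{9.12.1.19}) and (\ref{hodge_2}), the discrepancy $e^\varrho(\sn\varrho)\wedge\Omega$ being of the harmless type $\pi\cdot\Omega$. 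Collecting terms, the only genuinely $\tir^{-1}$-singular or second-order survivors are $\tir^{-1}\sl{\div}\pi$ (from (\ref{11.1.3.19})), $\tir^{-1}\ep^{AB}\sn_A\fw_B$ (from the reduced $\sI$ and from $\tr\chi\ti\mu$) and $\tir^{-2}\pi$, which move to the left; the term $-4\chih\cdot\sn\ti\zeta$ survives as $\chih\cdot\sn\ti\zeta$; the $\sn\sigma$-contractions in (\ref{mu2}) together with the $\sn\sigma$ produced by $\ti\zeta=\sn\sigma+\zeta$ give $\sn\sigma\cdot(\bE+\sn\pi+\sn\tr\chi)$; and everything else reduces to $\bA\cdot(\sn\tr\chi,\bE,\bd\ti\pi)$ or $e^\varrho\p\Omega\,\bA$.

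The main obstacle is the bookkeeping around the transversal vorticity derivative $\delta_{ij}\Lb(e^\varrho\bN^j\curl\Omega^i)$ coming from $-\Lb\bR_{44}$ in $R(\mu)$ (equation (\ref{6.23.3.19})): a priori this is a second derivative of $\Omega$ in the $\Lb$-direction with no available bound along $C_u$, so one must exploit the splitting $\Lb=-L+2\bT$ of (\ref{6.23.6.19}) to route the $L$-piece into $L\ti\mu$ and to reduce the $\bT$-piece, via (\ref{6.23.8.19}) and the formulas for $\bT\bN$ and $\bT e^\varrho$, to the acceptable $e^\varrho\p\Omega\,\bA$-type; one must equally verify that no curvature decomposition from Lemma \ref{6.23.17.19} reintroduces a $\D_*\pi$-term outside the displayed $\tir^{-1}\sl{\div}\pi$, $\sn\tr\chi\cdot\bA$ or $\bd\ti\pi\cdot\bA$. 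The remaining computations are purely symbolic and parallel \cite[Lemma 6.2]{Wangrough}, the only genuinely new feature being the vorticity terms just described.
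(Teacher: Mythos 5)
Your proposal is correct and follows essentially the same route as the paper: it substitutes the Step 1 reduction (\ref{6.23.19.19}) and the decomposition (\ref{11.1.3.19}) of $\Box_\bg(\Xi_L)$ into (\ref{6.23.7.19}), applies Lemma \ref{6.23.17.19} to the remaining curvature terms, and converts $e^\varrho\curl\Omega_\bN$ into $\ep^{AB}\sn_A\fw_B$ via (\ref{9.12.1.19}), with the quadratic discrepancy (which the paper records as $\tir^{-1}\pi\c\pi=\bE\c\bA$) absorbed into the stated error classes. No gaps.
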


\begin{proof}
Note due to Proposition \ref{6.7con}, $\bd \pi=\bd \ti \pi \c X+\bE$. Hence substituting  (\ref{11.1.3.19}) and  (\ref{6.23.19.19}) to (\ref{6.23.7.19}),
also using Lemma \ref{6.23.17.19}, we can conclude
\begin{align}
(L\ti\mu +\tr\chi\ti\mu) -(\tir^{-1} \sl{\div} \pi  +\tir^{-2}\pi)-\tir^{-1} \delta_{ij}\bN^i e^\varrho \curl \Omega^j& =\chih \c\sn \ti \zeta+\sn \sigma\c (\bE+\sn \pi+\sn \tr\chi)\nn\\
&+\bA\c(\sn \tr\chi, \bE, \bd \ti\pi)+e^\varrho\p \Omega\bA. \label{11.1.1.19}
\end{align}
Now we recast (\ref{9.12.1.19}) as
\begin{align}
e^\varrho \bN^i \curl \Omega_i&=\ep^{AB} \sn_A (e^\varrho\Omega_B)-e^\varrho \ep^{AB} \sn_A \varrho \Omega_B=\ep^{AB} \sn_A \fw_B-\ep^{AB} \sn_A \varrho \fw_B\nn\\
&=\ep^{AB} \sn_A \fw_B+\pi \c \pi,\label{11.1.2.19}
\end{align}
where we used the definition of $\fw$ for giving the symbolic form of the second term.
Using the above formula for $\tir^{-1} e^\varrho \bN^i \curl \Omega_i$ and  since we can regard $\tir^{-1} \pi\c \pi=\bE\c \bA$, (\ref{mutrans}) follows
as a consequence of the above calculation and (\ref{11.1.1.19}).
\end{proof}
 Let $\ti \zeta = \zeta +\sn \sigma$. We recall the Hodge operator $\D_1$
which sends an $S_{t,u}$-tangent tensor $F$ to $(\sl{\div} F, \sl{\curl} F)$. Thus we can write
$
\sn \tilde \zeta = \sn \D_1^{-1} (\sl{\div}\ti \zeta, \sl{\curl} \ti \zeta)
$
and use the Hodge system
\begin{align}
\sl{\div}\ti\zeta&=\f12(\ckk \mu-2|\zeta|^2-|\chih|^2-2 k_{AB} \chih_{AB})+\sl{\div} \pi_2+{\bE} \nn\\
&=\frac{1}{2} (\ti \mu-\delta_{ij}e^\varrho\bN^i \curl \Omega^j) +\sl{\div} \pi_2 +\bE, \label{hodge1} \\
\sl{\curl} \ti\zeta&=\f12 \ep^{AB} k_{AC} \chih_{CB}+\f12 \ep^{AB}\bR_{B43A}=\sl{\curl} \pi_3+\bE\label{hodge2},
\end{align}
which are directly derived from (\ref{dze}) and  (\ref{dcurl}) together with the curvature decomposition Lemma \ref{6.23.17.19} (iv), where $\pi_2$ and $\pi_3$ are 1-forms of type $\pi$.

By using Proposition \ref{Trans_mu} and the Hodge system (\ref{hodge1}) and (\ref{hodge2}), we will prove
\begin{proposition}\label{6.23.9.19}
For any $p$ satisfying $0\le 1-\frac{2}{p}<s'-2$ there hold
\begin{align}
&\|\sn \sigma\|_{L_u^2 L_t^2 L_\omega^\infty(\D^+)}+\|\tir\ckk \mu,\tir \ti \mu, \tir\sn \ti \zeta\|_{L_u^2 L_t^2 L_\omega^{p}(\D^+)}\les \la^{-4\ep_0},\label{ckmu2}\\
&\|\tir^{\frac{3}{2}} \ckk \mu, \tir^\frac{3}{2} \ti \mu \|_{L_u^2 L_t^\infty L_\omega^{p}(\D^+)}\les \la^{-4\ep_0}. \label{ckmu3}
\end{align}
\end{proposition}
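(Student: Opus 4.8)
The strategy is to integrate the transport equation \eqref{mutrans} for $\ti\mu$ along the null geodesics generating $C_u$, using the transport lemma (Lemma \ref{tsp2}) with $m=1$, and then feed the resulting bound on $\ti\mu$ into the Hodge system \eqref{hodge1}--\eqref{hodge2} via the Calderón--Zygmund estimates of Proposition \ref{cz} and Proposition \ref{cz.2} to control $\sn\ti\zeta$ and hence $\sn\sigma=\ti\zeta-\zeta$. First I would record the initial data: along any $C_u$ with $u\ge 0$, the quantities $\tir^2\mu$, $\tir\ckk\mu$, $\tir\ti\mu$, $\tir\sn\ti\zeta$ all vanish as $t\to u$ (this follows as in Lemma \ref{inii}(i) together with $\sigma(\Ga^+)=0$, $L\sigma=\f12\Xi_L$, and $\curl\Omega$ bounded at the vertex); for $u<0$ one instead invokes Proposition \ref{exten}. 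Then, since $\tir\tr\chi\approx 1$ by \eqref{comp2}, Lemma \ref{tsp2} applied to $L\ti\mu+\tr\chi\ti\mu=(\text{RHS of }\eqref{mutrans})$ gives
\begin{align*}
\tir\,|\ti\mu(t)|\les \lim_{\tau\to t_\tmin}(\tau-u)|\ti\mu(\tau)|+\int_{t_\tmin}^t \tir\,\big|\text{RHS}\big|\,dt'.
\end{align*}
The right side of \eqref{mutrans} is a sum of: a genuinely singular linear term $\tir^{-1}(\sl{\div}\pi+\ep^{AB}\sn_A\fw_B)+\tir^{-2}\pi$; a term $\chih\c\sn\ti\zeta$ (which couples back to the unknown and must be absorbed); the product $\sn\sigma\c(\bE+\sn\pi+\sn\tr\chi)$; terms $\bA\c(\sn\tr\chi,\bE,\bd\ti\pi)$; and the vorticity term $e^\varrho\p\Omega\,\bA$.

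\textbf{Key steps.} The treatment of each piece parallels \cite[Section 6]{Wangrough} with the new vorticity contributions handled using the flux bounds of Proposition \ref{7.13.2.19} and Proposition \ref{9.30.15.19}. Concretely: (i) the singular linear term is controlled in $L_t^2 L_\omega^p$ after multiplying by $\tir$ by the flux estimates $\|\tir\sn\pi,\pi\|_{L_t^2 L_\omega^p(C_u)}\les\la^{-\f12}$ in \eqref{flux_3} and the analogous bound for $\ep^{AB}\sn_A\fw_B=e^\varrho\bN^i\curl\Omega_i+\pi\c\pi$ via \eqref{9.12.1.19}, \eqref{9.13.2.19} and \eqref{lpcurl} — the $\tir^{-1}$ weight is exactly compensated; (ii) $\chih\c\sn\ti\zeta$ is handled by a bootstrap: using $\|\chih\|_{L_t^2 L_\omega^\infty(C_u)}\les\la^{-\f12}$ from \eqref{ric3.18.1}, the term $\|\tir\chih\c\sn\ti\zeta\|_{L_t^1 L_\omega^p}\les \|\chih\|_{L_t^2 L_\omega^\infty}\|\tir\sn\ti\zeta\|_{L_t^2 L_\omega^p}$ can be absorbed into the left side once one reformulates the estimate as a closed system in $\|\tir\sn\ti\zeta\|_{L_t^2 L_\omega^p}$ via the elliptic estimate Lemma \ref{hdgm1} applied to \eqref{hodge1}--\eqref{hodge2}; (iii) $\sn\sigma\c(\cdots)$ uses the preliminary $\|\sn\sigma\|_{L_t^2 L_\omega^p}\les\la^{-\f12}$ from \eqref{6.22.18.19} together with $\|\sn\tr\chi,\sn\pi\|_{L_t^2 L_\omega^p}\les\la^{-\f12}$ (from \eqref{sna}, \eqref{flux_3}) and $\|\bA\|_{L_t^2 L_\omega^\infty}\les\la^{-\f12}$; (iv) the $\bA\c\bd\ti\pi$ terms use \eqref{6.18.5.19}; (v) the vorticity term $e^\varrho\p\Omega\,\bA$ uses $\|\tir\p\Omega\|_{L_\omega^p}\les\la^{-1-4\ep_0}$ in \eqref{lpcurl} and $\|\bA\|_{L_t^2 L_\omega^\infty(C_u)}\les\la^{-\f12}$. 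Collecting these, using the maximal function inequality \eqref{hlm} for the $L_u^2 L_t^2$ and $L_u^2 L_t^\infty$ norms over $\D^+$ (where $u\ge 0$), and using $\tir\les\la^{1-8\ep_0}T$ to gain the small power $\la^{-4\ep_0}$, yields \eqref{ckmu2}. Then \eqref{ckmu3} follows from the same integration taking $L_t^\infty$ in $t$ instead of $L_t^2$, with the extra $\tir^{1/2}$ weight absorbed by the Sobolev-type bookkeeping as in \eqref{7.04.20.19}. Finally, the stated bound on $\|\sn\sigma\|_{L_u^2 L_t^2 L_\omega^\infty}$ comes from $\sn\sigma=\ti\zeta-\zeta$, the bound on $\sn\ti\zeta$ just obtained, the $L^\infty$-Sobolev embedding \eqref{sobinf}, and the estimate $\|\zeta\|_{L_t^2 L_\omega^\infty}\les\la^{-\f12}$ from \eqref{ric3.18.1}, together with recovering $\ti\zeta$ itself (not just $\sn\ti\zeta$) via Proposition \ref{cz.2} applied to \eqref{hodge1}--\eqref{hodge2} with the $\ti\mu$ bound as input.

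\textbf{Main obstacle.} The hard part is the coupled estimate in step (ii): $\sn\ti\zeta$ appears both as the quantity being estimated (through the Hodge system) and on the right side of the transport equation for $\ti\mu$ via the $\chih\c\sn\ti\zeta$ term, while $\ti\mu$ in turn drives $\sl{\div}\ti\zeta$. One must set up the correct closed inequality — schematically bounding $\|\tir\ti\mu\|_{L_t^2 L_\omega^p}+\|\tir\sn\ti\zeta\|_{L_t^2 L_\omega^p}$ simultaneously — and check that the coefficient multiplying the feedback term is $O(\la^{-\f12})$, hence absorbable for $\la\ge\La$ large. A secondary subtlety is that the $\tir^{-1}(\sl{\div}\pi+\ep^{AB}\sn_A\fw_B)$ term is exactly critical: there is no room to spare, so one must use the sharp flux bounds \eqref{flux_3}, \eqref{9.13.2.19} and not merely the energy bounds, and the cancellation structure built into the definitions \eqref{6.23_mu}, \eqref{10.5.6.19} of $\ckk\mu$ and $\ti\mu$ (which removed the second-order terms $\Box_\bg\Xi_4$ and the top-order vorticity term $\Lb(e^\varrho\bN^i\curl\Omega^i)$) is what makes this possible. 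Everything else is routine product estimation with the already-established norms from Sections \ref{c_flux} and \ref{causal_reg}.
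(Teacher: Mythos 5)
Your overall architecture matches the paper's: integrate the transport equation \eqref{mutrans} for $\ti\mu$, feed the result into the Hodge system \eqref{hodge1}--\eqref{hodge2} through Lemma \ref{hdgm1} and Proposition \ref{cz.2}, absorb the $\chih\c\sn\ti\zeta$ feedback using its $O(\la^{-\f12})$ coefficient, and finish with $\sn\sigma=\ti\zeta-\zeta$ and \eqref{sobinf}. But your execution has a genuine gap at exactly the point you call "critical". You apply Lemma \ref{tsp2} with $m=1$, producing $\tir|\ti\mu(t)|\les\int_{t_\tmin}^t\tir'\,|\mathrm{RHS}|\,dt'$. With this weight the singular linear terms do \emph{not} close: $\tir'\cdot\tir'^{-1}\sl{\div}\pi$ leaves $\int_{t_\tmin}^t|\sn\pi|\,dt'$ and $\tir'\cdot\tir'^{-2}\pi$ leaves $\int_{t_\tmin}^t\tir'^{-1}|\pi|\,dt'$, and near the vertex ($t_\tmin=u$, $\tir'\to0$) neither is controlled by the available flux bounds, which are all $\tir$-weighted (\eqref{flux_3} gives only $\|\tir\sn\pi\|_{L_t^2L_\omega^p}$ and $\|\pi\|_{L_t^2L_\omega^p}$; an unweighted $L_t^1$ or $\tir^{-1}$-weighted bound would amount to a trace on the time axis and is not available). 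So your claim that "the $\tir^{-1}$ weight is exactly compensated" fails under your weighting. The correct bookkeeping keeps the full damping $\tr\chi$ (i.e.\ $m=2$), giving $\tir^2|\ti\mu(t)|\les\int\tir'^2|\mathrm{RHS}|$, and then divides by $\tir$ (resp.\ $\tir^{1/2}$), so that the singular terms appear as $\tir^{-1}\int_u^t(\tir'|\sn\pi|+|\pi|+\tir'|\ep^{AB}\sn_A\fw_B|)\,dt'$, an averaging structure to which the Hardy--Littlewood maximal inequality \eqref{hlm} applies and which is then bounded by the $\tir$-weighted fluxes \eqref{flux_3}, \eqref{pric3}, \eqref{lpcurl}. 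Your later invocation of \eqref{hlm} cannot repair this, because your Gronwall bound has no $\tir^{-1}\int$ structure left to average.

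A second, related weakness is the term $\sn\sigma\c(\sn\tr\chi+\sn\pi+\bE)$. You propose to use only the preliminary bound \eqref{6.22.18.19}, which controls $\sn\sigma$ in $L_t^2L_\omega^p$ (and a weighted $L_\omega^pL_t^\infty$ norm) but not in $L_\omega^\infty$; pairing two $L_\omega^p$ quantities does not yield the $L_\omega^p$ bound you need for the product, since $\sn\tr\chi$, $\sn\pi$ are themselves only in $\tir^{-1}L_t^2L_\omega^p$. The paper resolves this self-referential coupling by a bootstrap assumption $\|\sn\sigma\|_{L_t^2L_u^2L_\omega^\infty(\D^+)}\le1$, pairing the $L_\omega^\infty$ norm of $\sn\sigma$ with the $L_\omega^p$ flux of $\sn\tr\chi,\sn\pi,\bE$, and then improving it to $\la^{-4\ep_0}$ at the end via $\sn\sigma=\ti\zeta-\zeta$, \eqref{sobinf}, \eqref{7.03.5.19} and \eqref{6.22.18.19}. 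Your proposal needs either this bootstrap or an explicit substitute (e.g.\ bounding $\ti\zeta$ in $L_\omega^\infty$ through Proposition \ref{cz.2} and absorbing yet another $\ti\mu$-coupling); as written, the exponents do not match and the argument does not close.
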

\begin{proof}
We prove the estimate on $\|\sn \sigma\|_{L_u^2 L_t^2 L_\omega^\infty}$ by making the bootstrap assumption
\begin{equation}\label{ba.1}
\|\sn \sigma\|_{L_t^2 L_u^2 L_\omega^\infty(\D^+)}\le 1
\end{equation}
and improving it to
\begin{equation}\label{con.1}
\|\sn \sigma\|_{L_u^2 L_t^2 L_\omega^\infty(\D^+)}\les \la^{-4\ep_0}.
\end{equation}
The other estimates will be established during the course of the derivation.

We first consider the estimates for $\ckk \mu$. In view of (\ref{10.5.6.19}), using (\ref{lpcurl}), on any $S_{t,u}$  contained in $\D^+$ we bound
\begin{equation}\label{10.11.1.19}
\|\tir\ckk \mu\|_{L_\omega^p}\les \|\tir\ti \mu\|_{L_\omega^p}+\|\tir e^\varrho\delta_{ij} \bN^i\curl \Omega^j\|_{L_\omega^p}\les \|\tir \ti \mu\|_{L_\omega^p}+\la^{-1}.
\end{equation}
The estimates of $\ckk\mu$ in (\ref{ckmu2}) and (\ref{ckmu3}) will follow from  integrating (\ref{10.11.1.19})  and the estimates of $\ti\mu$ in the corresponding norms.
Note that $\ti \mu$ verifies the transport equation (\ref{mutrans}). By using Lemma \ref{inii} (i), $\lim_{t\rightarrow u}|\tir^2 \ti \mu|=0$.
 Thus,  by Lemma \ref{tsp2} and (\ref{comp_3_27}), we have
\begin{equation}\label{gronmu}
|\tir^2\ti\mu|\les\left|\int_u^t  {\ti r}^2 |L\ti\mu+\tr\chi\ti \mu| d t'\right|.
\end{equation}
We first compute
\begin{align*}
\left\|\frac{1}{\tir}\int_u^t \tir^2 \sn \sigma(\sn \pi +\sn\tr\chi+\bE)\right\|_{L_u^2 L_t^\infty L_\omega^{p}(\D^+)}
\les \|\sn \sigma\|_{L_u^2 L_t^2 L_x^\infty(\D^+)} \|\tir \sn \pi, \tir\sn \tr\chi, \tir \bE\|_{L_u^\infty L_t^2 L_\omega^{p}(\D^+)}.
\end{align*}
Recall that $\bE = \bA\c \bA + \tr\chi \c \bA$ and $\tr\chi=\widetilde{\tr\chi}-\Xi_4$. By using (\ref{comp2}), (\ref{pric1}) and (\ref{pric3}),
\begin{equation*}
\|\tir \bE\|_{L_t^2 L_\omega^p(C_u\cap \D^+)}\le \|\tir \widetilde{\tr\chi} \bA\|_{L_t^2L_\omega^p(C_u\cap \D^+)}+\|\tir \bA\c \bA\|_{L_t^2 L_\omega^p(C_u\cap \D^+)}\les \la^{-\f12}.
\end{equation*}
Using  $\tr\chi=z-\Xi_4+\frac{2}{\tir}$ again,  also using the  estimates in (\ref{sna}) and (\ref{flux_3}), we have
\begin{equation}\label{10.11.4.19}
\|\tir\sn \tr\chi, \tir \sn \pi\|_{L_t^2 L_\omega^p(C_u)}\les \la^{-\f12}.
\end{equation}
Combining the above estimates with the estimate of $\bE$ and (\ref{ba.1}) implies
\begin{equation*}
\left\|\frac{1}{\tir}\int_u^t \tir^2 \sn \sigma(\sn \pi +\sn\tr\chi+\bE)\right\|_{L_u^2 L_t^\infty L_\omega^{p}(\D^+)}\les \la^{-\f12}\|\sn \sigma\|_{L_u^2 L_t^2 L_\omega^\infty(\D^+)}\les \la^{-\f12}.
\end{equation*}
Moreover, by the following consequence of  (\ref{ric1}),
\begin{equation}\label{10.11.2.19}
\|\bA\|_{L_t^2 L_x^\infty(\D^+)}\les \la^{-\f12-4\ep_0},
\end{equation}
also by using (\ref{comp2}) and (\ref{pric3}), it is easily seen that
\begin{equation}\label{err_4_4}
\|\tir \bE\|_{L_t^2 L_u^\infty L_\omega^p(\D^+)}\les \la^{-\f12-4\ep_0}, \quad\|\tir^\frac{3}{2} \bE\|_{L_u^2 L_t^\infty L_\omega^p(\D^+)}\les \la^{-4\ep_0}.
\end{equation}
It follows from (\ref{err_4_4}), (\ref{6.18.3.19}), the first estimate in (\ref{10.11.4.19}) and  (\ref{lpcurl}) that
\begin{equation}\label{10.11.5.19}
 \|\tir (\sn \tr\chi,\bd \ti\pi, e^\varrho\p\Omega, \bE)\|_{L_u^2 L_t^2 L_\omega^{p}(\D^+)}\les \la^{-4\ep_0}.
\end{equation}
 Consequently, by using  (\ref{10.11.3.19})
\begin{align*}
&\left\|\frac{1}{\tir}\int_u^t \tir^2 \bA\c(\sn \tr\chi, \bd \ti\pi, e^\varrho \p \Omega, \bE)\right\|_{L_u^2 L_t^\infty L_\omega^{p}(\D^+)}\\
&\quad\quad \quad\les\|\bA\|_{L_u^\infty L_t^2 L_\omega^\infty(\D^+)} \|\tir (\sn \tr\chi,\bd \ti\pi, e^\varrho\p\Omega, \bE)\|_{L_u^2 L_t^2 L_\omega^{p}(\D^+)}\\
 &\quad\quad \quad\les \la^{-\f12-4\ep_0}.
\end{align*}
For the term $\chih\c \sn\ti\zeta$, we may use (\ref{hodge1}), (\ref{hodge2}), Lemma \ref{hdgm1}, (\ref{10.11.5.19}) and  (\ref{10.11.3.19})
to derive that
\begin{align*}
\left\|\frac{1}{\tir} \int_u^t \tir^2 \chih\c \sn \ti\zeta dt'\right\|_{L_u^2 L_t^\infty L_\omega^p(\D^+)}
&= \left\|\frac{1}{\tir}\int_u^t \tir'^2 \chih\c \sn \D_1^{-1} (\sl{\div} \ti \zeta, \sl{\curl} \ti \zeta)d\tt\right\|_{L_u^2 L_t^\infty L_\omega^{p}(\D^+)}\\
&\les \|\chih\|_{L_u^\infty L_t^2 L_\omega^\infty(\D^+)} \|\tir (\ti\mu,\sn \pi, \bE, e^\varrho\curl \Omega)\|_{L_u^2 L_t^2 L_\omega^p(\D^+)}\\
&\les (\la^{-4\ep_0}+\|\tir\ti\mu\|_{L_u^2 L_t^2 L_\omega^p(\D^+)})\la^{-\f12}.
\end{align*}
Finally, for the second part on the left hand side of (\ref{mutrans}), we use (\ref{hlm}), (\ref{flux_3}), (\ref{pric3}) and (\ref{lpcurl}) to derive that
\begin{equation*}
\left\|\tir^{-1} \int_u^t (\tir'\sl{\div} \pi, \pi, \tir' \ep^{AB}\sn_A \fw_B )\right\|_{L_t^2 L_\omega^p(C_u\cap \D^+)}\les \|\tir \sn \pi, \pi, \tir\pi\c \pi,  \tir \p \Omega\|_{L_t^2 L_\omega^p(C_u\cap \D^+)} \les \la^{-\f12},
\end{equation*}
where we used (\ref{11.1.2.19}) to obtain $|\ep^{AB}\sn_A \fw_B|\les |\p \Omega|+|\pi\c \pi|$.

Now we may divide (\ref{gronmu}) by $\tir$ and use the above estimates to derive that
\begin{equation*}
\|\tir\ti\mu\|_{L_u^2 L_t^2 L_\omega^p(\D^+)}\les \la^{-4\ep_0}\|\tir\ti\mu\|_{L_u^2 L_t^2 L_\omega^p(\D^+)}+\la^{-4\ep_0},
\end{equation*}
which gives
\begin{equation}\label{mu_44}
\|\tir \ti\mu\|_{L_u^2 L_t^2 L_\omega^p(\D^+)}\les \la^{-4\ep_0}.
\end{equation}
We may divide (\ref{gronmu}) by $\tir^{\f12}$ and employ the similar argument as above to derive
\begin{equation*}
\|\tir^\frac{3}{2}\ti \mu\|_{L_u^2 L_t^\infty L_\omega^p(\D^+)}\les \la^{-\f12}.
\end{equation*}
We thus have obtained the estimates for $\ti \mu$ in (\ref{ckmu2}) and (\ref{ckmu3}).
By (\ref{10.11.1.19}), the estimates of $\ckk \mu$ in (\ref{ckmu2}) and (\ref{ckmu3}) are also proved.

By making use of (\ref{hodge1}), (\ref{hodge2}), Lemma \ref{hdgm1}, (\ref{mu_44}) and (\ref{10.11.5.19}), we can obtain that
\begin{align*}
\|\tir\sn \ti \zeta\|_{L_u^2 L_t^2 L_\omega^p(\D^+)}&\les \|\tir \ti\mu\|_{L_u^2 L_t^2 L_\omega^p(\D^+)}
+\|\tir (\bE+\sn\pi+e^\varrho \p \Omega)\|_{L_u^2 L_t^2 L_\omega^p(\D^+)}\les \la^{-4\ep_0}.
\end{align*}
Using the above estimate, in view of $\sn \sigma=\ti\zeta-\zeta$, (\ref{con.1}) can be obtained by using (\ref{sobinf}), the first estimate in (\ref{7.03.5.19}) and the last estimate in the line of (\ref{6.22.18.19}).
The proof is thus complete.
 \end{proof}

\begin{proposition}\label{dcmpsig}
$\sn \sigma$ can be decomposed as
$$
\sn \sigma=\bA+\bA^\dag+\mu^\dag,
$$
where $\bA^\dag$ and $\mu^\dag$ are 1-forms satisfying the estimates
$$
\|\bA^\dag\|_{L_t^2 L_x^\infty(\D^+)}\les \la^{-\f12-4\ep_0} \quad \mbox{and} \quad \|\mu^\dag\|_{L_u^2 L^\infty(\D^+)}\les \la^{-\f12-4\ep_0}.
$$
\end{proposition}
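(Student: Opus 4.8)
The plan is to build the decomposition directly from the identity $\sn\sigma=\ti\zeta-\zeta$, where $\ti\zeta=\zeta+\sn\sigma$ is the quantity controlled in Proposition \ref{6.23.9.19}. Since $\zeta$ is one of the quantities generically denoted $\bA$, the term $-\zeta$ is placed into the $\bA$ part of the claimed splitting, and it remains only to decompose $\ti\zeta$ itself. For this I would invert the Hodge operator $\D_1$ on each sphere $S_{t,u}\subset\D^+$, writing $\ti\zeta=\D_1^{-1}(\sl{\div}\ti\zeta,\sl{\curl}\ti\zeta)$ and substituting the Hodge system (\ref{hodge1})--(\ref{hodge2}) together with the identity (\ref{11.1.2.19}) for $\delta_{ij}e^\varrho\bN^i\curl\Omega^j$. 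After this substitution the sources of the Hodge system for $\ti\zeta$ split into: angular derivatives of $\pi$-type quantities ($\sl{\div}\pi_2$, $\sl{\curl}\pi_3$); an angular curl of the $S_{t,u}$-tangential part of $\fw=\Omega e^\varrho$ (the term $\ep^{AB}\sn_A\fw_B$); the mass aspect term $\tfrac12\ti\mu$; and $\bE=\bA\c\bA+\tr\chi\c\bA$ errors.

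First I would handle the $\pi$-derivative and vorticity-derivative sources together with Proposition \ref{cz.2}: since $\pi$ is a contraction of $\ti\pi=f(\bg)\bp\bg$ and $\fw$ is a $\Sigma_t$-tensor whose tangential part is $\Pi^{ij}\fw_j$, this proposition yields $\|\D_1^{-1}(\dots)\|_{L^\infty(S_{t,u})}\les\|\mu^\delta P_\mu(\ti\pi,\fw)\|_{l^c_\mu L^\infty(S_{t,u})}+\|(\ti\pi,\fw)\|_{L^\infty(S_{t,u})}+\tir^{1-\frac2q}\|e\|_{L^q(S_{t,u})}$ with $e$ collecting the $\bE$-type remainders. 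Taking $L^2_t$ along $C_u$ and $L^\infty_u$, the $\ti\pi$-contributions are bounded by $\la^{-\f12-4\ep_0}$ using (\ref{pi.2}) and the flux controls (\ref{flux_3}), (\ref{6.18.5.19}); the $\fw$-contributions are bounded by the same quantity using the sharpened vorticity flux along the cones, Proposition \ref{9.30.15.19} and Proposition \ref{7.13.2.19} (in particular (\ref{9.13.2.19})--(\ref{9.13.3.19})); and the $\bE$-errors are controlled by writing $\tr\chi=z-\Xi_4+2\tir^{-1}$ and invoking (\ref{comp2}), (\ref{ric1}), (\ref{pric3}). All of these together form $\bA^\dag$, and the estimate $\|\bA^\dag\|_{L_t^2 L_x^\infty(\D^+)}\les\la^{-\f12-4\ep_0}$ follows.

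The only source that cannot be absorbed in this way is the mass aspect term $\tfrac12\ti\mu$ in $\sl{\div}\ti\zeta$, because Proposition \ref{6.23.9.19} controls $\ti\mu$ (and $\ckk\mu$) only with an $L^2_u$ integration, through (\ref{ckmu2}) and (\ref{ckmu3}). I would therefore set $\mu^\dag:=\D_1^{-1}(\tfrac12\ti\mu,0)$; applying the elliptic bound $\|\mu^\dag\|_{L^\infty(S_{t,u})}\les\tir^{1-\frac2p}\|\ti\mu\|_{L^p(S_{t,u})}\approx\tir^{-\f12}\cdot\tir^{\frac32}\|\ti\mu\|_{L^p_\omega(S_{t,u})}$ and combining with the weight bookkeeping $v_t\approx\tir^2$ reduces $\|\mu^\dag\|_{L_u^2 L^\infty(\D^+)}$ to the $\tir^{3/2}$-weighted estimate (\ref{ckmu3}), giving the bound $\la^{-\f12-4\ep_0}$. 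Assembling $\sn\sigma=(-\zeta)+\bA^\dag+\mu^\dag$ yields the proposition. I expect the main obstacle to be the vorticity contribution $\ep^{AB}\sn_A\fw_B$: one must certify that it lands in the honest $L^2_tL^\infty_x$ part $\bA^\dag$ rather than in the weaker $\mu^\dag$, which forces the use of the cone-based controls for $\curl\C$ and $\p\Omega$ (not merely the slice-wise bounds) and a careful matching of the $\tir^{1-2/p}$ weight from the elliptic inversion against the $\tir$-weighted flux norms uniformly in $u$ and along each $C_u$.
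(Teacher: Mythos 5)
Your treatment of the $\bA^\dag$ part (slicewise inversion of the Hodge system for $\ti\zeta$ with the $\pi$-, vorticity- and $\bE$-sources, estimated through Proposition \ref{cz.2}, (\ref{pi.2}), the flux bounds and (\ref{lpcurl})) is essentially the paper's treatment of the corresponding piece, and your identification of $\bA=-\zeta$ is the same. The genuine gap is in $\mu^\dag$. First, a structural point: $\D_1^{-1}(\tfrac12\ti\mu,0)$ is not even well defined on $S_{t,u}$ unless the source has zero mean, so one must work with $\tfrac12(\ti\mu-\overline{\ti\mu})$ and absorb $\overline{\ti\mu}=\overline{\bE}+\overline{\delta_{ij}e^\varrho\bN^i\curl\Omega^j}$ elsewhere. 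More seriously, your estimate does not close quantitatively. The slicewise elliptic bound gives $\|\mu^\dag\|_{L^\infty(S_{t,u})}\les \tir\|\ti\mu\|_{L^p_\omega}=\tir^{-\f12}\big(\tir^{\frac32}\|\ti\mu\|_{L^p_\omega}\big)$, and (\ref{ckmu3}) only controls $\sup_t\big(\tir^{3/2}\|\ti\mu\|_{L^p_\omega}\big)$ in $L^2_u$: the factor $\tir^{-\f12}$ is unbounded as $t\to u$ on cones from the axis, so $\sup_t$ of the product is not controlled; and even away from the vertex, where $\tir^{-\f12}\les \tau_*^{-\f12}\sim\la^{-\f12+4\ep_0}$, you would obtain at best $\la^{-\f12+4\ep_0}\cdot\la^{-4\ep_0}=\la^{-\f12}$, missing the required $\la^{-\f12-4\ep_0}$. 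Using (\ref{ckmu2}) instead gives an $L^2_u L^2_t L^\infty_\omega$ bound of size $\la^{-4\ep_0}$, which is both the wrong norm ($L^\infty$ in $t$ is needed) and the wrong size. The root of the problem is that Proposition \ref{6.23.9.19} controls the mass aspect function only at size $\la^{-4\ep_0}$, so no slicewise inversion of $\ti\mu$ can produce the $\la^{-\f12-4\ep_0}$ gain demanded of $\mu^\dag$.

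The paper's route supplies the missing idea: introduce the auxiliary 1-form $\smu$ on each sphere by $\sl{\div}\smu=\f12(\ti\mu-\overline{\ti\mu})$, $\sl{\curl}\smu=0$, and instead of estimating it elliptically, derive its transport equation along $L$ from Proposition \ref{Trans_mu} (equations (\ref{g1_1})--(\ref{3_21_1})) and integrate it with Lemma \ref{tsp2}. Then $\mu^\dag:=v_t^{-\f12}\int_u^t v_{t'}^{\f12}\D_1^{-1}(G_1,G_2)\,dt'$, and the required smallness comes from the quadratic (product) structure of $G_1,G_2$, quantified by $\|\tir(G_1,G_2)\|_{L_u^2L_t^1L_\omega^p}\les\la^{-\f12-4\ep_0}$ (estimate (\ref{10.11.6.19})); the $L^2_u$ structure of the claimed norm for $\mu^\dag$ is forced exactly because $G_1,G_2$ contain $\smu$ and $\sn\sigma$, which are only bounded in $L^2_uL^2_tL^\infty_\omega$. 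The singular linear terms $\tir^{-1}(\sl{\div}\pi+\ep^{AB}\sn_A\fw_B)+\tir^{-2}(\pi-\overline\pi)$ produced by this renormalization, together with $\D_1^{-1}\big(\sl{\div}\ti\zeta-\f12(\ti\mu-\overline{\ti\mu}),\sl{\curl}\ti\zeta\big)$, constitute $\bA^\dag$ and are the pieces one estimates slicewise via Proposition \ref{cz.2} in $L_t^2L_x^\infty$, much as you proposed. Incidentally, your worry that the vorticity term $\ep^{AB}\sn_A\fw_B$ is the main obstacle is misplaced: it is handled by regarding $\fw$ as a $\ti\pi$-type quantity (plus (\ref{lpcurl})), and the true obstruction is the mass aspect term you tried to settle by direct inversion.
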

\begin{proof}
Now using $\ti \mu$, we introduce an $S_{t,u}$-tangent 1-tensor field $\sl \mu$. On each $S_{t,u}$, $ \sl\mu$ is defined by
\begin{equation}\label{hdg_1}
\sl{\div} \slashed{\mu}=\f12(\ti \mu-\overline{\ti\mu}),  \qquad \sl{\curl} \slashed{\mu}=0,
\end{equation}
where $\overline{f}:=\frac{1}{|S_{t,u}|} \int_{S_{t,u}} f d\mu_\ga$.

By using Lemma \ref{hdgm1}, (\ref{hdg_1}), (\ref{ckmu2}) and (\ref{sobinf}), we derive with $0\le 1-\frac{2}{p}< s'-2$
\begin{equation}\label{smu2}
\|\tir\sn\smu, \smu\|_{L_t^2 L_u^2 L_\omega^p} + \|\smu\|_{L_t^2 L_u^2 L_x^\infty}\les \la^{-4\ep_0}.
\end{equation}
In order to represent $\sn\sigma$, we first derive in view of (\ref{hodge1}) and (\ref{hodge2}),
\begin{equation*}
\zeta +\sn \sigma -\sl \mu = \D_1^{-1} \left(\sl{\div} \ti \zeta -\f12 (\ti \mu-\overline{\ti \mu}), \sl{\curl} \ti\zeta\right).
\end{equation*}
We further decompose $\sl\mu$ into two parts. For this purpose, we will derive a transport equation of $\sl \mu$.

By direct calculation, we have
\begin{equation*}
L \bar f=\overline{L f}+\overline{\tr\chi f-\overline{\tr\chi}{\overline f}}.
\end{equation*}
Let $G$ denote the right hand side of the identity in Proposition \ref{Trans_mu} and note that $\overline{\tir^{-1} \sn_A \pi_A}=0$ and $\overline{\tir^{-1}\ep^{AB} \sn_A \fw_B}=0$.
    We thus can obtain
    \begin{align*}
&L(\ti\mu-\overline{\ti \mu})+\tr \chi (\ti \mu -\overline{\ti\mu})\\
&= -(\tr\chi-\overline{\tr\chi}) \overline{\ti \mu} + G-\overline{G} + \tir^{-1}(\sl{\div} \pi+\ep^{AB} \sn_A \fw_B)+ \tir^{-2} (\pi -\overline{\pi}).
\end{align*}
By using (\ref{cmu2}), similar to \cite[Proposition 6.4]{Wangrough}, we obtain
\begin{align}\label{g1_1}
\begin{split}
\sl{\div} (L\smu+\f12 \tr\chi\smu)&=G_1+\f12\left(\tir^{-1} (\sl{\div} \pi+\ep^{AB}\sn_A\fw_B)+\tir^{-2}(\pi-\overline\pi)\right),\\
\sl{\curl} (L\smu+\f12 \tr\chi \smu)&=G_2,
\end{split}
\end{align}
where
\begin{align*}
G_1&=\f12 \sn\tr\chi \c\smu+\chih \c \sn \smu+\f12( G-\overline{G}) +\smu\c( \sn \pi + \bE)-\f12(\tr\chi-\overline{\tr\chi})\overline{\ti \mu},\\
G_2&=\hat\chi \c \sn \smu+\f12\sn \tr\chi\c\smu+\smu\c (\sn \pi + \bE).
\end{align*}
Consequently
\begin{equation}\label{3_21_1}
L\smu+\f12 \tr\chi \smu =\f12\D_1^{-1} \left(\tir^{-1}(\sl{\div} \pi+\ep^{AB}\sn_A \fw_B)+\tir^{-2} (\pi -\overline\pi),0\right)+\D_1^{-1}(G_1, G_2),
\end{equation}
which together with Lemma \ref{tsp2} implies that
\begin{align*}
\smu=v_t^{-\f12}\int_u^t v_\tt^\f12 \D_1^{-1}(G_1, G_2) d \tt
+ v_t^{-\f12}\int_u^t v_\tt^\f12 \D_1^{-1} \left(\tir^{-1}(\sl{\div} \pi+\ep^{AB} \sn_A \fw_B)+\tir^{-2}(\pi-\overline{\pi}),0\right) d\tt.
\end{align*}
Therefore $\sn \sigma = \bA + \mu^\dag + \bA^\dag$ with $\bA=-\zeta$ and
\begin{align*}
\mu^\dag & = v_t^{-\f12}\int_u^t v_\tt^\f12 \D_1^{-1}(G_1, G_2) d \tt, \displaybreak[0]\\
\bA^\dag & =  v_t^{-\f12}\int_u^t v_\tt^\f12 \D_1^{-1} \left(\tir^{-1}(\sl{\div} \pi+\ep^{AB}\sn_A \fw_B)+\tir^{-2}(\pi-\overline{\pi}),0\right) d\tt\\
 &+ \D_1^{-1} \left(\sl{\div} \ti \zeta -\f12 (\ti\mu-\overline{\ti \mu}), \sl{\curl} \ti\zeta\right).
\end{align*}
Note the first line on the right hand side of  $\bA^\dag$ term contains vorticity.  The control of the last term in $\bA^\dagger$ and the term  $G_1$ also involves the estimate of $\p \Omega$. Now we show
\begin{equation}\label{10.11.6.19}
\|\tir(G_1, G_2)\|_{L_u^2 L_t^1 L_\omega^p(\D^+)}\les \la^{-\f12-4\ep_0}.
\end{equation}
To prove the above estimate, we first treat  the last term in $G_1$ symbolically by
\begin{equation}\label{10.11.8.19}
  (\tr\chi-\overline{\tr\chi})\overline{\ti \mu}=z\c\ti\mu,
\end{equation}
where we have ignored the operator of taking average.

Hence, we schematically recast the terms of $G_1$ and $G_2$ as
\begin{align*}
G_1, G_2&=(\smu, \sn\sigma)\c (\sn\widetilde{\tr\chi}, \sn \pi, \bE)+\chih\c (\sn \ti\zeta, \sn \smu)
+\bA\c(\sn \widetilde{\tr\chi}, \bE, \bd \ti\pi, e^\varrho\p \Omega^j, \ti \mu ).
\end{align*}
By using (\ref{smu2}), (\ref{10.4.2.19}), (\ref{7.03.5.19}), (\ref{10.11.5.19}), (\ref{10.5.4.19}) and  (\ref{ckmu2}), we derive
\begin{align*}
\|\tir( G_1, G_2)\|_{L_u^2 L_t^1 L_\omega^p(\D^+)}&\les \|\smu, \sn \sigma\|_{L_u^2 L_t^2 L_\omega^\infty}\|\tir(\sn \widetilde{\tr\chi}, \sn \pi, \bE)\|_{L_u^\infty L_t^2 L_\omega^p}\\
& \quad \, + \|\bA\|_{L_u^\infty L_t^2 L_\omega^\infty} \|\tir (\sn \zeta, \sn\sl \mu, \sn \widetilde{\tr\chi}, \bE, \bd \ti\pi,\ti\mu, e^\varrho\p \Omega^j )\|_{L_u^2L_t^2 L_\omega^p}\\
& \les \la^{-\f12-4\ep_0},
\end{align*}
as desired in (\ref{10.11.6.19}). As its consequence, in view of (\ref{comp_3_27}), we can obtain $\|\mu^\dagger \|_{L_u^2 L^\infty}\les \la^{-\f12-4\ep_0}$.

Note that by using (\ref{dze}) and Lemma \ref{6.23.17.19} (iv), $\bar \mu=\bar \bE$.
Hence, by the definitions of $\ckk \mu$ and $\ti \mu $ in (\ref{6.23_mu}) and (\ref{10.5.6.19}) respectively,
\begin{equation*}
\bar{\ti \mu}=\overline{\bE}+\overline{\delta_{ij}e^\varrho \bN^i \curl \Omega^j}.
\end{equation*}
Thus by using (\ref{cz2}), (\ref{hodge1}) and  (\ref{hodge2})
\begin{align*}
\|\D_1^{-1}& \left(\sl{\div} \ti \zeta -\f12 (\ti\mu-\overline{\ti \mu}), \sl{\curl} \ti\zeta\right)\|_{L_t^2 L_u^\infty L_\omega^\infty}\\
&\les \|\tir (\bE-\overline{\bE})\|_{L_t^2 L_u^\infty L_\omega^p}+\|\tir\delta_{ij}e^\varrho \bN^i \curl \Omega^j\|_{L_t^2 L_u^\infty L_\omega^p}+\|\ell^c P_{\ell} \ti \pi\|_{L_t^2 l_\ell^2 L_x^\infty}
+\|\ti \pi \|_{L_t^2 L_x^\infty}\\
& \les \|\tir \bE\|_{L_t^2 L_u^\infty L_\omega^p}+\|\ell^c P_{\ell} \ti \pi\|_{L_t^2 l_\ell^2 L_x^\infty}
+\|\ti \pi \|_{L_t^2 L_x^\infty}+\|\tir\p \Omega\|_{L_t^2 L_u^\infty L_\omega^p} \\
&\les \la^{-\f12 -4\ep_0},
\end{align*}
where we used (\ref{err_4_4}), (\ref{pi.2})  and (\ref{lpcurl}) to derive the last inequality.

Finally, by using (\ref{cz2})  with $0<c<s'-2$, (\ref{pi.2}) and (\ref{comp_3_27})
\begin{align*}
\|v_t^{-\f12}\int_u^t &v_\tt^\f12 \D_1^{-1} (\tir^{-1}(\sl{\div} \pi+\ep^{AB}\sn_A \fw_B)+\tir^{-2}(\pi-\overline{\pi}),0) d\tt\|_{L_t^2 L_x^\infty}\\
&\les \|\ell^c P_\ell \ti\pi\|_{L_t^2 l_\ell^2 L_x^\infty}+\|\ti\pi\|_{L_t^2 L_x^\infty}\les \la^{-\f12-4\ep_0}
\end{align*}
where we regarded $\fw=\p v=\ti \pi$. Therefore the proof of  Proposition \ref{dcmpsig} is complete.
\end{proof}



\section{Appendix}\label{app}
In this section, we rely on the trichotomy (\ref{9.08.3.19}) of the Littlewood-Paley projections to derive  commutator estimates and product estimates, which are the basic analytic tools to treat the analysis in the fractional Sobolev spaces.
\begin{lemma}
(1) For smooth scalar functions $F$ and $G$,  $1\le p, q, r\le \infty$ satisfying $\frac{1}{p}+\frac{1}{r}=\frac{1}{q}$,
\begin{equation}\label{5.12.1.19}
\|[P_\mu, F] G\|_{L_x^q}\les \mu^{-1} \|\p F\|_{L_x^p}\|G\|_{L_x^r}
\end{equation}
(2)
For smooth scalar functions $F$ and $G$, with $0<\a<1$,
\begin{align}
&\|\mu^\a[P_\mu, F]\p G\|_{l_\mu^2 L_x^2}\les \|\p F\|_{L_x^\infty} \|G\|_{H^\a_x},\label{4.13.3.19}\\
&\|\mu^\a[P_\mu,  F]\p G\|_{l_\mu^2 L_x^2}\les \| F\|_{L^\infty_x}\|\p G\|_{H^\a_x}+\|F\|_{H^{1+\a}_x} \|G\|_{L_x^\infty},\label{4.13.2.19}\\
&\|\mu^\a\p[P_\mu, F] \p G\|_{l_\mu^2 L_x^2}\les \|\p F\|_{L_x^\infty}\|\p G\|_{H^\a_x}+\|\p F\|_{H^{\a+1}_x}\|G\|_{L^\infty_x}.\label{4.13.4.19}
\end{align}
\end{lemma}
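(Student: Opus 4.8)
The plan is to establish the four commutator estimates \eqref{5.12.1.19}--\eqref{4.13.4.19} by the standard Littlewood-Paley trichotomy, treating the high-high, high-low, and low-high interactions separately using the kernel representation of $[P_\mu, F]$. First I would prove \eqref{5.12.1.19}: writing $P_\mu f(x) = \mu^3 \int \check\beta(\mu(x-y)) f(y)\, dy$, the commutator has kernel $\mu^3 \check\beta(\mu(x-y))(F(y) - F(x))$, and writing $F(y) - F(x) = -\int_0^1 (x-y)\cdot \nabla F(x + s(y-x))\, ds$ gains one factor of $(x-y)$, hence a factor $\mu^{-1}$ after rescaling, since $|z|\,|\check\beta(\mu z)|$ integrates to $\mu^{-1}$ times a constant; then Young's (or H\"older's) inequality with $\frac1p + \frac1r = \frac1q$ gives the bound. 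This is the workhorse estimate and it is elementary.

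For \eqref{4.13.3.19}, I would decompose $G = G_{\le\mu} + \sum_{\la > \mu} G_\la$ (so $\p G$ likewise), and correspondingly split $[P_\mu, F]\p G$ into a piece involving $\p G_{\le\mu}$ and a high-frequency tail. For the low tail, apply \eqref{5.12.1.19} with $(p,r,q) = (\infty, 2, 2)$ to get $\mu^{-1}\|\p F\|_{L^\infty_x}\|\p G_{\le\mu}\|_{L^2_x}$; since $\|\p G_{\le\mu}\|_{L^2_x}\les \sum_{\ell\le\mu}\ell\|G_\ell\|_{L^2_x} = \sum_{\ell\le\mu}\ell^{1-\a}\cdot \ell^\a\|G_\ell\|_{L^2_x}$, multiplying by $\mu^\a$ and summing the geometric factor $\sum_{\ell\le\mu}(\ell/\mu)^{1-\a}$ (convergent since $\a<1$) gives, after taking $\ell^2_\mu$ and using Schur's test / Young's inequality for the convolution in the dyadic variable, the bound $\|\p F\|_{L^\infty_x}\|G\|_{\dot H^\a_x}$. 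For the high tail $\sum_{\la>\mu}[P_\mu, F](\p G)_\la = \sum_{\la>\mu}P_\mu(F_\la' (\p G)_\la)$-type terms (where the frequency of $F$ must also be $\gtrsim\mu$), one uses Bernstein and H\"older to place $F$ in $L^\infty$ via $\|F_\la\|_{L^\infty}\les \la^{-1}\|\p F\|_{L^\infty}$, again producing a summable geometric factor $(\mu/\la)^{1-\a}$; then take $\ell^2_\mu$.

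Estimates \eqref{4.13.2.19} and \eqref{4.13.4.19} follow the same template but with the derivative distributed differently, so the ``bad'' term is the one where all derivatives land on $F$ (or on $\p F$), which is why the right-hand sides carry the $\|F\|_{H^{1+\a}_x}\|G\|_{L^\infty_x}$ (resp.\ $\|\p F\|_{H^{1+\a}_x}\|G\|_{L^\infty_x}$) term: in the high-high and low-high regimes one cannot afford to put $F$ in $L^\infty$ and must instead pay the full $H^{1+\a}$ norm while $G$ (or $\p G$) sits in $L^\infty$ by Bernstein. Concretely, for \eqref{4.13.2.19} I would again split via trichotomy: the high-low piece gives $\|F\|_{L^\infty_x}\|\p G\|_{\dot H^\a_x}$ by \eqref{5.12.1.19} with $(p,r,q)=(\infty,2,2)$ after noting $[P_\mu,F]\p G = P_\mu(F \p G) - F P_\mu \p G$ contributes $\mu^{-1}\|\p F\|\cdot$; the low-high and high-high pieces are where $P_\mu(F_{\gtrsim\mu} (\p G)_\mu)$ appears, bounded by $\|F\|_{H^{1+\a}}\|G\|_{L^\infty}$ after Bernstein ($\|(\p G)_\mu\|_{L^2}\les \mu\|G_\mu\|_{L^2}$ won't help directly; instead place $G$ in $L^\infty$ and $\p$ on the $F$ factor by integration by parts inside $P_\mu$, i.e.\ $P_\mu \p(\cdot) = \mu\tilde P_\mu(\cdot)$). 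For \eqref{4.13.4.19} one additionally commutes the outer $\p$ with $P_\mu$ using $\p P_\mu = \mu \tilde P_\mu$ and then applies the analysis of \eqref{4.13.3.19}-type and \eqref{4.13.2.19}-type bounds. \textbf{The main obstacle} I anticipate is bookkeeping: correctly identifying, in each of the three trichotomy regimes and for each of the three remaining estimates, which factor must be put in $L^\infty$ (via Bernstein, losing $3/2$ derivatives, or via the hypothesis) so that the resulting dyadic sum has a \emph{summable} geometric ratio — i.e.\ verifying that the exponent on $(\mu/\la)$ or $(\ell/\mu)$ is strictly positive, which hinges on the restriction $0<\a<1$ and on never letting both factors be at frequency $\gg\mu$ without gaining from $P_\mu$'s localization. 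Once the regime-by-regime gains are tabulated, each piece closes by Schur's test in the dyadic index followed by $\ell^2_\mu$ summation, with no further subtlety.
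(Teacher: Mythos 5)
Your overall strategy coincides with the paper's: the paper proves part (2) from the splitting identity \eqref{4.13.5.19}, $[P_\mu,F]W=[P_\mu,F]W_{\le\mu}+\sum_{\la>\mu}P_\mu(F_\la W_\la)$, bounds the low piece either by the commutator gain \eqref{5.12.1.19} (for \eqref{4.13.3.19}) or by splitting it as $P_\mu(F_\mu(\p G)_{\le\mu})-FP_\mu \p G$ (for \eqref{4.13.2.19}), moves derivatives with the finite band/Bernstein inequalities, sums the geometric ratios and takes $l_\mu^2$; your kernel proof of \eqref{5.12.1.19} is the standard one and is fine. However, one step of your plan would fail as written: in \eqref{4.13.2.19} you invoke \eqref{5.12.1.19} with $(p,r,q)=(\infty,2,2)$, which costs $\mu^{-1}\|\p F\|_{L_x^\infty}\|\p G\|_{L_x^2}$, but $\|\p F\|_{L_x^\infty}$ does not appear on the right-hand side of \eqref{4.13.2.19} and cannot be recovered from $\|F\|_{L_x^\infty}$ or $\|F\|_{H^{1+\a}_x}$ (for $\a<1$, $H^{1+\a}({\mathbb R}^3)$ does not control $\p F$ in $L^\infty$). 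The commutator gain is simply not used for \eqref{4.13.2.19}: for the piece with $\p G$ at frequencies $\le\mu$ the paper bounds $P_\mu(F_{\mu}(\p G)_{\le\mu})$ by $\mu^{\a}\|F_\mu\|_{L_x^2}\,\|(\p G)_{\le\mu}\|_{L_x^\infty}\les \mu^{1+\a}\|F_\mu\|_{L_x^2}\|G\|_{L_x^\infty}$, giving $\|F\|_{H^{1+\a}_x}\|G\|_{L_x^\infty}$ after $l_\mu^2$, and bounds $F P_\mu\p G$ by $\|F\|_{L_x^\infty}\|\p G\|_{\dot H^\a_x}$; your regime labels (which of high-low/low-high produces which product) are also swapped relative to this.

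Two smaller points. In the high tail of \eqref{4.13.3.19} the ratio is $(\mu/\la)^{\a}$, not $(\mu/\la)^{1-\a}$, since $\mu^\a\|P_\mu(F_\la(\p G)_\la)\|_{L_x^2}\les \mu^\a\,\la^{-1}\|\p F\|_{L_x^\infty}\cdot\la\|G_\la\|_{L_x^2}$; it is still summable for $0<\a<1$, so nothing breaks. For \eqref{4.13.4.19} the paper does not commute $\p$ past $P_\mu$: it uses the Leibniz identity $\p[P_\mu,F]\p G=[P_\mu,\p F]\p G+[P_\mu,F]\p^2 G$ and applies \eqref{4.13.2.19} to $(\p F, G)$ and \eqref{4.13.3.19} to $(F,\p G)$, which matches the stated right-hand side immediately; your route via $\p P_\mu=\mu\ti P_\mu$ also works, but then you must track the extra term $\p F\, P_\mu\p G$ produced by the product rule separately.
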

\begin{proof}
We recall from \cite[(6.195)]{WangCMCSH} that for smooth scalar functions $f$ and $W$,
\begin{equation}\label{4.13.5.19}
[P_\mu, f]W=[P_\mu, f]W_{\le \mu}+\sum_{\la>\mu}P_\mu(f_\la W_\la).
\end{equation}
We consider (\ref{4.13.3.19}) by applying (\ref{4.13.5.19}) to $(f,W)=(F, \p G)$.
\begin{equation*}
\|\mu^\a[P_\mu, F] (\p G)_{\le \mu}\|_{L_x^2}\les  \| \p F\|_{L_x^\infty}\sum_{l\le \mu} (\frac{l}{\mu})^{1-\a} \|l^\a G_l\|_{L_x^2}.
\end{equation*}
Taking $l_\mu^2$ norm gives
\begin{equation*}
\|\mu^\a[P_\mu, F](\p G)_{\le \mu}\|_{l_\mu^2 L_x^2}\les \|\p F\|_{L_x^\infty}\|G\|_{H^\a_x}.
\end{equation*}
For the high-high interaction term  in (\ref{4.13.5.19}), by using the finite band property
\begin{equation*}
\|\mu^\a \sum_{\la>\mu}P_\mu(F_\la (\p G)_\la)\|_{L_x^2}\les \sum_{\la>\mu}(\frac{\mu}{\la})^\a\|(\p F)_\la\|_{L_x^\infty}\|\la^\a G_\la\|_{L_x^2}.
\end{equation*}
Taking $l_\mu^2$ norm implies
\begin{equation*}
\|\mu^\a \sum_{\la>\mu}P_\mu(F_\la (\p G)_\la) \|_{l_\mu^2 L_x^2}\les \| \p F\|_{L_x^\infty} \|G\|_{H^\a_x}.
\end{equation*}
(\ref{4.13.3.19}) follows by combining the estimates for both parts.

For (\ref{4.13.2.19}), by  applying (\ref{4.13.5.19}) to $(f, W)=(F, G)$, we first treat the term
by using the trichotomy and orthogonality property of the Littlewood-Paley projection,
\begin{equation*}
 [P_\mu, F](\p G)_{\le \mu}=P_\mu(F_\mu (\p G)_{\le \mu})-F P_\mu (\p G).
\end{equation*}
For this term, we compute
\begin{equation*}
\|\mu^\a [P_\mu, F](\p G)_{\le\mu}\|_{l_\mu^2 L_x^2}\les \| G\|_{L^\infty}\|F\|_{H^{\a+1}_x}+\|F\|_{L^\infty_x} \|G\|_{H^{\a+1}_x}.
\end{equation*}
The high-high interaction term can be controlled by
\begin{equation*}
\mu^\a\|\sum_{\la>\mu}P_\mu(F_\la (\p G)_\la)\|_{L_x^2}\les \sum_{\la>\mu}(\frac{\mu}{\la})^\a\|F_\la\|_{L^\infty} \la^{1+\a}\|G_\la\|_{L_x^2}.
\end{equation*}
Thus
\begin{equation*}
\|\mu^\a \sum_{\la>\mu}P_\mu(F_\la (\p G)_\la)\|_{l_\mu^2 L_x^2}\les \|F\|_{L^\infty} \|G\|_{H^{\a+1}_x}.
\end{equation*}
(\ref{4.13.2.19}) follows by  combining the two terms.

For (\ref{4.13.4.19}), we first note that
\begin{equation*}
\p [P_\mu, F] \p G=[P_\mu, \p F]\p G+[P_\mu, F]\p^2 G.
\end{equation*}
Applying (\ref{4.13.3.19}) with $(F,G)$ replaced by $(F, \p G)$ to the second term, (\ref{4.13.2.19}) with $(F,G)$ replaced by $(\p F, G)$ for the first term, (\ref{4.13.4.19}) follows immediately.
\end{proof}

Next we give the first set of the product estimates.
\begin{lemma}\label{prod}
For $\a>0$, there hold for scalar functions $F$ and $G$ that
\begin{align}
\|F\c  G\|_{\dot{H}^\a_x}&\les \|F\|_{L_x^\infty}\|G\|_{\dot{H}^\a_x}+\|F\|_{\dot{H}^\a_x}\|G\|_{L_x^\infty}  \label{4.14.11.19}\\
\|F \c \p G\|_{\dot{H}^\a_x}&\les\|F\|_{\dot{H}^{1+\a}_x}\|G\|_{L_x^\infty}+\|F\|_{L_x^\infty}\|\p G\|_{H^\a_x}\label{5.05.2.19}\\
\|\p(F\c G)\|_{\dot{H}^\a_x}&\les \|\p F\|_{H^\a_x}\|G\|_{L_x^\infty}+\|F\|_{L_x^\infty}\|\p G\|_{H^\a_x}\label{9.20.5.19}\\
\|F\c \p^2 G\|_{\dot{H}^\a_x}&\les \|F\|_{\dot{H}^{2+\a}_x}\|G\|_{L_x^\infty}+\|F\|_{L_x^\infty}\|\p^2 G\|_{H^\a_x}\label{5.05.5.19}\\
\|\p(F\c G)\|_{\dot{H}^\a_x}&\les \|F\|_{\dot{H}^{1+\a}_x}\|G\|_{L_x^\infty}+\|F\|_{H^1_x}\|G\|_{\dot{H}^{\frac{3}{2}+\a}_x}\label{5.05.6.19}\\
 \|F\c \p G\|_{\dot{H}^\a_x}&\les \|F\|_{\dot{H}^{1+\a}_x}\|G\|_{L_x^\infty}+\|F\|_{H^1_x}\|G\|_{\dot{H}^{\frac{3}{2}+\a}_x}\label{5.05.7.19}\\
 \|F \c G\|_{\dot{H}^\a_x}&\les \|F\|_{\dot{H}^{\f12+\a}_x}\|G\|_{H^1_x}+\|F\|_{L_x^\infty}\|G\|_{\dot{H}^\a_x},\label{9.22.7.19}
\end{align}
where $\dot{H}^{\a}$ denotes the Sobolev norm of $H^\a$ with the  $L^2$ norm excluded.
\end{lemma}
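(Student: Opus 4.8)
\textbf{Proof strategy for Lemma \ref{prod}.}
The plan is to reduce every inequality to a combination of the Littlewood--Paley trichotomy \eqref{9.08.3.19} together with the standard finite-band and Bernstein properties, exactly as in the commutator estimates proved above. For a product $F\c G$ write $P_\mu(F\c G)=P_\mu[F\c G]_{HL}+P_\mu[F\c G]_{LH}+P_\mu[F\c G]_{HH}$. For the $HL$ piece one has $\|\mu^\a P_\mu[F\c G]_{HL}\|_{L_x^2}\les \|\mu^\a F_\mu\|_{L_x^2}\|G\|_{L_x^\infty}$ (put $F$ in $L^2$, the low factor of $G$ in $L^\infty$), and summing in $\ell_\mu^2$ gives the term $\|F\|_{\dot H^\a_x}\|G\|_{L_x^\infty}$; the $LH$ piece is symmetric and produces $\|F\|_{L_x^\infty}\|G\|_{\dot H^\a_x}$; for the $HH$ piece, $\|\mu^\a P_\mu[F\c G]_{HH}\|_{L_x^2}\les \mu^\a\sum_{\la>\mu}\|F_\la\|_{L_x^\infty}\|G_\la\|_{L_x^2}$, and after taking $\ell^2_\mu$ and using $\sum_{\la>\mu}(\mu/\la)^\a\le C$ one absorbs it into $\|F\|_{L_x^\infty}\|G\|_{\dot H^\a_x}$ (or the symmetric bound). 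This proves \eqref{4.14.11.19}. All the remaining inequalities are obtained by feeding suitable derivatives into \eqref{4.14.11.19} or by redistributing the derivative differently across the three trichotomy pieces.

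Concretely: \eqref{9.20.5.19} follows by writing $\p(F\c G)=\p F\c G+F\c \p G$ and applying \eqref{4.14.11.19} to each term (with $\|\p G\|_{H^\a_x}\les \|\p G\|_{\dot H^\a_x}+\|\p G\|_{L^2_x}$ absorbed into the stated right-hand side). For \eqref{5.05.2.19} and \eqref{5.05.5.19} one applies \eqref{4.14.11.19} directly to $F\c(\p^m G)$, $m=1,2$, noting $\|F\|_{\dot H^{m+\a}_x}=\|\p^m F\|_{\dot H^\a_x}$ up to harmless terms; the only subtlety is to keep the $L^2$ part of $\p^m G$ under control, which is why the right-hand sides carry full $H^\a$ norms of $\p^m G$ rather than homogeneous ones. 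The estimates \eqref{5.05.6.19} and \eqref{5.05.7.19} are the ``Sobolev-embedding'' variants: in the $LH$ and $HH$ pieces one no longer puts the low/high factor in $L^\infty$ but instead uses Bernstein, $\|P_{\le \mu}\p F\|_{L_x^\infty}\les \mu^{1/2}\|P_{\le\mu}\p F\|_{L_x^2}\les \mu^{1/2}\|F\|_{H^1_x}$ and $\|G_\la\|_{L_x^\infty}\les \la^{3/2}\|G_\la\|_{L^2_x}$, which after the $\ell^2_\mu$ summation yields the factor $\|G\|_{\dot H^{3/2+\a}_x}$; the $HL$ piece still gives $\|F\|_{\dot H^{1+\a}_x}\|G\|_{L_x^\infty}$ and $\|G\|_{L_x^\infty}\les\|G\|_{\dot H^{3/2+\a}_x}+\cdots$ is controlled in the regime $\a>0$. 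Finally \eqref{9.22.7.19} is the endpoint case: here one places $G$ (rather than $F$) in $L^\infty$ only in the $HL$ piece, and in the $LH$/$HH$ pieces uses $\|P_{\le\mu}F\|_{L_x^\infty}\les \mu^{1/2}\|F\|_{\dot H^{1/2}_x}$ together with $\|P_{\le\mu}G\|,\|G_\la\|$ estimated via $\|G\|_{H^1_x}$, so that the $F$-derivative count is exactly $\tfrac12+\a$.

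The bookkeeping obstacle — and really the only place one must be careful — is the $HH$ (high--high) interaction, because there the output frequency $\mu$ is \emph{smaller} than the frequencies of both inputs, so one cannot simply pull out an $L^\infty$ norm of a genuinely high-frequency factor without first paying a Bernstein loss; the gain $(\mu/\la)^\a$ with $\a>0$ (and the summability of $\sum_{\la>\mu}(\mu/\la)^\a$) is exactly what makes the $\ell^2_\mu$ sum converge, which is why each stated inequality requires $\a>0$ (and, in \eqref{5.05.6.19}--\eqref{9.22.7.19}, why the ``low'' factor is measured in $H^1$ so that its Bernstein-upgraded $L^\infty$ bound is finite in $3$ dimensions). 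I would organise the write-up by proving \eqref{4.14.11.19} in full, then stating that \eqref{5.05.2.19}, \eqref{9.20.5.19} and \eqref{5.05.5.19} are immediate consequences, and finally treating \eqref{5.05.6.19}, \eqref{5.05.7.19} and \eqref{9.22.7.19} together since they share the Bernstein-based estimate of the low factor.
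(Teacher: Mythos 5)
Your trichotomy proof of \eqref{4.14.11.19} is correct and is exactly the paper's argument, but the reductions you propose for the remaining estimates do not go through. Applying \eqref{4.14.11.19} directly to $F\c\p G$ (or, via Leibniz, to $\p F\c G$ and $F\c\p G$) produces the terms $\|F\|_{\dot H^\a_x}\|\p G\|_{L_x^\infty}$ and $\|\p F\|_{L_x^\infty}\|G\|_{\dot H^\a_x}$, and neither is controlled by the right-hand sides of \eqref{5.05.2.19}, \eqref{9.20.5.19}, \eqref{5.05.5.19}, which deliberately carry only $\|G\|_{L_x^\infty}$ (resp.\ $\|F\|_{L_x^\infty}$) on the undifferentiated factor. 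The whole content of these inequalities is the derivative shift in the high--low interaction: one must use the finite band property to move the derivative from the low-frequency factor onto the high-frequency one, e.g.
$\mu^\a\|P_\mu\big(F_\mu\,(\p G)_{\le\mu}\big)\|_{L_x^2}\les \|\mu^{1+\a}F_\mu\|_{L_x^2}\sum_{l\le\mu}\tfrac{l}{\mu}\|G_l\|_{L_x^\infty}$,
which is how the paper proves \eqref{5.05.2.19}; so \eqref{5.05.2.19} and \eqref{5.05.5.19} need their own trichotomy computations and are not corollaries of \eqref{4.14.11.19}. (For \eqref{9.20.5.19} alone your idea can be salvaged by applying \eqref{4.14.11.19} with exponent $1+\a$ to the product $FG$ and using $\|\p(FG)\|_{\dot H^\a_x}\approx\|FG\|_{\dot H^{1+\a}_x}$, but not by Leibniz plus \eqref{4.14.11.19} at level $\a$.)

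For \eqref{9.22.7.19} your allocation is reversed: you place $G$ in $L^\infty$ in the high--low piece and Bernstein-upgrade $F$ in the low--high/high--high pieces, which yields terms of the form $\|F\|_{\dot H^\a_x}\|G\|_{L_x^\infty}$ and $\|F\|_{\dot H^{1/2}_x}\|G\|_{\dot H^{1/2+\a}_x}$ that are not bounded by the stated right-hand side (which contains no $\|G\|_{L_x^\infty}$ at all). The correct distribution, as in the paper, is the opposite: in the high--low piece Bernstein the low-frequency $G$, $\|G_l\|_{L_x^\infty}\les l^{3/2}\|G_l\|_{L_x^2}$, splitting the loss as $l^{1/2}\c l$ so that one derivative of $G$ is absorbed by $\|G\|_{H^1_x}$ and the remaining $\mu^{1/2}$ lands on $F$, giving $\|F\|_{\dot H^{1/2+\a}_x}\|G\|_{H^1_x}$; in the low--high and high--high pieces simply bound $F$ in $L_x^\infty$. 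Finally, the inequality you invoke for \eqref{5.05.6.19}--\eqref{5.05.7.19}, $\|P_{\le\mu}\p F\|_{L_x^\infty}\les\mu^{1/2}\|P_{\le\mu}\p F\|_{L_x^2}$, is false in three dimensions (the $L^2\to L^\infty$ Bernstein loss is $\mu^{3/2}$); the paper avoids this by Hölder $L^6\times L^3$ with Sobolev embedding $H^1_x\hookrightarrow L^6_x$ on the low factor and the $\mu^{1/2}$ Bernstein $L^2\to L^3$ on the high factor, and your version can only be repaired by applying Bernstein dyadically against the full $H^1_x$ norm rather than frequency-block by frequency-block as written.
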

\begin{proof}
Now we prove (\ref{4.14.11.19}). By trichotomy,
\begin{align*}
\mu^\a P_\mu(F\c G)&=\mu^\a  P_\mu[F\c G]_{HL}+\mu^\a P_\mu[F\c G]_{LH}+\mu^\a P_\mu[F\c G]_{HH}\\
&=I_\mu+J_\mu+K_\mu.
\end{align*}
For the three terms, by using H\"{o}lder's inequality, we can compute
\begin{align*}
\|I_\mu\|_{l_\mu^2 L_x^2}&\les \|F\|_{\dot{H}^a_x}\|G\|_{L_x^\infty}; \quad \|J_\mu\|_{l_\mu^2 L_x^2}\les \|F\|_{L_x^\infty}\|G\|_{\dot{H}^\a_x}\\
\|K_\mu\|_{l_\mu^2 L_x^2}&\les\|\sum_{\la>\mu}(\frac{\mu}{\la})^{\a} \|F_\la \|_{L_x^\infty}\|\la^\a G_\la\|_{L_x^2} \|_{l_\mu^2}\les \|F\|_{L_x^\infty}\|G\|_{\dot{H}^\a_x}.
\end{align*}
This gives (\ref{4.14.11.19}).

Next we prove (\ref{5.05.2.19}). Again by using trichotomy,
\begin{align*}
\mu^\a P_\mu(F\c \p G)&=\mu^\a P_\mu[F\c \p G]_{HL}+\mu^\a P_\mu[F \c \p G]_{L H}+\mu^\a P_\mu[F\c \p G]_{HH}\\
&=I_\mu+J_\mu+K_\mu.
\end{align*}
By the finite band property,
\begin{align}
\|I_\mu\|_{L_x^2}&\les \|\mu^{1+\a}F_\mu\|_{L_x^2}\sum_{l\le \mu}\frac{l}{\mu}\|G_l\|_{L_x^\infty}\nn\\
\|J_\mu\|_{L_x^2}&\les \|\mu^\a \p G_\mu\|_{L_x^2}\|F_{\le \mu}\|_{L_x^\infty}\les \|\mu^\a \p G_\mu\|_{L_x^2}\|F\|_{L_x^\infty}\nn\\
\|K_\mu\|_{L_x^2}&\les \sum_{\la>\mu}(\frac{\mu}{\la})^\a\|\la^{\a}\p G_\la\|_{L_x^2}\|F_\la\|_{L_x^\infty}\nn.
\end{align}
Summing the above terms in terms of $l_\mu^2$, we can conclude (\ref{5.05.2.19}). (\ref{5.05.5.19}) can be similarly proved.



Next we consider (\ref{5.05.6.19}) by using trichotomy. For simplicity, we set $\I_\mu=\mu^{1+\a}\|P_\mu(F\c G)\|_{L_x^2}$.
 \begin{align*}
 \I_\mu&\les \mu^{1+\a}\left(\|P_\mu[F\c G]_{HL}\|_{L_x^2}+\|P_\mu[F\c G]_{LH}\|_{L_x^2}+\|P_\mu[F\c G]_{HH}\|_{L_x^2}\right).
 \end{align*}
 By using the  finite band property and Bernstein inequality, we obtain
 \begin{align*}
 \mu^{1+\a}\|P_\mu[F\c G]_{HL}\|_{L_x^2}&\les \mu^{1+\a}\| P_\mu F\|_{L_x^2}\|G_{\le \mu}\|_{L_x^\infty}\\
\mu^{1+\a} \|P_\mu[F\c G]_{L H}\|_{L_x^2}&\les\|P_{\le \mu} F\|_{L^{6}_x}\|P_\mu G\|_{L_x^3}\mu^{1+\a}\\
 &\les \|F\|_{H^1_x}\|P_\mu G\|_{\dot{H}^{\frac{3}{2}+\a}_x}\\
 \mu^{1+\a}\|P_\mu[F \c G]_{HH}\|_{L_x^2}&\les\sum_{\la >\mu}\|P_\la  F\|_{L_x^6}(\frac{\mu}{\la})^{1+\a}\|\la^{\frac{3}{2}+\a} P_\la G\|_{L_x^2},
 \end{align*}
which yields in view of (\ref{4.12.2.19}) that
\begin{equation*}
\|\I_\mu\|_{l_\mu^2}\les \|F\|_{H^{1+\a}_x} \|G\|_{L^\infty_x}+\|F\|_{H^1_x}\|G\|_{\dot{H}^{\frac{3}{2}+\a}_x}
\end{equation*}
as desired.

Next we prove (\ref{5.05.7.19}). Let $\J_\mu= \mu^\a\|P_\mu(F \c \p G)\|_{L_x^2}$. In view of the trichotomy in  (\ref{9.08.3.19}), we estimate
 by using the finite band property and Bernstein inequality that
\begin{align*}
\mu^\a\|P_\mu[F \c \p G]_{HL}\|_{L_x^2}&\les\sum_{\la<\mu}\frac{\la}{\mu}\|\mu^{1+\a} P_\mu F\|_{L_x^2}\|P_\la G\|_{L_x^\infty},\\
\mu^\a\|P_\mu[F \c \p G]_{LH}\|_{L_x^2}&\les \|P_{\le \mu}F\|_{L_x^6}\mu^{\a}\|P_\mu \p G\|_{L_x^3}\les \|F\|_{L_x^6} \|\mu^{\a+\f12} P_\mu \p G\|_{L_x^2},\\
\mu^\a\|P_\mu[F \c\p G]_{HH}\|_{L_x^2}&\les \mu^{\a}\sum_{\la>\mu} \|P_\la F\|_{L_x^6}\|P_\la \p G\|_{L_x^3}\\
&\les \sum_{\la>\mu} (\frac{\mu}{\la})^\a \|P_\la F\|_{H^1_x}\|\la^{\f12+\a}P_\la \p G\|_{L_x^2}.
\end{align*}
Taking $l_\mu^2$ norm for the above three terms implies
\begin{align*}
\|\J_\mu\|_{l_\mu^2}\les \|F\|_{H^{1+\a}_x}\|G\|_{L_x^\infty}+\|F\|_{H^1_x}\|\p G\|_{H^{\frac{1}{2}+\a}_x}
\end{align*}
as desired.

At last we consider (\ref{9.22.7.19}). We estimate the terms in (\ref{9.08.3.19}) as follows
\begin{align*}
\|\mu^\a P_\mu[F \c G]_{HL}\|_{L_x^2}&\les \mu^\a \|F_\mu\|_{L_x^2}\sum_{l\le \mu}l^\frac{3}{2}\|P_l G\|_{L_x^2}\\
&\les \mu^{\a+\frac{1}{2}}\|F_\mu\|_{L_x^2}\sum_{l\le \mu} (\frac{l}{\mu})^\f12 \|l P_l G\|_{L_x^2},
\end{align*}
\begin{align*}
\|\mu^\a P_\mu[F\c G]_{LH}\|_{L_x^2}&\les \|F_{\le \mu}\|_{L_x^\infty}\mu^\a\|G_\mu\|_{L_x^2},
\end{align*}
\begin{align*}
\|\mu^\a P_\mu[F\c G]_{HH}\|_{L_x^2}&\les \|F\|_{L_x^\infty}\sum_{\la>\mu}(\frac{\mu}{\la})^\a \|\la^\a G_\la \|_{L_x^2}.
\end{align*}
Summing up the three inequalities implies
\begin{equation*}
\|\mu^\a P_\mu(F G)\|_{l_\mu^2 L_x^2}\les \|F\|_{\dot{H}^{\f12+\a}}\|G\|_{H^1_x}+\|F\|_{L_x^\infty}\|G\|_{\dot{H}^\a_x}.
\end{equation*}
The proof of (\ref{9.22.7.19}) is completed.
\end{proof}
\begin{lemma}
Let $0<\a<1$ be fixed.
\begin{align}
&\|\La^\a(F\c G)\|_{L_x^2}\les\|F\|_{B_{\infty, 2, x}^\a}\|G\|_{L_x^2}+\|F\|_{L_x^\infty}\|G\|_{\dot{H}^\a_x},\label{9.08.1.19}\\
&\|\La^\a(F\c G)\|_{L_x^2}\les \|F\|_{H^{\f12+\a}_x}\|G\|_{H^1_x}+\|G\|_{H^{\f12+\a}_x}\|F\|_{H^1_x}, \label{9.08.8.19}\\
&\|\La^\a(G_1 G_2 G_3)\|_{L_x^2}\les \sum_{j=1}^3\big(\|\La^\a G_j\|_{H^1_x}\Pi_{l\neq j}\|G_l\|_{H^1_x}\big).\label{9.08.17.19}
\end{align}
\end{lemma}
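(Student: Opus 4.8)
\textbf{Proof proposal for the Appendix product estimates \eqref{9.08.1.19}--\eqref{9.08.17.19}.}

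The plan is to handle each inequality by the Littlewood--Paley trichotomy \eqref{9.08.3.19}, splitting $P_\mu(F\cdot G)=P_\mu[F\cdot G]_{HL}+P_\mu[F\cdot G]_{LH}+P_\mu[F\cdot G]_{HH}$ and estimating the three pieces, then summing in $l_\mu^2$. For \eqref{9.08.1.19}, first I would write $\La^\a(FG)\approx \mu^\a P_\mu(FG)$ in $l_\mu^2 L_x^2$. The high-low piece $\mu^\a P_\mu(P_\mu F\, P_{\le\mu}G)$ is bounded by $\|\mu^\a P_\mu F\|_{L^\infty_x}\|P_{\le\mu}G\|_{L_x^2}\les\|F\|_{B^\a_{\infty,2,x}}\|G\|_{L_x^2}$ after taking $l_\mu^2$, using $\|P_{\le\mu}G\|_{L_x^2}\les\|G\|_{L_x^2}$ and the definition of the Besov norm from Section~\ref{h2vorticity}. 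The low-high piece $\mu^\a P_\mu(P_{\le\mu}F\,P_\mu G)$ is bounded by $\|F\|_{L_x^\infty}\|\mu^\a P_\mu G\|_{L_x^2}$, giving $\|F\|_{L_x^\infty}\|G\|_{\dot H^\a_x}$ in $l_\mu^2$. The high-high piece $\mu^\a\sum_{\la>\mu}P_\mu(P_\la F\,P_\la G)$ is controlled by $\sum_{\la>\mu}(\mu/\la)^\a\|P_\la F\|_{L_x^\infty}\|\la^\a P_\la G\|_{L_x^2}$, and since this is a discrete convolution with an $\ell^1$ kernel in $\mu$, Young's inequality gives $l_\mu^2$-bound $\les\|F\|_{L_x^\infty}\|G\|_{\dot H^\a_x}$; here one only needs the crude $\|P_\la F\|_{L_x^\infty}\les\|F\|_{L_x^\infty}$. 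Summing the three yields \eqref{9.08.1.19}.

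For \eqref{9.08.8.19} I would symmetrize: by trichotomy it suffices to bound $\mu^\a P_\mu(P_\mu F\,P_{\le\mu}G)$ and its mirror, plus the high-high term. For the high-low term use Bernstein: $\|P_{\le\mu}G\|_{L_x^\infty}\les\sum_{l\le\mu}\|P_l G\|_{L_x^\infty}\les\sum_{l\le\mu}l^{3/2}\|P_l G\|_{L_x^2}\les\|G\|_{H^{3/2+}_x}$; pairing with $\|\mu^\a P_\mu F\|_{L_x^2}$ this is weaker than needed, so instead one splits the derivative budget: write $\mu^\a P_\mu(P_\mu F\,P_{\le\mu}G)$ and bound $\|P_{\le\mu}G\|_{L_x^6}\les\|G\|_{H^1_x}$ (Sobolev) and $\|\mu^\a P_\mu F\|_{L_x^3}\les\|\mu^{\a+1/2}P_\mu F\|_{L_x^2}$ (Bernstein), giving $\|G\|_{H^1_x}\|F\|_{H^{1/2+\a}_x}$ in $l_\mu^2$. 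The mirror term gives $\|F\|_{H^1_x}\|G\|_{H^{1/2+\a}_x}$. The high-high term $\mu^\a\sum_{\la>\mu}P_\mu(P_\la F\,P_\la G)$ uses $\|P_\la F\|_{L_x^6}\les\|\la^{1/2}P_\la F\|_{L_x^2}$ paired with $\|P_\la G\|_{L_x^3}\les\|\la^{1/2}P_\la G\|_{L_x^2}$, so the summand is $(\mu/\la)^\a\|\la^{1/2}P_\la F\|_{L_x^2}\|\la^{1/2}P_\la G\|_{L_x^2}$; distributing $\la^\a$ onto either factor and using $\ell^1$-summability in $\mu$ gives the claimed bound. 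Then \eqref{9.08.17.19} follows by applying \eqref{9.08.8.19} twice: first write $\La^\a(G_1 G_2 G_3)$ with $F=G_1$, $G=G_2 G_3$, getting $\|G_1\|_{H^{1/2+\a}_x}\|G_2 G_3\|_{H^1_x}+\|G_1\|_{H^1_x}\|G_2 G_3\|_{H^{1/2+\a}_x}$; then estimate the products $\|G_2 G_3\|_{H^1_x}$ and $\|G_2G_3\|_{H^{1/2+\a}_x}$ again by \eqref{9.08.8.19} (with $\a$ replaced by the appropriate exponent, noting $1/2+\a<1$ for small $\a$), and finally bound the low-order $L^2$ contributions by Sobolev embedding $H^1\hookrightarrow L^6$ and Hölder on ${\mathbb R}^3$. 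Collecting terms and using $H^{1/2+\a}_x\hookrightarrow H^1_x$ where convenient produces the symmetric three-term bound.

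I expect the main technical point to be the bookkeeping in \eqref{9.08.8.19}: one must be careful which factor receives the $L^6$ (Sobolev) estimate and which receives the fractional-derivative $L^3$ (Bernstein) estimate in each of the three trichotomy pieces, so that the total derivative count is exactly $1/2+\a$ on one factor and $1$ on the other, with no loss. The high-high piece is the delicate one because both factors are at frequency $\la>\mu$ and one must redistribute the gain $(\mu/\la)^\a$ to close the $l_\mu^2$ sum while keeping each factor in $L^2$ after a Bernstein step. Everything else is a routine application of finite band property, Bernstein inequalities, Hölder, and Young's inequality for the discrete convolution in $\mu$; no new idea beyond the standard paraproduct decomposition is needed, and the low-frequency ($L^2$) parts excluded from the $\dot H^\a$ norms are handled trivially by $\|P_{\le 1}(\cdot)\|_{L_x^2}\les\|\cdot\|_{L_x^2}$ together with the embeddings already invoked.
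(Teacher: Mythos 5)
Your proof of \eqref{9.08.1.19} coincides with the paper's own argument (same trichotomy, same three bounds, same $l_\mu^2$ summation). For \eqref{9.08.8.19} and \eqref{9.08.17.19} the paper gives no proof at all: it cites (6.188) and Lemma 18 of \cite{WangCMCSH}. Your self-contained proof of the bilinear bound \eqref{9.08.8.19} is a legitimate alternative, and the H\"older/Bernstein bookkeeping in the high-low and low-high pieces is correct. One slip: in the high-high piece, Bernstein on a dyadic shell of radius $\la$ gives $\|P_\la F\|_{L_x^6}\les \la\,\|P_\la F\|_{L_x^2}$, not $\la^{\f12}\|P_\la F\|_{L_x^2}$ (you quoted the $L^2\to L^3$ gain). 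The estimate still closes, because the total cost $\la\cdot\la^{\f12}=\la^{\frac{3}{2}}$ is below the available budget $\la^{\frac{3}{2}+\a}$ coming from $\|\la^{\f12+\a}P_\la F\|_{L_x^2}\,\|\la P_\la G\|_{L_x^2}$, and the leftover $\la^{-\a}$ combines with the prefactor $\mu^\a$ to give the summable kernel $(\mu/\la)^\a$; but as written your exponents do not balance.

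The derivation of the trilinear estimate \eqref{9.08.17.19} from \eqref{9.08.8.19} has a genuine gap. Grouping $G=G_2G_3$ forces you to control $\|G_2G_3\|_{H^1_x}$ (and $\|G_2G_3\|_{H^{\f12+\a}_x}$) by quantities of the admissible form $\|\La^\a G_j\|_{H^1_x}\|G_l\|_{H^1_x}$. In three dimensions $H^1$ is not an algebra: $\|\p(G_2G_3)\|_{L_x^2}$ requires something like $\|\p G_2\|_{L_x^3}\|G_3\|_{L_x^6}$, i.e.\ $\frac{3}{2}$ derivatives on one factor, and the product estimate $H^{1+\a}\times H^1\to H^1$ holds only when $\a\ge\f12$. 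Since the lemma is stated for all $0<\a<1$ and is applied in the paper with $\a=\ep$ or $\delta$ arbitrarily small, the intermediate norm $\|G_2G_3\|_{H^1_x}$ simply cannot be bounded by the allowed right-hand side; nor does applying \eqref{9.08.8.19} ``with the appropriate exponent'' help, since that estimate is only available for exponents strictly below $1$ and in any case returns $H^{\frac{3}{2}}$-type norms of a single factor, which $\|\La^\a G_j\|_{H^1_x}$ does not control for small $\a$. The correct route --- and what the cited Lemma 18 of \cite{WangCMCSH} does --- is to decompose the triple product directly: put each of the three factors in $L^6$ via $H^1\hookrightarrow L^6$, use the H\"older splitting $L^6\times L^6\times L^6\to L^2$, and pull the factor $\la^{-\a}$ only from the single designated high-frequency factor, whose $\|\La^\a G_j\|_{H^1_x}$ then appears. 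That three-way Lebesgue splitting is exactly the flexibility you lose once two factors are merged into one, so the two-step iteration cannot be repaired without reorganizing the argument along these lines.
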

\begin{proof}
(\ref{9.08.8.19}) and (\ref{9.08.17.19}) are \cite[(6.188) and Lemma 18]{WangCMCSH} respectively.

By using the trichotomy in (\ref{9.08.3.19})
\begin{align*}
&\mu^\a\|P_\mu[F\c G]_{HL}\|_{L_x^2}\les\mu^\a \|F_\mu\|_{L_x^\infty}\|G_{\le \mu}\|_{L_x^2}\les \mu^\a \|F_\mu\|_{L_x^\infty} \|G\|_{L_x^2},\\
&\mu^\a\|P_\mu[F\c G]_{LH}\|_{L_x^2}\les \mu^\a \|F_{\le\mu} \|_{L_x^\infty}\|P_\mu G\|_{L_x^2}\les \|F\|_{L_x^\infty}\mu^\a \|P_\mu G\|_{L_x^2},\\
&\mu^\a\|P_\mu[F\c G]_{HH}\|_{L_x^2}\les\sum_{\la \ge\mu} (\frac{\mu}{\la})^\a \| F_\la\|_{L_x^\infty}\|\la^\a P_\la G\|_{L_x^2}.
\end{align*}
(\ref{9.08.1.19}) follows by taking $l_\mu^2$ norms on the above  inequalities.
\end{proof}

\begin{lemma}\label{lem2}
For $0<\a<1/2$ there hold
\begin{align}
\mu^{-\f12+\a}\|\p [P_\mu, F]G\|_{L^2_x}
&\les \|\p F\|_{L^6_x} \sum_{\la\le \mu} \left(\frac{\la}{\mu}\right)^{1/2-\a} \|\la^\a G_\la\|_{L^2_x} \nonumber\\
&+ \|\p F\|_{L^6_x} \sum_{\la>\mu} \left(\frac{\mu}{\la}\right)^{1/2+\a} \|\la^\a G_\la\|_{L^2_x}\nn
\end{align}
and
\begin{align}
\mu^{\f12+\a} \|[P_\mu, F]G\|_{L^2_x}
& \les \|\p F\|_{L^6_x} \sum_{\la\le \mu} \left(\frac{\la}{\mu}\right)^{1/2-\a} \|\la^\a G_\la\|_{L^2_x} \nonumber\\
& +\|\p F\|_{L^6_x} \sum_{\la\ge \mu} \left(\frac{\mu}{\la}\right)^{1+\a} \|\la^\a G_\la\|_{L^2_x}.\label{lem4eq}
\end{align}
\end{lemma}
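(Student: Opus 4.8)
The plan is to reduce both estimates to the basic commutator bound (\ref{5.12.1.19}), the paraproduct splitting (\ref{4.13.5.19}), Bernstein's inequality and the finite band property, and then to sum geometric series in the dyadic parameter; the only subtle point is the way the derivative is handled in the first inequality.

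First I would record two building blocks, valid for any Littlewood--Paley--type projector $Q_\mu$ at frequency $\mu$ (estimate (\ref{5.12.1.19}) and its proof extend verbatim to such $Q_\mu$). Applying (\ref{5.12.1.19}) with $(p,r,q)=(6,3,2)$ together with Bernstein's inequality $\|G_\la\|_{L_x^3}\les\la^{\f12}\|G_\la\|_{L_x^2}$ gives $\|[Q_\mu,F]G_\la\|_{L_x^2}\les\mu^{-1}\la^{\f12}\|\p F\|_{L_x^6}\|G_\la\|_{L_x^2}$ for every dyadic $\la$. On the other hand, for $\la>\mu$, Bernstein's inequality $\|Q_\mu h\|_{L_x^2}\les\mu^{\f12}\|h\|_{L_x^{3/2}}$, H\"older and the finite band property $\|F_\la\|_{L_x^6}\les\la^{-1}\|\p F\|_{L_x^6}$ give $\|Q_\mu(F_\la G_\la)\|_{L_x^2}\les\mu^{\f12}\la^{-1}\|\p F\|_{L_x^6}\|G_\la\|_{L_x^2}$.

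For (\ref{lem4eq}) I would apply (\ref{4.13.5.19}) with $Q_\mu=P_\mu$, writing $[P_\mu,F]G=\sum_{\la\le\mu}[P_\mu,F]G_\la+\sum_{\la>\mu}P_\mu(F_\la G_\la)$. Multiplying the first building block by $\mu^{\f12+\a}$ turns the factor $\mu^{-\f12+\a}\la^{\f12}$ into $\la^\a(\la/\mu)^{\f12-\a}$, and multiplying the second by $\mu^{\f12+\a}$ turns $\mu^{1+\a}\la^{-1}$ into $\la^\a(\mu/\la)^{1+\a}\le\la^\a(\mu/\la)^{\f12+\a}$; summing over the two families of $\la$ produces exactly (\ref{lem4eq}). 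The finitely many $\la\approx\mu$ terms implicit in the precise meaning of $P_{\le\mu}$ in (\ref{4.13.5.19}) are harmless, since both dyadic weights are $\approx1$ there.

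For the first inequality the naive Leibniz splitting $\p[P_\mu,F]G=[P_\mu,\p F]G+[P_\mu,F]\p G$ does not suffice: in the high--high regime $\la>\mu$ each of the two pieces only produces the weight $(\mu/\la)^{\a}$, short of the required $(\mu/\la)^{\f12+\a}$. The fix is to differentiate through the convolution kernel of $P_\mu$: since $\p_j P_\mu=\mu\,\ti P_\mu$, where $\ti P_\mu$ is the Fourier multiplier with symbol $i(\xi_j/\mu)\b(\xi/\mu)$ — again a Littlewood--Paley--type projector at frequency $\mu$ — one has the identity $\p[P_\mu,F]G=\mu\,[\ti P_\mu,F]G-\p F\cdot P_\mu G$. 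Applying (\ref{4.13.5.19}) to the commutator $[\ti P_\mu,F]G$ and then the two building blocks with $Q_\mu=\ti P_\mu$ gives $\mu\,\|[\ti P_\mu,F]G_\la\|_{L_x^2}\les\la^{\f12}\|\p F\|_{L_x^6}\|G_\la\|_{L_x^2}$ for $\la\le\mu$ and $\mu\,\|\ti P_\mu(F_\la G_\la)\|_{L_x^2}\les\mu^{3/2}\la^{-1}\|\p F\|_{L_x^6}\|G_\la\|_{L_x^2}$ for $\la>\mu$; after multiplication by $\mu^{-\f12+\a}$ these are precisely the two sums in the statement (the high--high weight being $(\mu/\la)^{1+\a}\le(\mu/\la)^{\f12+\a}$). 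The residual term is $\|\p F\cdot P_\mu G\|_{L_x^2}\les\|\p F\|_{L_x^6}\|P_\mu G\|_{L_x^3}\les\|\p F\|_{L_x^6}\sum_{\la\approx\mu}\la^{\f12}\|G_\la\|_{L_x^2}$, which after multiplication by $\mu^{-\f12+\a}$ is $\approx\sum_{\la\approx\mu}\la^\a\|\p F\|_{L_x^6}\|G_\la\|_{L_x^2}$ and is absorbed into either sum. I expect this last routing of the derivative — keeping it on the kernel so that the high--high interaction stays localized at frequency $\mu$ and the derivative costs only $\mu$ instead of $\la$ — to be the only non-routine step; everything else is assembling geometric sums in $\la$, which converge because $0<\a<\f12$.
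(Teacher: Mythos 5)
Your proposal is correct, but note that this paper does not actually prove Lemma \ref{lem2}: it simply cites \cite[Lemma 23]{WangCMCSH}, so there is no in-paper argument to compare against line by line. Judged on its own, your proof is sound and uses exactly the toolkit the paper's appendix relies on elsewhere: the kernel commutator bound (\ref{5.12.1.19}) with $(p,r,q)=(6,3,2)$ plus Bernstein gives $\|[Q_\mu,F]G_\la\|_{L^2}\les\mu^{-1}\la^{1/2}\|\p F\|_{L^6}\|G_\la\|_{L^2}$, while Bernstein from $L^{3/2}$ and the finite band property give $\|Q_\mu(F_\la G_\la)\|_{L^2}\les\mu^{1/2}\la^{-1}\|\p F\|_{L^6}\|G_\la\|_{L^2}$, and inserting these into the decomposition (\ref{4.13.5.19}) reproduces both displayed sums with the exact weights (for the second estimate the high--high weight $(\mu/\la)^{1+\a}$ comes out on the nose). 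Your diagnosis that the Leibniz splitting $\p[P_\mu,F]G=[P_\mu,\p F]G+[P_\mu,F]\p G$ only yields $(\mu/\la)^{\a}$ in the high--high regime is accurate, and the fix via $\p_jP_\mu=\mu\,\ti P_\mu$, i.e. $\p[P_\mu,F]G=\mu[\ti P_\mu,F]G-\p F\cdot P_\mu G$, is the right move: it keeps the output localized at frequency $\mu$, so the derivative costs $\mu$ rather than $\la$, and the residual $\p F\cdot P_\mu G$ is correctly absorbed into the $\la\approx\mu$ term of the first sum. Two small housekeeping points: the extension to $\ti P_\mu$ is needed not only for (\ref{5.12.1.19}) but also for the paraproduct identity (\ref{4.13.5.19}) (routine, since it only uses frequency localization at scale $\mu$), and in the near-diagonal range $\la\approx\mu$ one should keep the commutator form and use the kernel bound (rather than splitting into $F\,\ti P_\mu G_\la$, which would require $\|F\|_{L^\infty}$); your ``harmless $\la\approx\mu$ terms'' remark is consistent with this, and it is the same level of schematic bookkeeping the paper itself adopts in (\ref{4.13.5.19}). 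The closing remark about geometric series converging for $0<\a<1/2$ is not actually needed for the statement as written, but it is harmless.
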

This is \cite[Lemma 23]{WangCMCSH}.\\

\noindent\textbf{Acknowledgement} The author is indebted to  Pin Yu, who drew her attention to the regularity problem of compressible fluids in 2016, for a helpful discussion in 2018; and would like to thank Jared Speck for a couple of enlightening discussions in January 2019.
 
\end{document}